\documentclass[reqno]{amsart}
\usepackage{mathrsfs}
\usepackage{amsfonts}
\usepackage[centertags]{amsmath}

\usepackage{amssymb}
\usepackage{amsthm}
\usepackage{graphicx}
\usepackage{caption}
\usepackage{dynkin-diagrams}
\usepackage[colorlinks]{hyperref}
\usepackage[scr=boondox,cal=esstix]{mathalpha}
\hypersetup{
bookmarksnumbered,
pdfstartview={FitH},
breaklinks=true,
linkcolor=blue,
urlcolor=blue,
citecolor=blue,
bookmarksdepth=2
}
\usepackage[sorted,compressed-cites,sorted-cites,nobysame,initials]{amsrefs}
\usepackage{enumitem}
\usepackage[margin=1in]{geometry}
\usepackage{tikz-cd}
\usepackage{rotating}
\usetikzlibrary{matrix,arrows,decorations.pathmorphing}
\usepackage{tikz-cd}
\usetikzlibrary{calc}
\usetikzlibrary{decorations.pathmorphing}

\tikzset{curve/.style={settings={#1},to path={(\tikztostart)
    .. controls ($(\tikztostart)!\pv{pos}!(\tikztotarget)!\pv{height}!270:(\tikztotarget)$)
    and ($(\tikztostart)!1-\pv{pos}!(\tikztotarget)!\pv{height}!270:(\tikztotarget)$)
    .. (\tikztotarget)\tikztonodes}},
    settings/.code={\tikzset{quiver/.cd,#1}
        \def\pv##1{\pgfkeysvalueof{/tikz/quiver/##1}}},
    quiver/.cd,pos/.initial=0.35,height/.initial=0}

\tikzset{tail reversed/.code={\pgfsetarrowsstart{tikzcd to}}}
\tikzset{2tail/.code={\pgfsetarrowsstart{Implies[reversed]}}}
\tikzset{2tail reversed/.code={\pgfsetarrowsstart{Implies}}}
\tikzset{no body/.style={/tikz/dash pattern=on 0 off 1mm}}

\usepackage{listing}
\usepackage{color}
\usepackage{float}

\DeclareMathOperator{\supp}{supp}
\newcommand{\brd}[2]{\underbrace{#1\cdots}_{#2}}

\newcommand{\CC}{{\mathbb C}}

\newcommand{\OEIS}[1]{\href{https:oeis.org/#1}{#1}}

\newcommand{\la}{\langle}
\newcommand{\ra}{\rangle}
\newcommand{\wh}[1]{\widehat{#1}}

\newcommand{\ascprod}{\mathop{\overrightarrow{\prod}}}
\newcommand{\dscprod}{\mathop{\overleftarrow{\prod}}}

\usepackage{xstring}
\makeatletter
\newcommand{\partref}[1]{\StrCut{#1}{.}\@pref@\@suff@\ref{\@pref@}\StrLen{\@suff@}[\@slen@]\ifnum\@slen@>0\ref{#1}\fi}
\makeatother

\allowdisplaybreaks[4]
\usetikzlibrary{matrix,arrows,decorations.pathmorphing}

\newtheorem{theorem}{Theorem}[section]
\newtheorem{corollary}[theorem]{Corollary}
\newtheorem{lemma}[theorem]{Lemma}
\newtheorem{proposition}[theorem]{Proposition}
\theoremstyle{definition}
\newtheorem{definition}[theorem]{Definition}
\newtheorem{remark}[theorem]{Remark}
\newtheorem{example}[theorem]{Example}

\newtheorem{problem}{Problem}

\newtheorem{conjecture}[theorem]{Conjecture}

\newcommand{\cx}[2]{c_{#1\to #2}}

\newcommand{\cxr}[2]{c_{#1\leftarrow #2}}

\newcommand{\Cx}[2]{C_{#1\to #2}}
\newcommand{\Cxr}[2]{C_{#1\leftarrow #2}}
\newcommand{\Ast}{\mathscr{Ast}}

\newenvironment{enmalph}{\begin{enumerate}[label={\rm(\alph*)},leftmargin=*]}{\end{enumerate}}

\SetLabelAlign{center}{\hfil#1\hfil}
\newenvironment{enmroman}{\begin{enumerate}[label={\rm(\roman*)},align=center,leftmargin=*]}{\end{enumerate}}

\numberwithin{equation}{section}
\newcommand{\Cox}[1]{\mathbf{Cox}(#1)}
\newcommand{\tensor}{\otimes}

\newcommand{\kk}{\Bbbk}

\newcommand{\ZZ}{{\mathbb Z}}

\DeclareMathOperator{\id}{id}

\DeclareMathOperator{\End}{End}

\DeclareMathOperator{\Br}{Br}

\DeclareMathOperator*{\Arr}{Arr}

\DeclareMathOperator{\Hom}{Hom}

\DeclareMathOperator{\Inv}{Inv}

\DeclareMathOperator{\SQF}{SQF}
\DeclareMathOperator{\Art}{\mathscr{Art}}
\DeclareMathOperator{\Heck}{\mathscr{Hec}}
\DeclareMathOperator{\CoxCat}{\mathscr{Cox}}
\makeatletter
\renewcommand*{\Hdynkin}{\Adynkin\dynkinEdgeLabel{\numexpr\dynkin@rank-1}{\dynkin@rank}{5}}
\newcommand{\plink}[1]{\hypertarget{#1}{}\label{page:#1}}
\makeatother
\pgfkeys{/Dynkin diagram,
edge length=1.0cm,text style/.style={scale=0.8},Coxeter,root radius=0.07}

\begin{document}

\title{Artin monoids, their homomorphisms and twins}
\author{Arkady Berenstein, Jacob Greenstein and Jian-Rong Li}
\address{Arkady Berenstein, Department of Mathematics, University of Oregon, Eugene, OR 97403, USA}
\email{arkadiy@math.uoregon.edu}
\address{Jacob Greenstein, Department of Mathematics, University of California, Riverside, CA 92521, USA}
\email{jacobg@ucr.edu}
\address{Jian-Rong Li, Faculty of Mathematics, University of Vienna, Oskar-Morgenstern-Platz 1, 1090 Vienna, Austria}
\email{lijr07@gmail.com}
\date{}

\thanks{This work was partially supported by the Simons Foundation Collaboration Grant no.~636972 (A.~Berenstein), the Simons
foundation collaboration grant no.~245735 (J.~Greenstein), Austrian Science Fund (FWF): P 34602, Grant DOI: 10.55776/P34602, and PAT 9039323, Grant-DOI 10.55776/PAT9039323 (J.-R. Li).
}

\begin{abstract}
Motivated by the {\em twin homomorphism problem} for Coxeter groups and the corresponding Hecke monoids, we find a large class of its solutions originating from {\em standard homomorphisms} of Artin monoids and their compositions. These homomorphisms are expected to be injective when they are optimal and injective on generators, which generalizes the homogeneous homomorphisms and the famous Tits conjecture settled by Crisp and Paris. We classify disjoint standard homomorphisms and conjecture the complete classification when the domain is of rank two.
\end{abstract}

\maketitle

\tableofcontents

\section{Introduction and main results}
This work was motivated by the following {\em twin homomorphism problem} which
in turn was motivated by geometric considerations discussed in~\cite{BGLHeck}.
\begin{problem}\label{prob:twin}
Classify all pairs of homomorphisms of Coxeter 
groups and corresponding Hecke monoids which
coincide on generators.
\end{problem}
More precisely, 
let $M$ be a Coxeter matrix over a finite set~$I$ and let $W(M)=\la s_i\,:\, i\in I\ra$ 
be the corresponding Coxeter group
with simple generators~$s_i$ satisfying~$s_i^2=1$
as well as (generalized) braid  relations (see~\S\ref{subs:Br(M)W(M)}). 
Let~$(W(M),\star)$ be the corresponding Hecke monoid, also referred to in the literature as 0-Hecke monoid, Coxeter monoid or Demazure monoid,
which has the same generators satisfying $s_i\star s_i=s_i$, $i\in I$ and the same (generalized) braid relations (see~\S\ref{subs:Hecke}). Thus, for
any homomorphism between Coxeter groups (respectively, Hecke monoids) the image 
of every~$s_i$, $i\in I$ must be an involution (respectively, an idempotent). It turns 
out (see e.g.~\cite{BGLHeck}) that 
all idempotents in~$(W(M),\star)$ are 
longest elements~$w_\circ^J$ in finite parabolic subgroups~$W_J(M)$
of~$W(M)$, $J\subset I$ and in particular
are involutions in~$W(M)$. Thus, twin homomorphisms~$W(M)\to W(M')$
are defined by assignments~$s_i\mapsto w_\circ^{J_i}$, $i\in I$ for some subsets~$J_i$ of the index set~$I'$ of~$M'$, and so the twin homomorphism problem is very natural. 

In~\cite{BGLHeck} we solved Problem~\ref{prob:twin} in an important particular case when twin homomorphisms are equal
as maps of sets. By~\cite{BGLHeck}*{Theorem~3.23}, these are precisely what we call the {\em homogeneous} homomorphisms that is, those which are compatible
with the length function on the domain and
the codomain (see~\S\ref{subs:parab hom Artin} for the details),
and to coincide with unfoldings in finite types.
In particular, that means that they lift
to well-known homomorphisms of the semisimple algebraic groups
in crystallographic cases. Also, a composition of homogeneous homomorphisms 
is again homogeneous. 
Even more importantly, they are induced from
homomorphisms of the corresponding {\em Artin monoids}. On a different note, yet conjecturally,
twin homomorphisms $S_3\to S_n$ exist only 
if~$n=3m$ for some positive~$m$ and can be described explicitly. Most of them also lift to homomorphisms of braid monoids~$\Br^+_3\to \Br^+_{3m}$ and exhaust all homomorphisms
between these monoids which induce twin homomorphisms.

The above discussion motivated us to introduce the notion of
a {\em standard homomorphism} of Artin monoids,
which play a central role in the present paper. 
Namely, for any $w\in W(M)$, denote by~$T_w$
the unique element of the Artin monoid of the same length whose canonical image in~$W(M)$ is equal to~$w$ (see~\S\ref{subs:Br(M)W(M)} for precise definitions). We say that a homomorphism of
Artin monoids~$\Phi:\Br^+(M)\to \Br^+(M')$ is {\em standard} if
$\Phi(T_i)=T_{w_\circ^{J_i}}$
for suitable subsets~$J_i$ of the index set~$I'$ of~$M'$, $i\in I$. It should be noted that 
the length homomorphism $\ell:\Br^+(M)\to(\ZZ_{\ge 0},+)\cong \Br^+_2$ or 
more generally~$\Br^+(M)\to \ZZ_{\ge0}^I$ where
$T_i$ and~$T_j$ have the same image if there is a path from~$i$ to~$j$ in the Coxeter 
graph with all edges having odd labels
(see Example~\ref{ex:char hom}) is standard.
The following is immediate from Theorem~\ref{thm:Main Thm Cox Heck}.
\begin{corollary}
A standard homomorphism $\Br^+(M)\to \Br^+(M')$ 
yields a solution of Problem~\ref{prob:twin}.
\end{corollary}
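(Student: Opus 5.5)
The plan is to show that the assignment $s_i\mapsto w_\circ^{J_i}$, $i\in I$, defines both a group homomorphism $W(M)\to W(M')$ and a monoid homomorphism $(W(M),\star)\to(W(M'),\star)$. These two homomorphisms automatically agree on generators, so they form a twin pair. I expect this to be exactly what Theorem~\ref{thm:Main Thm Cox Heck} delivers once its hypotheses are checked.

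Let $\Phi:\Br^+(M)\to\Br^+(M')$ be standard with $\Phi(T_i)=T_{w_\circ^{J_i}}$. The first step is to note that each $J_i$ is of finite type, since $T_{w_\circ^{J_i}}$ only makes sense when $w_\circ^{J_i}$ exists. The second step is to use that $\Phi$ is a homomorphism, so the images $T_{w_\circ^{J_i}}$ and $T_{w_\circ^{J_j}}$ satisfy the braid relation of length $m_{ij}$ in $\Br^+(M')$.

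Next I push these relations down along the two canonical surjections, $\Br^+(M')\to W(M')$ and $\Br^+(M')\to(W(M'),\star)$. The second map sends $T_w\mapsto w$ and is a monoid homomorphism, by the standard description of the Hecke monoid as a quotient of the Artin monoid by $T_i^2=T_i$. Under both maps $T_{w_\circ^{J_i}}$ goes to $w_\circ^{J_i}$, so the braid relations hold among the $w_\circ^{J_i}$ in both the group and the Hecke monoid.

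It remains to check the quadratic relations. In $W(M')$ we need $(w_\circ^{J_i})^2=1$, which holds because longest elements are involutions. In the Hecke monoid we need $w_\circ^{J_i}\star w_\circ^{J_i}=w_\circ^{J_i}$, which holds because $w_\circ^{J_i}\star s_j=w_\circ^{J_i}$ for $j\in J_i$, and by induction along a reduced word for $w_\circ^{J_i}$. With the braid and quadratic relations in hand, the presentations of $W(M)$ and $(W(M),\star)$ give the two homomorphisms, and they coincide on generators.

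The only point needing care is that the image relations hold in $W(M')$ and $(W(M'),\star)$, not just in $\Br^+(M')$. This is immediate because both are quotients of $\Br^+(M')$. That is why the corollary reduces directly to Theorem~\ref{thm:Main Thm Cox Heck}, which presumably packages exactly this argument.
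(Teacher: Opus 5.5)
Your argument is correct and is essentially the paper's. The paper reads the corollary off Theorem~\ref{thm:Main Thm Cox Heck}, whose implications (iii)$\Rightarrow$(ii) say precisely that $\pi_{M'}\circ\Phi$ descends to homomorphisms on $W(M)$ and on $(W(M),\star)$ once the images $w_\circ^{J_i}$ of the generators are involutions, respectively idempotents. The only difference is in how the descent is justified: you use the universal property of the quotients of $\Br^+(M)$ by the congruences generated by $(T_i^2,1)$ and $(T_i^2,T_i)$, whereas the paper checks $\overline\Phi(s_iw)=\overline\Phi(s_i)\overline\Phi(w)$ (and its $\star$-analogue) directly through square-free lifts; both routes are valid.
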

The above justifies the following
\begin{problem}\label{prob:standard}
Classify standard homomorphisms between Artin monoids. 
\end{problem}

We completely solved Problem~\ref{prob:standard} in the important case when all the~$J_i$, $i\in I'$
are pairwise disjoint and~$M$ is of type~$A$ or~$B$
(Theorems~\ref{thm:main thm adm}
and~\ref{thm:higher rank adm AB}). In the even
dihedral case, the idea is based on the observation that the assignments $T_i\mapsto X_i$, $i\in\{1,2\}$ define a homomorphism
from the Artin monoid corresponding
to the Coxeter matrix~$I_2(2m)=\left(\begin{smallmatrix}1&2m\\2m&1\end{smallmatrix}\right)$ to any
cancellative monoid~$\mathsf M$ if and only if
$z:=(X_1X_2)^m$ is central 
in the submonoid of~$\mathsf M$ generated by~$X_1$ and~$X_2$ (Lemma~\ref{lem:cradicals}). 
Centers of Artin monoids of finite type were described explicitly in classical works of Deligne (\cite{Del}) and of Brieskorn and Saito (\cites{BrSa}), and factorizations of radicals of central elements are plausible starting points for searching for homomorphisms. For instance, for homogeneous homomorphisms, $z$ generates 
the center of the codomain. 
On the other hand, for new homomorphisms that we found in this paper, $z$ is the ratio of generators of the center of the codomain and of that of its parabolic submonoid.
It is worth
mentioning that the crucial role in
proving these results is played by non-standard
homomorphisms of Artin monoids obtained by ``removing decorations'' from standard ones.
To prove that there are no other homomorphisms, we use another important observation, namely
that $z$ must be invariant with respect
to the natural anti-involution of the codomain preserving generators (Lemma~\ref{lem:I2m iff cnd}),
which means that the corresponding linear operator in the symmetrized version of Burau representation of the braid monoid (see~\S\ref{subs:Burau}) must be self-adjoint with respect to a natural bilinear form. Thus, it suffices to prove that linear operators corresponding to~$z$ for ``non-homomorphisms'' are never self-adjoint, by exhibiting a pair of  non-orthogonal eigenvectors corresponding to different eigenvalues. Since a homomorphism 
from an odd dihedral Artin monoid is always accompanied by a one with an even dihedral domain, this allows to classify odd ones very efficiently (see~\S\ref{subs:Burau}). 
Unfolding these homomorphisms via~\eqref{eq:unfold Bn Dn+1} we obtain homomorphisms with the same property for~$M=D_{n+1}$, $n\ge 3$.
We expect that, apart from very few sporadic examples, that completes the 
classification of disjoint standard 
homomorphisms (see~\S\ref{subs:rank > 2}). 

The non-disjoint case is even richer than the disjoint one. For example, all
standard homomorphisms from~$\Br^+_3$, 
apart from its automorphisms, are not disjoint (see Section~\ref{sect:non-disjoint}).
We have already discovered a plethora of families of such homomorphisms in~\S\S\ref{subs:mon braid}, \ref{subs:inf ser non-disj} and~\ref{subs:non-disj Hecke}. 
Surprisingly, we classified (yet conjecturally) {\em all} standard homomorphisms from~$\Br^+(A_2)=\Br^+_3$
and $\Br^+(B_2)$ (Theorems~\ref{thm:Hom A2}, \ref{thm:Hom B2}, \ref{thm:B2 A2n-1 spec} and Proposition~\ref{prop:Hom A2|B2 EF}) to Artin monoids of finite types.
They have rather intriguing combinatorial properties
(see Theorem~\ref{thm:combinatoric std}
and Remark~\ref{rem:OEIS} for OEIS appearances of the related numerology). For 
instance, there are precisely~$2^m$ fully supported standard homomorphisms~$\Br^+_3\to \Br^+_{3m}$, $m\ge 1$. 
Homomorphisms from~$\Br^+(B_2)\to\Br^+(M)$ are even more affluent. 
For instance,
their number grows asymptotically as $\frac14(1+\sqrt 3)^{\frac12(n+5)}$
for~$M$ of type~$A_n$, $3\cdot 2^n$
for~$M$ of type~$B_n$ and faster than~$\frac73\cdot 2^{n+1}$ if~$M$ is of type~$D_{n+1}$. There is an 
additional series of homomorphisms~$\Br^+(B_2)\to \Br^+_{2n}$ which grows at least as fast as $2^{\frac12 n}n$.
We exhibit even more conjectural non-disjoint homomorphisms in~\S\ref{subs:conj families} and
expect that our lists provide a {\em complete}
classification when the domain 
is of rank~2.
These results, in conjunction with conjectures in~\S\ref{subs:conj families}, give a hope to {\em completely} classify standard homomorphisms of Artin monoids of finite and affine types.

Compositions of standard homomorphisms are also quite interesting and recover
some well-known homomorphisms which are not standard.
\begin{example}\label{ex:1.6}
A well-known homomorphism $\Br^+(B_n)\to \Br^+(A_n)$
defined by $T'_i\mapsto T_i^{1+\delta_{i,n}}$, $i\in[1,n]$ is the composition of the standard
unfolding $\Br^+(B_n)\to \Br^+(D_{n+1})$ given by~\eqref{eq:unfold Bn Dn+1} and the standard
folding $\Br^+(D_{n+1})\to \Br^+(A_n)$
defined by $T'_i\mapsto T_{i-\delta_{i,n+1}}$, $i\in [1,n+1]$. Similarly, a well-known homomorphism
$\Br^+(G_2)\to \Br^+(A_2)$ defined by $T'_1\mapsto T_1^3$, $T'_2\mapsto T_2$ is the composition of
the standard unfolding $\Br^+(G_2)\to\Br^+(D_4)$,
$T'_1\mapsto T_1T_3 T_4$, $T'_2\mapsto T_2$
with the standard folding $\Br^+(D_4)\to \Br^+(A_2)$, $T'_i\mapsto T_1$, $i\in\{1,3,4\}$, $T'_2\mapsto T_2$
(see Example~\ref{ex:affine} for similar homomorphisms in affine types).
\end{example}
These homomorphisms are examples of
{\em Tits homomorphisms}~$\mathbf T_{\mathbf d}$, $\mathbf d=(d_i)_{i\in I}\in\ZZ_{>0}^I$ defined by~$T_i\mapsto T_i^{d_i}$, $i\in I$ (see~\S\ref{subs:Tits homs}).
We prove (Theorem~\ref{thm:Tits standard}) that all
of them are compositions of standard ones. The 
same applies (Theorem~\ref{thm:classify light Artin}) to a more general class of 
{\em light
homomorphisms} (cf.~\cite{BGLHeck}*{Section~4}) given by $T_i\mapsto T_j^{d_{ij}}$, $d_{ij}\in \ZZ_{\ge0}$ (see~\S\ref{subs:parab proj Artin} for the precise definition).

Thus, it is natural to consider the category~$\Ast$ whose 
objects are Artin monoids and morphisms 
are (composition of) standard homomorphisms. 
The following surprising result highlights
the importance of this category.
\begin{theorem}[Corollary~\ref{cor:Ast hom}]\label{thm:mainthmstd}
Any morphism in~$\Ast$ yields a solution
of Problem~\ref{prob:twin}.
\end{theorem}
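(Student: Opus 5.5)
The plan is to reduce the statement to the already established single-homomorphism case (the Corollary after Problem~\ref{prob:twin}, i.e.\ Theorem~\ref{thm:Main Thm Cox Heck}) by showing that the class of homomorphisms of Artin monoids inducing twin homomorphisms is closed under composition. The workable form of Theorem~\ref{thm:Main Thm Cox Heck} is: a homomorphism $\Phi:\Br^+(M)\to\Br^+(M')$ with $\Phi(T_i)=T_{w_i}$ for each $i\in I$, where $w_i\in W(M')$ is an involution that is also idempotent in $(W(M'),\star)$ (that is, $w_i=w_\circ^{J_i}$), induces both a group homomorphism $W(M)\to W(M')$ and a Hecke monoid homomorphism $(W(M),\star)\to(W(M'),\star)$ with $s_i\mapsto w_i$. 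So it suffices to prove that if $\Phi:\Br^+(M)\to\Br^+(M')$ and $\Psi:\Br^+(M')\to\Br^+(M'')$ are standard, then $\Psi\circ\Phi$ sends each $T_i$ to some $T_{w_\circ^{K_i}}$. Then $\Psi\circ\Phi$ is again standard, and the same argument applies, by induction on the number of factors, to arbitrary compositions. (Equivalently, one may show directly that $\Ast$ is closed under composition.)

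The key step is computing $\Psi(T_{w_\circ^{J}})$ for a finite-type parabolic $J\subset I'$. Write $\Psi(T_j)=T_{w_\circ^{L_j}}$, where the $L_j\subset I''$ are finite-type subsets. Consider the restriction of $\Psi$ to the parabolic submonoid $\Br^+(M'_J)$. First, $T_{w_\circ^J}$ is the Garside element $\Delta_J$, characterized as the least common multiple of the $T_j$, $j\in J$, and hence as the unique element divisible on both sides by every $T_j$ and with Hecke image $w_\circ^J$. Second, $\Psi(\Delta_J)$ is divisible on both sides by every $\Psi(T_j)=\Delta_{L_j}$, hence by every $T_k$ with $k\in K:=\bigcup_{j\in J}L_j$. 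Third, the goal is to show that $\Psi(\Delta_J)$ equals $\Delta_K$, or more generally that it is a positive lift $T_w$ of a reduced element $w$ whose left and right descent sets equal the support. Such an element must be $w_\circ^K$ once we also know it is reduced, i.e.\ that $\ell(\Psi(\Delta_J))=\ell(w)$.

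The reducedness claim follows from the twin property of $\Psi$, which is already known from Theorem~\ref{thm:Main Thm Cox Heck}. Under the Hecke homomorphism $\psi_\star$ induced by $\Psi$, the element $w_\circ^J$, an idempotent of $(W(M'),\star)$, maps to an idempotent, hence to some $w_\circ^{K'}$. Under the Coxeter homomorphism $\psi$ it maps to the ordinary product of the images along a reduced word. The canonical projection $\Br^+(M'')\to(W(M''),\star)$ sends $\Psi(\Delta_J)$ to $\psi_\star(w_\circ^J)=w_\circ^{K'}$, and the projection to $W(M'')$ sends it to $\psi(w_\circ^J)$. Because $\psi$ and $\psi_\star$ are twins, the generator images coincide, but their values on $w_\circ^J$ need not coincide a priori. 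The argument therefore reduces to showing that an element $X\in\Br^+(M'')$ is of the form $T_w$ precisely when its Coxeter image and Hecke image agree and the length of $X$ equals the length of that image.

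The main obstacle is establishing this length equality, $\ell(\Psi(\Delta_J))=\ell(w_\circ^{K'})$. The approach I would try first is to compare $\Psi(\Delta_J)^2$ and $\Psi(\Delta_J^2)$ using centrality: $\Delta_J^2$ is central in $\Br^+(M'_J)$, and in the dihedral reduction of Lemma~\ref{lem:cradicals} it is $z$ or $z^2$. One would then argue inside the finite-type parabolic $\Br^+(M''_{K'})$ that a positive element divisible on both sides by all generators and with Hecke image $w_\circ^{K'}$ has length at least $\ell(w_\circ^{K'})$, with equality forced by a counting argument. Failing that, a fallback is to reduce to rank two via the Garside-theoretic characterization of $\Delta_J$ and then invoke the dihedral criterion of Lemma~\ref{lem:cradicals}.
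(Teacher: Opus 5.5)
Your reduction rests on the claim that a composition of standard homomorphisms is again standard. That claim is false, and the paper points this out right before defining $\Ast$.

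\textbf{Counterexample.} In Example~\ref{ex:1.6}, take the standard unfolding $\Br^+(B_n)\to\Br^+(D_{n+1})$, $T'_n\mapsto T_nT_{n+1}=T_{w_\circ^{\{n,n+1\}}}$, and follow it by the standard folding $\mathbf F_{\varpi_{(n,n+1)}}$, $T_{n+1}\mapsto T_n$. The composite sends $T'_n\mapsto T_n^2$. This is not square free, hence not of the form $T_{w_\circ^K}$.

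\textbf{Consequences for your argument.}
\begin{itemize}
\item Your key step $\Psi(T_{w_\circ^J})=T_{w_\circ^K}$ fails here: with $J=\{n,n+1\}$ one gets $\ell(\Psi(T_{w_\circ^J}))=2>1=\ell(w_\circ^{\{n\}})$ (see also Lemma~\ref{lem:Dn+1 An w0}).
\item So no counting argument can give the length equality you call the main obstacle. In general you only have Lemma~\ref{lem:T_w0 divides image}, $\Psi(T_{w_\circ^J})=T_{w_\circ^{[\Psi](J)}}u$, with $u$ possibly nontrivial.
\item In this example the induced maps also disagree on a generator: $\overline\Phi(s'_n)=\pi_{A_n}(T_n^2)=1$, while $\overline\Phi_\star(s'_n)=s_n$. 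So your target $\overline\Phi(\wh s_i)=\overline\Phi_\star(\wh s_i)=w_\circ^{K_i}$ is simply not true for general morphisms of $\Ast$.
\item What is true, and what ``yields a solution'' means in the paper (per the introduction), is that $\overline\Phi$ and $\overline\Phi_\star$ are homomorphisms.
\end{itemize}

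\textbf{The missing idea.} Propagate the weaker properties ``of Coxeter type'' and ``of Hecke type'' rather than standardness.
\begin{itemize}
\item These are defined by commuting squares $\pi_M\circ\Phi=\phi\circ\pi_{\wh M}$ and $\pi^\star_M\circ\Phi=\phi_\star\circ\pi^\star_{\wh M}$ with $\phi,\phi_\star$ homomorphisms. Pasting squares shows they are closed under composition.
\item Standard homomorphisms have both properties by Theorem~\ref{thm:Main Thm Cox Heck}, since each $w_\circ^J$ is both an involution and an idempotent.
\item For a composite $\Phi$, the map $\overline\Phi=\pi_M\circ\Phi\circ(\pi_{\wh M}|_{\SQF^+(\wh M)})^{-1}$ equals the composite of the induced group homomorphisms, and likewise for $\overline\Phi_\star$.
\end{itemize}
This gives the statement, and functoriality, with no length computations at all.
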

This theorem is quite non-trivial because compositions of homomorphism of Artin monoids are unlikely to yield homomorphisms
of either Coxeter groups or Hecke monoids, even if individual ones do (see~\S\ref{subs:Heck Cox hom}). 
To put this in context, 
we show in~\S\ref{subs:Heck Cox hom} that any homomorphism (or even {\em any map}) $\Phi:\Br^+(M')\to \Br^+(M)$ induces 
a {\em map of sets} $\overline\Phi:W(M')\to W(M)$ (respectively, $\overline\Phi_\star:(W(M'),\star)\to (W(M),\star)$). Thus,
$\Phi$ yields a solution of Problem~\ref{prob:twin} if and only if
both $\overline\Phi$ and~$\overline\Phi_\star$
are homomorphisms of, respectively, Coxeter
groups and Hecke monoids. So far, despite all our efforts, we were 
unable to find any examples of such morphisms outside of the category~$\Ast$. Thus, we expect that the converse of Theorem~\ref{thm:mainthmstd} also holds,
namely that any homomorphism of Artin
monoids yielding a solution of Problem~\ref{prob:twin} is a morphism in~$\Ast$.

A priori, a standard homomorphism need not be injective. However, homogeneous ones, which are standard and
form a subcategory of~$\Ast$ (cf. Corollary~\ref{cor:comp homogeneous}) are known to be injective (\cite{Cri}; see Remark~\ref{rem:injectivity}). On the other hand, the famous conjecture of Tits recently proved
by Crisp and Paris (\cite{CP}) stipulates,
that any Tits homomorphism~$\mathbf T_{\mathbf d}$ such that all
the~$d_i$, $i\in I$ are strictly greater than 1 is injective. A generalization
of Tits conjecture, which involves compositions
of Tits homomorphisms with standard homomorphisms, was proposed and partially proved in~\cite{JS}.
Motivated by the above discussion we formulate the following
\begin{conjecture}\label{conj:injectivity}
Any optimal (see Definition~\ref{def:types} and Example~\ref{ex:taut not inj}) and injective on generators homomorphism in~$\Ast$ is injective.
\end{conjecture}
In particular, this would imply that {\em any} Tits homomorphism is injective (Conjecture~\ref{conj:gen Tits}). For instance,
the injectivity of the Tits homomorphism
$\Br^+(B_n)\to \Br^+(A_n)$ from Example~\ref{ex:1.6}
was established by
Crisp in~\cite{Cri}.
We provide more
supporting evidence in Section~\ref{sec:Parab proj}. It should be noted, however, that
induced homomorphisms of Coxeter groups or Hecke monoids do not have to be injective. For instance, the injective Tits endomorphism  of $\Br^+_2$, $T_1\mapsto T_1^2$, induces 
the trivial homomorphism~$S_2\to S_2$
(see also Lemma~\ref{lem:induced inj} and Remark~\ref{rem:induced do not have to be injective}). Actually, we hope that Conjecture~\ref{conj:injectivity} can be strengthened by
that for any homomorphism of Artin groups, the quotient by its kernel is again an Artin group.

\subsection*{Acknowledgments}
The main part of this work was carried out while the authors were visiting Erwin Schr\"odinger
International Institute for Theoretical Physics (ESI), Vienna, Austria,
in the framework of the ``Research in teams'' program. It is our pleasure to thank the ESI for its hospitality. This work took its present shape while the first author was visiting Max Planck Institute for Mathematics in the Sciences (MIS), Leipzig, Germany and the second author was visiting Institut des Hautes \'Etudes Scientifiques (IHES), Bures-sur-Yvette, France. The hospitality of both institutions is gratefully acknowledged.

\section{Preliminaries}\label{sec:Prelim}

\subsection{General notation}
We extend the natural order on~$\mathbb Z$ to~$\mathbb Z\cup\{\infty\}$
via $\infty>n$ for all~$n\in\mathbb Z$
and use the convention that $n \infty=n+\infty=\infty$ for all~$n\in\mathbb Z_{>0}\cup\{\infty\}$.
In particular, $\infty$ is assumed 
to be divisible by all elements of~$\mathbb Z_{>0}\cup\{\infty\}$. 
Given~\plink{bar s}$s\in\mathbb Z$, let $\bar s\in\{0,1\}$
be the remainder of~$s$ when divided by~$2$.
For any $a,b\in\mathbb Z$ we denote $[a,b]=\{ i\in\mathbb Z\,:\,
a\le i\le b\}$ and \plink{[a,b]2}$[a,b]_2=\{ k\in [a,b]\,:\, \overline{b-k}=0\}$. Given $a,b\in\ZZ$ and~$J\subset \ZZ$, set $a+b J:=\{a +b j\,:\, j\in J\}$.
The power set of a set~$S$ will be denoted~\plink{PS}$\mathscr P(S)$. Given a category~$\mathscr{C}$, we denote $\Hom_{\mathscr C}(X,Y)$ the set of morphisms from $X\in\mathscr C$ to~$Y\in\mathscr C$.

\subsection{Monoids}\label{subs:monoids}
Throughout this paper, a homomorphism of monoids is assumed to map the identity element of the domain
to the identity element of the codomain.

Let~$\mathsf M$ be a multiplicative monoid. 
Given any finite subset~$I\subset \ZZ$ and
a family $X_i$, $i\in I$ of elements of~$\mathsf M$ we set
$$\plink{ascp}
\ascprod_{i\in I} X_i=X_{i_1}\cdots X_{i_r},\qquad
\dscprod_{i\in I} X_i=X_{i_r}\cdots X_{i_1}.
$$
where $I=\{i_1,\dots,i_r\}$ with~$i_1<\cdots<i_r$. This notation
will also be used for infinite families with all but finitely many of the~$X_i$
equal to~$1$.

Given a family~$S$ of generators of~$\mathsf M$,
the length function $\ell_S:\mathsf M\to \mathbb Z_{\ge0}$ is defined by setting $\ell_S(x)$,
$x\in \mathsf M$
to be
the minimal length of a word in~$S$ which is equal
to~$x$. Clearly, $\ell_S(xy)\le \ell_S(x)+\ell_S(y)$
for all~$x,y\in\mathsf M$.

An equivalence relation~$\mathcal C\subset\mathsf M\times\mathsf M$
is called a {\em congruence relation} on~$\mathsf M$
if $(x,y),(x',y')\in\mathcal C$ implies that~$(xx',yy')\in\mathcal C$. In that case,
the set~$\mathsf M/\mathcal C$ of equivalence classes with respect to~$\mathcal C$ is also a monoid, with the multiplication defined by $[x]_{\mathcal C}[y]_{\mathcal C}=[xy]_{\mathcal C}$, $x,y\in\mathsf M$,
where~$[x]_{\mathcal C}$ is the equivalence class of~$x\in\mathsf M$ with respect to~$\mathcal C$. Furthermore, the canonical map~$\pi_{\mathcal C}:\mathsf M\to \mathsf M/\mathcal C$, $x\mapsto [x]_{\mathcal C}$,
$x\in\mathsf M$ is a surjective homomorphism of monoids.

We say that a monoid~$\mathsf M$ is {\em left} (respectively, {\em right}) {\em cancellative} if $x y=x y'$ (respectively, $y x=y' x$),
$x,x',y\in\mathsf M$ implies~$y=y'$. We say that~$\mathsf M$
is {\em cancellative} if it is left and right cancellative. 
For any $x,y\in \mathsf M$ and~$m\in\ZZ_{\ge0}$  denote
$$\plink{brd}
\brd{xy}{m}:=(xy)^{\lfloor \frac12 m\rfloor} x^{\bar m}.
$$
Thus, $\brd{xy}0=1$, $\brd{xy}{m+1}=
\brd{xy}m\,x$ if~$m$ is even, $\brd{xy}{m+1}=
\brd{xy}m\,y$ if~$m$ is odd, while $\brd{xy}{m+1}=
x\brd{yx}{m}$ for all~$m\in\mathbb Z_{\ge 0}$. Given $x,y\in\mathsf M$, define\plink{B(x,y)}
$$
B(x,y)=\{ k\in\ZZ_{>0}\,:\, \brd{xy}k=\brd{yx}k\}.
$$
\begin{lemma}\label{lem:taut homs}
Let~$\mathsf M$ be a multiplicative monoid and let
$x,y\in\mathsf M$ be such that~$B(x,y)\not=\emptyset$. 
\begin{enmalph}
    \item \label{lem:taut homs.a}
    If~$m\in B(x,y)$ then
    \begin{equation}\brd{xy}{km}=(\brd{xy}m)^k, \qquad 
    k\ge 1\label{eq:taut homs}
    \end{equation}
    and so~$\mathbb Z_{>0}m\subset B(x,y)$. 
\item\label{lem:taut homs.b} If~$\mathsf M$ is left (or right) cancellative  then~$B(x,y)=\mathbb Z_{>0}\min B(x,y)$.
\end{enmalph}
\end{lemma}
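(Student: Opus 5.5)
Part (a) follows by induction on $k$ once $m$ is split by parity. If $m=2r$ is even, then $\brd{xy}{m}=(xy)^r$ and $\brd{xy}{km}=(xy)^{kr}=(\brd{xy}m)^k$ directly from the definition; in this case the hypothesis $m\in B(x,y)$ is not even needed. If $m$ is odd, write $P=\brd{xy}m=\brd{yx}m$. Since $m$ is odd, the word $\brd{xy}m$ ends with $x$, so the next letter is $y$. Concatenating gives
$$\brd{xy}{m}\,\brd{yx}{m}=\brd{xy}{2m},$$
because the alternating pattern continues without interruption. By the same reasoning, $\brd{xy}{jm}$ ends with $x$ when $j$ is odd and with $y$ when $j$ is even. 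Hence $\brd{xy}{(j+1)m}$ equals $\brd{xy}{jm}\,\brd{yx}{m}$ for $j$ odd and $\brd{xy}{jm}\,\brd{xy}{m}$ for $j$ even. In both cases the appended factor equals $P$, so induction on $k$ gives \eqref{eq:taut homs}. Applying the symmetric argument with the roles of $x$ and $y$ swapped yields $\brd{yx}{km}=P^k$ as well, so $km\in B(x,y)$.

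For (b), put $m_0=\min B(x,y)$. By (a) we have $\mathbb Z_{>0}m_0\subset B(x,y)$, so it remains to prove the reverse inclusion. Take $n\in B(x,y)$ and write $n=qm_0+r$ with $0\le r<m_0$. The goal is to show $r=0$. Assume $\mathsf M$ is left cancellative; the right cancellative case is the mirror argument, peeling factors off the right end instead of the left. The key identity is: if $m_0$ is even, then $\brd{xy}{n}=(\brd{xy}{m_0})^q\,\brd{xy}{r}$, and likewise with $x,y$ swapped. If $m_0$ is odd, the tail alternates: it is $\brd{xy}{r}$ or $\brd{yx}{r}$ according to the parity of $q$.

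Using \eqref{eq:taut homs} for both orderings, the prefixes of length $qm_0$ coincide: both equal $P^q$ with $P=\brd{xy}{m_0}=\brd{yx}{m_0}$. Left cancellation of $P^q$ in $\brd{xy}n=\brd{yx}n$ then leaves an equality between the two tails, which is $\brd{xy}{r}=\brd{yx}{r}$ in either parity case. If $r>0$ this gives $r\in B(x,y)$ with $r<m_0$, contradicting minimality. Hence $r=0$.

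The only real obstacle is bookkeeping: checking that splitting a length-$n$ alternating word into its first $qm_0$ letters and the remainder gives exactly these factors, especially when $m_0$ is odd and the starting letter of each block alternates. I would handle this with the identity $\brd{xy}{a+b}=\brd{xy}{a}\,\brd{zw}{b}$, where $(z,w)=(x,y)$ if $a$ is even and $(z,w)=(y,x)$ if $a$ is odd. This identity is immediate from the definition and dispatches all the cases above uniformly.
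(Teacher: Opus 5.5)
Your proposal is correct and is essentially the paper's proof. Your splitting identity $\brd{xy}{a+b}=\brd{xy}{a}\,\brd{zw}{b}$ is exactly the paper's \eqref{eq:brd tail}. The paper uses it for the induction on $k$ in (a), and in (b) to left-cancel the common prefix $(\brd{xy}{m_0})^q$ from $\brd{xy}{n}=\brd{yx}{n}$, which yields $r\in B(x,y)$ with $r<\min B(x,y)$. The differences are cosmetic: you split (a) by the parity of $m$, and you sketch the right-cancellative mirror case, which the paper omits.
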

\begin{proof}
Let~$t_0=x$ and~$t_1=y$. 
Note first that for~$r\le s\in\mathbb Z_{>0}$ and
$a\in \ZZ$
\begin{equation}\label{eq:brd tail}
\brd{t_{\overline{a\vphantom1}} t_{\overline{a+1}}}{s}=\brd{t_{\overline{a\vphantom1}}t_{\overline{a+1}}}r \,\brd{t_{\overline{a+r}}t_{\overline{a+r+1}}}{s-r}.
\end{equation}

We use induction on~$k$
to prove~\eqref{eq:taut homs}, the induction base being trivial. For the inductive step we have for~$k\in\mathbb Z_{>0}$ 
\begin{alignat*}{3}
\brd{t_0t_1}{(k+1)m}=&\brd{t_0t_1}{km} \,\brd{t_{\overline{km}}t_{\overline{km+1}}}{m}
&\qquad&\text{by~\eqref{eq:brd tail}}\\
=&(\brd{t_0t_1}{m})^k \brd{t_0t_1}{m}&&\text{by the induction hypothesis and since~$m\in B(t_0,t_1)$}\\
=&(\brd{t_0t_1}{m})^{k+1}.
\end{alignat*}
The second assertion in part~\ref{lem:taut homs.a} is now immediate.

We only prove part~\ref{lem:taut homs.b} for
left cancellative monoids. Let~$m=\min B(t_0,t_1)$, $m'\in B(x,y)\setminus\{m\}$
and write~$m'=d m+r$,
$d\in\mathbb Z_{> 0}$,
$0\le r<m$. Suppose that~$r>0$. Then 
\begin{alignat*}{2}
\brd{t_1t_0}{dm}\,\brd{t_{\overline{dm+1}}t_{\overline{dm}}}{r}
=&\brd{t_1t_0}{m'}&&\text{by~\eqref{eq:brd tail}}\\
=&\brd{t_0t_1}{m'}&&\text{since $m'\in B(t_0,t_1)$}\\
=&\brd{t_0t_1}{dm}\,\brd{t_{\overline{dm}}t_{\overline{dm+1}}}{r}&&
\text{by \eqref{eq:brd tail}}\\
=&\brd{t_1t_0}{dm}\,\brd{t_{\overline{dm}}t_{\overline{dm+1}}}{r}&\qquad&
\text{by part~\ref{lem:taut homs.a}}.
\end{alignat*}
Since~$\mathsf M$ is left cancellative,
it follows that $\brd{t_{\overline{dm+1}}t_{\overline{dm}}}{r}=
\brd{t_{\overline{dm}}t_{\overline{dm+1}}}{r}$
whence~$r\in B(t_0,t_1)$ which is a contradiction. Thus, $r=0$.
\end{proof}

\subsection{Artin monoids and Coxeter groups}\label{subs:Br(M)W(M)}
Let~$I$ be a finite set and let~$M=(m_{ij})_{i,j\in I}$ be
a symmetric matrix with $m_{ii}=1$ and $m_{ij}\in\ZZ_{>1}\cup\{\infty\}$, $i\not=j$.
Such a matrix is called a {\em Coxeter matrix} (over~$I$), and we denote the set of 
all Coxeter matrices over~$I$ by~\plink{Cox I}$\Cox I$.
The Coxeter
graph~\plink{Gamma(M)}$\Gamma(M)$ associated with~$M$ is the undirected graph with vertex set~$I$
and with a unique edge connecting $i,j\in I$ if and only if~$m_{ij}>2$.
The edge is labeled with~$m_{ij}$ if~$m_{ij}>3$.

The {\em Artin monoid} $\Br^+(M)$\plink{Br+(M)} associated with~$M$ (see for example~\cites{BrSa,Del,Tits}) is generated by the~$T_i$, $i\in I$ subject to relations
$$
\brd{T_iT_j}{m_{ij}}=\brd{T_jT_i}{m_{ij}},\qquad i\not=j\in I,\, m_{ij}<\infty.
$$
The Artin group~$\Br(M)$ associated with~$M$ has the same generators satisfying the same relations. By~\cite{Par}*{Theorem~1.1}, the natural homomorphism $\Br^+(M)\to \Br(M)$
is injective. In particular, $\Br^+(M)$ is cancellative (see
also~\cite{BrSa}*{Proposition~2.3} which establishes the cancellativity
of~$\Br^+(M)$ without using the embedding into its Artin group). The following example illustrates
Lemma~\partref{lem:taut homs.b}.
\begin{example}
Let~$I=\{1,2\}$ and let~$M=\left(\begin{smallmatrix}1&3\\3&1\end{smallmatrix}\right)$. Then $\Br^+(M)$ is generated by $T_1$, $T_2$ subject to the relation $T_1T_2T_1=T_2T_1T_2$.
Yet $(T_1T_2)^2\not=(T_2T_1)^2$ for otherwise 
we would have $T_2T_1T_2^2=T_1T_2T_1T_2=T_2T_1T_2T_1$
which, since~$\Br^+(M)$ is cancellative, yields~$T_1=T_2$.
\end{example}
Since defining relations of~$\Br^+(M)$ are homogeneous in the number of generators, the
length function with respect to~$\{T_i\}_{i\in I}$
is a homomorphism of monoids~\plink{ell}$\ell:\Br^+(M)\to (\ZZ_{\ge 0},+)$. If~$|I|=1$ this homomorphism
is actually an isomorphism.

Since defining relations of~$\Br^+(M)$ are palindromic,
$\Br^+(M)$
admits a unique \plink{op}anti-involu\-tion~${}^{op}$ defined on
generators by~$(T_i)^{op}=T_i$, $i\in I$. It clearly extends to
an anti-involution of~$\Br(M)$.

The {\em Coxeter group}\plink{W(M)}
$W=W(M)$ associated with~$M$ is generated by the~$s_i$, $i\in I$ subject
to relations
$$
(s_i s_j)^{m_{ij}}=1,\qquad i,j\in I,\, m_{ij}\not=\infty.
$$
Clearly, $W(M)$ is the quotient of~$\Br(M)$ by the minimal normal
subgroup containing the $T_i^2$, $i\in I$. Let~\plink{piM}$\pi_M:\Br(M)\to W(M)$,
$T_i\mapsto s_i$, $i\in I$,
be the canonical projection, which obviously restricts to a
surjective homomorphism of monoids~$\Br^+(M)\to W(M)$. Note also that~$W(M)$ is isomorphic
to the quotient monoid of~$\Br^+(M)$ by the minimal congruence relation containing the~$(T_i^2,1)$, $i\in I$.

We denote~$\ell$ the length function for~$W(M)$
with respect to~$\{s_i\}_{i\in I}$.
An expression~$w=s_{i_1}\cdots s_{i_k}$,
$i_1,\dots,i_k\in I$ is called {\em reduced} if~$k=\ell(w)$. Clearly,
$\ell(\pi_M(T))\le \ell(T)$ for all~$T\in\Br^+(M)$ and we
set\plink{SQF}
$$
\SQF^+(M)=\{ T\in\Br^+(M)\,:\,\ell(\pi_M(T))=\ell(T)\}.
$$
Elements of~$\SQF^+(M)$ are called {\em square free}. The following is well-known.
\begin{theorem}[\cite{Tits}*{Theorem~3}] \label{thm:Tits}
\begin{enmalph}
\item\label{thm:Tits.a} $\pi_M$ restricts to a bijection~$\SQF^+(M)\to W(M)$.
\item\label{thm:Tits.b} Given $w\in W(M)$, denote $T_w$ the unique
element of~$\SQF^+(M)\cap \pi^{-1}_M(\{w\})$.
Then $T_w T_{w'}=T_{ww'}$ if and only if
$\ell(ww')=\ell(w)+\ell(w')$. In particular,
for any~$w\in W(M)$,
an expression $w=s_{i_1}\cdots s_{i_k}$, $i_1,\dots,i_k\in I$ is reduced
if and only if~$T_w=T_{i_1}\cdots T_{i_k}$.
\end{enmalph}
\end{theorem}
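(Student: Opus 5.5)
This is Tits' theorem, cited from \cite{Tits}; I would reprove it via Matsumoto's theorem and the exchange property of Coxeter groups.

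\textbf{Construction of the inverse map.} For $w\in W(M)$, choose any reduced expression $w=s_{i_1}\cdots s_{i_k}$ and set $T_w:=T_{i_1}\cdots T_{i_k}\in\Br^+(M)$. By Matsumoto's theorem, any two reduced expressions of $w$ are connected by a sequence of braid moves. These moves replace a factor $\brd{s_is_j}{m_{ij}}$ by $\brd{s_js_i}{m_{ij}}$, and each is a defining relation of $\Br^+(M)$. Hence $T_w$ is well defined.

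Clearly $\pi_M(T_w)=w$. Since $\ell$ is a homomorphism on $\Br^+(M)$, we also have $\ell(T_w)=k=\ell(w)$, so $T_w\in\SQF^+(M)$.

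\textbf{Part (a).} Let $T\in\SQF^+(M)$ and write $T=T_{i_1}\cdots T_{i_k}$. Then $\pi_M(T)=s_{i_1}\cdots s_{i_k}$ has length $k$, so this expression is reduced. Therefore $T=T_{\pi_M(T)}$.

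Thus $w\mapsto T_w$ and $\pi_M|_{\SQF^+(M)}$ are mutually inverse, which proves (a). This also shows that the notation $T_w$ in (b) agrees with the construction above.

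\textbf{Second claim of part (b).} If $w=s_{i_1}\cdots s_{i_k}$ is reduced, then $T_w=T_{i_1}\cdots T_{i_k}$ by construction. Conversely, suppose $T_w=T_{i_1}\cdots T_{i_k}$. Then $k=\ell(T_w)=\ell(w)$, so the expression is reduced.

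\textbf{First claim of part (b).} Suppose $\ell(ww')=\ell(w)+\ell(w')$. Concatenating reduced expressions of $w$ and $w'$ gives an expression for $ww'$ of length $\ell(ww')$, hence a reduced one. By the previous paragraph, $T_wT_{w'}=T_{ww'}$.

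Conversely, suppose $T_wT_{w'}=T_{ww'}$. Applying $\ell$ gives
$$\ell(w)+\ell(w')=\ell(T_wT_{w'})=\ell(T_{ww'})=\ell(ww').$$

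\textbf{Main obstacle.} The only nontrivial input is Matsumoto's theorem, that reduced words of $w$ are connected by braid moves. If it cannot be quoted, I would prove it from the exchange condition. The argument is an induction on $\ell(w)$ comparing two reduced words $s_i u$ and $s_j v$ for $w$.

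If $i=j$, cancel $s_i$ and apply induction to $u$ and $v$. If $i\ne j$, then both $s_i$ and $s_j$ are left descents of $w$. Hence $m_{ij}<\infty$, and $w$ has a reduced word beginning with $\brd{s_is_j}{m_{ij}}$, as well as one beginning with $\brd{s_js_i}{m_{ij}}$.

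Apply induction to connect $s_iu$ with the word beginning $\brd{s_is_j}{m_{ij}}$, which agrees with it in the first letter. Likewise connect $s_jv$ with the word beginning $\brd{s_js_i}{m_{ij}}$. These two words are related by a single braid move, which completes the chain.
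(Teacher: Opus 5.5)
Your proof is correct. The paper does not prove this statement itself; it only quotes it from Tits, so there is no competing argument to compare against. Your derivation from Matsumoto's theorem (any two reduced words of $w$ are connected by braid moves) is the standard route, and it is also the substance behind Tits' result. Once $T_w$ is well defined, parts (a) and (b) follow from the length bookkeeping you give, which uses only that $\ell$ is a homomorphism on $\Br^+(M)$.

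Two points in your sketch of Matsumoto's theorem should be made explicit.

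First, the claim that two distinct left descents $s_i,s_j$ of $w$ force $m_{ij}<\infty$ and give the two alternating prefixes is a genuine lemma. It comes from the parabolic decomposition $w=x\,y$, where $x\in W_{\{i,j\}}$, $y$ has minimal length in $W_{\{i,j\}}w$, and $\ell(x'y)=\ell(x')+\ell(y)$ for all $x'\in W_{\{i,j\}}$; this decomposition is a consequence of the exchange condition. You also need the fact that in a dihedral group only $w_\circ^{\{i,j\}}$ has both generators as left descents.

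Second, to finish with a single braid move, the words $\brd{s_is_j}{m_{ij}}\mathbf y$ and $\brd{s_js_i}{m_{ij}}\mathbf y$ must share the same tail. That decomposition provides this: fix one reduced word $\mathbf y$ of $y$ and use it in both.
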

\begin{lemma}\label{lem:sq free fact}
Let~$M$ be a Coxeter matrix, $T\in\SQF^+(M)$ and suppose that~$T=T'T''$, $T',T''\in\Br^+(M)$.
Then both $T'$ and~$T''$ are square free.
\end{lemma}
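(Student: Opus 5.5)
The plan is to compare lengths, using only that $\ell$ is a monoid homomorphism on $\Br^+(M)$ and that $\pi_M$ is a monoid homomorphism. The one fact needed about $W(M)$ is the subadditivity $\ell(ww')\le\ell(w)+\ell(w')$ for the length function on $W(M)$, which holds for any length function with respect to a generating set (noted in \S\ref{subs:monoids}). Together with $\ell(\pi_M(S))\le \ell(S)$ for every $S\in\Br^+(M)$, this should be enough.

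Write $T=T'T''$ with $T\in\SQF^+(M)$. First, since $\pi_M$ is a homomorphism of monoids, $\pi_M(T)=\pi_M(T')\pi_M(T'')$. This gives the chain
$$
\ell(T')+\ell(T'')=\ell(T)=\ell(\pi_M(T))\le \ell(\pi_M(T'))+\ell(\pi_M(T''))\le \ell(T')+\ell(T'').
$$
The first equality holds because $\ell$ is additive on $\Br^+(M)$. The second is the square-freeness of $T$. The two inequalities are subadditivity in $W(M)$ and the inequality $\ell(\pi_M(S))\le\ell(S)$.

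Hence all inequalities in the chain are equalities. This gives $\ell(\pi_M(T'))+\ell(\pi_M(T''))=\ell(T')+\ell(T'')$. Each summand on the left is bounded by the corresponding summand on the right, so $\ell(\pi_M(T'))=\ell(T')$ and $\ell(\pi_M(T''))=\ell(T'')$. That is, both $T'$ and $T''$ lie in $\SQF^+(M)$.

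There is no real obstacle here. The only point needing care is that the bound $\ell(\pi_M(S))\le \ell(S)$ holds for arbitrary $S\in\Br^+(M)$ and not only for words: $S$ is a product of $\ell(S)$ generators $T_i$, and its image is the corresponding product of the $s_i$.
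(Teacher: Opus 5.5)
Your proof is correct and is essentially the paper's own argument. Both use subadditivity of length in $W(M)$ and the bound $\ell(\pi_M(X))\le\ell(X)$ to squeeze $\ell(\pi_M(T'))+\ell(\pi_M(T''))$ against $\ell(\pi_M(T))=\ell(T)=\ell(T')+\ell(T'')$, and then deduce termwise equality.
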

\begin{proof}
Since $\ell(X)\ge \ell(\pi_M(X))$ for all~$X\in\Br^+(M)$,
we have~$\ell(\pi_M(T))=\ell(T)=\ell(T')+\ell(T'')\ge
\ell(\pi_M(T'))+\ell(\pi_M(T''))$.
On the other hand, since $\pi_M(T)=
\pi_M(T')\pi_M(T'')$,
$\ell(\pi_M(T))
\le \ell(\pi_M(T'))+\ell(\pi_M(T''))$, whence $\ell(\pi_M(T))=
\ell(\pi_M(T'))+\ell(\pi_M(T''))$. This forces~$\ell(T')=\ell(\pi_M(T))$
and $\ell(T'')=\ell(\pi_M(T''))$.
\end{proof}

The anti-involution~${}^{op}$ factors through to an anti-involution of~$W(M)$ which coincides with
the anti-involution $w\mapsto w^{-1}$, $w\in W(M)$. The following is
immediate.
\begin{lemma}\label{lem:can image op inv}
If~$T\in\Br^+(M)$ is ${}^{op}$-invariant then~$\pi_M(T)\in W(M)$ is an involution.
In particular, $T\in\SQF^+(M)$ is ${}^{op}$-invariant
if and only if~$\pi_M(T)$ is an involution.
\end{lemma}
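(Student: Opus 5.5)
The plan is to prove the first assertion by passing ${}^{op}$ through $\pi_M$, and then to get the second assertion from the uniqueness in Theorem~\ref{thm:Tits}.

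\emph{Step 1: $\pi_M$ intertwines ${}^{op}$ with inversion.} I will check that
\[
\pi_M(T^{op})=\pi_M(T)^{-1}\qquad\text{for all } T\in\Br^+(M).
\]
Both sides are anti-homomorphisms $\Br^+(M)\to W(M)$. On generators they agree, since $\pi_M(T_i)=s_i=s_i^{-1}$. Hence they agree everywhere.

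\emph{Step 2: the first assertion.} Suppose $T^{op}=T$. Applying Step 1 gives $\pi_M(T)=\pi_M(T)^{-1}$, so $\pi_M(T)^2=1$. Thus $\pi_M(T)$ is an involution, where we allow the identity as in the paper's usage.

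\emph{Step 3: ${}^{op}$ preserves $\SQF^+(M)$.} For the converse I first need $T^{op}\in\SQF^+(M)$ whenever $T\in\SQF^+(M)$. The anti-involution ${}^{op}$ preserves $\ell$, because it reverses words. Inversion preserves length in $W(M)$. Combining these with Step 1,
\[
\ell\bigl(\pi_M(T^{op})\bigr)=\ell\bigl(\pi_M(T)^{-1}\bigr)=\ell\bigl(\pi_M(T)\bigr)=\ell(T)=\ell(T^{op}).
\]
Hence $T^{op}$ is square free.

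\emph{Step 4: the converse.} Let $T\in\SQF^+(M)$ and suppose $w=\pi_M(T)$ is an involution. Then $\pi_M(T^{op})=w^{-1}=w$ by Step 1. So $T$ and $T^{op}$ are both elements of $\SQF^+(M)$ mapping to $w$. By Theorem~\partref{thm:Tits.a}, $\pi_M$ is injective on $\SQF^+(M)$, so $T^{op}=T$.

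There is no real obstacle here. The only point needing care is Step 3, checking that ${}^{op}$ preserves square-freeness, and this is immediate from length preservation on both sides.
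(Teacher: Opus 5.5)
Your proof is correct and follows essentially the paper's own route. The paper calls the lemma immediate from the observation just before it that ${}^{op}$ descends to inversion on $W(M)$, which is your Step~1, and the converse then comes from the injectivity of $\pi_M$ on $\SQF^+(M)$ in Theorem~\partref{thm:Tits.a}, once you know, as in your Step~3, that $T^{op}$ is again square free.
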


\subsection{Parabolic submonoids and subgroups}\label{subs:parab}
Given~$J\subset I$, let $M_J=(m_{ij})_{i,j\in J}\in \Cox J$ be the corresponding
submatrix of~$M$. Then the submonoid \plink{Br+J(M)}$\Br^+_J(M):=\la T_j\,:\, j\in J\ra$ of~$\Br^+(M)$ is isomorphic to~$\Br^+(M_J)$. The subgroups~$\Br_J(M)$ of~$\Br(M)$ and~$W_J(M)$ of~$W(M)$ are defined similarly and are
isomorphic to respective objects corresponding to~$M_J$. Those subobjects are called {\em parabolic} submonoids (subgroups).
We will usually identify $W_J(M)$ with~$W(M_J)$ and so on and denote~\plink{iotaJ}$\iota_{J}$
the natural inclusion of~$W_J(M)$ (respectively, $\Br^+_J(M)$)
into~$W(M)$ (respectively, $\Br^+(M)$).

We say that~$J\subset I$ is of {\em finite type} if~$W(M_J)$ is finite.
The corresponding subgroups and submonoids are often referred to as being of {\em spherical type} in the literature. We denote~\plink{F(M)}$\mathscr F(M)$ the
set of all subsets of~$I$ of finite type. Clearly,
$\mathscr F(M)=\mathscr P(I)$ if and only if~$I\in\mathscr F(M)$, in which case we also say that~$M$ is of finite type. Note that~$\emptyset\in\mathscr F(M)$, the corresponding
parabolic subgroups and submonoids being trivial.

Define \plink{supp}$\supp:\Br(M)\to \mathscr P(I)$
by $$
\supp T=\bigcap_{J\subset I\,:\, T\in\Br_J(M)} J,
\qquad T\in\Br(M).
$$
The map~$\supp:W(M)\to \mathscr P(I)$ is defined
similarly.
Clearly, $\supp \pi_M(T)\subset \supp T$ for all~$T\in\Br(M)$.
Given a subset~$S$ of~$\Br(M)$ or~$W(M)$, we denote $\supp S=\bigcup_{x\in S} \supp x$. Observe that~$\supp TT'=\supp T\,\cup\,\supp T'$ for $T,T'\in\Br^+(M)$ while $\supp ww'\subset \supp w\,\cup\,\supp w'$ for $w,w'\in W(M)$. In particular, given
any expression $T=T_{i_1}\cdots T_{i_k}$
(respectively, a {\em reduced} expression
$w=s_{i_1}\cdots s_{i_k}$) where~$i_1,\dots,i_k\in I$
we have~$\supp T=\{i_1,\dots,i_k\}$ (respectively,
$\supp w=\{i_1,\dots,i_k\}$). It follows that the map
$\supp$ is surjective.
The following is well-known
(cf.~\cite{Tits}*{Theorem~3}, \cite{Bou}*{Ch. IV, \S1.5}).
\begin{lemma}\label{lem:extend supp}
Let~$w,w'\in W(M)$ with $\supp w\cap \supp w'=\emptyset$. Then
$\ell(ww')=\ell(w)+\ell(w')$ and~$\supp ww'=\supp w\cup\supp w'$.
\end{lemma}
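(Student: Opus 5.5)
Let $J=\supp w$ and $J'=\supp w'$, which are disjoint by assumption. I will compare lengths in $W(M)$ with lengths in the Artin monoid via square free lifts, working mainly with the parabolic submonoids $\Br^+_J(M)$ and $\Br^+_{J'}(M)$.

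\emph{Step 1: set up the lifts.} By Theorem~\ref{thm:Tits}, $T_w$ is a product of $\ell(w)$ generators forming a reduced expression of $w$, so all of them lie in $J$. Likewise $T_{w'}\in\Br^+_{J'}(M)$ has length $\ell(w')$. Put $T=T_wT_{w'}$. Then $\ell(T)=\ell(w)+\ell(w')$ and $\pi_M(T)=ww'$, so $\ell(ww')\le\ell(w)+\ell(w')$. The whole task is the reverse inequality, which amounts to showing that $T$ is square free.

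\emph{Step 2: the reverse inequality (the main obstacle).} I would use a projection onto the parabolic subgroup. Consider the map $W(M)\to W(M_J)$ obtained from the homomorphism $\Br(M)\to W(M_J)$ that sends $T_i\mapsto s_i$ for $i\in J$ and $T_i\mapsto 1$ otherwise. This map is not a homomorphism in general, since the braid relations between $i\in J$ and $j\notin J$ with odd $m_{ij}$ fail. So instead I would use the standard parabolic coset argument from~\cite{Bou}*{Ch.~IV, \S1.5}. Every $x\in W$ factors uniquely as $x=uv$ with $u\in W_J$ and $v$ a minimal length representative of $W_Jx$, and then $\ell(x)=\ell(u)+\ell(v)$.

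Take $x=ww'$, so $u=w$ and $v$ is the minimal representative of $W_Jw'$. It remains to show that $w'$ itself is minimal in its coset, i.e.\ $\ell(s_jw')>\ell(w')$ for all $j\in J$. Suppose instead that $\ell(s_jw')<\ell(w')$ for some $j\in J$. By the exchange condition, $w'$ then has a reduced expression beginning with $s_j$, so $j\in\supp w'=J'$, contradicting $J\cap J'=\emptyset$. Here I am using that $\supp$ is independent of the reduced expression, which the paper implicitly asserts. This gives $\ell(ww')=\ell(w)+\ell(w')$.

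\emph{Step 3: the support.} By the first assertion and Theorem~\ref{thm:Tits}\ref{thm:Tits.b}, concatenating reduced expressions of $w$ and $w'$ gives a reduced expression of $ww'$. Its letters are exactly $J\cup J'$, so $\supp ww'=\supp w\cup\supp w'$.
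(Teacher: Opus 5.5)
Your argument is correct. The paper does not prove this lemma; it only cites Tits' Theorem~3 and the exchange condition in Bourbaki (Ch.~IV, \S1.5).

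\textbf{Comparison of routes.} You prove it via the parabolic coset decomposition $x=uv$, with $u\in W_J$, $v$ minimal in $W_Jx$, and $\ell(x)=\ell(u)+\ell(v)$. You combine this with the characterization of minimal representatives as those having no left descents in $J$. The direction you use is that $\ell(s_jw')>\ell(w')$ for all $j\in J$ implies $w'$ is minimal in $W_Jw'$. This is the nontrivial half of that characterization. It is standard (Bourbaki, Ch.~IV, \S1, Ex.~3), but cite it as such rather than introducing it with ``i.e.''.

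The argument the paper's references point to is more direct. Concatenate reduced words for $w$ and $w'$. If the result were not reduced, the deletion condition would remove two letters. These must come one from each half, since removing two from the same half would shorten $w$ or $w'$. Writing hats for deleted letters, this gives $w_{\hat p}^{-1}w=w'_{\hat q}w'^{-1}\in W_J\cap W_{J'}$. That element has support in $J\cap J'=\emptyset$, so it equals $1$, forcing $\ell(w)=\ell(w_{\hat p})<\ell(w)$.

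Both routes rest on the same inputs: the deletion/exchange condition, and the fact, stated in the paper just before the lemma, that $\supp$ is the letter set of any reduced expression. Yours packages these into a larger standard theorem whose own proof uses the deletion condition in exactly this way. It is cleanly organized but not more elementary.

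\textbf{Cosmetic points.}
\begin{itemize}
\item The first paragraph of Step~2, about the map $W(M)\to W(M_J)$, is a dead end. You call it a homomorphism and then say it is not one. Remove it.
\item The lift $T=T_wT_{w'}$ in Step~1 is never used, since Step~2 works entirely in $W(M)$.
\item ``So $u=w$'' is only justified once you know $w'$ is the minimal representative.
\item You do not need the exchange condition to get a reduced word for $w'$ starting with $s_j$. If $\ell(s_jw')<\ell(w')$ then $\ell(s_jw')=\ell(w')-1$, and prefixing $s_j$ to a reduced word for $s_jw'$ gives one.
\item In Step~3 you do not need Theorem~\ref{thm:Tits}. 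The concatenated word has length $\ell(w)+\ell(w')=\ell(ww')$, so it is reduced by definition.
\end{itemize}
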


We say that~$J,K\subset I$ are {\em orthogonal}
if $m_{jk}=2$ for all~$j\in J$, $k\in K$. We say that~$J\subset I$
is {\em self-orthogonal} if~$m_{ij}\le 2$ for all~$i,j\in J$.
A Coxeter
matrix~$M$ over~$I$ is said to be {\em irreducible} if~$I$ cannot be written as a
disjoint union of two non-empty orthogonal subsets or, equivalently,
if~$\Gamma(M)$ is connected. We denote~$\Gamma_J(M)$ the full
weighted subgraph
of~$\Gamma(M)$ with vertex set~$J$. Clearly, $\Gamma_J(M)=\Gamma(M_J)$. We say that~$J\subset I$
is {\em connected} if~$\Gamma_J(M)$ is connected as a graph or,
equivalently, if $J$ is not the disjoint union of two non-empty
orthogonal subsets. By abuse of terminology, we say that~$J\subset I$
is a {\em connected component} of~$I$ if~$\Gamma_J(M)$
is a connected component of~$\Gamma(M)$ or, equivalently, if~$J$ is a maximal connected subset of~$I$.

It is well-known (see, e.g.~\cite{Bou}*{Ch. VI, \S4, Thm.~1}) that the Coxeter group~$W(M)$ with irreducible~$M$ is finite if and only if
$\Gamma(M)$ is isomorphic to one of the following graphs 
\begin{alignat}{3}
A_n:&\dynkin[text style/.style={scale=0.8},Coxeter,root radius=0.07,expand labels={1,2,n-1,n},make indefinite edge={2-3},edge length=1.0cm]A4,&&n\ge 1,
\nonumber\\
B_n:&\dynkin[text style/.style={scale=0.8},Coxeter,root radius=0.07,expand labels={1,2,n-1,n},make indefinite edge={2-3},edge length=1.0cm]B4,&&n\ge 2,\nonumber\\
D_{n+1}: &\dynkin[text style/.style={scale=0.8},Coxeter,root radius=0.07,expand labels={1,2,n-1,n,n+1},label directions={,,right,,},make indefinite edge={2-3},edge length=1.0cm]D5,&&n\ge 3,\nonumber\\
E_n:&\dynkin[text style/.style={scale=0.8},Coxeter,ordering=Kac,root radius=0.07,expand labels={1,2,3,4,n-1,n},make indefinite edge={4-5},edge length=1.0cm]E6,&\qquad&
n\in\{6,7,8\},\nonumber\\
F_4:&\dynkin[text style/.style={scale=0.8},Coxeter,ordering=Kac,root radius=0.07,expand labels={1,2,3,4},edge length=1.0cm]F4,\nonumber\\
I_2(m):&
\dynkin[text style/.style={scale=0.8},Coxeter,ordering=Kac,root radius=0.07,expand labels={1,2},edge length=1.0cm,gonality=m]I2,&&m\ge 4,\nonumber\\
H_n:&\dynkin[text style/.style={scale=0.8},Coxeter,ordering=Kac,root radius=0.07,ordering=Adams,expand labels={1,2,n-1,n},make indefinite edge={2-3},edge length=1.0cm]H4
,&&n\in\{3,4\}.\label{eq:Coxeter graphs}
\end{alignat}
The labeling shown in~\eqref{eq:Coxeter graphs} will be used throughout the rest of
the paper unless specified otherwise.
Clearly, $I_2(3)$ (respectively,~$I_2(4)$) coincides with $A_2$ (respectively, $B_2$); the graph of type~$I_2(6)$
is traditionally denoted as~$G_2$. We will use~$X_n$ as the notation for the Coxeter matrix of the corresponding graph with the labeling as in~\eqref{eq:Coxeter graphs}. 
We will also denote $I_2(\infty)=\left(\begin{smallmatrix}1&\infty\\\infty&1\end{smallmatrix}\right)$, the corresponding
Artin monoid being just the free monoids with two generators and the associated Coxeter group
being isomorphic to the free product of two copies of~$\ZZ_2$.

An automorphism~$\sigma$ of the weighted graph~$\Gamma(M)$, or, equivalently
a permutation~$\sigma$ of~$I$ such that $m_{\sigma(i)\sigma(j)}=m_{ij}$
for all~$i,j\in I$, induces an automorphism of~$\Br^+(M)$ (respectively,
$\Br(M)$, $W(M)$), called a {\em diagram automorphism} and also denoted
by~$\sigma$, via $\sigma(T_i)=T_{\sigma(i)}$ (respectively,
$\sigma(s_i)=s_{\sigma(i)}$), $i\in I$. If~$W(M)$ is finite and~$\Gamma(M)$ is connected, diagram automorphisms of order~$2$
exist only if $\Gamma(M)$ is of type $A_n$, $n\ge 1$, $D_{n+1}$, $n\ge 3$, $F_4$, $E_6$ or~$I_2(m)$, the corresponding permutation of~$I$ being
\begin{equation}\label{eq:diag aut}
\sigma=\begin{cases}
\prod_{1\le i\le \frac12n} (i,n+1-i),& M=A_n,\, n\ge 2,\\
(n,n+1),&M=D_{n+1},\, n\ge 3,\\
(1,4)(2,3),&M=F_4,\\
(1,5)(2,4),&M=E_6.
\end{cases}
\end{equation}
In type $D_4$, there is also a diagram
automorphism of order~$3$ given by the permutation $(1,3,4)$ of~$[1,4]$ and
so the group of all diagram automorphisms of~$D_4$ is isomorphic to~$S_3$.

If~$J\in\mathscr F(M)$, then $W_J(M)$ contains the unique element~\plink{w0J}
$w_\circ^J$ of maximal length (see, e.g.~\cites{Bou,Tits}), which is obviously an involution. It is well-known (see e.g.~\cite{Bou}*{Ch. IV, Ex. 22} or~\cite{BjBr}*{Proposition~2.3.2}) that
\begin{equation}\label{eq:ell w w0}
\ell(ww_\circ^J)=\ell(w_\circ^J w)=\ell(w_\circ^J)-\ell(w),\qquad
w\in W_J(M).
\end{equation}
For~$J\subset K\in\mathscr F(M)$, we denote \plink{wJ;K}$w_{J;K}:=w_\circ^Jw_\circ^K$. 

\subsection{Hecke monoids}\label{subs:Hecke}
The {\em Hecke monoid} associated with $M$ is the quotient
of~$\Br^+(M)$ by the minimal congruence relation
containing $(T_i^2,T_i)$ for all~$i\in I$.
We denote~\plink{pi*M}$\pi^\star_M$ the canonical homomorphism from $\Br^+(M)$
to the corresponding Hecke monoid.
Thus, the Hecke monoid is generated
by the $s_i:=\pi^\star_M(T_i)$, $i\in I$
subject to relations $s_i\star s_i=s_i$, $i\in I$ and
$$
\brd{s_i\star s_j\star}{m_{ij}}=\brd{s_j\star s_i\star}{m_{ij}},\qquad
i\not=j\in I,\, m_{ij}\not=\infty.
$$
Note that~${}^{op}$ and diagram automorphisms factor through to
the Hecke monoid.
\begin{remark}
In the literature, Hecke monoids are also referred to as Coxeter monoids
(see e.g.~\cite{K14}), $0$-Hecke monoids or Demazure monoids. The latter term is due to the fact that idempotent Demazure operators provide a representation of Hecke monoids.
\end{remark}
\begin{proposition}[\cite{BGLHeck}*{Proposition~2.4}]\label{prop:prod *}
For all~$i\in I$, $w\in W(M)$
\begin{equation}\label{eq:prod *}
s_i\star w=\begin{cases}
s_i w,&\ell(s_i w)>\ell(w),\\
w,&\ell(s_i w)<\ell(w),
\end{cases}\qquad
w\star s_i=\begin{cases}
ws_i,&\ell(ws_i)>\ell(w),\\
w,&\ell(ws_i)<\ell(w),
\end{cases}
\end{equation}
where we abbreviate $w=\pi^\star_M(T_w)$.
In particular, $\pi^\star_M(\Br^+(M))$ identifies with~$W(M)$ as a set,  the restriction
of~$\pi^\star_M$ to~$\SQF^+(M)$ is a bijection onto~$W(M)$ and
$\pi^\star_M|_{\SQF^+(M)}=\pi_M|_{\SQF^+(M)}$.
\end{proposition}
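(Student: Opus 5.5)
The plan is to use the standard model of the Hecke monoid as $W(M)$ with the Demazure product, built independently of the quotient, and then compare. On the set $W(M)$ we define operators $\sigma_i,\sigma_i'$, $i\in I$, by the right-hand sides of~\eqref{eq:prod *}: $\sigma_i(w)=s_iw$ if $\ell(s_iw)>\ell(w)$ and $\sigma_i(w)=w$ otherwise, and $\sigma'_i(w)=ws_i$ if $\ell(ws_i)>\ell(w)$ and $\sigma'_i(w)=w$ otherwise. The goal is to show that $T_i\mapsto\sigma_i$ extends to a homomorphism $\Br^+(M)\to\End(W(M))$ factoring through $\pi^\star_M$, and that the orbit map $T\mapsto(\text{action of }T)(1)$ is bijective from $\pi^\star_M(\Br^+(M))$ onto $W(M)$.

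The first step is to check the relations. The relation $\sigma_i^2=\sigma_i$ is immediate. For the braid relations, fix $i\ne j$ with $m=m_{ij}<\infty$, and use that every $w$ factors uniquely as $w=uv$ with $u\in W_{\{i,j\}}(M)$ and $v$ minimal in its coset $W_{\{i,j\}}v$, so that lengths add. We also need that $\ell(s_kuv)>\ell(uv)$ iff $\ell(s_ku)>\ell(u)$ for $k\in\{i,j\}$. With this, the action of $\sigma_i,\sigma_j$ on $w$ reduces to the action on $u$ inside the dihedral group $W(I_2(m))$. There both alternating products of length $m$ send every $u$ to the longest element $w_\circ^{\{i,j\}}$, by a direct dihedral computation. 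This gives a homomorphism $\Br^+(M)\to\End(W(M))$ killing $(T_i^2,T_i)$, hence a homomorphism $\rho$ from the Hecke monoid. I expect this step to be the main technical point, namely justifying the coset reduction via the exchange condition.

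The second step is to show that for $T\in\SQF^+(M)$ we have $\rho(\pi^\star_M(T))(1)=\pi_M(T)$. Write $T=T_{i_1}\cdots T_{i_k}$ with a reduced expression by Theorem~\partref{thm:Tits.b}; each step then increases length, so the operators multiply honestly. Hence the map $\pi^\star_M(\Br^+(M))\to W(M)$, $x\mapsto\rho(x)(1)$, is surjective.

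The third step proves the formula~\eqref{eq:prod *} in the monoid itself, which also gives injectivity. We show by induction on $\ell(T)$ that every $\pi^\star_M(T)$ equals $\pi^\star_M(T_w)$ for some $w$, and that $s_i\star\pi^\star_M(T_w)$ is as claimed. If $\ell(s_iw)>\ell(w)$, then $T_iT_w=T_{s_iw}$ by Theorem~\partref{thm:Tits.b}. Otherwise $w=s_iw'$ with $\ell(w')<\ell(w)$, so $T_w=T_iT_{w'}$, and then $T_iT_w=T_i^2T_{w'}\equiv T_iT_{w'}=T_w$. The right-multiplication case follows by symmetry, or by applying ${}^{op}$.

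Finally, every element of the Hecke monoid is of the form $\pi^\star_M(T_w)$, and these elements are distinguished by $\rho(\cdot)(1)$, since $\rho(\pi^\star_M(T_w))(1)=w$. Therefore $\pi^\star_M|_{\SQF^+(M)}$ is a bijection onto $W(M)$, compatible with $\pi_M$ by Theorem~\partref{thm:Tits.a}.
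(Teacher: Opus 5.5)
Your proof is correct. Note that the paper does not prove this statement: it is quoted from \cite{BGLHeck}*{Proposition~2.4}. The only comment made afterwards is that the Proposition can be read as a presentation of the Hecke monoid, namely $W(M)$ with the unique associative operation satisfying the first formula in~\eqref{eq:prod *}. So your argument is a genuinely independent, self-contained proof, resting only on Theorem~\ref{thm:Tits} and standard facts about minimal coset representatives.

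It is worth seeing where the content lies. Your third step uses nothing beyond Theorem~\ref{thm:Tits} and the relation $T_i^2\sim T_i$. It already gives the formulas in~\eqref{eq:prod *} and shows that $w\mapsto\pi^\star_M(T_w)$ is onto the Hecke monoid. The only nontrivial point is injectivity, i.e.\ that the congruence generated by the pairs $(T_i^2,T_i)$ never identifies $T_w$ with $T_{w'}$ for $w\ne w'$. Your first two steps supply this by exhibiting the left regular action of the Hecke monoid on $W(M)$. Here $\End(W(M))$ must mean the monoid of all self-maps of the set $W(M)$, since the $\sigma_i$ are not group endomorphisms.

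The reduction of the braid relations to $W_{\{i,j\}}(M)$ is sound. For $v$ minimal in $W_{\{i,j\}}(M)v$ one has $\ell(xv)=\ell(x)+\ell(v)$ for every $x\in W_{\{i,j\}}(M)$. Hence $\sigma_k(uv)=\sigma_k(u)v$ for $k\in\{i,j\}$. In the dihedral group, both alternating products of $m_{ij}$ operators are the constant map onto $w_\circ^{\{i,j\}}$. If the first operator applied is the unique left descent of $u\ne1$, one step is wasted, but only $m_{ij}-\ell(u)\le m_{ij}-1$ ascents are needed.

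Compared with simply quoting \cite{BGLHeck}, your route directly justifies the paper's subsequent remark. Transporting the monoid structure along the bijection $x\mapsto\rho(x)(1)$ shows that the operation on the set $W(M)$ determined by~\eqref{eq:prod *} exists and is associative. Under this transport, left multiplication by $x$ becomes the map $\rho(x)$.
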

It follows that
$$
\SQF^+(M)=\{ T\in\Br^+(M)\,:\,\ell(\pi^\star_M(T))=\ell(T)\}.
$$
From now on, we identify the Hecke monoid associated with
the Coxeter matrix~$M$ with the Coxeter group~$W(M)$ {\em as a set} and denote it
\plink{HeMon}$(W(M),\star)$. Note that $\supp (w\star w')=\supp w\cup\supp w'$ for all
$w,w'\in W(M)$. In fact,
Proposition~\ref{prop:prod *} can be regarded as a presentation of the Hecke monoid. Namely,
we can define it as $W(M)$, as a set, equipped with the unique associative operation~$\star$ satisfying the
first property in~\eqref{eq:prod *}. 
It follows that~$u\star v=uv$, $u,v\in W(M)$ if and only if
$\ell(uv)=\ell(u)+\ell(v)$, and in that case
we write~\plink{times}$uv=u\times v=u\star v$.

The following are immediate.
\begin{lemma}\label{lem:free product Artin}\label{lem:free product}
Let~$M\in\Cox I$, $M'\in\Cox{I'}$. 
\begin{enmalph}
\item\label{lem:free product.a}
The product in the category of monoids of~$\Br^+(M)$ and~$\Br^+(M')$
(respectively, $W(M)$ and~$W(M')$,
of~$(W(M),\star)$, $(W(M'),\star)$)
is isomorphic to~$\Br^+(M\times M')$
(respectively, $W(M\times M')$,
$(W(M\times M'),\star)$) where
$M\times M'\in\Cox{I\sqcup I'}$ is defined by
$$(M\times M')_{ij}=(M\times M')_{ji}=\begin{cases}
                                     m_{ij},&i,j\in I,\\
                                     m'_{ij},&i,j\in I',\\
                                     2,&i\in I,\,j\in I'.
                                    \end{cases}
                                    $$
\item \label{lem:free product.b}
The free product of monoids~$\Br^+(M)$ and~$\Br^+(M')$ (respectively, $W(M)$ and~$W(M')$,
$(W(M),\star)$ and $(W(M'),\star)$) is isomorphic to $\Br^+(M\coprod M')$
(respectively, $W(M\coprod M')$,
$(W(M\coprod M'),\star)$), where
$M\coprod M'\in\Cox{I\sqcup I'}$ is defined by
$$(M\textstyle\coprod M')_{ij}=(M\textstyle\coprod M')_{ji}=\begin{cases}
m_{ij},& i,j\in I,\\
m'_{ij},& i,j\in I',\\
\infty,& i\in I,\,j\in I'.
\end{cases}
$$
\end{enmalph}
\end{lemma}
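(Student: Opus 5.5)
The plan is to verify the universal properties directly from the presentations, since all the objects involved are defined by generators and relations. I treat the Artin monoid case in detail; the Coxeter group and Hecke monoid cases are identical after adding the relations $s_i^2=1$ (respectively $s_i\star s_i=s_i$), which involve only one index and hence one factor.

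For part~\ref{lem:free product.b}, the free product of two monoids given by presentations $\la S\mid R\ra$ and $\la S'\mid R'\ra$ is presented by $\la S\sqcup S'\mid R\cup R'\ra$. This is a general fact: a homomorphism out of the free product is the same as a pair of homomorphisms, and each of these is the same as an assignment on generators satisfying the corresponding relations. In $M\coprod M'$ the entries $m_{ij}=\infty$ for $i\in I$, $j\in I'$ contribute no relations. The defining relations of $\Br^+(M\coprod M')$ are therefore exactly those of $\Br^+(M)$ together with those of $\Br^+(M')$, and the isomorphism follows. The same reasoning applies verbatim to $W$ and to $(W,\star)$, because the extra relations are indexed by single elements of $I$ or of $I'$.

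For part~\ref{lem:free product.a}, the entries $m_{ij}=2$ for $i\in I$, $j\in I'$ add exactly the commutation relations $T_iT_j=T_jT_i$. So the presentation of $\Br^+(M\times M')$ is that of the free product modulo commutation of the two factors. Maps into a monoid $\mathsf N$ from this presented monoid therefore correspond to pairs of homomorphisms $\Br^+(M)\to\mathsf N$, $\Br^+(M')\to\mathsf N$ with commuting images. This identifies $\Br^+(M\times M')$ with the quotient of the free product by commutation of the factors, which is the standard presentation of the direct product $\Br^+(M)\times\Br^+(M')$. Concretely, I send $T_i\mapsto (T_i,1)$ for $i\in I$ and $T_j\mapsto(1,T_j)$ for $j\in I'$. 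All relations are respected, so this is a well-defined homomorphism; it is surjective. For injectivity, I use the commutation relations to rewrite any word as $uu'$ with $u$ a word in $I$ and $u'$ a word in $I'$. If $(u,u')=(v,v')$ in the product, then $u=v$ in $\Br^+(M)$ and $u'=v'$ in $\Br^+(M')$ via relations internal to each factor, so $uu'=vv'$ in $\Br^+(M\times M')$.

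I do not expect a real obstacle. The only point requiring care is the normal-form argument for injectivity in part~\ref{lem:free product.a}, together with the remark that the words "product in the category of monoids" mean the direct product (for which the argument above suffices). Everything else is routine bookkeeping with presentations.
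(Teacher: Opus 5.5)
Your proposal is correct and takes the approach the paper has in mind. The paper simply calls this lemma immediate from the presentations, and your union-of-presentations argument for part (b), together with the rewriting of every word as $uu'$ to get injectivity in part (a), fills in exactly those routine details. Working with $W(M)$ through its monoid presentation is legitimate, since the paper notes that $W(M)$ is the quotient of $\Br^+(M)$ by the congruence generated by the $(T_i^2,1)$.
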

\begin{lemma}\label{lem:orth factors}
Let~$M\in\Cox I$ and let $J,K\subset I$ be orthogonal.
Then
\begin{enmalph}
    \item\label{lem:orth factors.a}
    $\Br^+_{J\cup K}(M)\cong\Br^+_J(M)\times 
    \Br^+_K(M)$;
    \item\label{lem:orth factors.b}
    $W_{J\cup K}(M)\cong W_J(M)\times 
    W_K(M)$;
    \item\label{lem:orth factors.c}
    $(W_{J\cup K}(M),\star)\cong (W_J(M),\star)\times 
    (W_K(M),\star)$.
\end{enmalph}
In particular, submonoids $\Br^+_J(M)$, $\Br^+_K(M)$ (respectively,
$W_J(M)$, $W_K(M)$ and $(W_J(M),\star)$, $(W_K(M),\star)$) commute element-wise in~$\Br^+(M)$
(respectively, in~$W(M)$, $(W(M),\star)$).
\end{lemma}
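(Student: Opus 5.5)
The plan is to treat all three parts the same way, through presentations. For part~\ref{lem:orth factors.a}, I identify $\Br^+_{J\cup K}(M)$ with $\Br^+(M_{J\cup K})$ using the parabolic isomorphism from~\S\ref{subs:parab}. Since $J$ and $K$ are orthogonal, $m_{jk}=2$ for all $j\in J$, $k\in K$. Hence $M_{J\cup K}=M_J\times M_K$ in the sense of Lemma~\partref{lem:free product.a}, and that lemma gives $\Br^+(M_J\times M_K)\cong\Br^+(M_J)\times\Br^+(M_K)$. Composing with the parabolic identifications $\Br^+(M_J)\cong\Br^+_J(M)$ and $\Br^+(M_K)\cong\Br^+_K(M)$ yields~\ref{lem:orth factors.a}.

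Parts~\ref{lem:orth factors.b} and~\ref{lem:orth factors.c} follow identically. For~\ref{lem:orth factors.b} I use $W_{J\cup K}(M)\cong W(M_{J\cup K})$. For~\ref{lem:orth factors.c} I note that the Hecke submonoid generated by $\{s_i\}_{i\in J\cup K}$ is $(W_{J\cup K}(M),\star)$, which is closed under~$\star$ because $\supp(w\star w')=\supp w\cup\supp w'$. It is isomorphic to $(W(M_{J\cup K}),\star)$. One way to see this is that the Hecke product on $W_{J\cup K}(M)$ is determined by~\eqref{eq:prod *}, and lengths in a parabolic subgroup agree with lengths in $W(M)$. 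In both cases the second and third clauses of Lemma~\partref{lem:free product.a} finish the argument.

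For the final assertion, I would check element-wise commutation directly instead of reading it off the isomorphism. In each of the three monoids the generators $T_j$ and $T_k$ (respectively $s_j$, $s_k$) with $j\in J$, $k\in K$ satisfy the relation for $m_{jk}=2$, namely $T_jT_k=T_kT_j$ (respectively $s_js_k=s_ks_j$, $s_j\star s_k=s_k\star s_j$). If every generator of one submonoid commutes with every generator of another, then every element of the first commutes with every element of the second, by induction on word length.

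The only point needing any care is the parabolic identification for Hecke monoids, i.e.\ that the submonoid generated by $s_j$, $j\in J\cup K$, is isomorphic to the Hecke monoid of $M_{J\cup K}$. I expect that to be the main obstacle, though a mild one, and would handle it through Proposition~\ref{prop:prod *} as above.
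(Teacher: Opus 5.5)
Your argument is correct and is essentially what the paper intends. The paper records this lemma without proof as immediate, right after Lemma~\ref{lem:free product}. Your justification is the natural one: identify the parabolic objects with those attached to $M_{J\cup K}=M_J\times M_K$, apply Lemma~\partref{lem:free product.a}, and read off element-wise commutation from the relations for $m_{jk}=2$. The equality $M_{J\cup K}=M_J\times M_K$ silently uses that orthogonal $J,K$ are disjoint, since $m_{jj}=1\neq 2$; this is worth saying explicitly. Your Hecke parabolic identification via Proposition~\ref{prop:prod *} and the agreement of parabolic and ambient lengths is sound.
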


Given~$w\in W(M)$, denote\plink{DL(w)}
$$
D_L(w)=\{ i\in I\,:\, \ell(s_iw)<\ell(w)\},\quad
D_R(w)=\{ i\in I\,:\, \ell(ws_i)<\ell(w)\}.
$$
Clearly, $D_R(w)=D_L(w^{-1})$. We will
now collect some facts about idempotents in Hecke monoids that will be useful in the sequel (see e.g.~\cite{BGLHeck}*{\S2.5--2.8} for the details).
\begin{proposition}\label{prop:idempot Hecke}
Let~$M\in\Cox I$, $w\in W(M)$ and~$J\subset I$.
\begin{enmalph}
\item\label{prop:idempot Hecke.0}
If~$J\in\mathscr F(M)$ then~$w_\circ^J$
is the unique element~$u$ of~$W_J(M)$ satisfying
$x\star u=u$ for all~$x\in (W_J(M),\star)$.
In particular, $w_\circ^J$ is an idempotent;
    \item\label{prop:idempot Hecke.a}
$w$ is an idempotent in~$(W(M),\star)$
if and only if~$\supp w\in\mathscr F(M)$ and~$w=w_\circ^{\supp w}$;
    \item\label{prop:idempot Hecke.b}
    Let~$J\subset I$. Then
    $J\in\mathscr F(M)$ if and only if
    $(W_J(M),\star)$ contains an idempotent~$w$ with~$\supp w=J$;
    \item\label{prop:idempot Hecke.c}
    For any~$w\in W(M)$,
    $\{x\in W(M)\,:\, x\star w=w\}$ is 
    finite and coincides with~$(W_{D_L(w)}(M),\star)$.
\end{enmalph}
\end{proposition}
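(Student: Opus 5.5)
The proof proceeds in the order \ref{prop:idempot Hecke.0}, \ref{prop:idempot Hecke.c}, \ref{prop:idempot Hecke.a}, \ref{prop:idempot Hecke.b}. It rests on one consequence of Proposition~\ref{prop:prod *}, which I would record first: for all $x,y\in W(M)$,
$$
\ell(x\star y)\ge\max(\ell(x),\ell(y)).
$$
To see this, write $y$ as a reduced product of $s_i$'s. Then $x\star y=(\cdots(x\star s_{i_1})\star\cdots)\star s_{i_k}$, and by~\eqref{eq:prod *} each step either fixes the current element or raises its length by one. The bound $\ell(x\star y)\ge\ell(y)$ follows symmetrically by multiplying on the left. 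The same argument gives a rigidity statement: if the final length equals the initial one, then \emph{every} intermediate step fixes the element.

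For part~\ref{prop:idempot Hecke.0}, take $j\in J$. By~\eqref{eq:ell w w0} we have $\ell(s_jw_\circ^J)<\ell(w_\circ^J)$, so $s_j\star w_\circ^J=w_\circ^J$ by~\eqref{eq:prod *}. Induction on $\ell(x)$ then gives $x\star w_\circ^J=w_\circ^J$ for all $x\in W_J(M)$. For uniqueness, suppose $u\in W_J(M)$ satisfies $x\star u=u$ for all $x\in W_J(M)$. Taking $x=w_\circ^J$ and using the length bound gives $\ell(u)\ge\ell(w_\circ^J)$. The longest element of $W_J(M)$ is unique, so $u=w_\circ^J$. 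Taking $x=w_\circ^J$ also shows that $w_\circ^J$ is idempotent.

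For part~\ref{prop:idempot Hecke.c}, suppose $x\star w=w$ and write $x=s_{i_1}\cdots s_{i_k}$ reduced. The length bound forces each successive product $s_{i_r}\star(\cdots\star(s_{i_k}\star w))$ to have length $\ell(w)$. By the rigidity statement each such product equals $w$, so $s_{i_r}\star w=w$, that is, $i_r\in D_L(w)$. Hence $x\in W_{D_L(w)}(M)$. Conversely, $s_j\star w=w$ for every $j\in D_L(w)$ by~\eqref{eq:prod *}, and iterating this gives $x\star w=w$ for all $x\in W_{D_L(w)}(M)$. For finiteness I would invoke the standard parabolic coset decomposition (\cite{Bou}*{Ch. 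IV}). With $D=D_L(w)$, it lets us write $w=x\cdot u$ with $\ell(w)=\ell(x)+\ell(u)$ for any $x\in W_D(M)$ (this holds because all of $D$ lies in the left descent set). Hence $\ell(x)\le\ell(w)$ for all $x\in W_D(M)$, so $W_D(M)$ is finite. This finiteness step is the only point that needs input beyond Proposition~\ref{prop:prod *}, and I expect it to be the main obstacle. The cleanest route is to cite the well-known fact that if $D\subset D_L(w)$ then $W_D(M)$ is finite and $w_\circ^D$ is a left prefix of $w$.

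For part~\ref{prop:idempot Hecke.a}, let $w$ be idempotent and set $J=\supp w$. Apply the rigidity argument to $w\star w=w$, using a reduced word $s_{i_1}\cdots s_{i_k}$ for the left factor $w$ and multiplying into the right factor. This gives $s_j\star w=w$ for every $j\in J$, so $J\subset D_L(w)$. Part~\ref{prop:idempot Hecke.c} then makes $W_J(M)$ finite, so $J\in\mathscr F(M)$. Since every $x\in W_J(M)$ satisfies $x\star w=w$, the uniqueness in part~\ref{prop:idempot Hecke.0} gives $w=w_\circ^J$. The converse direction is part~\ref{prop:idempot Hecke.0}.

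Part~\ref{prop:idempot Hecke.b} then follows from parts~\ref{prop:idempot Hecke.0} and~\ref{prop:idempot Hecke.a}. If $J\in\mathscr F(M)$, then $w_\circ^J$ is an idempotent of $(W_J(M),\star)$, and $\supp w_\circ^J=J$ because~\eqref{eq:ell w w0} shows $w_\circ^J$ has every $j\in J$ as a left descent. Conversely, an idempotent $w$ with $\supp w=J$ forces $J\in\mathscr F(M)$ by part~\ref{prop:idempot Hecke.a}.
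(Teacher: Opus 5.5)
Your proof is correct. The paper does not prove this proposition in the text; it lists the facts and refers to \cite{BGLHeck}*{\S2.5--2.8}. Your argument is therefore a self-contained alternative. It derives everything from Proposition~\ref{prop:prod *}, the existence and uniqueness of $w_\circ^J$, and \eqref{eq:ell w w0}, all of which the paper already has available.

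The one place you reach outside is the finiteness in part~\ref{prop:idempot Hecke.c}, and it needs no outside input. Your own bound gives $\ell(x)\le\ell(x\star w)=\ell(w)$ whenever $x\star w=w$. So $\{x\in W(M)\,:\,x\star w=w\}$ lies in the set of elements of length at most $\ell(w)$, which is finite because $I$ is finite. Since you have shown this set equals $W_{D_L(w)}(M)$, finiteness follows without the parabolic coset decomposition.

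This is worth changing, because your parenthetical justification (``this holds because all of $D$ lies in the left descent set'') does not follow from the coset decomposition alone. Writing $w$ as an element of $W_D(M)$ times a minimal coset representative only makes every $x\in W_D(M)$ a left prefix of $w$ if that $W_D(M)$-component is the longest element of a finite $W_D(M)$. That is exactly the fact you then say you would cite. The citation is correct, but the direct length bound removes the need for it.

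With that change, parts~\ref{prop:idempot Hecke.0} and~\ref{prop:idempot Hecke.c} use only Proposition~\ref{prop:prod *}, the existence and uniqueness of $w_\circ^J$, and \eqref{eq:ell w w0}. The remaining two parts then follow from them as you describe.
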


\subsection{Divisibility, longest elements and Coxeter elements}\label{subs:w0J}
We say that $X\in\Br^+(M)$ is a left (respectively, right) divisor
of~$Y\in\Br^+(M)$ if $Y=XU$ (respectively, $Y=VX$) for some~$U\in\Br^+(M)$
(respectively, $V\in\Br^+(M)$). Since~$\Br^+(M)$ is cancellative,
such an element~$U$ (respectively, $V$), if exists, is unique
and will be denoted by~$(Y:X)_l$ (respectively, $(Y:X)_r$). The
following classical results will be often used in the sequel.
\begin{proposition}[\cite{BrSa}*{Lemma~5.1, Propositions~5.7,  Theorem~7.1}
and~\cite{Del}*{Theorem~4.21}]
\label{prop:fund elts BrSa}
Let~$J\in\mathscr F(M)$. Then
\begin{enmalph}
\item\label{prop:fund elts BrSa.a} $T_{w_\circ^J}$ is ${}^{op}$-invariant;
  \item\label{prop:fund elts BrSa.b} $T_{w_\circ^J}$ is the left and the right least common multiple
  of the~$T_j$, $j\in J$, that is, $T_{w_\circ^J}$ is left (respectively,
  right) divisible by all the $T_j$, $j\in J$ and is a left (respectively, right)
  divisor of every element of~$\Br^+(M)$ with that property;
\item\label{prop:fund elts BrSa.c'}
$X\in\Br^+_J(M)$ is left divisible by~$T_{w_\circ^J}$ if and
only if it is right divisible by~$T_{w_\circ^J}$;
\item\label{prop:fund elts BrSa.0}
If~$J=J_1\cup J_2$ with~$J_1$, $J_2$ orthogonal
then $T_{w_\circ^{J}}=T_{w_\circ^{J_1}}T_{w_\circ^{J_2}}$;
\item\label{prop:fund elts BrSa.c} There is a unique
diagram automorphism~$\Sigma_J$ of~$\Br^+_J(M)$  such that~$\Sigma_J^2=\id$ and
$X T_{w_\circ^J}=T_{w_\circ^J} \Sigma_J(X)$ for all~$X\in\Br^+_J(M)$;
\item\label{prop:fund elts BrSa.d}
The center of~$\Br^+_J(M)$ is generated by $T_{w_\circ^J}$ if~$\Sigma_J$
is trivial and by~$T_{w_\circ^J}^2$ otherwise;
\item\label{prop:fund elts BrSa.e}
$T_{w_\circ^J}$ is the unique element of~$\SQF^+(M)\cap \Br^+_J(M)$ of
maximal length and every square free element of~$\Br^+_J(M)$ is a
left and a right divisor of~$T_{w_\circ^J}$.
\end{enmalph}
\end{proposition}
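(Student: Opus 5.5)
Since all statements concern only $\Br^+_J(M)\cong\Br^+(M_J)$, we may assume $J=I$ and $M$ is of finite type. We write $w_\circ=w_\circ^I$ and $\Delta=T_{w_\circ}$. We will use three tools: cancellativity of $\Br^+(M)$, Theorem~\ref{thm:Tits}, and \eqref{eq:ell w w0}. Part~\ref{prop:fund elts BrSa.e} comes first. By \eqref{eq:ell w w0}, for every $w\in W(M)$ we have $w_\circ=w\cdot(w^{-1}w_\circ)=(w_\circ w^{-1})\cdot w$ with lengths adding. Theorem~\partref{thm:Tits.b} then gives $\Delta=T_wT_{w^{-1}w_\circ}=T_{w_\circ w^{-1}}T_w$. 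So every square free element is a left and right divisor of $\Delta$. Uniqueness of the element of maximal length follows from uniqueness of $w_\circ$ and Theorem~\partref{thm:Tits.a}.

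Part~\ref{prop:fund elts BrSa.a}: $\Delta^{op}$ has the same length as $\Delta$ and image $w_\circ^{-1}=w_\circ$, hence lies in $\SQF^+(M)$. Therefore $\Delta^{op}=\Delta$ by Theorem~\partref{thm:Tits.a}. Part~\ref{prop:fund elts BrSa.0}: $w_\circ^J=w_\circ^{J_1}w_\circ^{J_2}$ is length additive by Lemma~\ref{lem:extend supp}, since $W_J=W_{J_1}\times W_{J_2}$. Then apply Theorem~\partref{thm:Tits.b}.

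Part~\ref{prop:fund elts BrSa.c}: conjugation by $w_\circ$ maps simple reflections to simple reflections, $w_\circ s_jw_\circ=s_{\sigma(j)}$, and preserves $M$. So $\sigma$ is an involutive diagram automorphism. Since $\ell(s_jw_\circ)=\ell(w_\circ)-1$ and $s_jw_\circ=w_\circ s_{\sigma(j)}$, Theorem~\partref{thm:Tits.b} gives
$$\Delta=T_jT_{s_jw_\circ}=T_{s_jw_\circ}T_{\sigma(j)}.$$
Hence $T_j\Delta=T_jT_{s_jw_\circ}T_{\sigma(j)}=\Delta T_{\sigma(j)}$, and by multiplicativity $X\Delta=\Delta\,\Sigma(X)$ for all $X$. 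Uniqueness follows from left cancellation. Part~\ref{prop:fund elts BrSa.c'} is immediate from this: $X=\Delta U$ iff $X=\Sigma(U)\Delta$.

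Part~\ref{prop:fund elts BrSa.b}: divisibility of $\Delta$ by every $T_j$ is already contained in \ref{prop:fund elts BrSa.e}. The least-common-multiple property is where real work is needed. I would follow Brieskorn--Saito and first prove the reduction lemma: if $T_iX=T_jY$ with $i\ne j$, then $m_{ij}<\infty$ and $X=\brd{T_jT_i}{m_{ij}-1}Z$, $Y=\brd{T_iT_j}{m_{ij}-1}Z$ for some $Z$. The proof is by induction on length, through the chain of braid moves relating the two words. From this one deduces that any two elements with a common left multiple have a left lcm.

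Next I would show that the set of square free elements is closed under left lcm. Concretely, if $T_w$ and $T_j$ have a common multiple and $j\notin D_L(w)$, then their lcm is $T_{s_jw}$ or is obtained by completing the dihedral braid inside $W$. Iterating over $J$, the lcm of the $T_j$ is square free and is left divisible by all $T_j$, so its image $w$ has $D_L(w)=I$, forcing $w=w_\circ$. I expect this step, the reduction lemma together with the square-freeness of lcms, to be the main obstacle.

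Part~\ref{prop:fund elts BrSa.d}: by \ref{prop:fund elts BrSa.c}, $\Delta$ (or $\Delta^2$, when $\Sigma\ne\id$) is central. For the converse, let $Z\ne1$ be central, and assume first that $I$ is connected. If $T_j$ divides $Z$ on the left, then for every $k$ with $m_{jk}\ge3$ the relation $T_kZ=ZT_k$ together with the reduction lemma forces $T_k$ to divide $Z$ as well. By connectivity all $T_k$ divide $Z$, so by \ref{prop:fund elts BrSa.b} $\Delta$ divides $Z$. Write $Z=\Delta Z'$. Then $Z'$ commutes with $\Sigma(X)$ for every $X$, so $Z'$ is central and we conclude by induction on length. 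When $\Sigma\ne\id$, we use that $\Delta$ itself is not central, and so the exponent must be even. The reducible case then follows factorwise via \ref{prop:fund elts BrSa.0}.
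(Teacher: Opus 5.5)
The paper gives no proof of this proposition; it only cites Brieskorn--Saito and Deligne, so your argument has to stand on its own. Your derivations of (a), (c), (d), (e) and (g) from Theorem~\ref{thm:Tits} and \eqref{eq:ell w w0} are correct.

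\textbf{The lcm property (b).} This part carries all the real content, and you only announce it; the mechanism you describe is also wrong.
\begin{itemize}
\item For $j\notin D_L(w)$, the element $T_{s_jw}=T_jT_w$ is left divisible by $T_w$ only when $w^{-1}s_jw$ is a simple reflection.
\item The lcm of $T_w$ and $T_j$ is in general not a single dihedral completion. In $\Br^+(A_3)$ take $w=s_1s_3$ and $j=2$. Every common multiple of $T_1T_3$ and $T_2$ is left divisible by $T_1,T_2,T_3$, so the lcm is $T_{w_\circ}$, of length $\ell(w)+4$.
\end{itemize}
What you actually need is the following statement $(\ast)$: \emph{if $w\in W_J(M)$, $j\in J$, and $T_w$, $T_j$ both left divide $X\in\Br^+(M)$, then so does $T_v$ for some $v\in W_J(M)$ with $\ell(v)=\ell(w)+\ell(w^{-1}v)$ and $j\in D_L(v)$.}
\begin{itemize}
\item Iterating $(\ast)$ over $J$ yields $v$ with $D_L(v)\supseteq J$, i.e.\ $v=w_\circ^J$, which is exactly (b). You do not even need general existence of lcm's, which you also only assert.
\item Proving $(\ast)$ is the hard part. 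The obvious induction on $\ell(w)$ writes $w=s_iw'$ with $i\ne j$; then $T_iT_{w'}Y=T_jX''$ forces $T_{w'}Y=\brd{T_jT_i}{m_{ij}-1}Y'$. This replaces the pair $(w,s_j)$ by the pair $(w',\brd{s_js_i}{m_{ij}-1})$ of two arbitrary square-free elements.
\item So a stronger inductive hypothesis is required, together with a check that prefixing the resulting bound by $s_i$ keeps it reduced. This is what the cited sources carry out.
\item $(\ast)$ must be proved in $\Br^+(M)$ itself. Your initial reduction to $J=I$ is not harmless for (b), which quantifies over all of $\Br^+(M)$, and in general there is no retraction $\Br^+(M)\to\Br^+_J(M)$ (Proposition~\ref{prop:parab proj Artin}).
\end{itemize}

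\textbf{The center (f).} This part uses (b), so it inherits the gap above, and it has three further defects.
\begin{enumerate}
\item Write $Z=T_jZ_1$. The reduction lemma applied to $T_kZ=T_j(Z_1T_k)$ only gives $Z=\brd{T_jT_k}{m_{jk}-1}W$, which begins with $T_jT_k$. To bring $T_k$ to the front you must also use that for central $Z$, $Z=AB$ implies $Z=BA$; this follows from $AZ=ZA$ and left cancellation.
\item From $Z=\Delta Z'$ you get $\Sigma(X)Z'=Z'X$, not that $Z'$ is central. For example, in $A_2$ the central element $Z=\Delta^2$ gives $Z'=\Delta$. The induction must instead run over all $Z$ with $XZ=Z\tau(X)$ for every $X$, where $\tau\in\{\id,\Sigma\}$, using $Z=AB\Rightarrow Z=B\tau(A)$. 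One then gets $Z=\Delta^k$, and $\Delta^k$ is central iff $\Sigma^k=\id$.
\item The reducible case does not ``follow factorwise.'' The center of $\Br^+_{J_1}(M)\times\Br^+_{J_2}(M)$ is the product of the two centers, which is not generated by a single element. In fact (f) as stated is false for disconnected $J$: for $J=\{1,3\}$ in $A_3$ one has $\Sigma_J=\id$, yet $\Br^+_J(M)$ is commutative, hence equal to its center, and is not generated by $T_1T_3$. So (f) must be restricted to connected $J$, which is the only way the paper uses it.
\end{enumerate}
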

The involution~$\Sigma_J$ for~$J\in\mathscr F(M)$ connected is
non-trivial only if~$M_J$ is of type~$A_n$, $n\ge 1$, $D_{n+1}$
with~$n$ even, $I_2(2m+1)$, $m\in\ZZ_{>0}$ or $E_6$. Note that~$\Br^+(D_{n+1})$ admits a
non-trivial diagram automorphism for all~$n\ge 3$, yet~$\Sigma$
is trivial if~$n$ is odd; likewise,
$\Br^+(F_4)$ admits a diagram automorphism, yet~$\Sigma$ is also trivial.

Given~$X\in \Br^+(M)$, define \plink{I(X)}$D_L(X)=\{ i\in I\,:\, \text{$T_i$ is
a left divisor of~$X$}\}$.
Let $i\in D_L(X)$. Then $X=T_i X'$ for some~$X'\in \Br^+(M)$
with~$\ell(X')=\ell(X)-1$ and so $$\pi_M^\star(X)=s_i\star \pi_M^\star(X')=
s_i\star s_i\star\pi^\star_M(X')=s_i\star \pi^\star_M(X),
$$
whence
$i\in D_L(\pi^\star_M(X))$ by Proposition~\ref{prop:prod *}.
Thus, $D_L(X)
\subset D_L(\pi^\star_M(X))
\in\mathscr F(M)$ by Proposition~\partref{prop:idempot Hecke.c}.
Then $X$ is left
divisible by $T_{w_\circ^{D_L(X)}}$ by Proposition~\partref{prop:fund elts BrSa.b}.
\begin{remark}
It should be noted that~$D_L(X)$ can be a proper subset of~$D_L(\pi^\star_M(X))$. For example, for~$X=T_1^3T_2^2 T_1 T_3^2T_2T_1\in \Br^+(A_3)$
we have~$D_L(X)=\{1\}$ while~$D_L(\pi^\star_{A_3}(X))=D_L(w_\circ^I)=I$. 
\end{remark}

Given~$X\in \Br^+(M)$, define inductively $D_0(X)=D_L(X)$ and $$
D_j(X)=D_L((X:\ascprod_{0\le k\le j-1} T_{w_\circ^{D_k(X)}})_l),\qquad  j\ge 1.
$$
Clearly, $D_k(X)=\emptyset$ for~$k\gg0$.
\begin{proposition}[\cite{BrSa}*{Theorem~6.3}]\label{prop:Normal form}
Let $X,Y\in\Br^+(M)$. Then
\begin{enmalph}
\item $X=\ascprod_{j\in\ZZ_{\ge 0}} T_{w_\circ^{D_j(X)}}$ (this expression for~$X$ is called its {\em normal form});
\item $X=Y$ if and only if~$D_j(X)=D_j(Y)$ for
all~$j\ge 0$.
\end{enmalph}
\end{proposition}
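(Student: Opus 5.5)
The plan is to prove part~(a) by induction on~$\ell(X)$, using the discussion preceding the statement: for any $X\in\Br^+(M)$ the set $D_L(X)$ lies in~$\mathscr F(M)$, and $X$ is left divisible by $T_{w_\circ^{D_L(X)}}$ (via Proposition~\partref{prop:idempot Hecke.c} and Proposition~\partref{prop:fund elts BrSa.b}). Part~(b) will then follow from~(a) with no further work.

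For~(a), the base case is $X=1$. Here $D_L(X)=\emptyset$, and the same holds for every $D_j(X)$, since each quotient is again~$1$. Because $T_{w_\circ^{\emptyset}}=1$, the product is~$1$.

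For $X\neq1$, we have $X=T_iX'$ for some~$i$, so $D_0(X)=D_L(X)\neq\emptyset$. Set $X_1=(X:T_{w_\circ^{D_0(X)}})_l$. This quotient is well defined and unique because $\Br^+(M)$ is cancellative. Moreover $\ell(X_1)=\ell(X)-\ell(w_\circ^{D_0(X)})<\ell(X)$, since $\ell$ is a homomorphism and $T_{w_\circ^J}$ is square free of length $\ell(w_\circ^J)\ge1$ for $J\neq\emptyset$.

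The key identity is
$$D_j(X)=D_{j-1}(X_1),\qquad j\ge1.$$
To prove it, first note that for $A,B,X\in\Br^+(M)$ with $X$ left divisible by~$AB$, cancellativity gives $(X:AB)_l=((X:A)_l:B)_l$. Next, show by induction on~$j$ that $X$ is left divisible by $\ascprod_{0\le k\le j-1}T_{w_\circ^{D_k(X)}}$. This holds because each successive quotient $Y$ is left divisible by $T_{w_\circ^{D_L(Y)}}$, again by the remark preceding the statement. Applying the quotient identity with $A=T_{w_\circ^{D_0(X)}}$ then gives
$$\Bigl(X:\ascprod_{0\le k\le j-1}T_{w_\circ^{D_k(X)}}\Bigr)_l=\Bigl(X_1:\ascprod_{1\le k\le j-1}T_{w_\circ^{D_k(X)}}\Bigr)_l.$$
Inducting on~$j$ alongside this, using $D_k(X)=D_{k-1}(X_1)$ for $k<j$, the right-hand side equals $\bigl(X_1:\ascprod_{0\le k\le j-2}T_{w_\circ^{D_k(X_1)}}\bigr)_l$. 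Taking $D_L$ of both sides yields $D_j(X)=D_{j-1}(X_1)$.

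With the identity in hand, the induction hypothesis applied to~$X_1$ gives $X_1=\ascprod_{j\ge1}T_{w_\circ^{D_j(X)}}$. Hence $X=T_{w_\circ^{D_0(X)}}X_1$ is the claimed normal form. The same length argument shows that $D_k(X)=\emptyset$ for $k\gg0$, so the product is finite.

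For~(b), the ``only if'' direction is immediate, since each $D_j(X)$ is defined from~$X$ alone. For the ``if'' direction, if $D_j(X)=D_j(Y)$ for all~$j$, then part~(a) expresses $X$ and~$Y$ as the same product, so $X=Y$.

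The only step requiring care is the bookkeeping identity $D_j(X)=D_{j-1}(X_1)$, namely checking that the iterated quotients are well defined. That rests entirely on cancellativity and on the left divisibility by $T_{w_\circ^{D_L(Y)}}$ established before the statement. I expect no genuine obstacle beyond this.
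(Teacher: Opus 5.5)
Your argument is correct. It differs from the paper mainly in that it supplies a proof at all: the paper does not prove this proposition but attributes it to Brieskorn--Saito. What the paper does supply is the remark just before the statement: $D_L(X)\in\mathscr F(M)$, via $D_L(X)\subset D_L(\pi^\star_M(X))$ and Proposition~\ref{prop:idempot Hecke}, and $T_{w_\circ^{D_L(X)}}$ left-divides $X$, by the lcm property in Proposition~\ref{prop:fund elts BrSa}. You show that, given this remark, both parts of the proposition as formulated here follow formally from cancellativity and additivity of $\ell$. So no further input from the cited theorem is needed; the citation buys brevity, and your route buys self-containedness.

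If you want to shorten the argument, you can drop the outer induction on $\ell(X)$ and the shift identity $D_j(X)=D_{j-1}(X_1)$. Set $Q_0=X$ and $Q_{j+1}=(Q_j:T_{w_\circ^{D_L(Q_j)}})_l$. Your quotient identity then gives $Q_j=\bigl(X:\ascprod_{0\le k\le j-1}T_{w_\circ^{D_k(X)}}\bigr)_l$. Hence $D_j(X)=D_L(Q_j)$ and $X=\bigl(\ascprod_{0\le k\le j-1}T_{w_\circ^{D_k(X)}}\bigr)Q_j$ for every $j$. Since $\ell(Q_j)$ strictly decreases until $Q_j=1$, part (a) follows, and part (b) is then immediate as you say.
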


\begin{definition}\label{defn:weakly orthogonal}
Let~$M\in\Cox I$. We say that~$J,K\subset I$
are {\em weakly orthogonal} if 
$J\setminus K$ is orthogonal to~$K$ and $K\setminus J$ is orthogonal to~$J$. 
\end{definition}
In particular, every subset~$J$ of~$I$ is weakly
orthogonal to itself and if $J$, $K$ are orthogonal then they are weakly orthogonal.
The following Lemma is immediate from Proposition~\partref{prop:fund elts BrSa.0} and
Lemma~\partref{lem:orth factors.a}.
\begin{lemma}\label{lem:weakly orthogonal}
Let~$M\in\Cox I$, $J,K\in\mathscr F(M)$. If~$J$ and~$K$
are weakly orthogonal then $T_{w_\circ^J}T_{w_\circ^K}=
T_{w_\circ^K}T_{w_\circ^J}$. 
\end{lemma}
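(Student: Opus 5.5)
Write $J\cup K=(J\setminus K)\sqcup(J\cap K)\sqcup(K\setminus J)$. Weak orthogonality says that $J\setminus K$ is orthogonal to $K$, hence to $J\cap K$ and to $K\setminus J$, and that $K\setminus J$ is orthogonal to $J$, hence to $J\cap K$. So the three pieces $A=J\setminus K$, $C=J\cap K$ and $B=K\setminus J$ are pairwise orthogonal. In particular $J=A\sqcup C$ and $K=C\sqcup B$ are decompositions into orthogonal subsets. Both lie in~$\mathscr F(M)$ by assumption, and so their subsets $A$, $B$, $C$ do too, since parabolic subgroups of finite groups are finite.

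Proposition~\partref{prop:fund elts BrSa.0} then factors the two longest elements:
$$
T_{w_\circ^J}=T_{w_\circ^A}T_{w_\circ^C},\qquad T_{w_\circ^K}=T_{w_\circ^C}T_{w_\circ^B}.
$$
(If some piece is empty, the corresponding factor is~$1$.)

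By Lemma~\partref{lem:orth factors.a}, applied to the pairwise orthogonal sets $A,B,C$, the submonoids $\Br^+_A(M)$, $\Br^+_B(M)$ and $\Br^+_C(M)$ commute element-wise. Hence
$$
T_{w_\circ^J}T_{w_\circ^K}=T_{w_\circ^A}T_{w_\circ^C}T_{w_\circ^C}T_{w_\circ^B}
=T_{w_\circ^C}T_{w_\circ^B}T_{w_\circ^A}T_{w_\circ^C}=T_{w_\circ^K}T_{w_\circ^J}.
$$
In the middle step we first move $T_{w_\circ^A}$ to the right past $T_{w_\circ^C}T_{w_\circ^C}T_{w_\circ^B}$. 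We then move one factor $T_{w_\circ^C}$ to the far right past $T_{w_\circ^B}T_{w_\circ^A}$.

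The only step requiring care is checking that $A$, $B$, $C$ are pairwise orthogonal, including the case where some of them are empty. This follows directly from Definition~\ref{defn:weakly orthogonal}, so I expect no real obstacle.
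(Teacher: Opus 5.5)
Your proof is correct and is exactly the argument the paper intends: the paper simply declares the lemma immediate from Proposition~\partref{prop:fund elts BrSa.0} and Lemma~\partref{lem:orth factors.a}. You have spelled out the details, namely the splitting into the pairwise orthogonal pieces $J\setminus K$, $J\cap K$, $K\setminus J$, the resulting factorizations of $T_{w_\circ^J}$ and $T_{w_\circ^K}$, and the element-wise commutation of the corresponding parabolic submonoids.
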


Let~$J\subset I$. We say that~$C\in \Br^+_J(M)$ (respectively, $c\in W_J(M)$) is a {\em Coxeter element} 
if~$\supp C=J$ (respectively, $\supp c=J$) and~$\ell(C)=|J|$ (respectively, $\ell(c)=|J|$).
In the sequel, we will often consider
special Coxeter elements corresponding to an interval~$J=[a,b]\subset I\subset \ZZ$,
namely \plink{cab}$\cx ab=\ascprod_{a\le i\le b} s_i$, $\cxr ab=\dscprod_{a\le i\le b}s_i$, $\Cx ab=T_{\cx ab}$ and~$\Cxr ab=T_{\cxr ab}=(\Cx ab)^{op}$. We will use the convention that~$\cx ij=\cxr ij=1$ if~$i>j$
and similarly for~$\Cx ij$ and~$\Cxr ij$.

It is well-known (see e.g.~\cite{Bou}*{Ch. V, \S6})
that if~$J\in\mathscr F(M)$ then all Coxeter elements~$c\in W_J(M)$
are conjugate and of the same order~\plink{h(M)}$h(M_J)$, called the {\em Coxeter number}
of~$W_J(M)$. The Coxeter number is even for all irreducible finite types
except~$I_2(2m+1)$, $m>0$ and $A_{2m}$.
Note also that if~$J\subset I$ is self-orthogonal then $T_{w_\circ^J}$ is the unique Coxeter element of~$W_J(M)$.
The following is established in~\cite{BrSa}.
\begin{proposition}[\cite{BrSa}*{\S5.8}]\label{prop:Coxeter splitting}
Let~$M$ be a Coxeter matrix and let~$J\in\mathscr F(M)$. Then for any Coxeter element~$C\in\Br^+_J(M)$
\begin{enmalph}
\item\label{prop:Coxeter splitting.a}
$T_{w_\circ^J}^2=C^{h(M_J)}$;
\item\label{prop:Coxeter splitting.b}
If $\Sigma_J$ from Proposition~\partref{prop:fund elts BrSa.c} is trivial then~$h(M_J)$ is even and $T_{w_\circ^J}=
C^{h(M_J)/2}$;
\item\label{prop:Coxeter splitting.c}
If~$M_J$ is irreducible and~$J=J'\cup J''$
is a partition of~$J$ into disjoint non-empty self-orthogonal subsets then
$$T_{w_\circ^J}=\brd{T_{w_\circ^{J'}}T_{w_\circ^{J''}}}{h(M_J)}
=\brd{T_{w_\circ^{J''}}T_{w_\circ^{J'}}}{h(M_J)}.
$$
\end{enmalph}
\end{proposition}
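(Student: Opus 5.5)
The plan is to prove part~(c) first for a bipartite Coxeter element, derive~(a) and~(b) for that element, and then transport to an arbitrary Coxeter element by cyclic shifts, using that powers of~$T_{w_\circ^J}$ are central. Throughout, write $\Delta=T_{w_\circ^J}$, $h=h(M_J)$, $\Delta'=T_{w_\circ^{J'}}$ and $\Delta''=T_{w_\circ^{J''}}$. Since $J'$ and $J''$ are self-orthogonal, $\Delta'$ and $\Delta''$ are just the products of the pairwise commuting generators in each part. I first treat $M_J$ irreducible, and address the reducible case at the end.

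\emph{Part~(c).} A finite-type Coxeter graph is a forest, so a partition $J=J'\sqcup J''$ into self-orthogonal subsets exists. I will use the classical fact (Bourbaki, Ch.~V, \S6, Ex.~2; Steinberg) that in $W_J(M)$ the alternating product
\[
w_\circ^{J'}w_\circ^{J''}w_\circ^{J'}\cdots \qquad (h \text{ factors})
\]
equals $w_\circ^J$ and is a reduced expression. The length check is as follows. The number of reflections is $\ell(w_\circ^J)=\tfrac12 h|J|$, and the alternating word has length $\tfrac12 h(|J'|+|J''|)$ when $h$ is even. When $h$ is odd, $M_J$ is of type $A_{2m}$ or $I_2(2m+1)$, so $|J'|=|J''|$ and the same count holds. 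By Theorem~\partref{thm:Tits.b}, the corresponding word in the $T_j$ is then exactly $\Delta$, which gives $\Delta=\brd{\Delta'\Delta''}{h}$. Interchanging the roles of $J'$ and $J''$ gives the second expression. This cited Coxeter-group fact is the main input; I take it as known rather than reprove it.

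\emph{Parts~(a) and~(b) for the bipartite element $C_0=\Delta'\Delta''$.} Concatenate two copies of~(c), choosing the starting part of the second copy so the alternation continues. If $h$ is even, both copies start with $\Delta'$. If $h$ is odd, the first copy ends with $\Delta'$, so the second copy starts with $\Delta''$. In either case $\Delta^2=(\Delta'\Delta'')^h=C_0^h$. If $h$ is even, (c) directly gives $\Delta=C_0^{h/2}$. The fact that $\Sigma_J$ trivial forces $h$ even follows from the list of types where $\Sigma_J$ is non-trivial together with the list of odd Coxeter numbers: odd $h$ occurs only in types $A_{2m}$ and $I_2(2m+1)$, and both have non-trivial~$\Sigma_J$.

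\emph{Arbitrary Coxeter elements.} Since $\Gamma_J(M)$ is a tree, any two Coxeter elements of $\Br^+_J(M)$ are connected by a sequence of cyclic shifts $C=T_iX\mapsto C'=XT_i$. These are source-to-sink flips of acyclic orientations, and on a tree all acyclic orientations are connected by such flips. Suppose $C^h=\Delta^2$. Then
\[
T_i\,C'^{\,h}=C^hT_i=\Delta^2T_i=T_i\Delta^2,
\]
where the last equality uses that $\Delta^2$ is central (Proposition~\partref{prop:fund elts BrSa.d}). Left cancellativity gives $C'^{\,h}=\Delta^2$. The same argument, with $\Delta$ itself central when $\Sigma_J$ is trivial, propagates $C^{h/2}=\Delta$. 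Starting from $C_0$ this proves~(a) and~(b) for every Coxeter element.

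For reducible $J=\bigsqcup_k J_k$, Proposition~\partref{prop:fund elts BrSa.0} and Lemma~\partref{lem:orth factors.a} split $\Delta$ and $C$ componentwise. The identities then hold componentwise with the respective $h(M_{J_k})$, and they assemble correctly only under the convention that all components share a common Coxeter number, so I would restrict~(a) and~(b) to that case. I expect the main obstacle to be the reducedness and length count behind~(c), which I import from the cited Coxeter-group result; the remaining steps are formal.
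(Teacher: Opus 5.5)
Your argument is correct. The paper gives no proof of its own and only cites \cite{BrSa}*{\S5.8}, and your route is essentially the standard proof of that result: lift Steinberg's reduced bipartite factorization of $w_\circ^J$ to $\Br^+_J(M)$ via Theorem~\ref{thm:Tits}, double it to obtain $T_{w_\circ^J}^2=(T_{w_\circ^{J'}}T_{w_\circ^{J''}})^{h(M_J)}$, and transport to arbitrary Coxeter elements along cyclic shifts, using centrality of $T_{w_\circ^J}^2$ (resp.\ of $T_{w_\circ^J}$ when $\Sigma_J$ is trivial) and cancellativity.

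Your caveat on reducible $J$ is also justified. With $h(M_J)$ defined as the order of a Coxeter element, parts (a) and (b) fail for example when $M_J=A_1\times A_2$: there $h(M_J)=6$, while $T_{w_\circ^J}^2$ has length $8$ and $C^6$ has length $18$. So these parts should be read for irreducible $M_J$, or for components sharing a common Coxeter number, which is how the paper applies them.
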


\section{General properties of homomorphisms
of Artin monoids}\label{sec:Gen homs}

Throughout this chapter, we denote
standard generators of~$W(M')$ or $(W(M'),\star)$
(respectively, $\Br^+(M')$) corresponding
to~$M'\in\Cox{I'}$
by~$s'_i$ (respectively, $T'_i$), $i\in I'$ and so on.
We will often use the obvious fact that a homomorphism of Artin monoids extends to a homomorphism of their ambient Artin groups.

Let~\plink{A C H}$\Art$ (respectively, $\CoxCat  $, $\Heck $) be the category whose objects are Coxeter matrices
and morphisms are homomorphisms of corresponding Artin monoids (respectively, Coxeter groups, Hecke monoids).
Parabolic submonoids and subgroups are, naturally, subobjects in these categories. By 
Lemma~\ref{lem:free product}
all these categories admit finite products and coproducts via, respectively, $(M,M')\mapsto M\times M'$ and
$(M,M')\mapsto M\coprod M'$, $M\in\Cox I$, $M'\in\Cox{I'}$ (see Remark~\ref{rem:prod coprod}).

\subsection{Homomorphisms of Artin monoids}\label{subs:Artin homs}
Let $\wh M\in\Cox{\wh I}$, $M\in\Cox I$ and $\Phi\in\Hom_{\Art}(\wh M,M)$.
We define
$[\Phi]:\wh I\to\mathscr P(I)$,
$i\mapsto \supp\Phi(\wh T_i)$, $i\in \wh I$
and extend it to a map
\plink{[Phi]}$[\Phi]:\mathscr P(\wh I)\to\mathscr P(I)$
via $[\Phi](\wh J)=\bigcup_{j\in\wh J}[\Phi](j)$, $\wh J\subset \wh I$.
\begin{definition}[cf.~\cite{BGLHeck}*{Definition~3.2}]
\label{def:types heck hom}
We say that~$\Phi\in\Hom_{\Heck }(M',M)$ is:
\begin{itemize}
\item[-] {\em disjoint} if $[\Phi](i)\cap[\Phi](j)=
\emptyset$ for all $i\not=j\in I'$;
\item[-] {\em fully supported} if $[\Phi](I')=I$;
\item[-] {\em connected} if~$[\Phi](J)$
is a connected subset of~$I$ for any connected~$J\subset I'$.
\end{itemize}
\end{definition}
\begin{lemma}\label{lem:[Phi]comp}
Let~$M\in\Cox I$, $M'\in\Cox{I'}$ and~$M''\in\Cox{I''}$.
\begin{enmalph} 
\item\label{lem:[Phi]comp.a}
$\supp\Phi(x)=[\Phi](\supp x)$
for any~$\Phi\in\Hom_{\Art}(M',M)$ and for
all~$x\in \Br^+(M')$;
\item\label{lem:[Phi]comp.b}
$[\Phi\circ\Phi']=[\Phi]\circ[\Phi']$ as maps $\mathscr P(I'')\to
\mathscr P(I)$ for any~$\Phi\in\Hom_{\Art}(M',M)$,
$\Phi'\in\Hom_{\Art}(M'',M')$;
\item\label{lem:[Phi]comp.c}
\label{lem:elem Artin hom.a}
   If~$\Phi\in \Hom_{\Art}(M',M)$ is
    disjoint and~$[\Phi](i)\not=\emptyset$ for all~$i\in\wh I$ then~$[\Phi]:\mathscr P(I')\to
    \mathscr P(I)$ is
    injective.
\item\label{lem:[Phi]comp.d} If~$\Phi\in\Hom_{\Heck }(M',M)$ is disjoint then~$\bigcap\limits_{1\le t\le r}[\Phi](J_t)=
    [\Phi](\bigcap\limits_{1\le t\le r}J_t)$ for any $\{J_t\}_{1\le t\le r}\subset \mathscr P(I')$.

\item\label{lem:[Phi]comp.e} $\Phi\in\Hom_{\Heck }(M',M)$ is
connected if and only if the~$[\Phi](i)$, $i\in I'$ are connected and~$[\Phi](i)\cup [\Phi](j)$
is connected whenever $m'_{ij}>2$, $i,j\in I'$.
\end{enmalph}
\end{lemma}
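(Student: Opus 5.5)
The plan is to prove the five parts in order, using only elementary properties of $\supp$ for Artin monoids. The key fact is that every element of $\Br^+(M)$ has a unique length, and all expressions of it as words in the generators $T_i$ involve the same set of letters. Indeed, the defining relations $\brd{T_iT_j}{m_{ij}}=\brd{T_jT_i}{m_{ij}}$ have both sides containing exactly the letters $i$ and $j$, so the set of letters is invariant under the relations. Hence $\supp(XY)=\supp X\cup\supp Y$, and $\supp$ of a word is its letter set.

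For part~(a), write $x=T'_{i_1}\cdots T'_{i_k}$, so that $\supp x=\{i_1,\dots,i_k\}$. Then $\Phi(x)=\Phi(T'_{i_1})\cdots\Phi(T'_{i_k})$, and the multiplicativity of $\supp$ gives
\[
\supp\Phi(x)=\bigcup_t[\Phi](i_t)=[\Phi](\supp x).
\]
For part~(b), apply~(a) twice to get
\[
[\Phi\circ\Phi'](i)=\supp\Phi(\Phi'(T''_i))=[\Phi]\bigl(\supp\Phi'(T''_i)\bigr)=[\Phi]([\Phi'](i)).
\]
Both sides of the claimed identity are unions over the elements of a subset, so equality on singletons gives equality on all of $\mathscr P(I'')$.

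For part~(c), suppose $J\ne K\subset I'$, say $j\in J\setminus K$. Then $[\Phi](j)$ is nonempty. By disjointness it meets none of the $[\Phi](k)$ with $k\in K$, so it lies in $[\Phi](J)$ but not in $[\Phi](K)$. Hence $[\Phi](J)\ne[\Phi](K)$. The symmetric case $j\in K\setminus J$ is identical.

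For part~(d), here $\Phi$ is a morphism in $\Heck$ and $[\Phi](i)=\supp\Phi(s'_i)$. The inclusion $\supseteq$ is clear from monotonicity. For $\subseteq$, take $a$ in the intersection. Then for each $t$ there is $j_t\in J_t$ with $a\in[\Phi](j_t)$. Disjointness forces all $j_t$ to be equal to a single index $j$, which then lies in $\bigcap_t J_t$, so $a\in[\Phi](\bigcap_t J_t)$. Nonemptiness is automatic here because $a$ witnesses it.

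For part~(e), the forward direction is immediate: singletons are connected, and $\{i,j\}$ with $m'_{ij}>2$ is connected. For the converse, let $J\subset I'$ be connected and suppose $[\Phi](J)=A\sqcup B$ with $A$ and $B$ orthogonal. Each $[\Phi](i)$, $i\in J$, is connected and so lies entirely in $A$ or entirely in $B$. Similarly, if $m'_{ij}>2$, then $[\Phi](i)\cup[\Phi](j)$ is connected and so lies in one side, which puts $i$ and $j$ on the same side. Since $\Gamma_J(M')$ is connected, all $i\in J$ are on the same side, so one of $A$, $B$ is empty.

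The only genuinely subtle point is making sure that $\supp$ behaves multiplicatively on the monoid side used in part~(a). In the Hecke setting of parts~(d) and~(e) only the definition of $[\Phi]$ is used, so no further input is needed there.
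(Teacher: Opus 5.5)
Parts (a)--(d) are correct and match the paper's proof, with only cosmetic differences. In (b) you check the identity on singletons and extend by unions, where the paper uses surjectivity of $\supp:\Br^+(M'')\to\mathscr P(I'')$. In (d) you chase one element through all $r$ sets at once, where the paper treats two sets and inducts. For (e) you give a direct partition argument instead of the paper's ``obvious induction on $|I'|$''. That is where there is a genuine gap, and the paper's proof has the same one.

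Your step ``$[\Phi](i)\cup[\Phi](j)$ lies in one side, which puts $i$ and $j$ on the same side'' assumes every $[\Phi](i)$ is nonempty. If $[\Phi](k)=\emptyset$, then $k$ lies on both sides, and the side cannot be propagated along a path of $\Gamma_J(M')$ through $k$. Empty images do occur for Hecke homomorphisms, for instance parabolic projections. Under the paper's definition $\emptyset$ counts as connected, since it is not a union of two nonempty orthogonal subsets. In that setting the converse in (e) is actually false.

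A counterexample is $\phi=\iota_{\{1,3\}}\circ p_{\{1,3\}}$, i.e.\ $s_1\mapsto s_1$, $s_2\mapsto 1$, $s_3\mapsto s_3$, which is an endomorphism of $(W(A_3),\star)$. It has $[\phi](1)=\{1\}$, $[\phi](2)=\emptyset$, $[\phi](3)=\{3\}$, so every local condition in (e) holds. Yet $[\phi](\{1,2,3\})=\{1,3\}$ is not connected.

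The fix is to assume $[\Phi](i)\ne\emptyset$ for all $i\in I'$. Equivalently, adopt the convention that connected sets are nonempty, which the hypothesis ``$[\Phi](i)$ is connected'' then forces. With that, your argument is complete: each $[\Phi](i)$ lies in exactly one of $A$, $B$, vertices joined by an edge of $\Gamma_J(M')$ lie on the same side, and connectedness of $\Gamma_J(M')$ finishes the proof.
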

\begin{proof}
Since~$\supp(XY)=\supp X\,\cup\,\supp Y$ for all $X,Y\in \Br^+(M)$, we have for all~$T'\in\Br^+(M')$ $$\supp\Phi(T')=\bigcup_{j\in\supp T'}
\supp \Phi(T'_j)=\bigcup_{j\in\supp T'} [\Phi](j)=
[\Phi](\supp T'),$$
which proves~\ref{lem:[Phi]comp.a}.
To prove part~\ref{lem:[Phi]comp.b},
note that by part~\ref{lem:[Phi]comp.a} we have
for all~$T''\in \Br^+(M'')$
$$
[\Phi\circ\Phi'](\supp T'')=
\supp(\Phi\circ\Phi')(T'')
=[\Phi](\supp\Phi'(T''))
=[\Phi]([\Phi'](\supp T'')).
$$
Since $\supp: \Br^+(M'')\to \mathscr P(I'')$
is surjective, the assertion follows.

To prove~\ref{lem:[Phi]comp.c}, suppose that~$[\Phi](J)=[\Phi](J')$ for some~$J\not=J'$. We may assume, without loss of generality,
that $J'\not\subset J$. Let~$j\in J\setminus J'$. Then $\emptyset\not=[\Phi](j)\subset [\Phi](J)=[\Phi](J')=\bigcup_{j\in J'} [\Phi](j')$ which is a contradiction since~$[\Phi](j)\cap[\Phi](j')=\emptyset$ for all~$j'\in J'$. Finally, $[\Phi](J)\cap [\Phi](J')=
\bigcup_{j\in J,j'\in J'}[\Phi](j)\cap[\Phi](j')$.
Since~$\Phi$ is disjoint, $[\Phi](j)\cap[\Phi](j')=\emptyset$ unless~$j=j'$ and so
$[\Phi](J\cap J')=\bigcup_{j\in J\cap J'}[\Phi](j)=[\Phi](J\cap J')$. The general case in part~\ref{lem:[Phi]comp.d} follows by an obvious induction.

One direction in part~\ref{lem:[Phi]comp.e} is evident while the other follows by an obvious induction on the cardinality of~$I'$.
\end{proof}

The following is immediate from definitions.
\begin{lemma}\label{lem:fund hom}
Let~$M=(m)_{i,j\in I}$ be a Coxeter matrix, let~$\mathsf M$
be any multiplicative monoid
and let $X_i$, $i\in I$ be a collection of 
elements in~$\mathsf M$. The assignments 
$T_i\mapsto X_i$, $i\in I$ define 
a homomorphism of monoids $\Br^+(M)\to \mathsf M$
if and only if~$m_{ij}\in B(X_i,X_j)\cup\{\infty\}$
for all~$i\not=j\in I$.
\end{lemma}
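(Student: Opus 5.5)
This is the universal property of a presented monoid, so the plan is to apply the presentation of $\Br^+(M)$ directly and then unwind the definition of $B(X_i,X_j)$.

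First recall how $\Br^+(M)$ is built. It is the quotient of the free monoid $F$ on the symbols $T_i$, $i\in I$, by the smallest congruence relation $\mathcal C$ containing the pairs $\bigl(\brd{T_iT_j}{m_{ij}},\brd{T_jT_i}{m_{ij}}\bigr)$ for all $i\neq j$ with $m_{ij}<\infty$. The assignments $T_i\mapsto X_i$ always extend uniquely to a homomorphism $\phi:F\to\mathsf M$.

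\emph{Sufficiency.} Suppose $m_{ij}\in B(X_i,X_j)\cup\{\infty\}$ for all $i\neq j$.
\begin{enumerate}
\item Let $\ker\phi=\{(u,v)\in F\times F : \phi(u)=\phi(v)\}$. This set is an equivalence relation, and it is compatible with multiplication because $\phi$ is a homomorphism, so it is a congruence on $F$.
\item For $m_{ij}<\infty$ the hypothesis gives $m_{ij}\in B(X_i,X_j)$. Since $\phi$ is multiplicative, $\phi\bigl(\brd{T_iT_j}{m_{ij}}\bigr)=\brd{X_iX_j}{m_{ij}}$, so the definition of $B(X_i,X_j)$ says exactly that each defining pair lies in $\ker\phi$.
\item By minimality of $\mathcal C$ we get $\mathcal C\subset\ker\phi$. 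Hence $\phi$ factors through $F/\mathcal C=\Br^+(M)$, giving the desired homomorphism.
\end{enumerate}

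\emph{Necessity.} Suppose $\Phi:\Br^+(M)\to\mathsf M$ is a homomorphism with $\Phi(T_i)=X_i$, and let $i\neq j$ with $m_{ij}<\infty$.
\begin{enumerate}
\item Apply $\Phi$ to the relation $\brd{T_iT_j}{m_{ij}}=\brd{T_jT_i}{m_{ij}}$.
\item Multiplicativity of $\Phi$ turns this into $\brd{X_iX_j}{m_{ij}}=\brd{X_jX_i}{m_{ij}}$.
\item Since $m_{ij}>1$ is a positive integer, this says $m_{ij}\in B(X_i,X_j)$.
\end{enumerate}
Pairs with $m_{ij}=\infty$ impose no relation and are covered by the $\cup\{\infty\}$ clause.

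Both directions are routine. The only care needed is to note that the empty condition for $m_{ij}=\infty$ is exactly what the $\cup\{\infty\}$ in the statement encodes.
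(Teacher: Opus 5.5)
Your proof is correct and follows the same route as the paper. The paper states this lemma without proof, as "immediate from definitions". Your argument supplies exactly that reasoning in both directions: the universal property of the monoid presentation of $\Br^+(M)$, combined with the definition of $B(X_i,X_j)$, where the $\cup\{\infty\}$ clause accounts for the pairs that impose no relation.
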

\begin{example}\label{ex:char hom}
Let~$\wh M=(\wh m_{ij})_{i,j\in \wh I}\in \Cox{\wh I}$,
$M\in\Cox I$. Let~$\mathbf X=\{X_i\,:\,i\in\wh I\}\subset\Br^+(M)$, be a family of commuting elements satisfying $X_i=X_j$
whenever $\wh m_{ij}$ is odd, $i,j\in\wh I$. Then the assignments 
$\wh T_i\mapsto X_i$, $i\in\wh I$
define \plink{char hom}$\Xi_{\mathbf X}\in\Hom_{\Art}(\wh M,M)$. We call such a homomorphism a {\em character homomorphism}. The most basic
example is the generalization~$\ell_{\mathbf d}$, $\mathbf d=(d_i)_{i\in\wh I}\in\ZZ_{\ge 0}^{\wh I}$ of 
the length homomorphism $\ell:\Br^+(\wh M)\to (\ZZ_{\ge 0},+)\cong \Br^+(A_1)$ defined by $\wh T_i\mapsto d_i$,
$i\in \wh I$ where $d_i=d_j$ whenever~$\wh m_{ij}$ is odd.
\end{example}
The following Lemma guarantees that {\em any} factorization of any radical of a central element gives rise to a homomorphism of Artin monoids. 
\begin{lemma}\label{lem:cradicals}
Let~$\mathsf M$ be a left or right cancellative monoid, $X_1,X_2\in \mathsf M$ and~$m\in\ZZ_{>1}$.  The assignments~$T_i\mapsto X_i$, $i\in\{1,2\}$
define a homomorphism~$\Br^+(I_2(2m))\to\mathsf M$ if and only if 
$(X_1X_2)^m$ is central in the submonoid of~$\mathsf M$
generated by~$X_1$ and~$X_2$.
\end{lemma}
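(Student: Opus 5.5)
By Lemma~\ref{lem:fund hom}, the assignments $T_i\mapsto X_i$ define a homomorphism $\Br^+(I_2(2m))\to\mathsf M$ if and only if $2m\in B(X_1,X_2)$. Since $2m$ is even, this condition reads $(X_1X_2)^m=(X_2X_1)^m$. So the claim is that, in a one-sided cancellative monoid, $(X_1X_2)^m=(X_2X_1)^m$ holds exactly when $z:=(X_1X_2)^m$ commutes with both $X_1$ and $X_2$. Indeed, $z$ is central in $\langle X_1,X_2\rangle$ precisely when it commutes with the two generators. I will prove the two implications separately. Cancellativity is only needed for the converse.

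Suppose first that $(X_1X_2)^m=(X_2X_1)^m$. Then
\[
X_1 z=X_1(X_2X_1)^m=(X_1X_2)^mX_1=zX_1 .
\]
The first equality substitutes $z=(X_2X_1)^m$, and the second is just reassociation. Similarly,
\[
X_2 z=X_2(X_1X_2)^m=(X_2X_1)^mX_2=zX_2 ,
\]
where the last step again uses $(X_2X_1)^m=z$. Hence $z$ is central in the submonoid, and this direction needs no cancellation.

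Conversely, assume $z$ commutes with $X_1$ and $X_2$. Regrouping gives $X_1(X_2X_1)^m=(X_1X_2)^mX_1=zX_1$. Since $zX_1=X_1z$, we get $X_1(X_2X_1)^m=X_1(X_1X_2)^m$. If $\mathsf M$ is left cancellative, cancelling $X_1$ on the left yields $(X_2X_1)^m=(X_1X_2)^m$.

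If $\mathsf M$ is only right cancellative, I use the other generator instead. Regrouping gives $(X_2X_1)^mX_2=X_2(X_1X_2)^m=X_2z$. Since $X_2z=zX_2=(X_1X_2)^mX_2$, we get $(X_2X_1)^mX_2=(X_1X_2)^mX_2$. Cancelling $X_2$ on the right gives the same identity $(X_2X_1)^m=(X_1X_2)^m$.

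The only delicate point is choosing the side of the cancellation correctly in each case. Apart from that, the argument consists entirely of regrouping words, and the hypothesis $m>1$ plays no role beyond ensuring that $I_2(2m)$ is a Coxeter matrix.
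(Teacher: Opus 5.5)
Your proof is correct and follows essentially the same route as the paper. You reduce to the relation $(X_1X_2)^m=(X_2X_1)^m$ and prove the converse by cancelling $X_1$ on the left, or $X_2$ on the right, after using that $z$ commutes with that generator. You also write out the forward direction and the right-cancellative case, which the paper calls obvious or omits.
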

\begin{proof}
The forward direction is obvious. For the converse, let~$z=(X_1X_2)^m$ and suppose that~$X_1z=zX_1$. Then~$X_1(X_2X_1)^m=(X_1X_2)^mX_1= z X_1=X_1z$ and so~$(X_2X_1)^m=z$ since~$\mathsf M$ is 
left cancellative. The argument for a right cancellative monoid uses commutation with~$X_2$ and is omitted.
\end{proof}

\begin{definition}\label{def:types}
Let~$\wh M=(\wh m_{ij})_{i,j\in\wh I}\in\Cox{\wh I}$ and let~$M\in\Cox I$.
We say that~$\Phi:\Hom_{\Art}(\wh M,M)$ is:
\begin{itemize}
\item[-] {\em square free} if $\Phi(\wh T_i)\in \SQF^+(M)$
for all~$i\in\wh I$;
\item[-] {\em strongly square free} if $\Phi(\SQF^+(\wh M))\subset
\SQF^+(M)$.
\item[-] {\em optimal} if
$\wh m_{ij}=\min B(\Phi(\wh T_i),\Phi(\wh T_j))$ for all~$i,j\in\wh I$ such that $B(\Phi(\wh T_i),\Phi(\wh T_j))$ is non-empty and~$\Phi(\wh T_i)\not=\Phi(\wh T_j)$.
\end{itemize}
\end{definition}
\begin{lemma}\label{lem:elem Artin hom}
Let $M\in\Cox I$, $\wh M=(\wh m_{ij})_{i,j\in\wh I}\in\Cox{\wh I}$ and
let~$\Phi\in\Hom_{\Art}(\wh M,M)$.
\begin{enmalph}
  \item\label{lem:elem Artin hom.b} If~$\wh M$ is of finite type then
    $\Phi$ is strongly square free if and only if $\Phi(\wh T_{w_\circ^{\wh I}})\in\SQF^+(M)$.
    \item\label{lem:elem Artin hom.b'} $\Phi$
    commutes with~${}^{op}$ if and only
    if all the~$\Phi(\wh T_i)$, $i\in\wh I$ are
    ${}^{op}$-invariant.
    \item\label{lem:elem Artin hom.c}
    $\ell(\Phi(\wh T_i))=\ell(\Phi(\wh T_j))$
    for all $i,j\in \wh I$ such that~$\wh m_{ij}$
    is odd. In particular, if $[\Phi](i)=\emptyset$
    for some~$i\in\wh I$ then~$[\Phi](j)=\emptyset$
    for all~$j\in\wh I$ such that~$\wh m_{ij}$ is odd.
\end{enmalph}
\end{lemma}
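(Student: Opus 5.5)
Each of the three parts is handled separately, using only the length homomorphism, the anti-involution ${}^{op}$, and the characterization of square free elements.

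\emph{Part~\ref{lem:elem Artin hom.b}.} One direction is trivial, because $\wh T_{w_\circ^{\wh I}}\in\SQF^+(\wh M)$. For the converse, assume $\Phi(\wh T_{w_\circ^{\wh I}})\in\SQF^+(M)$ and let $T\in\SQF^+(\wh M)$. By Proposition~\partref{prop:fund elts BrSa.e}, $T$ is a left divisor of $\wh T_{w_\circ^{\wh I}}$. Write $\wh T_{w_\circ^{\wh I}}=TU$ with $U\in\Br^+(\wh M)$. Applying $\Phi$ gives $\Phi(\wh T_{w_\circ^{\wh I}})=\Phi(T)\Phi(U)$. Lemma~\ref{lem:sq free fact} then shows that $\Phi(T)$ is square free.

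\emph{Part~\ref{lem:elem Artin hom.b'}.} Suppose $\Phi\circ{}^{op}={}^{op}\circ\Phi$. Since $\wh T_i^{op}=\wh T_i$, we get $\Phi(\wh T_i)^{op}=\Phi(\wh T_i)$. Conversely, suppose each $\Phi(\wh T_i)$ is ${}^{op}$-invariant. The map $X\mapsto \Phi(X^{op})^{op}$ is a composition of two anti-homomorphisms with a homomorphism, so it is a homomorphism of monoids $\Br^+(\wh M)\to\Br^+(M)$. It agrees with $\Phi$ on the generators $\wh T_i$, hence equals $\Phi$. This gives $\Phi(X^{op})=\Phi(X)^{op}$ for all $X$.

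\emph{Part~\ref{lem:elem Artin hom.c}.} Let $\wh m_{ij}=2k+1$. Then $\Phi(\wh T_i)$ and $\Phi(\wh T_j)$ satisfy $\brd{XY}{2k+1}=\brd{YX}{2k+1}$ with $X=\Phi(\wh T_i)$ and $Y=\Phi(\wh T_j)$. Apply the length homomorphism $\ell$, which maps $\Br^+(M)$ to $(\ZZ_{\ge0},+)$. The left side has $k+1$ factors equal to $X$ and $k$ equal to $Y$; the right side has the reverse counts. This gives $(k+1)\ell(X)+k\ell(Y)=k\ell(X)+(k+1)\ell(Y)$, so $\ell(X)=\ell(Y)$.

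For the last assertion of part~\ref{lem:elem Artin hom.c}, note that $[\Phi](i)=\emptyset$ means $\Phi(\wh T_i)=1$, which has length $0$. Then $\Phi(\wh T_j)$ also has length $0$, so $\Phi(\wh T_j)=1$ and $[\Phi](j)=\emptyset$.

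None of the three parts presents a real obstacle. The only points that need care are that the relation in part~\ref{lem:elem Artin hom.b'} is verified via the universal property on generators, and that part~\ref{lem:elem Artin hom.b} uses Proposition~\partref{prop:fund elts BrSa.e}, which applies because $\wh M$ is of finite type.
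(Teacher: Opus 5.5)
Your proposal is correct and follows the paper's own proof: part (a) via Proposition~\partref{prop:fund elts BrSa.e} (you add the explicit appeal to Lemma~\ref{lem:sq free fact}), part (b) by the generator check that the paper calls obvious, and part (c) by the same length count applied to the odd braid relation. You have simply written out the details the paper leaves implicit.
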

\begin{proof}
Part~\ref{lem:elem Artin hom.b} is
immediate from Proposition~\partref{prop:fund elts BrSa.e}.
Part~\ref{lem:elem Artin hom.b'} is obvious.
To prove~\ref{lem:elem Artin hom.c}, note that
$$
\brd{\Phi(\wh T_i)\Phi(\wh T_j)}{\wh m_{ij}}=
\Phi(\brd{\wh T_i\wh T_j}{\wh m_{ij}})=\Phi(\brd{\wh T_j\wh T_i}{\wh m_{ij}})=
\brd{\Phi(\wh T_j)\Phi(\wh T_i)}{\wh m_{ij}}
$$
implies, for~$\wh m_{ij}$ odd,
that $$
\tfrac12(\wh m_{ij}-1)(\ell(\Phi(\wh T_i))
+\ell(\Phi(\wh T_j)))+\ell(\Phi(\wh T_i))=
\tfrac12(\wh m_{ij}-1)(\ell(\Phi(\wh T_j))
+\ell(\Phi(\wh T_i)))+\ell(\Phi(\wh T_j)).
$$
The assertion is now immediate.
\end{proof}

\begin{lemma}\label{lem:blowup}
Let~$M\in\Cox I$ and let
$\wh M=(d_{ij}m_{ij})_{i,j\in I}$ where
$d_{ij}=d_{ji}\in\mathbb Z_{>0}\cup\{\infty\}$ and~$d_{ii}=1$, $i,j\in I$. Then~$\wh M\in\Cox I$ and
the assignments $\wh T_i\mapsto T_i$, $i\in I$
define a homomorphism $\Br^+(\wh M)\to \Br^+(M)$.
\end{lemma}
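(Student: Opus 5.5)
The plan is to check the two assertions of the statement in turn: first that $\wh M$ is a Coxeter matrix over $I$, and then that the assignment on generators respects every defining relation of $\Br^+(\wh M)$.

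\emph{$\wh M$ is a Coxeter matrix.} This is a direct check of the definition, using the conventions on $\infty$ from the general notation.
\begin{itemize}
\item Symmetry of $\wh M$ follows from the symmetry of $M$ together with $d_{ij}=d_{ji}$.
\item On the diagonal, $\wh m_{ii}=d_{ii}m_{ii}=1\cdot 1=1$.
\item For $i\ne j$ we have $m_{ij}\in\ZZ_{>1}\cup\{\infty\}$ and $d_{ij}\in\ZZ_{>0}\cup\{\infty\}$. Their product is therefore either a positive integer at least $m_{ij}\ge 2$, or $\infty$ by the convention $n\infty=\infty$. In either case it lies in $\ZZ_{>1}\cup\{\infty\}$.
\end{itemize}

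\emph{The homomorphism.} I will invoke Lemma~\ref{lem:fund hom} with $\mathsf M=\Br^+(M)$ and $X_i=T_i$. It then suffices to show that $\wh m_{ij}\in B(T_i,T_j)\cup\{\infty\}$ for all $i\ne j$.
\begin{itemize}
\item If $\wh m_{ij}=\infty$ there is nothing to prove.
\item Otherwise $\wh m_{ij}<\infty$, which forces both $m_{ij}<\infty$ and $d_{ij}<\infty$. The defining relation of $\Br^+(M)$ gives $m_{ij}\in B(T_i,T_j)$. Lemma~\partref{lem:taut homs.a} then gives $\ZZ_{>0}m_{ij}\subset B(T_i,T_j)$, so $d_{ij}m_{ij}\in B(T_i,T_j)$.
\end{itemize}

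There is no real obstacle here. The only point needing care is the bookkeeping with $\infty$: one must confirm that $\wh m_{ij}$ finite forces both factors to be finite. This holds because a finite factor times $\infty$ equals $\infty$ by the stated convention.
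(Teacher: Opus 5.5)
Your proof is correct and follows the same route as the paper, which simply cites Lemma~\partref{lem:taut homs.a}. Your check that $\wh M$ is a Coxeter matrix, your handling of $\infty$, and your appeal to Lemma~\ref{lem:fund hom} are exactly the details that citation leaves implicit.
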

\begin{proof}
This is immediate from Lemma~\partref{lem:taut homs.a}.
\end{proof}
We call such homomorphisms {\em tautological}, and they are, in a sense, the opposite of optimal ones.
\begin{example}\label{ex:taut not inj}
For any~$d\in\ZZ_{>0}$, $m\in\ZZ_{\ge 2}$ there is a tautological homomorphism from $\Br^+(I_2(dm))$ to~$\Br^+(I_2(m))$. Such a homomorphism
is never injective because 
$\brd{T_iT_j}{m}\not=\brd{T_jT_i}m$ in~$\Br^+(I_2(dm))$, yet these elements coincide in~$\Br^+(I_2(m))$.
\end{example}

\begin{lemma}\label{lem:factor homs}
Every homomorphism of Artin monoids is a
composition of a tautological homomorphism
with an optimal one.
\end{lemma}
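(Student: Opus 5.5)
Let $\Phi\in\Hom_{\Art}(\wh M,M)$ with $\wh M=(\wh m_{ij})_{i,j\in\wh I}$, and write $X_i=\Phi(\wh T_i)$, $i\in\wh I$. The plan is to construct a Coxeter matrix $\wh M^{o}=(m^o_{ij})_{i,j\in\wh I}$ that keeps only the relations between the $X_i$ which $\Phi$ actually needs in their minimal form. Then $\Phi$ factors as $\Br^+(\wh M)\to\Br^+(\wh M^o)\to\Br^+(M)$: the first map is tautological in the sense of Lemma~\ref{lem:blowup} (blowing up $\wh M^o$ to $\wh M$), and the second map, $\wh T_i^o\mapsto X_i$, is optimal.

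The matrix $\wh M^o$ is defined as follows. Set $m^o_{ii}=1$. For $i\ne j$:
\begin{itemize}
\item[] if $\wh m_{ij}<\infty$ and $X_i\ne X_j$, put $m^o_{ij}=\min B(X_i,X_j)$;
\item[] if $\wh m_{ij}<\infty$ and $X_i=X_j$, put $m^o_{ij}=\wh m_{ij}$;
\item[] if $\wh m_{ij}=\infty$, put $m^o_{ij}=\infty$.
\end{itemize}
In the first case, Lemma~\ref{lem:fund hom} gives $\wh m_{ij}\in B(X_i,X_j)$, so $B(X_i,X_j)\ne\emptyset$ and the minimum exists. Since $\Br^+(M)$ is cancellative, Lemma~\partref{lem:taut homs.b} yields $B(X_i,X_j)=\ZZ_{>0}m^o_{ij}$, hence $m^o_{ij}$ divides $\wh m_{ij}$. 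We also need $m^o_{ij}\ge 2$. If $1\in B(X_i,X_j)$ then $X_i=X_j$, which is excluded in this case. Thus $\wh M^o$ is a symmetric Coxeter matrix, and we may write $\wh m_{ij}=d_{ij}m^o_{ij}$ with $d_{ij}\in\ZZ_{>0}\cup\{\infty\}$. Here $d_{ij}=1$ in the second case, and $d_{ij}=1$ with the convention $\infty=1\cdot\infty$ in the third.

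The first map is then given by Lemma~\ref{lem:blowup}: the tautological homomorphism $\Theta:\Br^+(\wh M)\to\Br^+(\wh M^o)$, $\wh T_i\mapsto \wh T^o_i$.

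For the second map, define $\Phi^o:\wh T^o_i\mapsto X_i$. By Lemma~\ref{lem:fund hom}, $\Phi^o$ is a homomorphism as long as $m^o_{ij}\in B(X_i,X_j)\cup\{\infty\}$ for every $i\ne j$. In the first case this holds by the choice of the minimum. In the second case $X_i=X_j$, so every $k$ lies in $B(X_i,X_j)$. The third case is trivially fine. By construction $\Phi=\Phi^o\circ\Theta$, since the two agree on generators.

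It remains to check that $\Phi^o$ is optimal, and this is the main subtlety. Optimality requires $m^o_{ij}=\min B(X_i,X_j)$ whenever $B(X_i,X_j)\ne\emptyset$ and $X_i\ne X_j$. The first case satisfies this by definition. The third case is where the difficulty lies: $\wh m_{ij}=\infty$, yet $B(X_i,X_j)$ might still be non-empty. To cover it, we refine the definition so that the first case applies whenever $X_i\ne X_j$ and $B(X_i,X_j)\ne\emptyset$, regardless of $\wh m_{ij}$. For $\wh m_{ij}=\infty$ this gives $\infty=d_{ij}m^o_{ij}$ with $d_{ij}=\infty$, which is allowed by Lemma~\ref{lem:blowup} and the convention $n\infty=\infty$. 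With this refinement, optimality of $\Phi^o$ holds by construction, and the factorization is complete.
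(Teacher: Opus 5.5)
Your proof is correct and, with your refined definition, it is exactly the paper's argument. The paper likewise sets $\wh m'_{ij}=\wh m_{ij}$ when $\Phi(\wh T_i)=\Phi(\wh T_j)$, sets $\wh m'_{ij}=\infty$ when the images differ and $B(\Phi(\wh T_i),\Phi(\wh T_j))=\emptyset$, and otherwise sets $\wh m'_{ij}=\min B(\Phi(\wh T_i),\Phi(\wh T_j))$, which is $>1$ and divides $\wh m_{ij}$ by Lemmas~\ref{lem:fund hom} and~\partref{lem:taut homs.b}; it then factors $\Phi$ through the tautological map of Lemma~\ref{lem:blowup}. You could state this case split from the outset, since it was your initial split on whether $\wh m_{ij}<\infty$ that forced the later correction.
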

\begin{proof}
Let $M=(m_{ij})_{i,j\in I}$, $\wh M=(\wh m)_{i,j\in\wh I}$ be Coxeter matrices
and let $\Phi\in\Hom_{\Art}(\wh M,M)$.
Define~$\wh M'=(\wh m'_{ij})_{i,j\in\wh I}$ as follows.
If~$\Phi(\wh T_i)=\Phi(\wh T_j)$, set 
set~$\wh m'_{ij}=\wh m_{ij}$. Otherwise,
if~$B_{ij}:=B(\Phi(\wh T_i),\Phi(\wh T_j))$ is empty then~$\wh m_{ij}=\infty$ and we set~$\wh m'_{ij}=\infty$. If~$B_{ij}\not=\emptyset$, then, since~$\Br^+(M)$ is cancellative,
$B_{ij}=
\ZZ_{>0}\wh m'_{ij}$ for some~$\wh m'_{ij}\in\ZZ_{>0}$
by Lemma~\partref{lem:taut homs.b}.
Moreover, since~$\Phi(\wh T_i)\not=\Phi(\wh T_j)$, $m'_{ij}>1$.
Since~$\wh m_{ij}\in B_{ij}\cup\{\infty\}$ by Lemma~\ref{lem:fund hom}, we have $\wh m_{ij}=d_{ij}\wh m'_{ij}$ for some~$d_{ij}\in\ZZ_{>0}\cup\{\infty\}$. Thus, $\wh M'\in\Cox{I'}$,
the assignments $\wh T_i\mapsto \wh T'_i$, $i\in \wh I$ define a tautological $\Phi'\in\Hom_{\Art}(\wh M,\wh M')$, and
the assignments $\wh T'_i\mapsto \Phi(\wh T_i)$, $i\in\wh I$, define an optimal
$\Phi''\in\Hom_{\Art}(\wh M',M)$. By construction,
$\Phi=\Phi''\circ\Phi'$.
\end{proof}

\subsection{Decorating homomorphisms from Artin
monoids}\label{subs:decor hom}
We will now discuss a machinery which allows us to produce new homomorphisms from Artin monoids to other monoids from existing ones. This construction will be used extensively in the sequel.

\begin{definition}\label{def:decoration}
Let~$M=(m_{ij})_{i,j\in I}$ be a Coxeter matrix, let~$\mathsf M$ be a multiplicative monoid and let~$\Phi:\Br^+(M)\to\mathsf M$ be a 
homomorphism of monoids. 
We say that
$\mathbf z=(z_i)_{i\in I}\in\mathsf M^I$ 
is a {\em decoration} of~$\Phi$ if 
the assignments~$T_i\mapsto \Phi(T_i)z_i$, $i\in I$
define a homomorphism~\plink{Phi z}$\Phi_{\mathbf z}:\Br^+(M)\to \mathsf M$.
\end{definition}
We will sometimes refer to~$\Phi_{\boldsymbol z}$
as a decorated companion of~$\Phi$. The following Lemma is immediate.
\begin{lemma}\label{lem:decs are invertible}
Let~$M\in\Cox I$, let $\mathsf M$ be a multiplicative
monoid and let~$\mathbf z=(z_i)_{i\in I}$ with all the~$z_i$ invertible. Then~$\mathbf z$ is a decoration of~$\Phi$ if and only if~$\mathbf z^{-1}=(z_i^{-1})_{i\in I}$ is a decoration of~$\Phi_{\mathbf z}$ and~$(\Phi_{\mathbf z})_{\mathbf z^{-1}}=\Phi$.
\end{lemma}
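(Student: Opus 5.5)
The lemma is a direct unwinding of Definition~\ref{def:decoration}, and I would prove both implications at once by observing that the two decorated homomorphisms act on the same generator images.

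Assume all the $z_i$ are invertible. Then $\mathbf z^{-1}=(z_i^{-1})_{i\in I}$ is well defined. The key identity holds in $\mathsf M$: for each $i\in I$,
\[
(\Phi(T_i)z_i)\,z_i^{-1}=\Phi(T_i).
\]

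For the forward direction, suppose $\mathbf z$ is a decoration of $\Phi$, so $\Phi_{\mathbf z}$ is a homomorphism with $\Phi_{\mathbf z}(T_i)=\Phi(T_i)z_i$. The assignments $T_i\mapsto \Phi_{\mathbf z}(T_i)z_i^{-1}$ are then exactly $T_i\mapsto\Phi(T_i)$, $i\in I$. These define a homomorphism, namely $\Phi$ itself. Hence $\mathbf z^{-1}$ is a decoration of $\Phi_{\mathbf z}$. Moreover, $(\Phi_{\mathbf z})_{\mathbf z^{-1}}$ and $\Phi$ agree on the generators $T_i$, so they coincide.

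For the converse, suppose $\mathbf z^{-1}$ is a decoration of $\Phi_{\mathbf z}$. Implicitly this requires that $\Phi_{\mathbf z}$ exists as a homomorphism, since the notion of a decoration is only defined relative to a homomorphism. If that existence is already given, then $\mathbf z$ is a decoration of $\Phi$ by definition, and the equality $(\Phi_{\mathbf z})_{\mathbf z^{-1}}=\Phi$ follows as in the forward direction.

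The reverse implication has more substance if we read the statement symmetrically via Lemma~\ref{lem:fund hom}. Let $\Psi$ be any homomorphism and apply the forward argument with $\Psi=\Phi_{\mathbf z}$ and the invertible decoration $\mathbf z^{-1}$. This recovers $\Phi$ as $\Psi_{\mathbf z^{-1}}$ and shows that $\mathbf z=(\mathbf z^{-1})^{-1}$ is a decoration of it. So the situation is symmetric under $\mathbf z\leftrightarrow\mathbf z^{-1}$.

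There is no real obstacle here. The only point requiring care is this: a homomorphism from $\Br^+(M)$ is determined by the images of the generators $T_i$. Two homomorphisms with equal generator images are therefore equal, and a map defined on generators is a homomorphism precisely when the relations are satisfied, which is the criterion of Lemma~\ref{lem:fund hom}.
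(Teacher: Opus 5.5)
Your proof is correct and is exactly the unwinding of Definition~\ref{def:decoration} that the paper has in mind when it calls this lemma immediate. The identity $\Phi_{\mathbf z}(T_i)z_i^{-1}=\Phi(T_i)$ gives the forward direction, and the converse holds trivially because being a decoration of $\Phi_{\mathbf z}$ already presupposes that $\Phi_{\mathbf z}$ is a homomorphism.
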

The following result provides a rather strong sufficient condition for the existence of a decoration of 
a given homomorphism, which will be used in multiple proofs later.
\begin{theorem}\label{thm:decoration sufficient}
Let~$M=(m_{ij})_{i,j\in I}$ be a Coxeter matrix, let~$\mathsf M$ be a multiplicative monoid and let~$\Phi:\Br^+(M)\to\mathsf M$ be a 
homomorphism of monoids. 
Suppose that for any~$i,j\in I$
we are given $z_{i,j}^{(k)}$, $k\in[1,m_{ij}]$
such that $z_{i,j}^{(1)}=z_{i,i}^{(1)}$ for all~$j\in I$
and for all~$i\not=j$ with~$m_{ij}<\infty$
\begin{enumerate}[label={$\arabic*^\circ.$},
ref={$\arabic*^\circ$}]
    \item\label{thm:decoration sufficient.1}
$z_{i,j}^{(k)}\Phi(T_j)=\Phi(T_j)z_{i,j}^{(k+1)}$
if~$k$ is odd while $z_{i,j}^{(k)}\Phi(T_i)=\Phi(T_i)z_{i,j}^{(k+1)}$ if~$k$
is even, $k\in[1,m_{ij}-1]$;
\item\label{thm:decoration sufficient.2} $z_{i,j}^{(m_{ij})}z_{j,i}^{(m_{ij}-1)}\cdots 
=z_{j,i}^{(m_{ij})}z_{i,j}^{(m_{ij}-1)}\cdots$.
\end{enumerate}
Then~$\mathbf z=(z_{i,i}^{(1)})_{i\in I}$ is a
decoration of~$\Phi$. Moreover, if~\ref{thm:decoration sufficient.1} is satisfied and~$\mathsf M$ is left 
cancellative then~$\mathbf z=(z_{i,i}^{(1)})_{i\in I}$
is a decoration of~$\Phi$ if and only if~\ref{thm:decoration sufficient.2} holds.
\end{theorem}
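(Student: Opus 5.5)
The plan is to check the braid relations for the elements $X_i:=\Phi(T_i)z_i$, where $z_i=z^{(1)}_{i,i}$. By Lemma~\ref{lem:fund hom}, $\mathbf z$ is a decoration exactly when $\brd{X_iX_j}{m_{ij}}=\brd{X_jX_i}{m_{ij}}$ for all $i\not=j$ with $m_{ij}<\infty$. So fix such a pair $i,j$, write $m=m_{ij}$, $A=\Phi(T_i)$, $B=\Phi(T_j)$, and abbreviate $z^{(k)}=z^{(k)}_{i,j}$ and $w^{(k)}=z^{(k)}_{j,i}$. The hypothesis $z^{(1)}_{i,j}=z^{(1)}_{i,i}$ gives $z^{(1)}=z_i$ and $w^{(1)}=z_j$.

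The key step is a "push the decorations to the right" identity, proved by induction on $k\in[1,m]$:
\begin{equation*}
\brd{X_iX_j}{k}=\brd{AB}{k}\, z^{(k)}w^{(k-1)}z^{(k-2)}\cdots,
\end{equation*}
where the trailing product alternates between $z^{(\cdot)}$ and $w^{(\cdot)}$ with decreasing superscripts down to $1$. The case $k=1$ is immediate. For the inductive step, multiply on the right by the next factor, which is $X_j=Bz_j=Bw^{(1)}$ if $k$ is odd and $X_i=Az^{(1)}$ if $k$ is even. We then need to move $B$ (or $A$) leftward through the product $z^{(k)}w^{(k-1)}\cdots$.

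The crucial bookkeeping is this. Condition~\ref{thm:decoration sufficient.1} for the pair $(i,j)$ lets $z^{(k)}$ pass $B$ when $k$ is odd, and pass $A$ when $k$ is even, raising the superscript each time. The same condition for the pair $(j,i)$ lets $w^{(k)}$ pass $A$ when $k$ is odd and $B$ when $k$ is even. In the product $z^{(k)}w^{(k-1)}z^{(k-2)}\cdots$ the letters alternate $z,w$ while the parities of the superscripts alternate as well. Consequently the same generator ($B$ if $k$ is odd, $A$ if $k$ is even) can be moved past every factor. Moving it across the whole product raises every superscript by one, turning the product into $z^{(k+1)}w^{(k)}\cdots w^{(2)}$, with the new letter landing immediately after $\brd{AB}{k}$. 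The trailing $w^{(1)}$ (or $z^{(1)}$) coming from the new factor then completes the product at level $k+1$. I expect this parity check to be the main, though only notational, obstacle. I will handle it by writing out the two parity cases explicitly. I also need to note that the steps only use indices up to $k+1\le m$, which is exactly the range where \ref{thm:decoration sufficient.1} is assumed.

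The symmetric identity holds for $\brd{X_jX_i}{m}$ with the roles of $z$ and $w$ swapped. Since $\brd{AB}{m}=\brd{BA}{m}$ because $\Phi$ is a homomorphism, the braid relation for the $X$'s follows from \ref{thm:decoration sufficient.2}, which proves sufficiency. For the converse under left cancellativity, assuming \ref{thm:decoration sufficient.1}, the displayed identity still holds. Equality of the two braid words then reads $\brd{AB}{m}\,(z^{(m)}w^{(m-1)}\cdots)=\brd{AB}{m}\,(w^{(m)}z^{(m-1)}\cdots)$, and left cancellation of $\brd{AB}{m}$ yields \ref{thm:decoration sufficient.2}.
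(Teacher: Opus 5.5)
Your proposal is correct and follows essentially the same route as the paper. Both reduce to a single pair $i,j$, use $1^\circ$ to rewrite $\brd{X_iX_j}{m}$ as $\brd{AB}{m}$ followed by the alternating product $z_{i,j}^{(m)}z_{j,i}^{(m-1)}\cdots$, and then conclude from $\brd{AB}{m}=\brd{BA}{m}$ together with $2^\circ$, using left cancellation for the converse. The only difference is mirrored bookkeeping, and your parity check is correct: the paper peels factors off the right and pushes each new $z^{(1)}$ rightward through a braid word in the $\Phi(T)$'s (Lemmas~\ref{lem:decoration 1} and~\ref{lem:decoration 2}), whereas you build from the left and push each new generator leftward through the accumulated product of $z$'s.
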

\begin{proof}
It suffices to prove the theorem for~$I$ with~$|I|=2$.
Let~$m=m_{ij}$, $\{i,j\}=I$ and assume that~$m<\infty$.
Abbreviate $t_i=\Phi(T_i)$, $i\in I$ and~$z_i^{(k)}=z_{i,j}^{(k)}$, $\{i,j\}=I$, $k\in[1,m]$. We will
use the convention that~$i+r=i$, $i\in I$ if~$r\in\ZZ$ is even and~$i+r=j$ if~$r$ is odd, $\{i,j\}=I$.
Using this convention,
the condition~\ref{thm:decoration sufficient.1}
can be written as
\begin{equation}\label{eq:decoration-1}
z_i^{(k)}\Phi(T_{i+k})=\Phi(T_{i+k})z_i^{(k+1)},\qquad 
i\in I,\,k\in[1,m-1],
\end{equation}
while~\ref{thm:decoration sufficient.2} becomes
\begin{equation}\label{eq:decoration-2}
    \dscprod_{1\le k\le m} z^{(k)}_{i-k}
    =\dscprod_{1\le k\le m} z^{(k)}_{i+r-k},\qquad i\in I,\,r\in\ZZ.
\end{equation}
\begin{lemma}\label{lem:decoration 1}
We have
$z_i^{(k)}\brd{t_{i+k}t_{i+k+1}}{l}=
\brd{t_{i+k}t_{i+k+1}}{l}z_i^{(k+l)}
$ for any~$i\in I$, $l\in[1,m]$ and $k\in[1,m-l]$.
\end{lemma}
\begin{proof}
We use induction on~$l$, the case~$l=1$ being just~\eqref{eq:decoration-1}.
For the inductive step, since  $\brd{t_jt_{j+1}}{r}=
t_j\,\brd{t_{j+1}t_{j+2}}{r-1}$ for any~$j\in I$, $r\in\ZZ_{>0}$, we have 
by~\eqref{eq:decoration-1} and the induction hypothesis
\begin{align*}
z_i^{(k)}\brd{t_{i+k}t_{i+k+1}}{l+1}&=
z_i^{(k)}t_{i+k}\brd{t_{i+k+1}t_{i+k+2}}{l}\\
&=t_{i+k}z_i^{(k+1)}\brd{t_{i+k+1}t_{i+k+2}}{l}
=\brd{t_{i+k}t_{i+k+1}}{l+1}z_i^{(k+l+1)}.\qedhere
\end{align*}
\end{proof}
\begin{lemma}\label{lem:decoration 2}
Let~$i\in I$. Then for any~$0\le l\le m$
\begin{align*}
\brd{(t_i z_i)(t_{i+1} z_{i+1})}{m}=\brd{(t_iz_i)(t_{i+1}z_{i+1})}{m-l}\,\brd{ t_{i+m-l}t_{i+m-l+1}}{l}\,
\dscprod_{1\le k\le l} z^{(k)}_{i+m-k}.
\end{align*}
\end{lemma}
\begin{proof}
We use induction on~$l$, the case~$l=0$ being trivial. 
For the inductive step we have by Lemma~\ref{lem:decoration 1} and the induction hypothesis
\begin{align*}
\brd{(t_i z_i)(t_{i+1} z_{i+1})}{m}
&=\brd{(t_iz_i)(t_{i+1}z_{i+1})}{m-l-1}
\, t_{i+m-l-1}z_{i+m-l-1}\,\brd{ t_{i+m-l}t_{i+m-l+1}}{l}\,
\dscprod_{1\le k\le l} z^{(k)}_{i+m-k}\\
&=\brd{(t_iz_i)(t_{i+1}z_{i+1})}{m-l-1}
\, \brd{ t_{i+m-l-1}t_{i+m-l}}{l+1}\,
z_{i+m-l-1}^{(l+1)}\dscprod_{1\le k\le l} z^{(k)}_{i+m-k}\\
&=\brd{(t_iz_i)(t_{i+1}z_{i+1})}{m-(l+1)}
\, \brd{ t_{i+m-(l+1)}t_{i+m-l}}{l+1}\,
\dscprod_{1\le k\le l+1} z^{(k)}_{i+m-k}.\qedhere
\end{align*}
\end{proof}
Using Lemma~\ref{lem:decoration 2} with~$m=l$ we obtain,
$$
\brd{(t_iz_i)(t_j z_j)}{m}=
\brd{t_i t_j}{m}\dscprod_{1\le k\le m} z^{(k)}_{i+m-k},
$$
where~$I=\{i,j\}$. Since
$\dscprod_{1\le k\le m} z^{(k)}_{i+m-k}=
\dscprod_{1\le k\le m} z^{(k)}_{i+k}=
\dscprod_{1\le k\le m} z^{(k)}_{j+k}$ by~\eqref{eq:decoration-2} while~$\Phi$ being a homomorphism yields~$\brd{t_it_j}{m}=
\brd{t_jt_i}m$, the first assertion follows.
The second assertion is now immediate.
\end{proof}

Note that if the~$\Phi(T_i)$, $i\in I$ are invertible, 
then the condition~\ref{thm:decoration sufficient.1} determines, for all~$i\not=j\in I$ with~$m_{ij}<\infty$,
the $z_{i,j}^{(k)}$, $k\in[1,m_{ij}-1]$ uniquely as
\begin{equation}\label{eq:inv decoration}
z_{i,j}^{(k+1)}=\begin{cases}
    \Phi(T_i)^{-1} z_{i,j}^{(k)}\Phi(T_i),&\bar k=0,\\
    \Phi(T_j)^{-1} z_{i,j}^{(k)}\Phi(T_j),&\bar k=1    
\end{cases}
\end{equation}
or, in the closed form $
z_{i,j}^{(k)}=\Phi(\brd{T_jT_i}{k-1})^{-1}
z_{i,i}^{(1)}\Phi(\brd{T_jT_i}{k-1})$, $k\in [1,m_{ij}-1]$.
\begin{corollary}\label{cor:dec iff}
Suppose that~$\mathsf M$ is left cancellative and
the $\Phi(T_i)$, $i\in I$ are invertible.
Then~$\mathbf z=(z_{i,i}^{(1)})_{i\in I}$  is a decoration of~$\Phi$ if and only if
the condition~\ref{thm:decoration sufficient.2} of
Theorem~\ref{thm:decoration sufficient} holds
for the~$z_{i,j}^{(k)}$ defined by~\eqref{eq:inv decoration}.
\end{corollary}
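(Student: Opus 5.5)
The plan is to derive Corollary~\ref{cor:dec iff} directly from the second assertion of Theorem~\ref{thm:decoration sufficient}, which already gives the ``if and only if'' once condition~\ref{thm:decoration sufficient.1} holds and~$\mathsf M$ is left cancellative. So two things must be shown: first, that the elements~$z_{i,j}^{(k)}$ defined by~\eqref{eq:inv decoration} (starting from $z_{i,j}^{(1)}=z_{i,i}^{(1)}$) satisfy~\ref{thm:decoration sufficient.1}; second, that the ``only if'' direction may use \emph{these} particular~$z_{i,j}^{(k)}$.

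For the first point, fix~$i\ne j$ with $m_{ij}<\infty$. I will check~\ref{thm:decoration sufficient.1} by multiplying~\eqref{eq:inv decoration} on the left by~$\Phi(T_i)$ when~$k$ is even and by~$\Phi(T_j)$ when~$k$ is odd. This gives $\Phi(T_{i})z_{i,j}^{(k+1)}=z_{i,j}^{(k)}\Phi(T_i)$ for even~$k$ and the analogous identity with~$T_j$ for odd~$k$, which is exactly~\ref{thm:decoration sufficient.1}. Here ``invertible'' means invertible in~$\mathsf M$, so the conjugates lie in~$\mathsf M$ and the manipulation is legitimate. I will also record that, by invertibility, these are the \emph{unique} solutions of~\ref{thm:decoration sufficient.1} with the prescribed~$z_{i,j}^{(1)}$. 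The reason is that \ref{thm:decoration sufficient.1} reads $\Phi(T)z^{(k+1)}=z^{(k)}\Phi(T)$, and left multiplication by $\Phi(T)^{-1}$ forces~\eqref{eq:inv decoration}.

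With~\ref{thm:decoration sufficient.1} established for these~$z_{i,j}^{(k)}$ and~$\mathsf M$ left cancellative, the second assertion of Theorem~\ref{thm:decoration sufficient} applies verbatim. It says that~$\mathbf z=(z_{i,i}^{(1)})_{i\in I}$ is a decoration of~$\Phi$ if and only if~\ref{thm:decoration sufficient.2} holds for them. Consistent with how the theorem's proof works, the case $m_{ij}=\infty$ imposes no condition in either direction. The closed form stated before the corollary is only a remark, obtainable by a trivial induction on~$k$, and is not needed for the argument.

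The only potentially delicate step is confirming that the ``moreover'' part of Theorem~\ref{thm:decoration sufficient} is stated for \emph{any} family satisfying~\ref{thm:decoration sufficient.1}, rather than for some unspecified family. It is: its proof shows that $\brd{(t_iz_i)(t_jz_j)}{m}=\brd{t_it_j}{m}\dscprod z^{(k)}_{i+m-k}$ for any such family. Left cancellation of~$\brd{t_it_j}{m}=\brd{t_jt_i}{m}$ then turns the braid relation for the decorated elements into~\ref{thm:decoration sufficient.2}. So no obstacle is expected beyond this bookkeeping.
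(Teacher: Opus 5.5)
Your proposal is correct and follows the paper's own route. The paper observes, just before the corollary, that when the $\Phi(T_i)$ are invertible, condition~\ref{thm:decoration sufficient.1} is equivalent to the recursion~\eqref{eq:inv decoration}, so the corollary is simply the second (``moreover'') assertion of Theorem~\ref{thm:decoration sufficient} applied to that family; your check of~\ref{thm:decoration sufficient.1} by left multiplication and your note that $m_{ij}=\infty$ imposes no condition supply exactly the details the paper leaves implicit.
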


\begin{example}\label{ex:embedded std}
Let~$M=B_2$ and let~$\mathsf M=\Br(A_n)$, or~$\Br(D_{n+1})$ with~$n$ even, or~$\Br(E_6)$.
The assignments
$\wh T_1\mapsto 1$, $\wh T_2\mapsto T_{w_\circ^{I}}$ define a homomorphism~$\Phi:\Br^+(M)\to \mathsf M$
(cf. Example~\ref{ex:char hom}). Let~$\sigma$ be the involutive diagram automorphism of~$\mathsf M$ and suppose that~$J$ and~$\sigma(J)$ are weakly orthogonal.
Set
$z_{1,1}^{(1)}=T_{w_\circ^J}$ and~$z_{2,2}^{(1)}=1$. Then using~\eqref{eq:inv decoration} and
Proposition~\partref{prop:fund elts BrSa.c}
we obtain 
$z_{1,2}^{(2)}=T_{w_\circ^{I}}^{-1}T_{w_\circ^J}T_{w_\circ^{I}}=T_{w_\circ^{\sigma(J)}}$,
$z_{1,2}^{(3)}=z_{1,2}^{(2)}$ and
$z_{1,2}^{(4)}=T_{w_\circ^J}$, while~$z_{2,1}^{(k)}=1$,
$1\le k\le 4$. Since~$T_{w_\circ^J}$
commutes with~$T_{w_\circ^{\sigma(J)}}$ by Lemma~\ref{lem:weakly orthogonal} and so
$$
z_{1,2}^{(4)}z_{2,1}^{(3)}z_{1,2}^{(2)}z_{2,1}^{(1)}
=T_{w_\circ^{J}}T_{w_\circ^{\sigma(J)}}=T_{w_\circ^{\sigma(J)}}T_{w_\circ^J}=z_{2,1}^{(4)}z_{1,2}^{(3)}z_{2,1}^{(2)}z_{1,2}^{(1)}.
$$
Thus, $\mathbf z=(z_{1,1}^{(1)},z_{2,2}^{(1)})$ is a decoration of~$\Phi$
and $\Phi_{\mathbf z}:\Br^+(B_2)\to\mathsf M$ is given by $\wh T_1\mapsto T_{w_\circ^J}$, $\wh T_2\mapsto T_{w_\circ^{I}}$ and hence a homomorphism to the respective Artin monoid.

More generally, let~$M\in\Cox I$ be of finite type.
Let~$\wh I$ be a finite set.
Fix a total order~$\prec$ on~$\wh I$ and 
let~$\{J_i\}_{i\in\wh I}$ be any collection of non-empty subsets of~$I$ such $J_i\subset J_k$ whenever~$i\prec k\in \wh I$ and~$\bigcup_{i\in\wh I} J_i=I$. Let~$\sigma_i=\Sigma_{J_i}$, $i\in\wh I$, in the notation of Proposition~\partref{prop:fund elts BrSa.c}.
Define~$\wh M=(\wh m_{ik})_{i,k\in\wh I}$ by $\wh m_{ii}=1$, $i\in\wh I$
and
$$
\wh m_{ik}=\wh m_{ki}=\begin{cases}
2,& \text{$J_i=J_k$ or~$J_i=\sigma_k(J_i)$},\\
4,& \text{$J_i\subsetneq J_k$ and 
$J_i\not=\sigma_k(J_i)$ are weakly orthogonal},\\
\infty,&\text{otherwise}
\end{cases}
$$
for all~$i\prec k\in\wh I$, $i\not=k$. Then~$\wh M\in\Cox{\wh I}$.
By the above, the assignments~$\wh T_i\mapsto T_{w_\circ^{J_i}}$, $i\in\wh I$
define a homomorphism~$\Br^+(\wh M)\to \Br^+(M)$.
\end{example}

The following Lemma provides perhaps the simplest yet important example of a decoration.
\begin{lemma}\label{lem:cent decor}
Let~$M\in\Cox I$, let $\mathsf M$ be a multiplicative monoid and let
$\Phi:\Br^+(M)\to\mathsf M$ be a homomorphism for some multiplicative monoid~$\mathsf M$. Let~$\mathbf z=(z_i)_{i\in I}\in\mathsf M^I$ where 
$z_iz_j=z_jz_i$ for all~$i,j\in I$,
$z_i=z_j$ if~$m_{ij}$ is odd and all the~$z_i$, $i\in I$
are in the centralizer of the image of~$\Phi$.
Then~$\mathbf z$ is a decoration of~$\Phi$.
\end{lemma}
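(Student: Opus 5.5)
We must show that the assignments $T_i\mapsto \Phi(T_i)z_i$, $i\in I$, define a homomorphism $\Br^+(M)\to\mathsf M$. By Lemma~\ref{lem:fund hom}, this amounts to checking, for every pair $i\neq j$ with $m_{ij}<\infty$, that
\[
\brd{(\Phi(T_i)z_i)(\Phi(T_j)z_j)}{m_{ij}}=\brd{(\Phi(T_j)z_j)(\Phi(T_i)z_i)}{m_{ij}}.
\]
So the argument reduces to a rank-two computation.

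We could invoke Theorem~\ref{thm:decoration sufficient} with the constant choices $z_{i,j}^{(k)}=z_i$ for all $k$. Condition~\ref{thm:decoration sufficient.1} then holds because each $z_i$ centralizes the image of~$\Phi$. Condition~\ref{thm:decoration sufficient.2} reduces to the equality of two alternating products of the $z$'s. However, a direct argument is just as short, so we give that instead.

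Fix $i\neq j$ with $m=m_{ij}<\infty$, and write $t_k=\Phi(T_k)$. Since every $z_k$ commutes with $t_i$ and $t_j$, we can move all the $z$'s to the right:
\[
\brd{(t_iz_i)(t_jz_j)}{m}=\brd{t_it_j}{m}\,z_i^{\lceil m/2\rceil}z_j^{\lfloor m/2\rfloor}.
\]
Here we also used that $z_i$ and $z_j$ commute with each other, so the product of the $z$'s can be rearranged in any order.

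The same computation gives
\[
\brd{(t_jz_j)(t_iz_i)}{m}=\brd{t_jt_i}{m}\,z_j^{\lceil m/2\rceil}z_i^{\lfloor m/2\rfloor}.
\]
Since $\Phi$ is a homomorphism, $\brd{t_it_j}{m}=\brd{t_jt_i}{m}$. It therefore remains to compare the two products of $z$'s.

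If $m$ is even, then $\lceil m/2\rceil=\lfloor m/2\rfloor=m/2$, so both products equal $z_i^{m/2}z_j^{m/2}$. If $m$ is odd, then $z_i=z_j$ by hypothesis, so both products equal $z_i^{m}$. In either case the two sides agree, which completes the proof.

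There is no real obstacle here. The only point needing care is the bookkeeping: the number of factors $z_i$ and $z_j$ in a braid word of length $m$ is $\lceil m/2\rceil$ and $\lfloor m/2\rfloor$ in one order and reversed in the other. The hypothesis that $z_i=z_j$ whenever $m_{ij}$ is odd is exactly what is needed to reconcile these counts.
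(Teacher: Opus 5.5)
Your proof is correct and is essentially the paper's: the paper applies Theorem~\ref{thm:decoration sufficient} with exactly the constant choice $z_{i,j}^{(k)}=z_i$ you mention, so the first condition is immediate from centrality and the second reduces to the symmetry of $z_i^{\lceil m_{ij}/2\rceil}z_j^{\lfloor m_{ij}/2\rfloor}$ in $i,j$ (automatic for even $m_{ij}$, and from $z_i=z_j$ for odd $m_{ij}$). Your direct computation via Lemma~\ref{lem:fund hom} simply inlines that theorem and does the same even/odd bookkeeping.
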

\begin{proof}
Let $z_{i,j}^{(k)}=z_i$
for all~$i,j\in I$, $k\in[1,m_{ij}]$. Suppose that~$i\not=j$ and~$m_{ij}<\infty$.
The condition~\ref{thm:decoration sufficient.1} of Theorem~\ref{thm:decoration sufficient} is obviously satisfied, while $z_{i,j}^{(m_{ij})}z_{j,i}^{(m_{ij}-1)}\cdots=
z_i^{\lceil \frac12 m_{ij}\rceil} z_j^{\lfloor\frac12 m_{ij}\rfloor}$ which is manifestly symmetric in~$i$ and~$j$ if~$m_{ij}$ is even and equals to $z_i^{m_{ij}}=z_j^{m_{ij}}$ if~$m_{ij}$ is odd.
\end{proof}
\begin{example}\label{ex:cent decoration}
Let~$M\in\Cox I$. Then any collection $\mathbf z=(z_i)_{i\in I}$
of central elements of~$\Br^+(M)$ satisfying
$z_i=z_j$ whenever~$m_{ij}$ is odd is a decoration of~$\id\in\Hom_{\Art}(M,M)$ and is manifestly optimal.
\end{example}

\subsection{Hecke and Coxeter type homomorphisms}\label{subs:Heck Cox hom}
Let~$\mathscr C$ be a category. For any subcategories~$\mathscr C_1$, $\mathscr C_2$
of~$\mathscr C$, define the category~$\Arr(\mathscr C_1,\mathscr C_2)$
whose objects are morphisms $f:X\to Y$
where~$X\in\mathscr C_1$ and~$Y\in\mathscr C_2$
and morphisms from~$f:X\to Y$ to~$f':X'\to Y'$
are pairs $(\phi,\psi)\in\Hom_{\mathscr C_1}(X,X')\times \Hom_{\mathscr C_2}(Y,Y')$
such that the diagram 
$$
    \begin{tikzcd}[ampersand replacement=\&]
	{X} \&\& X'\\ 
	{Y} \&\& {Y'}
	\arrow["\phi", from=1-1, to=1-3]
	\arrow["f", from=1-1, to=2-1]
	\arrow["f'", from=1-3, to=2-3]
	\arrow["\psi", from=2-1, to=2-3]
\end{tikzcd}
    $$
commutes. This generalizes the well-known
{\em arrow category} $\Arr(\mathscr C)$ of~$\mathscr C$ (see e.g.~\cite{Rom}) which coincides with~$\Arr(\mathscr C,\mathscr C)$; moreover, $\Arr(\mathscr C_1,\mathscr C_2)$ is always a subcategory 
of~$\Arr(\mathscr C)$, which 
is full if both~$\mathscr C_1$, $\mathscr C_2$ were full. The following is immediate.
\begin{lemma}\label{lem:forget}
Let~$\mathscr C_1$, $\mathscr C_2$ be 
subcategories of~$\mathscr C$. The assignments
$(f:X\to Y\in\Arr(\mathscr C_1,\mathscr C_2))\mapsto X$, $(\phi,\psi)\in
\Hom_{\Arr(\mathscr C_1,\mathscr C_2)}(f,f')\mapsto \phi$
(respectively, $(f:X\to Y)\mapsto Y$, $(\phi,\psi)\mapsto \psi$) define 
functors~$\mathcal F_i:\Arr(\mathscr C_1,\mathscr C_2)\to \mathscr C_i$,
$i\in\{1,2\}$.
\end{lemma}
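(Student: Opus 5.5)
The plan is a direct verification of the two assignments in Lemma~\ref{lem:forget}, one for each $i\in\{1,2\}$. I treat $\mathcal F_1$ in full; $\mathcal F_2$ is identical with $\psi$ in place of $\phi$ and targets in place of sources.

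First, well-definedness on objects and morphisms. An object of $\Arr(\mathscr C_1,\mathscr C_2)$ is a morphism $f:X\to Y$ of $\mathscr C$ with $X\in\mathscr C_1$ and $Y\in\mathscr C_2$, so $\mathcal F_1(f)=X$ is indeed an object of $\mathscr C_1$. A morphism $(\phi,\psi):f\to f'$ has, by definition, $\phi\in\Hom_{\mathscr C_1}(X,X')$. Hence $\mathcal F_1(\phi,\psi)=\phi$ is a morphism $\mathcal F_1(f)\to\mathcal F_1(f')$ in $\mathscr C_1$.

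Second, the functor axioms. For these I make explicit the composition and identities in $\Arr(\mathscr C_1,\mathscr C_2)$, which are componentwise. The composite of $(\phi,\psi):f\to f'$ and $(\phi',\psi'):f'\to f''$ is $(\phi'\circ\phi,\psi'\circ\psi)$. This pair is again a morphism: $\phi'\circ\phi$ lies in $\mathscr C_1$ and $\psi'\circ\psi$ lies in $\mathscr C_2$ because subcategories are closed under composition. The required square commutes by pasting the two given squares:
\[
f''\circ\phi'\circ\phi=\psi'\circ f'\circ\phi=\psi'\circ\psi\circ f.
\]
The identity of $f:X\to Y$ is $(\id_X,\id_Y)$. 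Its components lie in the subcategories, and its square commutes trivially.

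With this componentwise structure, the axioms are immediate. We have $\mathcal F_1\bigl((\phi',\psi')\circ(\phi,\psi)\bigr)=\phi'\circ\phi=\mathcal F_1(\phi',\psi')\circ\mathcal F_1(\phi,\psi)$ and $\mathcal F_1(\id_X,\id_Y)=\id_X=\id_{\mathcal F_1(f)}$. The same two identities hold for $\mathcal F_2$, with the second components.

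I see no real obstacle. The only point needing care is to record explicitly that the category structure on $\Arr(\mathscr C_1,\mathscr C_2)$ is componentwise, which the definition leaves implicit. Once that is stated, both functor axioms are tautological.
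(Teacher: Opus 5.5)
Your proposal is correct and matches the paper, which states this lemma without proof as "immediate"; your argument is precisely the routine check being left out. That check has three parts: composition in $\Arr(\mathscr C_1,\mathscr C_2)$ is componentwise, squares paste via $f'\circ\phi=\psi\circ f$, and the identity is $(\id_X,\id_Y)$. Once these are recorded, both functor axioms hold for each projection.
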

\begin{lemma}\label{lem:Arr faithful}
Let~$\mathscr D$ be any subcategory of~$\Arr(\mathscr C_1,\mathscr C_2)$ 
whose objects are
epic morphisms in~$\mathscr C$. Then
the restriction of~$\mathcal F_1$ to~$\mathscr D$ is a faithful functor~$\mathscr D\to \mathscr C_1$. 
\end{lemma}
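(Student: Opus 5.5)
To show that $\mathcal F_1|_{\mathscr D}$ is faithful, I will fix two objects $f:X\to Y$ and $f':X'\to Y'$ of~$\mathscr D$ and two morphisms $(\phi,\psi),(\phi_1,\psi_1)\in\Hom_{\mathscr D}(f,f')$ with $\mathcal F_1(\phi,\psi)=\mathcal F_1(\phi_1,\psi_1)$, that is, $\phi=\phi_1$. The goal is then to deduce $\psi=\psi_1$. That $\mathcal F_1$ restricted to~$\mathscr D$ is a functor is already given by Lemma~\ref{lem:forget}, because $\mathscr D$ is a subcategory of~$\Arr(\mathscr C_1,\mathscr C_2)$ and composition and identities in~$\mathscr D$ are those of the arrow category.

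The key step uses commutativity of the defining squares, which gives $\psi\circ f=f'\circ\phi$ and $\psi_1\circ f=f'\circ\phi_1$ in~$\mathscr C$. Since $\phi=\phi_1$, it follows that $\psi\circ f=\psi_1\circ f$. By hypothesis $f$ is an epic morphism in~$\mathscr C$, so $f$ is right cancellable, and therefore $\psi=\psi_1$ as morphisms in~$\mathscr C$. Both lie in $\Hom_{\mathscr C_2}(Y,Y')$, so they are equal there as well. Hence $(\phi,\psi)=(\phi_1,\psi_1)$, and $\mathcal F_1$ is injective on each hom-set of~$\mathscr D$.

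There is no genuine obstacle here. The only point needing care is that the morphism must be epic in the ambient category~$\mathscr C$ (and not merely in~$\mathscr C_1$ or~$\mathscr C_2$), since the cancellation $\psi\circ f=\psi_1\circ f\Rightarrow\psi=\psi_1$ takes place in~$\mathscr C$. That is exactly what the hypothesis provides.
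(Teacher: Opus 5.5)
Your proof is correct and follows the paper's argument essentially verbatim: equate the two commutative squares $\psi\circ f=f'\circ\phi=\psi_1\circ f$ and cancel the epic $f$ on the right to get $\psi=\psi_1$. Your remark that $f$ must be epic in the ambient category $\mathscr C$, where the cancellation takes place, is exactly the hypothesis the paper uses.
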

\begin{proof}
Let~$\pi_X:X\to \underline X$, $\pi_Y:Y\to \underline Y$ be objects in~$\mathscr D$
and suppose that~$(\phi,\psi),(\phi,\psi')\in\Hom_{\mathscr D}(\pi_X,\pi_Y)$.
Then $\psi\circ \pi_X=\pi_Y\circ \phi=
\psi'\circ\pi_X$ and, since~$\pi_X$ is epic,
$\psi'=\psi$. The assertion is now immediate.
\end{proof}

We now use this mini-theory in case when~$\mathscr C$ is the category of monoids~$\mathscr{Mon}$, $\mathscr C_1=\Art$, $\mathscr C_2=\Heck $ 
or~$\CoxCat$ and~$\mathscr D$ is 
the full subcategory $\mathscr D_{\mathscr{AH}}$ of~$\Arr(\Art,\Heck )$ 
(respectively the full subcategory $\mathscr D_{\mathscr{AC}}$ of
respectively $\Arr(\Art,\CoxCat)$)
whose objects are canonical surjective
homomorphisms~$\pi_M^\star:\Br^+(M)\to (W(M),\star)$ (respectively, $\pi_M:\Br^+(M)\to W(M)$). Denote by $\mathscr{AH}$
(respectively, by $\mathscr{AC}$)
the image of $\mathscr D_{\mathscr{AH}}$
(respectively, of $\mathscr D_{\mathscr{AC}}$)
under the functor~$\mathcal F_1$, which is 
faithful by Lemma~\ref{lem:Arr faithful}.
Finally, define the category~$\mathscr{ACH}$ 
with the same objects as~$\Art$ and with
$\Hom_{\mathscr{ACH}}(M,M')=
\Hom_{\mathscr{AH}}(M,M')\cap \Hom_{\mathscr{AC}}(M,M')$ for all $M,M'\in\Art$.

\begin{definition}\label{defn:Heck Coxeter}
We refer to any morphisms in~\plink{AH AC}$\mathscr{AH}$ (respectively, in~$\mathscr{AC}$, $\mathscr{ACH}$)
as homomorphisms of Artin monoids of {\em Hecke} (respectively, {\em Coxeter},
{\em Coxeter-Hecke}) type. Finally,
we say that a homomorphism of Artin monoids is {\em standard} if it is of Hecke type and is square-free.
\end{definition}
By Lemma~\ref{lem:Arr faithful}, $\mathscr{AH}$
(respectively, $\mathscr{AC}$) identifies with~$\mathscr D_{\mathscr{AH}}$ (respectively,
with~$\mathscr D_{\mathscr{AC}}$). Using the
functor~$\mathcal F_2$ from Lemma~\ref{lem:forget} we thus obtain functors~\plink{H C}$\mathsf H:\mathscr{AH}\to\Heck$ 
and $\mathsf C:\mathscr{AC}\to\CoxCat$. Explicitly, $\mathsf H(\Br^+(M))=(W(M),\star)$ (respectively, $\mathsf C(\Br^+(M))=W(M)$)
and for any morphism~$\Phi:\Br^+(\wh M)\to \Br^+(M)$ in~$\mathscr{AH}$
(respectively, in~$\mathscr{AC}$)
$\mathsf H(\Phi)$ (respectively, $\mathsf C(\Phi)$)
is the unique homomorphism
$(W(\wh M),\star)\to (W(M),\star)$ (respectively,
$W(\wh M)\to W(M)$) satisfying
$\mathsf H(\Phi)\circ\pi_{\wh M}^\star=
\pi_M^\star\circ\Phi$ (respectively, 
$\mathsf C(\Phi)\circ\pi_{\wh M}=
\pi_M\circ\Phi$).

Before providing examples, we establish a simple criterion for a homomorphism of Artin monoids to be of Coxeter or Hecke type. 
Let~$\wh M\in\Cox{\wh I}$ and~$M\in\Cox I$. Since $\pi_{\wh M}|_{\SQF^+(\wh M)}=\pi^\star_{\wh M}|_{\SQF^+(\wh M)}$ is a bijection~$\SQF^+(\wh M)\to W(\wh M)$, given
{\em any} map~$\Phi:\Br^+(\wh M)\to \Br^+(M)$ we obtain maps
\plink{barPhi}$\overline\Phi=\pi_M\circ\Phi\circ \pi_{\wh M}^{-1}:W(\wh M)\to W(M)$ and~$\overline\Phi_\star=\pi^\star_M\circ 
\Phi\circ\pi_{\wh M}^{-1}:(W(\wh M),\star)\to 
(W(M),\star)$. It should be noted that $\overline\Phi$ and~$\overline\Phi_\star$ need not be homomorphisms, and the passage from~$\Phi$
to~$\overline\Phi_\star$ or~$\overline\Phi$
is not, generally speaking, compatible with compositions. 
\begin{theorem}\label{thm:Main Thm Cox Heck}
Let~$\wh M\in\Cox{\wh I}$, $M\in\Cox I$ and let~$\Phi\in\Hom_{\Art}(\wh M,M)$. 
\begin{enmalph}
\item \label{thm:Main Thm Cox Heck.a}
The following are equivalent
\begin{enmroman}
\item \label{thm:Main Thm Cox Heck.a.i}
$\Phi$ is of Coxeter type;
\item  \label{thm:Main Thm Cox Heck.a.ii}
$\overline\Phi\in\Hom_{\CoxCat}(\wh M,M)$;
\item  \label{thm:Main Thm Cox Heck.a.iii}
$\overline\Phi(\wh s_i)$ is an involution for each~$i\in\wh I$.
\end{enmroman}
\item \label{thm:Main Thm Cox Heck.b}
The following are equivalent
\begin{enmroman}
\item \label{thm:Main Thm Cox Heck.b.i}
$\Phi$ is of Hecke type;
\item  \label{thm:Main Thm Cox Heck.b.ii}
$\overline\Phi_\star\in\Hom_{\Heck}(\wh M,M)$;
\item  \label{thm:Main Thm Cox Heck.b.iii}
$\overline\Phi_\star(\wh s_i)$ is an idempotent for each~$i\in\wh I$. Specifically, $\overline\Phi_\star(\wh s_i)=w_\circ^{[\Phi](i)}$, $i\in\wh I$.
\end{enmroman}
\item  \label{thm:Main Thm Cox Heck.c}
$\Phi$ is of Coxeter-Hecke type if and only if
$\overline\Phi(\wh s_i)=\overline\Phi_\star(\wh s_i)=w_\circ^{[\Phi](i)}$, $i\in\wh I$.
\item 
 \label{thm:Main Thm Cox Heck.d}
$\Phi$ is standard if and only if
$\Phi(\wh T_i)=T_{w_\circ^{[\Phi](i)}}$ for each~$i\in I$.
\end{enmalph}
\end{theorem}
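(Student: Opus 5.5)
The plan is to use the fact that $W(\wh M)$ and $(W(\wh M),\star)$ are quotients of $\Br^+(\wh M)$ by the minimal congruences generated by $(\wh T_i^2,1)$ and $(\wh T_i^2,\wh T_i)$ respectively. Then $\Phi$ is of Coxeter (resp.\ Hecke) type exactly when $\pi_M\circ\Phi$ (resp.\ $\pi^\star_M\circ\Phi$) factors through $\pi_{\wh M}$ (resp.\ $\pi^\star_{\wh M}$). Since these projections are surjective, such a factorization is unique.

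For part (a), the implication (i)$\Rightarrow$(ii) works as follows. If $\mathsf C(\Phi)\circ\pi_{\wh M}=\pi_M\circ\Phi$, then evaluating on $T_w\in\SQF^+(\wh M)$, $w\in W(\wh M)$, gives $\mathsf C(\Phi)(w)=\pi_M(\Phi(T_w))=\overline\Phi(w)$. Hence $\overline\Phi=\mathsf C(\Phi)$ is a homomorphism. The implication (ii)$\Rightarrow$(iii) is trivial, since $\overline\Phi(\wh s_i)^2=\overline\Phi(\wh s_i^2)=1$.

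For (iii)$\Rightarrow$(i), we have $\pi_M\circ\Phi(\wh T_i^2)=\overline\Phi(\wh s_i)^2=1=\pi_M\circ\Phi(1)$, because $\pi_{\wh M}^{-1}(\wh s_i)=\wh T_i$. So the congruence kernel of the homomorphism $\pi_M\circ\Phi$ contains all $(\wh T_i^2,1)$, and therefore contains the minimal congruence they generate. Hence $\pi_M\circ\Phi$ factors through $\pi_{\wh M}$ via a homomorphism $\psi$. This gives a morphism $(\Phi,\psi)$ in $\mathscr D_{\mathscr{AC}}$.

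Part (b) is proved verbatim, with $\pi^\star$ and the relations $(\wh T_i^2,\wh T_i)$ in place of $\pi$ and $(\wh T_i^2,1)$. The only additional content is the identification $\overline\Phi_\star(\wh s_i)=w_\circ^{[\Phi](i)}$ once it is idempotent. By Proposition~\ref{prop:idempot Hecke}\ref{prop:idempot Hecke.a}, an idempotent $w$ equals $w_\circ^{\supp w}$. It remains to show $\supp\pi^\star_M(\Phi(\wh T_i))=\supp\Phi(\wh T_i)=[\Phi](i)$. This holds because $\supp(w\star w')=\supp w\cup\supp w'$ for every product in the Hecke monoid, so $\pi^\star_M$ preserves supports: write $\Phi(\wh T_i)$ as a word in the $T_j$ and use $\supp s_j=\{j\}$.

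Part (c) follows by combining (a) and (b). If $\Phi$ is of Coxeter--Hecke type, then $\overline\Phi_\star(\wh s_i)=w_\circ^{[\Phi](i)}$ by (b). Also $\overline\Phi(\wh s_i)=\pi_M(\Phi(\wh T_i))$, and this is where I expect the only real obstacle: in general $\pi_M(X)\ne\pi^\star_M(X)$ when $X$ is not square free, so these two values cannot be compared directly. I would handle this with the following claim: if $\pi_M(X)$ is an involution and $\pi_M^\star(X)=w_\circ^J$ with $J=\supp X$, then $\pi_M(X)=w_\circ^J$. Unfortunately this seems false in general (e.g.\ $X=T_1^2$ gives $\pi_M(X)=1$), so the correct reading of (c) must be that Coxeter--Hecke type is equivalent to the stated equalities. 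The proof then splits into two directions. For ``if'', the equalities say exactly that condition (iii) holds in both (a) and (b). For ``only if'', (b) gives $\overline\Phi_\star(\wh s_i)=w_\circ^{[\Phi](i)}$, and I would need to establish $\overline\Phi(\wh s_i)=w_\circ^{[\Phi](i)}$ separately, possibly from the definition of the category via compatible factorizations. If that argument fails, I would fall back on reading (c) as a conjunction of the criteria in (a)(iii) and (b)(iii).

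Part (d) is direct. If $\Phi$ is standard, then $\Phi(\wh T_i)$ is square free, so $\Phi(\wh T_i)=T_{\pi^\star_M(\Phi(\wh T_i))}=T_{w_\circ^{[\Phi](i)}}$ by Proposition~\ref{prop:prod *} and (b). Conversely, if $\Phi(\wh T_i)=T_{w_\circ^{[\Phi](i)}}$, then $\Phi$ is square free. Moreover $\overline\Phi_\star(\wh s_i)=w_\circ^{[\Phi](i)}$, which is idempotent by Proposition~\ref{prop:idempot Hecke}\ref{prop:idempot Hecke.0}, so $\Phi$ is of Hecke type by (b).
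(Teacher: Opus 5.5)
Your parts (a), (b) and (d) are correct. The implications (i)$\Rightarrow$(ii)$\Rightarrow$(iii) and part (d) go as in the paper. For (iii)$\Rightarrow$(i) you take a different route. You note that the congruence kernel of the monoid homomorphism $\pi_M\circ\Phi$ (resp.\ $\pi^\star_M\circ\Phi$) contains $(\wh T_i^2,1)$ (resp.\ $(\wh T_i^2,\wh T_i)$). It therefore contains the minimal congruence presenting $W(\wh M)$ (resp.\ $(W(\wh M),\star)$), so the composite factors through $\pi_{\wh M}$ (resp.\ $\pi^\star_{\wh M}$).

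The paper instead proves (iii)$\Rightarrow$(ii) directly. Via Lemma~\ref{lem:hom on gens} it checks $\overline\Phi(\wh s_iw)=\overline\Phi(\wh s_i)\overline\Phi(w)$. When $\ell(\wh s_iw)>\ell(w)$ this is Theorem~\partref{thm:Tits.b}. Otherwise it writes $w=\wh s_i\times w'$ and uses $\overline\Phi(\wh s_i)^2=1$ (resp.\ $\overline\Phi_\star(\wh s_i)\star\overline\Phi_\star(\wh s_i)=\overline\Phi_\star(\wh s_i)$). Your argument is shorter and needs only the quotient presentations, with no reduced words. The paper's argument shows directly that the set map $\overline\Phi$ is multiplicative. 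Both are fine.

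For (c), your suspicion is correct. No argument via compatible factorizations can give the ``only if'' direction, because as stated it is false. Take $\Phi\in\Hom_{\Art}(A_1,A_1)$, $T_1\mapsto T_1^2$. Then $\overline\Phi(s_1)=\pi_{A_1}(T_1^2)=1$ is an involution and $\overline\Phi_\star(s_1)=s_1\star s_1=s_1$ is an idempotent. So $\Phi$ is of Coxeter--Hecke type by (a) and (b), yet $\overline\Phi(s_1)=1\ne s_1=w_\circ^{[\Phi](1)}$. The same happens for $\wh T_i\mapsto T_i^{1+\delta_{i,2}}$, $\Br^+(B_2)\to\Br^+(A_2)$, which Remark~\ref{rem:induced do not have to be injective} calls Coxeter--Hecke while noting $\overline\Phi(s_2)=1$. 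The paper's justification (``immediate from conditions (iii)'') only yields the ``if'' direction, which you prove correctly.

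So commit to your fallback rather than leaving the step open. The true equivalence is: $\Phi$ is of Coxeter--Hecke type iff each $\overline\Phi(\wh s_i)$ is an involution and $\overline\Phi_\star(\wh s_i)=w_\circ^{[\Phi](i)}$. The stronger equalities in (c) do hold when $\Phi$ is square free, since then $\pi_M$ and $\pi^\star_M$ agree on $\Phi(\wh T_i)$ by Proposition~\ref{prop:prod *}. This covers all standard homomorphisms. State the counterexample and this corrected version explicitly.
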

\begin{proof}
Let~$\Phi\in\Hom_{\Art}(\wh M,M)$ be of Coxeter type. Then, since~$\pi_{\wh M}$ is surjective,
$\overline\Phi=\mathsf H(\Phi)$ and hence 
is a homomorphism of Coxeter groups $W(\wh M)\to W(M)$. In particular, this implies that~$\overline\Phi(s_i)$ is an involution for all~$i\in\wh I$. Thus, the implications 
\ref{thm:Main Thm Cox Heck.a.i}$\implies$
\ref{thm:Main Thm Cox Heck.a.ii}$\implies$\ref{thm:Main Thm Cox Heck.a.iii} in part~\ref{thm:Main Thm Cox Heck.a} are proven. 
The implications \ref{thm:Main Thm Cox Heck.b.i}$\implies$
\ref{thm:Main Thm Cox Heck.b.ii}$\implies$\ref{thm:Main Thm Cox Heck.b.iii} in part~\ref{thm:Main Thm Cox Heck.b}
are proven similarly, and the expression for~$\overline\Phi_\star(\wh s_i)$
follows from Proposition~\partref{prop:idempot Hecke.a}.
Furthermore,
if~$\overline\Phi\in\Hom_{\CoxCat}(\wh M,M)$
(respectively, $\overline\Phi_\star\in\Hom_{\Heck}(\wh M,M)$)
then $(\Phi,\overline\Phi)\in\Hom_{\Arr(\Art,\CoxCat)}(\pi_{\wh M},\pi_M)$ (respectively, $(\Phi,\overline\Phi_\star)\in\Hom_{\Arr(\Art,\CoxCat)}(\pi_{\wh M}^\star,\pi_M^\star)$)
that is, $\Phi$ is of Coxeter (respectively, Hecke) type. 

It remains to prove the implications
\ref{thm:Main Thm Cox Heck.a.iii}$\implies$\ref{thm:Main Thm Cox Heck.a.ii} in parts~\ref{thm:Main Thm Cox Heck.a} and~\ref{thm:Main Thm Cox Heck.b}. 
For, note
the following obvious
\begin{lemma}\label{lem:hom on gens}
Let~$\mathsf M$, $\mathsf M'$ be monoids and let~$S$
be a set of generators for~$\mathsf M$. The following
are equivalent for a map
$f:\mathsf M\to \mathsf M'$.
\begin{enmroman}
\item $f$ is a homomorphism of monoids;
\item\label{lem:hom on gens.b} $f(sx)=f(s)f(x)$ for all~$s\in S$, $x\in\mathsf M$.
\end{enmroman}
\end{lemma}
Let~$w\in W(\wh M)$, $i\in\wh I$. If~$\wh s_iw=\wh s_i\times w=\wh s_i\star w$ then by Theorem~\partref{thm:Tits.b}
$$
\overline\Phi(\wh s_i w)=\pi_M(\Phi(\wh T_{\wh s_iw}))
=\pi_M(\Phi(\wh T_i\wh T_w))
=\pi_M(\Phi(\wh T_{i}))\pi_M(\Phi(\wh T_w))
=\overline\Phi(\wh s_i)\overline\Phi(w),
$$
and similarly for~$\overline\Phi_\star$. Otherwise,
write~$w=\wh s_i\times w'$ for some~$w'\in W(M')$. By the above,
$\overline\Phi(w)=\overline\Phi(\wh s_i)\overline\Phi(w')$ and so,
since~$\overline\Phi(\wh s_i)$ is an involution,
$$
\overline\Phi(\wh s_iw)=\overline\Phi(w')=\overline\Phi(\wh s_i)^2\overline\Phi(w')=\overline\Phi(\wh s_i)\overline\Phi(w).
$$
Likewise,
$\overline\Phi_\star(w)=\overline\Phi_\star(\wh s_i)\star \overline\Phi_\star(w')$ whence, since~$\overline\Phi_\star(\wh s_i)$ is an idempotent,
$$
\overline\Phi_\star(\wh s_i\star w)=\overline\Phi_\star(w)=\overline\Phi_\star(\wh s_i)\star \overline\Phi_\star(w')=\overline\Phi_\star(\wh s_i)^{\star 2}\star\overline\Phi_\star(w
')=\overline\Phi_\star(\wh s_i)\star\overline\Phi_\star(w).
$$
Thus, both~$\overline\Phi$ and~$\overline\Phi_\star$ satisfy the condition~\ref{lem:hom on gens.b} from Lemma~\ref{lem:hom on gens} and hence are homomorphisms of respective monoids.

Parts~\ref{thm:Main Thm Cox Heck.c} and~\ref{thm:Main Thm Cox Heck.d} are
immediate from respective
conditions~\ref{thm:Main Thm Cox Heck.b.iii}
in parts~\ref{thm:Main Thm Cox Heck.a} and~\ref{thm:Main Thm Cox Heck.b}.
\end{proof}
\begin{corollary}\label{cor:elem prop Coxeter Hecke}
\begin{enmalph}
                                                    \item \label{cor:elem prop Coxeter Hecke.b'}
    All standard homomorphisms are of
    Coxeter-Hecke type.
    \item\label{cor:elem prop Coxeter Hecke.c}
    Square free Coxeter or Hecke type homomorphisms
    commute with~${}^{op}$.
 \end{enmalph}
 \end{corollary}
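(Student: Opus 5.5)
Both parts of Corollary~\ref{cor:elem prop Coxeter Hecke} follow quickly from Theorem~\ref{thm:Main Thm Cox Heck} together with Proposition~\ref{prop:fund elts BrSa} and Lemmas~\ref{lem:can image op inv} and~\ref{lem:elem Artin hom}. The plan is to pin down the images of generators explicitly.

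For part~\ref{cor:elem prop Coxeter Hecke.b'}, let~$\Phi\in\Hom_{\Art}(\wh M,M)$ be standard. By definition it is of Hecke type and square free. By Theorem~\partref{thm:Main Thm Cox Heck.d}, $\Phi(\wh T_i)=T_{w_\circ^{[\Phi](i)}}$ for each~$i\in\wh I$, and in particular $[\Phi](i)\in\mathscr F(M)$. Since~$T_{w_\circ^{[\Phi](i)}}$ is square free, Proposition~\ref{prop:prod *} gives
$$\overline\Phi(\wh s_i)=\pi_M(T_{w_\circ^{[\Phi](i)}})=w_\circ^{[\Phi](i)}=\pi^\star_M(T_{w_\circ^{[\Phi](i)}})=\overline\Phi_\star(\wh s_i).$$
The longest element~$w_\circ^{[\Phi](i)}$ is an involution in~$W(M)$. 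So condition~\ref{thm:Main Thm Cox Heck.a.iii} of Theorem~\partref{thm:Main Thm Cox Heck.a} holds, and~$\Phi$ is of Coxeter type. Hence~$\Phi$ is of Coxeter-Hecke type; alternatively, this follows directly from Theorem~\partref{thm:Main Thm Cox Heck.c}.

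For part~\ref{cor:elem prop Coxeter Hecke.c}, by Lemma~\partref{lem:elem Artin hom.b'} it suffices to show that each~$\Phi(\wh T_i)$ is ${}^{op}$-invariant. Square-freeness gives~$\Phi(\wh T_i)\in\SQF^+(M)$, so by Lemma~\ref{lem:can image op inv} it is enough to show that~$\pi_M(\Phi(\wh T_i))$ is an involution. We split into the two types.

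In the Coxeter case this is exactly condition~\ref{thm:Main Thm Cox Heck.a.iii} of Theorem~\partref{thm:Main Thm Cox Heck.a}, because~$\pi_M(\Phi(\wh T_i))=\overline\Phi(\wh s_i)$.

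In the Hecke case, Theorem~\partref{thm:Main Thm Cox Heck.b} gives~$\pi^\star_M(\Phi(\wh T_i))=w_\circ^{[\Phi](i)}$. Since~$\Phi(\wh T_i)$ is square free, Proposition~\ref{prop:prod *} gives $\pi_M(\Phi(\wh T_i))=\pi^\star_M(\Phi(\wh T_i))=w_\circ^{[\Phi](i)}$, which is an involution. Equivalently, $\Phi(\wh T_i)=T_{w_\circ^{[\Phi](i)}}$, and this element is ${}^{op}$-invariant by Proposition~\partref{prop:fund elts BrSa.a}.

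No step is a real obstacle. The only point needing care is the identification $\pi_M=\pi^\star_M$ on~$\SQF^+(M)$, which is exactly what square-freeness supplies.
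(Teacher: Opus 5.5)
Your proof is correct and matches the paper's intended argument. The paper states this corollary without proof as an immediate consequence of Theorem~\ref{thm:Main Thm Cox Heck}: part (a) comes from parts (c) and (d) there, and part (b) from Lemma~\ref{lem:can image op inv} combined with Lemma~\ref{lem:elem Artin hom}(b'), with your use of $\pi_M=\pi^\star_M$ on $\SQF^+(M)$ in the Hecke case being exactly the detail left implicit.
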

As shown in Example~\ref{ex:1.6}, the composition
of two standard homomorphisms is not necessarily standard. We now define the category~\plink{Ast}$\Ast$
as the subcategory
of~$\mathscr{ACH}$ generated by standard homomorphisms, that is, its objects are all Coxeter matrices and
morphisms are (compositions of) standard homomorphisms. The following is an immediate consequence of Theorem~\ref{thm:Main Thm Cox Heck}.
\begin{corollary}\label{cor:Ast hom}
The assignments $\Phi\mapsto \overline\Phi$
(respectively, $\Phi\mapsto \overline\Phi_\star$),
$\Phi\in\Hom_{\Ast}(\wh M,M)$, $\wh M\in\Cox{\wh I}$, $M\in\Cox I$ define functors 
$\Ast\to \CoxCat$ (respectively, $\Ast\to\Heck$.
In particular, any morphism in~$\Ast$ yields
a solution~$(\overline\Phi,\overline\Phi_\star)$ of Problem~\ref{prob:twin}.
\end{corollary}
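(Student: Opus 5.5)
The statement follows by combining Theorem~\ref{thm:Main Thm Cox Heck} with Corollary~\partref{cor:elem prop Coxeter Hecke.b'} and the functors $\mathsf H$ and $\mathsf C$ constructed just before Definition~\ref{defn:Heck Coxeter}. The point is that $\Ast$ is by definition the subcategory of $\mathscr{ACH}$ generated by standard homomorphisms. By Corollary~\partref{cor:elem prop Coxeter Hecke.b'}, every standard homomorphism lies in $\mathscr{AH}\cap\mathscr{AC}$.

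The first step is to check that $\mathscr{AH}$ and $\mathscr{AC}$ are closed under composition, so that $\mathscr{ACH}$ is a category. This holds because each is the image under the functor $\mathcal F_1$ of a subcategory of an arrow category. Concretely, suppose $(\Phi,\psi)$ and $(\Phi',\psi')$ are commutative squares over $\pi^\star$ (respectively, over $\pi$). Pasting them gives a commutative square $(\Phi'\circ\Phi,\psi'\circ\psi)$. Hence any composite $\Phi=\Phi_r\circ\cdots\circ\Phi_1$ of standard homomorphisms lies in $\mathscr{ACH}$, with
\[
\mathsf H(\Phi)=\mathsf H(\Phi_r)\circ\cdots\circ\mathsf H(\Phi_1),\qquad
\mathsf C(\Phi)=\mathsf C(\Phi_r)\circ\cdots\circ\mathsf C(\Phi_1).
\]
By Lemma~\ref{lem:Arr faithful}, these are well defined because $\pi_{\wh M}$ and $\pi^\star_{\wh M}$ are surjective, hence epic.

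The second step is to identify $\mathsf C(\Phi)$ with $\overline\Phi$ and $\mathsf H(\Phi)$ with $\overline\Phi_\star$ for every $\Phi$ in $\mathscr{ACH}$. The defining relation $\mathsf C(\Phi)\circ\pi_{\wh M}=\pi_M\circ\Phi$, evaluated on $\SQF^+(\wh M)$ and combined with Theorem~\partref{thm:Tits.a}, gives exactly $\mathsf C(\Phi)=\pi_M\circ\Phi\circ(\pi_{\wh M}|_{\SQF^+})^{-1}=\overline\Phi$. The same argument with Proposition~\ref{prop:prod *} gives $\mathsf H(\Phi)=\overline\Phi_\star$. Consequently $\Phi\mapsto\overline\Phi$ and $\Phi\mapsto\overline\Phi_\star$ are the restrictions to $\Ast$ of the functors $\mathsf C$ and $\mathsf H$, so they are themselves functors. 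Note that functoriality only holds on $\mathscr{ACH}$; for arbitrary maps of Artin monoids the bar construction does not respect composition, as remarked before Theorem~\ref{thm:Main Thm Cox Heck}.

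Finally, for $\Phi\in\Hom_{\Ast}(\wh M,M)$, both $\overline\Phi$ and $\overline\Phi_\star$ are homomorphisms: $\overline\Phi=\mathsf C(\Phi)$ and $\overline\Phi_\star=\mathsf H(\Phi)$ are values of functors into $\CoxCat$ and $\Heck$, and alternatively this follows from Theorem~\ref{thm:Main Thm Cox Heck}. Both maps are computed from the same elements $\Phi(\wh T_i)$, so they coincide on generators: $\overline\Phi(\wh s_i)=\pi_M(\Phi(\wh T_i))$ and $\overline\Phi_\star(\wh s_i)=\pi^\star_M(\Phi(\wh T_i))$. It remains to see that these two values agree.

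This is the only genuinely delicate step. For a composite $\Phi$, the image $\Phi(\wh T_i)$ need not be square free; for instance, $T_1^2$ occurs in Example~\ref{ex:1.6}. So the two projections could a priori differ. The argument goes by induction on the number of factors. At each stage the images of generators are $w_\circ^{J}$ in both the Coxeter and the Hecke setting, by parts (c) and (d) of Theorem~\ref{thm:Main Thm Cox Heck}. A homomorphism sends a generator to such an element by multiplying images of generators along a reduced word of $w_\circ^J$. On $\SQF$ elements, the products $\cdot$ and $\star$ agree by Proposition~\ref{prop:prod *}. Hence agreement of the two maps on generators propagates through each composition, and their common value is $w_\circ^{[\Phi](i)}$ by Lemma~\partref{lem:[Phi]comp.b}. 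This shows that $(\overline\Phi,\overline\Phi_\star)$ is a twin pair and therefore a solution of Problem~\ref{prob:twin}.
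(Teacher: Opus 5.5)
Your first two steps are correct, and they are what the paper means by calling this an immediate consequence of Theorem~\ref{thm:Main Thm Cox Heck}. Since $\Ast\subset\mathscr{ACH}$, the functors $\mathsf C$ and $\mathsf H$ restrict to $\Ast$. Evaluating $\mathsf C(\Phi)\circ\pi_{\wh M}=\pi_M\circ\Phi$ (resp.\ its Hecke analogue) on the elements $\wh T_w$, $w\in W(\wh M)$, gives $\mathsf C(\Phi)=\overline\Phi$ and $\mathsf H(\Phi)=\overline\Phi_\star$. This already proves the statement. In the paper, ``$\Phi$ yields a solution of Problem~\ref{prob:twin}'' means exactly that both $\overline\Phi$ and $\overline\Phi_\star$ are homomorphisms; see the paragraph following Theorem~\ref{thm:mainthmstd} in the introduction.

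Your last paragraph claims more, and that claim is false. For a composite of standard homomorphisms, $\overline\Phi$ and $\overline\Phi_\star$ need not agree on generators, and $\overline\Phi(\wh s_i)$ need not equal $w_\circ^{[\Phi](i)}$. The example you cite yourself refutes it. In Example~\ref{ex:1.6}, the composite $\Br^+(B_n)\to\Br^+(D_{n+1})\to\Br^+(A_n)$ sends $\wh T_n\mapsto T_n^2$. Hence $\overline\Phi(\wh s_n)=\pi_{A_n}(T_n^2)=1$, while $\overline\Phi_\star(\wh s_n)=s_n\star s_n=s_n$; the paper records this in Remark~\ref{rem:induced do not have to be injective}.

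Your propagation argument fails exactly where you invoke Proposition~\ref{prop:prod *}. The identity $u\star v=uv$ holds only when $\ell(uv)=\ell(u)+\ell(v)$. The images of the generators along a reduced word of $w_\circ^J$ under the next factor need not multiply length-additively, because a standard homomorphism need not be strongly square free. In the example, the folding sends $T_nT_{n+1}=T_{w_\circ^{\{n,n+1\}}}$ to $T_nT_n$, and $s_ns_n=1\neq s_n=s_n\star s_n$. Theorem~\ref{thm:Main Thm Cox Heck}(d) gives agreement only for each standard factor on its own, not for the composite.

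Only the Hecke half of your conclusion survives. By part (b)(iii) of Theorem~\ref{thm:Main Thm Cox Heck}, $\overline\Phi_\star(\wh s_i)=w_\circ^{[\Phi](i)}$. By contrast, $\overline\Phi(\wh s_i)$ is just some involution supported in $[\Phi](i)$, and it may be trivial. Drop the final step: a pair coinciding on generators cannot be obtained for arbitrary morphisms of $\Ast$, and the corollary does not assert it.
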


\begin{example}\label{ex:non sqf non Cox}
One checks (it requires approximately 270 applications of braid relations) that
the assignments
$\wh T_1\mapsto T_1T_2T_1T_4$, $\wh T_2\mapsto (T_2T_3)^2$
define a homomorphism $\Br^+(I_2(10))\to \Br^+(A_4)$ which
is of Hecke type since~$\pi^\star_{A_4}(T_1T_2T_1T_4)=w_{\circ}^{\{1,2,4\}}$
and $\pi^\star_{A_4}((T_2T_3)^2)=s_3\star s_2\star s_3\star s_3=w_\circ^{\{2,3\}}$ but not of Coxeter type since $\pi_{M}((T_2T_3)^2)=s_3 s_2$
and hence is not an involution.
\end{example}

\begin{example}\label{ex: sqf cox not Hecke}
The assignments
$\wh T_i\mapsto T_{2i}T_{2i-1}T_{2i+1}T_{2i}$,
$i\in [1,n]$ define a strongly square free Coxeter type
homomorphism $\Br^+(A_n)\to \Br^+(A_{2n+1})$
(see~Theorem~\ref{thm:monomial brd}). 
However, this homomorphism is not of Hecke type since
$\pi^\star_{A_{2n+1}}(T_2 T_1 T_3 T_2)=s_2s_1s_3 s_2\not=w_\circ^{[1,3]}$.
\end{example}

\begin{example}
Let~$M$ be of finite irreducible type and let~$I=I_1\sqcup I_2$ be any partition of~$I$
into disjoint non-empty subsets.
Choose Coxeter elements~$C_j$, $j\in\{1,2\}$ in~$\Br^+_{I_j}(M)$. Then $C_1 C_2$ is a Coxeter element in~$\Br^+(M)$. Let~$m=h(M)/2$ if~$T_{w_\circ^I}$ is central in~$\Br^+(M)$ (in which case $h(M)$ is even) and~$m=h(M)$
otherwise. By Proposition~\ref{prop:Coxeter splitting},
$(C_1C_2)^{m}=T_{w_\circ}^{2m/h(M)}$ is
central in~$\Br^+(M)$. It follows from Lemma~\ref{lem:cradicals} that
the assignments $\wh T_i\mapsto C_i$,
$i\in \{1,2\}$ define $\Phi\in\Hom_{\Art}(I_2(2m),M)$ which 
is manifestly square free and, unless~$I_1$ and~$I_2$ are
self-orthogonal, is neither of Hecke nor of Coxeter type. If~$h(M)$ is even, then~$\Phi$ is also strongly square free.
\end{example}
\begin{example}
For all~$n\ge 2$, $1\le k\le n-3$ the
assignments~$\wh T_1\mapsto T_{w_{[1,k];[1,n+k]}}$,
$\wh T_2\mapsto T_{w_{[k+2,n-1];[k+2,2n-1]}}$ define 
a homomorphism~$\Br^+(B_2)\to \Br^+(A_{2n-1})$  which is square free but neither of
Hecke nor of Coxeter type (see Corollary~\ref{cor:strange homs}).
\end{example}

\begin{example}\label{ex:not sqf not cox not Hecke}
The assignments $\wh T_1\mapsto T_1T_2T_3T_2T_1T_3$,
$\wh T_2\mapsto T_2T_3T_4T_3T_2T_4$ define a homomorphism
$\Phi:\Br^+(I_2(5))\to\Br^+(A_4)$ which is not square free and is neither Hecke,
nor Coxeter type. Indeed, $\pi_M(\Phi(\wh T_1))=
s_1s_2s_3s_2s_1s_3=s_1s_3s_2s_1$ which is not an involution
since its square equals~$s_1s_2s_1s_3$,
while $\pi^\star_M(\Phi(\wh T_1))=s_1\star s_2\star s_3\star s_2\star s_1
\star s_3=
s_1s_2s_3s_2s_1\not=w_\circ^{[1,3]}$.
\end{example}
\begin{example}\label{ex:char hom heck}
The character homomorphism~$\Xi_{\mathbf X} \in\Hom_{\Art}(\wh M,M)$
from Example~\ref{ex:char hom} is of Hecke type if and only if~$\pi^\star_M(X_i)$, $i\in\wh I$
is a family of commuting idempotents in $(W(M),\star)$. In particular, this happens if~$\pi^\star_M(X_i)=w_\circ^J$ for all~$i\in\wh I$ where $J\subset I$. 
\end{example}
\begin{example}\label{ex:shift by center}
Let~$M\in\Cox I$ be of finite type and 
let~$\mathbf z=(z_i)_{i\in I}$ be
as in Example~\ref{ex:cent decoration}.
Then~$\id_{\mathbf z}$ is an endomorphism of~$\Br^+(M)$ of Hecke type. 
Indeed, it suffices to verify that for~$M$ irreducible in which case $z_i=T_{w_\circ^I}^{a_i}$ for some~$a_i\in\ZZ_{\ge 0}$ by Proposition~\partref{prop:fund elts BrSa.d}. Then
$\pi^\star_M(z_i T_i)=w_\circ^{I}$ for all~$i\in I$ such that~$a_i>0$ and~$\pi^\star_M(z_iT_i)=s_i$ otherwise. 
\end{example}
\begin{example}
Homomorphisms from Example~\ref{ex:embedded std} are
of Coxeter-Hecke type.
\end{example}

\begin{lemma}\label{lem:T_w0 divides image}
Let $\Phi\in\Hom_{\mathscr{AH}}(\wh M,M)$ and let~$J\in\mathscr F(\wh M)$. Then
\begin{enmalph}
    \item\label{lem:T_w0 divides image.b}
    $[\Phi](J)\in\mathscr F(M)$ and $\overline\Phi_\star(w_\circ^J)
    =w_\circ^{[\Phi](J)}$.
    \item\label{lem:T_w0 divides image.a} If $\Phi$ is standard then
    $\Phi(\wh T_{w_\circ^J})=
    T_{w_\circ^{[\Phi](J)}}u=\Sigma_{[\Phi](J)}(u) T_{w_\circ^{[\Phi](J)}}$ for some~$u\in\Br^+_{[\Phi](J)}(M)$.
\end{enmalph}
\end{lemma}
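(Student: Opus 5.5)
For part (a), I will work entirely in the Hecke monoid through $\overline\Phi_\star=\mathsf H(\Phi)$. Since $\Phi$ is of Hecke type, Theorem~\ref{thm:Main Thm Cox Heck} shows that $\overline\Phi_\star$ is a homomorphism $(W(\wh M),\star)\to(W(M),\star)$ with $\overline\Phi_\star(\wh s_i)=w_\circ^{[\Phi](i)}$. Set $u=\overline\Phi_\star(w_\circ^J)$.

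First I show that $u$ is an idempotent. By Proposition~\partref{prop:idempot Hecke.0}, $w_\circ^J$ is idempotent in $(W(\wh M),\star)$, and homomorphisms preserve idempotents.

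Next I compute its support. We have $w_\circ^J=\pi^\star_{\wh M}(\wh T_{w_\circ^J})$ and $\supp\wh T_{w_\circ^J}=J$. Hence $u=\pi^\star_M(\Phi(\wh T_{w_\circ^J}))$. Since $\supp(w\star w')=\supp w\cup\supp w'$ in the Hecke monoid, $\supp\pi^\star_M(X)=\supp X$ for $X\in\Br^+(M)$. Combining this with Lemma~\partref{lem:[Phi]comp.a} gives $\supp u=[\Phi](J)$.

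Proposition~\partref{prop:idempot Hecke.a} then yields $[\Phi](J)\in\mathscr F(M)$ and $u=w_\circ^{[\Phi](J)}$. This settles (a).

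For part (b), put $K=[\Phi](J)$ and $X=\Phi(\wh T_{w_\circ^J})\in\Br^+_K(M)$. By Proposition~\partref{prop:fund elts BrSa.b}, it suffices to show that each $T_k$, $k\in K$, left divides $X$. For a given $k\in K$, pick $j\in J$ with $k\in[\Phi](j)$.

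Because $w_\circ^J$ is the longest element of $W_J(\wh M)$, we can write $\wh T_{w_\circ^J}=\wh T_j Y$ with $Y\in\Br^+(\wh M)$. Then $X=\Phi(\wh T_j)Y'$, where $Y'=\Phi(Y)$. Standardness gives $\Phi(\wh T_j)=T_{w_\circ^{[\Phi](j)}}$, and this element is left divisible by $T_k$ by Proposition~\partref{prop:fund elts BrSa.b}. Hence $D_L(X)\supset K$, and since $D_L(X)\subset\supp X=K$ we get $D_L(X)=K$.

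By the discussion after Proposition~\ref{prop:fund elts BrSa}, $X$ is then left divisible by $T_{w_\circ^K}$. So $X=T_{w_\circ^K}u$ with $u\in\Br^+(M)$. Since $\supp$ of a product is the union of the supports, $u\in\Br^+_K(M)$.

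Finally, Proposition~\partref{prop:fund elts BrSa.c} applied inside $\Br^+_K(M)$ gives $\Sigma_K(u)T_{w_\circ^K}=T_{w_\circ^K}\Sigma_K^2(u)=T_{w_\circ^K}u$, which is the second equality in (b).

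I do not expect a real obstacle here. The only delicate point is the identity $\supp\pi^\star_M(X)=\supp X$ used in part (a). It follows from the stated property $\supp(w\star w')=\supp w\cup\supp w'$ by induction on the length of a word representing $X$.
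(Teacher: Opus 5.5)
Your proof is correct and follows essentially the same route as the paper: for (a) you compute the support of the idempotent $\overline\Phi_\star(w_\circ^J)=\pi^\star_M(\Phi(\wh T_{w_\circ^J}))$ and apply Proposition~\ref{prop:idempot Hecke}. For (b) you show that every $T_k$, $k\in[\Phi](J)$, left divides $\Phi(\wh T_{w_\circ^J})$ via $\Phi(\wh T_j)=T_{w_\circ^{[\Phi](j)}}$, then use the least common multiple property and $\Sigma_{[\Phi](J)}$ from Proposition~\ref{prop:fund elts BrSa}.

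The differences are cosmetic. In (b) you get finiteness of $[\Phi](J)$ from the discussion of $D_L(X)$ after Proposition~\ref{prop:fund elts BrSa}, whereas the paper takes it from part (a). You also spell out the identity $\supp\pi^\star_M(X)=\supp X$, which the paper uses implicitly.
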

\begin{proof}
Denote~$w=\overline\Phi_\star(w_\circ^J)$.
Then also $w=\pi^\star_M(\Phi(\wh T_{w_\circ^J}))$, whence
$\supp w=[\Phi](J)$. Since~$w_\circ^J$
is an idempotent in~$(W(\wh M),\star)$, it follows
that~$w$ is an idempotent in~$(W(M),\star)$ and
so $\supp w=[\Phi](J)\in\mathscr F(M)$ and~$w=w_\circ^{[\Phi](J)}$ by Proposition~\partref{prop:idempot Hecke.a}.

To prove part~\ref{lem:T_w0 divides image.a}, let~$j\in J$. Then~$\wh T_{w_\circ^J}$ is left divisible by~$\wh T_j$ by Proposition~\partref{prop:fund elts BrSa.b} whence
$\Phi(\wh T_{w_\circ^J})\in\Br^+_{[\Phi](J)}(M)$ is left divisible
by $\Phi(\wh T_j)$. Since~$\Phi$ is square free, $\Phi(\wh T_j)=T_{w_\circ^{[\Phi](j)}}$ by Theorem~\partref{thm:Main Thm Cox Heck.d}
and so,
again by Proposition~\partref{prop:fund elts BrSa.b},
is left divisible by all the~$T_i$, $i\in[\Phi](j)$.
Therefore, $\Phi(\wh T_{w_\circ^J})$ is left divisible by all the~$T_i$ with~$i\in[\Phi](J)=\bigcup_{j\in J}[\Phi](j)$. Since~$[\Phi](J)\in\mathscr F(M)$ by part~\ref{lem:T_w0 divides image.b}, $\Phi(\wh T_{w_\circ^J})$
is
left divisible by~$T_{w_\circ^{[\Phi](J)}}$ by Proposition~\partref{prop:fund elts BrSa.b}. The second equality follows
by Proposition~\partref{prop:fund elts BrSa.c}.
\end{proof}
\begin{lemma}\label{lem:induced inj}
Let~$\wh M\in\Cox{\wh I}$, $M\in\Cox I$ and
let~$\Phi\in\Hom_{\Art}(\wh M,M)$ be
strongly square free. Then~$\overline\Phi=\overline\Phi_\star$ and~$
\Phi(\wh T_w)=T_{\overline\Phi(w)}$ for all $w\in W(\wh M)$. In particular, if~$\Phi$ is injective then so is~$\overline\Phi$. 
\end{lemma}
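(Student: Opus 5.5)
The plan is to exploit the fact that, by strong square freeness, $\Phi$ maps the set $\SQF^+(\wh M)$ into $\SQF^+(M)$. On $\SQF^+(M)$ the projections $\pi_M$ and $\pi^\star_M$ coincide. Composing with the bijection $\pi_{\wh M}|_{\SQF^+(\wh M)}^{-1}$, both induced maps can then be read off from the single element $\Phi(\wh T_w)$.

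First, fix $w\in W(\wh M)$. By Theorem~\partref{thm:Tits.a}, $\pi_{\wh M}^{-1}(w)$ means $\wh T_w$, the unique square free preimage. Hence
\[
\overline\Phi(w)=\pi_M(\Phi(\wh T_w))
\quad\text{and}\quad
\overline\Phi_\star(w)=\pi^\star_M(\Phi(\wh T_w)).
\]
Since $\Phi$ is strongly square free, $\Phi(\wh T_w)\in\SQF^+(M)$. By Proposition~\ref{prop:prod *}, $\pi^\star_M|_{\SQF^+(M)}=\pi_M|_{\SQF^+(M)}$, and therefore $\overline\Phi(w)=\overline\Phi_\star(w)$.

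Second, put $u=\overline\Phi(w)=\pi_M(\Phi(\wh T_w))$. Then $\Phi(\wh T_w)$ is a square free element lying over $u$. By the uniqueness part of Theorem~\partref{thm:Tits.b}, $T_u$ is the only square free element with $\pi_M(T_u)=u$. Hence $\Phi(\wh T_w)=T_{\overline\Phi(w)}$.

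Third, for injectivity, suppose $\overline\Phi(w)=\overline\Phi(w')$. Then $\Phi(\wh T_w)=T_{\overline\Phi(w)}=T_{\overline\Phi(w')}=\Phi(\wh T_{w'})$. If $\Phi$ is injective, this gives $\wh T_w=\wh T_{w'}$. Applying $\pi_{\wh M}$ then yields $w=w'$.

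No step is a real obstacle. The one point needing care is that $\overline\Phi$ is defined through the particular section $\pi_{\wh M}^{-1}$, which sends $w$ to $\wh T_w$. Once that is made explicit, everything follows from the bijectivity statements in Theorem~\ref{thm:Tits} and Proposition~\ref{prop:prod *}.
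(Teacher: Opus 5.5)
Your proposal is correct and follows essentially the same route as the paper's proof. Both use strong square freeness to land $\Phi(\wh T_w)$ in $\SQF^+(M)$, then use that $\pi_M$ and $\pi^\star_M$ agree and are injective there, and obtain injectivity of $\overline\Phi$ directly from $\Phi(\wh T_w)=T_{\overline\Phi(w)}$.
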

\begin{proof}
Since~$\Phi$ is strongly square free, for any~$w\in W(\wh M)$ we have~$\Phi(\wh T_w)=T_{w'}$ for some~$w'\in W(M)$. Then~$\overline\Phi(w)=\pi_M(\Phi(\wh T_w))
=\pi_M(T_{w'})=w'$ and~$\overline\Phi_\star(w)=
\pi_M^\star(\Phi(\wh T_w))=\pi_M^\star(T_{w'})=w'$. The remaining assertions are immediate.
\end{proof}
\begin{remark}\label{rem:induced do not have to be injective}
If~$\Phi$ is not strongly square free then, even then it is injective and~$\overline\Phi$ or~$\overline\Phi_\star$ (or even both of them) are homomorphisms, they do not have to be injective. For example, the homomorphism
$\Br^+(B_2)\to\Br^+(A_2)$, $T_i\mapsto T_i^{1+\delta_{i,2}}$ is of Coxeter-Hecke type by Theorem~\ref{thm:Main Thm Cox Heck} and 
is injective by~\cite{Cri}.
Yet, $\overline\Phi$ is obviously not injective
since~$\overline\Phi(s_2)=1$. Also, $\overline\Phi_\star$ is the tautological
homomorphism $(W(B_2),\star)\to (W(A_2),\star)$
and is not injective since $s_1\star s_2\star s_1\not=s_2\star s_1\star s_2$ in~$(W(B_2),\star)$.

For a more complicated example, the assignments 
$\wh T_1\mapsto T_1T_3T_4T_3$, $\wh T_2\mapsto T_2$ define an optimal standard homomorphism~$\Br^+(I_2(8))\to \Br^+(A_4)$ (see Theorem~\ref{thm:main thm adm}), which, yet conjecturally, is expected to be injective. However,
the induced homomorphisms of Coxeter groups~$W(I_2(8))\to W(A_4)$
is not injective since $(s_2s_1s_3s_4s_3)^2=
(s_4s_3s_2)s_1s_3(s_2s_3s_4)$ whence
$(s_2s_1s_3s_4s_3)^4=1$. Likewise,
the induced homomorphism of Hecke monoids
$(W(I_2(8)),\star)\to (W(A_4),\star)$ is not injective since $s_2\star s_1s_3s_4s_3
\star s_2\star s_1s_3s_4s_3\star s_2=w_\circ^{[1,4]}$.
\end{remark}

Note some additional properties of
standard homomorphisms.
\begin{lemma}\label{lem:sqf Hecke hom}
Let~$\wh M=(\wh m_{ij})_{i,j\in\wh I}\in\Cox{\wh I}$, $M\in\Cox I$ and let~$\Phi\in\Hom_{\Art}(\wh M,M)$ be
standard. Then
\begin{enmalph}
   \item\label{lem:sqf Hecke hom.a} $\Phi$ is uniquely determined by~$[\Phi]:\wh I\to \mathscr F(M)$;
    \item\label{lem:sqf Hecke hom.b}
    If~$\wh m_{ij}$, $i\not=j\in\wh I$ is odd then
    $\ell(w_\circ^{[\Phi](i)})=\ell(w_\circ^{[\Phi](j)})$;
    \item\label{lem:sqf Hecke hom.c}
    If~$\wh m_{ij}$, $i,j\in\wh I$ is even then
$(T_{w_\circ^{[\Phi](i)}}T_{w_\circ^{[\Phi](j)}})^{\wh m_{ij}}$ is ${}^{op}$-invariant;
\end{enmalph}
\end{lemma}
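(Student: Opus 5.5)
Each of the three parts follows quickly from Theorem~\partref{thm:Main Thm Cox Heck.d} together with elementary properties of the anti-involution~${}^{op}$ and of the length function.

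For part~\ref{lem:sqf Hecke hom.a}: since $\Phi$ is standard, Theorem~\partref{thm:Main Thm Cox Heck.d} gives $\Phi(\wh T_i)=T_{w_\circ^{[\Phi](i)}}$ for every $i\in\wh I$. Thus the images of the generators, and hence $\Phi$, are determined by the map $[\Phi]$. By Lemma~\partref{lem:T_w0 divides image.b}, applied to the singletons $\{i\}\in\mathscr F(\wh M)$, this map takes values in $\mathscr F(M)$.

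For part~\ref{lem:sqf Hecke hom.b}: Lemma~\partref{lem:elem Artin hom.c} gives $\ell(\Phi(\wh T_i))=\ell(\Phi(\wh T_j))$ whenever $\wh m_{ij}$ is odd. Since $T_w$ is square free, $\ell(T_w)=\ell(w)$, and so $\ell(T_{w_\circ^{[\Phi](i)}})=\ell(w_\circ^{[\Phi](i)})$. The claim follows.

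For part~\ref{lem:sqf Hecke hom.c}, abbreviate $X=\Phi(\wh T_i)$ and $Y=\Phi(\wh T_j)$, and let $\wh m_{ij}=2m$. Both $X$ and $Y$ are ${}^{op}$-invariant by Proposition~\partref{prop:fund elts BrSa.a}. The relation $\brd{XY}{2m}=\brd{YX}{2m}$ reads $(XY)^m=(YX)^m$. Applying ${}^{op}$ to $(XY)^m$ gives $(Y^{op}X^{op})^m=(YX)^m=(XY)^m$, so $(XY)^m$ is ${}^{op}$-invariant, and therefore so is its square $(XY)^{\wh m_{ij}}$.

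The case $\wh m_{ij}=\infty$ needs no separate treatment, because under the paper's conventions $\infty$ is not even. The only point requiring any care in the whole argument is reading the exponent in part~\ref{lem:sqf Hecke hom.c} correctly; the ${}^{op}$-invariance itself is immediate from the braid relation.
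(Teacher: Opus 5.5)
Your proof is correct and follows the paper's argument. Parts (a) and (b) are exactly Theorem~\partref{thm:Main Thm Cox Heck.d} combined with Lemma~\partref{lem:elem Artin hom.c}. For (c) the paper cites Corollary~\partref{cor:elem prop Coxeter Hecke.c} ($\Phi$ commutes with ${}^{op}$) applied to the ${}^{op}$-invariant element $(\wh T_i\wh T_j)^{\wh m_{ij}}$, and you make the same observation directly in the codomain.

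One small correction: the paper's conventions make $\infty$ divisible by every positive integer, so your remark that "$\infty$ is not even" is not the right justification. The case $\wh m_{ij}=\infty$ is excluded simply because the power $(\cdot)^{\wh m_{ij}}$ is only meaningful for finite $\wh m_{ij}$.
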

\begin{proof}
Part~\ref{lem:sqf Hecke hom.a} is Theorem~\partref{thm:Main Thm Cox Heck.d}. Part~\ref{lem:sqf Hecke hom.b}
follows from Lemma~\partref{lem:elem Artin hom.c}
and Theorem~\partref{thm:Main Thm Cox Heck.d}.
Part~\ref{lem:sqf Hecke hom.c} follows from Corollary~\partref{cor:elem prop Coxeter Hecke.c}.
\end{proof}
\begin{lemma}\label{lem:parab preserving}
Let~$M\in\Cox I$, $\wh M\in\Cox{\wh I}$ 
and suppose that~$J\in\mathscr F(\wh I)$
for all~$J\subset\wh I$ with~$|J|=2$.
Suppose that~$\Phi\in\Hom_{\Art}(\wh M,M)$ satisfies $\Phi(\wh T_{w_\circ^J})=T_{w_\circ^{[\Phi](J)}}$
for all~$J\in\mathscr F(\wh M)$. Then~$\Phi$
is disjoint, of Coxeter-Hecke type,  
$\Phi(\wh T_{w_{J; K}})=
T_{w_{[\Phi](J);[\Phi](K)}}$
and also $\overline\Phi(w_{J; K})
=\overline\Phi_\star(w_{J; K})=w_{[\Phi](J);[\Phi](K)}$
for all~$J\subset K\in\mathscr F(\wh M)$.
\end{lemma}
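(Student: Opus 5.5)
Taking $J=\{i\}$ for $i\in\wh I$ in the hypothesis gives $\Phi(\wh T_i)=T_{w_\circ^{[\Phi](i)}}$. Hence $\Phi$ is square free and $\pi^\star_M(\Phi(\wh T_i))=\pi_M(\Phi(\wh T_i))=w_\circ^{[\Phi](i)}$. The latter is an idempotent in $(W(M),\star)$ by Proposition~\partref{prop:idempot Hecke.0} and an involution in $W(M)$. Theorem~\ref{thm:Main Thm Cox Heck} then shows that $\Phi$ is of Coxeter-Hecke type, and in fact standard. It remains to prove disjointness and the formulas for $w_{J;K}$.

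\emph{Disjointness.} Let $i\ne j\in\wh I$ and set $J=\{i,j\}$, which lies in $\mathscr F(\wh M)$ by assumption; write $m=\wh m_{ij}<\infty$. Then $\wh T_{w_\circ^J}=\brd{\wh T_i\wh T_j}{m}$, so by the hypothesis
$$
\brd{T_{w_\circ^{[\Phi](i)}}T_{w_\circ^{[\Phi](j)}}}{m}=T_{w_\circ^{[\Phi](J)}}
$$
is square free. Suppose $k\in[\Phi](i)\cap[\Phi](j)$. Since $m\ge2$, the left-hand side contains the factor $T_{w_\circ^{[\Phi](i)}}T_{w_\circ^{[\Phi](j)}}$. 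The first factor is right divisible by $T_k$ and the second is left divisible by $T_k$ (Proposition~\partref{prop:fund elts BrSa.b}). So this product, and hence the whole left-hand side, contains a factor of the form $XT_k^2Y$. By Lemma~\ref{lem:sq free fact}, $T_k^2$ would then be square free, which is false because $\pi_M(T_k^2)=1$. Therefore $[\Phi](i)\cap[\Phi](j)=\emptyset$.

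\emph{The $w_{J;K}$ formulas.} Let $J\subset K\in\mathscr F(\wh M)$. By~\eqref{eq:ell w w0} we have $w_\circ^K=w_\circ^J\times w_\circ^Jw_\circ^K$, so $\wh T_{w_\circ^K}=\wh T_{w_\circ^J}\wh T_{w_{J;K}}$ by Theorem~\partref{thm:Tits.b}. Applying $\Phi$ and the hypothesis gives
$$
T_{w_\circ^{[\Phi](K)}}=T_{w_\circ^{[\Phi](J)}}\,\Phi(\wh T_{w_{J;K}}).
$$
The same reasoning in $M$, with $[\Phi](J)\subset[\Phi](K)\in\mathscr F(M)$ (the latter because $T_{w_\circ^{[\Phi](K)}}$ exists), gives $T_{w_\circ^{[\Phi](K)}}=T_{w_\circ^{[\Phi](J)}}T_{w_{[\Phi](J);[\Phi](K)}}$. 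Left cancellation in $\Br^+(M)$ yields $\Phi(\wh T_{w_{J;K}})=T_{w_{[\Phi](J);[\Phi](K)}}$.

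This element is square free, so applying $\pi_M$ and $\pi^\star_M$, which agree on $\SQF^+(M)$ by Proposition~\ref{prop:prod *}, gives $\overline\Phi(w_{J;K})=\overline\Phi_\star(w_{J;K})=w_{[\Phi](J);[\Phi](K)}$. Here we use that $\overline\Phi(w)=\pi_M(\Phi(\wh T_w))$ by definition.

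The only step that needs care is disjointness, specifically extracting the factor $T_k^2$. The fallback is the length count: $\ell$ of the braid word equals $\ell(w_\circ^{[\Phi](J)})$, which contradicts cancellation in $W(M)$ from $s_k^2=1$.
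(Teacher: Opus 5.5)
Your proof is correct and follows essentially the same route as the paper's: square-freeness and Coxeter--Hecke type come from the case $J=\{i\}$, disjointness from exhibiting a square inside $\Phi(\wh T_{w_\circ^{\{i,j\}}})$, and the $w_{J;K}$ formulas from cancelling $T_{w_\circ^{[\Phi](J)}}$. The differences are cosmetic. You extract $T_k^2$ via right and left divisibility, whereas the paper writes $T_{w_\circ^{[\Phi](i)}}T_{w_\circ^{[\Phi](j)}}=T_{w_i}^{op}T_{w_\circ^{L}}^2T_{w_j}$ with $L=[\Phi](i)\cap[\Phi](j)$. You also use left cancellation in $\Br^+(M)$, where the paper inverts $T_{w_\circ^{[\Phi](J)}}$ in the Artin group.
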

\begin{proof}
The assumption that~$\Phi(\wh T_{w_\circ^J})=T_{w_\circ^{[\Phi](J)}}$
for any~$J\in\mathscr F(M)$ implies that~$\Phi$ is square
free and of Hecke type and standard. 
Suppose that~$J=[\Phi](i)\cap [\Phi](j)\not=\emptyset$ for some~$i\not=j\in\wh I$. Since $w_\circ^{[\Phi](i)}=
w_i^{-1}\times w_\circ^J$ and
$w_\circ^{[\Phi](j)}=w_\circ^J\times w_j$ where
$w_k=w_{J;[\Phi](k)}$, $k\in\{i,j\}$, we have
$$
T_{w_\circ^{[\Phi](i)}}T_{w_\circ^{[\Phi](j)}}=
T_{w_i}^{op} T_{w_\circ^J}^2 T_{w_j}.
$$
It follows that~$\Phi(\wh T_{w_\circ^{\{i,j\}}})=\Phi(\brd{\wh T_i\wh T_j}{\wh m_{ij}})$
is not square free and hence cannot be equal to~$T_{w_\circ^K}$
for any~$K\in\mathscr F(M)$. Thus, $\Phi$  is disjoint. 
Since~$\Phi(\wh T_{w_\circ^J})=T_{w_\circ^{[\Phi](J)}}$ for any~$J
\in\mathscr F(\wh M)$ and~$\wh T_{w_{J;K}}=\wh T_{w_\circ^J}^{-1}\wh T_{w_\circ^K}$ in~$\Br(\wh M)$,
it follows that~$\Phi(\wh T_{w_{J;K}})=
T_{w_\circ^{[\Phi](J)}}^{-1}T_{w_\circ^{[\Phi](K)}}=
T_{w_{[\Phi](J);[\Phi](K)}}$. Then~$\overline\Phi(w_{J;K})=\pi_M(\Phi(\wh T_{w_{J;K}}))
=\pi_M(T_{w_{[\Phi](J);[\Phi](K)}})=
w_{[\Phi](J);[\Phi](K)}$. The identity for~$\overline\Phi_\star$ is proved similarly.
\end{proof}

In some cases we can reconstruct a Coxeter or Hecke type homomorphism of Artin monoids from its ``shadow''.
\begin{lemma}\label{lem:lifting to Cox-Hecke}
Let~$\wh M=(\wh m_{ij})_{i,j\in\wh I}\in\Cox{\wh I}$, $M\in\Cox I$ and 
suppose that the $X_i\in\Br^+(M)$, $i\in\wh I$
satisfy 
\begin{enumerate}[label={$\arabic*^\circ.$},ref=
{$\arabic*^\circ$}]
    \item\label{lem:lifting to Cox-Hecke.1} $\brd{X_iX_j}{\wh m_{ij}}\in\SQF^+(M)$ for 
    all $i,j\in\wh I$ with~$\wh m_{ij}<\infty$;
    \item\label{lem:lifting to Cox-Hecke.2} The assignments $\wh s_i\mapsto\pi_M(X_i)$, $i\in \wh I$ define a homomorphism of Coxeter groups $W(\wh M)\to W(M)$
    
    or

\noindent
    the assignments $\wh s_i\mapsto\pi^\star_M(X_i)$, $i\in\wh I$ define a homomorphism $(W(\wh M),\star)\to (W(M),\star)$.
\end{enumerate}
Then the assignments $\wh T_i\mapsto X_i$, $i\in\wh I$
define a homomorphism $\Br^+(\wh M)\to\Br^+(M)$.
\end{lemma}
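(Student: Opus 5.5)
By Lemma~\ref{lem:fund hom}, it suffices to fix $i\ne j\in\wh I$ with $m:=\wh m_{ij}<\infty$ and show that $\brd{X_iX_j}{m}=\brd{X_jX_i}{m}$ in $\Br^+(M)$. Both sides have the same length. This is immediate if $m$ is even. If $m$ is odd, it follows from~\ref{lem:lifting to Cox-Hecke.2}: the equality $\ell(\pi_M(X_i))=\ell(X_i)$ (and similarly for $\pi^\star_M$) comes from square freeness.

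The plan is to deduce the equality from square freeness together with the bijectivity of $\pi_M$ (respectively $\pi^\star_M$) on $\SQF^+(M)$ given by Theorem~\partref{thm:Tits.a} and Proposition~\ref{prop:prod *}. By~\ref{lem:lifting to Cox-Hecke.1}, applied to both ordered pairs $(i,j)$ and $(j,i)$, the elements $A=\brd{X_iX_j}{m}$ and $B=\brd{X_jX_i}{m}$ both lie in $\SQF^+(M)$.

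In the Coxeter case, $\pi_M(A)=\brd{\pi_M(X_i)\pi_M(X_j)}{m}$ and $\pi_M(B)=\brd{\pi_M(X_j)\pi_M(X_i)}{m}$. These agree because the assignment $\wh s_i\mapsto \pi_M(X_i)$ is a homomorphism of Coxeter groups and $\brd{\wh s_i\wh s_j}{m}=\brd{\wh s_j\wh s_i}{m}$ in $W(\wh M)$. Since $\pi_M$ is injective on $\SQF^+(M)$, we get $A=B$.

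The Hecke case is identical. One uses $\pi^\star_M$, the braid relation $\brd{\wh s_i\star\wh s_j\star}{m}=\brd{\wh s_j\star\wh s_i\star}{m}$ in $(W(\wh M),\star)$, and the fact from Proposition~\ref{prop:prod *} that $\pi^\star_M$ restricted to $\SQF^+(M)$ is a bijection.

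The only point requiring care is that the homomorphism in~\ref{lem:lifting to Cox-Hecke.2} indeed sends the braid words to the images of $A$ and $B$. This holds because $\pi_M$ and $\pi^\star_M$ are monoid homomorphisms on $\Br^+(M)$. No further obstacle is expected, and the argument does not even require $\ell(\pi(X_i))=\ell(X_i)$ separately.
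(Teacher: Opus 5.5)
Your argument is correct and is the same as the paper's proof. By $1^\circ$, applied to both $(i,j)$ and $(j,i)$, the words $\brd{X_iX_j}{\wh m_{ij}}$ and $\brd{X_jX_i}{\wh m_{ij}}$ are square free; by $2^\circ$ they have the same image under $\pi_M$ (respectively $\pi^\star_M$); injectivity on $\SQF^+(M)$ then gives $\wh m_{ij}\in B(X_i,X_j)$, and Lemma~\ref{lem:fund hom} finishes. The length discussion in your first paragraph is superfluous, and its appeal to $\ell(\pi_M(X_i))=\ell(X_i)$ does not by itself give equal lengths for odd $\wh m_{ij}$ (that equality follows only from the main argument), so you can simply drop it.
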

\begin{proof}
Let~$i\not=j\in\wh I$ with~$\wh m_{ij}<\infty$. Since 
$$\brd{\pi_M(X_i)\pi_M(X_j)}{\wh m_{ij}}=\pi_M(\brd{X_iX_j}{\wh m_{ij}})
$$ 
and~$\pi_M|_{SQF^+(M)}$ is a bijection onto~$W(M)$, the assumption~\ref{lem:lifting to Cox-Hecke.2} implies that~$\wh m_{ij}\in B(X_i,X_j)$. It remains to apply Lemma~\ref{lem:fund hom}. The argument in the Hecke version is identical.
\end{proof}

Let~$M$ be a Coxeter matrix over~$I$.
While for Hecke monoids the parabolic
projection
$p_J:(W(M),\star)\to (W_J(M),\star)$, $s_i\mapsto s_i$, $i\in J$ and~$s_i\mapsto 1$ if~$i\in I\setminus J$,
is well-defined for any~$J\subset I$
(see for example~\cite{BGLHeck}), in the framework
of Artin monoids and Coxeter groups analogous homomorphisms exist only in special cases.
\begin{proposition}\label{prop:parab proj Artin}
Let~$M=(m_{ij})_{i,j\in I}$ be a Coxeter matrix and
let~$J\subset I$. The assignments
$$
T_i\mapsto \begin{cases}
T_i,&i\in J,\\
1,&i\in I\setminus J,
\end{cases}
$$
define a surjective Hecke type homomorphism \plink{PJ}$P_J:\Br^+(M)\to\Br_J^+(M)$ if and only
if $m_{ij}$ is even for all $j\in J$, $i\in I\setminus J$.
In particular, if $J$ and~$I\setminus J$ are orthogonal
then both~$P_J$ and~$P_{I\setminus J}$ are homomorphisms
of respective Artin monoids and $T=P_J(T)P_{I\setminus J}(T)$ for
all~$T\in\Br^+(M)$. Moreover, the same assertions hold for Coxeter groups.
\end{proposition}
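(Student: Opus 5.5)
The argument is a direct check of the defining relations via Lemma~\ref{lem:fund hom}, applied to each pair $i\ne j\in I$ with $m_{ij}<\infty$. Write $X_i$ for the proposed image of $T_i$, so $X_i=T_i$ for $i\in J$ and $X_i=1$ for $i\notin J$. There are three cases.
\begin{itemize}
\item If $i,j\in J$, the relation $\brd{T_iT_j}{m_{ij}}=\brd{T_jT_i}{m_{ij}}$ holds in $\Br^+_J(M)$.
\item If $i,j\notin J$, both sides are $1$.
\item If $i\in I\setminus J$ and $j\in J$, the two sides become $T_j^{\lfloor m_{ij}/2\rfloor}$ and $T_j^{\lceil m_{ij}/2\rceil}$.
\end{itemize}
In the mixed case the powers agree in $\Br^+_J(M)$ iff $m_{ij}$ is even. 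This uses that $\ell$ is a homomorphism, which rules out $T_j^a=T_j^{a+1}$. Hence $P_J$ is a homomorphism exactly under the stated parity condition. For the converse direction, note that $m_{ij}=\infty$ imposes no relation, and $\infty$ counts as even under the paper's conventions. Surjectivity is clear, since $P_J(T_i)=T_i$ for $i\in J$.

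For the Hecke type claim, use Theorem~\ref{thm:Main Thm Cox Heck}\ref{thm:Main Thm Cox Heck.b}. It suffices to check that $\overline{P_J}_\star(s_i)=\pi^\star_{M_J}(P_J(T_i))$ is an idempotent for every $i$. This value is $s_i$ for $i\in J$ and $1$ otherwise, and both are idempotents. Equivalently, one can note that $\overline{P_J}_\star$ is the parabolic projection $p_J$. The Coxeter group version follows the same way from part~\ref{thm:Main Thm Cox Heck.a}: the images are $s_i$ or $1$, all of which are involutions. So the induced map $W(M)\to W_J(M)$ is a homomorphism under the same parity condition. For necessity in the group setting, take $i\notin J$, $j\in J$ with $m_{ij}$ odd. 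The braid relation then forces $s_j^{(m_{ij}-1)/2}=s_j^{(m_{ij}+1)/2}$, i.e.\ $s_j=1$, a contradiction.

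Now suppose $J$ and $I\setminus J$ are orthogonal. Then $m_{ij}=2$ for every mixed pair, so both $P_J$ and $P_{I\setminus J}$ are defined. To get $T=P_J(T)P_{I\setminus J}(T)$, I would use Lemma~\ref{lem:orth factors}\ref{lem:orth factors.a}, which gives $\Br^+(M)\cong\Br^+_J(M)\times\Br^+_{I\setminus J}(M)$. Under this isomorphism $P_J$ and $P_{I\setminus J}$ are the two coordinate projections, and every element equals the product of its components, since the two factors commute elementwise. The same reasoning applies verbatim to $W(M)$ via Lemma~\ref{lem:orth factors}\ref{lem:orth factors.b}.

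No step is a real obstacle. The only point needing care is the necessity direction, where one must confirm that $T_j^a\ne T_j^{a+1}$ in the codomain; the length homomorphism handles this, and its analogue $s_j\ne 1$ handles the Coxeter group case.
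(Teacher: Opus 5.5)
Your proof is correct. On the main equivalence it follows the paper. Necessity is the same length count: the paper cites Lemma~\partref{lem:elem Artin hom.c}, which amounts to your observation that $\ell$ separates $T_j^{a}$ from $T_j^{a+1}$. Sufficiency is the same direct check of the mixed relation.

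The one genuine divergence is the identity $T=P_J(T)P_{I\setminus J}(T)$. The paper proves it by induction on $\ell(T)$: it writes $T=T_iT'$ and uses that $T_i$ with $i\notin J$ commutes with $P_J(T')$. You instead read it off from the product decomposition $\Br^+(M)\cong\Br^+_J(M)\times\Br^+_{I\setminus J}(M)$ of Lemma~\partref{lem:orth factors.a}, under which $P_J$ and $P_{I\setminus J}$ become the coordinate projections. Your route is shorter. The paper's induction is more self-contained: it effectively proves that the multiplication map $\Br^+_J(M)\times\Br^+_{I\setminus J}(M)\to\Br^+(M)$ is surjective, which that lemma takes for granted (it is immediate by moving the letters from $J$ to the left in any word).

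You also make the Hecke-type and Coxeter-group assertions explicit via Theorem~\ref{thm:Main Thm Cox Heck}, whereas the paper's proof leaves them implicit.
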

\begin{proof}
It follows from Lemma~\partref{lem:elem Artin hom.c} that if~$P_J$
is a homomorphism then~$m_{ij}$ is even for all~$i\in I\setminus J$, $j\in J$. 
For the converse, note that, for all~$i\in I\setminus J$, $j\in J$ 
we have $\brd{T_i\cdot 1}{m_{ij}}=\brd{1\cdot T_i}{m_{ij}}=
T_i^{\frac12m_{ij}}$ since~$m_{ij}$ is 
even and so~$P_J$ is a homomorphism.

If~$J$ and~$I\setminus J$ are orthogonal then $m_{ij}=2$ for all $i\in I\setminus J$, $j\in J$ and so both~$P_J$ and~$P_{I\setminus J}$ are well-defined homomorphisms.

To prove the last statement, we use induction on~$\ell(T)$, $T\in \Br^+(M)$,
the case~$\ell(T)=0$ being obvious. For the inductive step, write
$T=T_i T'$ with~$i\in I$, $\ell(T')=\ell(T)-1$. Then~$T=T_i P_J(T')
P_{I\setminus J}(T')$ by the induction hypothesis. If~$i\in J$ then
$T_i P_J(T')=P_J(T_i T')
=P_J(T)$ and~$P_{J\setminus I}(T')=P_{J\setminus I}(T_i)P_{J\setminus I}(T')=P_{J\setminus I}(T_i T')=
P_{J\setminus I}(T)$. If~$i\in I\setminus J$ then, since~$J$ and~$I\setminus J$ are orthogonal, $T_i$ commutes with~$P_J(T')$ and $P_J(T')=P_J(T_i) P_J(T')=P_J(T_iT')=P_J(T)$ while
$T_iP_{I\setminus J}(T')=p_{I\setminus J}(T_i T')=p_{I\setminus J}(T)$. Thus, we have
$T=P_J(T)P_{I\setminus J}(T)$.
\end{proof}

For irreducible Coxeter matrices of finite type, the only non-trivial examples of parabolic projections of Artin monoids are $P_{[1,n-1]}\in\Hom_{\Art}(B_n,A_{n-1})$, 
$P_{\{1,2\}}\in\Hom_{\Art}(F_4,A_2)$,
$P_{\{n\}}\in \Hom_{\Art}(B_n,\allowbreak A_1)$
and $P_{\{i\}}\in\Hom_{\Art}(I_2(2m),A_1)$, $m\ge 2$,
$i\in\{1,2\}$.

\begin{remark}\label{rem:prod coprod}
For any Coxeter matrices $M$, $M'$,
canonical morphisms $M\times M'\to M$
and~$M\times M'\to M'$ in either of categories~$\Art$, $\CoxCat  $ and~$\Heck $ involve parabolic projections in
an obvious way. 
\end{remark}

The following Lemma allows us to reduce the study of Hecke type homomorphisms of Artin monoids to that of fully supported ones with a connected codomain.
\begin{lemma}\label{lem:diagonal}
Let $M\in\Cox I$, $\wh M\in\Cox{\wh I}$ and let $\Phi\in\Hom_{\Art}(\wh M,M)$
be of Hecke type (respectively, of Coxeter type, square free). Let~$J\subset
\Phi(\wh I)$ and suppose that~$J$ and~$\Phi(\wh I)\setminus J$ are orthogonal. Then $P_J\circ\Phi$ is of Hecke type (respectively, of Coxeter type, square free).

Conversely, given pairwise orthogonal $J_1,\dots,J_k\subset I$ and
Hecke (respectively, Coxeter, square free) homomorphisms
$\Phi_t:\Br^+(\wh M)\to \Br^+_{J_t}(M)$, $1\le t\le k$, the map
$\Phi:\Br^+(\wh M)\to\Br^+(M)$ defined by
$T\mapsto \Phi_1(T)\cdots\Phi_k(T)$, $T\in\Br^+(\wh M)$ is a Hecke (respectively,
Coxeter, square free) homomorphism~$\Br^+(\wh M)\to\Br^+(M)$.
\end{lemma}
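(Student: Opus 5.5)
The plan is to prove the two halves separately. In each half, the Hecke, Coxeter and square free variants are handled in parallel, using Theorem~\ref{thm:Main Thm Cox Heck} to reduce the Hecke and Coxeter type properties to conditions on images of generators. (I read $J\subset\Phi(\wh I)$ as $J\subset[\Phi](\wh I)$.)

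\emph{First part.} Set $K=[\Phi](\wh I)\setminus J$. Since $J$ and $I\setminus J$ need not be orthogonal in~$I$, I will first replace~$M$ by the parabolic $M_{J\cup K}$: $\Phi$ takes values in $\Br^+_{J\cup K}(M)\cong\Br^+(M_{J\cup K})$, and all the relevant notions (square freeness, $\pi_M$, $\pi^\star_M$) are compatible with parabolic inclusions. Inside $J\cup K$ the subsets $J$ and $K$ are orthogonal, so Proposition~\ref{prop:parab proj Artin} shows that $P_J$ is a homomorphism and $X=P_J(X)P_K(X)$ for all~$X$. By Lemma~\partref{lem:orth factors.a} this factorization is the product decomposition $\Br^+_J\times\Br^+_K$, and by Lemma~\partref{lem:orth factors.b}, \partref{lem:orth factors.c} the same holds for $W$ and $(W,\star)$. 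So $\pi_M$ and $\pi^\star_M$ intertwine $P_J$ with the corresponding projections $W_{J\cup K}\to W_J$ and $(W_{J\cup K},\star)\to(W_J,\star)$.

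It follows that $\overline{P_J\circ\Phi}=p_J\circ\overline\Phi$ and $\overline{(P_J\circ\Phi)}_\star=p_J\circ\overline\Phi_\star$, with $p_J$ now a genuine homomorphism. Composing with a homomorphism preserves being a homomorphism, so Theorem~\ref{thm:Main Thm Cox Heck}\ref{thm:Main Thm Cox Heck.a}, \ref{thm:Main Thm Cox Heck.b} gives the Coxeter and Hecke cases. For square freeness, write $\Phi(\wh T_i)=P_J(\Phi(\wh T_i))P_K(\Phi(\wh T_i))$, a factorization of a square free element. Lemma~\ref{lem:sq free fact} then shows that each factor is square free.

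\emph{Second part.} Pairwise orthogonality gives $\Br^+_{J_1\cup\cdots\cup J_k}(M)\cong\prod_t\Br^+_{J_t}(M)$ by iterating Lemma~\partref{lem:orth factors.a}, with the factors commuting elementwise. Hence $T\mapsto\Phi_1(T)\cdots\Phi_k(T)$ is the composition of the diagonal $\Br^+(\wh M)\to\prod_t\Br^+(\wh M)$ with $\prod_t\Phi_t$, and is therefore a homomorphism. Alternatively, one can verify the braid relations on generators via Lemma~\ref{lem:fund hom}, using commutation. Under the isomorphisms of Lemma~\ref{lem:orth factors}, $\pi_M$ and $\pi^\star_M$ become the products of the $\pi_{M_{J_t}}$ and $\pi^\star_{M_{J_t}}$. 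Thus $\overline\Phi=\prod_t\overline{\Phi_t}$ (composed with the diagonal), and likewise for~$\star$, and these are homomorphisms.

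For square freeness, $\ell$ of a product of elements with pairwise disjoint supports is additive in $\Br^+$, and also in $W$ by Lemma~\ref{lem:extend supp}. So $\ell(\pi_M(\Phi(\wh T_i)))=\sum_t\ell(\pi_M(\Phi_t(\wh T_i)))=\sum_t\ell(\Phi_t(\wh T_i))=\ell(\Phi(\wh T_i))$.

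The main obstacle is the bookkeeping in the first part: checking that $P_J$ is well defined even though $J$ is only orthogonal to the rest of the \emph{image} support, and that $\pi_M$ and $\pi^\star_M$ genuinely intertwine it with $p_J$. Restricting to the parabolic $M_{J\cup K}$ before applying Proposition~\ref{prop:parab proj Artin} should take care of both.
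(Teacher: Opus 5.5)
Your proof is correct and follows essentially the same route as the paper's: intertwining $P_J$ with the parabolic projections of $W$ and $(W,\star)$, Lemma~\ref{lem:sq free fact} for square freeness, and elementwise commutation of the images for the converse. The paper instead checks that the images of generators are idempotents or involutions, i.e.\ condition~(iii) of Theorem~\ref{thm:Main Thm Cox Heck}, rather than composing the induced homomorphisms. Your restriction to the parabolic $M_{J\cup K}$ also makes explicit a point the paper leaves implicit: $P_J$ is only guaranteed to be well defined on $\Br^+_{[\Phi](\wh I)}(M)$, not on all of $\Br^+(M)$.
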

\begin{proof}
Observe that~$\pi^\star_{M_J}\circ P_J=p_J\circ \pi^\star_M$ and
$\pi_{M_J}\circ P_J=\tilde p_J\circ \pi_M$ where~$\tilde p_J:W(M)\to W_J(M)$ is the Coxeter group counterpart of~$P_J$. Thus, if $\pi^\star_M(\Phi(\wh T_i))$
(respectively, $\pi_M(\Phi(\wh T_i))$)
is
an idempotent (respectively, an involution) then so is~$\pi^\star_{M_J}(
P_J\circ \Phi(\wh T_i))$ (respectively, $\pi_{M_J}(P_J\circ \Phi(\wh T_i))$).
Furthermore, if $X_i=\Phi(\wh T_i)\in \SQF^+(M)$
then, since
$X_i=P_J(X_i)P_{I\setminus J}(X_i)$ by Proposition~\ref{prop:parab proj Artin}, it follows from Lemma~\ref{lem:sq free fact} that~$P_J(X_i)=P_J\circ\Phi(\wh T_i)$ is square free.

For the converse, since images of the~$\Phi_t$, $1\le t\le k$ commute in~$\Br^+(M)$, it follows that $\Phi(TT')=\prod_{1\le t\le k}\Phi_t(TT')
=\prod_{1\le t\le k}\Phi_t(T)\prod_{1\le k\le t}\Phi_t(T')=
\Phi(T)\Phi(T')$ and so~$\Phi$ is indeed a homomorphism. Since
the product of commuting idempotents (respectively, involutions)
is again an idempotent (respectively, an involution), it follows that
if all the~$\Phi_t$ are Hecke (respectively, Coxeter) then so is~$\Phi$.
Finally, since the product of square free elements
from~$\Br^+_{J}(M)$ and~$\Br^+_{K}(M)$ where~$J$ and~$K$ are orthogonal is obviously square free, if $\Phi_t(T_i)\in \SQF^+(M)\cap \Br^+_{J_t}(M)$ for all~$1\le t\le k$ then $\Phi(T_i)\in \SQF^+(M)$.
\end{proof}

We now discuss faithfulness and fullness of functors~$\mathsf H$
and~$\mathsf C$.

\begin{example}\label{ex:non-faithful}
Let~$M\in\Cox I$, $|I|>1$ be of finite type and irreducible. Let~$\mathbf z=(z_i)_{i\in I}$
where all~$z_i$ are central and not equal to~$1$ with
$z_i=z_j$ whenever~$m_{ij}$ is odd,
and let~$\mathbf z^2=(z_i^2)_{i\in I}$.
Then both~$\mathbf z$, $\mathbf z^2$ are decorations
of~$\id\in\Hom_{\Art}(M,M)$ and
$\id_{\mathbf z}\not=\id_{\mathbf z^2}$. 
Yet 
$\overline{\id_{\mathbf z}}_\star(s_i)=w_\circ^I=
\overline{\id_{\mathbf z^3}}_\star(s_i)$ for all~$i\in I$. Thus, $\mathsf H$ is not faithful.

Similarly, if~$M$ is as above and not of type
$A_n$, $D_{n+1}$ with~$n$ odd or~$E_6$, let
$\mathbf z=(z_i)_{i\in I}$ where~$z_i=T_{w_\circ^I}$
and let~$\mathbf z^3=(z_i^3)_{i\in I}$. Then
$\overline{\id_{\mathbf z}}(s_i)=w_\circ^I=
\overline{\id_{\mathbf z^3}}(s_i)$,
yet~$\id_{\mathbf z}\not=\id_{\mathbf z^3}$.
Thus, $\mathsf C$ is not faithful either.
\end{example}

We say that~$\phi\in\Hom_{\Heck }(\wh M,M)$
is (square free) {\em liftable}
if there exists a (square free)
$\Phi\in\Hom_{\Art}(\wh M,M)$ of Hecke type such that
$\overline\Phi_\star=\phi$. The corresponding notions
for homomorphisms of Coxeter groups are defined similarly.
Note that if~$\phi$
is square free liftable then~$\Phi(\wh T_{ w})=T_{\phi(w)}$
for all~$w\in W(\wh M)$.
\begin{lemma}\label{lem:non-lift-parab}
Let~$M\in\Cox I$.
For any~$J\subset I$, the parabolic projection $p_J:(W(M),\star)\to (W_J(M),\star)$ is liftable if and only if~$P_J$ is a well-defined homomorphism $\Br^+(M)\to\Br^+_J(M)$. 
\end{lemma}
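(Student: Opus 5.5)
The "if" direction is immediate. If $P_J$ is a well-defined homomorphism $\Br^+(M)\to\Br^+_J(M)$, then Proposition~\ref{prop:parab proj Artin} already tells us it is of Hecke type. We then compare $\overline{P_J}_\star$ with $p_J$ on generators:
\[
\pi^\star_{M_J}(P_J(T_i))=s_i \quad (i\in J), \qquad \pi^\star_{M_J}(P_J(T_i))=1 \quad (i\notin J).
\]
These agree with $p_J(s_i)$, and both maps are monoid homomorphisms determined by their values on generators, so $\overline{P_J}_\star=p_J$. Hence $p_J$ is liftable.

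For the "only if" direction, suppose $\Phi\in\Hom_{\Art}(M,M_J)$ is of Hecke type with $\overline\Phi_\star=p_J$. By Theorem~\partref{thm:Main Thm Cox Heck.b}, $\overline\Phi_\star(s_i)=w_\circ^{[\Phi](i)}$, and comparing with $p_J$ gives the supports:
\[
[\Phi](i)=\{i\} \quad (i\in J), \qquad [\Phi](i)=\emptyset \quad (i\in I\setminus J).
\]
Since $\supp\Phi(T_i)=\emptyset$ forces $\Phi(T_i)=1$, the generators outside $J$ are killed. For $i\in J$, we get $\Phi(T_i)=T_i^{d_i}$ with $d_i\ge1$. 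Now take $i\in I\setminus J$ and $j\in J$, and suppose $m_{ij}$ were odd. Then Lemma~\partref{lem:elem Artin hom.c} gives $\ell(\Phi(T_i))=\ell(\Phi(T_j))$, i.e.\ $0=d_j\ge1$, a contradiction. So $m_{ij}$ is even (or $\infty$) for all $i\in I\setminus J$ and $j\in J$. By Proposition~\ref{prop:parab proj Artin}, this is exactly the condition for $P_J$ to be a well-defined homomorphism.

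The one point that needs care is the convention $m_{ij}=\infty$. Proposition~\ref{prop:parab proj Artin} requires $m_{ij}$ "even", and under the paper's convention $\infty$ counts as even (it is divisible by everything). With that reading, the relation is absent when $m_{ij}=\infty$, so $P_J$ is trivially well defined on that pair and the argument is consistent. Apart from this convention check, the argument is routine.
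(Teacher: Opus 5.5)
Your proof is correct and is essentially the paper's argument. The ``if'' direction is the same comparison on generators. For the ``only if'' direction, the paper argues by contradiction: an odd $m_{ij}$ with $i\notin J$, $j\in J$ would force $[\Phi](j)=\emptyset$ via Lemma~\partref{lem:elem Artin hom.c}. That is just the contrapositive of your direct check that all such $m_{ij}$ are even, after which Proposition~\ref{prop:parab proj Artin} applies as you say.
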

\begin{proof}
If~$P_J$ is well-defined then, clearly, $p_J=\overline{(P_J)}_\star$. 
Suppose that~$P_J$ is not well-defined and $p_J=\overline{\Phi}_\star$ for some homomorphism $\Phi:\Br^+(M)\to \Br^+_J(M)$. By Proposition~\ref{prop:parab proj Artin}, there exist $i\in I\setminus J$, $j\in J$ such that~$m_{ij}\ge 3$ is odd. Since~$1=p_J(s_i)=\pi^\star_M(\Phi(T_i))$, it follows that~$[\Phi](i)=\emptyset$. Then by Lemma~\partref{lem:elem Artin hom.c}
we have~$[\Phi](j)=\emptyset$, whence 
$\phi(s_j)=\pi^\star(\Phi(T_j))=1$ which is a contradiction.
\end{proof}
\begin{example}\label{ex:non-liftable}
By Lemma~\ref{lem:non-lift-parab},
a parabolic projection~$p_J:(W(A_n),\star)\to (W_J(A_n),\star)$ with~$J\not=\emptyset,[1,n]$ is non-liftable.
\end{example}
\begin{example}\label{ex:non-liftable-1}
\label{ex:H3 D5}
The assignments
$
s'_1\mapsto s_4,\quad s'_2\mapsto s_1s_3$, $ s'_3\mapsto s_2 s_5
$
define a homomorphism of Hecke monoids $\phi:(W(H_3),\star)\to (W(D_5),\star)$. Indeed, $\phi$ is the composition of the homomorphism $(W(H_3),\star)\to
(W(D_6),\star)$ induced by the standard
homomorphism~\eqref{eq:unfold F4 E6} of respective Artin monoids with the
parabolic projection~$p_{[2,6]}:(W(D_6),\star)\to (W_{[2,6]}(D_6),\star)\cong (W(D_5),\star)$.

The homomorphism~$\phi$ is not liftable. Indeed, if~$\phi=\overline\Phi_\star$ for some~$\Phi\in\Hom_{\Art}(H_3,D_5)$ then $\Phi(T'_1)=T_4^{a_4}$,
$\Phi(T'_2)=T_1^{a_1}T_3^{a_3}$ and~$\Phi(T'_3)=T_2^{a_2}T_5^{a_5}$ where
$a_i\in\mathbb Z_{>0}$, $1\le i\le 5$
and $a_4=a_1+a_3=a_2+a_5$ by Lemma~\partref{lem:elem Artin hom.c}. Then the relation~$T'_1T'_2T'_1=T'_2T'_1T'_2$ in~$\Br^+(H_3)$
yields $T_4^{a_1+a_3}T_1^{a_1}T_3^{a_3}T_4^{a_1+a_3}
=T_1^{a_1}T_3^{a_3}T_4^{a_1+a_3}T_1^{a_1}T_3^{a_3}$
whence, since $\Br^+(D_5)$ is cancellative,
$$T_4^{a_1+a_3}T_3^{a_3}T_4^{a_1+a_3}
=T_3^{a_3}T_4^{a_1+a_3}T_3^{a_3}T_1^{a_1}.
$$
This forces~$a_1=0$ which is a contradiction.
\end{example}
\begin{example}
Let~$M=\tilde C_2$ (see Example~\ref{ex:affine}). Then~$p_{\{1,3\}}$ is liftable by Lemma~\ref{lem:non-lift-parab}. However, for any~$d_1,d_3\in\ZZ_{>0}$,
the assignments $T_1\mapsto T_1^{d_1}$,
$T_2\mapsto 1$, $T_3\mapsto T_3^{d_2}$
define a homomorphism $\Phi:\Br^+(M)\to \Br^+_{\{1,3\}}(M)$ such that~$\overline\Phi_\star=p_{\{1,3\}}$.
\end{example}
Thus, $\mathsf H$ is neither full nor faithful.

\begin{example}
The homomorphism of Artin monoids from Example~\ref{ex:non sqf non Cox} induces a homomorphism
of Hecke monoids~$(W(I_2(10)),\star)\to
(W(A_4),\star)$, $s'_1\mapsto s_1s_2s_1 s_4$,
$s'_2\mapsto s_2s_3s_2$
which is, therefore, liftable. However, it
is not square free liftable
since the canonical image of $X=((T_1T_2T_1T_4)(T_2T_3T_2))^5$
in~$W(A_4)$ is equal to $s_2s_1s_3s_2s_4$ which is not an involution. Then by Lemma~\ref{lem:can image op inv}, $X$
is not ${}^{op}$-invariant which contradicts Lemma~\partref{lem:sqf Hecke hom.c}.
\end{example}

\begin{remark}\label{rem:C not full}
Since every Coxeter group embeds into a symmetric group
in many different ways, it is highly unlikely that~$\mathsf C$ is full. For example, the left multiplication in~$S_3$ defines a homomorphism
$S_3=W(A_2)\to W(A_5)=S_6$ given by $\wh s_1\mapsto 
(1,2)(3,4)(5,6)=s_1s_3s_5$, $\wh s_2\mapsto(1,3)(2,5)(4,6)=s_2s_1s_4s_3s_2s_5s_4$. This
homomorphism does not have a straightforward
lifting to a homomorphism~$\Br^+(A_2)\to\Br^+(A_5)$,
and there are no reasons to expect it to have a more
sophisticated one.
\end{remark}

\begin{remark}
It is easy to see that the class of strongly square free homomorphisms of Artin monoids is closed under compositions.
\end{remark}

\subsection{Homogeneous homomorphisms of Artin monoids}\label{subs:parab hom Artin}
Let~$\wh M\in\Cox{\wh I}$, $M\in \Cox I$. Following~\cite{BGLHeck}*{\S3.2}  
we 
define $\ell_f:W(\wh M)\to \ZZ_{\ge 0}$
by~$\ell_f(w)=\ell(f(w))$, $w\in W(\wh M)$
for any map~$f:W(\wh M)\to W(M)$. We say that~$f$
is {\em homogeneous} if $\ell_f(ww')=\ell_f(w)+\ell_f(w')$ for all $w,w'\in W(\wh M)$ such that~$\ell(ww')=\ell(w)+\ell(w')$. By~\cite{BGLHeck}*{Proposition~3.20}, 
$f$ is a homomorphism of {\em both}
Hecke monoids and Coxeter groups if and only if
$f$ is homogeneous and a homomorphism
of one of these objects.
\begin{remark}
A map between two Coxeter groups can be homogeneous without being a homomorphism. For example, the 
map $f:W(A_2)\to W(A_2)$ defined
by $f(1)=1$, $f(s_i)=s_1$, $i\in\{1,2\}$,
$f(s_is_j)=s_1s_2$, $\{i,j\}=\{1,2\}$ and
$f(s_1s_2s_1)=s_1s_2s_1$ is homogeneous, as $\ell_f(w)=\ell(w)$ for all~$w\in W(A_2)$.
Yet it is not a homomorphism of groups
since~$f(s_1s_2)=s_1s_2\not=1=f(s_1)f(s_2)$.
\end{remark}

The following allows us to define homogeneous homomorphisms of Artin monoids.
\begin{theorem}\label{thm:homogeneous Artin}
Let~$\wh M\in\Cox{\wh I}$, $M\in\Cox I$
and let $f:W(\wh M)\to W(M)$ be a map.
\begin{enmalph}
   \item  \label{thm:homogeneous Artin.a}
Suppose that, for each~$i\in\wh I$, $f(\wh s_i)=w_\circ^{J_i}$ for some~$J_i\in\mathscr F(M)$ and that the assignments $\wh T_w\mapsto T_{f(w)}$, $w\in W(\wh M)$ define a homomorphism $\Phi_f:\Br^+(\wh M)\to \Br^+(M)$.
Then~$f$ is a homogeneous homomorphism of Coxeter groups (and hence of Hecke monoids).
\item\label{thm:homogeneous Artin.b} If~$f$ is a homogeneous 
homomorphism of Coxeter groups (or Hecke monoids) then the assignments $\wh T_w\mapsto T_{f(w)}$, $w\in W(\wh M)$ define a standard homomorphism of respective Artin monoids.
\end{enmalph}
\end{theorem}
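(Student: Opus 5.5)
Part~\ref{thm:homogeneous Artin.a} comes first. By hypothesis $\Phi_f(\wh T_i)=T_{w_\circ^{J_i}}$ and, for every $w\in W(\wh M)$, $\Phi_f(\wh T_w)=T_{f(w)}$ is square free. So $\Phi_f$ is strongly square free, and $\Phi_f=\Phi$ is standard. By Theorem~\partref{thm:Main Thm Cox Heck.d} it is of Hecke type, and by Corollary~\partref{cor:elem prop Coxeter Hecke.b'} of Coxeter-Hecke type. Lemma~\ref{lem:induced inj} then gives $\overline\Phi=\overline\Phi_\star$ and $\overline\Phi(w)=\pi_M(T_{f(w)})=f(w)$. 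Hence $f=\overline\Phi$ is a homomorphism of Coxeter groups by Theorem~\partref{thm:Main Thm Cox Heck.a}, and also of Hecke monoids.

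For homogeneity, take $w,w'$ with $\ell(ww')=\ell(w)+\ell(w')$. Theorem~\partref{thm:Tits.b} gives $\wh T_{ww'}=\wh T_w\wh T_{w'}$. Applying $\Phi_f$ yields $T_{f(ww')}=T_{f(w)}T_{f(w')}$. Taking lengths in $\Br^+(M)$, and using that $\ell(T_u)=\ell(u)$, we get $\ell(f(ww'))=\ell(f(w))+\ell(f(w'))$.

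For part~\ref{thm:homogeneous Artin.b}, let $f$ be a homogeneous homomorphism of Coxeter groups. By \cite{BGLHeck}*{Proposition~3.20} it is then also a homomorphism of Hecke monoids; the same result lets us start from the Hecke case as well. So $f(\wh s_i)$ is both an involution and an idempotent, and by Proposition~\partref{prop:idempot Hecke.a} we have $f(\wh s_i)=w_\circ^{J_i}$ with $J_i=\supp f(\wh s_i)\in\mathscr F(M)$.

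Define the map $\Psi:\wh T_w\mapsto T_{f(w)}$ on $\SQF^+(\wh M)$. The key claim is that the assignments $\wh T_i\mapsto X_i:=T_{w_\circ^{J_i}}$ define a homomorphism $\Phi$ which agrees with $\Psi$ on square free elements. To get that $\Phi$ is a homomorphism I will apply Lemma~\ref{lem:lifting to Cox-Hecke}. Condition~\ref{lem:lifting to Cox-Hecke.2} holds because $\pi_M(X_i)=f(\wh s_i)$ and $f$ is a homomorphism. For condition~\ref{lem:lifting to Cox-Hecke.1}, fix $i\ne j$ with $\wh m_{ij}<\infty$. The element $u=\brd{\wh s_i\wh s_j}{\wh m_{ij}}$ has the reduced expression $\wh s_i\wh s_j\cdots$. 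Repeated use of homogeneity gives $\ell(f(u))=\sum\ell(f(\text{letters}))=\ell(\brd{X_iX_j}{\wh m_{ij}})$. Since also $\pi_M(\brd{X_iX_j}{\wh m_{ij}})=f(u)$, the element $\brd{X_iX_j}{\wh m_{ij}}$ is square free. Thus $\Phi$ is a homomorphism.

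It remains to identify $\Phi(\wh T_w)$ with $T_{f(w)}$. Let $w=\wh s_{i_1}\cdots\wh s_{i_k}$ be reduced. Then $\Phi(\wh T_w)=X_{i_1}\cdots X_{i_k}$. Homogeneity, applied inductively along the reduced word, gives that this product has length $\ell(f(w))$, and its image under $\pi_M$ is $f(w)$. Hence the product is square free and equals $T_{f(w)}$ by Theorem~\ref{thm:Tits}. Finally, $\Phi$ is square free and $\Phi(\wh T_i)=T_{w_\circ^{J_i}}$, so it is standard by Theorem~\partref{thm:Main Thm Cox Heck.d}.

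The main obstacle is this last identification, namely checking that homogeneity propagates along reduced words so that lengths add up. It is a straightforward induction, but it is the step where the hypothesis of homogeneity is genuinely used.
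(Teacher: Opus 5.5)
Your proof is essentially the paper's. In part~\ref{thm:homogeneous Artin.a} you get homogeneity directly, by taking lengths in $T_{f(ww')}=\Phi_f(\wh T_w\wh T_{w'})=T_{f(w)}T_{f(w')}$. The paper instead shows that $f=\overline{\Phi_f}=\overline{\Phi_f}_\star$ is a homomorphism of both structures and then quotes \cite{BGLHeck}*{Proposition~3.20}; your version is more self-contained. In part~\ref{thm:homogeneous Artin.b} the paper says only that, by Theorem~\ref{thm:Tits}, it suffices to check $T_{f(w)}T_{f(w')}=T_{f(ww')}$ for length-additive pairs. Your route, checking the rank-two relations with Lemma~\ref{lem:lifting to Cox-Hecke} and then inducting along reduced words, is an explicit unpacking of that remark. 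You also state explicitly that $f(\wh s_i)=w_\circ^{J_i}$, which standardness needs and which the paper leaves implicit.

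That last step is a genuine gap in the Coxeter-group version of part~\ref{thm:homogeneous Artin.b}. You use \cite{BGLHeck}*{Proposition~3.20}, as the paper paraphrases it, to say that a homogeneous homomorphism of Coxeter groups is automatically a homomorphism of Hecke monoids. That direction is false. The opposite direction, homogeneous Hecke $\Rightarrow$ Coxeter, which you use for the Hecke version, is correct and easy to check with Lemma~\ref{lem:hom on gens}. For a counterexample, take $f:W(A_1)\to W(B_2)$ with $\wh s_1\mapsto s_1s_2s_1$. It is a group homomorphism, and it is trivially homogeneous because every length-additive pair in $W(A_1)$ contains $1$. However, $s_1s_2s_1\ne w_\circ^{\{1,2\}}$, so it is not an idempotent of $(W(B_2),\star)$ by Proposition~\partref{prop:idempot Hecke.a}. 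The induced map $\wh T_1\mapsto T_1T_2T_1$ is a square free homomorphism of Coxeter type, but it is not standard by Theorem~\partref{thm:Main Thm Cox Heck.d}. So the Coxeter version of part~\ref{thm:homogeneous Artin.b} is false as literally stated, and the paper's proof does not address this point either.

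Your argument does still prove something in that case. Run it with $X_i:=T_{f(\wh s_i)}$ in place of $T_{w_\circ^{J_i}}$, and the same length count together with Lemma~\ref{lem:lifting to Cox-Hecke} shows that $\wh T_w\mapsto T_{f(w)}$ is a strongly square free homomorphism of Coxeter type. It is standard exactly when every $f(\wh s_i)$ is a longest element $w_\circ^{J_i}$, which holds in particular under the Hecke-monoid hypothesis.
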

\begin{proof}
If~$\Phi_f$ is a homomorphism of Artin monoids
then it is standard by assumption on~$f$ and~$\overline{\Phi}_f=f=\overline{\Phi_f}_\star$. Since $\overline\Phi\in\Hom_{\CoxCat}(\wh M,M)$ and~$\overline\Phi_\star\in\Hom_{\Heck}(\wh M,M)$ by 
Theorem~\ref{thm:Main Thm Cox Heck},
part~\ref{thm:homogeneous Artin.a} follow
from the characterization of homogeneous homomorphisms discussed above.

To prove part~\ref{thm:homogeneous Artin.b}, 
note that by Theorem~\ref{thm:Tits} it suffices 
to verify that~$T_{f(w)f(w')}=T_{f(w)}T_{f(w')}$
for all~$w,w'\in W(\wh M)$ such that~$\ell(ww')=\ell(w)+\ell(w')$. But this
is immediate since~$f$ is a homogeneous homomorphism of Coxeter groups.
\end{proof}

Henceforth we call homomorphisms~$\Phi_f$ homogeneous. The following is immediate.
\begin{corollary}\label{cor:comp homogeneous}
The class of homogeneous homomorphisms of Artin monoids is closed with respect to compositions.
\end{corollary}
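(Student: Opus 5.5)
The plan is to reduce the statement to two facts: that every homogeneous homomorphism of Artin monoids is of the form $\Phi_f$ for a homogeneous homomorphism $f$ of Coxeter groups, and that homogeneous homomorphisms of Coxeter groups compose. Let $\Phi_f\in\Hom_{\Art}(\wh M,M')$ and $\Phi_g\in\Hom_{\Art}(M',M)$ be homogeneous. By definition, $f:W(\wh M)\to W(M')$ and $g:W(M')\to W(M)$ are homogeneous homomorphisms of Coxeter groups, with $\Phi_f(\wh T_w)=T'_{f(w)}$ and $\Phi_g(T'_u)=T_{g(u)}$. We will show that $g\circ f$ is a homogeneous homomorphism of Coxeter groups and that $\Phi_g\circ\Phi_f=\Phi_{g\circ f}$. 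Theorem~\partref{thm:homogeneous Artin.b} then says $\Phi_{g\circ f}$ is homogeneous.

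\emph{Homogeneity of the composite.} Clearly $g\circ f$ is a group homomorphism. Take $w,w'\in W(\wh M)$ with $\ell(ww')=\ell(w)+\ell(w')$. Homogeneity of $f$ gives $\ell(f(ww'))=\ell(f(w))+\ell(f(w'))$. Since $f$ is a homomorphism, $f(ww')=f(w)f(w')$, so $\ell(f(w)f(w'))=\ell(f(w))+\ell(f(w'))$. Homogeneity of $g$ applied to the pair $f(w),f(w')$ then yields
$$\ell(g(f(ww')))=\ell(g(f(w)))+\ell(g(f(w'))).$$
Hence $g\circ f$ is homogeneous.

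\emph{Identification of the composite.} For $w\in W(\wh M)$ we compute
$$(\Phi_g\circ\Phi_f)(\wh T_w)=\Phi_g(T'_{f(w)})=T_{g(f(w))}.$$
Thus the monoid homomorphism $\Phi_g\circ\Phi_f$ agrees with $\Phi_{g\circ f}$ on all $\wh T_w$, and in particular on the generators $\wh T_i$. So the two coincide.

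The only point that needs care is the notion of homogeneity. If "homogeneous" is read as "of the form $\Phi_f$ with $f$ a homogeneous homomorphism", there is no obstacle. If it is read through Theorem~\partref{thm:homogeneous Artin.a}, the condition is that $f(\wh s_i)=w_\circ^{J_i}$ and that $\wh T_w\mapsto T_{f(w)}$ is a homomorphism. In that reading we must also check that $(g\circ f)(\wh s_i)=g(w_\circ^{J_i})$ is a longest element. This follows from Lemma~\partref{lem:T_w0 divides image.b}: $\Phi_g$ is standard and hence of Hecke type, so $\overline{\Phi_g}_\star(w_\circ^{J_i})=w_\circ^{[\Phi_g](J_i)}$, and $\overline{\Phi_g}_\star=g$ on $W(M')$ by Lemma~\ref{lem:induced inj}.
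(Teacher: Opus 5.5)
Your argument is correct and is exactly what the paper leaves as ``immediate'' after Theorem~\ref{thm:homogeneous Artin}: you show that $\Phi_g\circ\Phi_f=\Phi_{g\circ f}$ and that $g\circ f$ is again a homogeneous homomorphism of Coxeter groups, so the composite is homogeneous. Your extra check that $(g\circ f)(\wh s_i)$ is a longest element, via Lemma~\ref{lem:T_w0 divides image} and the strong square-freeness of $\Phi_g$, is valid. It also follows directly from the fact that $g\circ f$ is a homomorphism of both Coxeter groups and Hecke monoids.
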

The following Corollary is immediate from Theorem~\ref{thm:homogeneous Artin} and~\cite{BGLHeck}*{Theorem~3.23}. 
\begin{corollary}\label{cor:adm finite class}
Let~$\wh M\in\Cox{\wh I}$, $M\in\Cox I$ be irreducible and 
of finite type. The following is 
a complete list of fully supported homogeneous
homomorphisms
$\Phi:\Br^+(\wh M)\to \Br^+(M)$. 
\begin{enmalph}
\item\label{cor:adm finite class.unfold}
For~$\wh M=B_n$, $n\ge 2$, 
\begin{alignat}{4}
&M=A_{2n-1}:\label{eq:unfold Bn A2n-1}
&\qquad&\Phi(\wh T_i)=T_i T_{2n-i},&\quad &i\in [1,n-1],&\quad& 
\Phi(\wh T_n)=T_n,
\\
&M=A_{2n}:\label{eq:unfold Bn A2n}
&& \Phi(\wh T_i)=T_i T_{2n+1-i},&& i\in [1,n-1],&& \Phi(\wh T_n)=T_n T_{n+1}T_n,
\\
&M=D_{n+1}:\label{eq:unfold Bn Dn+1}
&& \Phi(\wh T_i)=T_i,&& i\in [1,n-1],&& \Phi(\wh T_n)=T_n T_{n+1};
\end{alignat}
\item\label{cor:adm finite class.unfold F4}
For~$\wh M=F_4$, $M=E_6$ 
\begin{align}
\label{eq:unfold F4 E6}
\Phi(\wh T_1)=T_1 T_5,\quad \Phi(\wh T_2)
=T_2 T_4,\quad \Phi(\wh T_3)=T_3,\quad 
\Phi(\wh T_4)=T_6
\end{align}
\item \label{cor:adm finite class.odd}
For~$\wh M=I_2(2m+1)$, $m>0$, $M=A_{2m}$
$$\Phi(\wh T_i)=\prod_{j\in [1,2m+1-i]_2} T_j, \qquad i\in\{1,2\};
$$
\item\label{cor:adm finite class.even}
For $\wh M=I_2(2m)$, $m>1$,
any~$M$ with~$h(M)=2m$ and any
partition $I=I_1\sqcup I_2$ of~$I$ into
non-empty self-orthogonal subsets 
$$
\Phi(\wh T_j)=T_{w_\circ^{I_j}}=
\prod_{i\in I_j} T_i, \qquad j\in\{1,2\};
$$
\item\label{cor:adm finite class.I8 F4} For~$\wh M=I_2(8)$, $M=F_4$,
$$
\Phi(\wh T_1)=T_1 T_4,\quad \Phi(\wh T_2)=T_2 T_3 T_2;
$$

\item\label{cor:adm finite class.H3}
For~$\wh M=H_3$, $M=D_6$, 
\begin{equation}\label{eq:unfold H3D6}
\Phi(\wh T_1)=T_1T_5,\quad \Phi(\wh T_2)=T_2T_4,\quad \Phi(\wh T_3)=T_3T_6;
\end{equation}

\item\label{cor:adm finite class.H4}
For~$\wh M=H_4$, $M=E_8$,
\begin{equation}\label{eq:unfold H4E8}
\Phi(\wh T_1)=T_1T_7,\quad \Phi(\wh T_2)=T_2T_6,\quad 
\Phi(\wh T_3)=T_3T_5,\quad \Phi(\wh T_4)=
T_4T_8.
\end{equation}
\end{enmalph}
\end{corollary}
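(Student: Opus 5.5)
This corollary is deduced by transporting the classification of~\cite{BGLHeck}*{Theorem~3.23} through the dictionary of Theorem~\ref{thm:homogeneous Artin}. First I would establish a bijection between fully supported homogeneous homomorphisms $\Phi:\Br^+(\wh M)\to\Br^+(M)$ and fully supported homogeneous homomorphisms $f:W(\wh M)\to W(M)$ of Coxeter groups (equivalently, of Hecke monoids).

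Given a homogeneous $\Phi=\Phi_f$, part~\ref{thm:homogeneous Artin.a} shows that $f$ is a homogeneous homomorphism of both Coxeter groups and Hecke monoids. Moreover $f=\overline{\Phi}=\overline\Phi_\star$, and $\supp f(\wh s_i)=[\Phi](i)$, so full support transfers. Conversely, part~\ref{thm:homogeneous Artin.b} lifts any homogeneous $f$ to the standard homomorphism $\Phi_f$. The two constructions are mutually inverse: $\Phi_f$ is determined by its values on generators, namely $\wh T_i\mapsto T_{f(\wh s_i)}$, by Lemma~\partref{lem:sqf Hecke hom.a}, and $\overline{\Phi_f}=f$.

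Next I would invoke~\cite{BGLHeck}*{Theorem~3.23}. It lists the fully supported homogeneous homomorphisms between irreducible finite Coxeter groups: the unfoldings $B_n\to A_{2n-1},A_{2n},D_{n+1}$, $F_4\to E_6$, $I_2(2m+1)\to A_{2m}$, the bipartite maps $I_2(2m)\to M$ with $h(M)=2m$, $I_2(8)\to F_4$, $H_3\to D_6$ and $H_4\to E_8$. For each entry, $f(\wh s_i)=w_\circ^{J_i}$, so $\Phi(\wh T_i)=T_{w_\circ^{J_i}}$. It remains to write $T_{w_\circ^{J_i}}$ explicitly as in the displayed formulas.

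For self-orthogonal $J_i$, the element $T_{w_\circ^{J_i}}$ is simply the product of the $T_j$, $j\in J_i$, by Proposition~\partref{prop:fund elts BrSa.0}. This covers every case except the few where $J_i$ is of type $A_2$: $\{n,n+1\}\subset A_{2n}$ and $\{2,3\}\subset F_4$. There $T_{w_\circ^{J_i}}=T_nT_{n+1}T_n$, respectively $T_2T_3T_2$. For the odd dihedral case, $J_i$ is the set of odd or even indices in $[1,2m]$, matching $[1,2m+1-i]_2$.

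The main obstacle is purely one of bookkeeping. I must make sure that the labelings of~\cite{BGLHeck} agree with~\eqref{eq:Coxeter graphs}, and that ``fully supported'' and irreducibility match the hypotheses there; beyond that, the argument is immediate.
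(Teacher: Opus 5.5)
Your route is the paper's own. The paper records the corollary as immediate from Theorem~\ref{thm:homogeneous Artin} and \cite{BGLHeck}*{Theorem~3.23}. Your correspondence $\Phi\leftrightarrow f$, via $\overline\Phi=\overline\Phi_\star=f$, $[\Phi](i)=\supp f(\wh s_i)$ and Lemma~\partref{lem:sqf Hecke hom.a}, is exactly what ``immediate'' hides. However, the one step you actually carry out contains a concrete error.

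You assert that $\{2,3\}\subset F_4$ is of type $A_2$, so that $T_{w_\circ^{\{2,3\}}}=T_2T_3T_2$. With the labeling of \eqref{eq:Coxeter graphs}, the edge of $\Gamma(F_4)$ labeled $4$ joins $2$ and $3$. This is forced by the unfolding \eqref{eq:unfold F4 E6}, and the paper relies on it in the proof of Proposition~\ref{prop:Hom A2|B2 EF}, where $\{1,2,3\}$ and $\{2,3,4\}$ are of type $B_3$. Hence $\{2,3\}$ is of type $B_2$ and $T_{w_\circ^{\{2,3\}}}=T_2T_3T_2T_3$.

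No relabeling rescues the printed formula either. In the path $a-b-c-d$ with $m_{bc}=4$, the complement of a self-orthogonal pair is never a pair of type $A_2$. So you cannot have both $\Phi(\wh T_1)=T_1T_4=T_{w_\circ^{\{1,4\}}}$ and $T_2T_3T_2=T_{w_\circ^{\{2,3\}}}$. Concretely, $\pi^\star_{F_4}(T_2T_3T_2)=s_2s_3s_2$ is not an idempotent by Proposition~\partref{prop:idempot Hecke.a}. So by Theorem~\partref{thm:Main Thm Cox Heck.b}, no homomorphism with $\wh T_2\mapsto T_2T_3T_2$ is of Hecke type, let alone homogeneous. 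The length count agrees: $24=\ell(w_\circ^{F_4})=4\ell(\Phi(\wh T_1))+4\ell(\Phi(\wh T_2))$ (Lemma~\partref{lem:T_w0 divides image.b} plus homogeneity) forces $\ell(\Phi(\wh T_2))=4$, not $3$.

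Carried out correctly, your bookkeeping gives $\Phi(\wh T_1)=T_1T_4$ and $\Phi(\wh T_2)=T_2T_3T_2T_3$, the folding of $F_4$ along $\{\{1,4\},\{2,3\}\}$. So item~\ref{cor:adm finite class.I8 F4} as printed should be flagged as a misprint, not confirmed.

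When you match hypotheses with \cite{BGLHeck}, note also that the list as stated omits some fully supported homogeneous homomorphisms. It omits isomorphisms, e.g.\ the identity of $\Br^+(E_8)$. It also omits the rank-one domain: $\Br^+(A_1)\to\Br^+(M)$, $\wh T_1\mapsto T_{w_\circ^I}$, is fully supported and homogeneous for every $M$. So ``complete list'' has to be read with these excluded.
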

\begin{remark}\label{rem:injectivity}
This list coincides with that of LCM homomorphisms studied, in particular, 
in~\cites{Cri,God,Cas,Mue}.
By~\cites{Cri}, all
homomorphisms listed above are injective. 
Moreover,
homomorphisms from parts~\ref{cor:adm finite class.unfold} and~\ref{cor:adm finite class.unfold F4} are isomorphisms onto submonoids of~$\Br^+(M)$ fixed by respective diagram automorphisms.
\end{remark}

\section{Light homomorphisms of Artin monoids}
\label{sec:Parab proj}\label{sec:light}

In this section we describe a class of homomorphisms which unifies parabolic projections,
natural inclusions of parabolic submonoids and
tautological homomorphisms. 

\subsection{Light homomorphisms of Artin monoids}\label{subs:parab proj Artin}
Similarly to the case of Hecke monoids (cf.~\cite{BGLHeck}*{Definition~4.1}),
we now introduce the notion of light
homomorphisms of Artin monoids.
\begin{definition}\label{defn:light Artin}
Let~$\wh M\in\Cox{\wh I}$, $M\in\Cox I$.
We say that~$\Phi\in\Hom_{\Art}(\wh M,M)$ is {\em light}
if~$|[\Phi](i)|\le 1$ for all~$i\in \wh I$.
\end{definition}
A light homomorphism is manifestly of Coxeter-Hecke type.
A composition of light homomorphisms is light, as well as their free product.
Clearly, natural inclusions of parabolic submonoids,
parabolic projections,
tautological homomorphisms and diagram automorphisms are light. Also, if we write a light homomorphism
$\Phi$ as $\Phi''\circ\Phi'$ where~$\Phi''$ is optimal and~$\Phi'$ is tautological (Lemma~\ref{lem:factor homs}) then~$\Phi''$ is also light.

The following is immediate.
\begin{lemma}\label{lem:H functor light}
Let~$\wh M\in\Cox{\wh I}$, $M\in\Cox I$ and let
$\Phi\in\Hom_{\Art}(\wh M,M)$ be light. Then~$\overline\Phi_\star$ is light.
In other words, the functor~$\mathsf H$ restricts to the functor~$\mathsf H_{light}$ from the light Artin category to the light Hecke category.
\end{lemma}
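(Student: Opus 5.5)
We need to show: if $\Phi$ is light, then $\overline\Phi_\star$ is a homomorphism of Hecke monoids and is light in the sense of \cite{BGLHeck}*{Definition~4.1}, i.e.\ each $\overline\Phi_\star(\wh s_i)$ has support of cardinality at most one. Functoriality on the light subcategories then follows from $\mathsf H$ being a functor on $\mathscr{AH}$.

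First, $\Phi$ is of Hecke type. Since $|[\Phi](i)|\le 1$, each $\Phi(\wh T_i)$ is a monoid element supported on at most one index. It is therefore either $1$ or $T_j^{d}$ for some $j\in I$ and $d\ge1$, because $\Br^+_{\{j\}}(M)\cong(\ZZ_{\ge0},+)$ via $\ell$. Its image under $\pi^\star_M$ is then $1$ or $s_j^{\star d}=s_j$. Both are idempotents, namely $w_\circ^{\emptyset}$ and $w_\circ^{\{j\}}$. By Theorem~\ref{thm:Main Thm Cox Heck}\ref{thm:Main Thm Cox Heck.b} (implication \ref{thm:Main Thm Cox Heck.b.iii}$\implies$\ref{thm:Main Thm Cox Heck.b.i}), $\Phi$ is of Hecke type, so $\overline\Phi_\star=\mathsf H(\Phi)$ is a homomorphism of Hecke monoids. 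The same theorem gives $\overline\Phi_\star(\wh s_i)=w_\circ^{[\Phi](i)}$, whose support $[\Phi](i)$ has at most one element, so $\overline\Phi_\star$ is light.

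Second, the functor statement. Light homomorphisms are closed under composition: for $\Phi,\Phi'$ light, $[\Phi\circ\Phi'](i)=[\Phi]([\Phi'](i))$ by Lemma~\ref{lem:[Phi]comp}\ref{lem:[Phi]comp.b}, which has cardinality at most one. Identities are also light, so light Artin homomorphisms form a subcategory of $\mathscr{AH}$. The same closure holds for light Hecke homomorphisms. Since $\mathsf H$ preserves composition and identities, its restriction lands in the light Hecke category.

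There is no genuine obstacle here. The only point needing care is the identification of $\pi^\star_M(T_j^d)$ with $s_j$, which uses $s_j\star s_j=s_j$.
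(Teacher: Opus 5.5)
Your proposal is correct and is essentially the paper's argument. The paper calls the lemma immediate, having just remarked that a light homomorphism is manifestly of Coxeter--Hecke type. That remark rests on exactly your observation that $\pi^\star_M(T_j^d)=s_j$ and $\pi^\star_M(1)=1$ are idempotents, combined with Theorem~\ref{thm:Main Thm Cox Heck} to obtain $\overline\Phi_\star(\wh s_i)=w_\circ^{[\Phi](i)}$ with $|[\Phi](i)|\le 1$.
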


Another class of light homomorphisms is provided by foldings.
\begin{definition}\label{defn:foldable}
Let~$\varpi:I\to J$ be a surjective map.
We say that~$M\in\Cox I$ is {\em foldable along~$\varpi$} if
 $m_{ii'}=m_{ii''}$ for all
 $i,i',i''\in I$ with $\varpi(i')=\varpi(i'')\not=
\varpi(i)$.
\end{definition}
Note that any group~$G$ of automorphisms of~$\Gamma(M)$
gives rise to a map~$\varpi_G:I\to I/G$
such that~$M$ is foldable along~$\varpi_G$.

If~$M$ is foldable along~$\varpi$,
define~\plink{MII}$M^{\varpi}$ to be the matrix over~$J$ with~$(M^\varpi)_{jj}=1$, $j\in J$ and
$(M^{\varpi})_{jj'}=m_{ii'}$
for any~$i\in \varpi^{-1}(j)$, $i'\in \varpi^{-1}(j')$, $j\not=j'\in J$.
Clearly, $M^{\varpi}$
is a Coxeter matrix.

\begin{lemma}\label{lem:orbits fold}
Let~$\varpi:I\to J$ be surjective and suppose that~$M$ is foldable along~$\varpi$. The
assignments~$T_i\mapsto T^{\varpi}_{\varpi(j)}$, $i\in I$, where the~$T^\varpi_j$, $j\in J$
are generators of~$\Br^+(M^\varpi)$,
define a surjective light homomorphism of monoids \plink{F wpi}$\mathbf F_{\varpi}:\Br^+(M)\to \Br^+(M^{\varpi})$.
\end{lemma}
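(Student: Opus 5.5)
The plan is to apply Lemma~\ref{lem:fund hom} with $\mathsf M=\Br^+(M^\varpi)$ and $X_i=T^\varpi_{\varpi(i)}$, $i\in I$. The assignments then define a homomorphism as soon as $m_{ii'}\in B(X_i,X_{i'})\cup\{\infty\}$ for all $i\ne i'\in I$. I will check this in two cases, according to whether or not $\varpi(i)=\varpi(i')$.

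\emph{Case $\varpi(i)\ne\varpi(i')$.} Set $j=\varpi(i)$ and $j'=\varpi(i')$. By definition of $M^\varpi$ we have $m_{ii'}=(M^\varpi)_{jj'}$. If this common value is finite, the relation
\[
\brd{T^\varpi_jT^\varpi_{j'}}{m_{ii'}}=\brd{T^\varpi_{j'}T^\varpi_j}{m_{ii'}}
\]
is one of the defining relations of $\Br^+(M^\varpi)$. Hence $m_{ii'}\in B(X_i,X_{i'})$. This is where foldability enters: it makes $(M^\varpi)_{jj'}$ well defined, that is, independent of the choice of representatives $i\in\varpi^{-1}(j)$ and $i'\in\varpi^{-1}(j')$.

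\emph{Case $\varpi(i)=\varpi(i')$.} Here $X_i=X_{i'}$, so $\brd{X_iX_{i'}}{k}=\brd{X_{i'}X_i}{k}$ for every $k\ge1$. Thus $B(X_i,X_{i'})=\ZZ_{>0}$, and it automatically contains $m_{ii'}$ whenever $m_{ii'}<\infty$.

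It remains to check the remaining properties of $\mathbf F_\varpi$. Surjectivity holds because $\varpi$ is surjective, so every generator $T^\varpi_j$ of $\Br^+(M^\varpi)$ is the image of some $T_i$. Lightness is immediate: $[\mathbf F_\varpi](i)=\{\varpi(i)\}$ has cardinality one, so $|[\mathbf F_\varpi](i)|\le 1$ as required in Definition~\ref{defn:light Artin}.

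I expect no real obstacle in this argument. The only point needing care is the well-definedness of $M^\varpi$, which is exactly the foldability condition of Definition~\ref{defn:foldable}; the problem of identified vertices is handled completely by the triviality of the braid relation between equal elements.
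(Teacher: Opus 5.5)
Your proof is correct and matches the paper's argument. The paper also checks the braid relations case by case. When $\varpi(i)=\varpi(i')$ the relation collapses to $(T^\varpi_{\varpi(i)})^{m_{ii'}}=(T^\varpi_{\varpi(i)})^{m_{ii'}}$. Otherwise it is a defining relation of $\Br^+(M^\varpi)$, because $(M^\varpi)_{\varpi(i)\varpi(i')}=m_{ii'}$. Surjectivity and lightness are then immediate, as you say.
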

\begin{proof}
Let~$i\not=i'\in I$ with~$m_{ii'}<\infty$. If~$\varpi(i)=\varpi(i')$
then $\brd{T^\varpi_{\varpi(i)}T^\varpi_{\varpi(i')}}{m_{ii'}}=(T^\varpi_{\varpi(i)})^{m_{ii'}}=
\brd{T^\varpi_{\varpi(i')}T^\varpi_{\varpi(i)}}{m_{ii'}}.
$
Otherwise, $(M^\varpi)_{\varpi(i)\varpi(i')}=m_{ii'}$
and so $\brd{T^{\varpi}_{\varpi(i)}T^{\varpi}_{
\varpi(i')}}{m_{ii'}}
=\brd{T^{\varpi}_{\varpi(i')}T^{\varpi}_{\varpi(i)}}{m_{ii'}}$. The last assertion is obvious.
\end{proof}
\begin{example}
For any~$M\in\Cox I$, $\ell:\Br^+(M)\to (\ZZ_{\ge 0},+)\cong\Br^+(A_1)$
identifies with~$\mathbf F_\varpi$ where~$\varpi$ is the unique map~$I\to\{1\}$.
\end{example}

\begin{example}\label{ex:affine}
Using Lemma~\ref{lem:orbits fold} we can obtain more homomorphisms similar to those discussed in Example~\ref{ex:1.6}. 
Consider affine Coxeter graphs labeled as follows:
\begin{alignat*}{8}
&\tilde B_n: \dynkin[extended,o/.append style={fill=black},labels={0,1,2,n-1,n},Coxeter,edge length=1cm] B{**...**},&\quad & n\ge 3, &\qquad &\tilde C_n: \dynkin[extended,o/.append style={fill=black},labels={0,1,n-1,n},Coxeter,edge length=1cm] C{*...**},&\quad & n\ge 2,&\\
&\tilde D_{n+1}: \dynkin[extended,o/.append style={fill=black},labels={0,1,2,n-1,n,n+1},label directions={,,left,right,,},edge length=1cm] D{**...***}, &&n\ge 3,
&
&\tilde G_2: \dynkin[extended,o/.append style={fill=black},edge length=1cm] G[1]2
\end{alignat*}
We have the following unfolding homomorphisms
$$
\begin{array}{c|c|l}
\wh M&M&\multicolumn{1}{c}{\Phi}\\
\hline
\hline
\vphantom{\tilde{\tilde B}}\tilde B_n& \tilde D_{n+1}&
\wh T_i\mapsto T_i,\,i\in[0,n-1],\, \wh T_n\mapsto T_nT_{n+1}\\
\hline
\vphantom{\tilde{\tilde C}}\tilde C_n& \tilde B_{n+1}& \wh T_0\mapsto T_0T_1,\,T_i\mapsto T_{i+1},\, i\in[1,n]\\
\hline
\vphantom{\tilde{\tilde C}}\tilde G_2& \tilde D_{4}& \wh T_0\mapsto T_0,\,\wh T_1\mapsto T_2,\, \wh T_2\mapsto T_1T_3T_4\\
\hline
\end{array}
$$
while Lemma~\ref{lem:orbits fold} yields the following homomorphisms
$$
\begin{array}{c|c|l}
M&M^\varpi&\multicolumn{1}{c}{\mathbf F_\varpi}\\
\hline
\hline
\vphantom{\tilde{\tilde B}}\tilde B_n& B_n & T_i\mapsto T^{\varpi}_{i+\delta_{i,0}},\,i\in[0,n]\\
\hline
\vphantom{\tilde{\tilde B}}\tilde D_{n+1}& D_{n+1} &
T_i\mapsto T^{\varpi}_{n+1-i+\delta_{i,n+1}},\,i\in[0,n+1]\\
\hline
\vphantom{\tilde{\tilde B}}\tilde D_4& A_3 &  T_0\mapsto T^{\varpi}_1,\,  T_2\mapsto T^{\varpi}_2,\,T_i\mapsto T^{\varpi}_3,\, i\in\{1,3,4\}\\
\hline
\end{array}
$$
Their compositions yield non-standard homomorphisms of the type discussed in Example~\ref{ex:1.6}, namely
\begin{alignat}{3}
&\Br^+(\tilde C_n)\to
\Br^+(\tilde B_{n+1})\to \Br^+(B_{n+1}),&\qquad  &\wh T_i\mapsto T_{i+1}^{i+\delta_{i,0}},&\quad  &i\in [0,n],\nonumber\\
&\Br^+(\tilde B_n)\to \Br^+(\tilde D_{n+1})\to \Br^+(D_{n+1}),& &\wh T_i\mapsto T_{n+1-i}^{1+\delta_{i,n}},&&  i\in [0,n],\nonumber\\
&\Br^+(\tilde G_2)\to \Br^+(\tilde D_4)\to
\Br^+(A_3),& &\wh T_i\mapsto T_{i+1}^{1+2\delta_{i,3}},&& i\in [0,3].\label{eq:elem Tits affine}
\end{alignat}
In addition, the non-standard
homomorphism  $\Br^+(\tilde C_2)\to \Br^+(A_3)$
defined by $T'_i\mapsto T_i^{1+\delta_{i,2}}$, $i\in[1,3]$ is the composition of a standard
homomorphism $\Br^+(\tilde C_2)\to \Br^+(\tilde A_3)$,
$T'_0\mapsto T_0$, $T'_1\mapsto T_1T_3$, $T'_2\mapsto T_2$, and the standard homomorphism $
\Br^+(\tilde A_3)\to\Br^+(A_3)$, $T'_0\mapsto T_1$,
$T'_1\mapsto T_2$, $T'_2\mapsto T_3$, $T'_3\mapsto T_2$,
where Coxeter graph of type~$\tilde A_3$ are labeled as follows
$$
\qquad
\begin{dynkinDiagram}[name=upper,
text style/.style={scale=0.8},Coxeter,root radius=0.07,expand labels={0,1},label directions={above,above},edge length=1cm]A2
\node (current) at ($(upper root 1)+(0,-1cm)$) {};
\dynkin[at=(current),name=lower,expand labels={3,2},text style/.style={scale=0.8},Coxeter,root radius=0.07,edge length=1cm,label directions={below,below}]A2
\begin{pgfonlayer}{Dynkin behind}
\foreach \i in {1,...,2}{\draw[/Dynkin diagram]
($(upper root \i)$)
-- ($(lower root \i)$);}\end{pgfonlayer}
\end{dynkinDiagram}
$$
\end{example}

\subsection{Tits homomorphisms}\label{subs:Tits homs}
We now study a particular class of 
light homomorphisms which includes those 
from Examples~\ref{ex:1.6} and~\ref{ex:affine}. 

Given a Coxeter matrix~$M=(m_{ij})_{i,j\in I}$
and~$\mathbf d\in \ZZ_{>0}^I$, define~\plink{M(d)}$M(\mathbf d)=(m_{ij}(\mathbf d))_{i,j\in I}$ by
\begin{equation}
m_{ij}(\mathbf d)=\begin{cases}
m_{ij},&\text{$m_{ij}\le 2$ or~$d_id_j=1$},\\
2d_id_j,&m_{ij}=3,\,d_id_j\in\{2,3\},\\
\infty,&\text{otherwise}.\label{eq:Tits cover}
\end{cases}
\end{equation}
Clearly~$M(\mathbf d)\in\Cox I$.
\begin{theorem}\label{thm:Tits homomorphisms}
Let~$M,\wh M\in\Cox I$ and let~$\mathbf d\in \mathbb Z_{>0}^I$. The
assignments $\wh T_i\mapsto T_i^{d_i}$, $i\in I$ define an optimal
\plink{T d}$\mathbf T_{\mathbf d}\in\Hom_{\Art}(\wh M,M)$
if and only if~$\wh M=M(\mathbf d)$.
\end{theorem}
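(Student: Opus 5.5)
The statement reduces, via Lemma~\ref{lem:fund hom} and the definition of optimality, to a rank-two computation. For each pair $i\ne j$ we must show that $\min B(T_i^{d_i},T_j^{d_j})$, computed in $\Br^+(M)$, equals $m_{ij}(\mathbf d)$ from~\eqref{eq:Tits cover}, with $B=\emptyset$ corresponding to $\infty$. When $T_i^{d_i}=T_j^{d_j}$ (impossible for $i\ne j$ unless both sides are trivial, which cannot happen since $d_i,d_j>0$), optimality imposes nothing. Once these minima are known, Lemma~\partref{lem:taut homs.b} (valid since $\Br^+(M)$ is cancellative) gives $B(T_i^{d_i},T_j^{d_j})=\ZZ_{>0}\,m_{ij}(\mathbf d)$. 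Then $\mathbf T_{\mathbf d}$ is a homomorphism from $\Br^+(\wh M)$ precisely when $\wh m_{ij}\in \ZZ_{>0}m_{ij}(\mathbf d)\cup\{\infty\}$, and it is optimal precisely when $\wh m_{ij}=m_{ij}(\mathbf d)$. Both directions of the equivalence therefore follow. Since $T_i^{d_i},T_j^{d_j}$ lie in the parabolic submonoid $\Br^+_{\{i,j\}}(M)\cong\Br^+(I_2(m_{ij}))$, everything reduces to dihedral Artin monoids.

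The easy cases come first. If $m_{ij}=2$, the powers commute, so $\min B=2$. If $m_{ij}=\infty$, the submonoid is free and distinct powers never satisfy a braid relation, giving $B=\emptyset$. If $d_i=d_j=1$, then $\min B=m_{ij}$ tautologically, since the braid relation of smaller length fails (Lemma~\partref{lem:taut homs.b} together with the defining relation). For $m_{ij}=3$ with $\{d_i,d_j\}=\{1,2\}$, the claim is $(T_1^2T_2)^2=(T_2T_1^2)^2$, that is $\min B=4$; this is the $B_2\to A_2$ Tits map, and it can be checked directly, e.g.\ via Lemma~\ref{lem:cradicals} by noting that $(T_1^2T_2)^2=T_1T_{w_\circ}^{2}T_1^{-1}\cdots$, or more simply that $T_1^2T_2T_1^2T_2=T_1T_{w_\circ}T_1T_2$, which is symmetric under the appropriate rewriting. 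For $\{1,3\}$ one uses the $G_2$ composite from Example~\ref{ex:1.6} to get $\min B\le 6$. For $\{2,2\}$ (or, in the $I_2(m)$ analogue, product $4$) the claim is $\min B=8$, which I would verify similarly using $T_{w_\circ}$ and $\Sigma$. In each case we must also rule out smaller elements of $B$. By Lemma~\partref{lem:taut homs.b} these smaller elements could only be proper divisors of the minimum, and each such candidate is excluded by a length or normal-form comparison (Proposition~\ref{prop:Normal form}).

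The main obstacle is the \emph{otherwise} case: $B(T_i^{d_i},T_j^{d_j})=\emptyset$ whenever $m_{ij}\ge4$ with $d_id_j>1$, or $m_{ij}=3$ with $d_id_j\ge4$. This is essentially the Crisp--Paris/Tits statement that such powers generate a free submonoid. My first attempt would be a direct argument. Suppose $\brd{XY}{k}=\brd{YX}{k}$ with $X=T_i^{a}$, $Y=T_j^{b}$. Compare left divisors: $D_L$ of the left side is $\{i\}$ and of the right side is $\{j\}$, unless the word is left divisible by $T_{w_\circ}$ (finite case), which forces a factor $T_iT_j\cdots$ of length $m_{ij}$ to appear. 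I would compute the Garside normal form of alternating products of powers; for $a\ge2$ or $b\ge2$ with $m_{ij}\ge4$ (or $ab\ge4$ when $m_{ij}=3$), the normal form factors are simply the individual generators (or $T_{w_\circ}$ occurs only in controlled positions), so the two sides have different first normal-form factors. If this bookkeeping becomes unwieldy, I would fall back on citing \cite{CP}, which proves that such $T_i^{a},T_j^{b}$ generate a free monoid.
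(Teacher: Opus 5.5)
Your reduction to the dihedral submonoids $\Br^+_{\{i,j\}}(M)$, with the bookkeeping via Lemma~\ref{lem:fund hom} and Lemma~\ref{lem:taut homs}, is correct. So are the cases $m_{ij}\in\{2,\infty\}$, $d_id_j=1$, and $m_{ij}=3$ with exponents $\{1,2\}$ or $\{1,3\}$. For $\{1,2\}$, simply note that $(T_1^2T_2)^2=T_{w_\circ}^2=(T_2T_1^2)^2$. The $\{1,3\}$ case via the $D_4$ composite is exactly how the paper proves Lemma~\ref{lem:finite elem Tits}.

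Your sentence on exponents $\{2,2\}$ is wrong. Since $d_id_j=4\notin\{2,3\}$, \eqref{eq:Tits cover} gives $m_{ij}(\mathbf d)=\infty$, as your own last paragraph says. Most of your ``otherwise'' case is in fact elementary and needs no Crisp--Paris. Take $X=T_i^{a}$, $Y=T_j^{b}$ with $ab>1$. The word $\brd{XY}{k}$ has no alternating factor longer than $3$, or longer than $2$ if $a,b\ge2$. When this bound is below $m_{ij}$, no defining relation of $\Br^+(I_2(m_{ij}))$ can be applied. So the word is the unique word for its element, and it differs from $\brd{YX}{k}$ in the first letter. Hence $B(X,Y)=\emptyset$ whenever $m_{ij}\ge4$, and also when $m_{ij}=3$ with $a,b\ge2$.

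The genuine gap is the one remaining case: $m_{ij}=3$ with exponents $\{d,1\}$, $d\ge4$. This is where the theorem has real content. Odd $k$ are excluded by length. For even $k\ge4$, however, both sides are left divisible by $T_{w_\circ}=T_1T_2T_1$, for example
\[
(T_1^dT_2)^2=T_{w_\circ}T_2^{d-1}T_1^{d-1}T_2,\qquad (T_2T_1^d)^2=T_{w_\circ}T_1T_2^{d-1}T_1^{d-1},
\]
and higher powers accumulate further factors of $T_{w_\circ}$. So your claim that the two sides have different first normal-form factors is false here. The ``controlled positions'' bookkeeping is precisely what remains unproved. Your fallback does not apply either: Crisp--Paris concerns $T_i^{d_i}$ with \emph{all} $d_i\ge2$ and says nothing about the pair $(T_1^d,T_2)$.

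The paper closes this case, together with $m_{ij}\ge4$ and exponents $\{d,1\}$, by Proposition~\ref{prop:key converse Tits}. It uses a two-dimensional representation of $\Br^+(I_2(l))$. For generic $q$, the eigenvalues $\lambda_\pm$ of $\check T_1^d\check T_2(\zeta_l)$ satisfy $\lambda_+^n\ne\lambda_-^n$ for all $n$; this is exactly where $d\ge4$ is needed when $l=3$. Hence if $\check T_2(\zeta_l)$ commuted with a power of $\check T_1^d\check T_2(\zeta_l)$, it would commute with $\check T_1^d\check T_2(\zeta_l)$ itself, which fails.

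A cheaper substitute works for $A_2$ alone. The relation $(T_1^dT_2)^r=(T_2T_1^d)^r$ forces $T_2$ to commute with $(T_1^dT_2)^r$. Use $T_1\mapsto\left(\begin{smallmatrix}1&1\\0&1\end{smallmatrix}\right)$ and $T_2\mapsto\left(\begin{smallmatrix}1&0\\-1&1\end{smallmatrix}\right)$. The image of $T_1^dT_2$ has trace $2-d$. For $d\ge4$ it is therefore hyperbolic, or $-1$ times a nontrivial unipotent. In either case all its powers have the same centralizer as it does, and that centralizer does not contain the image of $T_2$. Some such input is indispensable; without it your argument does not establish optimality, or the converse, in this case.
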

\begin{proof}
We may assume, without loss of generality,
that~$I=\{1,2\}$, and so $\wh M=I_2(\wh m)$, $M=I_2(m)$, $m,\wh m\in\mathbb Z_{\ge 2}\cup\{\infty\}$, 
and that~$d_1\ge d_2$.

Suppose that~$\wh M=M(\mathbf d)$.
First we prove that the assignments $\wh T_i\mapsto
T_i^{d_i}$, $i\in \{1,2\}$ define a homomorphism
$\Br^+(I_2(\wh m))\to \Br^+(I_2(m))$.
We only need to consider the case when
$m=3$ and~$d_1=d\in \{2,3\}$, $d_2=1$,
the other cases being obvious.
\begin{lemma}\label{lem:finite elem Tits}
For~$d\in\{2,3\}$, the assignments $\wh T_1\mapsto T_1^d$, $\wh T_2\mapsto T_2$ define 
an optimal homomorphism from $\Br^+(I_2(2d))$ to~$Br^+(A_2)$.
\end{lemma}
\begin{proof}
Let $M'=A_3$ if~$d=2$ and~$M'=D_4$ if~$d=3$ and let~$I'=[1,d+1]$ be its index set.
Then $h(M')=2d$
and $I'=(I'\setminus\{2\})\cup\{2\}$ is
a partition of~$I'$ into (self-orthogonal) orbits of a suitable diagram automorphism. By Corollary~\partref{cor:adm finite class.even}, the assignments
$\wh T_1\mapsto \prod_{k\in I'\setminus \{2\}}T_k$,
$\wh T_2\mapsto T_2$ define a homomorphism
$\Br^+(I_2(2d))\to \Br^+(M')$. Furthermore, $M'$
is foldable along the map $\varpi:I'\to\{1,2\}$,
$\varpi(i)=1$, $i\in I'\setminus\{2\}$, $\varpi(2)=2$,
and the corresponding homomorphism
$\Br^+(M')\to\Br^+_{\{1,2\}}(M')\cong \Br^+(A_2)$
maps $T_i$, $i\in I'\setminus\{2\}$ to~$T_1$
and $T_2$ to itself. Their composition
is the desired homomorphism $\Br^+(I_2(2d))\to
\Br^+(A_2)$.
To prove that it is optimal, note that~$2d\in B(T_1^d,T_2)$ and so~$k=\min B(T_1^d,T_2)$ must
divide~$2d$ by Lemma~\partref{lem:taut homs.b}. Yet $T_1^d$
and~$T_2$ do not commute for otherwise
we would have~$T_2T_1^{d+1}=T_1^d T_2 T_1=T_2 T_1 T_2^d$
which by the cancellativity of~$\Br^+(A_2)$ yields a non-existent relation $T_1^d=T_2^d$. Thus, $k>2$. Since~$k$
is even by Lemma~\partref{lem:elem Artin hom.c} and~$2d\in\{4,6\}$ it follows that~$k=2d$.
\end{proof}

It remains to show that~$\mathbf T_{\mathbf d}$
is optimal, which also amounts
to proving the converse. This is obvious for the
case when~$d_1=d_2=1$ or~$m_{ij}=2$ 
and has already been proven in Lemma~\ref{lem:finite elem Tits} for~$d=d_1d_2\in\{2,3\}$ and~$m=3$.
If~$d_2>1$, then
by~\cite{CP} the submonoid of~$\Br^+(I_2(m))$
generated by $T_1^{d_1}$, $T_2^{d_2}$ is free
and so~$B(T_1^{d_1},T_2^{d_2})=\emptyset$.

It remains to consider the case when~$d=d_1>d_2=1$ and either~$m_{ij}>3$ or~$d>3$.
\begin{proposition}\label{prop:key converse Tits}
Let~$m>2$ and suppose that~$\mathbf T_{(d,1)}
\in\Hom_{\Art}(I_2(\wh m),I_2(m))$ with~$d\ge 2(1+\delta_{m,3})$.
Then~$\wh m=\infty$.
\end{proposition}
\begin{proof}
Let $q\in\mathbb R\setminus\{0\}$,
$z\in\mathbb C\setminus\{0\}$.
Define
$$
\check T_1=\begin{pmatrix}1-q^2&q\\q&0\end{pmatrix},\qquad \check T_2(z)=\begin{pmatrix}0&q z\\q z^*&1-q^2\end{pmatrix},
$$
where~$z^*$ denotes the complex conjugate of~$z$.
\begin{lemma}
Let~$\zeta_l$, $l\ge 3$ be an $l$th primitive complex root of unity.
Then for any~$q\in\mathbb R\setminus\{0\}$,
the assignments $T_1\mapsto \check T_1$,
$T_2\mapsto \check T_2(\zeta_l)$, $i\in\{1,2\}$ define a representation of~$\Br^+(I_2(l))$ on~$\mathbb C^2$. Moreover, in this representation
the matrix of $T^{op}$ is the adjoint of that of~$T\in\Br^+(I_2(l))$ with respect to the
standard hermitian product on~$\mathbb C^2$.
\end{lemma}
\begin{proof}
We have
\begin{align}
\brd{\check T_1\check T_2(z)}n&=q^{n-1}
\begin{cases}
\begin{pmatrix}
q z^*{}^{\frac n2}&(1-q^2)(1+z)r_{\frac n2-1}(z)\\
0&q z^{\frac n2}
\end{pmatrix},& \bar n=0,\\
\\
\begin{pmatrix}
(1-q^2)(r_{\lfloor \frac n2\rfloor}(z)+r_{\lfloor \frac n2\rfloor-1}(z))&q z^*{}^{\lfloor \frac n2\rfloor}\\
q z^{\lfloor \frac n2\rfloor}&0
\end{pmatrix},&\bar n=1,
\end{cases}\label{eq:T1T2(z)}\\
\intertext{and}
\brd{\check T_2(z)\check T_1}n&=q^{n-1}
\begin{cases}
\begin{pmatrix}
q z^{\frac n2}&0\\
(1-q^2)(1+z^*)r_{\frac n2-1}(z)&q z^*{}^{\frac n2}
\end{pmatrix},& \bar n=0,\\
\\
\begin{pmatrix}
0&q z{}^{\lceil \frac n2\rceil}\\
q z^*{}^{\lceil \frac n2\rceil}&
(1-q^2)(r_{\lfloor \frac n2\rfloor}(z)+z z^*r_{\lfloor \frac n2\rfloor-1}(z))
\end{pmatrix},&\bar n=1,
\end{cases}\label{eq:T2(z)T1}
\end{align}
where
\begin{equation}\label{eq: r_k defn}
r_k(z)=\sum_{0\le j\le k}z^{k-j}z^*{}^j=
\begin{cases}
(k+1) z^k,&z\in\mathbb R,\\
\dfrac{z^{k+1}-z^*{}^{k+1}}{z-z^*},&z\in\mathbb C\setminus\mathbb R.
\end{cases}
\end{equation}
Note that $r_k(z^*)=r_k(z)=r_k(z)^*$ and
\begin{equation}\label{eq:rec r_k}
z r_k(z)+z^*{}^{k+1}=r_{k+1}(z).
\end{equation}
Indeed, \eqref{eq:T1T2(z)}
clearly holds for~$n=1$.
If~$n=2k\ge 2$ then by the induction hypothesis
\begin{align*}
\brd{\check T_1\check T_2(z)}{n}&=q^{2k-2}
\begin{pmatrix}
(1-q^2)(r_{k-1}(z)+r_{k-2}(z))&q z^*{}^{k-1}\\
q z^{k-1}&0
\end{pmatrix}\check T_2(z)\\
&=q^{2k-1}
\begin{pmatrix}
q z^*{}^k &(1-q^2)(z (r_{k-1}(z)+r_{k-2}(z))+z^*{}^{k-1})\\0&q z^k
\end{pmatrix}
\\
&=q^{2k-1}
\begin{pmatrix}
q z^*{}^k &(1-q^2)(1+z)r_{k-1}(z)\\0&q z^k
\end{pmatrix},
\end{align*}
where we used~\eqref{eq:rec r_k}. If~$n=2k+1$, $k>0$ then by the induction hypothesis
and~\eqref{eq:rec r_k}
\begin{align*}
\brd{\check T_1\check T_2(z)}n&=q^{2k-1}
\begin{pmatrix}
q z^*{}^{k}&(1-q^2)(1+z)r_k(z)\\
0 &qz^k
\end{pmatrix}\check T_1\\
&=q^{2k}\begin{pmatrix}
(1-q^2)(z^*{}^k+(1+z)r_{k-1}(z)&q z^*{}^k\\
q z^k&0
\end{pmatrix}\\
&=q^{2k}\begin{pmatrix}(1-q^2)(r_k(z)+r_{k-1}(z))&q z^*{}^k\\
q z^k&0
\end{pmatrix}.
\end{align*}
The identity~\eqref{eq:T2(z)T1}
is proved similarly.

Suppose now that~$z=\zeta_l$. Then~$z^*=z^{-1}$ and
$z^*{}^{\lceil \frac l2\rceil}=z^{\lfloor \frac l2\rfloor}$. Also, if~$l=2s$, $s\ge 2$ then~$r_{s-1}(z)=0$ by~\eqref{eq: r_k defn},
while for~$l=2s+1$, $s\ge 1$,
$$
r_{s}(z)+r_{s-1}(z)=\frac{z^{s+1}-z^{-s-1}+z^s-z^{-s}}{z-z^{-1}}=\frac{z^{-s}(z^{l}-1)+z^s(1-z^{-l})}{z-z^{-1}}=0.
$$
The identity~$\brd{\check T_1\check T_2(z)}{l}=
\brd{\check T_2(z)\check T_1}{l}$ is now immediate
from~\eqref{eq:T1T2(z)} and~\eqref{eq:T2(z)T1}.
The last assertion follows since $\check T_1$ is symmetric and real, while the complex conjugate of the transpose of~$\check T_2(z)$
equals~$\check T_2(z)$
for any~$z\in\mathbb C$.
\end{proof}
Denote $p_d=(1-(-q^2)^d)/(1+q^2)$, $d\ge 0$. Then $p_0=0$, $p_1=1$ and
\begin{equation}\label{eq:p_d rec}
p_{d+1}=(1-q^2)p_d+q^2 p_{d-1},\quad d\ge 1.
\end{equation}
We set~$p_{-1}=q^{-2}$ so that the above recursion holds for all~$d\ge 0$.
Note that all the~$p_d$, $d\ge 1$ are polynomials in~$q$ with
the leading term~$(-q^2)^{d-1}$. An easy
induction on~$d$ yields, using~\eqref{eq:p_d rec},
$$
\check T_1^d=\begin{pmatrix}
                     p_{d+1}&q p_d\\
                     q p_d &q^2 p_{d-1},
\end{pmatrix}
$$
whence
\begin{align*}
\check T_1^d \check T_2(\zeta_l)&=\begin{pmatrix}
                          q^2 \zeta_l^{-1}p_d& q((1-q^2)p_d+\zeta_l p_{d+1})\\ q^3 \zeta_l^{-1} p_{d-1}&q^2((1-q^2)p_{d-1}+\zeta_l p_d)
                         \end{pmatrix}\\
                         &=\begin{pmatrix}
                            q^2 \zeta_l^{-1}p_d&q ((1+\zeta_l)p_{d+1}-q^2 p_{d-1})\\ q^3 \zeta_l^{-1}p_{d-1}&q^2((1+\zeta_l)p_d-q^2 p_{d-2})
                           \end{pmatrix}.
\end{align*}
Since $\det \check T_1=\det\check T_2(\zeta_l)=-q^2$,
the characteristic polynomial of~$\check T_1^d \check T_2(\zeta_l)$ is
$$
t^2-q^2\tau_d t+(-q^2)^{d+1},\qquad \tau_d:=(\zeta_l+1+\zeta_l^{-1})p_{d-1}-q^2 p_{d-2},
$$
and so the eigenvalues of~$\check T_1^d \check T_2(\zeta_l)$ are
\begin{equation}\label{eq:lam T_1^d T_2}
\lambda_\pm(q)=\tfrac12 q^2\Big(\tau_d\pm\sqrt{\tau_d^2-4(-q^2)^{d-1}}\Big).
\end{equation}
Note that~$\zeta_l+\zeta_l^{-1}\in\mathbb R$ and so~$\tau_d\in\mathbb R$ for all~$q\in\mathbb R\setminus\{0\}$.
\begin{lemma}\label{lem:generic eigenvalues}
Let~$l\ge 3$ and~$d\ge 2(1+\delta_{l,3})$. Then for generic~$q\in\mathbb R\setminus\{0,\pm1\}$, $\lambda_+(q)^n\not=\lambda_-(q)^n$ for all~$n\ge 1$.
\end{lemma}
\begin{proof}
Abbreviate $\zeta=\zeta_l$.
Suppose first that~$l=3$ and so $\zeta+1+\zeta^{-1}=0$. Then $\tau_d=-q^2 p_{d-2}$ and, since~$d>3$, we can write
$$
\lambda_\pm(q)=-\tfrac12 q^4\Big(p_{d-2}\mp\sqrt{p_{d-2}^2-4(-q^2)^{d-3}}\Big).
$$
Since~$p_{d-2}^2-4(-q^2)^{d-3}$ is a polynomial in~$q$ with the leading term $q^{4(d-3)}$, $\lambda_+(q)\not=\lambda_-(q)$
for a generic~$q$.

Suppose that $\lambda_+(q)\not=\lambda_-(q)$ and that
$\lambda_+(q)^n=\lambda_-(q)^n$ for some~$n>1$. Then
\begin{equation}\label{eq:generic eigenvalues 1}
\sum_{k\ge 0} \binom{n}{2k+1} p_{d-2}^{n-2k-1} (p_{d-2}^2-4(-q^2)^{d-3})^k=0.
\end{equation}
Each summand in the left hand side of~\eqref{eq:generic eigenvalues 1} is a
polynomial in~$q$ with the leading term
$\binom{n}{2k+1}(-q^2)^{(d-3)(n-1)}$. Therefore, the sum in~\eqref{eq:generic eigenvalues 1}
is a polynomial in~$q$ with the leading term $(2(-q^2)^{d-3})^{n-1}$. Thus, $\lambda_+(q)^n\not=\lambda_-(q)^n$, $n\ge 1$ for all but countably many~$q\in\mathbb R\setminus\{0,\pm1\}$.

Suppose that~$l>3$ and so~$\zeta+1+\zeta^{-1}\not=0$. If~$d=2$, $\tau_d=\zeta+1+\zeta^{-1}$, $\tau_d^2-4(-q^2)^{d-1}=(\zeta+1+\zeta^{-1})^2+4q^2>0$ for all real~$q$.
If~$d=3$, $\tau_d=(\zeta+1+\zeta^{-1})1-q^2(\zeta+2+\zeta^{-1})$ and so
$$
\tau^2_d-4(-q^2)^{d-1}=(z+4)z q^4-2 (z+1)(z+2)q^2+(z+1)^2,\qquad z=\zeta+\zeta^{-1},
$$
which is clearly a non-zero polynomial in~$q$. If~$d>3$ then the degree of~$\tau_d^2$ as a polynomial in~$q$ is $4(d-2)>2(d-1)$ hence the degree of~$\tau_d^2-4(-q^2)^{d-1}$ is $4(d-2)$.
Thus, $\lambda_+(q)\not=\lambda_-(q)$ for generic~$q\in\mathbb R\setminus\{0,\pm1\}$.

Now, suppose that~$\lambda_+(q)\not=\lambda_-(q)$.
Then~$\lambda_+(q)^n=\lambda_-(q)^n$ for some~$n\ge 2$ if and only if
\begin{equation}\label{eq:pwrs eig}
\sum_{k\ge 0}\binom{n}{2k+1} \tau_d^{n-2k-1}(\tau_d^2-4(-q^2)^{d-1})^k=0.
\end{equation}
If~$d=2$ then the left hand side is equal to
$$
\sum_{k\ge 0}\binom{n}{2k+1} (\zeta+1+\zeta^{-1})^{n-2k-1}(\zeta+1+\zeta^{-1}+4q^2)^k
$$
which is equal to $2(\zeta+1+\zeta^{-1})$ if~$n=2$ and is a polynomial in~$q$ otherwise, with the leading term~$2^{n-1}q^{n-1}$ if~$n$ is odd and~$n(\zeta+1+\zeta^{-1})2^{n-2}q^{n-2}$
if~$n>2$ is even.

If~$d=3$ then the left hand side of~\eqref{eq:pwrs eig} becomes
\begin{equation}\label{eq:lhs pwrs eig}
\sum_{k\ge 0}\binom{n}{2k+1} (z+1)^{n-2k-1}((z+4)z q^4-2 (z+1)(z+2)q^2+(z+1)^2)^k,\qquad z=\zeta+\zeta^{-1}.
\end{equation}
For~$n=2$, this equals~$2(z+1)\not=0$. Suppose that~$n>2$. If~$l=4$ and so~$z=0$,
\eqref{eq:lhs pwrs eig} is a polynomial in~$q$ with the leading term $(2q)^{n-1}$ if~$n$ is odd
and $-n (2q)^{n-2}$ if~$n$ is even. If~$l>4$ then
\eqref{eq:lhs pwrs eig} is a polynomial in~$q$ with the leading term $(z(z+4))^{\frac12(n-1)}q^{2(n-1)}$ if~$n$ is odd
and~$n(z+1)(z(z+4))^{\frac12n-1} q^{2(n-2)}$ if~$n$ is even. In either case, it is a non-zero polynomial in~$q$.

Finally, if~$d>3$, each summand in the left hand side of~\eqref{eq:pwrs eig} is a polynomial in~$q$ with the leading term $\binom{n}{2k+1}(-q^2)^{2(d-2)(n-1)}$.
Thus, the leading term of the sum is $(2(-q^2)^{d-2})^{n-1}$ and so the sum is a non-zero polynomial in~$q$.

Thus, for~$l>3$ and~$d\ge 2$, the left hand side of~\eqref{eq:pwrs eig} is a non-zero polynomial in~$q$, whence 
$\lambda_+(q)^n\not=\lambda_-(q)^n$ for all~$n\ge 1$ for all but countably many~$q\in\mathbb R\setminus\{0,\pm1\}$. 
\end{proof}

Suppose that~$\wh m$ is even. Then~$\wh T_2$
commutes with $(\wh T_1\wh T_2)^{\frac12\wh m}$ and so
$T_2=\mathbf T_{(d,1)}(\wh T_2)$ must commute with~$(T_1^d T_2)^{\frac12\wh m}=\mathbf T_{(d,1)}(
(\wh T_1\wh T_2)^{\frac12\wh m})$ in~$\Br^+(I_2(m))$.
Therefore, for any~$q\in\mathbb R\setminus\{0\}$,
$\check T_2(\zeta_m)$ must commute with
$(\check T_1^d T_2(\zeta_m))^{\frac12\wh m}$

 By Lemma~\ref{lem:generic eigenvalues}, we can choose~$q\in\mathbb R\setminus\{0,\pm1\}$ so that~$(\check T_1^d \check T_2(\zeta_m))^n$ has two distinct eigenvalues~$\lambda_\pm(q)^n$ for all~$n\ge 1$. Then
 $\ker((\check T_1^d\check T_2(\zeta_m))^n-
 \lambda_\pm(q)^n\id_{\mathbb C^2})=\ker(\check T_1^d\check T_2(\zeta_m)-
 \lambda_\pm(q)\id_{\mathbb C^2})$ for all~$n\ge 1$.
 Since both~$\check T_2(\zeta_m)$ and~$(\check T_1^d\check T_2(\zeta_m))^{\frac12\wh m}$ are diagonalizable and
 commute, they are simultaneously diagonalizable, which by the above also implies that $\check T_2(\zeta_m)$
 and $\check T_1^d \check T_2(\zeta_m)$ are
 simultaneously diagonalizable and, therefore, must commute. Yet by~\eqref{eq:p_d rec}
\begin{align*}
 \check T_2(\zeta_m)&(\check T_1^d\check T_2(\zeta_m))-(\check T_1^d\check T_2(\zeta_m))\check T_2(\zeta_m)
 =
 q^2 p_d\begin{pmatrix}
  \zeta_m-\zeta_m^{-1} & (q-q^{-1}) (1+\zeta_m) \\
 (q^{-1}-q) (1+\zeta_m^{-1}) & \zeta_m^{-1}-\zeta_m
 \end{pmatrix},
 \end{align*}
which is manifestly non-zero for~$q\in\mathbb R\setminus\{0,\pm1\}$,
as $\zeta_m$ is an $m$th primitive root
of unity with~$m\ge 3$.

Finally, suppose that~$\wh m$ is odd.
Then the composition of~$\mathbf T_{(d,1)}$ with the tautological 
homomorphism $\Br^+(I_2(2\wh m))\to\Br^+(I_2(\wh m))$
yields~$\mathbf T_{(d,1)}\in\Hom_{\Art}(I_2(2\wh m),I_2(m))$
which, as we just proved, does not exist.

Thus, $\wh m=\infty$.
\end{proof}
This completes the proof of Theorem~\ref{thm:Tits homomorphisms}.
\end{proof}
\begin{lemma}\label{lem:lifts of parab projections}
Let~$M=(m_{ij})_{i,j\in I}$ be a Coxeter matrix, $J\subset I$.
Assume that~$p_J$ is liftable, that each connected component of~$\Gamma_J(M)$ has at least two vertices and that~$m_{ij}<\infty$ for all~$i,j\in J$. Then~$P_J$
is the only~$\Phi\in\Hom_{\Art}(M,M_J)$
such that~$\overline\Phi_\star=p_J$.
\end{lemma}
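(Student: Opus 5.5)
The plan is to pin down $\Phi$ on generators using only the support information carried by $\overline\Phi_\star=p_J$. After that, Theorem~\ref{thm:Tits homomorphisms} should rule out every exponent except~$1$.

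\emph{Step 1: generators outside~$J$.} Let $\Phi\in\Hom_{\Art}(M,M_J)$ satisfy $\overline\Phi_\star=p_J$. For $i\in I\setminus J$ we get $\pi^\star_{M_J}(\Phi(T_i))=p_J(s_i)=1$. Since $\supp \pi^\star_{M_J}(X)=\supp X$ for all $X\in\Br^+(M_J)$ (recall $\supp(w\star w')=\supp w\cup\supp w'$), we obtain $[\Phi](i)=\emptyset$, hence $\Phi(T_i)=1=P_J(T_i)$.

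\emph{Step 2: generators in~$J$.} For $j\in J$ we have $\supp\Phi(T_j)=\supp s_j=\{j\}$, so $\Phi(T_j)=T_j^{a_j}$ with $a_j\in\ZZ_{>0}$. The restriction of $\Phi$ to $\Br^+_J(M)\cong\Br^+(M_J)$ is therefore the map $T_j\mapsto T_j^{a_j}$, $j\in J$. Write $\mathbf a=(a_j)_{j\in J}$. By Lemma~\ref{lem:factor homs}, this map factors as a tautological homomorphism $\Br^+(M_J)\to\Br^+(\wh M')$ followed by an optimal one $\Br^+(\wh M')\to\Br^+(M_J)$ with the same assignments on generators. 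By Theorem~\ref{thm:Tits homomorphisms}, the optimal factor is $\mathbf T_{\mathbf a}$, so $\wh M'=M_J(\mathbf a)$. Tautologicality then means that $m_{jk}(\mathbf a)$ divides $m_{jk}$ for all $j,k\in J$, under the paper's conventions for~$\infty$. The hypothesis $T_j^{a_j}\ne T_k^{a_k}$ for $j\ne k$, needed in the construction of Lemma~\ref{lem:factor homs}, is automatic because the supports differ.

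\emph{Step 3: forcing $a_j=1$.} Fix $j\in J$. Because the connected component of $\Gamma_J(M)$ containing $j$ has at least two vertices, there is $k\in J$ with $3\le m_{jk}<\infty$. Suppose $a_ja_k>1$. By~\eqref{eq:Tits cover}, $m_{jk}(\mathbf a)$ is then either $\infty$, or $2a_ja_k\in\{4,6\}$ with $m_{jk}=3$. The first option is impossible because $\infty$ does not divide the finite number $m_{jk}$. The second is impossible because neither $4$ nor $6$ divides~$3$. Hence $a_ja_k=1$, so $a_j=1$ for every $j\in J$, and $\Phi=P_J$ on all generators. Finally, $P_J$ is indeed a homomorphism because $p_J$ is liftable, by Lemma~\ref{lem:non-lift-parab}.

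The main delicate point is Step~2: making sure the factorization of Lemma~\ref{lem:factor homs} applies to the restricted map, and that the divisibility conventions for $\infty$ are handled correctly. An alternative route would bypass that lemma and invoke Proposition~\ref{prop:key converse Tits} and the Crisp--Paris freeness directly on each rank-two pair $\{j,k\}$. That would require a short separate check of the case $m_{jk}=3$ with $a_ja_k\in\{2,3\}$, where one would show $3\notin B(T_j^{a_j},T_k^{a_k})$ using Lemma~\ref{lem:elem Artin hom.c}, since $\ell(T_j^{a_j})\neq\ell(T_k^{a_k})$.
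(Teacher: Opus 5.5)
Your proof is correct and is essentially the paper's argument. The paper also reads off $\Phi(T_i)=1$ for $i\notin J$ and $\Phi(T_j)=T_j^{d_j}$ for $j\in J$ from supports, and then rules out $d_j>1$ along edges with $m_{ij}>2$ by applying the Crisp--Paris theorem and Proposition~\ref{prop:key converse Tits} directly, with Lemma~\ref{lem:elem Artin hom} forcing $d_i=d_j$ when $m_{ij}$ is odd; this is exactly the ``alternative route'' you sketch, whereas your main route packages the same ingredients through Lemma~\ref{lem:factor homs} and Theorem~\ref{thm:Tits homomorphisms}, and your direct choice of a neighbour $k$ of each $j$ replaces the paper's separate treatment of the case $m_{ij}=2$.
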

\begin{proof}
We may assume, without loss of generality, that~$J$ is connected and~$|J|>1$. Let~$\Phi$ be a homomorphism $\Br^+(M)\to\Br^+_J(M)$ and with~$\overline\Phi_\star=p_J$.
Then~$[\Phi](i)=\emptyset$ for all~$i\in I\setminus J$ and~$[\Phi](j)=\{j\}$ for all~$j\in J$. Thus,
$\Phi(T_i)=1$ if~$i\in I\setminus J$ and~$\Phi(T_i)=T_i^{d_i}$, $d_i\in\ZZ_{>0}$ if~$i\in J$. 

Let~$i\not=j\in J$ and suppose that~$\max(d_i,d_j)>1$.
Note that if~$m_{ij}$ is odd then~$d_i=d_j$ by Lemma~\partref{lem:elem Artin hom.c}
and so~$\min(d_i,d_j)>1$. Suppose that~$m_{ij}>2$. 
If~$\min(d_i,d_j)>1$ then~$B(T_i^{d_i},T_j^{d_j})=\emptyset$
by~\cite{CP} which is a contradiction since~$m_{ij}<\infty$. In particular,
$m_{ij}\not=3$.
Similarly,
if~$m_{ij}>3$ and~$\min(d_i,d_j)=1$ then
$B(T_i^{d_i},T_j^{d_j})=\emptyset$
by Proposition~\ref{prop:key converse Tits} which again contradicts the 
assumption that~$m_{ij}<\infty$.
Thus, if~$m_{ij}>2$ then~$d_i=d_j=1$.

Finally, if~$m_{ij}=2$ and, say, $d_i>1$ then, since~$J$ is connected,
there exists~$i'\in J$ with~$m_{ii'}\ge 3$ which leads to 
a contradiction by the above. Thus, $d_j=1$ for all~$j\in J$, that is, $\Phi=P_J$.
\end{proof}
Given~$\mathbf d\in \ZZ_{>0}^I$, let~$I_{>1}(\mathbf d)=\{i\in I\,:\, d_i>1\}$.
In reference to Tits conjecture, we call
an optimal $\mathbf T_{\mathbf d}\in\Hom_{\Art}(M(\mathbf d),M)$, $\mathbf d=(d_i)_{i\in I}\in\ZZ_{>0}^I$ satisfying $\mathbf T_{\mathbf d}(\wh T_i)=T_i^{d_i}$, $i\in I$
a {\em Tits homomorphism}.
We call a Tits homomorphism~$\mathbf T_{\mathbf d}$ {\em elementary} if~$I_{>1}(\mathbf d)=\{i\}$
for some~$i\in I$
and denote such a homomorphism by~\plink{T i,d}$\mathbf T_{i,d}$, $i\in I$, $d\in\mathbb Z_{>1}$.

\begin{proposition}\label{prop:elementary Tits}
Let~$M\in\Cox I$, $\mathbf d=(d_i)_{i\in I}\in\ZZ_{>0}^I$
and let $\mathbf T_{\mathbf d}\in\Hom_{\Art}(M(\mathbf d),M)$ be a Tits homomorphism.
Then for any total order $i_1<\cdots<i_n$ on~$I_{>1}(\mathbf d)$ there is a unique sequence of Coxeter matrices $M_0=M, M_1,\dots, M_n, M_{n+1}=
M(\mathbf d)$ over~$I$ such that
$\mathbf T_{\mathbf d}=\mathbf T_{i_1,d_{i_1}}\circ \cdots\circ \mathbf T_{i_n, d_{i_n}}$ where $\mathbf T_{i_t,d_{i_t}}\in\Hom_{\Art}(M_{t+1},M_t)$.
Thus, every Tits homomorphism is a composition of elementary Tits homomorphisms.
\end{proposition}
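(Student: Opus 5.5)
The plan is to build the intermediate matrices explicitly and then check compositions on generators. For $t\in[0,n]$ let $\mathbf d^{(t)}\in\ZZ_{>0}^I$ be given by $d^{(t)}_{i_s}=d_{i_s}$ for $s\le t$ and $d^{(t)}_i=1$ otherwise. Put $M_{t+1}=M(\mathbf d^{(t)})$, so $M_1=M(\mathbf d^{(0)})=M$ and $M_{n+1}=M(\mathbf d)$. The indexing in the statement then reads $M_0=M_1=M$, with $\mathbf T_{i_1,d_{i_1}}$ mapping $M_2\to M_1$. Accordingly, I will read the chain as $\mathbf T_{i_t,d_{i_t}}\in\Hom_{\Art}(M_{t+1},M_t)$ after shifting indices so that the first target is $M$. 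Uniqueness will follow from Theorem~\ref{thm:Tits homomorphisms}: the domain of an optimal elementary Tits homomorphism into a given codomain is forced to be the matrix $(\text{codomain})(\mathbf e)$ for the corresponding $\mathbf e$. Hence each $M_{t+1}$ is determined recursively by $M_t$.

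The key step is to show that $M(\mathbf d^{(t)})=M(\mathbf d^{(t-1)})(\mathbf e_t)$. Here $\mathbf e_t$ has entry $d_{i_t}$ at $i_t$ and $1$ elsewhere. Once this holds, Theorem~\ref{thm:Tits homomorphisms} says that $\mathbf T_{i_t,d_{i_t}}:\Br^+(M_{t+1})\to\Br^+(M_t)$ is a well-defined optimal homomorphism. On generators the composition sends $\wh T_i\mapsto T_i^{\prod_t (\mathbf e_t)_i}=T_i^{d_i}$, which is exactly $\mathbf T_{\mathbf d}$.

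To verify the identity, I would compare entries pairwise via \eqref{eq:Tits cover}, working with $\{i,j\}$ and $m=m_{ij}$.
\begin{itemize}
\item[(a)] If $m\le 2$, every matrix in the chain keeps $m$.
\item[(b)] If $i_t\notin\{i,j\}$, the entry is unchanged and $\mathbf e_t$ acts trivially on it.
\item[(c)] If $i_t=i$ and $d^{(t-1)}_j=1$ and $d^{(t-1)}_i=1$, the entry goes from $m$ to $M(\mathbf d^{(t)})_{ij}$, and both sides agree with \eqref{eq:Tits cover}.
\item[(d)] If $i_t=i$ and $d^{(t-1)}_j>1$ (so $j$ was treated earlier), the previous entry is $2d_j$ when $m=3$ and $d_j\in\{2,3\}$, and $\infty$ otherwise. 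Applying $\mathbf e_t$ with $d_{i}>1$ to an entry $2d_j\ge4$ gives $\infty$ by the ``otherwise'' clause. Applying it to $\infty$ gives $\infty$. Meanwhile $M(\mathbf d)_{ij}=\infty$ since $d_id_j\ge4$ and $m=3$, or $m>3$. So the two sides agree.
\end{itemize}

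The main obstacle is case (d), where \eqref{eq:Tits cover} must be consistent under iteration. In particular, one has to confirm that the rule yields $\infty$ rather than a finite value when the intermediate entry is $4$ or $6$ and a second factor $d_i>1$ appears. This is immediate from \eqref{eq:Tits cover}, since only $m_{ij}=3$ produces finite values beyond $2$. One should also note that optimality of each factor is part of Theorem~\ref{thm:Tits homomorphisms}. The final assertion of the statement is then immediate.
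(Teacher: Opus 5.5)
Your proof is correct and is essentially the paper's argument. The paper inducts on $|I_{>1}(\mathbf d)|$ using Lemma~\ref{lem:Tits factorization}, i.e.\ $M(\mathbf d_i)(\mathbf d^{(i)})=M(\mathbf d)$ (raise one index first, then the rest), whereas you check the mirror identity $M(\mathbf d^{(t-1)})(\mathbf e_t)=M(\mathbf d^{(t)})$ (raise one more index last) by the same entrywise use of \eqref{eq:Tits cover}; both then take existence, optimality and uniqueness from Theorem~\ref{thm:Tits homomorphisms}. Your unrolled version has the small advantage of naming the intermediate matrices explicitly as $M_{t+1}=M(\mathbf d^{(t)})$, and your resolution of the off-by-one in the statement (the composite lands in $M_1$, which must equal $M$) is the right reading.
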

\begin{proof}
The argument is by induction on~$|I_{>1}(\mathbf d)|$, the case~$|I_{>1}(\mathbf d)|=1$ being trivial. For the inductive step, we need the following 
\begin{lemma}\label{lem:Tits factorization}
Let~$i\in I_{>1}(\mathbf d)$, 
$\mathbf d_i=(d_j^{\delta_{i,j}})_{j\in I},
\mathbf d^{(i)}=(d_j^{1-\delta_{i,j}})_{j\in I}
\in\ZZ_{>0}^I$ and let~$M^{(i)}=M(\mathbf d_i)$.  
Then $M^{(i)}(\mathbf d^{(i)})=
M(\mathbf d)$.
\end{lemma}
\begin{proof}
Write $M^{(i)}=(m^{(i)}_{jk})_{j,k\in I}$. Then 
$m^{(i)}_{jk}=m_{jk}$, $j,k\not=i$ and 
$$
m^{(i)}_{ij}=m^{(i)}_{ji}=\begin{cases}
             m_{ij},& m_{ij}\le 2,\\
             2d_i,& m_{ij}=3,\, d_i\in\{2,3\},\,d_j=1,\\
             \infty,& \text{otherwise}.
            \end{cases}
$$
Note that, since~$m_{jk}=m^{(i)}_{jk}$,
$j,k\not=i$, we only need to show that
$m_{ij}(\mathbf d)=m^{(i)}_{ij}(\mathbf d^{(i)})$
for~$j\not=i\in I$. By~\eqref{eq:Tits cover}, we have 
$m^{(i)}_{ij}=m_{ij}(\mathbf d)=2d_i$ if~$d_j=1$, $d_i\in\{2,3\}$ and~$m_{ij}=3$. Since~$d^{(i)}_j=
d^{(i)}_i=1$, we have~$m^{(i)}_{ij}(\mathbf d^{(i)})=m^{(i)}_{ij}$ by~\eqref{eq:Tits cover} and so $m_{ij}(\mathbf d)=m^{(i)}_{ij}(\mathbf d^{(i)})$.
Likewise, $m^{(i)}_{ij}=m_{ij}(\mathbf d)=2$ if~$m_{ij}=2$ and so~$m^{(i)}_{ij}(\mathbf d^{(i)})=2=m_{ij}(\mathbf d)$. Finally, if~$d_j=d^{(i)}_j>1$ then~$m^{(i)}_{ij}=\infty$ and
so~$m^{(i)}_{ij}(\mathbf d^{(i)})=\infty$. But in that case, as 
$d_id_j>3$, 
$m_{ij}(\mathbf d)=\infty$ by~\eqref{eq:Tits cover}. 
\end{proof}
By Lemma~\ref{lem:Tits factorization}
and Theorem~\ref{thm:Tits homomorphisms},
$\mathbf T_{i,d_i}\in\Hom_{\Art}(M^{(i)},M)$ and
$\mathbf T_{\mathbf d^{(i)}}\in\Hom_{\Art}(M(\mathbf d),M^{(i)})$
are Tits homomorphisms. By 
construction, $\mathbf T_{\mathbf d}=
\mathbf T_{i,d_i}\circ \mathbf T_{\mathbf d^{(i)}}$.
Since $|I_{>1}(\mathbf d^{(i)})|=|I_{>1}(\mathbf d)|-1$,
the induction hypothesis applies to~$\mathbf T_{\mathbf d^{(i)}}$ and so it admits
the desired factorization. The uniqueness follows 
from Theorem~\ref{thm:Tits homomorphisms}.
\end{proof}

The following extends the famous {\em Tits conjecture} and its generalization proved in~\cite{CP} as well as a special case from~\cite{Cri}.
\begin{conjecture}\label{conj:gen Tits}
All Tits homomorphisms~$\mathbf T_{\mathbf d}$ 
are injective.
\end{conjecture}
By Proposition~\ref{prop:elementary Tits}, it suffices to prove the Conjecture for elementary Tits homomorphisms. The main result of~\cite{CP}
proves the above for all~$\mathbf d\in\mathbb Z_{>1}^I$.
We now provide some supporting evidence.
The following is immediate.
\begin{corollary}\label{cor:left factor Tits}
Let~$M\in\Cox I$, $\mathbf d\in\ZZ_{>0}^I$,
$\mathbf T_{\mathbf d}\in\Hom_{\Art}(M(\mathbf d),M)$
be a Tits homomorphism
and~$\mathbf T_{\mathbf d}=\mathbf T_{i_1,d_{i_1}}\circ\cdots \circ \mathbf T_{i_n,d_{i_n}}$ be any factorization as in Proposition~\ref{prop:elementary Tits}. If~$\mathbf T_{\mathbf d}$ is injective then so is
$\mathbf T_{i_r,d_{i_r}}\circ 
\cdots \circ \mathbf T_{i_n,d_{i_n}}$ 
for any $2\le r\le n$.
\end{corollary}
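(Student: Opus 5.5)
Write $\Psi=\mathbf T_{i_1,d_{i_1}}\circ\cdots\circ\mathbf T_{i_{r-1},d_{i_{r-1}}}$ and $\Phi_r=\mathbf T_{i_r,d_{i_r}}\circ\cdots\circ\mathbf T_{i_n,d_{i_n}}$. By the factorization in Proposition~\ref{prop:elementary Tits}, each factor is a homomorphism between the Artin monoids over the consecutive matrices $M_{t+1}\to M_t$. Composing these gives $\mathbf T_{\mathbf d}=\Psi\circ\Phi_r$, where $\Phi_r\in\Hom_{\Art}(M(\mathbf d),M_r)$ and $\Psi\in\Hom_{\Art}(M_r,M)$.

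The argument is then purely set-theoretic: if a composition $g\circ f$ of maps is injective, then $f$ is injective. Suppose $\Phi_r(X)=\Phi_r(Y)$ for some $X,Y\in\Br^+(M(\mathbf d))$. Applying $\Psi$ gives $\mathbf T_{\mathbf d}(X)=\mathbf T_{\mathbf d}(Y)$, and the assumed injectivity of $\mathbf T_{\mathbf d}$ forces $X=Y$.

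The only point needing care is that the composite $\Phi_r$ really is a well-defined map out of $\Br^+(M(\mathbf d))$ and that $\mathbf T_{\mathbf d}$ factors through it. Both follow from Proposition~\ref{prop:elementary Tits}: the target of $\mathbf T_{i_n,d_{i_n}}$ is $M_n$, the domain is $M_{n+1}=M(\mathbf d)$, and consecutive factors match up. So there is no real obstacle here.
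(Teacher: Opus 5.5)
Your proof is correct and is exactly the argument the paper has in mind when it calls the corollary immediate. Since $\mathbf T_{\mathbf d}=\Psi\circ\Phi_r$, injectivity of the composite forces injectivity of $\Phi_r$, the map applied first.
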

\begin{example}
Let~$M=A_3$, $\mathbf d=(d,d,d)$, $d\in\{2,3\}$. Then~$M(\mathbf d)=\left(\begin{smallmatrix}
    1&\infty&2\\
    \infty&1&\infty\\
    2&\infty&1
\end{smallmatrix}\right)$
and the Tits homomorphism $\mathbf T_{\mathbf d}:\Br^+(M(\mathbf d))\to 
\Br^+(M)$ is injective by~\cite{CP}. It
factorizes into 
the following chain of elementary Tits homomorphisms
$$
\begin{dynkinDiagram}[expand labels={1,2,3}]A3
\dynkinEdgeLabel{1}{2}{\infty}
\dynkinEdgeLabel{2}{3}{\infty}
\end{dynkinDiagram}
\xrightarrow{\mathbf T_{2,d}}
\begin{dynkinDiagram}[expand labels={1,2,3}]A3
\dynkinEdgeLabel{1}{2}{2d}
\dynkinEdgeLabel{2}{3}{2d}
\end{dynkinDiagram}
\xrightarrow{\mathbf T_{1,d}}
\begin{dynkinDiagram}[expand labels={1,2,3}]A3
\dynkinEdgeLabel{2}{3}{2d}
\end{dynkinDiagram}
\xrightarrow{\mathbf T_{3,d}}\\
\dynkin A3
$$
whence~$\mathbf T_{2,d}$ and~$\mathbf T_{1,d}\circ
\mathbf T_{2,d}=\mathbf T_{(d,d,1)}$ are injective. 
\end{example}

\begin{proposition}\label{prop:partial Tits}
Let~$m\in\{3,4,6\}$. Then~$\Phi_d\in\Hom_{\Art}(I_2(\infty),
I_2(m))$ defined by $\Phi_d(\wh T_1)=T_1^d$, $
\Phi(\wh T_2)=T_2$ is injective
if and only if~$d\ge 2(1+\delta_{3,m})$, that is,
if and only if~$\Phi_d=\mathbf T_{1,d}=
\mathbf T_{(d,1)}$ is an (elementary) Tits homomorphism.
\end{proposition}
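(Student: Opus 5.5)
The plan is to prove the two directions separately. The ``only if'' direction is a list of explicit relations, and the ``if'' direction reduces to showing that $A:=T_1^d$ and $B:=T_2$ generate a free submonoid of $\Br^+(I_2(m))$, which I would establish by a ping-pong argument in a linear representation.

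\emph{``Only if''.} If $d=1$ then $\Phi_d$ identifies $\brd{\wh T_1\wh T_2}{m}\neq\brd{\wh T_2\wh T_1}{m}$, so it is not injective. If $m=3$ and $d\in\{2,3\}$, Lemma~\ref{lem:finite elem Tits} shows that $2d\in B(T_1^d,T_2)$. Hence the two distinct words $\brd{\wh T_1\wh T_2}{2d}$ and $\brd{\wh T_2\wh T_1}{2d}$ of the free monoid $\Br^+(I_2(\infty))$ have the same image. So injectivity forces $d\ge 2(1+\delta_{m,3})$. Conversely, if this inequality holds, Theorem~\ref{thm:Tits homomorphisms} and the matrix~\eqref{eq:Tits cover} give $M((d,1))=I_2(\infty)$ in every case: either $m>3$, or $m=3$ and $d>3$. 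Therefore $\Phi_d=\mathbf T_{(d,1)}=\mathbf T_{1,d}$ is an elementary Tits homomorphism, which explains the reformulation in the statement.

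\emph{Reduction for ``if''.} It remains to show that $A$ and $B$ generate a free submonoid. Since $\Br^+(I_2(m))$ is cancellative, an induction on word length shows it suffices to prove the following: no element of the form $AX$ equals an element of the form $BY$, where $X,Y\in\langle A,B\rangle$. The words $X$ and $Y$ may have different lengths, and length alone does not separate them. Instead I would compose with a representation $\rho$ of $\Br(I_2(m))$ and use a ping-pong criterion for monoids. Concretely, I would look for disjoint nonempty subsets $U_A,U_B$ of a space on which $\rho$ acts, such that $\rho(A)(U_A\cup U_B)\subset U_A$ and $\rho(B)(U_A\cup U_B)\subset U_B$. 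Given such sets, any point $x\in U_A\cup U_B$ satisfies $\rho(AX)x\in U_A$ and $\rho(BY)x\in U_B$, so $AX\neq BY$.

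\emph{Choice of representation and the main obstacle.} For the representation I would use the two-dimensional representation $T_1\mapsto\check T_1$, $T_2\mapsto\check T_2(\zeta_m)$ from the proof of Proposition~\ref{prop:key converse Tits}. I would pass to its action on $\mathbb{CP}^1$, or on the real projective line after a suitable change of basis when $m\in\{3,4,6\}$. For $m=3$ one can alternatively use the classical $\Br_3\to SL_2(\ZZ)$, $T_1\mapsto\left(\begin{smallmatrix}1&1\\0&1\end{smallmatrix}\right)$, $T_2\mapsto\left(\begin{smallmatrix}1&0\\-1&1\end{smallmatrix}\right)$. There $A$ is a parabolic translation by $d$ and $B$ is a parabolic fixing $0$. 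For $d\ge4$ one can take $U_A$ to be a neighbourhood of $\infty$ of the form $\{|x|>2\}$ and $U_B$ a small neighbourhood of $0$, and then check the inclusions directly.

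The case $m\in\{4,6\}$ with $d\ge2$ is the main obstacle. Here $\rho(T_2)$ is no longer parabolic, and the ping-pong sets must be built from the dynamics of $\rho(T_1)^d$ and $\rho(T_2)$. I would first try to use the fact that $\Br(I_2(m))$ modulo its center $\langle(T_1T_2)^{m/2}\rangle$ is isomorphic to a free product $\ZZ_2*\ZZ_{m/2}$-type group, or contains one with finite index. That quotient acts on the Bass--Serre tree, and there $T_1$ and $T_2$ have explicit actions. In the tree I would choose $U_A,U_B$ as sets of ends lying in two specified branches. The check that $T_1^d$ sends both branches into its own branch uses $d\ge2$ exactly as the generic-eigenvalue computation in Lemma~\ref{lem:generic eigenvalues} suggests. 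If the tree approach fails, the fallback is a direct combinatorial argument via the Garside normal form of Proposition~\ref{prop:Normal form}: show that every element of $\langle A,B\rangle$ starting with $A$ has $D_0=\{1\}$, and every element starting with $B$ has $D_0=\{2\}$. This requires controlling carries in products $T_1^{d}T_2T_1^{d}\cdots$.
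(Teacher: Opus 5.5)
\textbf{Gap: the case $m\in\{4,6\}$, $d\ge2$, of the ``if'' direction is not proved.} You call it the main obstacle and leave it open. None of your three strategies is carried out, and the one you favour starts from a wrong description of the group. With $k=m/2$, $a=T_1$ and $x=T_1T_2$ one has $\Br(I_2(2k))=\langle a,x\mid ax^k=x^ka\rangle$. So the quotient by the centre $\langle x^k\rangle$ is $\ZZ*\ZZ_k$, not $\ZZ_2*\ZZ_k$.

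Lemma~\ref{lem:generic eigenvalues} is also beside the point. It only serves to show $B(T_1^d,T_2)=\emptyset$ (Proposition~\ref{prop:key converse Tits}), which is far weaker than freeness of the monoid generated by $T_1^d$ and $T_2$.

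\textbf{The paper's missing idea is a reduction to $A_2$.} Lemma~\ref{lem:finite elem Tits} gives the elementary Tits homomorphism $\mathbf T_{2,m/2}\colon\Br^+(I_2(m))\to\Br^+(A_2)$, with $T_1\mapsto T_1$ and $T_2\mapsto T_2^{m/2}$. The composite $\mathbf T_{2,m/2}\circ\Phi_d$ sends $\wh T_1\mapsto T_1^d$ and $\wh T_2\mapsto T_2^{m/2}$, with both exponents at least $2$. It is therefore injective by Crisp--Paris~\cite{CP}, and hence so is $\Phi_d$ (cf. Corollary~\ref{cor:left factor Tits}).

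\textbf{Your own ideas can also be finished cheaply.} In $\langle a\rangle*\langle x\mid x^k\rangle$ one has $A=a^d$, $B=a^{-1}x$ and $A^eB=a^{de-1}x$. So the image of any positive word in $A,B$ alternates between letters $x$ and powers of $a$ with exponents $-1$, $de-1$ or, at the end, $de$ (where $e\ge1$). For $d\ge2$ all these exponents are nonzero, so the product is reduced in the free product, and the word in $A,B$ can be read off from it. More directly still, a word in $T_1^d$ ($d\ge2$) and $T_2$ never contains $T_2T_1T_2$, so it has no alternating factor of length $m\ge4$. Hence no defining relation of $\Br^+(I_2(m))$ applies to it, and it is the only word representing its element. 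This is your Garside fallback, without any normal-form bookkeeping.

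\textbf{The other parts are fine, with one correction.} Your ``only if'' direction and the identification $\Phi_d=\mathbf T_{(d,1)}$ match the paper. For $m=3$ you use the same $SL_2$ representation as the paper, which quotes a freeness criterion for $\alpha\beta=-d$ instead of running ping-pong. Your ping-pong sets need fixing, though. $\{|x|>2\}$ is not mapped into itself by $A\colon x\mapsto x+d$, and no symmetric neighbourhood of $0$ is mapped into itself by $B\colon x\mapsto x/(1-x)$. One-sided sets work: $U_A=(2,\infty)\cup\{\infty\}$ and $U_B=(-2,0)$ satisfy $A(U_A\cup U_B)\subset U_A$ and $B(U_A\cup U_B)\subset U_B$ precisely when $d\ge4$.
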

\begin{proof}
Suppose first that~$m=3$ and so~$I_2(m)=A_2$.
It is well-known (and easy to verify) that 
the assignments 
$$
T_1\mapsto \begin{pmatrix}1&u\\
0&1
\end{pmatrix},\qquad T_2\mapsto \begin{pmatrix}1&0\\
-u^{-1}&1
\end{pmatrix},\qquad u\in\mathbb C\setminus\{0\}
$$
define a homomorphism~$\rho_u:\Br(A_2)\to SL(2,\mathbb C)$.
Then~$\rho_u(T_1^d)=\left(\begin{smallmatrix}1&du\\0&1\end{smallmatrix}\right)$. By~\cite{CJR}*{Theorem~2} the 
subgroup of~$SL(2,\CC)$ generated
by $\left(\begin{smallmatrix}
1&\alpha\\
0&1
\end{smallmatrix}\right)$ and
$\left(\begin{smallmatrix}
1&0\\
\beta&1
\end{smallmatrix}\right)$, $\alpha,\beta\in\CC$
is free if and only if~$|\alpha\beta|,
|\alpha\beta\pm 2|\ge 2$. Since~$\alpha=du$ with $d\ge 4$ and~$\beta=-u^{-1}$
obviously satisfy these conditions, 
the subgroup of~$SL(2,\mathbb C)$ generated 
by $\rho_u(T_1^d)$, $d\ge 4$, and~$\rho_u(T_2)$ is free. Thus,
the subgroup of~$\Br(A_2)$ and hence the submonoid of~$\Br^+(A_2)$ generated by~$T_1^d$, $d\ge 4$
and~$T_2$ are free.

If~$m=4$ or~$m=6$ then the 
factorization of~$\mathbf T_{(d,\frac m2)}\in\Hom_{\Art}(I_2(\infty),A_2)$
as~$\mathbf T_{(d,\frac m2)}=\mathbf T_{2,\frac m2}\circ \mathbf T_{1,d}$ yields the injectivity
of~$\mathbf T_{1,d}$ by Corollary~\ref{cor:left factor Tits}.

Conversely, if~$d=1$ then the homomorphism~$\Phi_d$ is tautological and hence not injective as 
$\brd{T_1T_2}m=\brd{T_2T_1}m$. 
If~$m=3$ and~$d\in\{2,3\}$ then the homomorphism
$\Br^+(I_2(\infty))\to \Br^+(I_2(m))$ is 
again not injective as $\brd{T_1^d T_2}{2d}=
\brd{T_2T_1^{d}}{2d}$. 
\end{proof}

The following Proposition generalizes Examples~\ref{ex:1.6} and~\ref{ex:affine}.
\begin{theorem}\label{thm:Tits standard}
Every Tits homomorphism is a morphism
in~$\Ast$.
\end{theorem}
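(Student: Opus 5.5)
By Proposition~\ref{prop:elementary Tits}, every Tits homomorphism $\mathbf T_{\mathbf d}$ factors as a composition $\mathbf T_{i_1,d_{i_1}}\circ\cdots\circ\mathbf T_{i_n,d_{i_n}}$ of elementary Tits homomorphisms. Since morphisms of~$\Ast$ are closed under composition, it suffices to show that each elementary Tits homomorphism $\mathbf T_{i,d}\in\Hom_{\Art}(M(\mathbf d_i),M)$ lies in~$\Ast$. We fix such $i$ and $d>1$, write $\wh M=M(\mathbf d_i)$, and for each $j\ne i$ let $m_{ij}$ denote the entry of~$M$.

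The plan mimics Example~\ref{ex:1.6} and Lemma~\ref{lem:finite elem Tits}. We factor $\mathbf T_{i,d}$ as a standard unfolding $\Br^+(\wh M)\to\Br^+(M')$ followed by a standard folding $\mathbf F_\varpi:\Br^+(M')\to\Br^+(M)$.

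The matrix $M'$ is constructed over $I'=(I\setminus\{i\})\sqcup\{i_1,\dots,i_d\}$, replacing $i$ by $d$ pairwise orthogonal copies. We set $m'_{i_ri_s}=2$ for $r\ne s$ and $m'_{jk}=m_{jk}$ for $j,k\ne i$. For the remaining entries we put $m'_{i_rj}=m_{ij}$ whenever $m_{ij}\in\{2,3\}$ and $d\le 3$. In every other case $m_{ij}(\mathbf d_i)=\infty$ by~\eqref{eq:Tits cover}, and we may simply put $m'_{i_rj}=\infty$.

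By Definition~\ref{defn:foldable}, $M'$ is foldable along the map $\varpi:I'\to I$ sending each $i_r\mapsto i$ and fixing all other indices, and $M'^{\varpi}=M$. Lemma~\ref{lem:orbits fold} then gives a light homomorphism $\mathbf F_\varpi$ with $T'_{i_r}\mapsto T_i$. This map is standard, since its generator images $T_j=T_{w_\circ^{\{j\}}}$ are square free and $\mathbf F_\varpi$ is light, hence of Hecke type.

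Next we define $\Psi:\Br^+(\wh M)\to\Br^+(M')$ by $\wh T_i\mapsto T'_{i_1}\cdots T'_{i_d}=T'_{w_\circ^{\{i_1,\dots,i_d\}}}$ (this equality uses Proposition~\partref{prop:fund elts BrSa.0}) and $\wh T_j\mapsto T'_j$ for $j\ne i$. Once $\Psi$ is shown to be a homomorphism, it is standard by Theorem~\partref{thm:Main Thm Cox Heck.d}. Moreover $\mathbf F_\varpi\circ\Psi(\wh T_k)=T_k^{d_k}$ for all~$k$, so $\mathbf F_\varpi\circ\Psi=\mathbf T_{i,d}$.

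The main step is checking that $\Psi$ respects the defining relations of~$\wh M$, which by Lemma~\ref{lem:fund hom} reduces to rank two, that is, to pairs $\{i,j\}$. The following cases arise.
\begin{enumerate}[label={\rm(\alph*)},leftmargin=*]
\item If $\wh m_{ij}=\infty$ there is nothing to check.
\item If $\wh m_{ij}=m_{ij}=2$, the element $T'_j$ commutes with every $T'_{i_r}$.
\item If $m_{ij}=3$ and $d\in\{2,3\}$, then $\wh m_{ij}=2d$. The parabolic subset $\{i_1,\dots,i_d,j\}$ of $I'$ is of type $A_3$ or $D_4$, with Coxeter number $2d$, and it is partitioned into the self-orthogonal subsets $\{i_1,\dots,i_d\}$ and $\{j\}$. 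Proposition~\partref{prop:Coxeter splitting.c}, equivalently Corollary~\partref{cor:adm finite class.even}, gives $\brd{XT'_j}{2d}=\brd{T'_jX}{2d}$ with $X=\Psi(\wh T_i)$.
\end{enumerate}

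Relations among indices $j,k\ne i$ are preserved trivially. The point needing care is that defining $\wh M$-relations can only involve $i$ via $\wh m_{ij}<\infty$, which occurs exactly in cases (b) and (c), so the assignment $m'_{i_rj}=\infty$ in all remaining cases is harmless. The construction depends on~$d$ only through these cases, and when $d>3$ only case (b) occurs, so no additional obstacle arises. This completes the plan.
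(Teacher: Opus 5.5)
Your strategy is the paper's own, essentially Lemma~\ref{lem:elem Tits std}. You reduce to elementary Tits homomorphisms via Proposition~\ref{prop:elementary Tits}. You then factor $\mathbf T_{i,d}$ as a standard unfolding, which replaces $i$ by $d$ pairwise orthogonal copies, followed by the folding $\mathbf F_\varpi$. The only nontrivial rank-two check is the $A_3$/$D_4$ case via Proposition~\partref{prop:Coxeter splitting.c}.

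However, your modification of the entries $m'_{i_rj}$ breaks the argument as written, in two places.

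\textbf{Case (b) fails for $d\ge 4$.} Your rule sets $m'_{i_rj}=\infty$ when $m_{ij}=2$ and $d\ge 4$, justified by the claim that $m_{ij}(\mathbf d_i)=\infty$ there. But by~\eqref{eq:Tits cover}, $m_{ij}(\mathbf d_i)=2$ whenever $m_{ij}=2$, so $\wh m_{ij}=2$. Case~(b) then needs $T'_j$ to commute with $T'_{i_1}\cdots T'_{i_d}$. This fails in $\Br^+(M')$ when $m'_{i_rj}=\infty$: map that parabolic submonoid to the free monoid by $T'_{i_r}\mapsto a$, $T'_j\mapsto b$, and note $a^db\neq ba^d$. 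Your final paragraph, which says case~(b) still occurs for $d>3$, contradicts your own definition.

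\textbf{The folding lands in the wrong monoid.} Whenever you replace a finite $m_{ij}$ by $m'_{i_rj}=\infty$ (for instance $m_{ij}\ge 4$, or $m_{ij}=3$ with $d\ge 4$), the definition of $M'^{\varpi}$ gives $(M'^\varpi)_{ij}=m'_{i_rj}=\infty\neq m_{ij}$. So $M'^\varpi\neq M$, and Lemma~\ref{lem:orbits fold} only produces $\mathbf F_\varpi:\Br^+(M')\to\Br^+(M'^\varpi)$, not a map to $\Br^+(M)$. For example, with $M=B_2$, $i=1$, $d=2$ you get $M'^\varpi=I_2(\infty)$.

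\textbf{The fix.} Do not modify anything: put $m'_{i_rj}=m_{ij}$ for all $j\neq i$ and all $d$, as the paper does. Then $M'$ is foldable along $\varpi$ with $M'^\varpi=M$ exactly. A pair $\{i,j\}$ with $\wh m_{ij}=\infty$ imposes no relation on $\Psi$, so your cases~(b) and~(c) are the only ones to check, and your verification of them goes through verbatim. Alternatively, you could keep the $\infty$ entries and insert the tautological homomorphism $\Br^+(M'^\varpi)\to\Br^+(M)$. It is light with square-free generator images, hence standard, but this only adds a step.
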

\begin{proof}
By Proposition~\ref{prop:elementary Tits}, it suffices to prove the assertion for elementary Tits homomorphisms. We need the following 
\begin{lemma}\label{lem:elem Tits std}
Let~$d\in\ZZ_{>1}$, $M\in\Cox I$, $i\in I$
and let~$\mathbf d=(d^{\delta_{i,j}})_{j\in I}$. Let~$\wh I=(I\setminus\{i\})
\sqcup S$ where~$S$ is any set with~$|S|=d$. Define~$\varpi:\wh I\to I$ 
and~$\wh M=(\wh m_{jk})_{j,k\in\wh I}$ by
\begin{align*}
&\varpi(j)=\begin{cases}
j,&j\in I\setminus\{i\},\\
i,&j\in S,
\end{cases}\\
&\wh m_{jk}=\begin{cases}
2-\delta_{j,k},&\varpi(j)=\varpi(k),\\
m_{\varpi(j)\varpi(k)},&\text{otherwise}.
\end{cases}
\end{align*}
for all $j,k\in\wh I$. 
Then
\begin{enmalph}
    \item\label{lem:elem Tits std.a} $\wh M\in\Cox{\wh I}$, is foldable along~$\varpi$, and~$\wh M^{\varpi}=M$;
    \item\label{lem:elem Tits std.b} 
    the assignments $T_k\mapsto\!\prod\limits_{j\in\varpi^{-1}(k)}\!\wh T_j$, $k\in I$, define a standard
    $\Phi\in\Hom_{\Art}(M(\mathbf d),\wh M)$;
    \item\label{lem:elem Tits std.c} $\mathbf T_{i,d}=\mathbf F_\varpi\circ \Phi\in\Hom_{\Art}(M(\mathbf d),M)$.
\end{enmalph}
\end{lemma}
\begin{proof}
Part~\ref{lem:elem Tits std.a} is obvious. 
To prove~\ref{lem:elem Tits std.b}, note first that the product~$\prod_{s\in S}\wh T_s$ is well-defined since $\wh m_{st}=2$ for all~$s\not=t\in S$. Since~$m(\mathbf d)_{jk}=m_{jk}=\wh m_{jk}$ if $j,k\in I\setminus\{i\}$,
it suffices
to prove that the assignments
\begin{equation}\label{eq:restr}
T_i\mapsto \wh T_{w_\circ^S}=\prod_{s\in S}\wh T_s,\quad T_j\mapsto \wh T_j
\end{equation}
define 
a homomorphism $\Br^+_{\{i,j\}}(M(\mathbf d))\cong \Br^+(I_2(m))\to 
\Br^+(\wh M)$ where
$$
m=m(\mathbf d)_{ij}=\begin{cases}
2,&m_{ij}=2,\\
2d,&m_{ij}=3,\,d\in\{2,3\},\\
\infty,&\text{otherwise}.
\end{cases}
$$
If~$m_{ij}=2$ then $\wh m_{sj}=m_{ij}=2$ for all~$s\in S$
and so
$\prod_{s\in S}\wh T_s$ commutes with~$\wh T_j$. 
If~$m_{ij}=3$ and~$d\in\{2,3\}$ then the assignments~\eqref{eq:restr} define 
a homomorphism $\Br^+(I_2(2d))\to \Br^+(D_{d+1})$
(see the proof of Lemma~\ref{lem:finite elem Tits}). The remaining case is obvious. Finally, $\Phi$ is
standard as~$\prod_{s\in S}\wh T_s=\wh T_{w_\circ^S}$
since all the $\wh T_s$, $s\in S$ commute.

Finally, we have~$\mathbf F_\varpi\circ\Phi(T_k)=T_k$, $k\in I\setminus\{i\}$
and~$\mathbf F_\varpi\circ\Phi(T_i)=\mathbf F_\varpi(\prod_{s\in S}\wh T_s)=T_i^d$
which proves~\ref{lem:elem Tits std.c}.
\end{proof}
Since both~$\Phi$ and~$\mathbf F_\varpi$ are standard,
the assertion follows for elementary Tits homomorphisms.
\end{proof}
\begin{remark}
The elementary Tits homomorphisms listed in Example~\ref{ex:affine} exhaust fully supported elementary Tits homomorphisms in
$\Hom_{\Art}(\wh M,M)$ where~$\wh M$ is of affine type and~$M$ is of finite or affine type.
\end{remark}

\subsection{Factorization of light homomorphisms}\label{subs:factor light}
The following is the analog of~\cite{BGLHeck}*{Proposition~4.8} for Artin monoids.
\begin{theorem}\label{thm:classify light Artin}
Every light homomorphism of Artin monoids is a composition of one or more of the following
\begin{itemize}
    \item[-] a tautological homomorphism;
    \item[-] a parabolic projection;
    \item[-] a natural inclusion;
    \item[-] a Tits homomorphism;
    \item[-] a diagram automorphism;
    \item[-] a folding along a surjective map of index sets of Coxeter matrices.
\end{itemize}
In particular, every light homomorphism of Artin monoids is a morphism in~$\Ast$.
\end{theorem}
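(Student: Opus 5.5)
Let $\Phi\in\Hom_{\Art}(\wh M,M)$ be light, so $\Phi(\wh T_i)=T_{\phi(i)}^{d_i}$ for a partial map $\phi:\wh I\to I$, defined exactly on $\wh I_0=\{i: [\Phi](i)\ne\emptyset\}$, and integers $d_i>0$. I will factor $\Phi$ in stages, peeling off one type of homomorphism at a time.

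\textbf{Step 1 (optimal part and projection).} By Lemma~\ref{lem:factor homs}, $\Phi=\Phi''\circ\Phi'$ with $\Phi'$ tautological and $\Phi''$ optimal and light, so I may assume $\Phi$ optimal. For $i\notin\wh I_0$ and $j\in\wh I_0$, Lemma~\partref{lem:elem Artin hom.c} shows $\wh m_{ij}$ is even or $\infty$. Hence Proposition~\ref{prop:parab proj Artin} applies, and $P_{\wh I_0}$ is a homomorphism (the $\infty$ entries cause no relations). Moreover $\Phi=\Phi|_{\Br^+_{\wh I_0}}\circ P_{\wh I_0}$, since both sides agree on generators. So I may assume $\phi$ is total.

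\textbf{Step 2 (inclusion and folding).} Next I factor $\Phi$ through the natural inclusion $\iota_{\phi(\wh I)}$, reducing to $\phi$ surjective. The delicate point is to write $\Phi$ as $\mathbf F_\varpi\circ\Psi$. To make $\Psi$ exist I introduce an intermediate Coxeter matrix $\wt M$ over $\wh I$ with
\[
\wt m_{ij}=2\ \text{ if }\phi(i)=\phi(j),\qquad \wt m_{ij}=m_{\phi(i)\phi(j)}\ \text{ otherwise}.
\]
This $\wt M$ is foldable along $\varpi=\phi$ with $\wt M^\varpi=M$, and $\mathbf F_\varpi(\wt T_i)=T_{\phi(i)}$. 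It remains to show that $\wh T_i\mapsto \wt T_i^{d_i}$ defines a homomorphism $\Br^+(\wh M)\to\Br^+(\wt M)$; this homomorphism then factors as a Tits homomorphism composed with a tautological one, which I justify below.

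\textbf{Step 3 (checking the Tits piece), the main obstacle.} For a pair $i\ne j$ with $\phi(i)\ne\phi(j)$, I need $\min B(T_{\phi(i)}^{d_i},T_{\phi(j)}^{d_j})$ to equal $m(\mathbf d)_{ij}$ computed for $\wt M$. This is exactly Theorem~\ref{thm:Tits homomorphisms} applied to the rank-two parabolic $\{\phi(i),\phi(j)\}$. When $B$ is empty, the relation is $\infty$ and there is nothing to check. For a pair with $\phi(i)=\phi(j)=k$, the elements $T_k^{d_i}$ and $T_k^{d_j}$ commute. Since $\Phi$ is optimal, $\wh m_{ij}=2$ unless $d_i=d_j$, in which case the images coincide and $\wh m_{ij}$ is arbitrary. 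These equal-image pairs are the obstruction. I would handle them by first applying a folding on the $\wh M$ side that identifies indices with identical images. This is legitimate when $\wh M$ is foldable along that identification, and I expect optimality plus Lemma~\ref{lem:fund hom} to give the needed equalities $\wh m_{ik}=\wh m_{jk}$.

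\textbf{Step 4 (assembly).} With that identification done, $\wh M$ matches $\wt M(\mathbf d)$ up to blowing up some entries, so the remaining factor is tautological. Diagram automorphisms only enter to fix labelings. Finally, tautological maps, projections, inclusions, foldings and diagram automorphisms are all standard, since their generators map to $T_j$ or $1$. Tits homomorphisms lie in $\Ast$ by Theorem~\ref{thm:Tits standard}. Hence every light homomorphism is a morphism in $\Ast$.
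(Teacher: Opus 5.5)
Your proof is correct, but it is organized differently from the paper's. The paper inducts on $N(\Phi)=|\{i\,:\,d_i>1\}|$, after also reducing to irreducible $\wh M$, $M$ via Lemma~\ref{lem:diagonal}:
\begin{itemize}
\item Lemma~\ref{lem:class light Artin 2} folds one fibre $\wh I(\Phi,k,d)$ to a single vertex and splits off one elementary Tits homomorphism $\mathbf T_{i,d}$ into an ad hoc matrix, whose entries $2d$ are replaced by $3$;
\item Lemma~\ref{lem:factor homs} is then used to re-optimize;
\item at $N=0$, Lemma~\ref{lem:all powers 1} identifies the remainder as a folding.
\end{itemize}
Throughout, the entries $\wh m_{i'j}$ are pinned down by the explicit case analysis~\eqref{eq:class step 0c}, based on Crisp--Paris and Proposition~\ref{prop:key converse Tits}.

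You instead get a one-shot normal form
$$\Phi=\iota\circ\mathbf F_\phi\circ\mathbf T_{\mathbf d}\circ\mathbf F_{\varpi_0}\circ P\circ\tau.$$
You fold the domain along ``same image''. You then apply a single multi-exponent Tits homomorphism into the matrix $\widetilde M$, which replaces each vertex $k$ of $M$ by the self-orthogonal set $\phi^{-1}(k)$ (the device of Lemma~\ref{lem:elem Tits std}). Finally you fold $\widetilde M$ back onto $M$.

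What your route buys is that it dispenses with the induction, the repeated re-optimization and the irreducibility reduction. It also isolates the only nontrivial input as the rank-two case of Theorem~\ref{thm:Tits homomorphisms}. The paper's route keeps the codomain fixed and outputs only elementary Tits factors, but yours refines to that form by Proposition~\ref{prop:elementary Tits}.

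Two small points should be stated rather than ``expected''.
\begin{itemize}
\item Foldability of $\wh M$ along $\varpi_0$ is immediate from optimality. For $i',i''$ with equal images and $i$ with a different image, both $\wh m_{ii'}$ and $\wh m_{ii''}$ equal $\min B(\Phi(\wh T_i),\Phi(\wh T_{i'}))$, or are both $\infty$. The folded homomorphism also stays optimal.
\item Define $\widetilde M$ on the folded index set rather than on $\wh I$. Then the folded domain matrix is exactly $\widetilde M(\mathbf d)$: two classes in one $\phi$-fibre now have different exponents, which forces the entry $2$. So no further tautological factor arises at that stage.
\end{itemize}
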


\begin{proof}
Let~$\wh M=(\wh m_{ij})_{i,j\in \wh I}\in\Cox{\wh I}$, $M\in\Cox I$ and let~$\Phi\in\Hom_{\Art}(\wh M,M)$ be light.
By Lemma~\ref{lem:factor homs} we may assume that~$\Phi$ is optimal. 

Let~$\wh I_s=\{i\in \wh I\,:\,|[\Phi](i)|=s\}$, $s\in\{0,1\}$. Note first that~$P_{\wh I_1}$ is a well-defined homomorphism. Indeed, since~$\Phi\in\Hom_{\Art}(\wh M,M)$, by Lemma~\partref{lem:elem Artin hom.c} if~$i\in\wh I_0$ then~$j\in\wh I_0$ for all~$j\in\wh I$ with~$\wh m_{ij}$ odd. Then~$\Phi=\Phi|_{\wh I_1}\circ P_{\wh I_1}$,
so we may assume without loss of generality that~$\wh I_0=\emptyset$. Furthermore, if~$\Phi$ is not fully supported, we can write it as a composition of a
fully supported light homomorphism with a natural inclusion.

Thus, we are assuming that~$\Phi$ is optimal, fully supported and~$|[\Phi](i)|=1$ for all~$i\in \wh I$. In particular, $[\Phi]$ can be regarded as a surjective map $\wh I\to I$.
Furthermore, by Lemma~\ref{lem:diagonal} we may assume that both~$M$ and~$\wh M$ are irreducible.

Given~$i\in \wh I$, define~$d_i\in\ZZ_{>0}$ by
$\Phi(\wh T_i)=T_{[\Phi](i)}^{d_i}$. Given $j\in I$ and~$d\in\ZZ_{>0}$, let
$\wh I(\Phi,j,d)=\{ i\in[\Phi]^{-1}(j)\,:\,d_i=d\}$. Clearly, 
$\wh I(\Phi,j,d)=\emptyset$ for all
but finitely many~$d\in\ZZ_{>0}$.
Let~$N(\Phi)=|\{i\in\wh I\,:\,d_i>1\}|$.
\begin{lemma}\label{lem:all powers 1}
Suppose that~$N(\Phi)=0$. Then 
$\wh M$ is foldable along~$[\Phi]$,
$M=\wh M^{[\Phi]}$ and~$\Phi=\mathbf F_{[\Phi]}$.
\end{lemma}
\begin{proof}
Since~$N(\Phi)=0$, $d_i=1$ for all~$i\in\wh I$.  
By the optimality of~$\Phi$, $\wh m_{ij}=
m_{[\Phi](i)[\Phi](j)}$ for all $i,j\in\wh I$ such that~$[\Phi](i)\not=[\Phi](j)$. The assertion is now immediate. 
\end{proof}
\begin{lemma}\label{lem:class light Artin 2}
Let $k\in I$, $d\in\ZZ_{>1}$
and~$i\in \wh I(\Phi,k,d)$.  
Set~$\widetilde I=
(\wh I\setminus\wh I(\Phi,k,d))\cup\{i\}$ and  define~$\varpi=\varpi_{k,d}:\wh I\to \widetilde I$
and~$\widetilde M=(\widetilde m_{i'i''})_{i',i''\in\widetilde I}$
by 
\begin{align*}
\varpi(j)&=\begin{cases}
j,&j\in\wh I\setminus\wh I(\Phi,k,d),\\
i,&j\in\wh I(\Phi,k,d),
\end{cases}
\\
\widetilde m_{i'i''}&=\begin{cases}
3,&i\in\{i',i''\},\,
m_{[\Phi](i')[\Phi](i'')}=3,\,d_{i'}d_{i''}\in\{2,3\},\\
\wh m_{i'i''},&\text{otherwise}
\end{cases}
\end{align*}
for all~$j\in\wh I$ and  for all~$i',i''\in\widetilde I$.
Then 
\begin{enmalph}
\item\label{lem:class light Artin 2.i} $\wh M$ is foldable along~$\varpi$;
\item\label{lem:class light Artin 2.ii} $\widetilde M\in\Cox{\widetilde I}$ and
$\wh M^\varpi=\widetilde M(\widetilde{\mathbf d})$ where~$\widetilde{\mathbf d}=(\widetilde d_j)_{j\in\widetilde I}$
with $\widetilde d_j=d^{\delta_{i,j}}$, $j\in\widetilde I$;
\item\label{lem:class light Artin 2.iii} The assignments $\widetilde T_j\mapsto T_{[\Phi](j)}^{d_j^{1-\delta_{i,j}}}$, $j\in\widetilde I$ define
$\widetilde\Phi\in\Hom_{\Art}(\widetilde M,M)$ with~$N(\widetilde\Phi)<N(\Phi)$;
\item\label{lem:class light Artin 2.iv} $\Phi=\widetilde \Phi\circ\mathbf T_{i,d}\circ\mathbf F_\varpi$.
\end{enmalph}
\end{lemma}
\begin{proof}
Let~$j\in \wh I\setminus \wh I(\Phi,k,d)$ and let~$l=[\Phi](j)$.
We claim that for all~$i'\in \wh I(\Phi,k,d)$
\begin{equation}
\label{eq:class step 0c}
\wh m_{i'j}=\begin{cases}
2,&m_{kl}\le 2,\\
2d,&m_{kl}=3,\,d_j=1,\,d\in\{2,3\},\\
\infty,&\text{otherwise}.
\end{cases}
\end{equation}
Indeed, if~$l=k$ then, since~$\Phi(\wh T_{j})=T_k^{d_{j}}\not=T_k^{d}=\Phi(\wh T_{i'})$, $\wh m_{i'j}=2$ by the optimality of~$\Phi$. Suppose that~$l\not=k$. 
Then the restriction of~$\Phi$ to~$\Br^+_{\{i',j\}}(\wh M)\cong \Br^+(I_2(\wh m_{i'j}))$ is a homomorphism
to~$\Br^+_{\{k,l\}}(M)\cong 
\Br^+(I_2(m_{kl}))$. 
If~$m_{kl}=\infty$
then clearly
$\wh m_{i'j}=\infty$. 
If~$m_{kl}=2$ then, in particular, $\Phi(\wh T_{i
})=T_k^d\not=T_l^{d_j}=\Phi(\wh T_j)$ and so~$\wh m_{i'j}=2$ by the optimality of~$\Phi$.
Suppose that~$2<m_{kl}<\infty$.
If~$d_ {j}>1$ (respectively, if $d_{j}=1$
and either $d>3$ or~$m_{kl}>3$) then
$\wh m_{i'j}=\infty$ by~\cite{CP} (respectively, Proposition~\ref{prop:key converse Tits}). 
Finally, if $d_{j}=1$, $d\in\{2,3\}$ and~$m_{kl}=3$
then $\wh m_{i'j}=2d$ by the optimality of~$\Phi$ and Proposition~\ref{prop:key converse Tits}. 

Part~\ref{lem:class light Artin 2.i} is immediate from~\eqref{eq:class step 0c}.

The first assertion in~\ref{lem:class light Artin 2.ii} is obvious. Let~$j,j'\in\widetilde I$.
If~$i\notin\{j,j'\}$ then $\widetilde d_j=\widetilde d_{j'}=1$ and so
$\widetilde M(\widetilde{\mathbf d})_{jj'}=\widetilde m_{jj'}=\wh m_{jj'}=(\wh M^\varpi)_{jj'}$ by~\eqref{eq:Tits cover}. Suppose that, say, $j'=i\not=j$. Then~$\widetilde d_j=1$
and so by~\eqref{eq:Tits cover}
$$
\widetilde M(\widetilde{\mathbf d})_{ij}=\begin{cases}
\widetilde m_{ij},&\widetilde m_{ij}=2,\\
2d,&\widetilde m_{ij}=3,\,d\in\{2,3\},\\
\infty,&\text{otherwise}.
\end{cases}
$$
By definition of~$\widetilde M$, $\widetilde m_{ij}=2$
implies that~$\wh m_{ij}=2=(\wh M^\varpi)_{ij}$. If~$\widetilde m_{ij}=3$ then, since~$\wh m_{ij}\not=3$ by~\eqref{eq:class step 0c}, we must have
$m_{[\Phi](i)[\Phi](j)}=3$, $d_j=1$, $d\in\{2,3\}$ and then~$\wh m_{ij}=2d$ by~\eqref{eq:class step 0c}.
Finally, suppose that~$\widetilde m_{ij}>3$.
Then~$\widetilde m_{ij}=\wh m_{ij}$ by definition of~$\widetilde M$ which forces both of them to be equal to~$\infty$ by~\eqref{eq:class step 0c}. Thus, $\wh M^\varpi=\widetilde M(\widetilde{\mathbf d})$.

To prove~\ref{lem:class light Artin 2.iii}, we only need to show that 
for any~$j\in\widetilde I\setminus\{i\}$, the
assignments 
\begin{equation}\widetilde T_i\mapsto T_k,\quad \widetilde T_j\mapsto T_{l}^{d_j},\quad  l=[\Phi](j),\label{eq:Phi tild}
\end{equation}
define a homomorphism
$\Br^+_{\{i,j\}}(\widetilde M)\cong \Br^+(I_2(\widetilde m_{ij}))\to \Br^+_{\{k,l\}}(M)$.
If~$l=k$ then $\wh m_{ij}=2$ by~\eqref{eq:class step 0c},
$\widetilde m_{ij}=\wh m_{ij}=2$ and, obviously, $T_j T_j^{d_k}=
T_j^{d_k}T_j$. Suppose that~$l\not=k$
and so~$\Br^+_{\{k,l\}}(M)\cong\Br^+(I_2(m_{kl}))$.
If~$\widetilde m_{ij}=3$ then, as in the proof of~\ref{lem:class light Artin 2.ii} above, $m_{kl}=3$, $d_j=1$
and~\eqref{eq:Phi tild} defines an isomorphism of respective parabolic submonoids.
Otherwise, $\widetilde m_{ij}=\wh m_{ij}\in\{2,\infty\}$. In either case, \eqref{eq:Phi tild} defines a homomorphism of respective parabolic submonoids. By construction, $N(\Phi')
\le N(\Phi)-1<N(\Phi)$.

To prove~\ref{lem:class light Artin 2.iv}, note that for~$j\in \wh I\setminus\wh I(\Phi,k,d)$ we get $$(\widetilde\Phi\circ\mathbf T_{i,d}\circ\mathbf F_\varpi)(\wh T_j)=(\widetilde\Phi\circ\mathbf T_{i,d})(\wh T_j)=\widetilde\Phi(\widetilde T_j)=T_{[\Phi](j)}^{d_j}=\Phi(\wh T_j),$$ 
while 
for~$j\in\wh I(\Phi,k,d)$,
$$
(\widetilde\Phi\circ\mathbf T_{i,d}\circ\mathbf F_\varpi)(\wh T_j)
=(\widetilde\Phi\circ\mathbf T_{i,d})(\wh T_i)=
\widetilde \Phi(\widetilde T_i^{d})
=\widetilde \Phi(\widetilde T_i)^{d_i}
=T_{k}^{d_i}=\Phi(\wh T_j).
$$
This completes the proof of Lemma~\ref{lem:class light Artin 2}.
\end{proof}

Note that~$\widetilde\Phi$ obtained from~$\Phi$
by Lemma~\ref{lem:class light Artin 2} does not have to be optimal. However, by Lemma~\ref{lem:factor homs}, we can always factor out a tautological homomorphism. Therefore, by applying Lemma~\ref{lem:class light Artin 2} repeatedly, we conclude that~$\Phi=\Phi'\circ\Phi''$ where~$\Phi'$ is optimal with $N(\Phi')=0$ and~$\Phi''$ is a composition of elementary Tits homomorphisms, foldings along surjective maps of index sets and tautological homomorphisms. But then~$\Phi'$ is a folding by Lemma~\ref{lem:all powers 1}.

Finally, all light homomorphisms listed in Theorem~\ref{thm:classify light Artin}, with the exceptions of Tits homomorphisms, are standard, while 
Tits homomorphisms are morphisms in~$\Ast$ by Theorem~\ref{thm:Tits standard}.
\end{proof}
\begin{example}
Let $d\in\{2,3\}$, $m\in\ZZ_{>2}\cup\{\infty\}$ and 
let~$\wh M=\left(\begin{smallmatrix}
    1&\infty&2d&2d\\
    \infty&1&2&2\\
    2d&2&1&m\\
    2d&2&m&1
\end{smallmatrix}\right)$. 
It is easy to see that the assignments $\wh T_1\mapsto T_1$, $\wh T_2\mapsto T_2^{d'}$, $\wh T_3\mapsto T_2^{d}$,
$\wh T_4\mapsto T_2^{d}$, 
$d'\in\ZZ_{>3}$, define an optimal homomorphism $\Br^+(\wh M)\to \Br^+(A_2)$, which factorizes as
\begin{multline*}
\begin{dynkinDiagram}[name=upper,label directions={above}]A1
\node (current) at ($(upper root 1)+(-1cm,-1cm)$) {};
\dynkin[at=(current),name=left,labels={2},label directions={below}]A1
\node (current) at ($(left root 1)+(1cm,0cm)$) {};
\dynkin[at=(current),name=right,labels={3,4},label directions={below,below}]A2
\dynkinEdgeLabel{1}{2}{m}
\begin{pgfonlayer}{Dynkin behind}
\draw[/Dynkin diagram,edge]
($(upper root 1)$) -- ($(left root 1)$);
\draw[/Dynkin diagram,edge]
($(upper root 1)$) -- ($(right root 1)$);
\draw[/Dynkin diagram,edge]
($(upper root 1)$) -- ($(right root 2)$);
\end{pgfonlayer}
\draw[draw=none] (left root 1) to
                node[auto,                inner sep=0.8,                /Dynkin diagram/text style,                /Dynkin diagram/edge label]
                {\(\pgfkeys{/Dynkin diagram/label macro*=\infty}\)}                (upper root 1);\draw[draw=none] (right root 1) to
                node[auto,                inner sep=1,                /Dynkin diagram/text style,                /Dynkin diagram/edge label]
                {\(\pgfkeys{/Dynkin diagram/label macro*={2d}}\)}                (upper root 1);\draw[draw=none] (upper root 1) to
                node[auto,                inner sep=0.8,                /Dynkin diagram/text style,                /Dynkin diagram/edge label]
                {\(\pgfkeys{/Dynkin diagram/label macro*={2d}}\)}                (right root 2);\end{dynkinDiagram}
\xrightarrow{\mathbf F_{\varpi_{[1,4]}}}
\begin{dynkinDiagram}[labels={2,1,3}]A3
\dynkinEdgeLabel{1}{2}{\infty}
\dynkinEdgeLabel{2}{3}{2d}
\end{dynkinDiagram}
\xrightarrow{
\mathbf T_{2,d'}\circ\mathbf T_{3,d}}
\begin{dynkinDiagram}[labels={2,1,3}]A3
\dynkinEdgeLabel{1}{2}{\infty}
\end{dynkinDiagram}
\xrightarrow{\tau}
\dynkin[labels={2,1,3}]A3
\\
\xrightarrow{\mathbf F_{\varpi_{[1,3]}}}
\dynkin[labels={2,1}]A2,
\end{multline*}
where~$\varpi_{[1,k]}:[1,k]\to [1,k-1]$, $k>1$ is defined by $\varpi(i)=i-\delta_{i,k}$,
$i\in[1,k]$ and~$\tau$ is the tautological homomorphism.
\end{example}
\begin{example}
Let~$M=(m_{ij})_{i,j\in I}\in\Cox I$ and let~$\mathbf d=(d_i)_{i\in I}\in\ZZ_{\ge 0}^I$ with~$d_i=d_j$ whenever~$m_{ij}$
is odd. Then the character homomorphism~$\Xi_\mathbf X:\Br^+(M)\to\Br^+(A_1)$ with~$\mathbf X=(T_1^{d_i})_{i\in I}$ factorizes as follows. Let~$\sim_M$ be the transitive closure of the relation on~$I$ defined by~$i\sim_M j$ whenever~$m_{ij}$
is odd, $i,j\in I$; this is manifestly an equivalence relation. Let~$\underline I$ be the set of equivalence 
classes for~$\sim_M$ and denote the class of~$i\in I$
by~$\underline i$. Let~$\tilde M=(\tilde m_{ij})_{i,j\in I}$
with~$\tilde m_{ij}=m_{ij}$ if~$m_{ij}$ is odd and~$\tilde m_{ij}=2$
if~$m_{ij}$ is even, $i,j\in I$. Note that~$\tilde M$
is foldable along the canonical map~$\varpi:I\to\underline I$, $i\mapsto \underline i$, $i\in I$
with~$\tilde M^\varpi$ being the 
product of copies of~$A_1$ indexed by~$\underline I$
(cf. Lemma~\partref{lem:free product.a}). Then
$\Xi_{\mathbf X}=\mathbf F_{\underline\varpi}\circ 
\mathbf T_{\underline{\mathbf d}}\circ \mathbf F_{\varpi}
\circ\tau$, where~$\tau:\Br^+(M)\to\Br^+(\tilde M)$
is the tautological homomorphism, $\underline{\mathbf d}=
(d_{\underline i})_{\underline i\in\underline I}$
with~$d_{\underline i}=d_i$, $i\in I$, and~$\underline\varpi$
is the unique map~$\underline I\to \{1\}$.
\end{example}
\begin{proposition}
\label{prop:types of light}
The following exhausts
optimal fully supported light homomorphisms between Artin monoids of irreducible finite types
\begin{enmalph}

\item\label{prop:types of light.PBnAn-1} $P_{[1,n-1]}:\Br^+(B_n)\to \Br^+(A_{n-1})$;
\item\label{prop:types of light.PF4A2}  $P_{[1,2]}:\Br^+(F_4)\to \Br^+(A_2)$;
\item\label{prop:types of light.PBnA1}
$P_{\{n\}}:\Br^+(B_n)\to\Br^+(A_1)$;
\item\label{prop:types of light.PI2A1}
$P_{\{1\}}:\Br^+(I_2(2m))\to \Br^+(A_1)$;
\item\label{prop:types of light.FDn+1An} $\mathbf F_{\varpi_{(n,n+1)}}:
\Br^+(D_{n+1})\to \Br^+(A_n)$, $n\ge 2$, where 
$\varpi_{(n,n+1)}:[1,n+1]\to[1,n]$ is defined by~$\varpi_{(n,n+1)}(i)=i-\delta_{i,n+1}$, $i\in[1,n+1]$;
\item\label{prop:types of light.FD4A2} $\mathbf F_{\varpi_{(1,3,4)}}:
\Br^+(D_4)\to \Br^+(A_2)$ where 
$\varpi_{(1,3,4)}:[1,4]\to\{1,2\}$
is defined by $\varpi_{(1,3,4)}(i)=1$, $i\in\{1,3,4\}$, $\varpi_{(1,3,4)}(2)=2$;
\item\label{prop:types of light.A1A1}
$\mathbf T_{1,d}:\Br^+(A_1)\to\Br^+(A_1)$, $T_1\mapsto T_1^d$, $d\in\ZZ_{>1}$;
\item\label{prop:types of light.BnAn} $\mathbf T_{n,2}:\Br^+(B_n)\to\Br^+(A_n)$, $\wh T_i\mapsto T_i^{1+\delta_{i,n}}$, $i\in[1,n]$;
\item\label{prop:types of light.G2A2} $\mathbf T_{2,3}:\Br^+(G_2)\to \Br^+(A_2)$, $\wh T_1\mapsto T_1$, $T_2\mapsto T_2^3$;
\item\label{prop:types of light.len} The length
homomorphism~$\ell:\Br^+(M)\to (\ZZ_{\ge0},+)\cong
\Br^+(A_1)$.
\end{enmalph}

The only tautological homomorphisms
of irreducible Artin monoids of finite type are from $\Br^+(I_2(dm))$ to~$\Br^+(I_2(m))$, 
$d>0$, $m\ge 3$.

All other light homomorphisms of 
Artin monoids of finite type are obtained as compositions of the above ones, up to natural inclusions, parabolic projections onto connected components, direct products
and diagram automorphisms.
\end{proposition}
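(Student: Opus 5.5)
We reduce the classification to the structure established in the proof of Theorem~\ref{thm:classify light Artin}. Let $\Phi\in\Hom_{\Art}(\wh M,M)$ be optimal, fully supported and light, with $\wh M$, $M$ irreducible of finite type. As in that proof, set $\wh I_0=\{i:[\Phi](i)=\emptyset\}$. By Lemma~\partref{lem:elem Artin hom.c} and Proposition~\ref{prop:parab proj Artin}, $\Phi=\Phi|_{\wh I_1}\circ P_{\wh I_1}$.

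\textbf{Step 1: the case $\wh I_0\neq\emptyset$.} For irreducible finite types, the only non-trivial parabolic projections are those listed after Proposition~\ref{prop:parab proj Artin}. Thus either $\wh I_0=\emptyset$, or $P_{\wh I_1}$ is one of \ref{prop:types of light.PBnAn-1}--\ref{prop:types of light.PI2A1}. In the latter case $\Phi$ factors through one of these projections, followed by a light homomorphism whose domain is $A_{n-1}$, $A_2$ or $A_1$; that second homomorphism is handled by the remaining steps.

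\textbf{Step 2: the case $|[\Phi](i)|=1$ for all $i$.} Now $[\Phi]:\wh I\to I$ is surjective and $\Phi(\wh T_i)=T_{[\Phi](i)}^{d_i}$. Take $i\ne j$ with $\wh m_{ij}\ge3$ and $[\Phi](i)\ne[\Phi](j)$. By \cite{CP} and Proposition~\ref{prop:key converse Tits} (as in~\eqref{eq:class step 0c}), finiteness of $\wh m_{ij}$ forces one of two situations: $d_i=d_j=1$ with $\wh m_{ij}=m_{[\Phi](i)[\Phi](j)}$, or $m=3$, $\{d_i,d_j\}=\{1,d\}$ with $d\in\{2,3\}$ and $\wh m_{ij}=2d$. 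If $[\Phi](i)=[\Phi](j)$, optimality gives $\wh m_{ij}=2$ when $d_i\ne d_j$ and $\wh m_{ij}=1$ (impossible) when $d_i=d_j$, so $i=j$ in that case. Since $\Gamma(\wh M)$ is a tree of finite type, we now split by $|I|$.
\begin{enumerate}
\item If $|I|=1$, then $M=A_1$. Odd edges force equal $d$'s, and optimality forces $\wh m_{ij}=2$ whenever $d_i\ne d_j$. Irreducibility of $\wh M$ then implies that all $d_i$ are equal and that $\wh M$ has no even edges unless $|\wh I|=1$. This yields $\mathbf T_{1,d}$ and, for $d=1$, the length homomorphism $\ell$; the case $d>1$ with $|\wh I|>1$ is the composite $\mathbf T_{1,d}\circ\ell$.
\item If $|I|\ge2$, we use the folding description of Lemma~\ref{lem:all powers 1} when all $d_i=1$. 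The fibres of $[\Phi]$ are self-orthogonal, and $\wh M$ is foldable with $\wh M^{[\Phi]}=M$. Checking the finite-type list~\eqref{eq:Coxeter graphs} for diagrams admitting such foldings with irreducible finite image gives exactly $D_{n+1}\to A_n$ and $D_4\to A_2$ (note that $A_{2n-1}\to$ anything fails, since the fibres must be orthogonal independent sets with uniform adjacency).
\item If some $d_i>1$, then every edge at $i$ is of the form $2d$ over an $m=3$ edge of $M$. Among finite types this gives $B_n\to A_n$ with $d=2$ at the end node, and $G_2\to A_2$ with $d=3$. Combinations in which a fibre has more than one vertex are composites of these with foldings.
\end{enumerate}

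\textbf{Step 3: tautological homomorphisms and the final assertion.} A tautological homomorphism between irreducible finite types multiplies entries $m_{ij}$ by $d_{ij}$. In rank $\ge3$ every finite type becomes infinite or leaves the list after such a multiplication (for example, the edges in $B_n$ and $H_n$ cannot be scaled). Hence only the rank-two case $I_2(dm)\to I_2(m)$ survives. The final assertion then follows from Theorem~\ref{thm:classify light Artin} together with Lemma~\ref{lem:factor homs} and Lemma~\ref{lem:diagonal}.

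The main obstacle is the exhaustive case check in Step 2: verifying that no other finite-type $\wh M$ admits a light optimal map onto an irreducible finite $M$, and separating the genuinely new cases from composites. I would organize this check by the fibres of $[\Phi]$ and the uniform-adjacency (foldability) condition.
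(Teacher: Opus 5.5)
\textbf{There is a genuine gap: Step~2 misapplies optimality.} Definition~\ref{def:types} constrains $\wh m_{ij}$ only when $\Phi(\wh T_i)\ne\Phi(\wh T_j)$. If $[\Phi](i)=[\Phi](j)$ and $d_i=d_j$, the two images coincide and $\wh m_{ij}$ is unconstrained, so your conclusion ``$i=j$ in that case'' is false. The maps you are trying to recover refute it: $\ell$ and the foldings in (e), (f) all send distinct generators to the same generator.

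The error propagates:
\begin{itemize}
\item For $|I|=1$, your claim that $\wh M$ has no even edges is false: $\ell\colon\Br^+(B_n)\to\Br^+(A_1)$ is light, fully supported and vacuously optimal.
\item In your second case, ``the fibres are self-orthogonal'' is asserted without valid justification; your own preamble would make them singletons.
\item In your third case, ``every edge at $i$ is of the form $2d$'' silently ignores neighbours of $i$ that have the same image.
\item The parenthetical claim about $A_{2n-1}$ fails for $n=2$: folding the two ends of $A_3=D_3$ onto $A_2$ is exactly (e) with $n=2$.
\end{itemize}

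\textbf{How to repair it.} Since $\wh M$ is of finite type, $\wh m_{ij}\in B(\Phi(\wh T_i),\Phi(\wh T_j))$ by Lemma~\ref{lem:fund hom}. So optimality says precisely that $\wh m_{ij}$ depends only on the pair of images whenever they differ, i.e.\ $\wh M$ is foldable along the partition of $\wh I$ by images. Suppose there are at least two classes. A node outside a class $F$ that is adjacent to one element of $F$ is then adjacent to all of them with the same label. Because $\Gamma(\wh M)$ is a tree (no triangles, no $4$-cycles), every class with $|F|\ge 2$ consists of pairwise orthogonal leaves hanging from a single common node, with equal labels. This gives you:
\begin{itemize}
\item self-orthogonality of the fibres;
\item that all edges at a node with $d_i>1$ go to nodes with different images;
\item a reduction of the folding check to nodes carrying two or three equally labelled leaves, i.e.\ $D_{n+1}$ with $n\ge 2$, and $D_4$.
\end{itemize}

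\textbf{A second missing argument.} You still need to justify ``combinations in which a fibre has more than one vertex are composites.'' For example, $\wh T_1\mapsto T_1$, $\wh T_2\mapsto T_2$, $\wh T_3\mapsto T_1^2$ is an optimal, fully supported, light homomorphism $\Br^+(B_3)\to\Br^+(A_2)$ that is not in the list. It equals $\mathbf F_\varpi\circ\mathbf T_{3,2}$, and in general such a decomposition is exactly what Lemma~\ref{lem:class light Artin 2} supplies via $\Phi=\widetilde\Phi\circ\mathbf T_{i,d}\circ\mathbf F_\varpi$.

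\textbf{Comparison with the paper.} The paper avoids this case analysis entirely. It reads the list as the elementary factors of Theorem~\ref{thm:classify light Artin}, and within irreducible finite types it only enumerates:
\begin{itemize}
\item parabolic projections, via Proposition~\ref{prop:parab proj Artin};
\item elementary Tits homomorphisms, via Proposition~\ref{prop:key converse Tits};
\item foldings, by inspection.
\end{itemize}
This is also the only reading under which (a)--(d) belong in the list. An optimal $\Phi$ with irreducible domain never kills a generator: $1$ commutes with everything, so optimality forces $\wh I_0$ to be orthogonal to $\wh I_1$, hence $\wh I_0=\emptyset$. So your Step~1 is vacuous, and projections occur only as factors, never as optimal maps themselves.
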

\begin{proof}
It suffices to verify that the above list exhausts all ``elementary'' light homomorphisms listed in Theorem~\ref{thm:classify light Artin}. 
 By
Proposition~\ref{prop:parab proj Artin}, \ref{prop:types of light.PBnAn-1}--\ref{prop:types of light.PI2A1} are the only parabolic projections existing in finite irreducible types. 
By Proposition~\ref{prop:key converse Tits}, \ref{prop:types of light.A1A1}--\ref{prop:types of light.G2A2} exhaust
all elementary Tits homomorphisms in finite types.
By inspection, the only existing foldings with irreducible codomain are those in~\ref{prop:types of light.FDn+1An}, \ref{prop:types of light.FD4A2}
and \ref{prop:types of light.len}.
\end{proof}
\begin{remark}
By~\cite{Cri}*{Proposition~3.5}, the homomorphism $\mathbf T_{n,2}$ from Proposition~\partref{prop:types of light.BnAn} is injective.
\end{remark}
We conclude this section with the following property of~$\mathbf F_{\varpi(n,n+1)}$.
\begin{lemma}\label{lem:Dn+1 An w0}
Let~$J\subset [1,n+1]$. Then
$$
\mathbf F_{\varpi_{(n,n+1)}}(T_{w_\circ^J})=
\begin{cases}
T_{w_\circ^J},& J\subset [1,n],\\
T_{w_\circ^{\sigma(J)}},& J\subset\sigma([1,n]),\\
T_{w_\circ^{J\setminus J'}}T_{w_\circ^{J'\setminus \{n+1\}}}^2=
T_{w_\circ^{J\setminus \{n+1\}}}
T_{w_\circ^{J'\setminus \{n+1\}}}\\
\quad =
T_{w_\circ^{J'\setminus \{n+1\}}}
T_{w_\circ^{J\setminus \{n+1\}}}
,&\{n,n+1\}\subset J,\\
\end{cases}
$$
where~$J'$ is the maximal interval $[i,n+1]$ contained in~$J$ and~$\sigma$ is the transposition~$(n,n+1)$.
\end{lemma}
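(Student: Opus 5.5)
Write $\mathbf F=\mathbf F_{\varpi_{(n,n+1)}}$; it sends $T_i\mapsto T_i$ for $i\le n$ and $T_{n+1}\mapsto T_n$. In $D_{n+1}$ (labeling as in~\eqref{eq:Coxeter graphs}) the vertices $n$ and $n+1$ are orthogonal and both adjacent to $n-1$. I would treat the three cases of the statement separately.

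\emph{The first two cases.} If $J\subset[1,n]$, then $\mathbf F$ restricted to $\Br^+_{[1,n]}(D_{n+1})$ is the natural isomorphism onto $\Br^+(A_n)$. It sends reduced words to reduced words, so $\mathbf F(T_{w_\circ^J})=T_{w_\circ^J}$ by Theorem~\partref{thm:Tits.b}. If $J\subset\sigma([1,n])$, then $\mathbf F=\mathbf F\circ\sigma$ on generators. Hence $\mathbf F(T_{w_\circ^J})=\mathbf F(\sigma(T_{w_\circ^{\sigma(J)}}))=\mathbf F(T_{w_\circ^{\sigma(J)}})$, and this reduces to the first case.

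\emph{The case $\{n,n+1\}\subset J$.} Let $J'=[i,n+1]$ be the maximal interval contained in $J$. Since $J'$ is a union of connected components of $\Gamma_J(D_{n+1})$, we have $J=(J\setminus J')\sqcup J'$ with the two parts orthogonal. By Proposition~\partref{prop:fund elts BrSa.0}, $T_{w_\circ^J}=T_{w_\circ^{J\setminus J'}}T_{w_\circ^{J'}}$. The factor $T_{w_\circ^{J\setminus J'}}$ is fixed by the first case. So it suffices to show that
\[
\mathbf F(T_{w_\circ^{J'}})=T_{w_\circ^{K}}^2,\qquad K:=J'\setminus\{n+1\}=[i,n].
\]
Here $M_{J'}$ is of type $D_{n+2-i}$ when $i\le n-2$, of type $A_3$ when $i=n-1$, and of type $A_1\times A_1$ when $i=n$. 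The case $i=n$ is immediate: $T_nT_{n+1}\mapsto T_n^2$.

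For $i<n$ I would use Proposition~\ref{prop:Coxeter splitting}. Take the Coxeter element $C=\Cx i{n-1}T_nT_{n+1}$ of $\Br^+_{J'}$. It maps to $C'=\Cx i{n-1}T_n^2=\Cx in T_n$. The key identity is $h(D_{k+1})=2k=2h(A_k)$, with $k=n-i+1$; for $A_3$ this is $h(A_3)=4=2h(A_2)$. There are two ways to proceed.

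\emph{First approach.} Use the bipartite splitting of Proposition~\partref{prop:Coxeter splitting.c}. Write $J'=J'_1\sqcup J'_2$ with both parts self-orthogonal and $\{n,n+1\}\subset J'_1$. Then $T_{w_\circ^{J'}}$ is an alternating product of $T_{w_\circ^{J'_1}}$ and $T_{w_\circ^{J'_2}}$ of length $h=2k$. Under $\mathbf F$, $T_{w_\circ^{J'_1}}\mapsto T_{w_\circ^{K_1}}T_n$ and $T_{w_\circ^{J'_2}}\mapsto T_{w_\circ^{K_2}}$, where $K=K_1\sqcup K_2$ is the bipartite splitting of $A_k$ with $n\in K_1$. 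The image is therefore an alternating product of $2k$ factors involving the extra $T_n$'s. One must then show it equals $\brd{T_{w_\circ^{K_1}}T_{w_\circ^{K_2}}}{2k}=T_{w_\circ^K}^2$, which would follow from Proposition~\partref{prop:Coxeter splitting.c} together with Lemma~\partref{lem:taut homs.a} applied with $m=k+1$ and $k$. This does not match directly because of the extra $T_n$ factors.

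\emph{Second approach (preferred).} Work in the Artin group and note that $T_{w_\circ^{J'}}$ is central in $\Br^+_{J'}$ when $k$ is odd, and otherwise $T_{w_\circ^{J'}}^2$ is central. A cleaner route is to invoke the known identity $T_{w_\circ^{D_{k+1}}}=T_{w_\circ^{A_k}}T_{w_{A_k;?}}$ obtained from the parabolic factorization $w_\circ^{J'}=w_\circ^{K}\cdot w_{K;J'}$. Here $T_{w_{K;J'}}$ has a known reduced expression $\ascprod$ of the "hooks" $\Cx jn T_{n+1}\Cxr j{n-1}$ for $j=i,\dots,n$, of total length $k(k+1)/2$. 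Applying $\mathbf F$ sends each hook to $\Cx jn T_n\Cxr j{n-1}=\Cx j{n}\Cxr j{n}$, and the product of these is a reduced word for $T_{w_\circ^K}$ in $A_k$. This gives $\mathbf F(T_{w_\circ^{J'}})=T_{w_\circ^K}^2$.

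I expect the main obstacle to be verifying this hook expression for $T_{w_{K;J'}}$, namely that the word is reduced of the right length and represents $w_\circ^Kw_\circ^{J'}$. I would do this by induction on $k$, using Theorem~\partref{thm:Tits.b}.

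\emph{Commutation.} The final equalities in the statement follow once the square is established. By Lemma~\ref{lem:weakly orthogonal}, $T_{w_\circ^{J\setminus J'}}$ and $T_{w_\circ^K}$ commute. Moreover $T_{w_\circ^{J\setminus J'}}T_{w_\circ^K}=T_{w_\circ^{J\setminus\{n+1\}}}$ by Proposition~\partref{prop:fund elts BrSa.0}, since $J\setminus\{n+1\}=(J\setminus J')\sqcup K$ is an orthogonal decomposition.
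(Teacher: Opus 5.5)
Your first two cases, the orthogonal splitting $J=(J\setminus J')\sqcup J'$, and the final commutation are fine. The gap is in the one real computation, $\mathbf F(T_{w_\circ^{J'}})=T_{w_\circ^K}^2$ for $i<n$.

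Your first approach is left unfinished, and the identity $h(D_{k+1})=2h(A_k)$ it relies on is false, since $h(A_k)=k+1$.

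Your preferred approach rests on a false factorization:
\begin{itemize}
\item The hook $\Cx jnT_{n+1}\Cxr j{(n-1)}$ has length $2(n-j+1)$. So the product over $j\in[i,n]$ has length $k(k+1)=\ell(w_\circ^{J'})$, which is twice $\ell(w_{K;J'})=k(k+1)/2$. It cannot represent $T_{w_{K;J'}}$; it is in fact a reduced factorization of $T_{w_\circ^{J'}}$ itself.
\item Each image $\Cx jn\Cxr jn$ contains $T_n^2$. So the image of the product is not square free and is not a reduced word for $T_{w_\circ^K}$.
\end{itemize}
The claim $\mathbf F(T_{w_{K;J'}})=T_{w_\circ^K}$ is true, but after cancelling $T_{w_\circ^K}$ it is just the lemma restated. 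As written, the proposal proves nothing in the main case.

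The missing idea, which is how the paper argues, is centrality plus a length count:
\begin{itemize}
\item By Proposition~\partref{prop:fund elts BrSa.c}, $T_{w_\circ^{J'}}$ commutes with $T_j$ for $j\in[i,n-1]$, and conjugation by it fixes or interchanges $T_n$ and $T_{n+1}$.
\item Since $\mathbf F(T_n)=\mathbf F(T_{n+1})$ and $\mathbf F(\Br^+_{J'}(D_{n+1}))=\Br^+_K$, the element $\mathbf F(T_{w_\circ^{J'}})$ is central in $\Br^+_K\cong\Br^+(A_k)$.
\item $\mathbf F$ preserves length, so this element has length $k(k+1)=\ell(T_{w_\circ^K}^2)$.
\item For $k\ge 2$ the center of $\Br^+(A_k)$ is generated by $T_{w_\circ^K}^2$ (Proposition~\partref{prop:fund elts BrSa.d}), so equality follows.
\end{itemize}

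Alternatively, you can salvage your hooks by using them to factor $T_{w_\circ^{J'}}$ rather than $T_{w_{K;J'}}$:
\begin{itemize}
\item Verify the type-$D$ analogue of~\eqref{eq:B_n telescope 0}, namely $T_{w_\circ^{[j,n+1]}}=\Cx jnT_{n+1}\Cxr j{(n-1)}T_{w_\circ^{[j+1,n+1]}}$. In the signed-permutation model the hook negates $e_j$ and $e_{n+1}$, and the lengths add.
\item Applying $\mathbf F$ then gives $\mathbf F(T_{w_\circ^{J'}})=L_iL_{i+1}\cdots L_n$ with $L_j=\Cx jn\Cxr jn$.
\item For $j<n$ you have $T_{w_\circ^{[j,n]}}^2=T_{w_\circ^{[j+1,n]}}L_jT_{w_\circ^{[j+1,n]}}=L_jT_{w_\circ^{[j+1,n]}}^2$, because $L_j$ commutes with $T_{j+1},\dots,T_n$ by Lemma~\ref{lem:comm cox}.
\item This telescopes to $T_{w_\circ^K}^2$.
\end{itemize}
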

\begin{proof}
The first two cases are obvious. To prove the last,
since~$J\setminus J'$ and~$J'$ are orthogonal,
it suffices to prove the first equality and hence
to consider the case when~$J=J'$. If~$J=\{n,n+1\}$
then $T_{w_\circ^J}=T_nT_{n+1}$ and so
~$\mathbf F_{\varpi_{(n,n+1)}}(T_{w_\circ^J})=T_n^2$.
Otherwise, $J=[i,n+1]$ for some~$1\le i\le n-1$ and~$T_{w_\circ^J}$ is 
central in~$\Br^+_J(M)$ if~$n-i$ is even. If~$n-i$ is odd then~$T_j T_{w_\circ^J}=T_{w_\circ^J}T_j$
for all~$j\in [i,n-1]$ while
$T_j T_{w_\circ^J}=T_{w_\circ^J}T_{2n+1-j}$, $j\in\{n,n+1\}$. It follows that
$\mathbf F_{\varpi_{(n,n+1)}}(T_{w_\circ^{[i,n+1]}})$ is central in~$\Br^+_{[i,n]}(M)\cong \Br^+(A_{n+1-i})$. Since~$\ell(\mathbf F_{\varpi_{(n,n+1)}}(T))=\ell(T)$
for all~$T\in\Br^+(M)$,
we conclude that~$\ell(\mathbf F_{\varpi_{(n,n+1)}}(T_{w_\circ^{[i,n+1]}}))=
\ell(T_{w_\circ^{[i,n+1]}})=
(n+1-i)(n+2-i)=\ell(T_{w_\circ^{[i,n]}}^2)$. 
Since~$T_{w_\circ^{[i,n]}}^2$
generates the center of~$\Br^+_{[i,n]}(M)$ by Proposition~\partref{prop:fund elts BrSa.d}, it follows that
$\mathbf F_{\varpi_{(n,n+1)}}(T_{w_\circ^{[i,n+1]}})=T_{w_\circ^{[i,n]}}^2$.
\end{proof}

\section{Classification of disjoint standard  homomorphisms}\label{sec:SQF AB}
We begin by classifying all disjoint fully supported standard homomorphisms $\Br^+(\wh M)
\to \Br^+(M)$ where $\wh M$ is irreducible and of finite type
and~$M$ is of type~$A_n$ or~$B_n$. In view of Lemma~\ref{lem:factor homs}, we will only consider
optimal homomorphisms. 

\subsection{Two families of homomorphisms in type~\texorpdfstring{$B$}{B}} 
We will often use the following Lemma.
\begin{lemma}\label{lem:I2m iff cnd}
Let~$m>1$ and~$M\in\Cox I$. Let~$X_1,X_2\in \Br^+(M)$ be ${}^{op}$-invariant. Then the 
assignments $\wh T_i\mapsto X_i$, $i\in \{1,2\}$ define a homomorphism
$\Br^+(I_2(2m))\to \Br^+(M)$ if and only if $(X_1X_2)^m$ is ${}^{op}$-invariant.
\end{lemma}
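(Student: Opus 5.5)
Since $X_1,X_2$ are ${}^{op}$-invariant, $((X_1X_2)^m)^{op}=(X_2^{op}X_1^{op})^m=(X_2X_1)^m$. So the condition that $(X_1X_2)^m$ is ${}^{op}$-invariant is exactly the braid relation $\brd{X_1X_2}{2m}=\brd{X_2X_1}{2m}$. By Lemma~\ref{lem:fund hom}, this relation is in turn equivalent to the assignments $\wh T_i\mapsto X_i$ defining a homomorphism $\Br^+(I_2(2m))\to\Br^+(M)$. The plan is therefore just to chain these two equivalences.

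In more detail, I would first record $\brd{X_1X_2}{2m}=(X_1X_2)^m$ and $\brd{X_2X_1}{2m}=(X_2X_1)^m$, which follows directly from the definition of $\brd{xy}{k}$ with $k=2m$ even. Next I would use that ${}^{op}$ is an anti-involution of $\Br^+(M)$ fixing $X_1$ and $X_2$. This gives $((X_1X_2)^m)^{op}=((X_1X_2)^{op})^m=(X_2X_1)^m$.

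Hence $(X_1X_2)^m$ is ${}^{op}$-invariant if and only if $(X_1X_2)^m=(X_2X_1)^m$, i.e. if and only if $2m\in B(X_1,X_2)$. Lemma~\ref{lem:fund hom} applied to $I_2(2m)$, whose only off-diagonal entry is $2m<\infty$, then gives the claim.

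I do not expect any real obstacle here. The only point that needs care is the order reversal under the anti-involution, namely $(X_1X_2)^{op}=X_2^{op}X_1^{op}=X_2X_1$. Notably, cancellativity and Lemma~\ref{lem:cradicals} are not needed.
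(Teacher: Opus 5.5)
Your proof is correct and is the same as the paper's. Both reduce the homomorphism condition to $(X_1X_2)^m=(X_2X_1)^m$, the braid relation of length $2m$. Both then identify $(X_2X_1)^m$ with $((X_1X_2)^m)^{op}$ using the ${}^{op}$-invariance of $X_1$ and $X_2$.
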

\begin{proof}
These assignments define a homomorphism~$\Br^+(I_2(2m))\to\Br^+(M)$ if and only if 
$$
(X_1X_2)^m=\brd{X_1X_2}{2m}=\brd{X_2X_1}{2m}=(X_2X_1)^m.
$$
Since~$X_1$ and~$X_2$ are ${}^{op}$-invariant, this happens if and only if $((X_1X_2)^m)^{op}=(X_2X_1)^m=(X_1X_2)^m$.
\end{proof}
We begin by constructing infinite families
of disjoint Coxeter type homomorphisms and of
standard homomorphisms $\Br^+(I_2(2m))\to \Br^+(B_n)$
for~$2\le m\le n$.
\begin{proposition}  \label{prop:admissible hom from BrI22m to BrBn}
Let $2\le m\le n$.
\begin{enmalph}
\item\label{prop:admissible hom from BrI22m to BrBn.a} 
The assignments 
$$
\wh T_1\mapsto T_{w_\circ^{[1,m-1]_2}},\quad
\wh T_2\mapsto T_{w_\circ^{[1,m-2]_2}}\Cx mn\Cxr m{(n-1)}
$$
define a disjoint~$\Phi\in\Hom_{\mathscr{AC}}(I_2(2m),B_n)$.
\item\label{prop:admissible hom from BrI22m to BrBn.b}
The assignments
$$
\wh T_1\mapsto T_{w_\circ^{[1,m-1]_2}},\quad
\wh T_2\mapsto T_{w_\circ^{[1,m-2]_2\cup [m,n]}}
$$
define a disjoint standard $\wh\Phi\in\Hom_{\Art}(I_2(2m),B_n)$.
\end{enmalph}
\end{proposition}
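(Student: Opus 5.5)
The plan is to apply Lemma~\ref{lem:I2m iff cnd} to part~\ref{prop:admissible hom from BrI22m to BrBn.a}, and then to get part~\ref{prop:admissible hom from BrI22m to BrBn.b} from~\ref{prop:admissible hom from BrI22m to BrBn.a} as a central decoration. Write $X_1=T_{w_\circ^{[1,m-1]_2}}$, $Y=\Cx mn\Cxr m{(n-1)}$ and $X_2=T_{w_\circ^{[1,m-2]_2}}Y$.

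\emph{Step 1: the images are ${}^{op}$-invariant with the right projections.} $X_1$ is ${}^{op}$-invariant and $\pi_{B_n}(X_1)$ is a product of commuting simple reflections. Since $m-2$ and $m$ are not adjacent, $[1,m-2]_2$ is orthogonal to $[m,n]$. Hence $X_2$ is a product of two commuting ${}^{op}$-invariant factors, as $Y^{op}=Y$. Moreover $\pi_{B_n}(Y)=s_m\cdots s_n\cdots s_m$ is a reflection. So both $\pi_{B_n}(X_i)$ are involutions, which gives Coxeter type by Theorem~\ref{thm:Main Thm Cox Heck} once $\Phi$ is a homomorphism. The supports $[1,m-1]_2$ and $[1,m-2]_2\cup[m,n]$ are disjoint.

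\emph{Step 2 (the main obstacle): prove the identity}
$$
(X_1X_2)^m\,T_{w_\circ^{[m+1,n]}}=T_{w_\circ^{[1,n]}}.
$$
A length count supports it: $m(2n-m)=n^2-(n-m)^2$. Granting it, $(X_1X_2)^m=T_{w_\circ^{[1,n]}}T_{w_\circ^{[m+1,n]}}^{-1}$ in $\Br(B_n)$. This is ${}^{op}$-invariant because $T_{w_\circ^{[1,n]}}$ is central (in type $B$, $\Sigma$ is trivial) and both factors are ${}^{op}$-invariant. Lemma~\ref{lem:I2m iff cnd} then gives~\ref{prop:admissible hom from BrI22m to BrBn.a}.

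I would prove the identity by induction on $n-m$. For $n=m$ we have $Y=T_n$, so $X_1,X_2$ are the products over the two parts of the bipartition of $B_n$ into self-orthogonal sets, and $h(B_n)=2n$. Proposition~\partref{prop:Coxeter splitting.c} then gives $(X_1X_2)^n=T_{w_\circ^{[1,n]}}$. For the inductive step I would use $T_{w_\circ^{[k,n]}}=\Cx kn\Cxr k{(n-1)}\,T_{w_\circ^{[k+1,n]}}$. This lets me rewrite $X_1X_2$ for $(m,n)$ in terms of the corresponding element for $(m,n-1)$, or for a shifted interval, by commuting the tail through. If that bookkeeping gets unwieldy, the fallback is to compute the left normal form of $(X_1X_2)^mT_{w_\circ^{[m+1,n]}}$ via Proposition~\ref{prop:Normal form} and show that its only factor is $T_{w_\circ^{[1,n]}}$.

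\emph{Step 3: part~\ref{prop:admissible hom from BrI22m to BrBn.b}.} Let $z=T_{w_\circ^{[m+1,n]}}$. Then $w_\circ^{[m,n]}=(s_m\cdots s_n\cdots s_m)\times w_\circ^{[m+1,n]}$. Since $T_{w_\circ^{[m,n]}}$ is central in $\Br^+_{[m,n]}(B_n)$ and $z$ is central in $\Br^+_{[m+1,n]}(B_n)$, we get $zY=T_{w_\circ^{[m,n]}}=Yz$, because $z$ commutes with $T_{w_\circ^{[m,n]}}$. Also $z$ commutes with $X_1$ and with $T_{w_\circ^{[1,m-2]_2}}$ by orthogonality. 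So $z$ centralizes the image of $\Phi$, and Lemma~\ref{lem:cent decor} shows that $\mathbf z=(1,z)$ is a decoration of $\Phi$; here $2m$ is even, so no equality $z_1=z_2$ is required. Thus $\wh\Phi=\Phi_{\mathbf z}$ sends $\wh T_2$ to $T_{w_\circ^{[1,m-2]_2}}T_{w_\circ^{[m,n]}}=T_{w_\circ^{[1,m-2]_2\cup[m,n]}}$, by Proposition~\partref{prop:fund elts BrSa.0}. Both images are longest elements of parabolic submonoids with disjoint supports, so $\wh\Phi$ is disjoint and standard by Theorem~\partref{thm:Main Thm Cox Heck.d}.
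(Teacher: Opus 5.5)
Your Steps~1 and~3 are fine and agree with the paper. Step~1 uses ${}^{op}$-invariance of $X_1,X_2$ together with Lemma~\ref{lem:I2m iff cnd} for part~(a). Step~3 uses the central decoration $\mathbf z=(1,T_{w_\circ^{[m+1,n]}})$ via Lemma~\ref{lem:cent decor} for part~(b).

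The gap is Step~2, which carries the whole content of part~(a). You establish only the base case $n=m$, correctly, via Proposition~\partref{prop:Coxeter splitting.c}. The inductive step is described only as ``commuting the tail through'', with a normal-form computation as a fallback, and neither is carried out. This is not routine bookkeeping. The only parabolic copy of $B_{n-1}$ in $B_n$ is $[2,n]$, which shifts the supports of $X_1$ and $X_2$. So no parabolic embedding carries the element for $(m,n-1)$ to the one for $(m,n)$. What does relate them is the non-parabolic map $T_i\mapsto T_i$ for $i\le n-2$, $T_{n-1}\mapsto T_{n-1}T_nT_{n-1}$. Using it would require proving that this map is a homomorphism and computing its values on $T_{w_\circ^{[1,n-1]}}$ and $T_{w_\circ^{[m+1,n-1]}}$. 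Those are braid identities you have not supplied.

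The missing idea is that your length count already proves the identity once combined with Theorem~\ref{thm:Tits}. Set $X=(X_1X_2)^mT_{w_\circ^{[m+1,n]}}\in\Br^+(B_n)$, so that $\ell(X)=m(2n-m)+(n-m)^2=n^2=\ell(w_\circ^{[1,n]})$. If $\pi_{B_n}(X)=w_\circ^{[1,n]}$, then $\ell(\pi_{B_n}(X))=\ell(X)$, so $X\in\SQF^+(B_n)$. Injectivity of $\pi_{B_n}$ on $\SQF^+(B_n)$ (Theorem~\partref{thm:Tits.a}) then forces $X=T_{w_\circ^{[1,n]}}$.

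The Coxeter group identity is a short check in the signed-permutation model of $W(B_n)$:
\begin{itemize}
\item $w_\circ^{[1,m-1]_2}w_\circ^{[1,m-2]_2}$ is a Coxeter element of $W_{[1,m-1]}(B_n)\cong S_m$, hence an $m$-cycle on $\epsilon_1,\dots,\epsilon_m$.
\item $\cx mn\cxr m{(n-1)}$ negates $\epsilon_m$ and fixes the other $\epsilon_i$.
\item So the $m$-th power of their product acts as $-1$ on $\epsilon_1,\dots,\epsilon_m$ and trivially on the rest.
\item $w_\circ^{[m+1,n]}$ acts as $-1$ on $\epsilon_{m+1},\dots,\epsilon_n$.
\end{itemize}
The total is $-1=w_\circ^{[1,n]}$. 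This is exactly how the paper proves Lemma~\ref{lem:B2 I22m BrBn main id}, and no induction on $n-m$ is needed.
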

\begin{proof}
Abbreviate $T_{(i,n+1)}:=\Cxr in\Cxr i{(n-1)}$, $1\le 
i\le n$. Note that since $w_\circ^{[i,n]}
=\cx in\times \cxr i{(n-1)}\times w_\circ^{[i+1,n]}=
w_\circ^{[i+1,n]}\times \cx in\times \cxr i{(n-1)}$,
for all~$1\le i\le n-1$,
\begin{equation}\label{eq:B_n telescope 0}
T_{w_\circ^{[i,n]}}=T_{(i,n+1)}T_{w_\circ^{[i+1,n]}}= T_{w_\circ^{[i+1,n]}}T_{(i,n+1)}.
\end{equation}
We need the following
\begin{lemma}\label{lem:B2 I22m BrBn main id}
For all $2\le m\le n$ we have in~$\Br(B_n)$
\begin{equation}
( T_{w_\circ^{[1,m-1]_2}} T_{w_\circ^{[1,m-2]_2}} T_{(m,n+1)})^m  
=T_{w_\circ^{[m+1,n]}}^{-1} T_{w_\circ^{[1,n]}}=T_{w_\circ^{[1,n]}}
T_{w_\circ^{[m+1,n]}}^{-1} .\label{eq:prep I2m Bn}
\end{equation}
\end{lemma}
\begin{proof}
Since $m\ell(T_{w_\circ^{[1,m-1]_2}} T_{w_\circ^{[1,m-2]_2}} T_{(m,n+1)})=
(2n-m)m=n^2-(n-m)^2=\ell(T_{w_\circ^{[1,n]}})-
\ell(T_{w_\circ^{[m+1,n]}}),
$
it suffices to
prove that
\begin{equation}\label{eq:weyl grp id}
(w_\circ^{[1,m-1]_2}w_\circ^{[1,m-2]_2}\cx mn\cxr m{(n-1)})^m w_\circ^{[m+1,n]}=w_\circ^{[1,n]}
\end{equation}
in~$W(B_n)$. For that, since the reflection representation of~$W(B_n)$
on an~$n$-dimensional Euclidean space is faithful (see e.g.~\cite{Bou}*{Ch.~IV, \S4.4, Corollaire~2}),
it suffices to show that the left hand
side of~\eqref{eq:weyl grp id} acts as~$-1$ in the reflection representation since~$w_\circ^{[1,n]}$ acts this way.
Indeed, recall (see e.g.~\cite{Bou}*{Ch.~VI, \S4.5}) that~$W(B_n)$
is isomorphic to the semi-direct product of~$S_n$ with~$\mathbb Z_2^n$.
Let
$\epsilon_1,\dots,\epsilon_n$ be the standard basis of~$\mathbb R^n$.
Then~$S_n$ acts by permutations of the~$\epsilon_i$ and the~$i$th copy
of~$\mathbb Z_2$ acts as $\epsilon_j\mapsto (-1)^{\delta_{i,j}}\epsilon_j$,
$1\le j\le n$.
Then
\begin{equation}\label{eq:B refl 1}
w_\circ^{[m+1,n]}(\epsilon_i)=\begin{cases}
    \epsilon_i,&1\le i\le m\\
    -\epsilon_i,&m+1\le i\le n.
    \end{cases}
\end{equation}
while $\cx mn\cxr m{n-1}(\epsilon_i)=(-1)^{\delta_{i,m}}\epsilon_i$,
$1\le i\le n$. Since~$w_\circ^{[1,m-1]_2}w_\circ^{[1,m-2]_2}$
is a Coxeter element in the parabolic subgroup
$W_{[1,m-1]}(B_n)\cong W(A_{m-1})$,
it identifies with a cycle of length~$m=h(A_{m-1})$ permuting all the
$\epsilon_1,\dots,\epsilon_{m}$.
It follows that
\begin{equation}\label{eq:B refl 2}
(w_\circ^{[1,m-1]_2}w_\circ^{[1,m-2]_2}\cx mn\cxr m{(n-1)})^m(\epsilon_i)=
\begin{cases}-\epsilon_i,&1\le i\le m\\
\epsilon_i,&m+1\le i\le n.
\end{cases}
\end{equation}
Together~\eqref{eq:B refl 1} and~\eqref{eq:B refl 2} imply that
the left hand side of~\eqref{eq:weyl grp id} acts as~$-1$.
\end{proof}
By Proposition~\ref{prop:fund elts BrSa}, $T_{w_\circ^{[1,n]}}$ is central 
in~$\Br(B_n)$ and $T_{w_\circ^{[1,n]}}$, $T_{w_\circ^{[m+1,n]}}$ are ${}^{op}$-invariant.
It follows that the left hand side of~\eqref{eq:prep I2m Bn}
is ${}^{op}$-invariant. Since~$T_{(m,n+1)}$ as  
well as the $T_{w_\circ^{[1,m-i]_2}}$, $i\in\{1,2\}$ are ${}^{op}$-invariant 
and $T_{(m,n+1)}$ commutes with~$T_{w_\circ^{[1,m-2]_2}}$, it 
follows that~$T_{w_\circ^{[1,m-2]_2}}T_{(m,n+1)}$ is ${}^{op}$-invariant. Since~$T_{w_\circ^{[1,n-1]_2}}$ is also ${}^{op}$-invariant,
it follows from Lemma~\ref{lem:I2m iff cnd} that
the assignments 
in part~\ref{prop:admissible hom from BrI22m to BrBn.a} define a homomorphism $\Phi:\Br^+(I_2(2m))\to\Br^+(B_n)$ which is 
of Coxeter type by Theorem~\partref{thm:Main Thm Cox Heck.a}. 

To prove part~\ref{prop:admissible hom from BrI22m to BrBn.b}, let~$z_1=1$ and~$z_2=T_{w_\circ^{[m+1,n]}}$.
By~\eqref{eq:B_n telescope 0}, $T_{w_\circ^{[1,m-2]_2\cup[m,n]}}=\Phi(\wh T_2)z_2$. 
We claim that~$\mathbf z=(z_1,z_2)$ is a decoration
of~$\Phi$ and so~$\wh\Phi=\Phi_{\mathbf z}
\in\Hom_{\Art}(I_2(2m),B_n)$. Since~$z_1=1$,
by Lemma~\ref{lem:cent decor}
it suffices to prove that~$z_2$ commutes with~$\Phi(\wh T_i)$, $i\in\{1,2\}$ which is obvious for~$i=1$ and follows from~\eqref{eq:B_n telescope 0} for~$i=2$.
\end{proof}

\subsection{Key result}\label{subs:Key res}
Fix~$n>1$. We abbreviate~\plink{Brn}$\Br^+_{n+1}:=\Br^+(A_n)$, $\Br_{n+1} :=\Br(A_n)$ and~$\pi_n:=\pi_{A_n}$. Let~$\sigma$
be the diagram automorphism of~$\Br^+_{n+1}$, the corresponding
permutation of~$I=[1,n]$ being $\sigma(i)=n+1-i$, $i\in [1,n]$.
Note that if~$J=[a,b]\subset I$ satisfies~$\sigma(J)=J$ then
$b=n+1-a$ and so $n-|J|=2(a-1)$ is even.

Our present goal is to prove the following theorem which generalizes
the classical result from~\cite{BrSa}*{\S5.8} (cf. Corollary~\partref{cor:adm finite class.odd}\ref{cor:adm finite class.even} and
Proposition~\ref{prop:Coxeter splitting}).
\begin{theorem}\label{thm:main thm adm}
Let~$K\subsetneq I$ be an interval with~$|K|>1$
and let~$I'(K)\sqcup I''(K)$ be the unique partition of~$I\setminus K$ into self-orthogonal subsets such that~$I'(K)$
and~$K$ are orthogonal. Then  the assignments
$$
\wh T_1\mapsto T_{w_\circ^{I'(K)\cup K}},\qquad \wh T_2\mapsto T_{w_\circ^{I''(K)}}
$$
define an optimal (disjoint standard) homomorphism $\Br^+(I_2(2m(K)))\to \Br^+_{n+1}$
where
$$
m(K)=\begin{cases}
\frac12(n-|K|)+1,& \sigma(K)=K,\\
n-|K|+2,&\text{otherwise.}
\end{cases}
$$

Conversely, suppose that~$\Phi:\Br^+(I_2(N))\to \Br^+_{n+1}$ is an optimal disjoint fully supported standard homomorphism such that~$[\Phi](i)\not=\emptyset$,
$i\in \wh I=\{1,2\}$. Then
\begin{enmalph}
\item \label{thm:main them adm.a}
either both $[\Phi](1)$ and~$[\Phi](2)$ are self-orthogonal, or
exactly one of them contains a unique connected component
of rank~$>1$.

\item  \label{thm:main them adm.b} Suppose that
$[\Phi](1)$ and~$[\Phi](2)$ are self-orthogonal.
Then~$N=n+1$ and~$\Phi$ is the
homomorphism from Corollary~\partref{cor:adm finite class.odd} or~\ref{cor:adm finite class.even}.

\item\label{thm:main them adm.c} Suppose that precisely one
of the~$[\Phi](i)$, $i\in\{1,2\}$ contains a unique connected component~$K$
with~$|K|>1$. Then~$N=2m(K)$.
\end{enmalph}
\end{theorem}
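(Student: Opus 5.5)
For the existence part I would mimic the type~$B$ argument of Proposition~\ref{prop:admissible hom from BrI22m to BrBn}: first build a Coxeter type homomorphism with ${}^{op}$-invariant images, then decorate it. Write $X_1=T_{w_\circ^{I'(K)\cup K}}$ and $X_2=T_{w_\circ^{I''(K)}}$. Both are ${}^{op}$-invariant by Proposition~\partref{prop:fund elts BrSa.a}. By Lemma~\ref{lem:I2m iff cnd}, it suffices to show that $(X_1X_2)^{m(K)}$ is ${}^{op}$-invariant.

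Write $K=[a,b]$ and use Lemma~\ref{lem:weakly orthogonal} to split $X_1=T_{w_\circ^{I'(K)}}T_{w_\circ^K}$. The plan is to prove an identity in $\Br_{n+1}$ of the form
\[
(X_1X_2)^{m(K)}=T_{w_\circ^{K^\sharp}}^{-1}\,T_{w_\circ^{I}}^{\epsilon}
\]
up to central factors. Here $K^\sharp$ is a suitable subinterval attached to $K$, and $\epsilon\in\{1,2\}$, with $\epsilon=1$ when $\sigma(K)=K$ and $\epsilon=2$ otherwise. This is the analogue of Lemma~\ref{lem:B2 I22m BrBn main id}. I would prove it the same way.
\begin{itemize}
\item[(i)] Check that the lengths match; this is routine arithmetic with $\ell(T_{w_\circ^{[a,b]}})=\binom{b-a+2}{2}$.
\item[(ii)] Reduce to an identity in $W(A_n)=S_{n+1}$ via Theorem~\ref{thm:Tits}.
\item[(iii)] Verify the permutation identity directly. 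The element $w_\circ^{I'(K)\cup K}w_\circ^{I''(K)}$ acts as a ``Coxeter-like'' product: it reverses the block corresponding to $K$ and acts as a bipartite Coxeter element outside it. One computes that its $m(K)$-th power is $w_\circ^{I}$ times $w_\circ^{K^\sharp}$ when $\sigma(K)=K$, and the identity times the longest element of the $K$-block otherwise. In the latter case the factor $T_{w_\circ^I}^2$ is central.
\end{itemize}
Since $T_{w_\circ^I}$ conjugates by $\sigma$ and $\sigma(K^\sharp)$ is controlled, the right-hand side is ${}^{op}$-invariant. If the identity only appears naturally with a smaller block replacing $K$, I would instead use Lemma~\ref{lem:I2m iff cnd} for the undecorated images. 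Then I would recover $X_1$ through the decoration $z_1=T_{w_\circ^K}$ times a correction, verifying its commutation by Theorem~\ref{thm:decoration sufficient} and Lemma~\ref{lem:cent decor}, exactly as $z_2$ was handled in type~$B$.

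Optimality means $\min B(X_1,X_2)=2m(K)$, which I would get from Lemma~\partref{lem:taut homs.b}. The minimum divides $2m(K)$ and is even by Lemma~\partref{lem:elem Artin hom.c}, since $\ell(X_1)\ne\ell(X_2)$ generically. To exclude a proper divisor, I would project to $S_{n+1}$ (Theorem~\ref{thm:Main Thm Cox Heck} together with Lemma~\ref{lem:lifting to Cox-Hecke}) and compute the order of $\pi_n(X_1X_2)$, or compare lengths.

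For the converse, part~\ref{thm:main them adm.a} should follow from Lemma~\partref{lem:sqf Hecke hom.c} and a Burau-type argument. Suppose both $[\Phi](i)$ had components of rank $>1$, or one had two such components. Then $(X_1X_2)^{N/2}$ would fail to be ${}^{op}$-invariant. I would show this in the symmetrized Burau representation of \S\ref{subs:Burau}, by exhibiting two eigenvectors of $X_1X_2$ with different eigenvalues that are not orthogonal, so that no power is self-adjoint. For part~\ref{thm:main them adm.b}, full support, disjointness and self-orthogonality make $X_1X_2$ a Coxeter element of $\Br^+_{n+1}$. Proposition~\ref{prop:Coxeter splitting} and the order $h(A_n)=n+1$ of $\pi_n(X_1X_2)$ then force $N=n+1$, which yields Corollary~\partref{cor:adm finite class.odd}/\ref{cor:adm finite class.even}. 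For part~\ref{thm:main them adm.c}, full support and disjointness force the shape $[\Phi](1)=I'(K)\cup K$, $[\Phi](2)=I''(K)$, and optimality together with the existence part then gives $N=2m(K)$.

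The main obstacle is part~\ref{thm:main them adm.a}, namely the non-self-adjointness computation. For that I would first try reducing to a small parabolic containing the two offending components, using Lemma~\ref{lem:diagonal}-style projections where allowed, and do an explicit $2\times2$ or $3\times3$ eigen-computation.
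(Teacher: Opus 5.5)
You correctly identified undecoration as the fallback; it is essentially Lemma~\ref{lem:TJ hom}. With it, your Tits-lifting argument works when $\sigma(K)=K$. Write $K=[a,b]$ and let $T_J$ be $X_1X_2$ with $T_{w_\circ^K}$ replaced by the transposition $T_{(a,b+1)}$. Then the needed identity $T_J^{m(K)}T_{w_\circ^{[a+1,b-1]}}=T_{w_\circ^I}$ is between square-free elements, so the argument of Lemma~\ref{lem:B2 I22m BrBn main id} applies.

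\textbf{Gap 1: the case $\sigma(K)\neq K$.} This is the new content of the theorem, and your steps (ii)--(iii) do not reach it. The required identity is $T_J^{|J|}T_{w_\circ^{[a+1,b-1]}}^{2}=T_{w_\circ^I}^2$ (Proposition~\ref{prop:g J=1}), where $|J|=n+2-|K|=m(K)$.
\begin{itemize}
\item Both sides have length $n(n+1)=2\ell(w_\circ^I)$ and map to $1\in S_{n+1}$, so neither side is square free.
\item Theorem~\ref{thm:Tits} only lifts identities between elements of $\SQF^+(A_n)$. Outside that set, equal length and equal image in $S_{n+1}$ do not determine an element of $\Br^+_{n+1}$.
\item The paper uses a different principle. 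Since $T_{w_\circ^I}^2$ generates the center of $\Br^+_{n+1}$ (Proposition~\partref{prop:fund elts BrSa.d}), any central element of length $n(n+1)$ equals it.
\item Establishing this centrality and the identity needs long explicit braid computations: Proposition~\ref{prop:Z_r is central} for centrality, and Proposition~\ref{prop:TJm power central} for $T_{[1,m+1]\cup\{n+1\}}^{m+2}=Z_m$.
\item The result is then moved to an arbitrary position of $K$ by the conjugations of Proposition~\ref{prop:conj} and Lemma~\ref{lem:move bubble}.
\item For $|J|$ even you can instead square the symmetric identity and conjugate. For $|J|$ odd there is no $\tilde\sigma$-symmetric representative, and nothing in your proposal replaces the computation.
\end{itemize}

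\textbf{Gap 2: optimality when $\sigma(K)\neq K$ and $|J|$ is even.} Lemma~\partref{lem:taut homs.b} plus the order of $\pi_n(X_1X_2)$ only gives $\min B(X_1,X_2)\in\{|J|,2|J|\}$. Neither $S_{n+1}$ nor a length comparison can exclude $|J|$. For example, take $n=4$, $K=[1,2]$, so $X_1=T_1T_2T_1T_4$ and $X_2=T_3$. Then $\pi_4(X_1X_2)$ is a $4$-cycle, so $\pi_4((X_1X_2)^2)$ is an involution. Yet $(X_1X_2)^2$ is not ${}^{op}$-invariant, and $\min B=8=2m(K)$. The paper proves this with the Burau form (Proposition~\ref{prop:|J| even}). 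Separately, ``$\ell(X_1)\neq\ell(X_2)$ generically'' must be ``always''; see the proof of Theorem~\ref{thm:adm I2m+1}.

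\textbf{Gap 3: part~(a) of the converse.} The localization you propose does not exist. In type $A_n$ a parabolic projection exists only onto a union of connected components (Proposition~\ref{prop:parab proj Artin}), and the images here are fully supported. So the eigenvector problem is genuinely $(n+1)$-dimensional. The paper's argument rests on three ingredients you are missing.
\begin{itemize}
\item All $T_J$ with equal $|J|$ are conjugate (Corollary~\ref{cor:conj J}). So the eigenspaces need only be computed once, for $T_{[1,m]\cup\{n+1\}}^{m+1}$, and that computation itself uses the forward identity of Proposition~\ref{prop:g J=1}.
\item The eigenvalues $1,q^{2(n+1)},q^{2|J|}$ have pairwise distinct powers, so every $T_J^{k|J|}$ has the same eigenspaces. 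Two non-orthogonal eigenvectors of $X_1X_2$ with distinct eigenvalues are not enough by themselves: their eigenvalues may coincide after raising to a power, and then that power could still be self-adjoint.
\item Specific pulled-back eigenvectors $U(J)^{-1}u$ must be shown to pair nontrivially. This is a substantial computation (Lemmas~\ref{lem:Euclid jm-1<n} and~\ref{lem:Euclid jm-1=n}).
\end{itemize}

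Your arguments for parts~(b) and~(c) are fine once existence and optimality are available.
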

\begin{conjecture}
Let~$\Phi:\Br^+(I_2(N))\to\Br^+_{n+1}$ and~$K$ be as in the converse of Theorem~\ref{thm:main thm adm}. Then both~$\overline\Phi$ and~$\overline\Phi_\star$ are injective
if and only if either both~$[\Phi](1)$, $[\Phi](2)$ are self-orthogonal, or~$K$ satisfies~$\sigma(K)=K$. 
\end{conjecture}

\subsection{Transpositions in braid monoids}
Given $i\le j\in [1,n]$, denote \plink{T(i,j)}$T_{(i,j+1)}$ the unique square free element of~$\Br^+_{n+1}$
corresponding to the transposition~$(i,j+1)$ in~$S_{n+1}$ which identifies with $W(A_n)$.
We use the convention that $T_{(i,j)}=1$ if~$i\ge j$.

\begin{proposition}\label{prop:elem prop transp}
Let $i\le j\in [1,n]$.
\begin{enmalph}
\item \label{prop:elem prop transp.a}$\ell(T_{(i,j+1)})=2(j-i)+1$.
    \item \label{prop:elem prop transp.b}
    $T_{(i,j+1)}=T_i T_{(i+1,j+1)}T_i=T_j T_{(i,j)} T_j$.
    \item \label{prop:elem prop transp.c}
    $T_{(i,j+1)}=\Cx ij\Cxr i{(j-1)}=\Cx i{(j-1)}\Cxr ij=
    \Cxr ij\Cx{(i+1)}j=\Cxr{(i+1)}j\Cx ij$.
    In particular, $T_{(i,j+1)}$ is ${}^{op}$-invariant.
    \item\label{prop:elem prop transp.e} $T_{k}T_{(i,j+1)}=T_{(i,j+1)}T_k$ for all $k\in [1,n]\setminus \{i-1,i,j,j+1\}$.
    \item\label{prop:elem prop transp.f}
    Let $k\le l\in[1,n]$. Then $T_{(k,l+1)}$ commutes with $T_{(i,j+1)}$
    provided that either $i<k$, $l<j$, or $l<i-1$ or~$j+1<k$.
    \item \label{prop:elem prop transp.g}
    $T_{w_\circ^{[i,j]}}=\prod_{0\le k\le \frac12(j-i)} T_{(i+k,j+1-k)}$.
\end{enmalph}
\end{proposition}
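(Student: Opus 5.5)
The whole argument rests on Theorem~\ref{thm:Tits}: a product $T_uT_v$ of square free elements equals $T_{uv}$ exactly when $\ell(uv)=\ell(u)+\ell(v)$. So each identity in $\Br^+_{n+1}$ among square free elements reduces to an identity in $S_{n+1}=W(A_n)$ together with a length count. Throughout I identify $s_k$ with the transposition $(k,k+1)$ and $\ell$ with the number of inversions. Since $(i,j+1)$ sends $i\mapsto j+1$, the inversions are the pair $(i,j+1)$ itself plus the pairs $(i,k)$ and $(k,j+1)$ for $i<k\le j$. That gives $\ell=2(j-i)+1$, which is part~\ref{prop:elem prop transp.a}.

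For part~\ref{prop:elem prop transp.c}, I check in $S_{n+1}$ that $\cx ij\cxr i{(j-1)}=(i,j+1)$. The word $\cx ij\cxr i{(j-1)}$ is the conjugation of $s_j$ by $\cx i{(j-1)}$, and it has length $(j-i+1)+(j-i)=2(j-i)+1$. This matches part~\ref{prop:elem prop transp.a}, so the expression is reduced, and Theorem~\partref{thm:Tits.b} gives $T_{(i,j+1)}=\Cx ij\Cxr i{(j-1)}$. The other three expressions are handled the same way: each is a word of the right length that represents $(i,j+1)$, which I see by writing it as a conjugate of $s_i$ or $s_j$. Applying ${}^{op}$ to $\Cx ij\Cxr i{(j-1)}$ gives $\Cx i{(j-1)}\Cxr ij$, which we just showed equals $T_{(i,j+1)}$; this proves ${}^{op}$-invariance (alternatively, use Lemma~\ref{lem:can image op inv}). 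Part~\ref{prop:elem prop transp.b} follows from $(i,j+1)=s_i(i+1,j+1)s_i$, since the lengths add: $2(j-i-1)+1+2=2(j-i)+1$. The second equality is symmetric.

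For parts~\ref{prop:elem prop transp.e} and~\ref{prop:elem prop transp.f}, I work in $W$. If two permutations $u,v$ commute and $\ell(uv)=\ell(u)+\ell(v)=\ell(vu)$, then $T_uT_v=T_{uv}=T_{vu}=T_vT_u$. For part~\ref{prop:elem prop transp.e}, take $k\notin\{i-1,i,j,j+1\}$. If $k<i-1$ or $k>j+1$, then $s_k$ has support disjoint from and orthogonal to $[i,j]$, and the claim follows from Lemma~\ref{lem:orth factors}. If $i<k<j$, then $s_k$ commutes with $(i,j+1)$ in $S_{n+1}$, and $\ell(s_k(i,j+1))=\ell(i,j+1)+1$ because the values at positions $k,k+1$ remain in order: they are $k,k+1$. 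The same holds on the other side, so $T_k$ commutes with $T_{(i,j+1)}$. Part~\ref{prop:elem prop transp.f} is the same argument with two transpositions. In the nested case $i<k\le l<j$, the transpositions are disjoint, so they commute. The lengths add because $(k,l+1)$ only acts on positions strictly inside $(i,j+1)$, where $(i,j+1)$ fixes the values; this can be checked by counting inversions of the product. In the separated case the supports are orthogonal. The inversion count in the nested case is the one step I expect to need care.

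Finally, for part~\ref{prop:elem prop transp.g}, the longest element of $W_{[i,j]}$ is the reversal of $i,\dots,j+1$, which is the product of the commuting transpositions $(i+k,j+1-k)$. Their lengths sum to $\sum_k(2(j-i-2k)+1)$, and I check this equals $\binom{j-i+2}{2}$; it suffices to induct on $j-i$. The product is then a reduced product of nested transpositions, so by part~\ref{prop:elem prop transp.f} and Theorem~\partref{thm:Tits.b} it equals $T_{w_\circ^{[i,j]}}$. For the induction I use $w_\circ^{[i,j]}=(i,j+1)\,w_\circ^{[i+1,j-1]}$ with lengths adding, which gives the formula directly.
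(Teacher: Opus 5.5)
Your proof is correct and is essentially the paper's: identify each element in $S_{n+1}$, check that lengths add, and apply Theorem~\ref{thm:Tits}. The only deviations are these. For part~\ref{prop:elem prop transp.e} (and hence~\ref{prop:elem prop transp.f}) you use commutation in $W(A_n)$ together with length additivity, whereas the paper uses the braid identities $T_k\Cx ij=\Cx ij T_{k-1}$ and $T_{k-1}\Cxr ij=\Cxr ij T_k$ of Lemma~\ref{lem:comm cox}. You also obtain~\ref{prop:elem prop transp.c} directly rather than from~\ref{prop:elem prop transp.b} by induction.

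As in the paper, your argument for part~\ref{prop:elem prop transp.b} tacitly needs $i<j$. For $i=j$, the convention $T_{(i+1,i+1)}=1$ makes the right-hand side $T_i^2\neq T_i=T_{(i,i+1)}$.
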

\begin{proof}
Since~$\ell((i,j+1))=2(j-i)+1$,
part~\ref{prop:elem prop transp.a} is obvious. Since
$(i,i+1)(i+1,j+1)(i,i+1)=(i,j+1)=(j,j+1)(i,j)(j,j+1)$
in~$S_{n+1}$ and
$\ell((i,j+1))=2+\ell((i+1,j+1))=2+\ell((i,j))$, part~\ref{prop:elem prop transp.b} follows. Part~\ref{prop:elem prop transp.c} follows from~\ref{prop:elem prop transp.b} by a straightforward induction.
The assertion of~\ref{prop:elem prop transp.e} is obvious
for $k\in [1,i-2]\cup[j+2,n]$. To prove it for $k\in[i+1,j-1]$, we
need the following
\begin{lemma}\label{lem:comm cox}
Let $i< j\in [1,n]$ and $k\in[i+1,j]$. Then $T_k\Cx ij=\Cx ij T_{k-1}$ and $T_{k-1}\Cxr ij=\Cxr ij T_k$.
\end{lemma}
\begin{proof}
We have
\begin{align*}
T_k \Cx ij&=\Cx i{(k-2)}T_k T_{k-1}T_k \Cx{(k+1)}j=\Cx i{(k-2)}T_{k-1}T_k T_{k-1}\Cx{(k+1)}j=\Cx ij T_{k-1}.
\end{align*}
The second identity is obtained from the first by applying~${}^{op}$.
\end{proof}
Thus, given $k\in [i+1,j-1]$ we have
$$
T_k T_{(i,j+1)}=T_k \Cx ij\Cxr i{(j-1)}=\Cx ij T_{k-1}\Cxr i{(j-1)}
=\Cx ij\Cxr i{(j-1)}T_k=T_{(i,j+1)}T_k.
$$
Part~\ref{prop:elem prop transp.f} is immediate from part~\ref{prop:elem prop transp.e}. Finally, since $w_\circ^{[i,j]}
=\prod_{0\le k\le \frac12(j-i)} (i+k,j+1-k)$ and
$\ell(w_\circ^{[i,j]})=\binom{j-i+2}2=\sum_{0\le k\le \frac12(j-i)}
(2(j-i-2k)+1)=\sum_{0\le k\le \frac12(j-i)}\ell((i+k,j+1-k))$,
part~\ref{prop:elem prop transp.g} follows.
\end{proof}

For $J=\{j_1 <j_2< \cdots <j_m\} \subset [1,n+1]$, set $$
\plink{TJ}T_J = \tilde \tau_1(J)\tilde \tau_0(J),
\qquad \tilde\tau_k(J)=\prod_{1\le r\le m\,:\, \bar{r}=k} T_{(j_r,j_{r+1})}
$$
By Proposition~\partref{prop:elem prop transp.e},
$\tilde\tau_k(J)$, $k\in\{0,1\}$ are well-defined. Note that
\begin{equation}\label{eq:len TJ}
\ell(T_J)=2\sum_{1\le r\le m-1} (j_{r+1}-j_r)-m+1=2(\max J-\min J)-|J|+1.
\end{equation}
We also set for $k\in\{0,1\}$\plink{tauk(J)}
\begin{equation}\label{eq:tau_i(J) defn}
\tau_k(J) = \prod_{1\le r\le m\,:\, \bar{r}=k} T_{w_\circ^{[j_r,j_{r+1}-1]}}.
\end{equation}
In particular, it follows from Proposition~\partref{prop:elem prop transp.g} that
\begin{equation}\label{eq:tilde tau tau}
\tau_k(J)=\tilde\tau_k(J) X_k(J),\qquad X_k(J)=\prod_{1\le r\le m\,:\,\bar r=k} T_{w_\circ^{[j_r+1,j_{r+1}-2]}},\qquad k\in\{0,1\}.
\end{equation}
Clearly, $X_1(J)$ and~$X_0(J)$ commute and also
commute with the~$\tilde\tau_k(J)$, $k\in\{0,1\}$.
The following Lemma is obvious
\begin{lemma}\label{lem:all disj TJ}
Up to renumbering of the generators, every fully supported disjoint standard homomorphism $\Psi:\Br^+(I_2(N))\to \Br^+_{n+1}$
satisfies $\Psi(\wh T_r)=\tau_{\bar r}(J)$, $r\in\{1,2\}$
for some~$\{1,n+1\}\subset J\subset [1,n+1]$;
\end{lemma}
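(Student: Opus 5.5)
The plan is to unwind the definitions. Since $\Psi$ is standard, Theorem~\partref{thm:Main Thm Cox Heck.d} gives $\Psi(\wh T_r)=T_{w_\circ^{J_r}}$ with $J_r=[\Psi](r)$, $r\in\{1,2\}$. Since $\Psi$ is disjoint and fully supported, $J_1\cap J_2=\emptyset$ and $J_1\cup J_2=[1,n]$. I then want to read off the set~$J$ from the way $[1,n]$ is cut into the connected components of $J_1$ and~$J_2$.

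\emph{Step 1: the cut points.} In type~$A_n$ a subset of~$[1,n]$ is connected if and only if it is an interval. So each $J_r$ is a disjoint union of maximal intervals, and any two of them are separated by a gap of at least one index. Together, the components of $J_1$ and of $J_2$ tile~$[1,n]$ by consecutive intervals
\[
[j_1,j_2-1],\ [j_2,j_3-1],\ \dots,\ [j_{m-1},j_m-1],
\]
where $1=j_1<j_2<\cdots<j_m=n+1$. Two adjacent intervals cannot lie in the same $J_r$, since otherwise their union would be a larger connected subset of~$J_r$, contradicting maximality. Hence the intervals alternate between $J_1$ and~$J_2$. Set $J=\{j_1,\dots,j_m\}$; by construction $\{1,n+1\}\subset J$.

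\emph{Step 2: renumbering and matching with $\tau_k(J)$.} After possibly swapping the labels of the generators $\wh T_1,\wh T_2$ (an automorphism of $\Br^+(I_2(N))$), we may assume $1\in J_1$. Then $[j_r,j_{r+1}-1]\subset J_1$ exactly when $r$ is odd, and $\subset J_2$ exactly when $r$ is even. For the index $r=m$ in~\eqref{eq:tau_i(J) defn} there is no $j_{m+1}$, and we read that factor as~$1$. The intervals contained in a fixed $J_r$ are pairwise orthogonal, since consecutive ones are separated by at least one index. Therefore Proposition~\partref{prop:fund elts BrSa.0}, applied repeatedly, gives
\[
T_{w_\circ^{J_1}}=\prod_{r\ \text{odd}}T_{w_\circ^{[j_r,j_{r+1}-1]}}=\tau_1(J),\qquad T_{w_\circ^{J_2}}=\prod_{r\ \text{even}}T_{w_\circ^{[j_r,j_{r+1}-1]}}=\tau_0(J).
\]
This is exactly $\Psi(\wh T_r)=\tau_{\bar r}(J)$.

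The degenerate case is when one of the $J_r$ is empty. Then $J=\{1,n+1\}$, $\tau_1(J)=T_{w_\circ^{[1,n]}}$ and $\tau_0(J)=1$, so the statement still holds. I do not expect a genuine obstacle anywhere. The only point needing care is the alternation argument together with the product convention at $r=m$, so that the parity labelling of the intervals matches the definition~\eqref{eq:tau_i(J) defn} after the renumbering.
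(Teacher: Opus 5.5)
Your proposal is correct and is exactly the unwinding of definitions that the paper has in mind; the paper calls this lemma obvious and gives no proof. Your decomposition of $[1,n]$ into alternating maximal intervals of $[\Psi](1)$ and $[\Psi](2)$, the swap of $\wh T_1,\wh T_2$ to put $1\in[\Psi](1)$, and reading the undefined $r=m$ factor in~\eqref{eq:tau_i(J) defn} as~$1$ together give precisely the intended statement.
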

The following Lemma is crucial for proving Theorem~\ref{thm:main thm adm}.
\begin{lemma}\label{lem:TJ hom}
The following are equivalent for~$J\subset[1,n+1]$
and~$m\ge 1$;
\begin{enmalph}
\item\label{lem:TJ hom.a}
$T_J^m$ is ${}^{op}$-invariant; 
\item\label{lem:TJ hom.b}
The assignments~$\wh T_k\mapsto \tilde\tau_{\bar k}(J)$, $k\in\{1,2\}$ define a homomorphism 
$\Br^+(I_2(2m))\to
\Br^+_{n+1}$;
\item\label{lem:TJ hom.c} The assignments 
$\wh T_k\mapsto \tau_{\bar k}(J)$, $k\in\{1,2\}$
define a homomorphism $\Br^+(I_2(2m))\to
\Br^+_{n+1}$.
\end{enmalph}
\end{lemma}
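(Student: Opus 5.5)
The equivalence (a)$\iff$(b) comes from Lemma~\ref{lem:I2m iff cnd}, and (b)$\iff$(c) from the decoration machinery of \S\ref{subs:decor hom}.

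\emph{Step 1: (a)$\iff$(b).} By Proposition~\partref{prop:elem prop transp.c} each $T_{(j_r,j_{r+1})}$ is ${}^{op}$-invariant. The factors of $\tilde\tau_k(J)$ pairwise commute by Proposition~\partref{prop:elem prop transp.f}: for $r\equiv r'$ the intervals $[j_r,j_{r+1}]$ and $[j_{r'},j_{r'+1}]$ are separated by at least one full interval. Hence $\tilde\tau_0(J)$ and $\tilde\tau_1(J)$ are ${}^{op}$-invariant. With $X_1=\tilde\tau_1(J)$ and $X_2=\tilde\tau_0(J)$ we have $X_1X_2=T_J$, so Lemma~\ref{lem:I2m iff cnd} gives exactly (a)$\iff$(b).

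\emph{Step 2: (b)$\iff$(c).} By~\eqref{eq:tilde tau tau}, $\tau_k(J)=\tilde\tau_k(J)X_k(J)$. The elements $X_0(J)$ and $X_1(J)$ commute with each other and with both $\tilde\tau_k(J)$, as noted after~\eqref{eq:tilde tau tau}.

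For (b)$\implies$(c), put $z_1=X_1(J)$ and $z_2=X_0(J)$. These commute pairwise and lie in the centralizer of the image of the homomorphism in (b). The matrix entry $2m$ is even, so Lemma~\ref{lem:cent decor} applies and $\mathbf z$ is a decoration; the decorated companion is precisely the assignment in (c).

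For (c)$\implies$(b), I would work in the Artin group $\Br_{n+1}$, which is cancellative and contains $\Br^+_{n+1}$. The same commutation facts hold there for $z_i^{-1}$, so by Lemma~\ref{lem:decs are invertible} (or again Lemma~\ref{lem:cent decor} applied in $\Br_{n+1}$) $\mathbf z^{-1}$ is a decoration of the homomorphism in (c). The resulting relation $(\tilde\tau_1\tilde\tau_0)^m=(\tilde\tau_0\tilde\tau_1)^m$ holds in $\Br_{n+1}$. Since the natural map $\Br^+_{n+1}\to\Br_{n+1}$ is injective (\cite{Par}), the relation also holds in $\Br^+_{n+1}$.

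A more direct route to (c)$\implies$(b) is also available. Write $X=X_0(J)X_1(J)$, which is central in the submonoid generated by all the relevant elements. Then
$$(\tau_1\tau_0)^m=(\tilde\tau_1\tilde\tau_0)^mX^m \quad\text{and}\quad (\tau_0\tau_1)^m=(\tilde\tau_0\tilde\tau_1)^mX^m,$$
so right cancellation in $\Br^+_{n+1}$ gives the claim.

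The only step needing genuine checking is the commutation claims. One must verify that $T_{w_\circ^{[j_r+1,j_{r+1}-2]}}$ commutes with $T_{(j_s,j_{s+1})}$ for every $s$. For $s=r$ this follows from Proposition~\partref{prop:elem prop transp.e}, since each generator $T_k$ with $k\in[j_r+1,j_{r+1}-2]$ commutes with the transposition. For $s\neq r$ the supports are either orthogonal or, when the intervals are adjacent, still avoid the indices $\{j_s-1,j_s,j_{s+1}-1,j_{s+1}\}$; this is the main bookkeeping obstacle, though the paper asserts it as clear.
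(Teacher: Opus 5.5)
Your argument is correct and is essentially the paper's proof. The paper gets (a)$\iff$(b) from Lemma~\ref{lem:I2m iff cnd}, using ${}^{op}$-invariance of $\tilde\tau_0(J)$ and $\tilde\tau_1(J)$. It gets (b)$\implies$(c) by decorating with $(X_1(J),X_0(J))$ via Lemma~\ref{lem:cent decor}, and (c)$\implies$(b) by decorating with the inverses inside $\Br_{n+1}$, exactly as you do.

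Your second route for (b)$\iff$(c) is a genuine simplification. Put $X=X_0(J)X_1(J)$; it commutes with everything in sight. Then
\[
\tau_1(J)\tau_0(J)=\tilde\tau_1(J)\tilde\tau_0(J)X,\qquad \tau_0(J)\tau_1(J)=\tilde\tau_0(J)\tilde\tau_1(J)X.
\]
From these, both implications follow at once: one trivially, the other by right cancellation of $X^m$. This stays inside the cancellative monoid $\Br^+_{n+1}$, so it needs neither decorations, nor the passage to $\Br_{n+1}$, nor the injectivity of $\Br^+_{n+1}\to\Br_{n+1}$.

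Two small remarks. First, Lemma~\ref{lem:decs are invertible} alone would not justify (c)$\implies$(b). It presupposes that one of the two decorated homomorphisms already exists, so citing it here is circular; your fallback to Lemma~\ref{lem:cent decor} in $\Br_{n+1}$ is what actually does the work. Second, the bookkeeping you flag is just orthogonality. The support $[j_r+1,j_{r+1}-2]$ is at distance~$2$ from the supports $[j_{r-1},j_r-1]$ and $[j_{r+1},j_{r+2}-1]$ of the neighbouring transpositions, so it is orthogonal to them.
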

\begin{proof}
Assertions~\ref{lem:TJ hom.a} and~\ref{lem:TJ hom.b} 
are equivalent by Lemma~\ref{lem:I2m iff cnd}.
Suppose that the assignments in part~\ref{lem:TJ hom.b}
define~$\tilde\Phi\in\Hom_{\Art}(I_2(2m),A_n)$. 
Then by Lemma~\ref{lem:cent decor}, $\mathbf z=(X_1(J),X_0(J))$ is a decoration of~$\tilde\Phi$
and then~$\tilde\Phi_{\mathbf z}(\wh T_r)=\tau_{\bar r}(J)$,
$r\in\{1,2\}$ by~\eqref{eq:tilde tau tau}. 

Finally, suppose that
the assignments in~\ref{lem:TJ hom.c} define~$\Phi\in\Hom_{\Art}(I_2(2m),A_n)$ and
extend it
to a homomorphism $\Br^+(I_2(2m))\to \Br_{n+1}$. Then
$\mathbf z^{-1}=(X_1(J)^{-1},X_0(J)^{-1})$ is a decoration of~$\Phi$ by Lemma~\ref{lem:cent decor}, and
$\Phi_{\mathbf z^{-1}}(\wh T_r)=\tilde\tau_{\bar r}(J)$,
$r\in\{1,2\}$.
\end{proof}

\subsection{Symmetries and conjugation}
By Lemma~\ref{lem:TJ hom}, to prove Theorem~\ref{thm:main thm adm}
we need to find a necessary and sufficient condition
for~$T_J^m$, $\{1,n+1\}\subset J\subset [1,n+1]$, $m\in\mathbb Z_{>0}$, to be~${}^{op}$-invariant.

Given $J\subset [1,n+1]$, let $\tilde\sigma(J)=\{n+2-j\,:\,j\in J\}$.
\begin{lemma}\label{lem:diag aut TJ}
Let $J\subset [1,n+1]$ and let $\sigma$ be the diagram automorphism
of~$\Br^+_{n+1}$. Then
$$
\sigma(T_J)=\begin{cases}
T_{\tilde\sigma(J)},& \text{$|J|$ is even},\\
T_{\tilde\sigma(J)}{}^{op},& \text{$|J|$ is odd}.
\end{cases}
$$
In particular, $T_J^m$ is ${}^{op}$-invariant if and only if
$T_{\tilde\sigma(J)}^m$ is ${}^{op}$-invariant.
\end{lemma}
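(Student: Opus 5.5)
The plan is to compute $\sigma$ on each transposition element first and then assemble $T_J$. Since $\sigma$ is induced by the diagram automorphism $i\mapsto n+1-i$ of $A_n$, it maps square free elements to square free elements of the same length. On $W(A_n)\cong S_{n+1}$ it corresponds to conjugation by the longest permutation $k\mapsto n+2-k$. Therefore
\[
\sigma(T_{(i,j+1)})=T_{(n+1-j,\,n+2-i)}.
\]
Explicitly, $(i,j+1)$ is sent to the transposition $(n+2-(j+1),\,n+2-i)$, and the unique square free lift is preserved, since square freeness is characterized by length in Theorem~\partref{thm:Tits.a}.

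Write $J=\{j_1<\dots<j_m\}$ and $\tilde\sigma(J)=\{j'_1<\dots<j'_m\}$ with $j'_r=n+2-j_{m+1-r}$. The consecutive pair $(j_r,j_{r+1})$ of $J$ is sent to the pair $(j'_{m-r},j'_{m-r+1})$ of $\tilde\sigma(J)$. Hence $\sigma(\tilde\tau_k(J))$ is the product of the $T_{(j'_s,j'_{s+1})}$ with $s=m-r$ and $\bar r=k$, that is, with $\bar s=\overline{m-k}$. The factors in each $\tilde\tau_k$ commute (Proposition~\partref{prop:elem prop transp.e}), so the ordering of factors is irrelevant. This yields
\[
\sigma(\tilde\tau_k(J))=\tilde\tau_{\overline{m-k}}(\tilde\sigma(J)).
\]

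We then split on the parity of $m=|J|$. If $m$ is even, the parity is preserved, so
\[
\sigma(T_J)=\sigma(\tilde\tau_1(J))\,\sigma(\tilde\tau_0(J))=\tilde\tau_1(\tilde\sigma(J))\,\tilde\tau_0(\tilde\sigma(J))=T_{\tilde\sigma(J)}.
\]
If $m$ is odd, the parities swap, and we obtain $\tilde\tau_0(\tilde\sigma(J))\,\tilde\tau_1(\tilde\sigma(J))$. Each $\tilde\tau_k$ is a product of commuting ${}^{op}$-invariant factors (Proposition~\partref{prop:elem prop transp.c}), so each $\tilde\tau_k$ is itself ${}^{op}$-invariant. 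It follows that this product equals $(\tilde\tau_1(\tilde\sigma(J))\tilde\tau_0(\tilde\sigma(J)))^{op}=T_{\tilde\sigma(J)}^{op}$.

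For the final assertion, note that $\sigma$ commutes with ${}^{op}$, because both are determined on generators and $\sigma$ preserves the generating set. Hence $T_J^m$ is ${}^{op}$-invariant if and only if $\sigma(T_J)^m$ is. In the even case we have $\sigma(T_J)^m=T_{\tilde\sigma(J)}^m$. In the odd case we have $\sigma(T_J)^m=(T_{\tilde\sigma(J)}^m)^{op}$, and an element is ${}^{op}$-invariant exactly when its ${}^{op}$-image is. So in both cases the claim follows.

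The only delicate step is the index bookkeeping: checking that $r\mapsto m-r$ flips parity exactly when $m$ is odd, and that the transposition formula for $\sigma$ is correct. Everything else is formal.
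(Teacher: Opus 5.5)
Your proposal is correct and follows the paper's proof. Both compute $\sigma(T_{(a,b)})=T_{(n+2-b,\,n+2-a)}$ and observe that the consecutive pair $(j_r,j_{r+1})$ of $J$ goes to the pair of index $|J|-r$ in $\tilde\sigma(J)$, so the factors $\tilde\tau_0,\tilde\tau_1$ are kept or swapped according to the parity of $|J|$. You also spell out two steps the paper leaves implicit: each $\tilde\tau_k$ is ${}^{op}$-invariant, which turns the swapped product into $T_{\tilde\sigma(J)}{}^{op}$, and $\sigma$ commutes with ${}^{op}$, which gives the final assertion.
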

\begin{proof}
Write $J=\{j_1,\dots,j_k\}$ where $j_1<j_2<\cdots<j_{k-1}<j_k$. Then $\tilde\sigma(J)=\{n+2-j_k,n+2-j_{k-1},\cdots,n+2-j_2,n+2-j_1\}$ and
$$
T_J=\prod_{1\le l\le k/2} T_{(j_{2l-1},j_{2l})} \prod_{1\le l\le (k-1)/2} T_{(j_{2l},j_{2l+1})}.
$$
Suppose first that~$k=2m$. Then
$$
T_J=\prod_{1\le l\le m} T_{(j_{2l-1},j_{2l})} \prod_{1\le l\le m-1} T_{(j_{2l},j_{2l+1})}
$$
Since $\sigma(T_{(a,b)})=T_{(n+2-b,n+2-a)}$
$$
\sigma(T_J)=\prod_{1\le l\le m} T_{(n+2-j_{2l},n+2-j_{2l-1})} \prod_{1\le l\le 2m-1} T_{(n+2-j_{2l+1},n+2-j_{2l})}=T_{\tilde\sigma(J)}.
$$
If~$k=2m+1$ then
\begin{align*}
\sigma(T_J)&=\prod_{1\le l\le m} \sigma(T_{(j_{2l-1},j_{2l})}) \prod_{1\le l\le m} \sigma(T_{(j_{2l},j_{2l+1})})\\&=
\prod_{1\le l\le m} T_{(n+2-j_{2l},n+2-j_{2l-1},j_{2l})} \prod_{1\le l\le m} T_{(n+2-j_{2l+1},n+2-j_{2l})}=T_{\tilde\sigma(J)}{}^{op}.\qedhere
\end{align*}
\end{proof}

Our next goal is to show that all the~$T_J$ with~$J$ of the same
cardinality are conjugate in~$\Br_{n+1}$ (eventually we will also see that the converse is true).
\begin{proposition}\label{prop:conj}
Let $J\subset [1,n+1]$ and let
$j\in J$ with $\min J<j<\max J$ and $j-1\notin J$. Then in~$\Br_{n+1}$
$$
T_{(J\setminus\{j\})\cup\{j-1\}}=T_{j-1}^{\epsilon} T_J T_{j-1}^{-\epsilon},
$$
where $\epsilon=(-1)^{|J\cap [1,j]|+1}$.
\end{proposition}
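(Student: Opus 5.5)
The plan is to compare $T_J$ and $T_{J'}$, where $J'=(J\setminus\{j\})\cup\{j-1\}$, factor by factor, using the definition $T_J=\tilde\tau_1(J)\tilde\tau_0(J)$ and the elementary properties of transpositions in Proposition~\ref{prop:elem prop transp}.

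Write $J=\{j_1<\dots<j_m\}$ and $j=j_r$. Since $\min J<j<\max J$, we have $1<r<m$. Set $a=j_{r-1}$ and $b=j_{r+1}$. Since $j-1\notin J$, we get $a<j-1$, so the ordered set $J'$ has the same elements in the same positions except that $j_r$ is replaced by $j-1$. Consequently the factors of $T_J$ and $T_{J'}$ coincide except for $T_{(a,j)}$ and $T_{(j,b)}$, which are replaced by $T_{(a,j-1)}$ and $T_{(j-1,b)}$. One of the indices $r-1,r$ is odd and the other even, so exactly one of these two factors lies in $\tilde\tau_1$ and the other in $\tilde\tau_0$.

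The first step is to show that $T_{j-1}$ commutes with every other factor $T_{(j_s,j_{s+1})}$, $s\notin\{r-1,r\}$. By Proposition~\partref{prop:elem prop transp.e}, applied with $i=j_s$ and $j_{s+1}-1$ in the role of its $j$, it suffices to check that $j-1\notin\{j_s-1,\,j_s,\,j_{s+1}-1,\,j_{s+1}\}$. We have $j_s\ne j-1$ and $j_{s+1}\ne j-1$ because $j-1\notin J$. Also $j_s=j$ only for $s=r$, and $j_{s+1}=j$ only for $s=r-1$, both of which are excluded.

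The second step is the pair of local identities from Proposition~\partref{prop:elem prop transp.b}:
$$T_{(a,j)}=T_{j-1}T_{(a,j-1)}T_{j-1},\qquad T_{(j-1,b)}=T_{j-1}T_{(j,b)}T_{j-1}.$$
Hence $T_{(a,j-1)}=T_{j-1}^{-1}T_{(a,j)}T_{j-1}^{-1}$ in $\Br_{n+1}$.

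The third step is a parity split; the point is that since the factors within each $\tilde\tau_k$ commute, the two relevant factors can be moved to be adjacent.
\begin{itemize}
\item If $r$ is odd, then $T_{(j,b)}\in\tilde\tau_1$ and $T_{(a,j)}\in\tilde\tau_0$. Write $T_J=U\,T_{(j,b)}T_{(a,j)}\,V$ with $U$ and $V$ products of the remaining factors. Then
$$T_{(j-1,b)}T_{(a,j-1)}=T_{j-1}T_{(j,b)}T_{j-1}\,T_{j-1}^{-1}T_{(a,j)}T_{j-1}^{-1}=T_{j-1}T_{(j,b)}T_{(a,j)}T_{j-1}^{-1}.$$
Since $U$ and $V$ commute with $T_{j-1}$, this gives $T_{J'}=T_{j-1}T_JT_{j-1}^{-1}$. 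Here $|J\cap[1,j]|=r$ is odd, so $\epsilon=+1$, as required.
\item If $r$ is even, the order of the two factors is reversed. The same computation gives
$$T_{(a,j-1)}T_{(j-1,b)}=T_{j-1}^{-1}T_{(a,j)}T_{(j,b)}T_{j-1},$$
and therefore $T_{J'}=T_{j-1}^{-1}T_JT_{j-1}$, consistent with $\epsilon=-1$.
\end{itemize}

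The only delicate point is the bookkeeping: confirming that reordering within each $\tilde\tau_k$ is legitimate, which is the well-definedness noted after the definition of $T_J$, and matching the parity of $r$ with the sign $\epsilon$. Everything else is a direct application of Proposition~\ref{prop:elem prop transp}.
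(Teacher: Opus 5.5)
Your proposal is correct and matches the paper's proof essentially step for step. The paper packages your local identity, derived from part (b) of Proposition~\ref{prop:elem prop transp}, as Lemma~\ref{lem:conj}. It then commutes $T_{j-1}$ past the remaining factors using part (e) and splits on the parity of the position of $j$ in $J$, exactly as you do.
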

\begin{proof}
We need the following
\begin{lemma}\label{lem:conj}
Let $i,j,k\in [1,n+1]$ with $i<j-1$ and $j<k$. Then
in~$\Br_n$ we have
$$
T_{(j-1,k)}T_{(i,j-1)}=T_{j-1} T_{(j,k)}T_{(i,j)}T_{j-1}^{-1}.
$$
and
$$
T_{(i,j-1)}T_{(j-1,k)}=T_{j-1}^{-1} T_{(i,j)}T_{(j,k)}T_{j-1}.
$$
\end{lemma}
\begin{proof}
Using Proposition~\partref{prop:elem prop transp.b}
we obtain
$$
T_{(j-1,k)}T_{(i,j-1)}
=T_{j-1} T_{(j,k)}T_{j-1} T_{(i,j-1)}
=T_{j-1} T_{(j,k)} T_{(i,j)}T_{j-1}^{-1}.
$$
The second identity follows from the first by applying~${}^{op}$.
\end{proof}
Write $J=\{j_1,\dots,j_m\}$ where~$j_1<\cdots<j_m$ and
$j_k=j$. In particular, $|J\cap [1,j]|=k$.
Since~$\min J<j<\max J$, $2\le k\le m-1$ and so $j_{k-1}\le j-2$
as $j-1\notin J$.
Let~$J'=(J\setminus \{j\})\cup \{j-1\}$.
Suppose first that~$k$ is odd and so~$\epsilon=(-1)^{k+1}=1$. Then
$T_J=X T_{(j,j_{k+1})} T_{(j_{k-1},j)} X'$ and
$T_{J'}=X T_{(j-1,j_{k+1})} T_{(j_{k-1},j-1)} X'$ where
$$
X=\prod_{\substack{t\in[1,m]\setminus\{k\}\\\bar t=1}}
T_{(j_t,j_{t+1})},\qquad
X'=\prod_{\substack{t\in[1,m]\setminus\{k-1\}\\ \bar t=0}}
T_{(j_t,j_{t+1})}.
$$
By Proposition~\partref{prop:elem prop transp.e}, $T_{j-1}$ commutes with~$X$ and~$X'$. Then
by Lemma~\ref{lem:conj}
\begin{align*}
T_{J'}&=X T_{(j-1,j_{k+1})} T_{(j_{k-1},j-1)} X'\\
&=X T_{j-1}T_{(j,j_{k+1})} T_{(j_{k-1},j)}T_{j-1}^{-1} X'\\
&=T_{j-1}XT_{(j,j_{k+1})} T_{(j_{k-1},j)}X'T_{j-1}^{-1}=
T_{j-1} T_J T_{j-1}^{-1}.
\end{align*}
Similarly, if $k$ is even, $T_J=Y T_{(j_{k-1},j)}
T_{(j,j_{k+1})}Y'$ and~$T_{J'}=Y T_{(j_{k-1},j-1)}
T_{(j-1,j_{k+1})}Y'$ where
$$
Y=\prod_{\substack{t\in[1,m]\setminus\{k-1\}\\t\equiv 1\pmod 2}}
T_{(j_t,j_{t+1})},\qquad
Y'=\prod_{\substack{t\in[1,m]\setminus\{k\}\\ t\equiv 0\pmod 2}}
T_{(j_t,j_{t+1})}.
$$
In particular, $T_{j-1}$ commutes with~$Y$ and~$Y'$ and~$\epsilon=(-1)^{k+1}=-1$.
Using Lemma~\ref{lem:conj} we obtain
\begin{align*}
T_{J'}&=Y T_{(j_{k-1},j-1)} T_{(j-1,j_{k+1})} X'\\
&=YT_{j-1}^{-1} T_{(j_{k-1},j)} T_{(j,j_{k+1})}T_{j-1} Y'\\
&=T_{j-1}^{-1}YT_{(j_{k-1},j)} T_{(j,j_{k+1})}Y'T_J T_{j-1}.\qedhere
\end{align*}
\end{proof}

Given~$J\subset[1,n+1]$, denote \plink{gJ}$g(J)=|\{ j\in J\,:\,\min J<j<\max J,
\,j-1\notin J\}|$. For example, if $J=[1,n+1]$ then~$g(J)=0$
and $g([1,a]\cup[b+1,n+1])=1$ for all~$1\le a<b\le n$. Denote
$$
\plink{Cij(a)}\Cx ij^{(a)}=\ascprod_{i\le k\le j}T^a_k,\qquad \Cxr ij^{(a)}=\dscprod_{i\le k\le j}T^a_k,\qquad a\in\mathbb Z.
$$
\begin{corollary}\label{cor:conj J}
Let $J=\{j_0,\dots,j_{m+1}\}\subset [1,n+1]$ with $1=j_0<\cdots<j_{m+1}=n+1$. Then
$$
U(J) T_J U(J)^{-1} = T_{[1,m+1]\cup \{n+1\}}
$$
where\plink{U(J)}
\begin{equation}\label{eq:U(J) defn}
U(J)=\dscprod_{k\in[1,m]} \Cx{(k+1)}{(j_k-1)}^{((-1)^k)}.
\end{equation}
In particular, if $J,J'\subset [1,n+1]$ satisfy $|J|=|J'|$, $\min J=\min J'$ and~$\max J=\max J'$ then
$T_J$ and~$T_{J'}$ are conjugate in~$\Br_n$; thus,
all $\{1,n+1\}\subset J,J'\subset [1,n+1]$ with $|J|=|J'|$ are conjugate in~$\Br_n$.
\end{corollary}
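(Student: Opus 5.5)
The corollary follows by iterating Proposition~\ref{prop:conj}: we move the interior elements of $J$ leftwards one step at a time until $J$ becomes $[1,m+1]\cup\{n+1\}$, and record the conjugating elements.

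\textbf{Order of the moves.} The interior elements are processed in increasing order $k=1,2,\dots,m$. At stage $k$ the elements $j_1,\dots,j_{k-1}$ have already been moved to $2,\dots,k$. The current set is then
\[
J^{(k-1)}=[1,k]\cup\{j_k,\dots,j_{m+1}\}.
\]
We move $j_k$ down to $k+1$ one step at a time, passing through $j_k-1, j_k-2,\dots,k+1$. Each intermediate step replaces the current position $p$ by $p-1$, with $p\le j_k$ and $p-1\ge k+1$.

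\textbf{Checking the hypotheses of Proposition~\ref{prop:conj}.} At each step the element $p$ is interior, since $p>1$ and $p<j_{m+1}=n+1$. Also $p-1\notin$ the current set, because $p-1>k$ and $p-1<j_k\le p$ means $p-1$ lies strictly between $k$ and the next remaining element. The set $\{q\in\text{current set}: q\le p\}$ is $[1,k]\cup\{p\}$, which has $k+1$ elements. So Proposition~\ref{prop:conj} applies with
\[
\epsilon=(-1)^{k+2}=(-1)^k,
\]
and each step conjugates by $T_{p-1}^{(-1)^k}$.

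\textbf{Assembling the conjugator.} Within stage $k$ the successive conjugators are, in order, $T_{j_k-1}^{\pm},T_{j_k-2}^{\pm},\dots,T_{k+1}^{\pm}$. Each new one multiplies on the left, so the total conjugator for stage $k$ is
\[
T_{k+1}^{a}\cdots T_{j_k-1}^{a}=\Cx{(k+1)}{(j_k-1)}^{(a)},\qquad a=(-1)^k .
\]
This also covers the empty case $j_k=k+1$. Later stages multiply on the left again, so stage $m$ is outermost. Hence the total conjugator is $\dscprod_{k\in[1,m]}\Cx{(k+1)}{(j_k-1)}^{((-1)^k)}=U(J)$, which gives $U(J)T_JU(J)^{-1}=T_{[1,m+1]\cup\{n+1\}}$.

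\textbf{The main claim.} For $J,J'$ with $|J|=|J'|$, $\min J=\min J'$ and $\max J=\max J'$, the same leftward-packing argument applies; in general $\min J$ need not be $1$, but the procedure only uses $\min J$ and $\max J$. Both $J$ and $J'$ are conjugate to the same packed set $[\min J,\min J+|J|-2]\cup\{\max J\}$. Since conjugacy is an equivalence relation, $T_J$ and $T_{J'}$ are conjugate.

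The main care needed is the sign bookkeeping: at every step of stage $k$ the count $|J\cap[1,p]|$ must equal $k+1$, and the product ordering must match the ascending and descending conventions in~\eqref{eq:U(J) defn}. Otherwise the argument is a routine induction.
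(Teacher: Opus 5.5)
Your proposal is correct and is essentially the paper's argument: iterate Proposition~\ref{prop:conj} to pack the interior elements $j_1,\dots,j_m$ leftwards in increasing order, with stage $k$ contributing the left factor $\Cx{(k+1)}{(j_k-1)}^{((-1)^k)}$ and the sign coming from $|J\cap[1,p]|=k+1$. Your explicit stage-by-stage iteration is slightly cleaner than the paper's induction on $g(J)$. That quantity need not drop when $j_k$ is moved to $k+1$, e.g.\ $g(\{1,3,4,6\})=g(\{1,2,4,6\})=1$. The paper's step is easily repaired by inducting instead on the number of $k\in[1,m]$ with $j_k\ne k+1$.
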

\begin{proof}
Abbreviate~$J_m=[1,m+1]\cup\{n+1\}$ and $U_k(J)=\Cx{(k+1)}{(j_k-1)}^{((-1)^k)}$.
The argument is by induction on $g(J)$. Note that
$g(J)=|\{k\in[1,m]\,:\, j_{k-1}<j_k-1\}|$.
If $g(J)=0$ then $J=J_{n}$ and~$U(J)=1$.

For the inductive step, let $k>0$ be minimal such that $j_{k-1}<
j_k-1$. Then $j_s=s+1$ for all $0\le s<k$ and so
$U_s(J)=1$ for all~$1\le s<k$. By Proposition~\ref{prop:conj}, $U_k(J) T_J U_k(J)^{-1}
=T_{J'}$ where~$J'=J\setminus \{j_k\}\cup \{k+1\}$. Since
$g(J')=g(J)-1$, $U(J')T_{J'}U(J')^{-1}=T_{J_m}$ by the induction
hypothesis. It remains to observe that
$U_s(J')=1$ for all~$1\le s\le k$ and $U_s(J)=U_s(J')$ for all~$k+1\le s\le m$, whence $U(J')U_k(J)=U(J)$.
\end{proof}
\begin{corollary}\label{cor:TJ coxeter}
Let $J\subset [1,n+1]$. Then~$\pi_n(T_J)\in S_{n+1}$ is a cycle of length~$|J|$ and, in particular,
has order~$|J|$. Moreover, if $T_J^N$ is ${}^{op}$-invariant then
$|J|$ divides~$2N$.
\end{corollary}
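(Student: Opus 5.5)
Since conjugation in~$\Br_{n+1}$ descends to conjugation in~$S_{n+1}$ via~$\pi_n$, the plan is to reduce everything to the model case supplied by Corollary~\ref{cor:conj J}. First I would handle an arbitrary~$J$, not just one containing~$\{1,n+1\}$. Write $a=\min J$, $b=\max J$. All factors of~$T_J$ then lie in the parabolic submonoid~$\Br^+_{[a,b-1]}$, which is isomorphic to~$\Br^+_{b-a+1}$ after shifting indices by~$a-1$. Under this identification~$J$ becomes a subset of~$[1,b-a+1]$ containing both endpoints. So, up to this relabeling, Corollary~\ref{cor:conj J} applies: $T_J$ is conjugate in~$\Br_{n+1}$ to~$T_{J_0}$ with $J_0=\{a,a+1,\dots,a+|J|-2,b\}$. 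Hence~$\pi_n(T_J)$ is conjugate in~$S_{n+1}$ to~$\pi_n(T_{J_0})$, and cycle type is a conjugacy invariant. It therefore suffices to compute~$\pi_n(T_J)$ for one convenient~$J$ of each cardinality. Alternatively, one could compute it directly for general~$J$, which is equally easy.

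The direct computation goes as follows. With $J=\{j_1<\dots<j_m\}$, $\pi_n(\tilde\tau_1(J))$ is the product of the disjoint transpositions $(j_r,j_{r+1})$ for $r$ odd. Likewise $\pi_n(\tilde\tau_0(J))$ is the product of the disjoint transpositions for $r$ even. Both are involutions generating a dihedral action on the path $j_1-j_2-\cdots-j_m$, with edges colored alternately. The product of the two ``alternating matchings'' of a path on~$m$ vertices is the standard $m$-cycle on $\{j_1,\dots,j_m\}$: following an element through the even matching and then the odd one moves it two steps along the path, bouncing at the ends, which yields a single cycle through all~$m$ vertices. Points outside~$J$ are fixed. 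So~$\pi_n(T_J)$ is an $|J|$-cycle and has order~$|J|$. This step is routine; the only care needed is the bookkeeping at the two ends of the path depending on the parity of~$m$.

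For the final claim, suppose $T_J^N$ is ${}^{op}$-invariant. By Lemma~\ref{lem:can image op inv}, $\pi_n(T_J^N)=\pi_n(T_J)^N$ is an involution (possibly trivial), so $\pi_n(T_J)^{2N}=1$. Since $\pi_n(T_J)$ has order~$|J|$, we get $|J|\mid 2N$. The main, though mild, obstacle is making the endpoint-shifting reduction precise, or equivalently verifying the cycle structure of the product of the two matchings. The rest is immediate from the earlier lemmas.
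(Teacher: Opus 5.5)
Your argument is correct. The final step, where Lemma~\ref{lem:can image op inv} gives $\pi_n(T_J)^{2N}=1$, is exactly the paper's.

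For the cycle structure, your first sketch is the paper's route. The paper invokes Corollary~\ref{cor:conj J} to replace $T_J$ by $T_{[1,m+1]\cup\{n+1\}}$ and then writes that one permutation out explicitly. As stated, that reduction needs $\{1,n+1\}\subset J$, and your shift to the parabolic submonoid $\Br^+_{[a,b-1]}$ is what covers an arbitrary $J$.

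Your direct computation is a genuinely more economical alternative. Since $\pi_n(T_{(j_r,j_{r+1})})=(j_r,j_{r+1})$, any permutation sending $r\mapsto j_r$ conjugates $\pi_n(T_J)$ to the bipartite Coxeter element $(s_1s_3\cdots)(s_2s_4\cdots)$ of $S_{|J|}$. That element is a $|J|$-cycle, which is exactly what your ``bouncing along the path'' argument verifies. This needs no braid-group conjugation identities and treats every $J$ uniformly.

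The paper's route reuses Corollary~\ref{cor:conj J}, which it needs anyway for the later eigenvalue computations. For this corollary alone, your computation is shorter and slightly more general.
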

\begin{proof}
By Corollary~\ref{cor:conj J} it suffices to prove the first assertion for~$J=J_m=[1,m+1]\cup\{n+1\}$, $1\le m\le n-2$.
It is easy to check that
$$
\pi_n(T_{J_m})=\begin{cases}
(1,2,4,\dots,m-1,m+1,n+1,m,m-2,\dots,3),&\text{$m$ is odd}\\
(1,2,4,\dots,m-2,m,n+1,m+1,m-1,\dots,3),&\text{$m$ is even}.
\end{cases}
$$
In either case, $\pi_n(T_{J_m})$ is a cycle of length~$m+2=|J_m|$.
By Lemma~\ref{lem:can image op inv}, if~$T_J^N$ is ${}^{op}$-invariant then
$\pi_n(T_J^N)=\pi_n(T_J)^N$ is an involution. Thus, $\pi_n(T_J)^{2N}=1$ and so the order of~$\pi_n(T_J)$ divides~$2N$.
\end{proof}

\subsection{Forward direction}\label{subs:forward direction}
Our present aim is to establish the forward direction of Theorem~\ref{thm:main thm adm}.
\begin{theorem}\label{thm:adm I2m}
For any~$\{1,n+1\}\subset J\subset [1,n+1]$ with~$g(J)=1$,
the assignments $\wh T_r\mapsto \tau_{\overline r}(J)$, $r\in\{1,2\}$
define an optimal fully supported disjoint standard homomorphism
$\Br^+(I_2(2m))\to \Br^+_{n+1}$  where
$$
m=m(J)=\begin{cases}
|J|/2,& J=\tilde\sigma(J),\\
|J|,& \text{otherwise}.
\end{cases}
$$
\end{theorem}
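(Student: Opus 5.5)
By Lemma~\ref{lem:TJ hom}, the assignments $\wh T_r\mapsto\tau_{\bar r}(J)$ define a homomorphism $\Br^+(I_2(2m))\to\Br^+_{n+1}$ if and only if $T_J^m$ is ${}^{op}$-invariant. The homomorphism is then automatically disjoint, fully supported (since $\{1,n+1\}\subset J$) and standard, because $\tau_k(J)$ are products of $T_{w_\circ}$ of mutually orthogonal intervals. So the plan is to establish two facts. First, $T_J^{m(J)}$ is ${}^{op}$-invariant. Second, $T_J^{m'}$ is not ${}^{op}$-invariant for any proper divisor $m'$ of $m(J)$. The second fact gives optimality via Lemma~\partref{lem:taut homs.b}. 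For the second fact I would use Corollary~\ref{cor:TJ coxeter}: ${}^{op}$-invariance of $T_J^{m'}$ forces $|J|$ to divide $2m'$. If $m(J)=|J|/2$, this already rules out every proper divisor. If $m(J)=|J|$, it leaves only the case $m'=|J|/2$ with $|J|$ even, which must be excluded using $J\ne\tilde\sigma(J)$.

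To get explicit formulas, use $g(J)=1$ to write $J=[1,a]\cup[b+1,n+1]$ with $1\le a<b\le n$. Then, for suitable $U\in\Br_{n+1}$ obtained from Corollary~\ref{cor:conj J}, I would compute $T_J^{|J|}$ directly. The key step is an analogue of Lemma~\ref{lem:B2 I22m BrBn main id}: I expect
$$T_J^{|J|}=T_{w_\circ^{[a+1,b-1]}}^{-2}\,T_{w_\circ^{[1,n]}}^2$$
in $\Br_{n+1}$. Both factors commute, since $T_{w_\circ^{[1,n]}}^2$ is central, and both are ${}^{op}$-invariant. This would prove the $m=|J|$ case.

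To prove this identity, I would proceed as in Lemma~\ref{lem:B2 I22m BrBn main id}. Compare lengths:
$$|J|\,\ell(T_J)=|J|\,(2n-|J|+1),$$
and this should equal $n(n+1)-(b-a)(b-a-1)$ with $|J|=n+1-(b-a-1)$. One then checks that $T_J^{|J|}T_{w_\circ^{[a+1,b-1]}}^2$ is a positive element of length $\ell(T_{w_\circ^{[1,n]}}^2)$. Identifying it with the generator of the center of $\Br^+_{n+1}$ requires more than a Weyl group computation (squares are not square free). So instead I would show that $T_J^{|J|}$ commutes with all $T_i$ for $i\notin[a,b]$ and that its product with $T_{w_\circ^{[a+1,b-1]}}^2$ is central. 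A cleaner route is to view $T_J$ as a Coxeter-like element: $T_JT_{w_\circ^{[a+1,b-1]}}^{\,2/|J|}$ formally behaves like a Coxeter element of $A_n$. I expect this step — proving the power identity in the braid group — to be the main obstacle. Failing a slick argument, I would induct on $b-a$ using Proposition~\ref{prop:elem prop transp} and Lemma~\ref{lem:comm cox}.

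For the symmetric case $J=\tilde\sigma(J)$, $|J|$ is even (since $n-(b-a-1)$ is even). There I would show
$$T_J^{|J|/2}=T_{w_\circ^{[a+1,b-1]}}^{-1}\,T_{w_\circ^{[1,n]}}.$$
Both sides are ${}^{op}$-invariant and commute, because $\sigma$ preserves $[a+1,b-1]$ and so $T_{w_\circ^{[1,n]}}$ commutes with $T_{w_\circ^{[a+1,b-1]}}$ by Proposition~\partref{prop:fund elts BrSa.c}. The length matches, and the identity should follow from the previous one by taking a square root via uniqueness of roots in the Garside group, or by a direct Weyl group check combined with positivity. For the non-symmetric case with $|J|$ even, suppose $T_J^{|J|/2}$ were ${}^{op}$-invariant. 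Then $T_J^{|J|/2}T_{w_\circ^{[a+1,b-1]}}$ would be a positive element of length $\ell(T_{w_\circ^{[1,n]}})$ whose square is $T_{w_\circ^{[1,n]}}^2$. By uniqueness of roots in $\Br_{n+1}$ it would equal $T_{w_\circ^{[1,n]}}$, and conjugation by it would then swap $[a+1,b-1]$ with $\sigma([a+1,b-1])$ while fixing $T_{w_\circ^{[a+1,b-1]}}$. This forces $\sigma([a+1,b-1])=[a+1,b-1]$, i.e.\ $J=\tilde\sigma(J)$, a contradiction.
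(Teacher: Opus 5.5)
Your reduction via Lemma~\ref{lem:TJ hom}, your target identities (they are Corollary~\ref{cor:symm even J} and Proposition~\ref{prop:g J=1}) and your use of Corollary~\ref{cor:TJ coxeter} all match the paper. Two steps, however, do not hold up.

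The first is the appeal to uniqueness of roots. In braid groups roots are unique only up to conjugacy. For example, $T_{w_\circ^{[1,2]}}^2$ is central in $\Br_3$, so the positive element $T_1^2T_2=T_1T_{w_\circ^{[1,2]}}T_1^{-1}$, of length $\ell(w_\circ^{[1,2]})$, squares to $T_{w_\circ^{[1,2]}}^2$ but is not $T_{w_\circ^{[1,2]}}$.

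For non-symmetric $J$ with $|J|$ even this is fatal. You can see it from the fact that your hypothesis that $T_J^{|J|/2}$ is ${}^{op}$-invariant is never used. Indeed, $T_{w_\circ^{[a+1,b-1]}}$ commutes with $T_J$: it commutes with $T_i$ for $i\in[1,a-1]\cup[b+1,n]$, and with $T_{(a,b+1)}$ by Proposition~\partref{prop:elem prop transp.e}. So, given your formula for $T_J^{|J|}$, one has $(T_J^{|J|/2}T_{w_\circ^{[a+1,b-1]}})^2=T_{w_\circ^{[1,n]}}^2$ for \emph{every} such $J$. Your argument would then give $J=\tilde\sigma(J)$ always, which is false (e.g.\ $n=5$, $J=\{1,4,5,6\}$).

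Excluding $m'=|J|/2$ needs finer information. The paper uses the symmetrized Burau representation (Proposition~\ref{prop:|J| even}). It produces eigenvectors of $T_J^{|J|/2}$ for the eigenvalues $q^{n+1}$ and $-q^{n+1}$ that are not orthogonal, which contradicts ${}^{op}$-invariance by Lemma~\ref{lem:orth eigenspaces}.

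In the symmetric case only your second option is sound. $T_J^{|J|/2}T_{w_\circ^{[a+1,b-1]}}$ is positive of length $\ell(w_\circ^{[1,n]})$, so by Theorem~\ref{thm:Tits} it equals $T_{w_\circ^{[1,n]}}$ as soon as its image in $S_{n+1}$ is $w_\circ^{[1,n]}$. This is what the paper does, via the $W(B_n)$ computation of Lemma~\ref{lem:B2 I22m BrBn main id} followed by unfolding.

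Your divisor criterion also misses odd elements of $B(\tau_1(J),\tau_0(J))$. If $m(J)$ is odd you must still exclude $m(J)\in B(\tau_1(J),\tau_0(J))$. This follows from $\ell(\tau_0(J))\ne\ell(\tau_1(J))$ and Lemma~\partref{lem:elem Artin hom.c}, as in the proof of Theorem~\ref{thm:adm I2m+1}.

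The second gap is that existence for non-symmetric $J$ is not established. Your plan — a positive central element of $\Br^+_{n+1}$ of length $n(n+1)$ must be $T_{w_\circ^{[1,n]}}^2$ — is the paper's strategy. But checking the centrality is the whole difficulty, and you leave it open; the heuristic with $T_{w_\circ^{[a+1,b-1]}}^{2/|J|}$ has no meaning in $\Br_{n+1}$.

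The paper first moves the gap by conjugation (Lemma~\ref{lem:move bubble}). For $|J|$ even it then squares the symmetric identity. For $|J|$ odd it introduces the explicit element $Z_m$ of~\eqref{eq:Z_r defn}, verifies that $Z_mT_{w_\circ^{[m+2,n-1]}}^2$ commutes with every $T_i$ (Lemmas~\ref{lem:transp 1}, \ref{lem:transp 2}, \ref{lem:comm I} and~\ref{lem:comm II}), and proves $T_{[1,m+1]\cup\{n+1\}}^{m+2}=Z_m$ by a separate long computation (Proposition~\ref{prop:TJm power central}).

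Finally, $|J|=n+1-(b-a)$, not $n+1-(b-a-1)$; with your value the length count does not balance.
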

\begin{proof} Since~$g(J)=1$, we can write $J=[1,a]\cup [b+1,n+1]$ where
$1\le a<b\le n-1$.

Suppose first that $J=\tilde\sigma(J)=\{ n+2-j\,:\,j\in J\}$. This forces $b=n+1-a$.
Applying one of the unfolding homomorphisms~\eqref{eq:unfold Bn A2n-1} or~\eqref{eq:unfold Bn A2n}, depending on the parity of~$n$, to~\eqref{eq:prep I2m Bn}
we obtain in~$\Br_{n+1}$
$$
(T_{w_\circ^{[1,a-1]_2\cup \sigma([1,a-1]_2)}} T_{w_\circ^{[1,a-2]_2\cup
\sigma([1,a-2]_2)}}T_{(a,n+2-a)})^{a} = T_{w_\circ^{[1,n]}} T_{w_\circ^{[a+1,n-a]}}^{-1}
$$
with~$\sigma(i)=n+1-i$, $1\le i\le n$,
which immediately yields the following
\begin{corollary}\label{cor:symm even J}
Let~$\tilde J_m=[1,m]\cup \tilde\sigma([1,m])$ with $2m<n+1$. Then~$|\tilde J_m|=2m$ and
$T_{\tilde J_m}^m=T_{w_\circ^{[1,n]}} T_{w_\circ^{[m+1,n-m]}}^{-1}$ in~$\Br_{n+1}$.
\end{corollary}
Since $T_{w_\circ^{[m+1,n-m]}}$ commutes with~$T_{w_\circ^{[1,n]}}$
and both are~${}^{op}$-invariant by Proposition~\ref{prop:fund elts BrSa}
\ref{prop:fund elts BrSa.a}, by Lemma~\ref{lem:TJ hom}
this establishes the first case in Theorem~\ref{thm:adm I2m}.

To establish the second case, we need to prove the following
\begin{proposition}\label{prop:g J=1}
Let $J=[1,a]\cup[b+1,n+1]$, $1\le a<b\le n$. Then~$T_J^{|J|}=T_{w_\circ^{[1,n]}}^2 T_{w_\circ^{[a+1,b-1]}}^{-2}$ in~$\Br_n$.
\end{proposition}

The following Lemma allows one to reduce Proposition~\ref{prop:g J=1}
to a specially chosen~$J$.
\begin{lemma}\label{lem:move bubble}
Let  $[a,b],[a',b']\subset [1,n+1]\setminus\{1,n+1\}$ with~$b-a=b'-a'>1$.
Let~$J=[1,n+1]\setminus[a,b]$, $J'=[1,n+1]\setminus [a',b']$. Then
$T_J^{N}=T_{w_\circ^{[1,n]}}^2
T_{w_\circ^{[a,b-1]}}^{-2}$ for some~$N\ge 1$ if and only if
$T_{J'}^{N}=T_{w_\circ^{[1,n]}}^2 T_{w_\circ^{[a',b'-1]}}^{-2}$.
\end{lemma}
\begin{proof}
It suffices to prove the Lemma when $a'=a+1$.  We have, by Proposition~\ref{prop:conj},
$$
T_{J'}=\begin{cases}
\Cx ab T_J \Cx ab^{-1},&\text{$a$ is even}\\
\Cxr ab^{-1} T_J \Cxr ab,&\text{$a$ is odd}.
\end{cases}
$$
Since
$$
\Cx ab T_{w_\circ^{[a,b-1]}}^2=T_{w_\circ^{[a,b]}} T_{w_\circ^{[a,b-1]}}
=T_{w_\circ^{[a+1,b]}} T_{w_\circ^{[a,b]}}
=T_{w_\circ^{[a+1,b]}}^2 \Cx ab,
$$
it follows that
$$
\Cx ab T_{w_\circ^{[a,b-1]}}^{-2} \Cx ab^{-1}=T_{w_\circ^{[a+1,b]}}^{-2}
=\Cxr ab^{-1} T_{w_\circ^{[a,b-1]}}^{-2} \Cxr ab,
$$
where the second equality is obtained from the first by applying~${}^{op}$.
Since~$T_{w_\circ^{[1,n]}}^2$ is central in~$\Br^+_{n+1}$ by Proposition~\partref{prop:fund elts BrSa.d}, the assertion
is now immediate.
\end{proof}

Suppose first that~$|J|$ is even with~$|J|=2m$. By Corollary~\ref{cor:symm even J}, $T_{\tilde J_m}^{2m}=T_{w_\circ^{[1,n]}}^2 T_{w_\circ^{[m+1,n-1]}}^{-2}$. Thus, Proposition~\ref{prop:g J=1} for~$|J|$ even is proved.

We will now obtain a convenient expression for
$T_{w_\circ^{[1,n]}}^2 T_{w_\circ^{[r+1,n-1]}}^{-2}$ for
$1\le r\le n$.
For that we need more properties of the~$T_{(a,b+1)}$, $a\le b\in[1,n]$.
\begin{lemma}\label{lem:transp 1}
For any $2\le a\le b$,
$
T_{(a,b+1)}T_{(a+1,b+1)}T_{a-1}T_a T_{(a+1,b+1)}
$
is ${}^{op}$-invariant.
\end{lemma}
\begin{remark}
It is easy to check that for~$b>a$ the canonical image of the above element of~$\Br^+_{n+1}$ in~$W(A_n)$ is
$(a-1,b+1)$ which is of length~$2(b-a)+3$
while the original element of~$\Br^+_{n+1}$
has length~$6(b-a)+1$. 
\end{remark}
\begin{proof}
Let $X=T_{(a,b+1)}T_{(a+1,b+1)}T_{a-1}T_a T_{(a+1,b+1)}$.
For~$b=a$ we have $X=T_a T_{a-1} T_a$ which is manifestly
${}^{op}$-invariant. Thus, we may assume, without loss of generality, that $b>a$.

For simplicity, let $a=2$. Thus, we claim that
$$
T_{(2,b+1)}T_{(3,b+1)}T_1 T_2 T_{(3,b+1)}=T_{(3,b+1)}
T_2 T_1 T_{(3,b+1)}T_{(2,b+1)}.
$$
Since $T_{(3,b+1)}=\Cx 3{(b-1)}T_b \Cxr3{(b-1)}$,
and the $T_j$ with $i<j<k-1$ commute with $T_{(i,k)}$ by Proposition~\partref{prop:elem prop transp.e}, we have
$$
X=\Cx3{(b-1)}T_{(2,b+1)}\Cxr3b \Cx1b\Cxr3{(b-1)}
$$
while
$$
X^{op}=\Cx 3{(b-1)} \Cxr1b \Cx3b T_{(2,b+1)}\Cxr3{(b-1)}.
$$
Since~$\Br^+_{n+1}$ is cancellative, it suffices to prove that
$$
T_{(2,b+1)}\Cxr3b \Cx1b=
\Cxr1b \Cx3b T_{(2,b+1)}.
$$
Since $T_{(2,b+1)}=\Cxr2b\Cx3{b}$ and $\Cxr1b=\Cxr2b T_1$, the above equality
follows once we establish that
\begin{equation}\label{eq:interm transp 1}
\Cx3b\Cxr3b \Cx1b=T_1\Cx3b T_{(2,b+1)}.
\end{equation}
But the left hand side of~\eqref{eq:interm transp 1} is equal to
\begin{equation*}
\Cx3b\Cxr3b \Cx1b=\Cx3b\Cxr3b T_1\Cx2b=T_1\Cx3b\Cxr3b\Cx2b
=T_1 \Cx3bT_{(2,b+1)}.\qedhere
\end{equation*}
\end{proof}
\begin{lemma}\label{lem:transp 2}
For all $2\le a<b\le n$ we have
$$
T_{(a,b+1)} T_{a-1} T_{(a,b+1)}T_{(a+1,b+1)}T_a= T_{(a-1,b+1)}T_{(a,b+1)}T_{(a+1,b+1)}
$$
\end{lemma}
\begin{proof}
Since
$$
T_{(a,b+1)} T_{a-1}=\Cxr{a}b \Cx{(a+1)}b T_{a-1}=
\Cxr{(a-1)}b \Cx{(a+1)}{b}.
$$
and~$\Br^+_{n+1}$ is cancellative, the assertion is equivalent to
$$
\Cx{(a+1)}b T_{(a,b+1)}T_{(a+1,b+1)}T_a=
\Cx{a}b T_{(a,b+1)}T_{(a+1,b+1)}.
$$
Now,
\begin{align*}
\Cx{a}{b}T_{(a,b+1)}=\Cx{a} b T_a T_{(a+1,b+1)} T_a 
&=T_a T_{a+1} T_a \Cx{(a+2)}b T_{(a+1,b+1)} T_a\\
&=T_{a+1}\Cx ab T_{(a+1,b+1)}T_a.
\end{align*}
Suppose we proved that
\begin{equation}\label{eq:eq transp 2}
\Cx{a}{b}T_{(a,b+1)}=\Cx{(a+1)}k \Cx ab T_{(k,b+1)}\Cxr{a}{(k-1)}.
\end{equation}
for some~$k> a$ (the case~$k=a+1$ was established above).
Then
\begin{align*}
\Cx{a}{b}T_{(a,b+1)}&=\Cx{(a+1)}k \Cx ab T_k T_{(k+1,b+1)}\Cxr{a}{k}\\
&=\Cx{(a+1)}{k} \Cx a{(k-1)}(T_k T_{k+1} T_k)\Cx{(k+2)}b T_{(k+1,b+1)}\Cxr{a}{k}\\
&=\Cx{(a+1)}{k} \Cx a{(k-1)}(T_{k+1} T_{k} T_{k+1})
\Cx{(k+2)}bT_{(k+1,b+1)}\Cxr{a}{k}\\
&=\Cx{(a+1)}{(k+1)} \Cx abT_{(k+1,b+1)}\Cxr{a}{k}.
\end{align*}
Thus, \eqref{eq:eq transp 2} holds for all~$k\ge a+1$. In particular,
for~$k=b$ we obtain
$$
\Cx{a}{b}T_{(a,b+1)}=\Cx{(a+1)}{b} \Cx abT_{(b,b+1)}\Cxr{a}{b-1}
=\Cx{(a+1)}{b} \Cx ab\Cxr{a}{b}.
$$
Therefore, it suffices to prove that
$$
T_{(a,b+1)}T_{(a+1,b+1)}T_a=\Cx ab\Cxr{a}{b}T_{(a+1,b+1)}
$$
which, since $T_{(a,b+1)}=\Cx ab\Cxr a{(b-1)}$, is equivalent to
$$
\Cxr{a}{(b-1)}T_{(a+1,b+1)}T_a=\Cxr{a}{b}T_{(a+1,b+1)}.
$$
By Proposition~\ref{prop:elem prop transp} we obtain
\begin{align*}
\Cxr{a}{(b-1)}&T_{(a+1,b+1)}T_a=\Cxr{(a+1)}{(b-1)}T_a T_{(a+1,b+1)}T_a=\Cxr{(a+1)}{(b-1)}T_{(a,b+1)}\\
&=T_{(a,b+1)}\Cxr{(a+1)}{(b-1)}
=\Cxr ab\Cx{(a+1)}b \Cxr{(a+1)}{(b-1)}=\Cxr ab T_{(a+1,b+1)}.\qedhere
\end{align*}
\end{proof}

\begin{lemma}\label{lem:cox transp exchage}We have for all $r\in[1,n-1]$
\begin{equation}\label{eq:cox transp exchange}
\Big(\ascprod_{i\in[1,r+1]} T_{(i,n+1)}\Big) \Cx1{r}=\Cx1r
T_{(r+1,n+1)}\Big(\ascprod_{i\in[1,r]} T_{(i,n+1)}\Big)
\end{equation}
\end{lemma}
\begin{proof}
Suppose we proved that for some~$i\in[1,r]$
\begin{equation}\label{eq:cox transp exchange'}
\ascprod_{j\in[1,r+1]} T_{(j,n+1)}\Cx1r=
\Cx1{(i-1)} T_{(i,n+1)}\ascprod_{j\in[1,r+1]\setminus\{i\}} T_{(j,n+1)}\Cx ir
\end{equation}
(for~$i=1$ this is trivial). Then
\begin{align*}
\ascprod_{j\in[1,r+1]} T_{(j,n+1)}\Cx1r&=\Cx1{i} T_{(i+1,n+1)}T_i\ascprod_{j\in[1,r+1]\setminus\{i\}} T_{(j,n+1)}\Cx ir\\
&=\Cx1{i} T_{(i+1,n+1)}\ascprod_{j\in[1,i-1]}T_{(j,n+1)} T_i T_{(i+1,n+1)}T_i \ascprod_{j\in[i+2,r+1]} T_{(j,n+1)}\Cx{(i+1)} r\\
&=\Cx1{i} T_{(i+1,n+1)}\ascprod_{j\in[1,r+1]\setminus \{i+1\}} T_{(j,n+1)} \Cx{(i+1)} r.
\end{align*}
Thus, \eqref{eq:cox transp exchange'} holds for all~$i\in[1,r+1]$. But for~$i=r+1$ this is precisely~\eqref{eq:cox transp exchange}.
\end{proof}
Denote
\begin{equation}\label{eq:Z_r defn}
Z_r:=\Big(\ascprod_{i\in[1,r+1]} T_{(i,n+1)}\Big) \Cx1{r}T_{(r+1,n+1)}=\Cx1r
T_{(r+1,n+1)}\Big(\ascprod_{i\in[1,r+1]} T_{(i,n+1)}\Big),
\end{equation}
the equality following from~\eqref{eq:cox transp exchange}.
\begin{proposition}\label{prop:Z_r is central}
For all $r\in[1,n-2]$ we have
$
Z_r T_{w_\circ^{[r+2,n-1]}}^2=T_{w_\circ^{[1,n]}}^2$.
\end{proposition}
\begin{proof}
We have
$$
\ell(Z_r)=\sum_{1\le i\le r+1}(2(n-i)+1)+2n-r-1=(r+2)(2n-r-1).
$$
and so
$$
\ell(Z_r T_{w_\circ^{[r+2,n-1]}}^2)=(n-r-1)(n-r-2)+(r+2)(2n-r-1)=n(n+1)
=\ell(T_{w_\circ^{[1,n]}}^2).
$$
By Proposition~\partref{prop:fund elts BrSa.d},
$T_{w_\circ^{[1,n]}}^2$ generates the center of~$\Br^+_{n+1}$,
and so
it suffices to prove that $Z_r T_{w_\circ^{[r+2,n-1]}}^2$
is central in~$\Br^+_{n+1}$.
\begin{lemma}\label{lem:comm I}
We have $T_i Z_r=Z_r T_i$ for all $i\in [1,n]\setminus \{r+1,n\}$.
\end{lemma}
\begin{proof}
By Proposition~\partref{prop:elem prop transp.c}, the $T_j$, $j\in [r+2,n-1]$ commute with the $T_{(i,n+1)}$,
$i\in[1,r+1]$. It follows that
$T_i Z_r=Z_r T_i$ for all~$i\in [r+2,n-1]$.

Since~$\Br^+_{n+1}$ embeds into~$\Br_{n+1}$ (cf.~\cites{BrSa,Del,Par}),
it suffices to prove that~$T_i^{-1} Z_r T_i=Z_r$ for all~$i\in [1,r]$.

Let~$i\in [1,r-1]$. By Proposition~\ref{prop:elem prop transp}, $T_i^{-1}$ commutes with
the $T_{(j,n+1)}$ for all $1\le j\le i-1$ and $T_i^{-1} T_{(i,n+1)}=T_{(i+1,n+1)}T_i$. Therefore, \begin{multline*}
T_i^{-1} Z_r T_i = \Big(\ascprod_{j\in[1,i-1]} T_{(j,n+1)} \Big) T_{(i+1,n+1)}T_i T_{(i+1,n+1)}T_{(i+2,n+1)}\Big(\ascprod_{j\in[i+3,r+1]} T_{(j,n+1)}\Big)\Cx1r T_i T_{(r+1,n+1)}.
\end{multline*}
Since $\Cx1r T_i=T_{i+1}\Cx1r$ by Lemma~\ref{lem:comm cox}, we obtain
\begin{align*}
T_i^{-1} &Z_r T_i = \Big(\ascprod_{j\in[1,i-1]} T_{(j,n+1)} \Big) T_{(i+1,n+1)}T_i T_{(i+1,n+1)}T_{(i+2,n+1)}T_{i+1}\Big(\ascprod_{j\in[i+3,r+1]} T_{(j,n+1)}\Big)\Cx1r T_{(r+1,n+1)}\\
&= \Big(\ascprod_{j\in[1,i-1]} T_{(j,n+1)} \Big) T_{(i,n+1)}T_{(i+1,n+1)}T_{(i+2,n+1)}
\Big(\ascprod_{j\in[i+3,r+1]} T_{(j,n+1)}\Big)\Cx1r T_{(r+1,n+1)}\\
&=Z_r,
\end{align*}
where we used Lemma~\ref{lem:transp 2}.

Let~$i=r$. Then
\begin{align*}
T_r^{-1} Z_r T_r&=\ascprod_{i\in[1,r-1]} T_{(i,n+1)} T_{(r+1,n+1)}T_r T_{(r+1,n+1)} \Cx1r T_{(r+1,n+1)} T_r\\
&=\ascprod_{i\in[1,r-1]}T_{(i,n+1)} T_{(r+1,n+1)}T_r T_{(r+1,n+1)} \Cx1{(r-1)} T_{(r,n+1)}\\
&=\ascprod_{i\in[1,r-1]}T_{(i,n+1)} \Cx1{(r-2)} T_{(r+1,n+1)}T_r T_{r-1} T_{(r+1,n+1)} T_{(r,n+1)}\\
&=\ascprod_{i\in[1,r-1]}T_{(i,n+1)} \Cx1{(r-2)} T_{(r,n+1)} T_{(r+1,n+1)}T_{r-1} T_r T_{(r+1,n+1)}\\
&=\ascprod_{i\in[1,r+1]}T_{(i,n+1)} \Cx1r T_{(r+1,n+1)}=Z_r,
\end{align*}
by Lemma~\ref{lem:transp 1}.
\end{proof}
\begin{lemma}\label{lem:comm II}
We have $T_i Z_r T_{(r+2,n)}^2=Z_r T_{(r+2,n)}^2 T_i$ for $i\in\{r+1,n\}$.
\end{lemma}
\begin{proof}
Since $T_{r+1}$ and~$T_n$ commute with $T_{(a,b)}$ for all $r+2<a<b<n-1$, $T_{w_\circ}^{[r+1,n]}=\prod\limits_{1\le i\le \frac12(n-r)+1} \!\!\! T_{(r+i,n+2-i)}$ by Proposition~\ref{prop:elem prop transp} while $T_j T_{w_\circ^{[r+1,n]}}=T_{w_\circ^{[r+1,n]}}
T_{n+r+1-j}$ for all $j\in [r+1,n]$,
we conclude that
$$
T_{r+1} T_{(r+1,n+1)}T_{(r+2,n)}=T_{(r+1,n+1)}T_{(r+2,n)} T_n
$$
and
$$
T_n T_{(r+1,n+1)}T_{(r+2,n)}=T_{(r+1,n+1)}T_{(r+2,n)} T_{r+1}.
$$
Furthermore,
note that by Proposition~\partref{prop:elem prop transp.f}, $T_{(r+2,n)}$ commutes with all factors in
the formula defining~$Z_r$.

We have
\begin{align*}
T_{r+1} Z_r T_{(r+2,n)}^2 &=\ascprod_{i\in[1,r]} T_{(i,n+1)} T_{r+1} T_{(r+1,n+1)}T_{(r+2,n)} \Cx1r  T_{(r+1,n+1)}T_{(r+2,n)}\\
&=\ascprod_{i\in[1,r]} T_{(i,n+1)} T_{(r+1,n+1)}T_{(r+2,n)} T_n\Cx1r  T_{(r+1,n+1)}T_{(r+2,n)}\\
&=\ascprod_{i\in[1,r]} T_{(i,n+1)} T_{(r+1,n+1)}T_{(r+2,n)} \Cx1r T_n  T_{(r+1,n+1)}T_{(r+2,n)}\\
&=Z_r T_{(r+2,n)}^2 T_{r+1},
\\
\intertext{and}
T_{n} Z_r T_{(r+2,n)}^2 &=\Cx1r T_n T_{(r+1,n+1)}T_{(r+2,n)}\ascprod_{i\in[1,r]} T_{(i,n+1)} T_{(r+1,n+1)}T_{(r+2,n)} \\
&=\Cx1r T_{(r+1,n+1)}T_{(r+2,n)}T_{r+1}\ascprod_{i\in[1,r]} T_{(i,n+1)} T_{(r+1,n+1)}T_{(r+2,n)} \\
&=\Cx1r T_{(r+1,n+1)}T_{(r+2,n)}\ascprod_{i\in[1,r]}
T_{(i,n+1)} T_{r+1} T_{(r+1,n+1)}T_{(r+2,n)}\\
&=\Cx1r T_{(r+1,n+1)}T_{(r+2,n)}\ascprod_{i\in[1,r]}
T_{(i,n+1)} T_{(r+1,n+1)}T_{(m+2,n)}T_n\\
&=Z_r T_{(r+2,n)}^2 T_n.\qedhere
\end{align*}
\end{proof}
Together, Lemmata~\ref{lem:comm I} and~\ref{lem:comm II} imply that
$Z_r T_{w_\circ^{[r+2,n-1]}}^2$ is central
in~$\Br^+_{n}$, which completes the proof of Proposition~\ref{prop:Z_r is central}.
\end{proof}
\begin{proposition}\label{prop:TJm power central}
For all~$m\in [1,n-2]$,
$T_{[1,m+1]\cup\{n+1\}}^{m+2}=Z_m$.
\end{proposition}
\begin{proof}
The assertion follows immediately from Proposition~\ref{prop:Z_r is central} for $m$ even. Throughout the rest of this proof, we assume that~$m$ is odd. Let~$J_m=[1,m+1]\cup\{n+1\}$.
\begin{lemma}\label{lem:1st identity}
We have
$$
T_{J_m}^{m+1}=\dscprod_{j\in [1,m]_2} T_{(j,n+1)}\ascprod_{j\in [2,m+1]_2} T_{(j,n+1)}.
$$
\end{lemma}
\begin{proof}
First we prove by descending induction that for all $k\in [1,m]_2$,
\begin{equation}\label{eq:1st rewrite}
\begin{split}
T_{J_m}^{m+1}=\dscprod_{i\in [k,m]_2} &(\Cx im T_{(m+1,n+1)}) \Big(\ascprod_{j\in[k,m]_2} T_{w_\circ^{[1,j-2]_2}} T_{w_\circ^{[2,j-1]_2}}\Big)\\
&(T_{w_\circ^{[1,m]_2}}T_{w_\circ^{[2,m-1]_2}}T_{(m+1,n+1)})^{\frac{m+k}2}.
\end{split}
\end{equation}
Indeed, since the $T_i$, $i\in [1,m-1]$ commute with $T_{(m+1,n+1)}$
by Proposition~\partref{prop:elem prop transp.e}, we have
$$
T_{J_m}^{m+1}=T_m T_{(m+1,n+1)} (T_{w_\circ^{[1,m-2]_2}}T_{w_\circ^{[2,m-1]_2}}) (T_{w_\circ^{[1,m]_2}}T_{w_\circ^{[2,m-1]_2}}T_{(m+1,n+1)})^{m},
$$
which is~\eqref{eq:1st rewrite} with~$k=m$.

For the inductive step we have
\begin{align*}
T_{J_m}^{m+1}&=\dscprod_{i\in [k,m]_2} (\Cx im T_{(m+1,n+1)}) \Big(\ascprod_{j\in[k,m]_2} T_{w_\circ^{[1,j-2]_2}} T_{w_\circ^{[2,j-1]_2}}\Big)\\
&\mskip200mu(T_{w_\circ^{[1,m]_2}}T_{w_\circ^{[2,m-1]_2}} T_{(m+1,n+1)})^{\frac{m+k}2} \\
&=\dscprod_{i\in [k,m]_2} (\Cx im T_{(m+1,n+1)}) \Big(\ascprod_{j\in[k,m]_2} T_{w_\circ^{[1,j-4]_2}} T_{w_\circ^{[2,j-3]_2}} T_{j-2} T_{j-1} \Big)
T_m
T_{(m+1,n+1)}\\&\mskip200muT_{w_\circ^{[1,m-2]_2}}T_{w_\circ^{[2,m-1]_2}}(T_{w_\circ^{[1,m]_2}}T_{w_\circ^{[2,m-1]_2}} T_{(m+1,n+1)})^{\frac{m+k}2-1} \\
&=\dscprod_{i\in [k,m]_2} (\Cx im T_{(m+1,n+1)}) \Cx{(k-2)}mT_{(m+1,n+1)}\Big(\ascprod_{j\in[k-2,m-2]_2} T_{w_\circ^{[1,j-2]_2}} T_{w_\circ^{[2,j-1]_2}}\Big)
\\&\mskip200muT_{w_\circ^{[1,m-2]_2}}T_{w_\circ^{[2,m-1]_2}} (T_{w_\circ^{[1,m]_2}}T_{w_\circ^{[2,m-1]_2}} T_{(m+1,n+1)})^{\frac{m+k}2-1} \\
&=\dscprod_{i\in [k-2,m]_2} (\Cx im T_{(m+1,n+1)})\Big(\ascprod_{j\in[k-2,m]_2} T_{w_\circ^{[1,j-2]_2}} T_{w_\circ^{[2,j-1]_2}}\Big) \\
&\mskip200mu(T_{w_\circ^{[1,m]_2}}T_{w_\circ^{[2,m-1]_2}} T_{(m+1,n+1)})^{\frac{m+k-2}2}.
\end{align*}
This proves~\eqref{eq:1st rewrite}. Taking~$k=1$ yields
\begin{equation}\label{eq:1st rewrite final}
\begin{split}
T_{J_m}^{m+1}=\dscprod_{i\in [1,m]_2} (\Cx im T_{(m+1,n+1)}) &\Big(\ascprod_{j\in[1,m-2]_2} T_{w_\circ^{[1,j]_2}} T_{w_\circ^{[2,j+1]_2}}\Big)\\
&(T_{w_\circ^{[1,m]_2}}T_{w_\circ^{[2,m-1]_2}} T_m T_{(m+1,n+1)})^{\frac{m+1}2}.
\end{split}
\end{equation}
The next step is to show that for all $k\in[1,m]_2$,
\begin{equation}\label{eq:2nd rewrite}\begin{split}
(T_{w_\circ^{[1,m]_2}}T_{w_\circ^{[2,m-1]_2}} T_m T_{(m+1,n+1)})^{\frac{m+1}2}=
\Big(\dscprod_{j\in[k,m]_2} T_{w_\circ^{[1,j]_2}}T_{w_\circ^{[2,j-1]_2}}\Big) T_{(m+1,n+1)}\\\ascprod_{j\in[k+1,m-1]_2}(\Cxr jm T_{(m+1,n+1)})
(T_{w_\circ^{[1,m]_2}}T_{w_\circ^{[2,m-1]_2}} T_{(m+1,n+1)})^{\frac{k-1}2}.
\end{split}
\end{equation}
Again, we use descending induction on~$k$, the case~$k=m$ being trivial. For the inductive step, we have
\begin{align*}
(&T_{w_\circ^{[1,m]_2}}T_{w_\circ^{[2,m-1]_2}} T_m T_{(m+1,n+1)})^{\frac{m+1}2}=
\Big(\dscprod_{j\in[k,m]_2} T_{w_\circ^{[1,j]_2}}T_{w_\circ^{[2,j-1]_2}}\Big) T_{(m+1,n+1)}\\&\mskip50mu\ascprod_{j\in[k+1,m-1]_2}(\Cxr jm T_{(m+1,n+1)})
(T_{w_\circ^{[1,m]_2}}T_{w_\circ^{[2,m-1]_2}} T_{(m+1,n+1)})^{\frac{k-1}2}\\
&=
\Big(\dscprod_{j\in[k,m]_2} T_{w_\circ^{[1,j]_2}}T_{w_\circ^{[2,j-1]_2}}\Big) T_{(m+1,n+1)}\ascprod_{j\in[k+1,m-1]_2}(\Cxr jm T_{(m+1,n+1)})
 \\
&\mskip50muT_{w_\circ^{[1,k-2]_2}} T_{w_\circ^{[2,k-3]_2}}T_{w_\circ^{[k,m]_2}} T_{w_\circ^{[k-1,m]_2}} T_{(m+1,n+1)}
(T_{w_\circ^{[1,m]_2}}T_{w_\circ^{[2,m-1]_2}} T_{(m+1,n+1)})^{\frac{k-3}2}\\
&=
\Big(\dscprod_{j\in[k-2,m]_2} T_{w_\circ^{[1,j]_2}}T_{w_\circ^{[2,j-1]_2}}\Big) T_{(m+1,n+1)}\ascprod_{j\in[k+1,m-1]_2}(\Cxr jm T_{(m+1,n+1)})\\
&\mskip50mu
\Big(\ascprod_{i\in [k,m]_2} T_i T_{i-1}\Big)
T_{(m+1,n+1)}
(T_{w_\circ^{[1,m]_2}}T_{w_\circ^{[2,m-1]_2}} T_{(m+1,n+1)})^{\frac{k-3}2}\\
&=
\Big(\dscprod_{j\in[k-2,m]_2} T_{w_\circ^{[1,j]_2}}T_{w_\circ^{[2,j-1]_2}}\Big) T_{(m+1,n+1)}\ascprod_{j\in[k+1,m-1]_2}(\Cxr{(j-2)}m T_{(m+1,n+1)})\\
&\mskip50mu
T_m T_{m-1} T_{(m+1,n+1)}
(T_{w_\circ^{[1,m]_2}}T_{w_\circ^{[2,m-1]_2}} T_{(m+1,n+1)})^{\frac{k-3}2}\\
&=
\Big(\dscprod_{j\in[k-2,m]_2} T_{w_\circ^{[1,j]_2}}T_{w_\circ^{[2,j-1]_2}}\Big) T_{(m+1,n+1)}\ascprod_{j\in[k-1,m-1]_2}(\Cxr{j}m T_{(m+1,n+1)})\\
&\mskip50mu
(T_{w_\circ^{[1,m]_2}}T_{w_\circ^{[2,m-1]_2}} T_{(m+1,n+1)})^{\frac{k-3}2}.
\end{align*}
In particular, for~$k=1$ we obtain
\begin{equation}\label{eq:2nd rewrite final}
\begin{split}
T_{J_m}^{m+1}&=\dscprod_{i\in [1,m]_2} (\Cx im T_{(m+1,n+1)}) \Big(\ascprod_{j\in[1,m-2]_2} T_{w_\circ^{[1,j]_2}} T_{w_\circ^{[2,j+1]_2}}\Big)\\
&\Big(\dscprod_{j\in[1,m]_2} T_{w_\circ^{[1,j]_2}}T_{w_\circ^{[2,j-1]_2}}\Big) T_{(m+1,n+1)}\ascprod_{j\in[2,m-1]_2}(\Cxr jm T_{(m+1,n+1)}).
\end{split}
\end{equation}
Next we claim that
$$
\Big(\ascprod_{j\in[1,m-2]_2} T_{w_\circ^{[1,j]_2}} T_{w_\circ^{[2,j+1]_2}}\Big)
\Big(\dscprod_{j\in[1,m]_2} T_{w_\circ^{[1,j]_2}}T_{w_\circ^{[2,j-1]_2}}\Big)=T_{w_\circ^{[1,m]}}.
$$
We use induction on odd~$m$, the case~$m=1$ being trivial. Note that, since
$$
T_{w_\circ^{[1,r]}}=T_{w_\circ^{[1,r-1]}}\Cxr1r
$$
and $T_{w_\circ^J}$, $J\subset[1,n]$, is ${}^{op}$-invariant by~\cite{BrSa}*{Lemma~5.1}, it follows that
\begin{equation}\label{eq:2-side w0}
T_{w_\circ^{[1,m]}}=T_{w_\circ^{[1,m-1]}}\Cxr1m=\Cx1{(m-1)}T_{w_\circ^{[1,m-2]}}\Cxr1m.
\end{equation}
Then
\begin{align*}
\Big(\ascprod_{j\in[1,m-2]_2} &T_{w_\circ^{[1,j]_2}} T_{w_\circ^{[2,j+1]_2}}\Big)
\Big(\dscprod_{j\in[1,m]_2} T_{w_\circ^{[1,j]_2}}T_{w_\circ^{[2,j-1]_2}}\Big)\\
&=\Big(\ascprod_{j\in[1,m-2]_2} T_{w_\circ^{[1,j-2]_2}} T_{j}T_{j+1} T_{w_\circ^{[2,j-1]_2}}\Big)
\Big(\dscprod_{j\in[1,m]_2} T_{w_\circ^{[1,j-2]_2}}T_j T_{j-1} T_{w_\circ^{[2,j-3]_2}}\Big)\\
&=\Cx1{(m-1)}\Big(\ascprod_{j\in[1,m-4]_2} T_{w_\circ^{[1,j]_2}} T_{w_\circ^{[2,j+1]_2}}\Big)
\Big(\dscprod_{j\in[1,m-2]_2} T_{w_\circ^{[1,j]_2}}T_{w_\circ^{[2,j-1]_2}}\Big)\Cxr1m\\
&=\Cx1{(m-1)}T_{w_\circ^{[1,m-2]}}\Cxr1m,
\end{align*}
where we used the induction hypothesis and the convention that~$T_0=1$. It remains to use~\eqref{eq:2-side w0}.
Thus,
$$
T_{J_m}^{m+1}=\dscprod_{i\in [1,m]_2} (\Cx im T_{(m+1,n+1)}) T_{w_\circ^{[1,m]}} T_{(m+1,n+1)}\ascprod_{j\in[2,m-1]_2}(\Cxr jm T_{(m+1,n+1)}).
$$
Applying the diagram automorphism of the submonoid $\Br^+_{[1,m]}(A_n)\cong \Br^+_{m+1}$ of~$\Br^+_{n+1}$, we obtain from~\eqref{eq:2-side w0}
$$
T_{w_\circ^{[1,m]}}=\Cxr2{m}T_{w_\circ^{[3,m]}}\Cx1m.
$$
Since~$T_{w_\circ^{[1,m]}}$ is ${}^{op}$-invariant by Proposition~\partref{prop:fund elts BrSa.a}, this is also equal to
$
T_{w_\circ^{[1,m]}}=\Cxr1{m}T_{w_\circ^{[1,m-2]}}\Cx2m$.
A straightforward induction now yields
$$
T_{w_\circ^{[1,m]}}=\Big(\ascprod_{j\in[1,m]_2}\Cxr jm\Big)\Big(\dscprod_{j\in[2,m-1]_2}\Cx jm\Big).
$$
Thus,
\begin{equation}\label{eq:3rd rewrite}
\begin{split}
T_{J_m}^{m+1}&=\dscprod_{i\in [1,m]_2} (\Cx im T_{(m+1,n+1)}) \Big(\ascprod_{j\in[1,m]_2}
\Cxr jm\Big)\\&\qquad\Big(\dscprod_{j\in[2,m-1]_2}\Cx jm\Big)
T_{(m+1,n+1)} \ascprod_{j\in[2,m-1]_2}(\Cxr jm T_{(m+1,n+1)}).
\end{split}
\end{equation}
Suppose we proved that, for some $k\in[1,m-2]_2$,
\begin{align}\label{eq:3rd rewrite final}
T_{J_m}^{m+1}&=\dscprod_{i\in [k,m]_2} (\Cx im T_{(m+1,n+1)}) \dscprod_{i\in [1,k-2]_2} T_{(j,n+1)}
\Big(\ascprod_{j\in[k,m]_2}
\Cxr jm\Big)\\&\qquad\Big(\dscprod_{j\in[k+1,m-1]_2}\Cx jm\Big)
\ascprod_{j\in[2,k-1]_2} T_{(j,n+1)} T_{(m+1,n+1)} \ascprod_{j\in[k+1,m-1]_2}(\Cxr jm T_{(m+1,n+1)}),\nonumber
\end{align}
the case~$k=1$ being just~\eqref{eq:3rd rewrite}. Since $\Cx im T_{(m+1,n+1)}\Cxr im=T_{(i,n+1)}$, $1\le i\le m$ and the $\Cx jm$, $i<j\le m$ commute with $T_{(i,n+1)}$, $1\le i\le m+1$,
we obtain
\begin{align*}
T_{J_m}^{m+1}&=\dscprod_{i\in [k+2,m]_2} (\Cx im T_{(m+1,n+1)}) \Cx km T_{(m+1,n+1)}\Cxr km \dscprod_{i\in [1,k-2]_2} T_{(j,n+1)}
\ascprod_{j\in[k+2,m]_2}
\Cxr jm\\&\qquad\Big(\dscprod_{j\in[k+3,m-1]_2}\Cx jm\Big)
\ascprod_{j\in[2,k-1]_2} T_{(j,n+1)}
\Cx{(k+1)}m T_{(m+1,n+1)}\Cxr{(k+1)}m\\
&\mskip300mu\ascprod_{j\in[k+3,m-1]_2}(\Cxr jm T_{(m+1,n+1)})\\
&=\dscprod_{i\in [k+2,m]_2} (\Cx im T_{(m+1,n+1)}) \dscprod_{i\in [1,k]_2} T_{(j,n+1)}
\ascprod_{j\in[k+2,m]_2}
\Cxr jm\\&\qquad\Big(\dscprod_{j\in[k+3,m-1]_2}\Cx jm\Big)
\ascprod_{j\in[2,k+1]_2} T_{(j,n+1)} T_{(m+1,n+1)} \ascprod_{j\in[k+3,m-1]_2}(\Cxr jm T_{(m+1,n+1)}).
\end{align*}
Thus, the identity~\eqref{eq:3rd rewrite final} holds for all~$k\in[1,m]_2$.
Taking~$k=m$ yields the assertion.
\end{proof}
\begin{lemma}\label{lem:2nd identity}
We have
\begin{equation}\label{eq:2nd identity}
T_{J_m}^{m+1} T_{w_\circ^{[1,m]_2}} T_{w_\circ^{[2,m-1]_2}}=\Big(\ascprod_{i\in[1,m+1]} T_{(i,n+1)}\Big) \Cx1m\end{equation}
\end{lemma}
\begin{proof}
The argument is by induction on odd~$m$, the case~$m=1$ being immediate from Lemma~\ref{lem:1st identity}. For the inductive step, by Lemma~\ref{lem:1st identity} we are proving that
$$
\dscprod_{j\in [1,m]_2} T_{(j,n+1)}\ascprod_{j\in [2,m+1]_2} T_{(j,n+1)}T_{w_\circ^{[1,m]_2}} T_{w_\circ^{[2,m-1]_2}}=\ascprod_{i\in[1,m+1]} T_{(i,n+1)} \Cx1m.
$$
The left hand side is equal to
\begin{align*}
T_{(m,n+1)} &\dscprod_{j\in [1,m-2]_2} T_{(j,n+1)}\ascprod_{j\in [2,m-1]_2} T_{(j,n+1)} T_{w_\circ^{[1,m-2]_2}} T_{w_\circ^{[2,m-3]_2}} T_{(m+1,n+1)}T_m T_{m-1}\\
&=T_{(m,n+1)} \ascprod_{i\in[1,m-1]} T_{(i,n+1)} \Cx1{(m-2)} T_{(m+1,n+1)}T_m T_{m-1}.
\end{align*}
where we used the induction hypothesis. Since $$
T_{(m-1,n+1)}T_{(m,n+1)}T_{(m+1,n+1)}=T_{(m,n+1)}T_{m-1} T_{(m,n+1)}T_{(m+1,n+1)}T_m
$$
by Lemma~\ref{lem:transp 2},
the right hand side equals to
\begin{multline*}
\ascprod_{i\in[1,m-2]} T_{(i,n+1)} T_{(m,n+1)}T_{m-1} T_{(m,n+1)}T_{(m+1,n+1)}T_m \Cx1m\\=\ascprod_{i\in[1,m-2]} T_{(i,n+1)} T_{(m,n+1)}T_{m-1} T_{(m,n+1)}
\Cx1{(m-1)} T_{(m+1,n+1)}T_m T_{m-1}.
\end{multline*}
Since the braid monoid is cancellative, it suffices to prove that
\begin{equation}
\label{eq:equiv id for reorder}
T_{(m,n+1)} \ascprod_{i\in[1,m-1]} T_{(i,n+1)} \Cx1{(m-2)}=\ascprod_{i\in[1,m-2]} T_{(i,n+1)} T_{(m,n+1)}T_{m-1} T_{(m,n+1)}
\Cx1{(m-1)}.
\end{equation}
Write $T_{(m,n+1)}=\Cx m{(n-1)}\Cxr mn$. Since~$\Cx m{(n-1)}$ commutes with the~$T_{(i,n+1)}$, $i\in [1,m-2]_2$, \eqref{eq:equiv id for reorder} is equivalent to
\begin{equation}
\label{eq:equiv id for reorder'}
\Cxr mn\ascprod_{i\in[1,m-1]} T_{(i,n+1)} \Cx1{(m-2)}=\ascprod_{i\in[1,m-2]} T_{(i,n+1)} \Cxr mn T_{m-1} T_{(m,n+1)}
\Cx1{(m-1)}.
\end{equation}
Since $\Cxr{(m-1)}n T_{(m,n+1)}=\Cxr{(m-1)}n\Cx mn\Cxr m{(n-1)}=T_{(m-1,n+1)}\Cxr m{(n-1)}$, that identity is equivalent to
\begin{equation}
\label{eq:equiv id for reorder''}
\Cxr mn\ascprod_{i\in[1,m-1]} T_{(i,n+1)} \Cx1{(m-2)}=\ascprod_{i\in[1,m-1]} T_{(i,n+1)} \Cxr m{(n-1)}
\Cx1{(m-1)}.
\end{equation}
Since $T_{(m-1,n+1)}$ commutes with $\Cxr m{(n-1)}$, we can rewrite the right hand side as
\begin{align*}
\ascprod_{i\in[1,m-2]} T_{(i,n+1)} &\Cxr m{(n-1)} T_{(m-1,n+1)}\Cx1{(m-1)}\\
&=\ascprod_{i\in[1,m-2]} T_{(i,n+1)} \Cxr m{(n-1)} T_{m-1} T_{(m,n+1)}T_{m-1}\Cx1{(m-1)}
\\
&=\ascprod_{i\in[1,m-2]} T_{(i,n+1)} \Cxr{(m-1)}{(n-1)} T_{(m,n+1)}T_{m-1}\Cx1{(m-1)}\\
&=\ascprod_{i\in[1,m-2]} T_{(i,n+1)} \Cxr{(m-1)}{(n-1)} T_{(m,n+1)}\Cx1{(m-1)}T_{m-2}.
\end{align*}
Therefore, \eqref{eq:equiv id for reorder''} is equivalent to
\begin{equation}
\label{eq:equiv id for reorder'''}
\Cxr mn\ascprod_{i\in[1,m-1]} T_{(i,n+1)} \Cx1{(m-3)}=\ascprod_{i\in[1,m-2]} T_{(i,n+1)} \Cxr{(m-1)}{(n-1)} T_{(m,n+1)}\Cx1{(m-1)}.
\end{equation}
Suppose we proved that~\eqref{eq:equiv id for reorder''} is equivalent to
\begin{equation}
\label{eq:equiv id for reorder'v}
\Cxr mn\ascprod_{j\in[1,m-1]} T_{(j,n+1)} \Cx1{(i-1)}=\ascprod_{j\in[1,i]} T_{(j,n+1)} \Cxr{(i+1)}{(n-1)} \ascprod_{j\in[i+2,m]} T_{(j,n+1)}\Cx1{(m-1)}.
\end{equation}
for some~$i\in[3,m-2]$.
We can rewrite the right hand side of~\eqref{eq:equiv id for reorder'v} as
\begin{align*}
\ascprod_{j\in[1,i-1]}& T_{(j,n+1)} \Cxr{(i+1)}{(n-1)} T_{(i,n+1)} \ascprod_{j\in[i+2,m]} T_{(j,n+1)}\Cx1{(m-1)}\\
&=\ascprod_{j\in[1,i-1]} T_{(j,n+1)} \Cxr{i}{(n-1)} T_{(i+1,n+1)} T_i \ascprod_{j\in[i+2,m]} T_{(j,n+1)}\Cx1{(m-1)}\\
&=\ascprod_{j\in[1,i-1]} T_{(j,n+1)} \Cxr{i}{(n-1)} T_{(i+1,n+1)} \ascprod_{j\in[i+2,m]} T_{(j,n+1)}T_i \Cx1{(m-1)}\\
&=\ascprod_{j\in[1,i-1]} T_{(j,n+1)} \Cxr{i}{(n-1)} T_{(i+1,n+1)} \ascprod_{j\in[i+2,m]} T_{(j,n+1)}\Cx1{(i-2)}T_i T_{i-1} T_i \Cx{(i+1)}{(m-1)}\\
&=\ascprod_{j\in[1,i-1]} T_{(j,n+1)} \Cxr{i}{(n-1)} T_{(i+1,n+1)} \ascprod_{j\in[i+2,m]} T_{(j,n+1)}\Cx1{(m-1)}T_{i-1}
\end{align*}
whence~\eqref{eq:equiv id for reorder'''} is equivalent to
$$
\Cxr mn\ascprod_{j\in[1,m-1]} T_{(j,n+1)} \Cx1{(i-2)}=\ascprod_{j\in[1,i-1]} T_{(j,n+1)} \Cxr{i}{(n-1)} \ascprod_{j\in[i+1,m]} T_{(j,n+1)}\Cx1{(m-1)}.
$$
Thus, \eqref{eq:equiv id for reorder'''} is equivalent to~\eqref{eq:equiv id for reorder'v} for all~$i\in[2,m-1]$. Taking~$i=2$
we conclude that~\eqref{eq:equiv id for reorder'''} is equivalent to
\begin{equation}\label{eq:last reduction}
\Cxr mn\ascprod_{j\in[1,m-1]} T_{(j,n+1)}=T_{(1,n+1)} \Cxr{2}{(n-1)} \ascprod_{j\in[3,m]} T_{(j,n+1)}\Cx1{(m-1)}.
\end{equation}
We now rewrite the right hand side of this identity as
\begin{align*}
T_{(1,n+1)} &\Cxr{2}{(n-1)} \ascprod_{j\in[3,m]} T_{(j,n+1)}\Cx1{(m-1)}\\
&=\Cxr1n\Cx 2n\Cxr2{(n-1)}\ascprod_{j\in[3,m]} T_{(j,n+1)}\Cx1{(m-1)}
=\Cxr1n \ascprod_{j\in[2,m]} T_{(j,n+1)}\Cx1{(m-1)}\\
&=\Cxr2n T_{(1,n+1)} \ascprod_{j\in[3,m]} T_{(j,n+1)}\Cx2{(m-1)}.
\end{align*}
Suppose we proved that for some~$i\in [2,m-1]$
\begin{align*}
T_{(1,n+1)} &\Cxr{2}{(n-1)} \ascprod_{j\in[3,m]} T_{(j,n+1)}\Cx1{(m-1)}=
\Cxr in \ascprod_{j\in[1,m]\setminus\{i\}} T_{(j,n+1)}\Cx i{(m-1)}.
\end{align*}
Then
\begin{align*}
T_{(1,n+1)} &\Cxr{2}{(n-1)} \ascprod_{j\in[3,m]} T_{(j,n+1)}\Cx1{(m-1)}\\
&=\Cxr{(i+1)} n \ascprod_{j\in[1,i-1]} T_{(j,n+1)} T_i T_{(i+1,n+1)} T_i \ascprod_{j\in[i+2,m]} T_{(j,n+1)}\Cx{(i+1)}{(m-1)}\\
&=\Cxr{(i+1)} n \ascprod_{j\in[1,i-1]} T_{(j,n+1)} T_{(i,n+1)} \ascprod_{j\in[i+2,m]} T_{(j,n+1)}\Cx{(i+1)}{(m-1)}\\
&=\Cxr{(i+1)}n \ascprod_{j\in[1,m]\setminus\{i+1\}} T_{(j,n+1)}\Cx{(i+1)}{(m-1)}.
\end{align*}
Therefore,
$$
T_{(1,n+1)} \Cxr{2}{(n-1)} \ascprod_{j\in[3,m]} T_{(j,n+1)}\Cx1{(m-1)}=\Cxr mn \ascprod_{j\in[1,m-1]} T_{(j,n+1)},
$$
which is the left-hand side of~\eqref{eq:last reduction}.
\end{proof}
Since $T_{J_m}=T_{w_\circ^{[1,m]_2}}T_{w_\circ^{[2,m-1]_2}} T_{(m+1,n+1)}$,
the assertion is immediate from Lemma~\ref{lem:2nd identity} and
the definition~\eqref{eq:Z_r defn} of~$Z_m$.
\end{proof}

Proposition~\ref{prop:g J=1} for~$J$ with~$g(J)=1$ and~$|J|$ odd follows from Proposition~\ref{prop:TJm power central} by Lemma~\ref{lem:move bubble}.

Thus, if $J=[1,a]\cup [b+1,n+1]$ with $1\le a<b\le n$,
$T_J^{|J|}$ is the product of two commuting ${}^{op}$-invariant
elements of~$\Br_n$ and, therefore, is~${}^{op}$-invariant.
It remains to apply Lemma~\ref{lem:TJ hom}. This completes the proof of
Theorem~\ref{thm:adm I2m}.
\end{proof}

\begin{proof}[Proof of forward direction in Theorem~\ref{thm:main thm adm}]
Since~$g(J)=1$, $J=[1,a]\cup [b+1,n+1]$ for some~$1\le a<b\le n+1$.
Let~$K=[a,b]$. Then, in the notation of Theorem~\ref{thm:main thm adm},
\begin{align*}
I'(K)&=[1,a-2]_2\cup \{ r\in [b+1,n+1]\,:\,
\overline{r-b}=0\},\\
I''(K)&=I\setminus (I'(K)\cup K)
=[1,a-1]_2\cup \{ r\in [b+1,n+1]\,:\, \overline{r-b}=1\},
\end{align*}
and so $\tau_{\overline a}(J)=T_{w_\circ^{I'(K)\cup K}}$,
$\tau_{\overline{a-1}}(J)=T_{w_\circ^{I''(K)}}$.
If~$\sigma(K)=K$, that is, $b=n+1-a$ then
$m(K)=\frac12(n-|K|)+1=a$ while~$|J|=2a$ and so~$m(K)=|J|/2$.
Otherwise, $m(K)=n-|K|+2=n-b+a+1=|J|$. Then
Theorem~\ref{thm:adm I2m} yields the desired homomorphism.
\end{proof}

Given $J\subset [1,n+1]$, $J=\{j_1,\dots,j_m\}$ with~$j_1<\cdots<j_m$, let $\Br^+_{n+1}[J]$ be the submonoid of~$\Br^+_{n+1}$
generated by the $T_{(j_k,j_{k+1})}$, $1\le k\le m-1$.
\begin{corollary}\label{cor:centrailty}
Let~$\{1,n+1\}\subset J\subset [1,n+1]$ with~$g(J)=1$. Then~$T_J^{|J|}$
is central in~$\Br^+_{n+1}[J]$.
\end{corollary}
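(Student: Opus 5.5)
The natural route goes through the identity established in Proposition~\ref{prop:g J=1}. Since $g(J)=1$, we may write $J=[1,a]\cup[b+1,n+1]$ with $1\le a<b\le n$. Proposition~\ref{prop:g J=1} gives, in $\Br_{n+1}$,
$$
T_J^{|J|}=T_{w_\circ^{[1,n]}}^2\,T_{w_\circ^{[a+1,b-1]}}^{-2}.
$$
Moreover $T_J^{|J|}$ lies in $\Br^+_{n+1}[J]$, since $T_J=\tilde\tau_1(J)\tilde\tau_0(J)$ is a product of the generators $T_{(j_k,j_{k+1})}$. So it suffices to show that $T_J^{|J|}$ commutes with each generator $T_{(j_k,j_{k+1})}$ of $\Br^+_{n+1}[J]$. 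This can be checked in $\Br_{n+1}$, because $\Br^+_{n+1}$ embeds there.

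Write $K=[a+1,b-1]$, which is the gap of $J$. The consecutive pairs of $J$ are of three kinds: $(j,j+1)$ with $j\in[1,a-1]$, $(j,j+1)$ with $j\in[b+1,n]$, and the single bridging pair $(a,b+1)$.

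For the first two kinds, $T_{(j,j+1)}=T_j$ with $j\notin[a,b]$. Such a $T_j$ commutes with every $T_k$, $k\in K$, because $|j-k|\ge2$. It therefore commutes with $T_{w_\circ^K}$.

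For the bridging generator, Proposition~\partref{prop:elem prop transp.e} shows that $T_{(a,b+1)}$ commutes with $T_k$ for all $k\in[a+1,b-1]=K$. Hence it also commutes with $T_{w_\circ^K}$.

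Finally, $T_{w_\circ^{[1,n]}}^2$ is central in $\Br^+_{n+1}$ by Proposition~\partref{prop:fund elts BrSa.d}, and therefore central in $\Br_{n+1}$. Consequently every generator of $\Br^+_{n+1}[J]$ commutes with $T_{w_\circ^{[1,n]}}^2T_{w_\circ^K}^{-2}=T_J^{|J|}$. Since the identity holds in the group and both sides lie in the monoid, the commutation holds in $\Br^+_{n+1}$.

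There is one degenerate case: if $b=a+1$, then $K=\emptyset$ and $T_{w_\circ^K}=1$, so the argument goes through trivially.

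The only point requiring care is the commutation of the bridging transposition with the whole interval $K$. Proposition~\partref{prop:elem prop transp.e} covers exactly $k\in[a+1,b-1]$, since it excludes only $k\in\{a-1,a,b,b+1\}$. So the whole argument is a short consequence of Proposition~\ref{prop:g J=1}, with that proposition doing the real work.
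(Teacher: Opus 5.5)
Your proof is correct and is essentially the paper's own argument: you write $J=[1,a]\cup[b+1,n+1]$ and invoke Proposition~\ref{prop:g J=1} to get $T_J^{|J|}=T_{w_\circ^{[1,n]}}^2T_{w_\circ^{[a+1,b-1]}}^{-2}$. You then check, via Proposition~\ref{prop:elem prop transp}, that $T_{w_\circ^{[a+1,b-1]}}$ commutes with the generators $T_i$, $i\in[1,n]\setminus[a,b]$, and $T_{(a,b+1)}$ of $\Br^+_{n+1}[J]$. Your extra remarks (that $T_J^{|J|}$ lies in $\Br^+_{n+1}[J]$, the passage through $\Br_{n+1}$, and the case $b=a+1$) only make explicit what the paper leaves implicit.
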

\begin{proof}
Let~$J=[1,a]\cup[b+1,n+1]$, $1\le a<b\le n$. Then~$\Br^+(J)$
is generated by the $T_i$, $i\in [1,n]\setminus [a,b]$
and by $T_{(a,b+1)}$. By Proposition~\ref{prop:g J=1},
$T_J^{|J|}=T_{w_\circ^{[1,n]}}^2 T_{w_\circ^{[a+1,b-1]}}^{-2}$.
Since $T_{w_\circ^{[1,n]}}^2$ is central in~$\Br^+_{n+1}$
and $T_{w_\circ^{[a+1,b-1]}}^2$ commutes with the $T_i$,
$i\in [1,a-1]\cup[b+1,n]=[1,n]\setminus [a,b]$ and with $T_{(a,b+1)}$
by Proposition~\ref{prop:elem prop transp}, the assertion follows.
\end{proof}
\begin{corollary}\label{cor:Cox hom from TJ}
Let~$\{1,n+1\}\subset J\subset [1,n+1]$ with~$g(J)=1$ and let~$m=m(J)$ be 
as in Theorem~\ref{thm:adm I2m}. Then the
assignments $\wh T_r\mapsto \tilde\tau_{\overline r}(J)$, $r\in\{1,2\}$ define an optimal 
Coxeter type homomorphism~$\Phi:
\Br^+(I_2(2m))\to \Br^+_{n+1}$.
\end{corollary}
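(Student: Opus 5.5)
The proof splits into three claims: $\Phi$ is a homomorphism, it is of Coxeter type, and it is optimal.

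\emph{Homomorphism.} Theorem~\ref{thm:adm I2m} shows that the assignments $\wh T_r\mapsto\tau_{\overline r}(J)$ define a homomorphism $\Br^+(I_2(2m))\to\Br^+_{n+1}$ with $m=m(J)$. By Lemma~\ref{lem:TJ hom} (implication \ref{lem:TJ hom.c}$\Rightarrow$\ref{lem:TJ hom.b}), this is equivalent to $T_J^m$ being ${}^{op}$-invariant, and hence to the assignments $\wh T_r\mapsto\tilde\tau_{\overline r}(J)$ defining a homomorphism $\Phi$.

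\emph{Coxeter type.} Each $\tilde\tau_k(J)$ is a product of pairwise commuting transposition elements $T_{(j_r,j_{r+1})}$ with disjoint, non-adjacent index ranges; these commute by Proposition~\partref{prop:elem prop transp.f}. Their images in $S_{n+1}$ are therefore products of disjoint transpositions, which are involutions. Theorem~\partref{thm:Main Thm Cox Heck.a} (\ref{thm:Main Thm Cox Heck.a.iii}$\Rightarrow$\ref{thm:Main Thm Cox Heck.a.i}) then shows that $\Phi$ is of Coxeter type.

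\emph{Optimality.} We must show that $2m=\min B(\tilde\tau_1(J),\tilde\tau_0(J))$. By Lemma~\partref{lem:taut homs.b}, this minimum $2m'$ divides $2m$; it is even because $\ell(\tilde\tau_1(J))\ne\ell(\tilde\tau_0(J))$ in general (Lemma~\partref{lem:elem Artin hom.c}). More robustly, we avoid parity arguments and observe that $2m'\in B$ forces $T_J^{m'}=(\tilde\tau_1\tilde\tau_0)^{m'}$ to be ${}^{op}$-invariant. Since $m'\in B$ does not require $m'$ even, we handle the odd case separately: $\brd{\tilde\tau_1\tilde\tau_0}{m'}=\brd{\tilde\tau_0\tilde\tau_1}{m'}$ lifts along the decoration of Lemma~\ref{lem:TJ hom}, and then Lemma~\partref{lem:taut homs.b} gives $2m'\in B$ anyway. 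By Corollary~\ref{cor:TJ coxeter}, $\pi_n(T_J)$ is a $|J|$-cycle and $|J|$ divides $2m'$.

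If $J\neq\tilde\sigma(J)$, then $m=|J|$ and $2m'$ is a multiple of $|J|$ dividing $2|J|$, so $m'\in\{|J|/2,|J|\}$. If $J=\tilde\sigma(J)$, then $m=|J|/2$ and $m'=m$ follows directly. It therefore remains to exclude $m'=|J|/2$ when $|J|$ is even and $J\ne\tilde\sigma(J)$. This is the main obstacle.

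For this step I would use Proposition~\ref{prop:g J=1}: $T_J^{|J|}=T_{w_\circ^{[1,n]}}^2T_{w_\circ^{[a+1,b-1]}}^{-2}$. If $T_J^{|J|/2}=:Y$ were ${}^{op}$-invariant, it would be a square root of this element in $\Br_{n+1}$. Conjugating via Lemma~\ref{lem:move bubble} / Proposition~\ref{prop:conj} to the symmetric position $\tilde J_{|J|/2}$, and using Corollary~\ref{cor:symm even J}, which gives $T_{\tilde J}^{|J|/2}=T_{w_\circ^{[1,n]}}T_{w_\circ^{[k+1,n-k]}}^{-1}$, $Y$ would be conjugate to this element by an explicit $\Cx ab$-type product, and ${}^{op}$-invariance would have to survive that conjugation. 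A simpler route is to pass to $W(A_n)$: by Lemma~\ref{lem:can image op inv}, $\pi_n(Y)$ would be an involution, and $\pi_n(Y)=\pi_n(T_J)^{|J|/2}$ is the product of $|J|/2$ transpositions pairing opposite entries of the $|J|$-cycle. This is always an involution, so it gives no contradiction.

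I would therefore fall back on the decorated version. A homomorphism from $I_2(|J|)$ via the $\tilde\tau$ would, after decorating by $(X_1(J),X_0(J))$ as in Lemma~\ref{lem:TJ hom}, give a standard homomorphism $\wh T_r\mapsto\tau_{\overline r}(J)$ from $I_2(|J|)$. Lemma~\ref{lem:sqf Hecke hom} then requires ${}^{op}$-invariance of the corresponding square-free element $\Phi(\wh T_{w_\circ})$, and Lemma~\partref{lem:T_w0 divides image.a} forces $\Phi(\wh T_{w_\circ})$ to be divisible by $T_{w_\circ^{[1,n]}}$. Comparing lengths, $\tfrac{|J|}2\ell(\tau_1\tau_0)$ against $\ell(T_{w_\circ^{[1,n]}})$, with the explicit values from \eqref{eq:len TJ}, should yield a contradiction precisely when $a\ne n+1-b$.
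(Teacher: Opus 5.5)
Your arguments for the homomorphism property and for Coxeter type are the paper's. Your reduction of optimality is also sound. With $k=\min B(\tilde\tau_1(J),\tilde\tau_0(J))$, Corollary~\ref{cor:TJ coxeter} leaves open exactly the case $k=|J|$ when $|J|$ is even and $J\neq\tilde\sigma(J)$. The paper's proof cites only that corollary at this point; the case is actually settled later, by Proposition~\ref{prop:|J| even}.

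\textbf{The gap.} Your proposed way of excluding this case cannot work. Write $J=[1,a]\cup[b+1,n+1]$. Then
\[
\ell(\tau_1(J)\tau_0(J))=|J|-2+\tbinom{b-a+2}{2},\qquad |J|=n+1-(b-a),
\]
so $\tfrac{|J|}{2}\ell(\tau_1(J)\tau_0(J))$ depends only on $n$ and $b-a$, not on where the gap sits. Now take $J'=[1,|J|/2]\cup\tilde\sigma([1,|J|/2])$. It has the same $n$ and the same gap size $b-a$, and for it Theorem~\ref{thm:adm I2m} does give a standard homomorphism $\Br^+(I_2(|J|))\to\Br^+_{n+1}$. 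Hence the divisibility in part (a) of Lemma~\ref{lem:T_w0 divides image}, and the length inequality it implies, hold for $J'$ and therefore also for $J$. No contradiction can arise.

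Two further problems with that step. First, $(\tau_1(J)\tau_0(J))^{|J|/2}$ is not square free. Second, its ${}^{op}$-invariance is, by Lemma~\ref{lem:I2m iff cnd}, equivalent to the very statement you are trying to refute. Conceptually, all $T_J$ with the same $|J|$ are conjugate in $\Br_{n+1}$ (Corollary~\ref{cor:conj J}). So the obstruction must detect ${}^{op}$-invariance itself, not a numerical quantity blind to the position of the gap.

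\textbf{The missing ingredient.} This is Proposition~\ref{prop:|J| even}. In the symmetrized Burau representation, ${}^{op}$ acts as transposition (Proposition~\ref{prop:Burau}). There one exhibits eigenvectors of $T_J^{|J|/2}$ for the distinct eigenvalues $q^{n+1}$ and $-q^{n+1}$ that are not orthogonal. By Lemma~\ref{lem:orth eigenspaces}, $T_J^{|J|/2}$ is then not ${}^{op}$-invariant, and your case analysis closes.

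\textbf{Odd $k$.} Separately, odd $k$ is not ruled out by your detour through $2k\in B$. That fact is just Lemma~\partref{lem:taut homs.a}, and no decoration is needed for it. For $|J|$ odd, $k=|J|$ satisfies every divisibility constraint you use. What excludes odd $k$ is Lemma~\partref{lem:elem Artin hom.c} together with the count
\[
\ell(\tilde\tau_{\bar a}(J))-\ell(\tilde\tau_{\overline{a+1}}(J))\ge 2(b-a)-1>0 .
\]
This holds because the two factors contain numbers of transpositions differing by at most one, and $\ell(T_{(a,b+1)})=2(b-a)+1$. State this outright rather than as holding ``in general''.
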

\begin{proof}
The above assignments define a homomorphism
by Theorem~\ref{thm:adm I2m} and Lemma~\ref{lem:TJ hom}.
Since 
$\pi_n(\tilde\tau_r(J))$, $r\in\{0,1\}$ are manifestly involutions,
being products of commuting transpositions,
$\Phi$ is of Coxeter type by Theorem~\partref{thm:Main Thm Cox Heck.a}
and is optimal by Corollary~\ref{cor:TJ coxeter}.
\end{proof}

\subsection{Symmetrized Burau representation and the converse}\label{subs:Burau} We now prove the converse in Theorem~\ref{thm:main thm adm}. The key ingredient is the following
\begin{theorem}\label{thm:adm I2m converse }
Let~$\{1,n+1\}\subset J\subset [1,n+1]$ with~$g(J)>1$. Then $T_J^m$ is not ${}^{op}$-invariant
for all~$m\in\mathbb Z_{>0}$.
\end{theorem}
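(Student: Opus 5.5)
Following the strategy announced in the introduction, we pass to a linear representation in which ${}^{op}$ becomes adjunction. Concretely, we use a (reduced) symmetrized Burau representation $\rho_q$ of $\Br_{n+1}$ on $\mathbb C^n$ with real parameter $q$. It is the $A_n$-analogue of the matrices $\check T_1,\check T_2(z)$ from the proof of Proposition~\ref{prop:key converse Tits}. Each $\rho_q(T_i)$ is a real symmetric matrix, equal to the identity outside the $2\times2$ block on coordinates $i,i+1$, where it is $\left(\begin{smallmatrix}1-q^2&q\\ q&0\end{smallmatrix}\right)$, suitably adapted to the reduced representation. With such a choice, $\rho_q(X^{op})=\rho_q(X)^{t}$ for every $X\in\Br^+_{n+1}$. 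Hence if $T_J^m$ is ${}^{op}$-invariant, then $A^m$ is symmetric, where $A:=\rho_q(T_J)$.

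The argument then rests on the following linear-algebra step. Suppose $A$ has eigenvectors $v,w$ with eigenvalues $\lambda\neq\mu$ such that $\lambda^m\neq\mu^m$ and $\langle v,w\rangle\neq0$ for the standard (bilinear or hermitian) form. Then $A^m$ is not symmetric, because eigenvectors of a symmetric matrix for distinct eigenvalues are orthogonal. Thus it suffices to exhibit, for generic $q$, two eigenvectors of $A$ whose eigenvalues have non-equal $m$-th powers for all $m$, and which are not orthogonal.

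First I would reduce the problem using conjugation. By Corollary~\ref{cor:conj J}, $T_J=U(J)^{-1}T_{J_m}U(J)$, so the spectrum of $A$ depends only on $|J|$, and it can be computed from $T_{J_{|J|-2}}$. Moreover, $\pi_n(T_J)$ is a $|J|$-cycle by Corollary~\ref{cor:TJ coxeter}. I would compute the characteristic polynomial of $\rho_q(T_J)$ explicitly as a polynomial in $q$, expecting the form $(t-1)^{n+1-|J|}$ times a factor in $t$ and $q^2$. I would then argue, as in Lemma~\ref{lem:generic eigenvalues}, by comparing leading terms in $q$, that for all but countably many real $q$ no two distinct eigenvalues have a ratio that is a root of unity. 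Next, the eigenvectors of $A$ are $U(J)^{-1}$ applied to eigenvectors of the model matrix. The obstruction to self-adjointness must therefore come from $U(J)$ failing to be orthogonal. Since $g(J)=1$ gives a homomorphism (Theorem~\ref{thm:adm I2m}), the computation must detect $g(J)\ge 2$: there, at least two ``gaps'' $[a,b]$ and $[a',b']$ make $\rho_q(U(J))^t\rho_q(U(J))$ fail to commute with the model matrix.

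For the explicit pair, I would use the fixed space. The eigenvalue $1$ eigenspace of $A$ contains vectors supported on the ``gaps'' (the intervals in $[1,n]$ between consecutive elements of $J$ not adjacent). Pick an eigenvector $v$ for eigenvalue $1$ localized near the first gap, and an eigenvector $w$ for a nontrivial eigenvalue $\lambda(q)\neq1$. With two gaps, the $1$-eigenspace splits in a way that destroys the orthogonality forced in the $g(J)=1$ case, namely $T_J^{|J|}=T_{w_\circ}^2T_{w_\circ^{[a+1,b-1]}}^{-2}$, which is self-adjoint. I would compute $\langle v,w\rangle$ as a rational function in $q$, show it is a nonzero function, and choose $q$ generic.

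The main obstacle is this last computation: writing down explicit eigenvectors of $\rho_q(T_J)$ for a general $J$ with $g(J)>1$ and showing non-orthogonality uniformly. To keep it tractable, I would first use Lemma~\ref{lem:diag aut TJ} and conjugation to reduce to $J$ with exactly two gaps in a normal position. Then I would argue that extra gaps only add orthogonal blocks, or else handle them by induction on $g(J)$, computing by hand only the minimal configuration, for example $J=\{1,3,5,\dots\}$ in small $n$.
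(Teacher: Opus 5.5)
Your framework is the paper's. You use the symmetrized Burau representation in which ${}^{op}$ becomes transposition; with the $2\times2$ blocks you wrote it lives on $\kk^{n+1}$, not $\CC^n$. You combine this with orthogonality of eigenspaces of a symmetric operator and with conjugation by $U(J)$ to the model $T_{[1,|J|-1]\cup\{n+1\}}$.

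The concrete part of your plan, however, rests on a wrong picture of the spectrum. By Corollary~\ref{cor:TJ eigenvectors}, $\det(t\,\id_V-T_J)=(t-1)(t^{|J|}-q^{2(n+1)})(t-q^2)^{n-|J|}$.
\begin{itemize}
\item The eigenvalue $1$ is simple, with eigenspace $\kk v_{[1,n+1]}$. This vector is fixed by all of $\Br_{n+1}$ and satisfies $\la v_{[1,n+1]}\,|\,u_i\ra=0$ for every $i$. Every other eigenvector lies in the invariant span of the $u_i$.
\item Hence the $1$-eigenspace is orthogonal to all other eigenspaces. Your witness, a $1$-eigenvector localized on a gap paired with a nontrivial eigenvector, can never detect non-symmetry.
\item The vectors supported inside gaps, e.g.\ $u_k$ with $j_r<k<j_{r+1}-1$, are in fact $q^2$-eigenvectors by Lemma~\ref{lem:Transp action}.
\item The remaining eigenvalues are the $|J|$-th roots of $q^{2(n+1)}$, and their pairwise ratios are roots of unity. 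So your genericity claim is false. The paper handles this by reducing, via Corollary~\ref{cor:TJ coxeter}, to exponents divisible by $|J|$ and comparing $\ker(T_J^{|J|}-q^{2(n+1)}\id_V)$ with $\ker(T_J-q^2\id_V)$.
\end{itemize}

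The genuine gap is that proving these two eigenspaces are non-orthogonal for \emph{every} $J$ with $g(J)>1$ is the whole content of the theorem, and the reductions you propose for it are unavailable.
\begin{itemize}
\item Conjugation does not preserve ${}^{op}$-invariance. Every $T_J$ is conjugate to $T_{[1,|J|-1]\cup\{n+1\}}$, whose $|J|$-th power is ${}^{op}$-invariant by Theorem~\ref{thm:adm I2m}. So you cannot move $J$ to a ``normal position'' with exactly two gaps; Lemma~\ref{lem:diag aut TJ} only gives the symmetry $J\leftrightarrow\tilde\sigma(J)$.
\item Extra gaps do not split off orthogonal blocks. By Lemma~\ref{lem:TJ|J| eigenvectors}, $U(J)^{-1}(u_r)$ is spread over all $u_t$ with $t\in[\beta_-(J),j_r-1]$, across every intermediate gap.
\item Your observation that the obstruction must come from $\rho_q(U(J))$ failing to be orthogonal is correct, but it only restates the claim.
\end{itemize}
What is actually required, and what the paper supplies, is the following, with $m=|J|-1$:
\begin{itemize}
\item closed formulas for $U(J)^{-1}(u_{m-1})$, and for $U(J)^{-1}(u_{j_{m-1}})$ when $j_{m-1}<n$ or $U(J)^{-1}(w^{(\epsilon)}_{[m+1,n-1]})$ when $j_{m-1}=n$;
\item a proof that the resulting pairings are nonzero Laurent polynomials in $q$ (Lemmas~\ref{lem:Euclid jm-1<n} and~\ref{lem:Euclid jm-1=n}). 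This uses specialization at $q=1$ after dividing by $1-q^{-2}$ in the first case, and a coefficient extraction in $z=q^{2\epsilon}$ in the second.
\end{itemize}
Your proposal names this computation as ``the main obstacle'' but offers no viable route through it.
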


To prove this Theorem, we use representation theory of braid monoids.
Let~$\kk$ be a field of characteristic zero and let~$q\in\kk^\times$
which is not a root of unity.
Let \plink{ei}$\{e_i\}_{1\le i\le n+1}$ be the standard basis of~$V=\kk^{n+1}$. Define~$T_i\in\End V$
by
$$
T_i(e_j)=e_j-(q \delta_{i,j}-\delta_{i+1,j})(q e_i-e_{i+1})
$$
for all $j\in [1,n+1]$, $i\in[1,n]$. It easy to see that the~$T_i$
are invertible with
$$
T_i^{-1}(e_j)=e_j-(\delta_{i,j}-q^{-1}\delta_{i+1,j})(e_i-q^{-1}e_{i+1}).
$$
\begin{proposition}\label{prop:Burau}
The operators $T_i$, $i\in [1,n]$ provide a representation of~$\Br_n$ on~$V$. Moreover, $T_i^2=(1-q^{2})T_i+q^{2}\id_V$, $T_i^{-1}=q^{-2} T_i+(1-q^{-2})\id_V$ and for any $T\in\Br_n$, the matrix of $T^{op}$
in the standard basis~$\{e_i\}_{1\le i\le n+1}$ is the transpose of the matrix of~$T$.
\end{proposition}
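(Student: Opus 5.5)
The plan is to verify the three claims directly from the defining formula. Throughout, we use the vectors $v_i:=qe_i-e_{i+1}$ and the symmetric bilinear form $(e_j,e_k)=\delta_{jk}$.

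\emph{Rank-one form of the generators.} The defining formula can be written as
$$
T_i(x)=x-(x,qe_i-e_{i+1})\,v_i,
$$
because $(e_j,qe_i-e_{i+1})=q\delta_{i,j}-\delta_{i+1,j}$. So $T_i=\id_V-v_iv_i^{t}$ is a symmetric matrix in the standard basis. It is a rank-one perturbation of the identity, and
$$
(v_i,v_i)=q^2+1,\qquad (v_i,v_{i+1})=-q,\qquad (v_i,v_j)=0\quad\text{for }|i-j|>1.
$$

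\emph{Quadratic relation and inverse.} Since $(v_iv_i^t)^2=(q^2+1)v_iv_i^t$, we get
$$
T_i^2=\id_V-2v_iv_i^t+(1+q^2)v_iv_i^t=\id_V+(q^2-1)v_iv_i^t=(1-q^2)T_i+q^2\id_V .
$$
Multiplying this identity by $q^{-2}T_i^{-1}$ gives $T_i^{-1}=q^{-2}T_i+(1-q^{-2})\id_V$. Expanding this formula reproduces the stated expression for $T_i^{-1}$, and in particular shows that each $T_i$ is invertible.

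\emph{Braid relations.} For $|i-j|>1$ we have $v_i\perp v_j$, hence $v_iv_i^tv_jv_j^t=0$ and $T_iT_j=T_jT_i$. For $j=i+1$, write $P=v_iv_i^t$ and $Q=v_{i+1}v_{i+1}^t$, so $T_i=1-P$ and $T_{i+1}=1-Q$. Put $a=1+q^2$ and $c=(v_i,v_{i+1})=-q$. Then $P^2=aP$, $Q^2=aQ$ and $PQP=c^2P$. Expanding,
$$
(1-P)(1-Q)(1-P)=1-2P-Q+P^2+PQ+QP-PQP
=1+(a-2-c^2)P-Q+PQ+QP .
$$
Here $a-2-c^2=q^2-1-q^2=-1$, so this equals $1-P-Q+PQ+QP$, which is symmetric in $P$ and $Q$. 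The symmetric computation for $(1-Q)(1-P)(1-Q)$ therefore gives the same operator. This key cancellation is the one genuine computation in the proof; everything else is formal.

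\emph{Representation.} The $T_i$ are invertible and satisfy the Artin relations of $A_n$, so the assignment $T_i\mapsto T_i$ extends to a group homomorphism from the Artin group to $GL(V)$. (The paper writes $\Br_n$ here, meaning the Artin group of type $A_n$, i.e.\ $\Br_{n+1}$ in its later notation.)

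\emph{Transpose property.} Let $\rho$ be the representation and $t$ the transpose. Then $T\mapsto \rho(T^{op})$ and $T\mapsto\rho(T)^t$ are both anti-homomorphisms into $GL(V)$. They agree on the generators because each $\rho(T_i)$ is symmetric. Both also respect inverses: ${}^{op}$ and $t$ each commute with inversion. Since the generators together with their inverses generate the group, the two maps agree everywhere, so the matrix of $T^{op}$ is the transpose of the matrix of $T$.
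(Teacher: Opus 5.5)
Your proof is correct, but it argues differently from the paper. The paper works in coordinates throughout. It computes both sides of the braid relation on each basis vector $e_j$; the displayed computation actually uses the formula for $T_i^{-1}$, so it checks the equivalent braid relation for the inverses. It then computes $T_i^2(e_j)$ directly and notes that the generators have symmetric matrices.

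You instead write $T_i=\id_V-v_iv_i^{t}$ and reduce everything to the Gram matrix of the vectors $v_i$:
\begin{itemize}
\item the quadratic relation is $P^2=(1+q^2)P$;
\item distant commutation is $v_i\perp v_j$;
\item the braid relation comes down to the single scalar identity $(v_i,v_i)-(v_i,v_{i+1})^2=1$, which is exactly what makes $1+(a-2-c^2)P-Q+PQ+QP$ symmetric in $P$ and $Q$.
\end{itemize}

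What your route buys:
\begin{itemize}
\item It is short and isolates why the construction works.
\item It makes the symmetry of the generators immediate, and hence the ${}^{op}$/transpose statement, which you then handle just as the paper does.
\item It explains facts used later. $T_i$ fixes $v_i^{\perp}$ pointwise and acts by $-q^2$ on $v_i=qu_i$, which gives $T_i(u_i)=-q^2u_i$ in \eqref{eq:ind base cox w0}. Also $v_{[i,j]}\perp v_k$ for $k\in[i,j-1]$, which gives Lemma~\ref{lem:fixed point}.
\end{itemize}
The paper's computation is more pedestrian, but it produces the explicit basis formulas directly.

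One small point of order: establish invertibility before multiplying by $T_i^{-1}$. For example, read the quadratic relation as $T_i\bigl(q^{-2}T_i+(1-q^{-2})\id_V\bigr)=\id_V$.
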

\begin{proof}
The operators~$T_i$ are easily seen to be modified Burau operators
(\cite{Bu}).
Clearly, $T_iT_j(e_k)=T_jT_i(e_k)$, $1\le k\le n+1$,
if~$|i-j|>1$. Since
\begin{align*}
T_i T_{i+1}T_i(e_j)&=T_i T_{i+1}(e_j-(\delta_{i,j}-q^{-1}\delta_{i+1,j})(e_i-q^{-1}e_{i+1})\\
&=T_i(e_j-(\delta_{i,j}-q^{-1}\delta_{i+1,j})e_i
-(\delta_{i+1,j}-q^{-1}\delta_{i+2,j})e_{i+1}
\\&\qquad+(q^{-2}\delta_{i,j}+q^{-1}(1-q^{-2})\delta_{i+1,j}-
q^{-2}\delta_{i+2,j})e_{i+2})\\
&=e_j-(\delta_{i,j}-q^{-2}\delta_{i+1,j})e_i-(1-q^{-2})(\delta _{i+1,j}-q^{-1}\delta _{i+2,j})e_{i+1}\\
&\qquad+(q^{-2}\delta_{i,j}+q^{-1}(1-q^{-2})\delta_{i+1,j}-
q^{-2}\delta_{i+2,j})e_{i+2})
\\
\intertext{and}
T_{i+1}T_iT_{i+1}(e_j)&=T_{i+1}T_i(e_j-(\delta_{i+1,j}-q^{-1}\delta_{i+2,j})(e_{i+1}-q^{-1}e_{i+2}))\\
 &=T_{i+1}(e_j-(\delta _{i,j}-q^{-2}\delta _{i+2,j})e_i+(q^{-1}\delta _{i,j}-\delta _{i+1,j}+q^{-1}(1-q^{-2})\delta _{i+2,j})e_{i+1}\\
&\qquad+q^{-1}(\delta_{i+1,j}-q^{-1}\delta_{i+2,j})e_{i+2})\\
 &=e_j-(\delta _{i,j}-q^{-2}\delta _{i+2,j})e_i
 -(1-q^{-2})(\delta _{i+1,j}-q^{-1}\delta _{i+2,j})e_{i+1}\\
 &\qquad+(q^{-2}\delta_{i,j}+q^{-1}(1-q^{-2})\delta_{i+1,j}-
 q^{-2}\delta_{i+2,j})e_{i+2}),
\end{align*}
for all $i\in[1,n-1]$, $j\in[1,n+1]$, it follows that all $T_i T_{i+1} T_i=T_{i+1}T_i T_{i+1}$.
Furthermore,
\begin{align*}
T_i^2(e_j)&=T_i(e_j-(q \delta_{i,j}-\delta_{i+1,j})(q e_i-e_{i+1})=e_j-(q\delta_{i,j}-\delta_{i+1,j})(1-q^{2})(q e_i-e_{i+1})\\
&=(1-q^2)T_i(e_j)+q^{2}e_j.
\end{align*}
The identity for~$T_i^{-1}$ is obvious. The last assertion follows since the matrix of~$T_i$ with respect to the
standard basis is obviously symmetric and thus the restriction of the transpose to the image of~$\Br_n$ in~$\End V$ coincides with~${}^{op}$.
\end{proof}

Given $i,j\in[1,n+1]$, define \plink{v[i,j]}$v_{[i,j]}=\sum_{i\le t\le j} q^t e_t$.

\begin{lemma}\label{lem:fixed point}
For all~$i\le j\in[1,n+1]$, $T(v_{[i,j]})=v_{[i,j]}$
for all $T\in\Br_n$ with $\supp(T)\subset [i,j-1]$. In particular, $T(v_{[1,n+1]})=v_{[1,n+1]}$ for all~$T\in\Br_n$.
\end{lemma}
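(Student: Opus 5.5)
Since $\Br_n$ is generated as a group by the $T_k$ and their inverses, it suffices to check the claim on the generators $T_k$ with $k\in[i,j-1]$. The statement for the inverses then follows by applying $T_k^{-1}$ to both sides, and products are handled by closure under composition. So the plan is a single direct computation: show that $T_k(v_{[i,j]})=v_{[i,j]}$ for every $i\le k\le j-1$.

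By definition, $T_k(e_t)=e_t-(q\delta_{k,t}-\delta_{k+1,t})(qe_k-e_{k+1})$. Hence, for any vector $v=\sum_t c_te_t$,
\[
T_k(v)=v-(qc_k-c_{k+1})(qe_k-e_{k+1}).
\]
Thus $T_k$ fixes $v$ exactly when $qc_k=c_{k+1}$. For $v=v_{[i,j]}$ we have $c_t=q^t$ for $t\in[i,j]$ and $c_t=0$ otherwise. When $k\in[i,j-1]$, both $k$ and $k+1$ lie in $[i,j]$, so $qc_k=q^{k+1}=c_{k+1}$ and the correction term vanishes. This gives the first assertion.

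The second assertion is the special case $i=1$, $j=n+1$, where every generator $T_k$, $k\in[1,n]$, has support in $[1,n]$. There is no genuine obstacle here. The only point requiring care is checking the boundary indices $k=i$ and $k=j-1$, which is why the support condition is $[i,j-1]$ rather than $[i,j]$.
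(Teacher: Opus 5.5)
Your proposal is correct and is essentially the paper's proof: reduce to the generators $T_k$, $k\in[i,j-1]$, and observe that the correction term $(qc_k-c_{k+1})(qe_k-e_{k+1})$ vanishes because $c_k=q^k$ and $c_{k+1}=q^{k+1}$; the paper writes this same step as the cancellation $-q^{k+1}(qe_k-e_{k+1})+q^{k+1}(qe_k-e_{k+1})$. Your explicit treatment of the inverses $T_k^{-1}$ and of the boundary indices $k=i$ and $k=j-1$ fills in details that the paper leaves implicit, and does not change the argument.
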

\begin{proof}
It suffices to prove that~$T_k(v_{[i,j]})=v_{[i,j]}$ for all~$k\in [i,j-1]$. We have
\begin{equation*}
T_k(v_{[i,j]})=v_{[i,j]}-q^{k+1}(q e_k-e_{k+1})+q^{k+1}(q e_k-e_{k+1})=v_{[i,j]}.\qedhere
\end{equation*}
\end{proof}
Denote \plink{ui}$u_i=e_i-q^{-1}e_{i+1}$, $i\in [1,n]$ and let~$w^{(a)}_{[i,j]}=\sum_{i\le t\le j}q^{a t}u_t$, $a\in\mathbb Z$.
It is immediate from the definitions that for all~$i\in [1,n]$,
$T^{\pm1}_i(u_k)=u_k$ if $k\in[1,n+1]\setminus\{i-1,i,i+1\}$ while
\begin{equation}\label{eq:ind base cox w0}
\begin{array}{ll}
T^{\pm 1}_i(u_{i-1})=u_{i-1}+q^{\pm1} u_i,&  i\in [2,n],\\
T^{\pm 1}_i(u_i)=-q^{\pm 2} u_i,&i\in [1,n],\\
T^{\pm 1}_{i}(u_{i+1})=q^{\pm1} u_i+u_{i+1},&i\in[1,n-1].
\end{array}
\end{equation}

\begin{lemma}\label{lem:rev Cox act}
Let $i\le j\in [1,n]$, $k\in[1,n]$ and~$\epsilon\in\{1,-1\}$. Then
$$
\Cxr ij^{(\epsilon)}
(u_k)=\begin{cases}
u_k,&k\in[1,n]\setminus [i-1,j+1],\\
q^{\epsilon}u_{k-1}+\delta_{k,j+1}u_{j+1},&k\in[i+1,j+1],\\
q^{\epsilon(1-i)} w^{(\epsilon)}_{[i-1,j]},&k=i-1>0,\\
-q^{\epsilon(2-i)} w^{(\epsilon)}_{[i,j]},&k=i.
\end{cases}
$$
\end{lemma}
\begin{proof}
Since~$\supp(\Cxr{i}{ j}^{(\epsilon)})=[i,j]$, the assertion is obvious for~$k\in[1,n+1]\setminus [i-1,j+1]$. To prove it for~$k\in[i-1,j+1]$ we use induction on~$j-i$.
The induction base $j-i=0$ is~\eqref{eq:ind base cox w0}.
For the inductive step, for~$k\in [i+1,j]$ we have
\begin{align*}
\Cxr{i}{ j}^{(\epsilon)}(u_k)&=\Cxr{(k-1)}{ j}^{(\epsilon)}\Cxr{i}{(k-2)}^{(\epsilon)}(u_k)=\Cxr{k}{ j}^{(\epsilon)}T^{\epsilon}_{k-1}(u_k)
=\Cxr{k}{ j}^{(\epsilon)}(q^{\epsilon} u_{k-1}+u_{k})\\
&=\Cxr{(k+1)}{ j}^{(\epsilon)}(q^{\epsilon}(u_{k-1}+q^{\epsilon}u_k)-q^{2\epsilon} u_k)
=q^{\epsilon}u_{k-1}
\end{align*}
while for~$k=j+1$,
$$
\Cxr{i}{ j}^{(\epsilon)}(u_{j+1})=T_j^{\epsilon}(u_{j+1})=q^{\epsilon}u_j+u_{j+1}.
$$
For~$k=i-1$, using~\eqref{eq:ind base cox w0} and the induction hypothesis we obtain
\begin{align*}
\Cxr{i}{ j}^{(\epsilon)}(u_{i-1})&=
u_{i-1}+q^{\epsilon} \Cxr{(i+1)}{ j}^{(\epsilon)}(u_i)=u_{i-1}+q^{\epsilon(1-i)}w^{(\epsilon)}_{[i,j]}=q^{\epsilon(1-i)}w^{(\epsilon)}_{[i-1,j]}.
\end{align*}
Finally, for~$k=i$, \eqref{eq:ind base cox w0} and the induction hypothesis yield
\begin{align*}
\Cxr{i}{ j}^{(\epsilon)}(u_i)&=-q^{2\epsilon}\Cxr{(i+1)}{ j}^{(\epsilon)}(u_i)=-q^{\epsilon(2-i)} w^{(\epsilon)}_{[i,j]}.\qedhere
\end{align*}
\end{proof}
\begin{corollary}\label{cor:cox w +-}
Let $i\le j,k\le l\in [1,n]$ and $\epsilon,\epsilon'\in\{1,-1\}$. Then
$$
\Cxr ij^{(\epsilon)}(w^{(\epsilon')}_{[k,l]})=
\begin{cases}
w^{(\epsilon')}_{[k,l]},&k>j+1,\\
q^{\epsilon+\epsilon'}w^{(\epsilon')}_{[k-1,\min(l-1,j)]}+w^{(\epsilon')}_{[j+1,l]},&k\in[i+1,j+1],\\
q^{\epsilon+\epsilon'} w^{(\epsilon')}_{[i,\min(l-1,j]]}-
q^{2\epsilon+(\epsilon'-\epsilon)i}w^{(\epsilon)}_{[i,j]}
+w^{(\epsilon')}_{[j+1,l]},&k=i,\\
w^{(\epsilon')}_{[k,i-1]}+
q^{\epsilon+\epsilon'}w^{(\epsilon')}_{[i,\min(l-1,j)]}+w^{(\epsilon')}_{[j+1,l]}\\
\quad+q^{(\epsilon'-\epsilon)i}(q^{\epsilon-\epsilon'}-q^{2\epsilon}) w^{(\epsilon)}_{[i,j]},&k<i.
\end{cases}
$$
\end{corollary}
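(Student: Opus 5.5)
The identity follows from Lemma~\ref{lem:rev Cox act} by linearity. I would write $w^{(\epsilon')}_{[k,l]}=\sum_{k\le t\le l}q^{\epsilon' t}u_t$, apply $\Cxr ij^{(\epsilon)}$ to each $u_t$ using that lemma, and regroup the result into the vectors $w^{(\cdot)}_{[\cdot,\cdot]}$. Throughout I use the convention that $w^{(a)}_{[r,s]}=0$ when $r>s$, and that $u_t$ only makes sense for $t\in[1,n]$, so the term $u_{j+1}$ is present only when $j+1\le l\le n$.

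First I split the index range $[k,l]$ into the following pieces.
\begin{itemize}
\item[(i)] Indices $t<i-1$ or $t>j+1$: here $u_t$ is fixed.
\item[(ii)] Indices $t\in[\max(k,i+1),\min(l,j+1)]$: each contributes
$q^{\epsilon' t}(q^\epsilon u_{t-1}+\delta_{t,j+1}u_{j+1})=q^{\epsilon+\epsilon'}q^{\epsilon'(t-1)}u_{t-1}+\delta_{t,j+1}q^{\epsilon'(j+1)}u_{j+1}$.
Summing the first parts gives $q^{\epsilon+\epsilon'}w^{(\epsilon')}_{[\max(k,i+1)-1,\min(l-1,j)]}$. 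The leftover $q^{\epsilon'(j+1)}u_{j+1}$ combines with the fixed terms $t\ge j+2$ from (i) to give $w^{(\epsilon')}_{[j+1,l]}$.
\item[(iii)] The index $t=i$, if $k\le i\le l$: it contributes $q^{\epsilon' i}\cdot(-q^{\epsilon(2-i)})w^{(\epsilon)}_{[i,j]}=-q^{2\epsilon+(\epsilon'-\epsilon)i}w^{(\epsilon)}_{[i,j]}$.
\item[(iv)] The index $t=i-1$, if $k\le i-1$: it contributes $q^{\epsilon'(i-1)}q^{\epsilon(1-i)}w^{(\epsilon)}_{[i-1,j]}$. I would split this as $w^{(\epsilon)}_{[i-1,j]}=q^{\epsilon(i-1)}u_{i-1}+w^{(\epsilon)}_{[i,j]}$. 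The first piece becomes $q^{\epsilon'(i-1)}u_{i-1}$, which together with the fixed terms $t<i-1$ from (i) gives $w^{(\epsilon')}_{[k,i-1]}$. The second piece is $q^{(\epsilon'-\epsilon)(i-1)}w^{(\epsilon)}_{[i,j]}$.
\end{itemize}

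Assembling the cases then gives the statement.
\begin{itemize}
\item[-] If $k>j+1$, everything is fixed.
\item[-] If $k\in[i+1,j+1]$, only (ii) and the tail $w^{(\epsilon')}_{[j+1,l]}$ occur, giving the second line.
\item[-] If $k=i$, piece (ii) starts at $i+1$ and so produces $w^{(\epsilon')}_{[i,\min(l-1,j)]}$; adding (iii) gives the third line.
\item[-] If $k<i$, all pieces occur. Combining the second part of (iv) with (iii) gives the coefficient $q^{(\epsilon'-\epsilon)(i-1)}-q^{2\epsilon+(\epsilon'-\epsilon)i}=q^{(\epsilon'-\epsilon)i}(q^{\epsilon-\epsilon'}-q^{2\epsilon})$ on $w^{(\epsilon)}_{[i,j]}$, which is the last line.
\end{itemize}

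There is no real obstacle beyond bookkeeping. The only points needing care are the boundary cases: $l\le j$, where the tail $w^{(\epsilon')}_{[j+1,l]}$ vanishes and the cutoff $\min(l-1,j)$ is the relevant one, and $i=1$, where the $t=i-1$ term is absent. These are handled uniformly by the empty-interval convention.
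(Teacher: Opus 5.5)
Your proposal is correct and is essentially the paper's proof: it likewise applies Lemma~\ref{lem:rev Cox act} to each $u_t$ in $w^{(\epsilon')}_{[k,l]}$ and regroups, and for $k<i$ it combines the $t=i-1$ and $t=i$ contributions into the coefficient on $w^{(\epsilon)}_{[i,j]}$ exactly as you do. One small caveat: your step ``all pieces occur'' when $k<i$ uses $l\ge i$, which the last case of the statement tacitly requires; the paper assumes this too, and the formula fails when $l<i$.
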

\begin{proof}
The first two cases are immediate from Lemma~\ref{lem:rev Cox act}. If~$k=i$,
\begin{align*}
\Cxr ij^{(\epsilon)}(w^{(\epsilon')}_{[i,l]})
&=q^{\epsilon' i}\Cxr ij^{(\epsilon)}(u_i)+q^{\epsilon+\epsilon'}w^{(\epsilon')}_{[i,\min(l-1,j)]}+w^{(\epsilon')}_{[j+1,l]}\\
&=-q^{(\epsilon'-\epsilon)i+2\epsilon} w^{(\epsilon)}_{[i,j]}+
q^{\epsilon+\epsilon'}w^{(\epsilon')}_{[i,\min(l-1,j)]}+w^{(\epsilon')}_{[j+1,l]}.
\end{align*}
Finally, if $k<i$,
\begin{align*}
\Cxr ij^{(\epsilon)}(w^{(\epsilon')}_{[k,l]})&=
w^{(\epsilon')}_{[k,i-2]}+q^{\epsilon'(i-1)}\Cxr ij^{(\epsilon)}(u_{i-1})-q^{(\epsilon'-\epsilon)i+2\epsilon} w^{(\epsilon)}_{[i,j]}+
q^{\epsilon+\epsilon'}w^{(\epsilon')}_{[i,\min(l-1,j)]}+w^{(\epsilon')}_{[j+1,l]}\\
&=
w^{(\epsilon')}_{[k,i-2]}+q^{(\epsilon'-\epsilon)(i-1)}
w^{(\epsilon)}_{[i-1,j]}-q^{(\epsilon'-\epsilon)i+2\epsilon} w^{(\epsilon)}_{[i,j]}+
q^{\epsilon+\epsilon'}w^{(\epsilon')}_{[i,\min(l-1,j)]}+w^{(\epsilon')}_{[j+1,l]}\\
&=w^{(\epsilon')}_{[k,i-1]}+
q^{(\epsilon'-\epsilon)i}(q^{\epsilon-\epsilon'}-q^{2\epsilon}) w^{(\epsilon)}_{[i,j]}+
q^{\epsilon+\epsilon'}w^{(\epsilon')}_{[i,\min(l-1,j)]}+w^{(\epsilon')}_{[j+1,l]}.\qedhere
\end{align*}
\end{proof}

\begin{lemma}\label{lem:Tw0ij act ui}
For all $i\le j\in[1,n]$, $k\in[1,n]$, $\epsilon\in\{1,-1\}$
\begin{equation}
T_{w_\circ^{[i,j]}}^{\epsilon}(u_k)=\begin{cases}
u_k,&k\in[1,n]\setminus[i-1,j+1],\\
-q^{\epsilon(j-i+2)}u_{i+j-k},&k\in[i,j],\\
q^{\epsilon(j+1)}w^{(-\epsilon)}_{[i,j+1]},&k=j+1,\\
q^{-\epsilon(i-1)}w^{(\epsilon)}_{[i-1,j]},&k=i-1.
\end{cases}\label{eq:Tw0 act uk}
\end{equation}
In particular,
\begin{equation}\label{eq:Tw0^2 eigen}\{ u_k\,:\, k\in[i,j]\}\subset
\ker(T_{w_\circ^{[i,j]}}^2-q^{2(j-i+2)}\id_V)
\end{equation}
and
\begin{equation}\label{eq:Tw0 eigen}
w^{(\epsilon)}_{[k,l]}\in\ker(T_{w_\circ^{[i,j]}}-\id_V),\qquad [i-1,j+1]\subset [k,l]\subset [1,n+1].
\end{equation}
\end{lemma}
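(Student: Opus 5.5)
The plan is to prove \eqref{eq:Tw0 act uk} for $\epsilon=1$ first, and to deduce $\epsilon=-1$ by inverting the resulting linear maps. The two displayed consequences then follow directly.

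For $\epsilon=1$ I will use the factorization $T_{w_\circ^{[i,j]}}=\Cxr ij\,T_{w_\circ^{[i+1,j]}}$, which is the mirror form of \eqref{eq:2-side w0} obtained by applying the diagram automorphism of $\Br^+_{[i,j]}$ and ${}^{op}$. I then argue by induction on $j-i$. The base case $i=j$ is exactly \eqref{eq:ind base cox w0}: $T_i(u_i)=-q^2u_i$, $T_i(u_{i+1})=q u_i+u_{i+1}=q^{i+1}w^{(-1)}_{[i,i+1]}$, and $T_i(u_{i-1})=u_{i-1}+qu_i=q^{-(i-1)}w^{(1)}_{[i-1,i]}$. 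The case $k\notin[i-1,j+1]$ is trivial, since every $T_t$ with $t\in[i,j]$ fixes $u_k$.

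For the inductive step I treat the cases of $k$ separately.
\begin{itemize}
\item If $k\in[i+1,j]$, the induction hypothesis gives $T_{w_\circ^{[i+1,j]}}(u_k)=-q^{j-i+1}u_{i+1+j-k}$. Lemma~\ref{lem:rev Cox act} then sends $u_{i+1+j-k}$ to $q\,u_{i+j-k}$, provided $i+1+j-k\in[i+1,j]$ so that the $\delta$-term vanishes. This yields $-q^{j-i+2}u_{i+j-k}$.
\item If $k=i$, the induction hypothesis gives $T_{w_\circ^{[i+1,j]}}(u_i)=q^{-i}w^{(1)}_{[i,j]}$. I then apply Corollary~\ref{cor:cox w +-} with $k=i$ and $\epsilon=\epsilon'=1$, which gives $q^{2}w^{(1)}_{[i,j-1]}-q^2w^{(1)}_{[i,j]}=-q^{2+j}u_j$. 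Hence $u_i\mapsto -q^{j-i+2}u_j$, as required.
\item If $k=j+1$, the induction hypothesis gives $q^{j+1}w^{(-1)}_{[i+1,j+1]}$. Applying $\Cxr ij$ termwise via Lemma~\ref{lem:rev Cox act} shifts indices down, giving $q^{t-1}\cdot q^{-t}$ times the appropriate vectors plus the extra $u_{j+1}$ term, which reassembles into $q^{j+1}w^{(-1)}_{[i,j+1]}$. Alternatively I can use Corollary~\ref{cor:cox w +-} with $\epsilon'=-1$ and $k=i+1$.
\item If $k=i-1$, I could again use the induction hypothesis together with the case $k<i$ of Corollary~\ref{cor:cox w +-}. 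Simpler, though, is to use the other factorization $T_{w_\circ^{[i,j]}}=T_{w_\circ^{[i,j-1]}}\Cx ij$, or to invoke the symmetry $u_k\leftrightarrow$ reversed indices induced by the diagram automorphism.
\end{itemize}
The bookkeeping of powers of $q$ in the $k=i-1$ and $k=j+1$ cases is the main obstacle. I would first try to verify these two cases by the fixed-vector trick: $w^{(\pm1)}$ should be fixed by $T_{w_\circ^{[i,j]}}$ because of \eqref{eq:Tw0 eigen}. Knowing that, the image of $u_{j+1}$ is forced by linearity, since $u_{j+1}$ is $w^{(-1)}_{[i,j+1]}$ minus a combination of the $u_t$ with $t\in[i,j]$, whose images are already known.

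To make this non-circular, I will establish \eqref{eq:Tw0 eigen} independently. The vector $w^{(\epsilon)}_{[k,l]}$ with $[i-1,j+1]\subset[k,l]$ is fixed by each $T_t^{\pm1}$ with $t\in[i,j]$. Indeed, by \eqref{eq:ind base cox w0}, $T_t^{\pm1}$ changes only the $u_t$-coefficient, and the new coefficient is $q^{\epsilon(t-1)}q^{\pm1}+q^{\epsilon t}(-q^{\pm2})+q^{\epsilon(t+1)}q^{\pm1}$. For the matching sign $\pm=\epsilon$ this equals $q^{\epsilon t}$. For the opposite sign it does not, which shows that $w^{(\epsilon)}$ is fixed by $T_t^{\epsilon}$ only. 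I will therefore check sign conventions carefully and may need to state that $w^{(\epsilon)}$ is fixed by $T_{w_\circ}^{\epsilon}$. Using that fact, the $k=j+1$ and $k=i-1$ formulas follow by subtraction from the $k\in[i,j]$ case, which is itself handled cleanly by induction.

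Finally, for $\epsilon=-1$, I invert the $\epsilon=1$ formulas: on $\mathrm{span}(u_i,\dots,u_j)$ the map is $-q^{j-i+2}$ times the index reversal, which is an involution. Then \eqref{eq:Tw0^2 eigen} is immediate from the $k\in[i,j]$ case applied twice.
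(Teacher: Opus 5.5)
Your overall strategy is sound. The factorization $T_{w_\circ^{[i,j]}}=\Cxr ij\,T_{w_\circ^{[i+1,j]}}$ is correct. Your inductive treatment of $k\in[i+1,j]$, $k=i$ and $k=j+1$ is also right (in the last case the coefficient is $q\cdot q^{-t}=q^{-(t-1)}$, not $q^{t-1}q^{-t}$).

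The gap is the case $k=i-1$. You never carry it out, and each route you offer for it is defective:
\begin{itemize}
\item The ``other factorization'' $T_{w_\circ^{[i,j-1]}}\Cx ij$ is not $T_{w_\circ^{[i,j]}}$. Already $T_1\cdot T_1T_2\ne T_1T_2T_1$; the correct factorizations are $T_{w_\circ^{[i,j-1]}}\Cxr ij=\Cx ij\,T_{w_\circ^{[i,j-1]}}$.
\item The case $k<i$ of Corollary~\ref{cor:cox w +-} presupposes $l\ge i$, since it uses the image of $u_i$. So it does not apply to $u_{i-1}$ alone.
\item The reversal symmetry is left unspecified.
\item The fixed-vector trick does not work as described. $w^{(-1)}_{[i,j+1]}$ is not fixed: by the lemma, its image is $w^{(-1)}_{[i,j+1]}-q^{2-2i}w^{(1)}_{[i,j]}$. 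The vectors that are fixed, $w^{(\epsilon)}_{[k,l]}$ with $k\le i-1<j+1\le l$, involve both $u_{i-1}$ and $u_{j+1}$. So one subtraction gives a single equation in two unknown images, and for $i=1$ no such vector exists at all.
\end{itemize}

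The missing step is in fact trivial in your own ordering. Since $i-1\notin[i,j+1]$, the induction hypothesis gives $T_{w_\circ^{[i+1,j]}}(u_{i-1})=u_{i-1}$. The case $k=i-1$ of Lemma~\ref{lem:rev Cox act} then gives $\Cxr ij(u_{i-1})=q^{1-i}w^{(1)}_{[i-1,j]}$, which is the claim.

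For $\epsilon=-1$ you need not invert by hand; your inversion only covers $k\in[i,j]$ explicitly. The factorization $T_{w_\circ^{[i,j]}}=T_{w_\circ^{[i+1,j]}}\Cx ij$ gives $T_{w_\circ^{[i,j]}}^{-1}=\Cxr ij^{(-1)}T_{w_\circ^{[i+1,j]}}^{-1}$. The same induction then runs verbatim with Lemma~\ref{lem:rev Cox act} at $\epsilon=-1$.

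Your independent check of \eqref{eq:Tw0 eigen} contains a false claim. Under $T_t^{\delta}$ the new $u_t$-coefficient of $w^{(\epsilon)}_{[k,l]}$ is $q^{\epsilon t}(q^{\delta-\epsilon}-q^{2\delta}+q^{\delta+\epsilon})=q^{\epsilon t}$ for \emph{both} $\delta=\pm1$. In any case, a vector fixed by an invertible operator is automatically fixed by its inverse. So $w^{(\pm1)}_{[k,l]}$ is fixed by every $T_t^{\pm1}$ with $t\in[i,j]$, and your hedge is unnecessary.

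With these repairs your proof is complete, and it differs from the paper's. The paper uses the mirror factorization $T^{\epsilon}_{w_\circ^{[i,j]}}=T^{\epsilon}_{w_\circ^{[i,j-1]}}\Cxr ij^{(\epsilon)}$, applying the Coxeter element first. There $k=j+1$ is the easy case. The cases $k=i$ and $k=i-1$ instead require pushing a whole vector $w^{(\epsilon)}_{[\cdot,j]}$ through $T^{\epsilon}_{w_\circ^{[i,j-1]}}$, using all cases of the induction hypothesis. The paper then deduces \eqref{eq:Tw0 eigen} only afterwards, from the two boundary formulas.

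Your ordering makes $k=i-1$ a one-liner and reduces $k=i$ and $k=j+1$ to a single termwise application of Lemma~\ref{lem:rev Cox act}. Your generator-by-generator proof of \eqref{eq:Tw0 eigen} is also more elementary and independent of the main formula.
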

\begin{proof}
The first case in~\eqref{eq:Tw0 act uk} is obvious. For the remaining cases, we use induction on~$j-i$.
The case $j=i$ has already been established in~\eqref{eq:ind base cox w0}.

Note that $T_{w_\circ^{[i,j]}}^{\epsilon}=T_{w_\circ^{[i,j]}}^{\epsilon}\Cxr{i}{ j}^{(\epsilon)}$. Indeed,
$T_{w_\circ^{[i,j]}}=T_{w_\circ^{[i,j]}}\Cxr ij$ while, since $T_{w_\circ^{[i,j]}}$ is
${}^{op}$-invariant by~\cite{BrSa}*{Lemma~5.1}, $T_{w_\circ^{[i,j]}}^{-1}=(\Cx ij T_{w_\circ^{[i,j-1]}})^{-1}=T_{w_\circ^{[i,j-1]}}^{-1}\Cx ij^{-1}$.

For the inductive step, for~$k\in[i+1,j]$
we have by Lemma~\ref{lem:rev Cox act} and the induction hypothesis
\begin{align*}
T_{w_\circ^{[i,j]}}^{\epsilon}(u_k)&=T_{w_\circ^{[i,j-1]}}^{\epsilon}\Cxr{i}{ j}^{(\epsilon)}(u_k)=q^{\epsilon} T_{w_\circ^{[i,j-1]}}^{\epsilon}(u_{k-1})=-q^{\epsilon(j-i+2)}u_{i+j-k},
\end{align*}
while for~$k=i$
\begin{align}
T_{w_\circ^{[i,j]}}^{\epsilon}(u_i)&=T_{w_\circ^{[i,j-1]}}^{\epsilon}\Cxr{i}{ j}^{(\epsilon)}(u_i)=-q^{\epsilon(2-i)} T_{w_\circ^{[i,j-1]}}^{\epsilon}(w^{(\epsilon)}_{[i,j]})
\nonumber\\
&=q^{\epsilon(2-i)}\sum_{i\le t\le j-1}q^{\epsilon(t-i+j+1)}u_{i+j-1-t}-q^{\epsilon(2+j-i)} T_{w_\circ^{[i,j-1]}}^{\epsilon}(u_j)
\nonumber\\
&=q^{\epsilon(2+2j-i)} w^{(-\epsilon1)}_{[i,j-1]}-q^{\epsilon(2+2j-i)} w^{(-\epsilon1)}_{[i,j]}=-q^{\epsilon(j-i+2)}u_{j}.\label{eq:Tw0ij ui}
\end{align}
Furthermore,
\begin{align*}
T_{w_\circ^{[i,j]}}^{\epsilon}(u_{j+1})&=T_{w_\circ^{[i,j-1]}}^{\epsilon}\Cxr{i}{ j}^{(\epsilon)}(u_{j+1})=T_{w_\circ^{[i,j-1]}}^{\epsilon}(q^{\epsilon} u_j+u_{j+1})\\
&=q^{\epsilon(j+1)}w^{(-\epsilon1)}_{[i,j]}+u_{j+1}=q^{\epsilon(j+1)} w^{(-\epsilon1)}_{[i,j+1]}.
\end{align*}
Finally, by Lemma~\ref{lem:rev Cox act}
\begin{align*}
T_{w_\circ^{[i,j]}}^{\epsilon}(u_{i-1})&=T_{w_\circ^{[i,j-1]}}^{\epsilon}\Cxr{i}{ j}^{(\epsilon)}(u_{i-1})
=q^{-\epsilon(i-1)} T_{w_\circ^{[i,j-1]}}^{\epsilon}(w^{(\epsilon)}_{[i-1,j]}).\end{align*}
As we already established in~\eqref{eq:Tw0ij ui},
$$
T_{w_\circ^{[i,j-1]}}^{\epsilon}(w^{(\epsilon)}_{[i,j]})=q^{\epsilon j} u_j.
$$
Using the induction hypothesis, we obtain
\begin{align*}
T_{w_\circ^{[i,j]}}^{\epsilon}(u_{i-1})&=T_{w_\circ^{[i,j-1]}}^{\epsilon}(u_{i-1})+q^{\epsilon(j-i+1)}u_j=q^{-\epsilon(i-1)}(w^{(\epsilon)}_{[i-1,j-1]}+q^{\epsilon j}u_j)=q^{-\epsilon(i-1)}w^{(\epsilon)}_{[i-1,j]}.
\end{align*}

The inclusion in~\eqref{eq:Tw0^2 eigen} is immediate. It is also clear that it suffices to prove~\eqref{eq:Tw0 eigen} for~$k=i-1$, $l=j+1$. Note that
$$
T_{w_\circ^{[i,j]}}^{-\epsilon}(u_{j+1})=q^{-\epsilon(j+1)}w^{(\epsilon)}_{[i-1,j+1]}-q^{-\epsilon(j-i+2)} u_{i-1}
$$
while
$$
T_{w_\circ^{[i,j]}}^{\epsilon}(u_{i-1})=q^{-\epsilon(i-1)}w^{(\epsilon)}_{[i-1,j+1]}-q^{\epsilon(j-i+2)}u_{j+1}
$$
whence
$$
w^{(\epsilon)}_{[i-1,j+1]}=q^{\epsilon(i-1)} T_{w_\circ^{[i,j]}}^{\epsilon}(u_{i-1})+q^{\epsilon(j+1)}u_{j+1}=
q^{\epsilon(j+1)}T_{w_\circ^{[i,j]}}^{-\epsilon 1}(u_{j+1})+q^{\epsilon(i-1)}u_{i-1}
$$
and so
$$
T_{w_\circ^{[i,j]}}^{\epsilon}(w^{(\epsilon)}_{[i-1,j+1]})=q^{\epsilon(j+1)}u_{j+1}+q^{\epsilon(i-1)}T_{w_\circ^{[i,j]}}^{\epsilon}(u_{i-1})
=w^{(\epsilon)}_{[i-1,j+1]},
$$
which immediately yields~\eqref{eq:Tw0 eigen}.
\end{proof}

\begin{lemma}\label{lem:Transp action}
For all $1\le i<j\le n$, $k\in[1,n]$,
$$
T_{(i,j+1)}(u_k)=\begin{cases}
				u_k,& k\in [1,n+1]\setminus [i-1,j+1],\\
				q^2 u_k,&k\in [i+1,j-1],\\
				-q^{j+2} w^{(-1)}_{[i,j-1]},&k=j,\\
				q^{j+1} w^{(-1)}_{[i,j+1]},&k=j+1,\\
				-q^{2-i} w^{(1)}_{[i+1,j]},&k=i,\\
				q^{1-i} w^{(1)}_{[i-1,j]},&k=i-1.
                 \end{cases}
$$
\end{lemma}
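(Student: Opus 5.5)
The computation rests on the factorization from Proposition~\partref{prop:elem prop transp.c}, namely $T_{(i,j+1)}=\Cx ij\Cxr i{(j-1)}=\Cxr ij\Cx{(i+1)}j$. I will use the version $T_{(i,j+1)}=\Cxr{(i+1)}j\,\Cx ij$ only if it turns out more convenient; the main choice is $T_{(i,j+1)}=\Cx ij\Cxr i{(j-1)}$. Its advantage is that Lemma~\ref{lem:rev Cox act} already gives the action of $\Cxr ij^{(1)}=\Cxr ij$ on each $u_k$. For the other factor, $\Cx ij$, I would derive the analogous formula, either by the same induction on $j-i$ using~\eqref{eq:ind base cox w0}, or by transposition. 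Proposition~\ref{prop:Burau} says that the matrix of $T^{op}$ is the transpose of that of $T$, and $\Cx ij=(\Cxr ij)^{op}$. I expect the direct induction to be cleaner, since the $u_k$ basis is not orthonormal. The result should be: $\Cx ij(u_k)=u_k$ off $[i-1,j+1]$; $\Cx ij(u_k)=qu_{k+1}$ (plus a boundary correction $u_{i-1}$ at $k=i-1$) for $k\in[i-1,j-1]$; and for $k=j$ and $k=j+1$ one gets linear combinations of the form $q^{\bullet}w^{(-1)}_{[i,\cdot]}$.

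I then evaluate $T_{(i,j+1)}(u_k)=\Cx ij\bigl(\Cxr i{(j-1)}(u_k)\bigr)$ case by case, applying Lemma~\ref{lem:rev Cox act} with $j$ replaced by $j-1$.
\begin{itemize}
\item[(i)] For $k\notin[i-1,j+1]$ the element is supported away from $k$, so $u_k$ is fixed.
\item[(ii)] For $k\in[i+1,j-1]$ the inner factor gives $qu_{k-1}$, and the outer factor shifts it back to $q^2u_k$.
\item[(iii)] For $k=j$ the inner factor gives $qu_{j-1}+u_j$. Applying $\Cx ij$ to $u_{j-1}$ and $u_j$ and collecting terms should produce $-q^{j+2}w^{(-1)}_{[i,j-1]}$.
\item[(iv)] For $k=j+1$ only $T_j$ in $\Cx ij$ acts nontrivially at first, which gives $qu_j+u_{j+1}$. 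Together with the previous case this yields $q^{j+1}w^{(-1)}_{[i,j+1]}$.
\item[(v)] For $k=i$ and $k=i-1$ the inner factor gives $-q^{2-i}w^{(1)}_{[i,j-1]}$ and $q^{1-i}w^{(1)}_{[i-1,j-1]}$ respectively. To push these through $\Cx ij$, it is convenient first to apply Corollary~\ref{cor:cox w +-}-type telescoping to $w^{(1)}$-vectors, using the formula for $\Cx ij(u_t)$.
\end{itemize}

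An alternative for the boundary cases, which I would try first because it needs less new computation, is symmetry. Proposition~\ref{prop:elem prop transp.g} and Lemma~\ref{lem:Tw0ij act ui} handle $T_{w_\circ^{[i,j]}}$. Moreover $T_{(i,j+1)}=T_{w_\circ^{[i,j]}}T_{w_\circ^{[i+1,j-1]}}^{-1}$ in $\Br_{n+1}$ by~\eqref{eq:B_n telescope 0}-style telescoping, which holds since $w_\circ^{[i,j]}=(i,j+1)\times w_\circ^{[i+1,j-1]}$. Both factors act on the $u_k$ explicitly by~\eqref{eq:Tw0 act uk}, including the inverse ($\epsilon=-1$). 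So:
\begin{itemize}
\item[(a)] For $k\in[i+1,j-1]$ we get $-q^{-(j-i)}u_{i+j-k}$ from the inverse factor, and then $-q^{j-i+2}u_{k}$ from $T_{w_\circ^{[i,j]}}$. The product is $q^2u_k$, as claimed.
\item[(b)] For $k=j$, $T_{w_\circ^{[i+1,j-1]}}^{-1}(u_j)=q^{-j}w^{(1)}_{[i+1,j]}$. Applying $T_{w_\circ^{[i,j]}}$ to each $u_t$, $t\in[i+1,j]$, reverses indices with factor $-q^{j-i+2}$, and reindexing $t\mapsto i+j-t$ turns $w^{(1)}$ into $q^{i+j}w^{(-1)}$, up to the missing endpoint.
\item[(c)] The cases $k=i$, $i-1$, $j+1$ go the same way using the last two lines of~\eqref{eq:Tw0 act uk}.
\end{itemize}
This route turns everything into bookkeeping of exponents and sums.

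The main obstacle is the boundary cases $k\in\{i-1,j+1\}$ (and $i$, $j$). There a $w$-vector passes through a second operator, and one must check that the resulting telescoping sums collapse exactly to the stated ranges $[i,j-1]$, $[i,j+1]$, $[i+1,j]$, $[i-1,j]$. I would verify the powers of $q$ carefully by testing the smallest case $j=i+1$ against~\eqref{eq:ind base cox w0}, using $T_{(i,i+2)}=T_iT_{i+1}T_i$.
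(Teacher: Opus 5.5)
Your proposal is correct, and the route you would try first differs from the paper's.

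\textbf{The paper's route.} The paper inducts on $j-i$. The base case $j=i+1$ is Lemma~\ref{lem:Tw0ij act ui} applied to $T_{w_\circ^{[i,i+1]}}=T_{(i,i+2)}$. The inductive step uses the conjugations $T_{(i,j+1)}=T_iT_{(i+1,j+1)}T_i=T_jT_{(i,j)}T_j$ from Proposition~\partref{prop:elem prop transp.b}, together with~\eqref{eq:ind base cox w0}. The first conjugation handles $k\in[i+1,j+1]$ and the second handles $k\in\{i-1,i\}$.

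\textbf{Your route.} You write $T_{(i,j+1)}=T_{w_\circ^{[i,j]}}T_{w_\circ^{[i+1,j-1]}}^{-1}$, which is also immediate from Proposition~\partref{prop:elem prop transp.g}. You then read every case off~\eqref{eq:Tw0 act uk}, with $\epsilon=-1$ for the inner factor and $\epsilon=1$ for the outer one. All six cases check out:
\begin{itemize}
\item Your computations (a) and (b) are right.
\item For $k=i$, you get $T_{w_\circ^{[i+1,j-1]}}^{-1}(u_i)=q^{i}w^{(-1)}_{[i,j-1]}$. The reversal $u_t\mapsto -q^{j-i+2}u_{i+j-t}$ then turns this into $-q^{2-i}w^{(1)}_{[i+1,j]}$.
\item For $k\in\{i-1,j+1\}$, the inner factor acts trivially and the last two lines of~\eqref{eq:Tw0 act uk} give the claim at once.
\end{itemize}
You should add the degenerate case $j=i+1$. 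There $[i+1,j-1]=\emptyset$, so Lemma~\ref{lem:Tw0ij act ui} does not formally apply, but the inner factor is $1$ and you are exactly in the paper's base case.

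\textbf{Comparison.} Your route makes the statement a non-inductive corollary of Lemma~\ref{lem:Tw0ij act ui}. It uses that lemma's $\epsilon=-1$ half, which the paper proves but does not exploit here. It also explains the interior eigenvalue $q^2$ as the ratio of $-q^{j-i+2}$ to $-q^{j-i}$. The paper's induction, by contrast, needs only the $j=i+1$ instance of that lemma plus the action of single generators.

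\textbf{Your other route.} The route via $\Cx ij\Cxr i{(j-1)}$ also works. It needs the formulas
\begin{itemize}
\item $\Cx ij(u_k)=qu_{k+1}$ for $k\in[i,j-1]$,
\item $\Cx ij(u_{i-1})=u_{i-1}+qu_i$,
\item $\Cx ij(u_j)=-q^{j+2}w^{(-1)}_{[i,j]}$,
\item $\Cx ij(u_{j+1})=q^{j+1}w^{(-1)}_{[i,j+1]}$.
\end{itemize}
Two small corrections there. In (iv), the input you need is $\Cx i{(j-1)}(u_j)=q^{j}w^{(-1)}_{[i,j]}$, not ``the previous case''. In (v), Corollary~\ref{cor:cox w +-} concerns $\Cxr ij$, not $\Cx ij$, so you simply push the $w^{(1)}$-vectors through term by term.
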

\begin{proof}
The assertion is obvious when $k\in[1,n+1]\setminus[i,j+1]$. For~$k\in [i-1,j+1]$
we use induction on~$j-i$. If~$j=i+1$, $T_{(i,j+1)}=T_{w_\circ^{[i,i+1]}}$
and the assertion follows from Lemma~\ref{lem:Tw0ij act ui}.

For the inductive step, recall that $T_{(i,j+1)}=T_i T_{(i+1,j+1)}T_i=T_j T_{(i,j)}T_j$.
Suppose first that~$k\in[i+2,j-1]$. Then
$$
T_{(i,j+1)}(u_k)=T_i T_{(i+1,j+1)}(u_k)=q^2 T_i(u_k)=q^2 u_k,
$$
while for~$k=i+1$ by~\eqref{eq:ind base cox w0} and the induction hypothesis
\begin{align*}
T_{(i,j+1)}(u_{i+1})&=T_i T_{(i+1,j+1)}(q u_i+u_{i+1})=T_i( q^{1-i}w^{(1)}_{[i,j]}-q^{1-i}w^{(1)}_{[i+2,j]})\\
&=T_i(q u_i+q^2 u_{i+1})=-q^3 u_i+q^2(q u_i+u_{i+1})=q^2 u_{i+1}.
\end{align*}
Furthermore, since $j>i+1$ and so $T_i(u_k)=u_k$ for~$k\in \{j,j+1\}$,
\begin{align*}
T_{(i,j+1)}(u_{j+1})&=q^{j+1} T_i(w^{(-1)}_{[i+1,j+1]})=q^{j-i} T_i(u_{i+1})+q^{j+1}w^{(-1)}_{[i+2,j+1]}\\
&=q^{j-i}(q u_i+u_{i+1})+q^{j+1}w^{(-1)}_{[i+2,j+1]}=q^{j+1} w^{(-1)}_{[i,j+1]},
\intertext{while}
T_{(i,j+1)}(u_j)&=-q^{j+2} T_i(w^{(-1)}_{[i+1,j-1]})=-q^{j+1-i} T_i(u_{i+1})-q^{j+2} w^{(-1)}_{[i+2,j-1]}\\
&=-q^{j+1-i}(q u_i+u_{i+1})-q^{j+2} w^{(-1)}_{[i+2,j-1]}=-q^{j+2} w^{(-1)}_{[i,j-1]}.
\end{align*}
Finally, for~$k\in \{i-1,i\}$, $T_j(u_k)=u_k$ and so
\begin{align*}
T_{(i,j+1)}(u_i)&=-q^{2-i} T_j ( w^{(1)}_{[i+1,j-1]})=-q^{2-i} w^{(1)}_{[i+1,j-2]}-q^{j+1-i} T_j(u_{j-1})\\
&=-q^{2-i} w^{(1)}_{[i+1,j-2]}-q^{j+1-i} (q u_{j-1}+u_j)=-q^{2-i} w^{(1)}_{[i+1,j]},
\intertext{while}
T_{(i,j+1)}(u_{i-1})&=q^{1-i} T_j (w^{(1)}_{[i-1,j-1]})=q^{1-i}w^{(1)}_{[i-1,j-2]}+q^{j-i}T_j(u_{j-1})=q^{1-i}w^{(1)}_{[i-1,j]}.\qedhere
\end{align*}
\end{proof}

Now we describe eigenspaces of~$T_J^{|J|}$ for a special choice of~$J$.
\begin{proposition}\label{prop:TJ eigenvectors}
Let $m\in [1,n-1]$ and let~$J=[1,m]\cup \{n+1\}$.
Then
\begin{enmalph}
\item\label{prop:TJ eigenvectors.a} $\{ u_i\,:\,i\in[m+1,n-1]
\subset \ker(T_J-q^{2}\id_V)$;
\item\label{prop:TJ eigenvectors.b}
$\{u_i\,:\,i\in[1,m-1]\}\cup \{w^{(\epsilon)}_{[m,n]}\,:\,\epsilon\in\{1,-1\}\}$
is a basis of~$\ker(T_J^{|J|}-q^{2(n+1)}\id_V)$;
\item\label{prop:TJ eigenvectors.c} $T_J^{|J|}$ is diagonalizable on~$V$,
$\det(t\id_V-T_J^{|J|})=(t-1)(t-q^{2(n+1)})^{m+1}(t-q^{2m})^{n-m-1}$
and $\det(t\id_V-T_J)\allowbreak=(t-1)(t^{m+1}-q^{2(n+1)})(t-q^{2m})^{n-m-1}$.
\end{enmalph}
\end{proposition}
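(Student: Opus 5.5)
Here $J=[1,m]\cup\{n+1\}$, so $|J|=m+1$, and by the definition of $T_J$ we have $T_J=\tilde\tau_1(J)\tilde\tau_0(J)$. The first factor is the product of the $T_{(j,j+1)}=T_j$ with $j\le m-1$ odd, together with $T_{(m,n+1)}$ if $m$ is odd. The second factor is the analogous product over even indices, where $T_{(m,n+1)}$ appears if $m$ is even. Thus $T_J$ is the product of $T_{w_\circ^{[1,m-1]_2}}T_{w_\circ^{[2,m-1]_2}}$ (in the appropriate order) with the single long transposition $T_{(m,n+1)}$. The plan is to compute the action of each factor on $V$ using \eqref{eq:ind base cox w0}, Lemma~\ref{lem:Transp action} and Lemma~\ref{lem:fixed point}, and then to combine this with the algebraic identities already proved for $T_J^{|J|}$.

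For part~\ref{prop:TJ eigenvectors.a}, take $i\in[m+1,n-1]$. Then $T_j(u_i)=u_i$ for all $j\le m-1$ except when $j=i-1=m$, which cannot occur since $j\le m-1$. Lemma~\ref{lem:Transp action} applied with $(i,j+1)=(m,n+1)$, i.e.\ with indices $m$ and $n$, gives $T_{(m,n+1)}(u_i)=q^2u_i$ for $i\in[m+1,n-1]$. Hence $T_J(u_i)=q^2u_i$. For the eigenvalue $1$ I would use $v_{[1,n+1]}$, which is fixed by all of $\Br_n$ by Lemma~\ref{lem:fixed point}; this vector is not in the span of the $u_k$. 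Together with the $u_i$, $i\in[m+1,n-1]$, this produces $n-m$ eigenvectors of $T_J^{m+1}$, with eigenvalues $1$ and $q^{2(m+1)}$.

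Note that the statement's exponent $q^{2m}$ in~\ref{prop:TJ eigenvectors.c} should read $q^{2(m+1)}$ if the computation above is right. I would first recheck this indexing against Proposition~\ref{prop:TJm power central}, which (shifting $m$ by one) gives $T_J^{|J|}=Z_{m-1}=T_{w_\circ^{[1,n]}}^2T_{w_\circ^{[m+1,n-1]}}^{-2}$ via Proposition~\ref{prop:Z_r is central}. This formula is the cleanest route for all three parts. By Lemma~\ref{lem:Tw0ij act ui}, $T_{w_\circ^{[1,n]}}^2$ acts on $\mathrm{span}(u_k)$ by $q^{2(n+1)}$ and fixes $v_{[1,n+1]}$. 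Also by \eqref{eq:Tw0^2 eigen}, $T_{w_\circ^{[m+1,n-1]}}^{-2}$ scales $u_k$, $k\in[m+1,n-1]$, by $q^{-2(n-m)}$, and by \eqref{eq:Tw0 eigen} it fixes $w^{(\epsilon)}_{[m,n]}$. It also fixes $u_k$ for $k\le m-1$, since these lie outside $[m,n]$.

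These facts give part~\ref{prop:TJ eigenvectors.b}: the $m+1$ vectors $u_1,\dots,u_{m-1},w^{(\pm1)}_{[m,n]}$ are eigenvectors of $T_J^{|J|}$ for $q^{2(n+1)}$. They are linearly independent because $w^{(1)}-w^{(-1)}$ has a nonzero $u_m$ coefficient, $q^m-q^{-m}\ne0$, since $q$ is not a root of unity. The vectors $u_k$, $k\in[m+1,n-1]$, have eigenvalue $q^{2(n+1)-2(n-m)}=q^{2(m+1)}$, and $v_{[1,n+1]}$ has eigenvalue $1$. Counting $(m+1)+(n-m-1)+1=n+1=\dim V$ gives diagonalizability and the characteristic polynomial of $T_J^{|J|}$. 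The eigenspace for $q^{2(n+1)}$ is exactly this span, because the other eigenvalues differ from $q^{2(n+1)}$ since $q$ is not a root of unity.

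For the characteristic polynomial of $T_J$ itself, note that $T_J$ preserves each eigenspace of $T_J^{m+1}$. On the $(m+1)$-dimensional eigenspace, $T_J$ has $T_J^{m+1}=q^{2(n+1)}$. The main obstacle is showing that its eigenvalues there are the $m+1$ distinct roots of $t^{m+1}-q^{2(n+1)}$, rather than repeated ones. I would argue as follows. The eigenvalues form some multiset of $(m+1)$-th roots of $q^{2(n+1)}$, and their product is $\det T_J$ restricted to the eigenspace. The full determinant is $\det T_J=(-q^2)^{\ell(T_J)}$, where $\ell(T_J)=2(n-1)-m$ by \eqref{eq:len TJ} and each $T_i$ has determinant $-q^2$; dividing by the other factors fixes the product. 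This alone does not force distinctness. So instead I would compare with a specialization argument: as $q\to1$, $T_J$ becomes the permutation $\pi_n(T_J)$, an $(m+2)$-cycle by Corollary~\ref{cor:TJ coxeter}, which has distinct eigenvalues. Alternatively, the restriction of $T_J$ to this eigenspace is $q^{2(n+1)/(m+1)}$ times a companion-like cyclic matrix acting on the chain $u_1,\dots$; computing $T_J$ on the basis explicitly shows it is cyclic, with minimal polynomial of degree $m+1$, which yields $t^{m+1}-q^{2(n+1)}$. I expect this cyclicity check to be the hardest step.
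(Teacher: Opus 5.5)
Your treatment of (a), (b) and the $T_J^{|J|}$ half of (c) follows the paper's proof. You use the identity $T_J^{|J|}=T_{w_\circ^{[1,n]}}^2T_{w_\circ^{[m+1,n-1]}}^{-2}$, then Lemmas~\ref{lem:Transp action}, \ref{lem:Tw0ij act ui}, \ref{lem:fixed point}, and finally the count $(m+1)+(n-m-1)+1=n+1$. The paper takes that identity from Proposition~\ref{prop:g J=1} with $a=m$, $b=n$. Cite it that way, since it also covers $m=1$, which lies outside the range of Propositions~\ref{prop:TJm power central} and~\ref{prop:Z_r is central}.

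You are right that (c) is misprinted. The last factors should be $(t-q^{2(m+1)})^{n-m-1}$ for $T_J^{|J|}$ and, by (a), $(t-q^{2})^{n-m-1}$ for $T_J$.

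One justification fails as stated. The fact that $w^{(1)}_{[m,n]}-w^{(-1)}_{[m,n]}\neq0$ only shows the two vectors differ, not that they are independent modulo $\mathrm{span}(u_1,\dots,u_{m-1})$. Since $u_1,\dots,u_n$ are independent and $w^{(\pm1)}_{[m,n]}$ involve only $u_m,\dots,u_n$, what you need is a nonzero $2\times 2$ minor. For example, at $u_m,u_{m+1}$ (note $m+1\le n$) it is $q^{m}q^{-m-1}-q^{-m}q^{m+1}=q^{-1}-q\neq0$.

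For the characteristic polynomial of $T_J$ itself, the paper only says ``immediate''. You are right that it needs an argument and that $\det T_J$ cannot supply it. Incidentally, $\ell(T_J)=2n-m$ by~\eqref{eq:len TJ}, not $2(n-1)-m$. Your specialization idea does close the gap, but it has to be phrased through polynomials and with the correct cycle length.

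Put $D(t)=(t-1)(t-q^2)^{n-m-1}$ and $\chi(t)=\det(t\,\id_V-T_J)/D(t)$.
\begin{itemize}
\item The matrices of the $T_i$ have entries in $\mathbb Z[q]$ and $D$ is monic, so $\chi\in\mathbb Z[q][t]$. Your eigenspace decomposition shows that $\chi$ is the characteristic polynomial of $T_J$ on the $q^{2(n+1)}$-eigenspace of $T_J^{|J|}$ whenever $q$ is not a root of unity, for instance when $q$ is an indeterminate.
\item At $q=1$ each $T_i$ becomes the permutation matrix of $(i,i+1)$. Hence $T_J$ becomes the permutation matrix of $\pi_n(T_J)$, which by Corollary~\ref{cor:TJ coxeter} is a cycle of length $|J|=m+1$, not $m+2$.
\item Dividing $(t^{m+1}-1)(t-1)^{n-m}$ by $D|_{q=1}=(t-1)^{n-m}$ gives $\chi|_{q=1}=t^{m+1}-1$, which is squarefree. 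So the discriminant of $\chi$ is a nonzero polynomial in $q$.
\item For transcendental $q$, the $m+1$ roots of $\chi$ are therefore pairwise distinct $(m+1)$-st roots of $q^{2(n+1)}$, hence all of them. Thus $\chi=t^{m+1}-q^{2(n+1)}$ as an identity in $\mathbb Z[q][t]$, and it specializes to every admissible $q$.
\end{itemize}
No cyclic-vector computation is needed.
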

\begin{proof}
Let $i\in[m+1,n-1]$. Then $T_{(m,n+1)}(u_i)=q^2 u_i$ by
Lemma~\ref{lem:Transp action} and $T_j(u_i)=u_i$ for all~$j\in [1,m-1]$ by~\eqref{eq:ind base cox w0}.
Since $T_J$ is the product of~$T_{(m,n+1)}$ with the $T_j$, $j\in[1,m-1]$,
part~\ref{prop:TJ eigenvectors.a} follows.

We now prove~\ref{prop:TJ eigenvectors.b}.
By Proposition~\ref{prop:g J=1}, $T_J^{|J|}=T_{w_\circ^{[1,n+1]}}^2 T_{w_\circ^{[m+1,n-1]}}^{-2}$.
By Lemma~\ref{lem:Tw0ij act ui}, $T_{w_\circ^{[1,n]}}^2(u_k)=
q^{2(n+1)}u_k$ for all~$k\in [1,n]$, while $T_{w_\circ^{[m+1,n-1]}}(u_k)=u_k$,
$k\in[1,m-1]$ and $T_{w_\circ^{[m+1,n-1]}}(w^{(\pm1)}_{[m,n]})=w^{(\pm1)}_{[m,n]}$. Therefore,
$\{u_i\,:\,i\in [1,m-1]\}\cup\{w^{(1)}_{[m,n]},w^{(-1)}_{[m,n]}\}\subset \ker(T_J^{|J|}-q^{2(n+1)}\id_V)$.

Next we claim that $\{u_i\,:\,i\in [1,m-1]\}\cup\{w^{(1)}_{[m,n]},w^{(-1)}_{[m,n]}\}$
is linearly independent. Indeed, since
$w^{(-1)}_{[m,n]}=q^{-m}e_m-q^{-n-1}e_{n+1}$, $w^{(-1)}_m$ is not
contained in the span of the~$u_i$, $i\in [1,m-1]$, which are manifestly linearly independent. Since
the coefficient of~$e_i$, $i\in[m+1,n]$ in~$w^{(1)}_{[m,n]}$
is $q^i-q^{i-2}\not=0$, while the $u_i$, $i\in[1,m-1]$
and~$w^{(-1)}_{[m,n]}$ are contained in the span of~$\{ e_j\,:\,
j\in[1,m]\cup\{n+1\}\}$,
it follows that $w^{(1)}_{[m,n]}$, is not
in the span of~$\{ u_i\,:\,i\in[1,m-1]\}\cup\{w^{(-1)}_m\}$.

In particular, $\dim\ker(T_J^{|J|}-q^{2(n+1)}\id_V)\ge m+1$.
By part~\ref{prop:TJ eigenvectors.a}, $\dim
\ker(T_J^{|J|}-q^{2(m+1)})\ge n-m-1$. By Lemma~\ref{lem:fixed point}, $\dim\ker(T_J^{|J|}-\id_V)\ge 1$. Since $(n-m-1)+(m+1)+1=n+1=\dim V$,
and the sum of $\ker(T_J^{|J|}-\lambda\id_V)$ with
$\lambda\in\{1,q^{2(m+1)},q^{2(n+1)}\}$ is direct,
we conclude that
all these inequalities are in fact equalities.
Therefore, $\{ u_i\,:\, i\in[m+1,n]\}$ is a basis
of~$\ker(T_J^{|J|}-q^{2(m+1)}\id_V)$,
$\{u_i\,:\, i\in [1,m-1]\}\cup\{w^{(1)}_{[m,n]},w^{(-1)}_{[m,n]}\}$ is a
basis of~$\ker(T_J^{|J|}-q^{2(n+1)}\id_V)$ and
$v_{[1,n+1]}$ spans~$\ker(T_J^{|J|}-\id_V)$. The remaining assertions are
now immediate.
\end{proof}
The following is an immediate consequence of Proposition~\ref{prop:TJ eigenvectors} and Corollary~\ref{cor:conj J}.
\begin{corollary}\label{cor:TJ eigenvectors}
Let $\{1,n+1\}\subset J\subset [1,n+1]$ with $|J|\le n$. Then
$T_J^{|J|}$ is diagonalizable on~$V$ and
$$
\det(t\id_V-T_J^{|J|})=(t-1)(t-q^{2(n+1)})^{|J|}(t-q^{2|J|})^{n-|J|},
$$
while
$$
\det(t\id_V-T_J)=(t-1)(t^{|J|}-q^{2(n+1)})(t-q^{2})^{n-|J|}.
$$
In particular, $T_J$ is diagonalizable on~$V$ provided that~$\kk$ contains all $|J|$th roots of~$q^{2(n+1)}$. Finally,
$T_J$ is conjugate to~$T_{J'}$ in~$\Br_n$
if and only if~$|J|=|J'|$.
\end{corollary}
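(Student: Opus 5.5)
The plan is to reduce everything to the special case $J_m=[1,m]\cup\{n+1\}$ treated in Proposition~\ref{prop:TJ eigenvectors}, via conjugation. By Corollary~\ref{cor:conj J}, for any $\{1,n+1\}\subset J\subset[1,n+1]$ with $|J|=m+1$ there is $U=U(J)\in\Br_{n+1}$ such that $U T_J U^{-1}=T_{J_m}$. (The paper's conventions use both $J_m=[1,m+1]\cup\{n+1\}$ and $[1,m]\cup\{n+1\}$; I will match indices so that $|J_m|=|J|$.) Since the Burau-type operators of Proposition~\ref{prop:Burau} give a representation of the group $\Br_{n+1}$, the operator of $T_J$ on $V$ is conjugate, by the invertible operator of $U$, to that of $T_{J_m}$. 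Hence $T_J^{|J|}=U^{-1}T_{J_m}^{|J|}U$ as operators, and diagonalizability and the characteristic polynomials of $T_J^{|J|}$ and $T_J$ transfer verbatim from Proposition~\partref{prop:TJ eigenvectors.c}.

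For diagonalizability of $T_J$ itself, I would argue from the characteristic polynomial. If $\kk$ contains all $|J|$th roots of $q^{2(n+1)}$, then $t^{|J|}-q^{2(n+1)}$ has $|J|$ distinct roots, because $\operatorname{char}\kk=0$. Moreover, since $q$ is not a root of unity, none of these roots equals $1$ or $q^2$. So the only possibly repeated eigenvalue is $q^2$, and its eigenspace has full dimension $n-|J|$ by Proposition~\partref{prop:TJ eigenvectors.a}, transported by conjugation. (In the $J_m$ case it is spanned by $u_i$, $i\in[m+1,n-1]$.) Every other eigenvalue is simple, so $T_J$ is diagonalizable. I also need to check that the stated exponent $(t-q^2)^{n-|J|}$ matches the eigenspace count; the displayed $(t-q^{2m})$ in Proposition~\ref{prop:TJ eigenvectors} looks like a typo for $q^2$, which is consistent with part (a).

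For the final claim, the \enquote{if} direction is exactly Corollary~\ref{cor:conj J}. For \enquote{only if}, conjugate elements of $\Br_{n+1}$ act by conjugate operators, so they have equal characteristic polynomials. The factor $t^{|J|}-q^{2(n+1)}$ determines $|J|$: its roots all have $|J|$th power equal to $q^{2(n+1)}$, and since $q$ is not a root of unity this pins $|J|$ down. One can also simply compare the multiplicity of the eigenvalue $q^2$, which is $n-|J|$. So $T_J\sim T_{J'}$ forces $|J|=|J'|$.

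The main obstacle is bookkeeping rather than ideas. I need to reconcile the index conventions for $J_m$ between Corollary~\ref{cor:conj J} and Proposition~\ref{prop:TJ eigenvectors}, and confirm the eigenvalue $q^2$ versus $q^{2m}$ discrepancy, so that the transported formulas are stated consistently for $|J|\le n$.
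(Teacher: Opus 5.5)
Your proposal is correct and takes the same route as the paper. The paper obtains this corollary directly from Corollary~\ref{cor:conj J} and Proposition~\ref{prop:TJ eigenvectors} by conjugation. Your added arguments correctly fill in what the paper calls immediate: diagonalizability of $T_J$ from the simple eigenvalues together with the dimension of the $q^2$-eigenspace, and the ``only if'' from the multiplicity $n-|J|$ of the eigenvalue $q^2$. Your handling of the index shift is also right, as is your reading of the typo in Proposition~\ref{prop:TJ eigenvectors}(c): the last factor should be $t-q^{2|J|}$ for $T_J^{|J|}$ and $t-q^2$ for $T_J$.
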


Let~\plink{<.|.>}$\la\cdot\vert\cdot\ra:V\tensor V\to \kk$ be the standard symmetric bilinear form defined by $\la e_i\vert e_j\ra=\delta_{i,j}$, $i,j\in [1,n+1]$. Then
\begin{equation}\label{eq:Euclid ui uj}
\la u_i\vert u_j\ra
=\delta_{i,j}(1+q^{-2})-q^{-1}(\delta_{i+1,j}+\delta_{i,j+1}),\qquad
i,j\in [1,n].
\end{equation}
Note the following
\begin{lemma}\label{lem:orth eigenspaces}
Let $X\in \Br_n$ be ${}^{op}$-invariant and let $\lambda\not=\mu\in\kk$.
Then $$\la\ker(X-\lambda\id_V)\,|\,\ker(X-\mu\id_V)\ra=\{0\}.
$$
\end{lemma}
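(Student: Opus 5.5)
The plan is to reduce everything to the transpose property in Proposition~\ref{prop:Burau}. In the symmetrized Burau representation on $V$, the matrix of $X^{op}$ is the transpose of the matrix of $X$, with respect to the standard basis $\{e_i\}$. Since $X$ is ${}^{op}$-invariant, the operator $X$ on $V$ is therefore symmetric with respect to the standard bilinear form, that is,
$$
\la Xv\,|\,w\ra=\la v\,|\,Xw\ra\qquad\text{for all } v,w\in V.
$$
The only point needing care is that the identity $[X^{op}]=[X]^{t}$ holds for all of $\Br_n$ and not just for the generators. I would check this by noting that each $T_i$ has a symmetric matrix. Then $T_i^{-1}=q^{-2}T_i+(1-q^{-2})\id_V$ also has a symmetric matrix. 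Both ${}^{op}$ and transposition are anti-homomorphisms, and they agree on the generators $T_i^{\pm1}$, so they agree on every word.

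Given the symmetry, the argument is the standard one for self-adjoint operators. Take $v\in\ker(X-\lambda\id_V)$ and $w\in\ker(X-\mu\id_V)$. Then
$$
\lambda\la v\,|\,w\ra=\la Xv\,|\,w\ra=\la v\,|\,Xw\ra=\mu\la v\,|\,w\ra.
$$
Hence $(\lambda-\mu)\la v\,|\,w\ra=0$, and since $\lambda\neq\mu$ this forces $\la v\,|\,w\ra=0$.

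There is no real obstacle here. The only step deserving a sentence is the extension of the transpose property to inverses and products, since $X$ lies in the group $\Br_n$ rather than the monoid, and that was handled above.
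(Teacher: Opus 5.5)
Your proposal is correct and follows the paper's proof: the transpose property from Proposition~\ref{prop:Burau} makes an ${}^{op}$-invariant $X$ self-adjoint for $\la\cdot|\cdot\ra$, and then the standard eigenvector orthogonality argument applies. Your extra check that this property extends from the generators to all of the group (inverses included) fills in a step the paper leaves implicit.
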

\begin{proof}
The adjoint operator of~$X\in \End V$ with respect to~$\la\cdot|\cdot\ra$ is~$X^T$ which coincides with~$X^{op}$ for~$X\in\Br_n$ by Proposition~\ref{prop:Burau}. The assertion is now standard.
\end{proof}
\begin{proof}[Proof of Theorem~\ref{thm:adm I2m converse }]
By Corollary~\ref{cor:TJ coxeter}, $T_J^{k}$ is not ${}^{op}$-invariant
unless~$|J|$ divides~$2k$. Note that if~$|J|$ is even and
$X=T_J^{k|J|/2}$ is ${}^{op}$-invariant then so is~$X^2=T_J^{k|J|}$.
Thus, it suffices to prove that~$T_J^{k|J|}$ is not ${}^{op}$-invariant
for all~$k\in\mathbb Z_{>0}$.
By Lemma~\ref{lem:orth eigenspaces} and Corollary~\ref{cor:TJ eigenvectors}, it suffices to prove that
for all~$k\in\mathbb Z_{>0}$
$$
\la \ker(T_J^{k|J|}-q^{2k(n+1)}\id_V)\,\vert\,\ker(T_J^{k|J|}-q^{2k|J|}\id_V)\ra\not=\{0\}.
$$
Since~$\ker(T_J^{k|J|}-\lambda^k)=
\ker(T_J^{|J|}-\lambda)$ for all~$\lambda\in\kk$ by Corollary~\ref{cor:TJ eigenvectors}, it suffices to prove that
$$
\la \ker(T_J^{|J|}-q^{2(n+1)}\id_V)\,\vert\,\ker(T_J^{|J|}-q^{2|J|}\id_V)\ra\not=\{0\}.
$$
Let~$m=|J|-1$.
Since~$U(J)T_JU(J)^{-1}=T_{[1,m]\cup\{n+1\}}$ by Corollary~\ref{cor:conj J},
where~$U(J)$ is as defined in~\eqref{eq:U(J) defn}, it suffices to
prove the following
\begin{proposition}\label{prop:red}
Let~$\{1,n+1\}\subset J\subset [1,n+1]$ with~$g(J)>1$. Then
$$
\la U(J)^{-1}(u)\,\vert\, U(J)^{-1}(v)\ra\not=0
$$
for some~$u\in \ker(T_{[1,m]\cup\{n+1\}}^{m+1}-q^{2(m+1)}\id_V)$,
$v\in \ker(T_{[1,m]\cup\{n+1\}}^{m+1}-q^{2(n+1)}\id_V)$,
where~$m=|J|-1$.
\end{proposition}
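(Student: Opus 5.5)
The plan is to exhibit explicit eigenvectors and compute a single pairing, using the description of the eigenspaces of $T_{J_m}^{m+1}$, $J_m=[1,m]\cup\{n+1\}$, from Proposition~\ref{prop:TJ eigenvectors}. Two facts drive it. First, $\ker(T_{J_m}^{m+1}-q^{2(n+1)}\id_V)$ is spanned by $u_1,\dots,u_{m-1}$ and $w^{(\pm1)}_{[m,n]}$. Second, $\ker(T_{J_m}^{m+1}-q^{2(m+1)}\id_V)$ is spanned by $u_{m+1},\dots,u_{n}$; these are vectors on which $T_{(m,n+1)}$ acts by $q^2$ and the $T_j$, $j<m$, act trivially.

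Pulling back by $U(J)^{-1}$ turns these into eigenvectors of $T_J^{|J|}$. Write
$$U(J)=\dscprod_{k\in[1,m]} \Cx{(k+1)}{(j_k-1)}^{((-1)^k)}.$$
Each factor acts on $\{u_i\}$ by the explicit formulas of Lemma~\ref{lem:rev Cox act}, applied to the inverses $\Cx{(k+1)}{(j_k-1)}^{(\pm1)\,-1}=\Cxr{(k+1)}{(j_k-1)}^{(\mp1)}$. Here I use that $(\Cx ij^{(a)})^{-1}=\Cxr ij^{(-a)}$, together with Corollary~\ref{cor:cox w +-}.

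The key simplification is that the factors with $j_k=k+1$ are trivial, and $g(J)>1$ means at least two factors are nontrivial. I will choose the test vectors so that most factors fix them.
\begin{itemize}
\item For $u$, take $u_i$ with $i$ large, namely $u_n$ or $u_{n-1}$, away from the supports, or more precisely $u_{m+1}$. On these the inverse Coxeter factors act in a controlled way by the case ``$k\in[i+1,j+1]$'' of Lemma~\ref{lem:rev Cox act}: a shift with a factor $q^{\pm1}$ plus possibly a boundary term.
\item For $v$, take $u_{1}$ (when $m\ge 2$) or $w^{(-1)}_{[m,n]}$. The latter has the simple form $q^{-m}e_m-q^{-n-1}e_{n+1}$, which is convenient to pair against.
\end{itemize}

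I will first reduce to a normalized case. Using Lemma~\ref{lem:diag aut TJ} (symmetry $\tilde\sigma$) and the conjugation invariance, I want to move to a configuration with exactly two gaps adjacent to the right end. I plan to argue by induction on $g(J)$, or more simply to treat the minimal configuration: nontrivial factors only at the last two indices $k=m-1,m$. Even so, a general $J$ still has several factors, so the honest route is to compute $U(J)^{-1}(u)$ and $U(J)^{-1}(v)$ in terms of the $u_i$ and $w^{(\pm1)}$ for the general shape. Then I evaluate $\la\cdot|\cdot\ra$ with \eqref{eq:Euclid ui uj} and show the result is a nonzero Laurent polynomial in $q$. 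Its leading or lowest term will be isolated, and since $q$ is not a root of unity this gives nonvanishing.

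The main obstacle is the bookkeeping of $U(J)^{-1}$ acting through several Coxeter factors of alternating sign $\epsilon=(-1)^k$, which produces sums of $w^{(\pm1)}$ vectors. I expect the nonzero contribution to come from the interaction of the two outermost gaps. I will first try $v=w^{(-1)}_{[m,n]}$ paired with $u=u_{m+1}$, checking whether the coefficient of a single extreme power of $q$ survives. If cancellation occurs there, I will switch to $u=u_n$ and $v=w^{(1)}_{[m,n]}$.
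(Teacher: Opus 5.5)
Your strategy of transporting eigenvectors of $T_{[1,m]\cup\{n+1\}}^{m+1}$ by $U(J)^{-1}$ and pairing them is the paper's strategy. The problem is that the proposal stops exactly where the proof has to begin. You never fix a pair $(u,v)$ that works for every $J$ with $g(J)>1$, and you never compute a pairing. The concrete choices you do commit to are also faulty.

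\emph{The fallback vector is not an eigenvector.} The $q^{2(m+1)}$-eigenspace is spanned by $u_{m+1},\dots,u_{n-1}$ only; its dimension is $n-m-1$ by Corollary~\ref{cor:TJ eigenvectors}. The vector $u_n$ does not lie in it, since $T_{(m,n+1)}(u_n)=-q^{n+2}w^{(-1)}_{[m,n-1]}$ by Lemma~\ref{lem:Transp action}. So your fallback $u=u_n$ is not admissible.

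\emph{The first-choice pair fails.} The pair $u=u_{m+1}$, $v=w^{(-1)}_{[m,n]}$ already gives zero in the smallest case $n=5$, $J=\{1,3,5,6\}$, $m=3$. Here $U(J)^{-1}=T_2T_4^{-1}T_3^{-1}$, and \eqref{eq:ind base cox w0} gives
$U(J)^{-1}(u_4)=u_2+q^{-1}u_3$ and $U(J)^{-1}(w^{(-1)}_{[3,5]})=q^{-5}u_5$. These are orthogonal by \eqref{eq:Euclid ui uj}.

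\emph{No reduction to a normalized $J$ is available.} All $T_J$ with the same $|J|$ are conjugate in the braid group (Corollary~\ref{cor:conj J}). This includes those with $g(J)=1$, for which $T_J^{|J|}$ is ${}^{op}$-invariant, so every pairing in question vanishes by Lemma~\ref{lem:orth eigenspaces}. Nonvanishing therefore depends on the exact shape of $U(J)$ and must use $g(J)>1$ explicitly. A heuristic about ``the two outermost gaps'' or an extreme power of $q$ cannot replace this.

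What is missing is a choice of vectors that works uniformly, together with closed formulas for their images under $U(J)^{-1}$. The paper takes $v=u_{m-1}$ and chooses $u$ as follows:
\begin{itemize}
\item if $j_{m-1}<n$, then $u=u_{j_{m-1}}$, and $g(J)>1$ forces $m+1\le j_{m-1}\le n-1$;
\item if $j_{m-1}=n$, then $u=w^{((-1)^{m-1})}_{[m+1,n-1]}$.
\end{itemize}
It computes the images under $U(J)^{-1}$ in closed form in terms of $\beta_\pm(J)$ (Lemma~\ref{lem:TJ|J| eigenvectors}). It then shows the resulting Laurent polynomial is nonzero. In the first case it divides by $1-q^{-2}$ and evaluates at $q=1$, obtaining $2(-1)^m$. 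In the second case it computes the residue at $z=0$ in the variable $z=q^{2\epsilon}$, handling one degenerate case directly. Without an explicit uniform computation of this kind, your proposal does not establish the statement.
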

\begin{proof}
Write
$J=\{j_0=1<j_1<\cdots<j_{m-1}<j_m=n+1\}$ and define\plink{betapm}
\begin{align*}
&\beta_-(J)=\max\{k\in[0,m-1]\,:\,j_k=k+1\}+1,\\
&\beta_+(J)=\min\{k\in[1,m-1]\,:\,j_t=j_{m-1}-m+t+1\}.
\end{align*}
Thus,
$$
J=[1,\beta_-(J)]\cup \{j_{\beta_-(J)},\dots,j_{\beta_+(J)-1}\}
\cup [j_{\beta_+(J)},j_{m-1}]\cup\{n+1\}.
$$
with~$j_{\beta_-(J)}\ge \beta_-(J)+2$ and~$j_{\beta_+(J)-1}\le j_{\beta_+(J)}-2$.
Note also that~$g(J)>1$ implies that~$|J|\le n-1$.

Given~$s\in\mathbb Z$ let \plink{qs}$q_s=q^{(-1)^s}$. We have
\begin{equation}\label{eq: q r+s}
q_{r+s}=q^{(-1)^{r+s}}=q_r^{(-1)^s},\qquad r,s\in\mathbb Z.
\end{equation}
\begin{lemma}\label{lem:TJ|J| eigenvectors}
Let~$J=\{j_0=1<j_1<\cdots<j_{m-1}<j_m=n+1\}\subset [1,n+1]$ where $m=|J|-1\le n-2$.
\begin{enmalph}
\item\label{lem:TJ|J| eigenvectors.a}
For all $r\in[1,m-1]$,
\begin{equation}\label{eq:UJ-1 ur}
U(J)^{-1}(u_{r})=\sum_{\beta_-(J)\le k\le r-1}
\sum_{j_{k-1}\le t\le j_k-1} q_r^{\overline{r-k}}
(q_r^{k-t}-q_r^{t-k})u_t
+\sum_{j_{r-1}\le t\le j_r-1} q_r^{r-t}u_t,
\end{equation}

\item\label{lem:TJ|J| eigenvectors.b} Suppose that~$g(J)>1$ and
$j_{m-1}<n$. Then~$j_{m-1}\ge m+1$ and
$$
U(J)^{-1}(u_{j_{m-1}})=\sum_{j_{\beta_+(J)}-1 \le t\le j_{m-1}} q_m^{\overline{j_{m-1}-t}} u_t.
$$
\item\label{lem:TJ|J| eigenvectors.c} If $j_{m-1}=n$ then
$$
U(J)^{-1}(w^{((-1)^{m+1})}_{[m+1,n-1]})=\sum_{\beta_-(J)\le k\le \beta_+(J)}
q_{m+1}^{m-k-\overline{m-k}} w^{((-1)^{m+1})}_{[j_{k-1}+\delta_{k,\beta_-(J)},j_k-1-\delta_{k,\beta_+(J)}]}.
$$
\end{enmalph}
\end{lemma}
\begin{proof}
Note that~$k+1\le j_k\le j_{k+1}-1$, $0\le k\le m-1$, and
so if~$j_k=k+1$ for some~$k>0$ then $j_s=s+1$ for all~$0\le s\le k$. In this proof we abbreviate $\beta_\pm=\beta_\pm(J)$. By definition of~$\beta_-$,
$$
U(J)^{-1}=\ascprod_{\beta_-\le k\le m-1} \Cxr{(k+1)}{(j_k-1)}^{((-1)^{k+1})}.
$$
Denote the right hand side of~\eqref{eq:UJ-1 ur} by~$S(J,r)$.
Then by Lemma~\ref{lem:rev Cox act}
\begin{align*}
U(J)^{-1}(u_r)&=\Big(\ascprod_{\beta_-\le k\le m-1} \Cxr{(k+1)}{(j_k-1)}^{((-1)^{k+1})}\Big)(u_r)=\Big(\ascprod_{\beta_-\le k\le r} \Cxr{(k+1)}{(j_k-1)}^{((-1)^{k+1})}\Big)(u_r).
\end{align*}
If~$r<\beta_-$ then~$U(J)^{-1}(u_r)=u_r$ while
$$
S(J,r)=\sum_{j_{r-1}\le t\le j_r-1} q_r^{r-t}u_t=\sum_{r\le t\le r} q_r^{r-t}u_t=u_r.
$$
Suppose that
$r\ge \beta_-$. Then by Lemma~\ref{lem:rev Cox act}
\begin{align*}
U(J)^{-1}(u_r)&=\Big(\ascprod_{\beta_-\le k\le r-1} \Cxr{(k+1)}{(j_k-1)}^{((-1)^{k+1})}\Big)\Cxr{(r+1)}{(j_r-1)}^{((-1)^{r+1})}(u_r)\\
&=\Big(\ascprod_{\beta_-\le k\le r-1} \Cxr{(k+1)}{(j_k-1)}^{((-1)^{k+1})}\Big)(\sum_{r\le t\le j_r-1} q_r^{r-t} u_t).
\end{align*}
If~$\beta_-=r$ then~$r=j_{r-1}$ and
$$
S(J,r)=\sum_{j_{r-1}\le t\le j_r-1} q_r^{r-t}u_t=\sum_{r\le t\le j_r-1} q_r^{r-t}u_t=U(J)^{-1}(u_r).
$$
If~$\beta_-<r$ then, again by Lemma~\ref{lem:rev Cox act}
\begin{align*}
U(&J)^{-1}(u_r)=\Big(\ascprod_{\beta_-\le k\le r-2} \Cxr{(k+1)}{(j_k-1)}^{((-1)^{k+1})}\Big)\Cxr{r}{(j_{r-1}-1)}^{((-1)^{r})}(\sum_{r\le t\le j_r-1} q_r^{r-t} u_t)\\
&=\Big(\ascprod_{\beta_-\le k\le r-2} \Cxr{(k+1)}{(j_k-1)}^{((-1)^{k+1})}\Big)\Big(\sum_{r+1\le t\le j_{r-1}-1} q_r^{r+1-t} u_{t-1}\\&\qquad+
q_r^{r-j_{r-1}-1}(q_r u_{j_{r-1}-1}+u_{j_{r-1}})+\sum_{j_{r-1}+1\le t\le j_r-1} q_r^{r-t}u_t-q_r^{2-r}\sum_{r\le t\le j_{r-1}-1}q_r^{t} u_t\Big)\\
&=\Big(\ascprod_{\beta_-\le k\le r-2} \Cxr{(k+1)}{(j_k-1)}^{((-1)^{k+1})}\Big)\Big(\sum_{r\le t\le j_{r-1}-1} q_r(q_r^{r-1-t}-q_r^{t-r+1}) u_t+\sum_{j_{r-1}\le t\le j_r-1} q_r^{r-t}u_t\Big).
\end{align*}
We claim that
\begin{align*}
U(J)^{-1}(u_r)&=\Big(\ascprod_{\beta_-\le s\le k} \Cxr{(s+1)}{(j_s-1)}^{((-1)^{s+1})}\Big)(\sum_{k+2\le t\le j_{k+1}-1} q_r^{\overline{r-k-1}}(q_r^{k+1-t}-q_r^{t-k-1}) u_t\\
&\qquad +\sum_{k+2\le l\le r-1} \sum_{j_{l-1}\le t\le j_l-1} q_r^{\overline{r-l}}
(q_r^{l-t}-q_r^{t-l})u_t
+\sum_{j_{r-1}\le t\le j_r-1} q_r^{r-t}u_t
\end{align*}
for all $\beta_--1\le k\le r-2$. The case~$k=r-2$ has already been established. For the inductive step, we obtain, using Lemma~\ref{lem:rev Cox act},
\begin{align*}
&\Cxr{(k+1)}{(j_k-1)}^{((-1)^{k+1})}\Big(\sum_{k+2\le t\le j_{k+1}-1} q_r^{\overline{r-k-1}}(q_r^{k+1-t}-q_r^{t-k-1}) u_t\Big)
\\&=\sum_{k+2\le t\le j_k-1} q^{(-1)^{k+1}}q_r^{\overline{r-k-1}}(q_r^{k+1-t}-q_r^{t-k-1}) u_{t-1}+\sum_{j_k+1\le t\le j_{k+1}-1} q_r^{\overline{r-k-1}}(q_r^{k+1-t}-q_r^{t-k-1}) u_t\\
&\qquad+q_r^{\overline{r-k-1}}(q_r^{k+1-j_k}-q_r^{j_k-k-1})(q^{(-1)^{k+1}} u_{j_k-1}+u_{j_k})\\
&=\sum_{k+2\le t\le j_k} q^{(-1)^{k+1}}q_r^{\overline{r-k-1}}(q_r^{k+1-t}-q_r^{t-k-1}) u_{t-1}
+\sum_{j_k\le t\le j_{k+1}-1} q_r^{\overline{r-k-1}}(q_r^{k+1-t}-q_r^{t-k-1}) u_t.
\end{align*}
Since $(-1)^{k+1}+(-1)^r \overline{r-k-1}=(-1)^r ((-1)^{k+1-r}+\overline{r-k-1})=(-1)^r \overline{r-k}$, we obtain
\begin{align*}
\Cxr{(k+1)}{(j_k-1)}^{((-1)^{k+1})}&\Big(\sum_{k+2\le t\le j_{k+1}-1} q_r^{\overline{r-k-1}}(q_r^{k+1-t}-q_r^{t-k-1}) u_t\Big)
\\
&=\sum_{k+1\le t\le j_k-1} q_r^{\overline{r-k}}(q_r^{k-t}-q_r^{t-k}) u_t+\sum_{j_k\le t\le j_{k+1}-1} q_r^{\overline{r-k-1}}(q_r^{k+1-t}-q_r^{t-k-1}) u_t,
\end{align*}
and so
\begin{align*}
U(J)^{-1}(u_r)&=\Big(\ascprod_{\beta_-\le s\le k-1} \Cxr{(s+1)}{(j_s-1)}^{((-1)^{s+1})}\Big)\Big(\sum_{k+1\le t\le j_{k}-1} q_r^{\overline{r-k}}(q_r^{k-t}-q_r^{t-k}) u_t\\
&\qquad +\sum_{k+1\le l\le r-1} \sum_{j_{l-1}\le t\le j_l-1} q_r^{\overline{r-l}}
(q_r^{l-t}-q_r^{t-l})u_t
+\sum_{j_{r-1}\le t\le j_r-1} q_r^{r-t}u_t\Big).
\end{align*}
Taking~$k=\beta_--1$ and noting that $j_{\beta_--1}=\beta_-$, we obtain
\begin{align*}
U(J)^{-1}(u_r)&=\sum_{\beta_-+1\le t\le j_{\beta_-}-1} q_r^{\overline{r-\beta_-}}(q_r^{\beta_--t}-q_r^{t-\beta_-}) u_t\\
&\qquad +\sum_{\beta_-+1\le l\le r-1} \sum_{j_{l-1}\le t\le j_l-1} q_r^{\overline{r-l}}(q_r^{l-t}-q_r^{t-l})u_t
+\sum_{j_{r-1}\le t\le j_r-1} q_r^{r-t}u_t\\
&=\sum_{\beta_-\le t\le j_{\beta_-}-1} q_r^{\overline{r-\beta_-}}(q_r^{\beta_--t}-q_r^{t-\beta_-}) u_t\\
&\qquad +\sum_{\beta_-+1\le l\le r-1} \sum_{j_{l-1}\le t\le j_l-1} q_r^{\overline{r-l}}(q_r^{l-t}-q_r^{t-l})u_t
+\sum_{j_{r-1}\le t\le j_r-1} q_r^{r-t}u_t\\
&=\sum_{j_{\beta_--1}\le t\le j_{\beta_-}-1} q_r^{\overline{r-\beta_-}}(q_r^{\beta_--t}-q_r^{t-\beta_-}) u_t\\
&\qquad +\sum_{\beta_-+1\le l\le r-1} \sum_{j_{l-1}\le t\le j_l-1} q_r^{\overline{r-l}}(q_r^{l-t}-q_r^{t-l})u_t
+\sum_{j_{r-1}\le t\le j_r-1} q_r^{r-t}u_t\\
&=S(J,r).
\end{align*}
Part~\ref{lem:TJ|J| eigenvectors.a} is proven.

To prove part~\ref{lem:TJ|J| eigenvectors.b}, we use
induction on~$m$. Since~$g(J)>1$, the induction base is~$m=2$, that is $J=\{1,j,n+1\}$ for some~$2<j<n$.
Then $U(J)^{-1}=\Cxr2{(j-1)}$ and $U(J)^{-1}(u_j)=q u_{j-1}+u_j=q_2 u_{j-1}+u_j$
by Lemma~\ref{lem:rev Cox act}.

Suppose the claim is proven for all $J'$ with~$|J'|=m+1$, $m\ge 2$ and that~$J$ with~$g(J)>1$ satisfies~$|J|=m+2$. Then by Lemma~\ref{lem:rev Cox act}
\begin{align*}
U(J)^{-1}(u_{j_m})&=U(J\setminus\{j_m\})^{-1}\Cxr{(m+1)}{(j_m-1)}^{((-1)^{m+1})}(u_{j_m})
=U(J\setminus\{j_m\})^{-1}(q_{m+1} u_{j_m-1}+u_{j_m}).
\end{align*}
If~$j_{m-1}<j_m-1$ then $U(J\setminus\{j_m\})^{-1}(q_{m+1} u_{j_m-1}+u_{j_m})=q_{m+1} u_{j_m-1}+u_{j_m}$. But in that case~$\beta_+(J)=m$ and the formula in~\ref{lem:TJ|J| eigenvectors.b} holds. Otherwise,
$j_{m-1}=j_m-1$. If~$g(J\setminus\{j_m\})=1$ then, as $j_{m-1}<n$, it follows
that $j_i=i+1$, $0\le i\le m$ which contradicts the assumption~$g(J)>1$.
Thus, $g(J\setminus\{j_m\})>1$, $\beta_+(J\setminus\{j_m\})=\beta_+
\le m-1$ and
and so by the induction hypothesis,
\begin{align*}
U(J)^{-1}(u_{j_m})&=U(J\setminus\{j_m\})^{-1}(q_m^{-1} u_{j_{m-1}})+u_{j_m}=q_m^{-1}\sum_{j_{\beta_+}-1\le t\le j_{m-1}} q_m^{\overline{j_{m-1}-t}}u_t+u_{j_m}\\
&=q_m^{-1}\sum_{j_{\beta_+}-1\le t\le j_{m}-1} q_m^{1-\overline{j_{m}-t}}u_t+u_{j_m}=\sum_{j_{\beta_+}-1\le t\le j_m} q_{m+1}^{\overline{j_m-t}}u_t.
\end{align*}
To prove part~\ref{lem:TJ|J| eigenvectors.c}, abbreviate~$\epsilon=(-1)^{m+1}$;
thus, $q_{m+1}=q^{\epsilon}$. First we claim that for all
$\beta_+\le k\le m-1$,
$\beta_+\le k\le m-1$,
\begin{align*}
\ascprod_{k\le t\le m-1} \Cxr{(t+1)}{(j_t-1)}^{((-1)^{t+1})}(
w^{(\epsilon)}_{[m+1,n-1]})&=
\ascprod_{k\le t\le m-1} \Cxr{(t+1)}{(n-m+t)}^{((-1)^{t+1})}(
w^{(\epsilon)}_{[m+1,n-1]})\\
&=
q^{\epsilon(m-k-\overline{m-k})}
w^{(\epsilon)}_{[k+1,n-m-1+k]}
\end{align*}
Indeed, for~$k=m-1$ we have $\Cxr{m}{(n-1)}^{(-\epsilon)}(w^{(\epsilon)}_{[m+1,n-1]})=
w^{(\epsilon)}_{[m,n-2]}$
by Corollary~\ref{cor:cox w +-}.
For the inductive step, again by Corollary~\ref{cor:cox w +-}
\begin{align*}
\Cxr{(k+1)}{(n-m+k)}^{((-1)^{k+1})}(q^{\epsilon(m-k-1-\overline{m-k-1})}
w^{(\epsilon)}_{[k+2,n-m+k]})
&=q^{\epsilon(m-k+(-1)^{k-m}-\overline{m-k-1})} w^{(\epsilon)}_{[k+1,n-m+k-1]}\\
&=q^{\epsilon(m-k+(-1)^{k-m}-\overline{m-k-1})} w^{(\epsilon)}_{[k+1,n-m+k-1]},
\end{align*}
since $(-1)^{k-m}-\overline{m-k-1}=-\overline{m-k}$.

Taking~$k=\beta_+$, we obtain
$$
\ascprod_{\beta_+\le t\le m-1} \Cxr{(t+1)}{(j_t-1)}^{((-1)^{t+1})}(w^{(\epsilon)}_{[m+1,n-1]}
=
q^{\epsilon(m-\beta_+-\overline{m-\beta_+})}
w^{(\epsilon)}_{[\beta_++1,n-m-1+\beta_+]}.
$$
Since $n-m+\beta_+=j_{\beta_+}-1$, we obtain
$$
\ascprod_{\beta_+\le t\le m-1} \Cxr{(t+1)}{(j_t-1)}^{((-1)^{t+1})}(w^{(\epsilon)}_{[m+1,n-1]}=q^{\epsilon(m-\beta_+-\overline{m-\beta_+})}
w^{(\epsilon)}_{[\beta_++1,j_{\beta_+}-2]}.
$$
Finally, we prove that for all $\beta_-\le k\le \beta_+$,
\begin{align}
\ascprod_{k\le t\le \beta_+} \Cxr{(t+1)}{(j_t-1)}^{((-1)^{t+1})}(w^{(\epsilon)}_{[m+1,n-1]})&=
q^{\epsilon(m-k-\overline{m-k})} w^{(\epsilon)}_{[k+1,j_k-1-\delta_{k,\beta_+}]}\nonumber\\
&\quad +
\sum_{k+1\le t\le \beta_+} q^{\epsilon(m-t-\overline{m-t})}w^{(\epsilon)}_{[j_t,j_{t+1}-1-\delta_{t,\beta_+}]}.\label{eq:intermed UJ wm+1n-1}
\end{align}
Indeed, the case~$k=\beta_+$ has already been established. For~$k<\beta_+$ note that by definition of~$\beta-+$, $j_k<j_{k+1}-\delta_{k+1,\beta_+}$, $\beta_-+1\le k\le \beta_+-1$.
Thus,
\begin{align*}
\Cxr{(k+1)}{(j_k-1)}^{(-1)^{k+1}}&(q^{\epsilon(m-k-1-\overline{m-k-1})} w^{(\epsilon)}_{[k+2,j_{k+1}-1-\delta_{k+1,\beta_+}]})\\&
=q^{\epsilon((-1)^{k-m}+m-k-\overline{m-k-1})} w^{(\epsilon)}_{[k+1,j_k-1]}+w^{(\epsilon)}_{[j_k,j_{k+1}-1-\delta_{k+1,\beta_+}]}\\
&=q^{\epsilon(m-k-\overline{m-k})} w^{(\epsilon)}_{[k+1,j_k-1]}+w^{(\epsilon)}_{[j_k,j_{k+1}-1-\delta_{k+1,\beta_+}]}.
\end{align*}
Since $\Cxr{(k+1)}{(j_k-1)}^{(-1)^{k+1}}(w^{(\epsilon)}_{[j_t,j_{t+1}-1-\delta_{t,\beta_+}]})=w^{(\epsilon)}_{[j_t,j_{t+1}-1-\delta_{t,\beta_+}]}$,
$k+1\le t\le \beta_+-1$, \eqref{eq:intermed UJ wm+1n-1} follows. It remains to observe that this implies the assertion since $j_{\beta_--1}=\beta_-$.
\end{proof}
\begin{lemma}\label{lem:Euclid jm-1<n}
Let~$\{1,n+1\}\subset J\subset [1,n+1]$ with~$g(J)>1$. Suppose that
$j=\max(J\setminus\{n+1\})<n$. Then
$\la U(J)^{-1}(u_{m-1})\,\vert\,U(J)^{-1}(u_j)\ra\not=0$.
\end{lemma}
\begin{proof}
Let~$m=|J|-1$ and write~$J=\{j_0=1<j_1<\cdots<j_{m-1}=j<j_m=n+1\}$.
Note that by Lemma~\partref{lem:TJ|J| eigenvectors.b},
$U(J)^{-1}(u_j)$ is contained in the span of the $u_t$, $t\in[j_{\beta_+}-1,
j]$ which is orthogonal to all the $u_s$, $s\in [1,j_{\beta_+}-3]$. Thus,
we can consider $U(J)^{-1}(u_{m-1})$ modulo $V'=\sum_{1\le s\le
j_{\beta_+}-3} \kk u_s$.

Suppose first that~$\beta_+(J)=m-1$. Then~$j_{m-2}\le j_{m-1}-2$, hence
by Lemma~\ref{lem:TJ|J| eigenvectors}
$U(J)^{-1}(u_{m-1})=q_m^{j-m-1}u_{j-2}+q_m^{j-m}u_{j-1}\pmod{V'}$,
$U(J)^{-1}(u_j)=q_m u_{j-1}+u_j$ and so by~\eqref{eq:Euclid ui uj}
\begin{align*}
\langle U(J)^{-1}(u_{m-1})\,\vert\,U(J)^{-1}(u_j)\rangle&=\langle q_m^{j-m-1}u_{j-2}+q_m^{j-m}u_{j-1},q_m u_{j-1}+u_j\rangle\\
&=q_m^{j-m}( -2q^{-1}+q_m(1+q^{-2}))=q^{(-1)^m(j-m)-1}(q_m^2-1),
\end{align*}
which is manifestly a non-zero Laurent polynomial in~$q$.

Suppose now that~$\beta_+=\beta_+(J)<m-1$.
Then, in particular, $j_{m-2}=j-1$ and
\begin{align*}
U(J)^{-1}(&u_{m-1})=\sum_{\beta_+\le k\le m-2}\sum_{j_{k-1}\le t\le j_k-1} q_{m-1}^{\overline{m-1-k}}(q_{m-1}^{k-t}-q_{m-1}^{t-k})u_t
+q_{m-1}^{m-j}u_{j-1}\pmod{V'}\\
&=\sum_{j_{\beta_+-1}\le t\le j_{\beta_+}-1} q_{m-1}^{\overline{m-1-\beta_+}}(q_{m-1}^{\beta_+-t}-q_{m-1}^{t-\beta_+})u_t\\
&\quad+\sum_{\beta_++1\le k\le m-2}q_{m-1}^{\overline{m-1-k}}(q_{m-1}^{k-j_{k-1}}-q_{m-1}^{j_{k-1}-k})u_{j_{k-1}}
+q_{m-1}^{m-j}u_{j-1}\pmod{V'}
\\
&=q_{m-1}^{\overline{m-1-\beta_+}}(q_{m-1}^{m+1-j}-q_{m-1}^{j-m-1})u_{j-m+\beta_+-1}
\\
&\quad+(q_{m-1}^{m-j}-q_{m-1}^{j-m})\sum_{\beta_+\le k\le m-2}q_{m-1}^{\overline{m-1-k}}u_{j-m+k}
+q_{m-1}^{m-j}u_{j-1}\pmod{V'}\\
&=q_{m}^{-\overline{m-1-\beta_+}}(q_{m}^{j-m-1}-q_{m}^{m+1-j})u_{j-m+\beta_+-1}
\\
&\quad+(q_{m}^{j-m}-q_{m}^{m-j})\sum_{j-m+\beta_+\le t\le j-2}q_{m}^{-\overline{j-t-1}}u_t
+q_{m}^{j-m}u_{j-1}\pmod{V'}.
\end{align*}
Then, using~\eqref{eq:Euclid ui uj}, we obtain
\begin{align*}
\langle U(&J)^{-1}(u_{m-1})\,\vert\,U(J)^{-1}(u_{j_{m-1}})\rangle =
\langle q_{m}^{-\overline{m-1-\beta_+}}(q_{m}^{j-m-1}-q_{m}^{m+1-j})u_{j-m+\beta_+-1}
\\
&\qquad+(q_{m}^{j-m}-q_{m}^{m-j})\sum_{j-m+\beta_+\le t\le j-2}q_{m}^{-\overline{j-t-1}}u_t
+q_{m}^{j-m}u_{j-1}\,\vert\,
\sum_{j-m+\beta_+ \le t\le j} q_m^{\overline{j-t}} u_t\rangle\\
&=-q_{m}^{2\overline{m-\beta_+}-1}(q_{m}^{j-m-1}-q_{m}^{m+1-j})q^{-1}\\
&\qquad+(q_{m}^{j-m}-q_{m}^{m-j})\langle \sum_{j-m+\beta_+\le t\le j-2}q_{m}^{-\overline{j-t-1}}u_t
+q_{m}^{j-m}u_{j-1}\,\vert\,\sum_{j-m+\beta_+ \le t\le j} q_m^{\overline{j-t}} u_t\rangle\\
&=-q_{m}^{2\overline{m-\beta_+}-1}(q_{m}^{j-m-1}-q_{m}^{m+1-j})q^{-1}+q_m^{j-m+1}(1+q^{-2})-2q^{-1} q_m^{j-m}\\\
&\qquad+(q_{m}^{j-m}-q_{m}^{m-j})\langle \sum_{j-m+\beta_+\le t\le j-2}q_{m}^{-\overline{j-t-1}}u_t
\,\vert\,\sum_{j-m+\beta_+ \le t\le j} q_m^{\overline{j-t}} u_t\rangle\\
&=-q^{-1}(q_{m}^{2\overline{m-\beta_+}}(q_{m}^{j-m-2}-q_{m}^{m-j})-q_m^{j-m}(q_m^2-1))\\\
&\qquad+(q_{m}^{j-m}-q_{m}^{m-j})\langle \sum_{j-m+\beta_+\le t\le j-2}q_{m}^{-\overline{j-t-1}}u_t\,
\vert\,\sum_{j-m+\beta_+ \le t\le j-1} q_m^{\overline{j-t}} u_t\rangle\\
&=-q^{-1}(q_{m}^{2\overline{m-\beta_+}}(q_{m}^{j-m-2}-q_{m}^{m-j})-q_m^{j-m}(q_m^2-1))\\\
&\qquad+(q_{m}^{j-m}-q_{m}^{m-j})q_m^{-1}\langle \sum_{j-m+\beta_+\le t\le j-2}q_{m}^{\overline{j-t}}u_t\,
\vert\,\sum_{j-m+\beta_+ \le t\le j-1} q_m^{\overline{j-t}} u_t\rangle\\
&=-q^{-1}(q_{m}^{2\overline{m-\beta_+}}(q_{m}^{j-m-2}-q_{m}^{m-j})-q_m^{j-m}(q_m^2-1)+q_m^{j-m}-q_m^{m-j})\\\
&\qquad+(q_{m}^{j-m}-q_{m}^{m-j})q_m^{-1}\langle \sum_{j-m+\beta_+\le t\le j-2}q_{m}^{\overline{j-t}}u_t\,
\vert\,\sum_{j-m+\beta_+ \le t\le j-2} q_m^{\overline{j-t}} u_t\rangle\\
&=-q^{-1}(q_{m}^{2\overline{m-\beta_+}}(q_{m}^{j-m-2}-q_{m}^{m-j})-q_m^{j-m}(q_m^2-1)+q_m^{j-m}-q_m^{m-j})\\\
&\qquad+(q_{m}^{j-m}-q_{m}^{m-j})q_m^{-1} \Big((1+q^{-2})\sum_{j-m+\beta_+\le t\le j-2} q_m^{2\overline{j-t}}
-2 q^{-1} \sum_{j-m+\beta_+\le t\le j-3} q_m^{\overline{j-t}+\overline{j-t+1}}\Big)\\
&=-q^{-1}\Big(q_{m}^{2\overline{m-\beta_+}}(q_{m}^{j-m-2}-q_{m}^{m-j})-q_m^{j-m}(q_m^2-1)+(q_m^{j-m}-q_m^{m-j})(2(m-\beta_+)-3))\Big)\\
&\qquad+(q_{m}^{j-m}-q_{m}^{m-j})q_m^{-1} (1+q^{-2})(q_m^2 \lfloor\tfrac12(m-\beta_+-1)\rfloor+\lfloor\tfrac12(m-\beta_+)\rfloor).
\end{align*}
Note that, since~$g(J)>1$, $j<m$ and so $q_m^{j-m}-q_m^{m-j}\not=0$. We can rewrite the above as
$$
\langle U(J)^{-1}(u_{m-1})\,\vert\, U(J)^{-1}(u_{j_{m-1}})\rangle =q_m^{j-m} Q^+_{m-\beta_+,(-1)^m}(q)-q_m^{m-j}Q^-_{m-\beta_+,(-1)^m}(q),
$$
where~$Q^\pm_{r,\epsilon}(q)\in\mathbb Z[q,q^{-1}]$, $r\in\mathbb Z$, $\epsilon\in\{1,-1\}$, are defined by
\begin{align*}
&Q^+_{r,\epsilon}(q)=-q^{-1}(q^{2\epsilon (\overline{r}-1)}-q^{2\epsilon}+2(r-1))+(1+q^{-2})(q^\epsilon\lfloor\tfrac12(r-1)\rfloor+q^{-\epsilon}\lfloor\tfrac12 r\rfloor),
\\
&Q^-_{r,\epsilon}(q)=-q^{-1}(q^{2\epsilon \overline{r}}+2(r-1)-1)+(1+q^{-2})(q^\epsilon \lfloor\tfrac12(r-1)\rfloor+q^{-\epsilon}\lfloor\tfrac12 r\rfloor).
\end{align*}
Since~$1+q^{-2}=q^{-1}(q^\epsilon+q^{-\epsilon})$, $\epsilon\in\{1,-1\}$, we have
\begin{align*}
q &Q^+_{r,\epsilon}(q)=q^{2\epsilon}(1+\lfloor\tfrac12(r-1)\rfloor)-q^{2\epsilon(\overline r-1)}+q^{-2\epsilon}\lfloor\tfrac12 r\rfloor+
2(1-r)+\lfloor\tfrac12(r-1)\rfloor+\lfloor\tfrac12 r\rfloor\\
&=q^{2\epsilon}(1+\lfloor\tfrac12(r-1)\rfloor)+q^{-2\epsilon}\lfloor\tfrac12 r\rfloor+
1-r-q^{2\epsilon(\bar r-1)}\\
&=q^{2\epsilon}(\lfloor\tfrac12r\rfloor+\bar r)+q^{-2\epsilon}\lfloor\tfrac12 r\rfloor+
1-2\lfloor\tfrac12 r\rfloor-\bar r-q^{2\epsilon(\bar r-1)}\\
&=\lfloor\tfrac12 r\rfloor(q^\epsilon-q^{-\epsilon})^2
+\bar r(q^{2\epsilon}-1)+1-q^{2\epsilon(\bar r-1)}=(q-q^{-1})(\lfloor\tfrac12 r\rfloor(q-q^{-1})+\epsilon q^{\epsilon(2\bar r-1)}).
\end{align*}
Similarly,
\begin{align*}
q Q^-_{r,\epsilon}(q)&=(q-q^{-1})^2\lfloor\tfrac12 r\rfloor+1-q^{2\epsilon \bar r}
+(1-\bar r)(1-q^{2\epsilon})\\
&=(q-q^{-1})( \lfloor\tfrac12 r\rfloor(q-q^{-1})-\epsilon q^\epsilon).
\end{align*}
Thus, $Q^\pm_{r,\epsilon}(q)/(1-q^{-2})\in \mathbb Z[q,q^{-1}]$ and
equals~$\pm \epsilon$ at~$q=1$.
It follows that
$$\la U(J)^{-1}(u_{m-1})\,\vert\, U(J)^{-1}(u_{j_{m-1}})\ra/(1-q^{-2})
\in\mathbb Z[q,q^{-1}],
$$
equals~$2(-1)^m$ at~$q=1$ and hence
is non-zero.
\end{proof}
\begin{lemma}\label{lem:Euclid jm-1=n}
Suppose that~$g(J)>1$ and~$j_{m-1}=n$. Let~$\epsilon=(-1)^{m-1}$. Then
$$
\langle U(J)^{-1}(u_{m-1})\,\vert\, U(J)^{-1}(w^{(\epsilon)}_{[m+1,n-1]})\rangle\not=0.
$$
\end{lemma}
\begin{proof}
As before, we abbreviate~$\beta_\pm=\beta_\pm(J)$. By definition, $\beta_-\le\beta_+$.
Since $g(J)>1$, we must have $\beta_-\le \beta_+-1$ for otherwise $\beta_-=\beta_+$ and so $J=[1,\beta_--1]\cup [n-m+\beta_-+1,n+1]$.
Write
$$
U(J)^{-1}(q^{-\epsilon} u_{m-1})=\sum_{\beta_-\le t\le n-1} c_t u_t,\qquad U(J)^{-1}( q^{-\epsilon m} w^{(\epsilon)}_{[m+1,n-1]})=\sum_{\beta_-+1\le t \le j_{\beta_+}-2} c'_t u_t,
$$
where by Lemma~\ref{lem:TJ|J| eigenvectors}
$$
c_t=q^{-\epsilon\overline{m-\kappa(t)}}(q^{\epsilon(\kappa(t)-t)}-(1-\delta_{m-1,\kappa(t)})q^{\epsilon(t-\kappa(t))}),\quad
c'_t=q^{\epsilon(t-\kappa(t)-\overline{m-\kappa(t)})},
$$
and $\kappa(t)=\{ k\in[1,m]\,:\, j_{k-1}\le t\le j_k-1\}$.
Thus, by~\eqref{eq:Euclid ui uj}
\begin{align*}
q^{-\epsilon(m+1)} \langle U(J)^{-1} &u_{m-1}\,\vert\, U(J)^{-1}(w^{(\epsilon)}_{[m+1,n-1]})\rangle=(1+q^{-2})\sum_{\beta_-+1\le t \le j_{\beta_+}-2} c_t c'_t
\\&-q^{-1}\sum_{\beta_-\le t\le j_{\beta_+}-3} c_t c'_{t+1}-q^{-1}\sum_{\beta_-+1\le t\le j_{\beta_+}-2} c_{t+1}c'_t
\end{align*}
Note that $\kappa(t+1)=\kappa(t)$ unless $t=j_{\kappa(t)}-1$ in which case $\kappa(t+1)=\kappa(t)+1$.
We have
\begin{align*}
&c_t c'_t=q^{-2\epsilon \overline{m-\kappa(t)}}(1-(1-\delta_{m-1,\kappa(t)})q^{2\epsilon(t-\kappa(t))})
\intertext{
while}
&c_t c'_{t+1}=\begin{cases}
q^{\epsilon}c_t c'_t,&t\not=j_{\kappa(t)}-1,\\
q^{-\epsilon}(1-(1-\delta_{m-1,\kappa(t)})q^{2\epsilon(t-\kappa(t))}),&t=j_{\kappa(t)}-1,
\end{cases}
\\
\intertext{and}
&c'_t c_{t+1}=\begin{cases}
q^{-\epsilon(1+2\overline{m-\kappa(t)})}(1-(1-\delta_{m-1,\kappa(t)})q^{2\epsilon(1+t-\kappa(t))}),&t\not=j_{\kappa(t)}-1,\\
q^{-\epsilon}(1-(1-\delta_{m-1,\kappa(t)+1})q^{2\epsilon(t-\kappa(t))}),&t=j_{\kappa(t)}-1.
\end{cases}
\end{align*}
Thus, for~$\beta_-<k<\beta_+$,
\begin{align*}
\sum_{j_{k-1}\le t\le j_k-1} &c_t c'_t
=q^{-2\epsilon\overline{m-k}}\Big(
j_k-j_{k-1}-q^{-2\epsilon k}\frac{q^{2\epsilon j_k}-q^{2\epsilon j_{k-1}}}{q^{2\epsilon}-1}\Big),\\
\sum_{j_{k-1}\le t\le j_k-1} c_t c'_{t+1}&=
q^{\epsilon(1-2\overline{m-k})}\Big(
j_k-j_{k-1}-1-q^{-2\epsilon k}\,\frac{q^{2\epsilon (j_k-1)}-q^{2\epsilon j_{k-1}}}{q^{2\epsilon}-1}\Big)\\
&\qquad+q^{-\epsilon}(1-q^{2\epsilon(j_k-k-1)}),\\
\sum_{j_{k-1}\le t\le j_k-1} c_{t+1} c'_{t}&=
q^{-\epsilon(1+2\overline{m-k})}\Big(j_k-j_{k-1}-1-q^{-2\epsilon k}\,\frac{q^{2\epsilon j_k}-q^{2\epsilon(j_{k-1}+1)}}{q^{2\epsilon}-1}\Big)\\
&\qquad
+q^{-\epsilon}(1-(1-\delta_{m-2,k})q^{2\epsilon(j_k-1-k)}).
\end{align*}
Since $q^{-1}(q^\epsilon+q^{-\epsilon})=1+q^{-2}$, $\epsilon\in\{1,-1\}$
we obtain, for $\beta_-<k<\beta_+$,
\begin{align*}
(1+&q^{-2})\sum_{j_{k-1}\le t\le j_k-1} c_t c'_t-q^{-1}\sum_{j_{k-1}\le t\le j_k-1} (c_t c'_{t+1}+c_{t+1}c'_t)\\
&=q^{-2\epsilon\overline{m-k}}\Big( (1+q^{-2})(j_k-j_{k-1})-q^{-1}(q^\epsilon+q^{-\epsilon})(j_k-j_{k-1})+q^{-1}(q^\epsilon+q^{-\epsilon})\\
&\quad-q^{-2\epsilon k}(q^{2\epsilon}-1)^{-1}\Big( (1+q^{-2})(q^{2\epsilon j_k}-q^{2\epsilon j_{k-1}})-q^{-1+\epsilon}(q^{2\epsilon (j_k-1)}-q^{2\epsilon j_{k-1}})\\
&\quad-q^{-1-\epsilon}(q^{2\epsilon j_k}-q^{2\epsilon(j_{k-1}+1)})\Big)\Big)-q^{-1-\epsilon}(2-(2-\delta_{m-2,k})q^{2\epsilon(j_k-1-k)})\\
&=q^{-2\epsilon\overline{m-k}}\Big( (1+q^{-2})
-q^{-2\epsilon k}(q^{2\epsilon}-1)^{-1}\Big( q^{2\epsilon j_k}((1+q^{-2})-2q^{-1-\epsilon})\\
&\quad-q^{2\epsilon j_{k-1}}((1+q^{-2})-2q^{-1+\epsilon})\Big)\Big)-q^{-1-\epsilon}(2-(2-\delta_{m-2,k})q^{2\epsilon(j_k-1-k)})\\
&=q^{-2\epsilon\overline{m-k}}( (1+q^{-2})-q^{-1-\epsilon(2k+1)}(q^{2\epsilon j_k}+q^{2\epsilon j_{k-1}}))-q^{-1-\epsilon}(2-(2-\delta_{m-2,k})q^{2\epsilon(j_k-1-k)}).
\end{align*}
For $k=\beta_-$ we have
\begin{align*}
(1+&q^{-2})\sum_{\beta_-+1\le t\le j_{\beta_-}-1} c_t c'_t-q^{-1}\sum_{\beta_-\le t\le j_{\beta_-}-1} c_t c'_{t+1}-q^{-1}\sum_{\beta_-+1\le t\le j_{\beta_-}-1}
c_{t+1}c'_t)\\
&=(1+q^{-2})q^{-2\epsilon\overline{m-\beta_-}}\Big(j_{\beta_-}-\beta_--1-\frac{q^{2\epsilon(j_{\beta_-}-\beta_-)}-q^{2\epsilon}}{q^{2\epsilon}-1}\Big)
\\&\qquad-q^{-1+\epsilon(1-2\overline{m-\beta_-})}\Big( j_{\beta_-}-\beta_--1-\frac{q^{2\epsilon(j_{\beta_-}-\beta_--1)}-1}{q^{2\epsilon-1}}\Big)-q^{-1-\epsilon}(1-q^{2\epsilon(j_{\beta_-}-\beta_--1)})\\
&\qquad-q^{-1-\epsilon(1+2\overline{m-\beta_-})}\Big(j_{\beta_-}-\beta_--2-\frac{q^{2\epsilon(j_{\beta_-}-\beta_-)}-q^{4\epsilon}}{q^{2\epsilon}-1}\Big)\\
&\qquad-q^{-1-\epsilon}(1-(1-\delta_{m-2,\beta_-})q^{2\epsilon(j_{\beta_-}-1-\beta_-)})\\
&=q^{-2\epsilon\overline{m-\beta_-}}\Big(q^{-1-\epsilon}-(q^{2\epsilon}-1)^{-1}\Big( (1+q^{-2})(q^{2\epsilon(j_{\beta_-}-\beta_-)}-q^{2\epsilon})\\
&\qquad+q^{-1+\epsilon}(q^{2\epsilon(j_{\beta_-}-\beta_--1)}-1)+q^{-1-\epsilon}(q^{2\epsilon(j_{\beta_-}-\beta_-)}-q^{4\epsilon})\Big)\Big)\\
&\qquad
-q^{-1-\epsilon}(2-(2-\delta_{m-2,\beta_-})q^{2\epsilon(j_{\beta_-}-1-\beta_-)})\\
&=q^{-(1+\epsilon(1+2\epsilon\overline{m-\beta_-})}(1-q^{2\epsilon(j_{\beta_-}-\beta_-)})
-q^{-1-\epsilon}(2-(2-\delta_{m-2,\beta_-})q^{2\epsilon(j_{\beta_-}-1-\beta_-)}),
\end{align*}
while for~$k=\beta_+$,
\begin{align*}
(1&+q^{-2})\sum_{j_{\beta_+-1}\le t\le j_{\beta_+}-2} c_t c'_t-q^{-1}\sum_{j_{\beta_+-1}\le t\le j_{\beta_+}-3} c_t c'_{t+1}-q^{-1} \sum_{j_{\beta_+-1}\le t\le j_{\beta_+}-2} c_{t+1}c'_t\\
&=(1+q^{-2})q^{-2\epsilon \overline{m-\beta_+}}\Big(j_{\beta_+}-j_{\beta_+-1}-1-(1-\delta_{m-1,\beta_+})q^{-2\epsilon\beta_+}\,\frac{q^{2\epsilon (j_{\beta+}-1)}-q^{2\epsilon j_{\beta_+-1}}}{q^{2\epsilon}-1}\Big)\\
&\qquad-q^{-1+\epsilon(1-2\overline{m-\beta_+})}\Big(j_{\beta_+}-j_{\beta_+-1}-2-(1-\delta_{m-1,\beta_+})q^{-2\epsilon\beta_+}\,\frac{q^{2\epsilon (j_{\beta+}-2)}-q^{2\epsilon j_{\beta_+-1}}}{q^{2\epsilon}-1}\Big)\\
&\qquad-q^{-1-\epsilon(1+2\overline{m-\beta_+})}\Big(j_{\beta_+}-j_{\beta_+-1}-1-(1-\delta_{m-1,\beta_+})q^{-2\epsilon\beta_+}\,\frac{q^{2\epsilon j_{\beta+}}-q^{2\epsilon (j_{\beta_+-1}+1)}}{q^{2\epsilon}-1}\Big)\\
&=
-(1-\delta_{m-1,\beta_+})q^{-2\epsilon (\beta_++\overline{m-\beta_+})}(q^{2\epsilon}-1)^{-1}\Big(q^{2\epsilon j_{\beta_+}}((1+q^{-2})q^{-2\epsilon}-q^{-1-\epsilon}-q^{-1-3\epsilon})
\\
&\qquad-q^{2\epsilon j_{\beta_--1}}((1+q^{-2})-2q^{-1+\epsilon})\Big)\\
&=q^{-1-\epsilon}(q^{2\epsilon(1-\overline{m-\beta_+})}-(1-\delta_{m-1,\beta_+})q^{2\epsilon(j_{\beta_+-1}-\beta_+-\overline{m-\beta_+})}).
\end{align*}
Let~$z=q^{2\epsilon}$. Since~$\beta_-\le \beta_+-1$ we obtain
\begin{align*}
&q^{1-\epsilon m} \la U(J)^{-1} u_{m-1}\,\vert\, U(J)^{-1}(w^{(\epsilon)}_{[m+1,n-1]})\ra
=z^{1-\overline{m-\beta_+}}-(1-\delta_{m-1,\beta_+})z^{j_{\beta_+-1}-\beta_+-\overline{m-\beta_+}}\\&\quad+
z^{-\overline{m-\beta_-}}(1-z^{j_{\beta_-}-\beta_-})
-(2-(2-\delta_{m-2,\beta_-})z^{j_{\beta_-}-1-\beta_-})\\
&\quad+\sum_{\beta_-+1\le k\le \beta_+-1} \Big(z^{-\overline{m-k}}( (1+z)-z^{- k}(z^{j_k}+z^{j_{k-1}}))-2+(2-\delta_{m-2,k})z^{j_k-1-k}\Big)\\
&=z^{1-\overline{m-\beta_+}}-(1-\delta_{m-1,\beta_+})z^{j_{\beta_+-1}-\beta_+-\overline{m-\beta_+}}\\&\quad+
z^{-\overline{m-\beta_-}}(1-z^{j_{\beta_-}-\beta_-})
+(2-\delta_{m-2,\beta_-})z^{j_{\beta_-}-1-\beta_-}-2(\beta_+-\beta_-)\\
&\quad+(1+z)\Big(z^{\overline{m-\beta_-}-1}\lfloor\tfrac12(\beta_+-\beta_-)\rfloor+z^{-\overline{m-\beta_-}}\lfloor\tfrac12(\beta_+-\beta_--1)\rfloor\Big)\\
&\quad-\sum_{\beta_-+1\le k\le \beta_+-1} \Big(z^{-\overline{m-k}-k}(z^{j_k}+z^{j_{k-1}}))-(2-\delta_{m-2,k})z^{j_k-1-k}\Big)\\
&=z^{1-\overline{m-\beta_+}}+z^{-\overline{m-\beta_-}}-2(\beta_+-\beta_-)\\
&\quad+(1+z)\Big(z^{\overline{m-\beta_-}-1}\lfloor\tfrac12(\beta_+-\beta_-)\rfloor+z^{-\overline{m-\beta_-}}\lfloor\tfrac12(\beta_+-\beta_--1)\rfloor\Big)\\
&\quad-\sum_{\beta_-+1\le k\le \beta_+} (1-\delta_{m-1,k}) z^{j_{k-1}-k-\overline{m-k}}-\sum_{\beta_-\le k\le \beta_+-1} z^{j_k-k}(z^{-\overline{m-k}}-(2-\delta_{k,m-2})z^{-1})\\
&=z^{1-\overline{m-\beta_+}}+z^{-\overline{m-\beta_-}}-2(\beta_+-\beta_-)\\
&\quad+(1+z)\Big(z^{\overline{m-\beta_-}-1}\lfloor\tfrac12(\beta_+-\beta_-)\rfloor+z^{-\overline{m-\beta_-}}\lfloor\tfrac12(\beta_+-\beta_--1)\rfloor\Big)\\
&\quad-\sum_{\beta_-\le k\le \beta_+-1} z^{j_k-k}(1-z^{\overline{m-k}-1})(1-(1-\delta_{k,m-2})z^{-1})\\
&=z^{1-\overline{m-\beta_+}}+z^{-\overline{m-\beta_-}}-2(\beta_+-\beta_-)-\sum_{\beta_-\le k\le \beta_+-1} z^{j_k-k}(1-z^{\overline{m-k}-1})^{2-\delta_{k,m-2}}\\
&\quad+(1+z)\Big(z^{\overline{m-\beta_-}-1}\lfloor\tfrac12(\beta_+-\beta_-)\rfloor+z^{-\overline{m-\beta_-}}\lfloor\tfrac12(\beta_+-\beta_--1)\rfloor\Big).
\end{align*}
Denote this expression by~$Q_J(z)$. Clearly, $Q_J(z)\in\mathbb Z[z,z^{-1}]$
and, since $j_k-k\ge 2$ for all~$k\ge \beta_-$,
$z^{j_k-k}(1-z^{\overline{m-k}-1})^{2-\delta_{k,m-2}}\in
\mathbb Z[z]$ for all~$k\ge \beta_-$. Therefore,
$$
\operatorname{Res}_{z=0}Q_J(z)=
\begin{cases}
\lfloor\tfrac12(\beta_+-\beta_-)\rfloor,&\overline{m-\beta_-}=0,\\
1+\lfloor\tfrac12(\beta_+-\beta_-)\rfloor,&\overline{m-\beta_-}=1.
\end{cases}
$$
In particular, since $\beta_-\le \beta_+-1$, $\operatorname{Res}_{z=0} Q_J(z)
\not=0$, and hence~$Q_J(z)\not=0$, unless
$\beta_+=\beta_-+1$ and~$\overline{m-\beta_-}=0$. But in that case
\begin{equation*}
Q_J(z)=-z^{j_{\beta_-}-\beta_-}(1-z^{-1})^{2-\delta_{k,m-2}}\not=0.\qedhere
\end{equation*}
\end{proof}
Let~$m=|J|-1$.
By Proposition~\partref{prop:TJ eigenvectors.b}, $u_{m-1}\in\ker(T_{[1,m]\cup\{n+1\}}^{m+1}-q^{2(n+1)}\id_V)$.
If~$j_{m-1}<n$ then, since~$g(J)>1$, $j_{m-1}\ge m+1$ and so
$$u_{j_{m-1}}
\in \ker(T_{[1,m]\cup\{n+1\}}-q^2\id_V)=
\ker(T_{[1,m]\cup\{n+1\}}^{m+1}-q^{2(m+1)}\id_V)$$
by Proposition~\partref{prop:TJ eigenvectors.a}.
If~$j_{m-1}=n$ then $$w^{((-1)^m)}_{[m+1,n-1]}
\in \ker(T_{[1,m]\cup \{n+1\}}-q^2\id_V)=
\ker(T_{[1,m]\cup\{n+1\}}^{m+1}-q^{2(m+1)}\id_V)$$ by
Proposition~\partref{prop:TJ eigenvectors.a}.
Proposition~\ref{prop:red} is now immediate from Lemmata~\ref{lem:Euclid jm-1<n}
and~\ref{lem:Euclid jm-1=n}.
\end{proof}
Theorem~\ref{thm:adm I2m converse } is proven.
\end{proof}

\begin{proposition}\label{prop:eigenvectors even}
Let $\tilde J_m=[1,m]\cup[n+2-m,n+1]$, $1\le m\le \frac12 n$.
Then
\begin{enmalph}
\item\label{prop:eigenvectors even.a} $\{ u_i\,:\,m+1\le i\le n-m\}\subset \ker(T_{\tilde J_m}-q^{2}\id_V)$;
\item\label{prop:eigenvectors even.b} $\{ u_i\mp u_{n+1-i}:\,1\le i\le m-1\}\cup\{q^{n+1}w^{(-1)}_{[1,n]}\mp w^{(1)}_{[1,n]}\} \subset \ker(T_{\tilde J_m}^m\mp q^{n+1}\id_V)$,
\item\label{prop:eigenvectors even.c} $T_{\tilde J_m}^m$ is diagonalizable on~$V$ and
$$
\det(t \id_V-T_{\tilde J_m}^m)=(t-1)(t^2-q^{2(n+1)})^m (t-q^{2m})^{n-2m}.
$$
\end{enmalph}
\end{proposition}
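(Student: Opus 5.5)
This is the even analogue of Proposition~\ref{prop:TJ eigenvectors}, and the plan is to argue the same way: exhibit eigenvectors directly, then use a dimension count.

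For part~\ref{prop:eigenvectors even.a}, note that $T_{\tilde J_m}=\tilde\tau_1(\tilde J_m)\tilde\tau_0(\tilde J_m)$ is a product of the generators $T_i$ with $i\in[1,m-1]\cup[n+2-m,n]$ and the single transposition $T_{(m,n+2-m)}$. For $i\in[m+1,n-m]$ we have $T_k(u_i)=u_i$ whenever $|k-i|>1$, by~\eqref{eq:ind base cox w0}. Also $T_{(m,n+2-m)}(u_i)=q^2u_i$ by Lemma~\ref{lem:Transp action}, which applies because $i$ lies strictly inside $[m+1,n-m]$ (the range $[i+1,j-1]$ with $j=n+1-m$). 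This gives~\ref{prop:eigenvectors even.a} at once.

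For part~\ref{prop:eigenvectors even.b}, use Corollary~\ref{cor:symm even J}, which gives $T_{\tilde J_m}^m=T_{w_\circ^{[1,n]}}T_{w_\circ^{[m+1,n-m]}}^{-1}$. By Lemma~\ref{lem:Tw0ij act ui}, $T_{w_\circ^{[1,n]}}(u_k)=-q^{n+1}u_{n+1-k}$, so $T_{w_\circ^{[1,n]}}(u_i\mp u_{n+1-i})=\pm q^{n+1}(u_i\mp u_{n+1-i})$ up to the sign convention. For $i\le m-1$ the vectors $u_i,u_{n+1-i}$ are fixed by $T_{w_\circ^{[m+1,n-m]}}$, since they lie outside $[m,n+1-m]$.

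For the last pair of vectors, write $w^{(\pm1)}_{[1,n]}$. These are fixed by $T_{w_\circ^{[m+1,n-m]}}$ by~\eqref{eq:Tw0 eigen}, because $[m,n+1-m]\subset[1,n]$. It then remains to compute how $T_{w_\circ^{[1,n]}}$ acts on them. Using the third line of~\eqref{eq:Tw0 act uk} summed termwise (or directly, $T_{w_\circ^{[1,n]}}(u_k)=-q^{n+1}u_{n+1-k}$), one gets $T_{w_\circ^{[1,n]}}(w^{(1)}_{[1,n]})=-q^{n+1}\sum_k q^ku_{n+1-k}=-q^{2(n+1)}w^{(-1)}_{[1,n]}$, and symmetrically $T_{w_\circ^{[1,n]}}(w^{(-1)}_{[1,n]})=-w^{(1)}_{[1,n]}$. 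Consequently $q^{n+1}w^{(-1)}\mp w^{(1)}$ is an eigenvector with eigenvalue $\pm q^{n+1}$. The signs here need a careful check, and that is the main computational point.

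For part~\ref{prop:eigenvectors even.c}, count vectors. Part~\ref{prop:eigenvectors even.a} gives $n-2m$ vectors for $T^m_{\tilde J_m}$ with eigenvalue $q^{2m}$. Part~\ref{prop:eigenvectors even.b} gives $m$ vectors for each of the eigenvalues $\pm q^{n+1}$. Lemma~\ref{lem:fixed point} gives $v_{[1,n+1]}$ with eigenvalue $1$. The total is $n+1=\dim V$.

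Since $q$ is not a root of unity, the eigenvalues $1,q^{2m},\pm q^{n+1}$ are pairwise distinct, except possibly when $2m=n+1$, which is excluded by $m\le n/2$. Eigenspaces for distinct eigenvalues form a direct sum, so it only remains to check linear independence within each family. For the $u_i\mp u_{n+1-i}$ together with the $w$-combination, look at coordinates: the $w$-vector has nonzero $u_m$-coefficient (or, equivalently, nonzero $e_{n+1}$ and $e_1$ components), while the other vectors do not involve $u_m$. Hence all the inequalities are equalities, $T^m_{\tilde J_m}$ is diagonalizable, and the characteristic polynomial is as stated.

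The main obstacle is the sign and power bookkeeping in the action of $T_{w_\circ^{[1,n]}}$ on $w^{(\pm1)}_{[1,n]}$, together with the independence check for the $w$-combination.
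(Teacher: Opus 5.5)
Your proof is correct and is essentially the paper's. It finds eigenvectors via Lemma~\ref{lem:Transp action}, Corollary~\ref{cor:symm even J}, Lemma~\ref{lem:Tw0ij act ui} and Lemma~\ref{lem:fixed point}, then does the dimension count; your factorization of $T_{\tilde J_m}$ into $T_{(m,n+2-m)}$ and the $T_i$, $i\in[1,m-1]\cup[n+2-m,n]$, is in fact the accurate one.

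The only slip is the parenthetical ``equivalently, nonzero $e_1$ and $e_{n+1}$ components''. It is not equivalent, since $u_1\mp u_n$ also has nonzero $e_1$ and $e_{n+1}$ components. What does the job is the $u_m$-coefficient $q^{n+1-m}\mp q^{m}\neq 0$ (because $n+1-2m\ge 1$ and $q$ is not a root of unity), together with linear independence of the $u_i$. This is the same observation as the paper's expansion $\sum_t q^{n+1-t}(u_t\mp u_{n+1-t})$.
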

\begin{proof}
By Lemma~\ref{lem:Transp action}, $T_{(m+1,n+2-m)}(u_i)=q^2 u_i$ for all~$i\in[m+1,n-m]$.
Since~$T_{\tilde J_m}$ is the product of~$T_{(m+1,n+2-m)}$ and the $T_j$ with $j\in[1,m]\cup[n+2-m,n]$ which fix the~$u_i$ with~$i\in[m+1,n-m]$, part~\ref{prop:eigenvectors even.a} follows.

To prove~\ref{prop:eigenvectors even.b},
recall that $
T_{\tilde J_m}^m=T_{w_\circ^{[1,n]}}T_{w_\circ^{[m+1,n-m]}}^{-1}
$
by Corollary~\ref{cor:symm even J}. By Lemma~\ref{lem:Tw0ij act ui}, $T_{w_\circ^{[1,n]}}(u_i)
=-q^{n+1}u_{n+1-i}$ for all~$i\in [1,n]$ while
$T_{w_\circ^{[m+1,n-1]}}(u_i)=u_i$ for all $i\in[1,m-1]\cup
[n+2-m,n]$. It follows that $u_i\pm u_{n+1-m}\in
\ker(T_{\tilde J_m}^m\mp q^{n+1}\id_V)$, $i\in[1,m-1]$.
Furthermore, by Lemma~\ref{lem:Tw0ij act ui}
$$
T_{w_\circ^{[1,n]}}(w^{(-1)}_{[1,n]})=
-\sum_{1\le t\le n}q^{-t+n+1}u_{n+1-t}=-w^{(1)}_{[1,n]},
$$
while
$$
T_{w_\circ^{[1,n]}}(w^{(1)}_{[1,n]})=-\sum_{1\le t\le n}q^{t+n+1}u_{n+1-t}
=-q^{2(n+1)}w^{(-1)}_{[1,n]},
$$
whence $q^{n+1}w^{(-1)}_{[1,n]}\mp w^{(1)}_{[1,n]}\in\ker(T_{w_\circ^{[1,n]}}\mp q^{n+1}\id_V)$. Since
$w^{(\pm1)}_{[1,n]}\in\ker(T_{w_\circ^{[m+1,n-m]}}-\id_V)$ by Lemma~\ref{lem:Tw0ij act ui}, part~\ref{prop:eigenvectors even.b} follows.

To prove~\ref{prop:eigenvectors even.c}, it remains to show that $q^{n+1}w^{(-1)}_{[1,n]}\mp w^{(1)}_{[1,n]}$ is not contained in the span of manifestly linearly
independent $\{ u_i\mp u_{n+1-i}\,:\, 1\le i\le m-1\}$. But we have
\begin{align*}
q^{n+1}w^{(-1)}_{[1,n]}\mp w^{(1)}_{[1,n]}&=\sum_{1\le t\le n} q^{n+1-t}u_t\mp \sum_{1\le t\le n} q^t u_t=\sum_{1\le t\le n} q^{n+1-t}(u_t\mp u_{n+1-t}).
\end{align*}
If~$n$ is odd, we obtain
$$
q^{n+1}w^{(-1)}_{[1,n]}-w^{(1)}_{[1,n]}=\sum_{1\le t\le \frac12(n-1)} (q^{n+1-t}- q^t)(u_t- u_{n+1-t})
$$
and
$$
q^{n+1}w^{(-1)}_{[1,n]}+w^{(1)}_{[1,n]}=\sum_{1\le t\le \frac12(n-1)} (q^{n+1-t}+ q^t)(u_t- u_{n+1-t})+2 q^{\frac12(n+1)} u_{\frac12(n+1)},
$$
while for~$n$ even
$$
q^{n+1}w^{(-1)}_{[1,n]}\mp w^{(1)}_{[1,n]}=\sum_{1\le t\le \frac12n} (q^{n+1-t}\mp q^t)(u_t\mp u_{n+1-t}).
$$
In either case, since all vectors appearing in the right hand side are linearly independent, it follows that $q^{n+1}w^{(-1)}_{[1,n]}\mp w^{(1)}_{[1,n]}$ is not
contained in the span of any proper subfamily of these vectors.
\end{proof}
\begin{corollary}
For any~$\{1,n+1\}\subset J\subset [1,n+1]$ with~$|J|$ even,
$T_J^{|J|/2}$ is diagonalizable on~$V$ and
$\det(t\id_V-T_J^{|J|/2})=(t-1)(t^2-q^{2(n+1)})^{|J|/2}
(t-q^{|J|})^{n-|J|}$.
\end{corollary}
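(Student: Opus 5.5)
The corollary reduces to the special sets $\tilde J_m$ of Proposition~\ref{prop:eigenvectors even} by conjugation. Write $|J|=2m$, so that $J\supset\{1,n+1\}$ and $2m\le n+1$. Corollary~\ref{cor:conj J} shows that $T_J$ is conjugate in $\Br_{n+1}$ to $T_{[1,2m-1]\cup\{n+1\}}$, and hence to every $T_{J'}$ with $\{1,n+1\}\subset J'$ and $|J'|=2m$. When $2m\le n$ we take $J'=\tilde J_m=[1,m]\cup[n+2-m,n+1]$, which has cardinality $2m$ and contains $1$ and $n+1$. Since we are working in a representation of the group $\Br_{n+1}$, $T_J^{m}$ is conjugate to $T_{\tilde J_m}^{m}$ as an operator on $V$. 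Conjugation preserves both diagonalizability and the characteristic polynomial, so both assertions transfer from Proposition~\partref{prop:eigenvectors even.c}.

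Before quoting that proposition I would check its exponents. It gives the factor $(t-q^{2m})^{n-2m}=(t-q^{|J|})^{n-|J|}$. Part~\ref{prop:eigenvectors even.a} gives eigenvalue $q^{2m}$ on $n-2m$ vectors, and the degree count $1+2m+(n-2m)=n+1$ checks out. This matches the stated formula.

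The boundary case $|J|=n+1$, that is $J=[1,n+1]$ with $n$ odd, is not covered by $\tilde J_m$ with $m\le n/2$. I would handle it in one of two ways. The first option is to extend Corollary~\ref{cor:symm even J} to $m=\frac12(n+1)$: then $T_{\tilde J_m}^m=T_{w_\circ^{[1,n]}}$, and Lemma~\ref{lem:Tw0ij act ui} shows that $T_{w_\circ^{[1,n]}}$ swaps $u_i$ and $u_{n+1-i}$ up to the scalar $-q^{n+1}$ and fixes $v_{[1,n+1]}$. The second option is to note directly that $T_J$ is then a Coxeter element and $T_J^{(n+1)/2}=T_{w_\circ^{[1,n]}}$ by Proposition~\partref{prop:Coxeter splitting.b}. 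With either route the eigenvalues are $\pm q^{n+1}$ with the expected multiplicities and $1$ once, and the middle vector $u_{(n+1)/2}$ is an eigenvector.

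The main point needing care is this boundary case, together with confirming that the conjugation in Corollary~\ref{cor:conj J} is valid when $J$ has several gaps. It is, since the corollary is stated for arbitrary $J$ with prescribed endpoints.
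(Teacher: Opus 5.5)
Your argument is correct and is the derivation the paper has in mind; the paper records this corollary without proof, as the even-cardinality analogue of Corollary~\ref{cor:TJ eigenvectors}. The argument is Proposition~\partref{prop:eigenvectors even.c} for $\tilde J_m$, transported to every $J\supset\{1,n+1\}$ with $|J|=2m$ by the conjugacy of Corollary~\ref{cor:conj J}.

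The boundary case $J=[1,n+1]$ lies outside the range $m\le\frac12 n$ of that proposition, and there are two points to state precisely. First, the exponent $n-|J|$ is $-1$, so the stated formula holds only after cancelling one factor $t-q^{n+1}$: the eigenvalue $-q^{n+1}$, whose eigenspace contains $u_{(n+1)/2}$, has multiplicity $\frac12(n+1)$, while $q^{n+1}$ has multiplicity $\frac12(n-1)$. Second, in your second route the identity $T_J^{(n+1)/2}=T_{w_\circ^{[1,n]}}$ comes from Proposition~\partref{prop:Coxeter splitting.c} with the bipartite partition of $[1,n]$. It does not come from part~\ref{prop:Coxeter splitting.b}, whose hypothesis that $\Sigma_{[1,n]}$ is trivial fails in type~$A_n$, $n\ge 2$.
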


\begin{proposition}\label{prop:|J| even}
Let~$\{1,n+1\}\subset J\subset [1,n+1]$ with~$2<|J|=2m<n+1$.
Then the assignments~$T_r\mapsto \tau_{\bar r}(J)$, $r\in\{1,2\}$
define a homomorphism~$\Br^+(I_2(|J|))\to\Br^+_{n+1}$
if and only if~$J=\tilde\sigma(J)$.
\end{proposition}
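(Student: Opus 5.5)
By Lemma~\ref{lem:TJ hom}, applied with $|J|=2m$, the assignments $\wh T_r\mapsto\tau_{\bar r}(J)$ define a homomorphism $\Br^+(I_2(|J|))\to\Br^+_{n+1}$ if and only if $T_J^{m}$ is ${}^{op}$-invariant. So the whole statement reduces to deciding when $T_J^{|J|/2}$ is ${}^{op}$-invariant. I will treat the two directions separately. I flag at the outset a consistency issue that has to be resolved before either direction can be completed as worded.

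\emph{Consistency check with Theorem~\ref{thm:adm I2m converse }.} If $T_J^m$ is ${}^{op}$-invariant, then so is $T_J^{2m}$. Hence Theorem~\ref{thm:adm I2m converse } forces $g(J)=1$, that is, $J=[1,a]\cup[b+1,n+1]$ with $1\le a<b\le n$. Now take a symmetric $J$ with $g(J)>1$; for instance $n=6$ and $J=\{1,3,5,7\}$, where $|J|=4<n+1$ and $J=\tilde\sigma(J)$, but $g(J)=2$. For such $J$ the criterion above shows there is \emph{no} homomorphism. So the ``if'' direction can only hold for symmetric $J$ with $g(J)=1$, i.e.\ $J=\tilde J_m$ as in Corollary~\ref{cor:symm even J}. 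I will therefore prove the statement in the form: a homomorphism exists iff $J=\tilde J_m=[1,m]\cup[n+2-m,n+1]$. I read the hypothesis ``$J=\tilde\sigma(J)$'' as intended in conjunction with $g(J)=1$, which is the only setting in which Theorem~\ref{thm:main thm adm} uses this proposition. Before writing the final version I would re-check the definition of $g$ and the statement of Theorem~\ref{thm:adm I2m converse } on this example, to make sure the mismatch is real and not a misreading on my part.

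\emph{``If'' direction.} For $J=\tilde J_m$, Corollary~\ref{cor:symm even J} gives $T_{\tilde J_m}^m=T_{w_\circ^{[1,n]}}T_{w_\circ^{[m+1,n-m]}}^{-1}$. Both factors are ${}^{op}$-invariant by Proposition~\partref{prop:fund elts BrSa.a}. They also commute, since $T_{w_\circ^{[1,n]}}$ normalizes $\Br^+_{[m+1,n-m]}$ through $\sigma$, and $\sigma$ preserves $[m+1,n-m]$. Hence $T_{\tilde J_m}^m$ is ${}^{op}$-invariant, and Lemma~\ref{lem:TJ hom} yields the homomorphism.

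\emph{``Only if'' direction.} Assume $T_J^m$ is ${}^{op}$-invariant. As noted above, $g(J)=1$, so $J=[1,a]\cup[b+1,n+1]$ with $2m=a+n+1-b$; it remains to show $b=n+1-a$. I will work in the symmetrized Burau representation. Lemma~\ref{lem:move bubble} conjugates $T_J$ to $T_{\tilde J_m}$ by explicit Coxeter elements $\Cx ab^{\pm1}$ or $\Cxr ab^{\mp1}$, shifting the gap $[a,b]$ by one step at a time. Proposition~\ref{prop:eigenvectors even} then describes the eigenspaces of $T_{\tilde J_m}^m$: eigenvalues $\pm q^{n+1}$ on vectors built from the $u_i\mp u_{n+1-i}$ and $q^{n+1}w^{(-1)}_{[1,n]}\mp w^{(1)}_{[1,n]}$, and eigenvalue $q^{2m}$ on the $u_i$ with $i\in[m+1,n-m]$. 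Pulling these back by the conjugator gives eigenvectors of $T_J^m$ for distinct eigenvalues. By Lemma~\ref{lem:orth eigenspaces}, it then suffices to exhibit one pair that is non-orthogonal for $\la\cdot|\cdot\ra$ whenever $a\ne n+1-b$. The natural candidates are the pullback of $u_{m+1}$ (from the $q^{2m}$-eigenspace) paired with the pullback of $q^{n+1}w^{(-1)}_{[1,n]}-w^{(1)}_{[1,n]}$ (from the $q^{n+1}$-eigenspace).

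\emph{Main obstacle.} The hard step is this last pairing computation. One has to transport the vectors under a product of $|a-(n+1-b)|/2$ shifts, using Lemma~\ref{lem:rev Cox act} and Corollary~\ref{cor:cox w +-}, and then show that the resulting Laurent polynomial is nonzero. Here I would follow the strategy of Lemma~\ref{lem:Euclid jm-1=n}: factor out $(1-q^{\pm2})$ and evaluate at $q=1$, or alternatively read off a residue at $z=q^2=0$.
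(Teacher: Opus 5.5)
Your consistency objection is correct, so the mismatch is not a misreading. For $n=6$ and $J=\{1,3,5,7\}$ we have $\tau_1(J)=T_{w_\circ^{\{1,2,5,6\}}}$ and $\tau_0(J)=T_{w_\circ^{\{3,4\}}}$. Already in $(W(A_6),\star)$, in one-line notation,
$(w_\circ^{\{1,2,5,6\}}\star w_\circ^{\{3,4\}})^{\star 2}=[7,3,6,4,2,5,1]$, whereas $(w_\circ^{\{3,4\}}\star w_\circ^{\{1,2,5,6\}})^{\star 2}=[7,5,2,4,6,3,1]$. Hence $(\tau_1(J)\tau_0(J))^2\ne(\tau_0(J)\tau_1(J))^2$, and the ``if'' direction fails as stated. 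The paper's own proof disposes of every $J$ with $g(J)>1$ by the theorem that $T_J^k$ is then never ${}^{op}$-invariant. So what it actually proves is exactly your corrected statement: a homomorphism exists iff $J=\tilde J_m$. Your ``if'' direction for $\tilde J_m$, and your reduction of ``only if'' to $g(J)=1$, match the paper.

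The genuine gap is the remaining case $g(J)=1$, $J\ne\tilde\sigma(J)$. This is the entire content of the paper's proof, and you leave it as a plan. Moreover, the pair you propose cannot work.
\begin{itemize}
\item If $n=2m$, which $2m<n+1$ allows, the $q^{2m}$-eigenspace of $T^m_{\tilde J_m}$ is zero. So $u_{m+1}$ is not an eigenvector at all.
\item If $n>2m$, your pairing vanishes identically. Every conjugator produced by Proposition~\ref{prop:conj} or Lemma~\ref{lem:move bubble} is a product of $T_i^{\pm1}$ with $2\le i\le n-1$. Such $T_i^{\pm1}$ fix $w^{(\pm1)}_{[1,n]}$ by~\eqref{eq:Tw0 eigen}. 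They also satisfy $T_i^{\pm1}(u_t)\in\kk u_t+\kk u_i$, so they preserve $\operatorname{span}(u_2,\dots,u_{n-1})$. That span is orthogonal to $w^{(\pm1)}_{[1,n]}$ by~\eqref{eq:Euclid ui uj}.
\end{itemize}
Consequently the pullbacks of $u_{m+1}$ and of $q^{n+1}w^{(-1)}_{[1,n]}-w^{(1)}_{[1,n]}$ are orthogonal whether or not $J$ is symmetric, and no computation with them can detect failure.

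The missing idea is the right choice of vectors: work inside the $\pm q^{n+1}$-eigenspaces, which exist for every $m\ge2$. The paper proceeds as follows.
\begin{itemize}
\item By Lemma~\ref{lem:diag aut TJ} one may take $J=J(r,m)=[1,m-r]\cup[n+2-r-m,n+1]$ with $1\le r\le m-1$.
\item Then $T_J=\tilde U(r,m)^{-1}T_{\tilde J_m}\tilde U(r,m)$, where $\tilde U(r,m)=U(\tilde J_m)^{-1}U(J(r,m))$ is supported on $[m-r+1,n-m]$. In particular $\tilde U(r,m)^{-1}$ fixes $u_{n+2-m}$.
\item Pulling back $u_{m-1}\mp u_{n+2-m}$ from Proposition~\ref{prop:eigenvectors even}(b) gives eigenvectors $x\mp u_{n+2-m}$ of $T_J^m$ for the eigenvalues $\pm q^{n+1}$, where $x=\tilde U(r,m)^{-1}(u_{m-1})$.
\item Their pairing collapses to $\langle x|x\rangle-(1+q^{-2})$.
\item One computes $x$ by induction on $r$ via Lemma~\ref{lem:rev Cox act} and checks that this difference is a nonzero Laurent polynomial. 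For example, with $q_m=q^{(-1)^m}$ and $r=1$, one gets $x=\sum_{m-1\le t\le n-m}q_m^{t-m+1}u_t$ and the difference equals $q^{-1}q_m(q_m^{2(n-2m+1)}-1)\neq0$.
\end{itemize}
This non-orthogonality computation is the step your proposal needs and does not supply.
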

\begin{proof}
By Theorems~\ref{thm:adm I2m} and~\ref{thm:adm I2m converse },
it only remains to prove that~$T_J^{m}$ is not
${}^{op}$-invariant when~$g(J)=1$ and~$J\not=\tilde\sigma(J)$.
By Proposition~\ref{prop:eigenvectors even} and Lemma~\ref{lem:orth eigenspaces}, it suffices to prove that
$$
\la \ker(T_J^m-q^{n+1}\id_V)\,|\,\ker(T_J^m+q^{n+1}\id_V)\ra\not=\{0\}.
$$
By Lemma~\ref{lem:diag aut TJ}, it suffices to consider
$J=J(r,m):=[1,m-r]\cup[n+2-r-m,n+1]$ with $1\le r\le m-1$.
Denote $\tilde U(r,m):=U(\tilde J_m)^{-1}U(J(r,m))$
where~$\tilde J_m=J(0,m)=[1,m]\cup[n+2-m,n+1]=
[1,m]\cup\tilde\sigma([1,m])$.
Since by Corollary~\ref{cor:conj J}
$$
U(J(r,m))T_{J(r,m)}U(J(r,m))^{-1}=T_{[1,2m-1]\cup\{n+1\}}=U(\tilde J_m)T_{\tilde J_m}U(\tilde J_m)^{-1},
$$
we obtain
\begin{equation}\label{eq:T J(r,n) from T tilde J_m}
T_{J(r,m)}=\tilde U(r,m)^{-1} T_{\tilde J_m}\tilde U(r,m),
\end{equation}
where
$$
\tilde U(r,m):=U(\tilde J_m)^{-1}U(J(r,m))=\dscprod_{m-r+1\le k\le m} \Cx{k}{(n-2m+k)}^{((-1)^{k+1})}
$$
by~\eqref{eq:U(J) defn}.
Set
$x_{m-r}=\tilde U(r,m)^{-1}(u_{m-1})$, $1\le r\le m-1$.
Since $\tilde U(r,m)^{-1}(u_{n+2-m})=u_{n+2-m}$
by Lemma~\ref{lem:rev Cox act},
$x_{m-r}\pm u_{n+2-m}\in\ker(T_{J(r,m)}^m\pm q^{n+1}\id_V)$
by~\eqref{eq:T J(r,n) from T tilde J_m} and Proposition~\partref{prop:eigenvectors even.b}.
Therefore, it suffices to prove that
$$
\langle x_{m-r}-u_{n+2-m}\,|\, x_{m-r}+u_{n+2-m}\rangle=\langle x_{m-r}\,|\,x_{m-r}\rangle-(1+q^{-2})\not=0,\qquad 1\le r\le m-1.
$$

First, by Lemma~\ref{lem:rev Cox act}
$$
x_{m-1}=\Cxr m{(n-m)}^{((-1)^m)}(u_{m-1})=q_m^{-(m-1)} w^{((-1)^m)}_{[m-1,n-m]}=\sum_{m-1\le t\le n-m} q_m^{t-m+1}u_t.
$$
Then by~\eqref{eq:Euclid ui uj}
\begin{align*}
\langle x_{m-1}\,&|\,x_{m-1}\rangle=(1+q^{-2})\sum_{m-1\le t\le n-m} q_m^{2(t-m+1)}-2q^{-1}\sum_{m-1\le t\le n-m-1} q_m^{2(t-m+1)+1}
\\&=q^{-1}\Big((q_m+q_m^{-1})\frac{q_m^{2(n-2m+2)}-1}{q_m^2-1}-2q_m \frac{q^{2(n-2m+1)}-1}{q_m^2-1}\Big)
=q^{-1}(q_m^{-1}+q_m^{2(n-2m+1)+1}).
\end{align*}
Thus, since~$q$ is not a root of unity and~$2m<n+1$
$$
\langle x_{m-1}\,|\,x_{m-1}\rangle-(1+q^{-2})=q^{(-1)^m-1}(1-q_m^{2(n-2m+1)})\not=0.
$$
Next we claim that for~$2\le r\le m-1$,
\begin{align}
x_{m-r}&=q_m^{-\overline{r-1}}\sum_{m-r+1\le t\le n-m-r+1} (q_m^{t+r-m}-q_m^{m-r-t})u_t\nonumber\\
&\qquad+
(q_m^{n-2m+1}-q_m^{2m-n-1})\sum_{n-m-r+2\le t\le n-m-1} q_m^{-\overline{t-n+m}}u_t+q_m^{n-2m+1}u_{n-m}.\label{eq:x m-r}
\end{align}
Indeed, it is immediate from the definition of~$x_{m-r}$ that
$
x_{m-r-1}=\Cxr{m-r}{(n-m-r)}^{((-1)^{m+r})}(x_{m-r})
$.
Thus, by Lemma~\ref{lem:rev Cox act}
\begin{align*}
x_{m-2}=\Cxr{(m-1)}{(n-m-1)}^{((-1)^{m+1})}(&x_{m-1})=\sum_{m\le t\le n-m+1} q_m^{t-m}u_{t-1}
-\sum_{m-1\le t\le n-m-1} q_m^{m-t-3}\\
&=q_m^{-1}\sum_{m-1\le t\le n-m-1} (q_m^{t+2-m}-q_m^{m-t-2})u_t+q_m^{n-2m+1}u_{n-m},
\end{align*}
which is~\eqref{eq:x m-r} with~$r=2$.
For the inductive step we have
\begin{align*}
x_{m-r-1}&=\Cxr{m-r}{(n-m-r)}^{((-1)^{m+r})}\Big(q_m^{-\overline{r-1}}\sum_{m-r+1\le t\le n-m-r+1} (q_m^{t+r-m}-q_m^{m-r-t})u_t\Big)+
\\
&\qquad+(q_m^{n-2m+1}-q_m^{2m-n-1})\sum_{n-m-r+2\le t\le n-m-1} q_m^{-\overline{t-n+m}}u_t+q_m^{n-2m+1}u_{n-m}\\
&=q_m^{-\overline{r-1}}q_{m+r}\sum_{m-r+1\le t\le n-m-r} (q_m^{t+r-m}-q_m^{m-r-t})u_{t-1}\\
&\quad +q_m^{-\overline{r-1}}(q_m^{n-2m+1}-q_m^{2m-n+1})(q_{m+r}u_{n-m-r}+u_{n-m-r+1})\\
&\qquad+(q_m^{n-2m+1}-q_m^{2m-n-1})\sum_{n-m-r+2\le t\le n-m-1} q_m^{-\overline{t-n+m}}u_t+q_m^{n-2m+1}u_{n-m}.
\\
\intertext{Since $q_m^{-\overline{r-1}}q_{m+r}=q_m^{(-1)^r-\overline{r-1}}=q_m^{-\overline r}$ by~\eqref{eq: q r+s},}
x_{m-r-1}
&=q_m^{-\overline{r}}\sum_{m-r\le t\le n-m-r-1} (q_m^{t+1+r-m}-q_m^{m-r-1-t})u_{t}+q_m^{-\overline{r}}(q_m^{n-2m+1}-q_m^{2m-n+1})u_{n-m-r}\\
&\qquad+(q_m^{n-2m+1}-q_m^{2m-n-1})\sum_{n-m-r+1\le t\le n-m-1} q_m^{-\overline{t-n+m}}u_t+q_m^{n-2m+1}u_{n-m}\\
&=q_m^{-\overline{r}}\sum_{m-r\le t\le n-m-r} (q_m^{t+1+r-m}-q_m^{m-r-1-t})u_{t}\\
&\qquad+(q_m^{n-2m+1}-q_m^{2m-n-1})\sum_{n-m-r+1\le t\le n-m-1} q_m^{-\overline{t-n+m}}u_t+q_m^{n-2m+1}u_{n-m},
\end{align*}
which proves the inductive step and hence~\eqref{eq:x m-r}.
Using~\eqref{eq:Euclid ui uj} and~\eqref{eq:x m-r} we obtain
\begin{align*}
q\langle x_{m-r}&\,|\,x_{m-r}\rangle =(q_m + q_m^{-1}) \Big(\sum_{m-r+1\le t\le n-m-r+1}
     q_m^{-2 \overline{r - 1}} (q_m^{t + r - m} - q_m^{m - t - r})^2 \\
     &\quad+ (q_m^{n + 1 - 2 m} - q_m^{2 m - n - 1})^2 \sum_{n-m-r+2\le t\le n-m-1}
      q_m^{-2 \overline{t - n + m}} +
    q_m^{2 (n + 1 - 2 m)}\Big) \\
&\qquad-
 2 \Big (\sum_{m-r+1\le t\le n-m-r} q_m^{-2 \overline{r - 1}} (q_m^{t + r - m} -
        q_m^{m - t - r}) (q_m^{t + 1 + r - m} - q_m^{m - t - r - 1})\\
&\qquad\qquad + (q_m^{n + 1 - 2 m} - q_m^{2 m - n - 1})^2
      q_m^{-1}(r-2)+
    q_m^{n - 2 m} (q_m^{n + 1 - 2 m} - q_m^{2 m - n - 1})\Big)\\
&=(q_m + q_m^{-1}) \Big(\sum_{m-r+1\le t\le n-m-r+1}
     q_m^{-2 \overline{r - 1}} (q_m^{t + r - m} - q_m^{m - t - r})^2 \\
     &\qquad+ (q_m^{n + 1 - 2 m} - q_m^{2 m - n - 1})^2 \big( q_m^{-2\overline r}\lfloor\tfrac12(r-1)\rfloor+q_m^{-2\overline{r-1}}(\lfloor\tfrac12 r\rfloor-1)\big)+
    q_m^{2 (n + 1 - 2 m)}\Big) \\
&\qquad-
 2 \Big (\sum_{m-r+1\le t\le n-m-r} q_m^{-2 \overline{r - 1}} (q_m^{t + r - m} -
        q_m^{m - t - r}) (q_m^{t + 1 + r - m} - q_m^{m - t - r - 1})\\
&\qquad\qquad + (q_m^{n + 1 - 2 m} - q_m^{2 m - n - 1})^2
      q_m^{-1}(r-2)+q_m^{-1}(q_m^{2(n - 2 m+1)}-1)\Big)\\
&=(q_m + q_m^{-1}) \Big(\sum_{m-r+1\le t\le n-m-r+1}
     q_m^{-2 \overline{r - 1}} (q_m^{2(t + r - m)} + q_m^{2(m - t - r)})-2q_m^{-2\overline{r-1}} \\
     &\qquad+ (q_m^{n + 1 - 2 m} - q_m^{2 m - n - 1})^2 \big( q_m^{-2\overline r}\lfloor\tfrac12(r-1)\rfloor+q_m^{-2\overline{r-1}}(\lfloor\tfrac12 r\rfloor-1)\big)+
    q_m^{2 (n + 1 - 2 m)}\Big) \\
&\qquad-
 2 \Big (q_m^{-2 \overline{r - 1}}\sum_{m-r+1\le t\le n-m-r}  (q_m^{2(t + r - m)+1}+q_m^{2(m-r-t)-1})\\
&\qquad\qquad + (q_m^{n + 1 - 2 m} - q_m^{2 m - n - 1})^2
      q_m^{-1}(r-2) +
    q_m^{-1}(q_m^{2(n - 2 m+1)}-1)\Big)
\\
&=(q_m + q_m^{-1}) ((q_m^{n + 1 - 2 m} -
        q_m^{2 m - n - 1})^2 (q_m^{-2 \overline{r}} \lfloor\tfrac12(r-1)\rfloor +
       q_m^{-2 \overline{r - 1}}\lfloor \tfrac12 r\rfloor) +
    q_m^{2 (n + 1 - 2 m)})\\
&\qquad-
 2 ((q_m^{n + 1 - 2 m} - q_m^{2 m - n - 1)})^2 q_m^{-1} (r - 2) +
    q_m^{-1} (q_m^{2 (n + 1 - 2 m)} - 1))\\
&\qquad-(q_m - q_m^{-1}) q_m^{-2 \overline{r - 1}} \sum_{m-r+1\le t\le n-m-r}
   q_m^{2 (t + r - m)} - q_m^{2 (m - r - t)}\\
&=(q_m + q_m^{-1}) ((q_m^{n + 1 - 2 m} -
        q_m^{2 m - n - 1})^2 (q_m^{-2 \overline{r}} \lfloor \tfrac12 (r - 1) \rfloor +
       q_m^{-2 \overline{r - 1}} \lfloor \tfrac12 r \rfloor) +
    q_m^{2 (n + 1 - 2 m)}) \\
&\qquad-
 2 ((q_m^{n + 1 - 2 m} - q_m^{2 m - n - 1})^2 q_m^{-1} (r - 2) +
    q_m^{-1} (q_m^{2 (n + 1 - 2 m)} - 1))\\
&\qquad+
 q_m^{-2 \overline{r - 1}} (q_m + q_m^{-1} - q_m^{2 (2 m - n - 1) + 1} -
    q_m^{2 (n - 2 m + 1) - 1}).
\end{align*}
This can be rewritten as
\begin{align*}
q\langle x_{m-r}\,|\,x_{m-r}\rangle-(q+q^{-1})=
p_0(q_m)+p_+(q_m)q_m^{2(n-2m+1)}+p_-(q_m)q_m^{-2(n-2m+1)},
\end{align*}
where $p_0,p_\pm\in\mathbb Z[z,z^{-1}]$ are defined by
\begin{align*}
p_+(z)&=z-z^{-1}+(z+z^{-1})\Big(z^{-2\overline r}\lfloor\tfrac12(r-1)\rfloor+z^{-2\overline{r-1}}\lfloor\tfrac12r\rfloor\Big)-2z^{-1}(r-2)-z^{-1-2\overline{r-1}}\\
&=\lfloor\tfrac12(r+1)\rfloor z-\lfloor\tfrac12(r-1)\rfloor(2z^{-1}-z^{-3}),\\
p_-(z)&=(z+z^{-1})\Big(z^{-2\overline r}\lfloor\tfrac12(r-1)\rfloor+z^{-2\overline{r-1}}\lfloor\tfrac12r\rfloor\Big)-2z^{-1}(r-2)-z^{1-2\overline{r-1}}\\
&=\lfloor\tfrac12(r-2)\rfloor(z-2z^{-1})+\lfloor\tfrac12r\rfloor z^{-3},\\
p_0(z)&=-(r-1)(z+z^{-3})+2(r-2)z^{-1}.
\end{align*}
Since $n+1-2m\ge 1$ and~$r\ge 2$, it follows that $q\langle x_{m-r}\,|\,x_{m-r}\rangle-q-q^{-1}
$ is a Laurent polynomial in~$q_m$ with the leading term
$q_m^{2(n-2m+1)+1}\lfloor\tfrac12(r+1)\rfloor$ and hence is non-zero.
\end{proof}
\begin{theorem}\label{thm:adm I2m+1}
Let $\{1,n+1\}\subset J\subset [1,n+1]$.
Let~$\tau_i(J)$, $i\in\{1,2\}$ be as in~\eqref{eq:tau_i(J) defn}.
The assignments $\wh T_r\mapsto \tau_{\overline r}(J)$, $r\in\{1,2\}$
define a fully supported disjoint standard homomorphism $\Br^+(I_2(2m+1))\to \Br^+_{n+1}$, $m\ge 1$
if and only if~$J=[1,n+1]$.
\end{theorem}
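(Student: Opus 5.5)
The plan is to reduce everything to the length constraint coming from the odd relation, Lemma~\partref{lem:elem Artin hom.c}. This forces $\ell(\tau_1(J))=\ell(\tau_0(J))$. From there, the ``if'' direction will follow from Corollary~\partref{cor:adm finite class.odd}.

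For the ``if'' direction, take $J=[1,n+1]$. Then $\tau_k(J)=\prod_{r\,:\,\bar r=k}T_r$ is the product of the commuting generators in one parity class, since $T_{w_\circ^{[r,r]}}=T_r$. Thus $\tau_1(J)=T_{w_\circ^{I_1}}$ and $\tau_0(J)=T_{w_\circ^{I_2}}$, where $I=I_1\sqcup I_2$ is the partition of $[1,n]$ into odd and even indices. By Corollary~\partref{cor:adm finite class.odd}, with $n=2m$, this assignment defines a homomorphism $\Br^+(I_2(2m+1))\to\Br^+_{n+1}=\Br^+(A_{2m})$. It is manifestly disjoint, fully supported and standard. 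I should also check the case $n$ odd. There $h(A_n)=n+1$ is even, so the two products have different lengths $\lceil n/2\rceil\ne\lfloor n/2\rfloor$ unless\dots{} in fact for $n$ odd the lengths are $\frac{n+1}2$ and $\frac{n-1}2$, so no homomorphism from an odd dihedral monoid exists. Hence the statement must be read with $m$ tied to $n=2m$, and the ``if'' direction is exactly the homogeneous unfolding.

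For the ``only if'' direction, suppose the assignments define a homomorphism $\Br^+(I_2(2m+1))\to\Br^+_{n+1}$. Composing with the tautological homomorphism $\Br^+(I_2(2(2m+1)))\to\Br^+(I_2(2m+1))$ of Lemma~\ref{lem:blowup} yields a homomorphism from $\Br^+(I_2(2(2m+1)))$. By Lemma~\ref{lem:TJ hom}, $T_J^{2m+1}$ is then ${}^{op}$-invariant. Theorem~\ref{thm:adm I2m converse } then forces $g(J)\le 1$. If $g(J)=0$ we have $J=[1,n+1]$ and we are done.

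So the main obstacle is excluding $g(J)=1$, where $J=[1,a]\cup[b+1,n+1]$ with $a<b$. Here $\tau_1(J)$ and $\tau_0(J)$ are $T_{w_\circ^{I'(K)\cup K}}$ and $T_{w_\circ^{I''(K)}}$ with $K=[a,b]$, $|K|>1$ (up to order). I would first try the length argument. We need
\[
\ell(T_{w_\circ^{I'(K)\cup K}})=|I'(K)|+\tbinom{|K|+1}2 \quad\text{to equal}\quad |I''(K)|.
\]
Since $|I''(K)|-|I'(K)|\le 2$ while $\binom{|K|+1}2\ge 3$, equality is impossible. This gives a contradiction via Lemma~\partref{lem:elem Artin hom.c}, with no need for the Burau computations.

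As a fallback, if the counting of $I'(K)$ versus $I''(K)$ turns out subtler than this, I would instead use Corollary~\ref{cor:TJ coxeter}. Namely, ${}^{op}$-invariance of $T_J^{2m+1}$ forces $|J|\mid 2(2m+1)$. I would combine this with optimality from Theorem~\ref{thm:adm I2m}: there the minimal element $m(J)\in\{|J|/2,|J|\}$ of $B(\tau_1(J),\tau_0(J))$ must divide $2(2m+1)$, and I would check that the odd relation itself already fails by comparing lengths.
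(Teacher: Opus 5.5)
Your proposal is correct and is essentially the paper's proof. The ``if'' direction is the homogeneous map of Corollary~\partref{cor:adm finite class.odd}, which, as you observe, forces $n=2m$; the ``only if'' direction uses Lemma~\ref{lem:TJ hom} and Theorem~\ref{thm:adm I2m converse } to reduce to $J=[1,a]\cup[b+1,n+1]$, then rules this case out by the length equality of Lemma~\partref{lem:elem Artin hom.c}, exactly as you do. Your estimate $|I''(K)|-|I'(K)|\le 2$ is in fact more accurate than the paper's, which asserts that the two counts differ by at most $1$; that fails e.g.\ for $n=6$, $J=\{1,2,4,5,6,7\}$, where $\ell(\tau_0(J))=4$ and $\ell(\tau_1(J))=3$, but since $\binom{|K|+1}{2}\ge 3$ the conclusion is unaffected and your fallback paragraph is unnecessary.
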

\begin{proof}
The forward direction is well known (cf. Corollary~\ref{cor:adm finite class}). For the converse,
if these assignments define a homomorphism of Artin
monoids then, by Lemma~\partref{lem:elem Artin hom.c} we must have~$\ell(\tau_0(J))=\ell(\tau_1(J))$. Furthermore, Lemma~\ref{lem:TJ hom} implies that
$T_J^{2m+1}$ is ${}^{op}$-invariant. By Theorem~\ref{thm:adm I2m converse }
this forces~$g(J)=1$ that is $J=[1,a]\cup [a+r,n+1]$ for some~$1\le
a\le n-1$ and $2\le r\le n+1-a$. Then~$\ell(\tau_{\bar a}(J))=\binom{r+1}2+
k$ and~$\ell(\tau_{1-\bar a}(J))=k'$ where~$k+k'=n-r$ and $k=k'$ if~$n-r$
is even while $|k-k'|=1$ if~$n-r$ is odd.
Since~$\ell(\tau_1(J))=\ell(\tau_0(J))$ this
forces $\binom{r+1}2=k'-k$ which is impossible since~$\binom{r+1}2\ge 3$
for~$r\ge 2$.
\end{proof}

\begin{corollary}\label{cor:type B}
The homomorphisms $\Br^+(I_2(2m))\to \Br^+(B_n)$, $2\le m \le n$ from
Proposition~\ref{prop:admissible hom from BrI22m to BrBn} are
the only fully supported optimal disjoint standard homomorphisms
$I_2(N)\to \Br^+(B_n)$.
\end{corollary}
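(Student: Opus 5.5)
The plan is to reduce to type~$A$ by unfolding and then invoke the classification already obtained there. Let $\Phi:\Br^+(I_2(N))\to\Br^+(B_n)$ be fully supported, optimal, disjoint and standard, and put $J_i=[\Phi](i)$. First observe that $N$ must be even. For $N$ odd, Lemma~\partref{lem:sqf Hecke hom.b} forces $\ell(w_\circ^{J_1})=\ell(w_\circ^{J_2})$. Composing with the unfolding $\Br^+(B_n)\to\Br^+(A_{2n-1})$ of~\eqref{eq:unfold Bn A2n-1}, which is homogeneous and hence sends $T_{w_\circ^J}$ to $T_{w_\circ^{\tilde J}}$ with $\tilde J=J\cup\sigma(J)$ (here $n\in J$ corresponds to the middle node), yields a fully supported disjoint standard homomorphism $\Br^+(I_2(N))\to\Br^+_{2n}$. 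By Theorem~\ref{thm:adm I2m+1}, for $N$ odd this must be the Coxeter-element one with $J=[1,2n]$. That image is $\sigma$-invariant only when both $\tilde J_i$ are self-orthogonal, and the resulting lengths are then unequal or the order does not match, so the odd case is excluded. I expect this to be a short check.

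For $N=2m$, the unfolded homomorphism $\tilde\Phi$ is optimal up to a tautological factor, because the unfolding is injective by Remark~\ref{rem:injectivity}. By Lemma~\ref{lem:all disj TJ} it has the form $\tau_{\bar r}(\tilde J)$ for some $\{1,2n\}\subset\tilde J\subset[1,2n]$. Lemma~\ref{lem:TJ hom} then says $T_{\tilde J}^m$ is ${}^{op}$-invariant. Theorem~\ref{thm:adm I2m converse } gives $g(\tilde J)\le1$, and invariance under $\tilde\sigma$ (inherited from $\sigma$-stability of the images) fixes the shape: either $\tilde J=[1,2n]$, or $\tilde J=[1,a]\cup\tilde\sigma([1,a])$. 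In the first case both $J_i$ are self-orthogonal in $B_n$; the unfolded map is then the one from Corollary~\partref{cor:adm finite class.even}, and $h(B_n)=2n$ gives $m=n$. In the second case the interval $K=[a,2n-a]$ is $\sigma$-stable, and Theorem~\ref{thm:main thm adm} gives $N=2m(K)=2a$.

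Next, I translate back to $B_n$. Write $a=m$, so $K=[m,2n-m]$ in $A_{2n-1}$, which is the image of $[m,n]\subset[1,n]$. The partition into $I'(K)$ and $I''(K)$ descends to $[1,m-2]_2\cup[m,n]$ and $[1,m-1]_2$, up to swapping the generators. This is precisely the homomorphism $\wh\Phi$ of Proposition~\partref{prop:admissible hom from BrI22m to BrBn.b}, and the self-orthogonal case is its instance $m=n$. Because the unfolding is injective, $\Phi$ is recovered from $\tilde\Phi$; injectivity also makes the optimality degree computed downstairs agree with $N$.

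The main obstacle is justifying that disjointness and the shape of $\tilde J$ descend correctly. Every $\sigma$-stable subset in the image is a union of orbits, and $\tilde J$, being determined by the supports of the images, is $\tilde\sigma$-stable. With $g=1$ and $\tilde\sigma$-stability, the gap is forced to be centered, which is exactly the case $\sigma(K)=K$. I would verify this bookkeeping carefully, including the edge case $m=2$.
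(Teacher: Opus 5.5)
Your proposal is correct and follows the paper's route: compose with the standard unfolding into $\Br^+(A_{2n-1})$ and read the answer off the type-$A$ classification of Theorem~\ref{thm:main thm adm}. Where the paper simply cites that theorem, you unpack it via Lemmas~\ref{lem:all disj TJ} and~\ref{lem:TJ hom}, the non-invariance of $T_J^m$ for $g(J)>1$, and the $\tilde\sigma$-stability of $\tilde J$. Your separate treatment of odd $N$ is harmless but redundant, since parts~(b)--(c) of that theorem already force $N$ even.

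As in the paper, you must tacitly assume that both $[\Phi](1)$ and $[\Phi](2)$ are non-empty. Otherwise $\tilde J=\{1,2n\}$ (your $a=1$) gives the degenerate optimal homomorphism $\Br^+(I_2(2))\to\Br^+(B_n)$, $\wh T_1\mapsto T_{w_\circ^{[1,n]}}$, $\wh T_2\mapsto 1$, which is not on the list.
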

\begin{proof}
Since the composition of such a homomorphism
with the one from~\eqref{eq:unfold Bn A2n}
or~\eqref{eq:unfold Bn A2n} 
is again a homomorphism of the same type,
the assertion follows from Theorem~\ref{thm:main thm adm}.
\end{proof}

\subsection{Higher rank}
\label{subs:rank > 2}
We can now classify all fully supported 
disjoint standard homomorphisms $\Br^+(\wh M)\to\Br^+(M)$ where~$\wh M$ is irreducible
of finite type and~$M$ is of type~$A$ or~$B$.
\begin{theorem}\label{thm:higher rank adm AB}
Let~$\wh M$ be irreducible of finite type 
with~$m=|\wh I|>2$
and let~$\Phi:\Br^+(\wh M)\to \Br^+(M)$ be 
an optimal fully supported disjoint standard homomorphism. Suppose that~$M$ is of type~$A_n$ or~$B_n$.
\begin{enmalph}
 \item\label{thm:higher rank adm AB.a}
 If~$\wh M$ is not of type~$B$ then~$\wh M$ and~$M$ are both of type~$A$ and~$\Phi$ is an isomorphism;
 \item\label{thm:higher rank adm AB.b}
 If~$\wh M=B_m$ and~$M=B_n$ then $m\le n$
 and 
 $$
 \Phi(\wh T_i)=T_i,\quad  i\in [1,m-1],\qquad 
 \Phi(\wh T_m)=T_{w_\circ^{[m,n]}};
 $$
 \item\label{thm:higher rank adm AB.c}
 If~$\wh M=B_m$ and~$M=A_n$ then~$m\le \lfloor\frac n2\rfloor$ and $\Phi$ is the composition 
 of the 
 homomorphism~$\Br^+(\wh M)\to 
 \Br^+(B_{\lfloor \frac n2\rfloor})$
 from part~\ref{thm:higher rank adm AB.b}
 with the standard unfolding~$\Br^+(B_{\lfloor \frac n2\rfloor})\to 
 \Br^+(M)$ given by~\eqref{eq:unfold Bn A2n-1} or~\eqref{eq:unfold Bn A2n}, depending on
 the parity of~$n$.
 \end{enmalph}
\end{theorem}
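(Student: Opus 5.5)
The plan is to reduce to rank two: restrict $\Phi$ to every rank-two parabolic submonoid $\Br^+_{\{i,j\}}(\wh M)$ with $\wh m_{ij}>2$, and then assemble the pieces using the connectivity of $\Gamma(\wh M)$. Such a restriction is again an optimal disjoint standard homomorphism $\Br^+(I_2(\wh m_{ij}))\to\Br^+(M)$, and by Lemma~\ref{lem:diagonal} it may be projected onto the connected component of $[\Phi](\{i,j\})$. By Proposition~\ref{prop:parab proj Artin} that component is again of type $A$ or $B$. If it is a type $A$ interval, Theorems~\ref{thm:main thm adm} and~\ref{thm:adm I2m+1} give two options. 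When $\wh m_{ij}$ is odd, $\{[\Phi](i),[\Phi](j)\}$ is the bipartition of an interval into self-orthogonal pieces, and the lengths must be equal by Lemma~\ref{lem:sqf Hecke hom.b}. When $\wh m_{ij}$ is even, the pair is either that bipartition or $\{I'(K)\cup K,I''(K)\}$. If the component contains the end node of $B_n$, Corollary~\ref{cor:type B} gives the pair from Proposition~\ref{prop:admissible hom from BrI22m to BrBn.b}.

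Next, let $i\in\wh I$ be a node that has two neighbours $j,k$ with $\wh m_{ij}=\wh m_{ik}=3$. Such a node exists unless $\wh M$ is $A_3$, $B_3$ or $H_3$ (for rank $3$), and the small cases are handled directly below. Applying the odd case to both edges, $[\Phi](i)\cup[\Phi](j)$ and $[\Phi](i)\cup[\Phi](k)$ are intervals bipartitioned into self-orthogonal sets, with $\ell(w_\circ^{[\Phi](i)})=\ell(w_\circ^{[\Phi](j)})=\ell(w_\circ^{[\Phi](k)})$. Since $\Phi$ is disjoint and $j,k$ are orthogonal, $[\Phi](j)$ and $[\Phi](k)$ must be orthogonal and interleave with $[\Phi](i)$ in the interval. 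The odd-dihedral homomorphism of Corollary~\ref{cor:adm finite class.odd} has images of sizes $m$ and $m$, or $m+1$ and $m$. Combining this with orthogonality, I expect $|[\Phi](i)|=1$ and hence all odd-connected images to be singletons. Propagating along the odd edges then makes the images of the $A$-part of $\wh M$ singletons forming a path. This excludes $D$, $E$, $F_4$ and $H$ (for $H_3$/$H_4$ the $5$-edge is odd: a rank-two homomorphism from $I_2(5)$ into type $A$ exists only with $N=n+1=5$ by Theorem~\ref{thm:main thm adm.b}, and it is incompatible with the singleton images forced on the neighbouring $3$-edge). For $F_4$ the middle $4$-edge joins two singletons, so by Theorem~\ref{thm:main thm adm.c} we would need $N=2m(K)$ with $|K|\ge2$, which is impossible. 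For $\wh M=A_m$ this gives part (a), with $\Phi$ the identity up to the diagram automorphism.

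For $\wh M=B_m$ the images of $\wh T_1,\dots,\wh T_{m-1}$ are consecutive singletons, and the last image $[\Phi](m)$ must be disjoint from them. Using the rank-two classification on the $4$-edge $\{m-1,m\}$ with $[\Phi](m-1)$ a singleton, the only possibility in $B_n$ is $[\Phi](m)=[m,n]$, which is the case $m=2$ of Proposition~\ref{prop:admissible hom from BrI22m to BrBn.b} after projection. In $A_n$, Theorem~\ref{thm:main thm adm} gives $[\Phi](m-1)\in I'(K)$ and $K=[\Phi](m)$ with $\sigma(K)=K$; the non-symmetric $K$ is excluded because the other singletons must continue the path symmetrically (compare with the $B_n$ unfolding via Corollary~\ref{cor:type B}). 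Identifying the resulting map with the composition in part (c) is then routine.

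The main obstacle I expect is making the middle step rigorous. The rank-two lists constrain each pair separately, and ruling out configurations in which a node image is a non-singleton self-orthogonal set shared compatibly by several neighbours requires careful length and orthogonality bookkeeping, including the exceptional rank-three cases. I would try length parity from Lemma~\ref{lem:elem Artin hom.c} first.
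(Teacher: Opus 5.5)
Your reduction to rank-two parabolics is the right starting point. However, the central step of your second paragraph, that the images of nodes joined by $3$-edges are singletons, is false. Part~(c) of the statement you are proving refutes it: there $[\Phi](j)=\{j,n+1-j\}$ for $j\in[1,m-1]$.

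The error is treating $[\Phi](i)\cup[\Phi](j)$, for a $3$-edge $\{i,j\}$, as a single interval. Lemma~\ref{lem:diagonal} and Theorem~\ref{thm:adm I2m+1} only give a disjoint union of $A_2$-components $\{x,x+1\}$, each meeting $[\Phi](i)$ and $[\Phi](j)$ in one point. So the local rank-two data say only that the $[\Phi](j)$ on the simply laced part are self-orthogonal of a common cardinality $k$. No length or orthogonality bookkeeping at one node can force $k=1$; that is a global question.

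For $\wh M$ with all labels odd, $I\to\wh I$ is a covering of the tree $\Gamma(\wh M)$ by the connected graph $\Gamma(M)$, so $k=1$. For $\wh M=B_m$, however, the $4$-edge can glue two copies of the $A_{m-1}$-part, and into $A_n$ it must. Suppose $[\Phi](m-1)$ were a singleton. The only $4$-edge components allowed by Theorem~\ref{thm:main thm adm} are $\{a,b\}\sqcup[a+1,b-1]$ with $b-a\ge2$, the pair and the interval lying in different images. This would force $[\Phi](m-1)=\{a+1\}$ with $a,a+2\in[\Phi](m)$, leaving no neighbour of $a+1$ for $[\Phi](m-2)$. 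So your route would "prove" that no such homomorphism $\Br^+(B_m)\to\Br^+(A_n)$ exists.

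Your part~(c) paragraph is also internally inconsistent. It pairs a singleton $[\Phi](m-1)$ with a symmetric $K=[\Phi](m)$ of size $>1$. Moreover, the image of $m-1$ is the pair adjacent to $K$, i.e. the relevant part of $I''(K)$, not $I'(K)$.

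The missing idea is to start at the $4$-edge and propagate outward, as the paper does.
\begin{enumerate}
\item Theorem~\ref{thm:main thm adm} together with Lemma~\ref{lem:diagonal} gives $[\Phi](m-1)=\bigcup_i\{a_i,b_i\}$ and $[\Phi](m)=\bigcup_i[a_i+1,b_i-1]$. The pairs must lie in $[\Phi](m-1)$ by the disjointness argument above, since $[\Phi](m-1)$ also borders $[\Phi](m-2)$.
\item The $A_2$-component structure along the $3$-edges then determines everything else: $[\Phi](m-1-t)=\bigcup_i\{a_i-t,b_i+t\}$.
\item Full support rules out more than one bracket, since otherwise gaps appear between consecutive brackets at the level of $[\Phi](1)$. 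This forces a single bracket with $a_1=m$ and $b_1=n+1-m$, which is part~(c).
\end{enumerate}
For $M=B_n$ the paper does not argue directly via Corollary~\ref{cor:type B}. It composes with the unfoldings \eqref{eq:unfold Bn A2n-1}, \eqref{eq:unfold Bn A2n} and reduces to type~$A$. Your direct approach there can work, since singletons do occur in $B_n$: only one copy of the $A_{m-1}$-part, glued to the type-$B$ component $\{a\}\sqcup[a+1,n]$, is possible. But it needs the same component-and-connectivity argument, not your local singleton claim.

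A minor slip: $F_4$ also has no node with two $3$-edge neighbours, so it belongs in your list of exceptional cases.
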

\begin{proof}
The argument for types in which $\wh m_{ij}$
is odd for all~$i,j\in\wh I$ is the same as in classification of LCM homomorphisms.

Let~$\wh M$ be of type~$B_m$. It is easy
to see, using Proposition~\ref{prop:admissible hom from BrI22m to BrBn}, that the assignments in part~\ref{thm:higher rank adm AB.b} define 
a homomorphism $\Br^+(\wh M)\to \Br^+(B_n)$,
$n\ge m$.

Suppose first that~$M$ is of type~$A$. The restriction of~$\Phi$ to $\Br^+_{[m-1,m]}(\wh M)$ is 
a homomorphism~$\Br^+(I_2(4))\to \Br^+_J(M)$
where~$J=[\Phi](\{m-1,m\})$. By Lemma~\ref{lem:diagonal} and
Theorem~\ref{thm:main thm adm}, $J=\bigcup_{1\le i\le k} [a_i,b_i]$
where~$b_i-a_i\ge 2$, $a_i-b_{i-1}\ge 1$, $2\le i\le k$, $1\le a_1$, $b_k\le n$
and 
$$
[\Phi](m-1)=\bigcup_{1\le i\le k}\{a_i,b_i\},
\qquad [\Phi](m)=\bigcup_{1\le i\le k} [a_i+1,b_i-1].
$$
By Lemma~\ref{lem:diagonal}, Theorem~\ref{thm:main thm adm} and
Corollary~\partref{cor:adm finite class}, $[\Phi](\{m-1,m-2\})$ is the disjoint union of subsets of~$I$ corresponding to submatrices of type~$A_2$.
In particular, $[\Phi](m-2)$ and~$[\Phi](m-1)$ must be self-orthogonal.
Thus, $[\Phi](m-2)=\bigcup_{1\le i\le k}
\{a_i-1,b_i+1\}$. Continuing this way
we conclude that~$[\Phi](j)=
\bigcup_{1\le i\le k} \{ a_i-m+j+1,
b_i+m-j-1\}$, $j\in [1,m-1]$ and are self-orthogonal.
Thus, $a_i-m+2$ and~$b_{i-1}+m-2$ both belong to~$[\Phi](1)$ and so
we must have $a_i-m+2-(b_{i-1}+m-2)>1$. Yet
in that case $a_i-m+1$ does not belong to~$[\Phi](j)$ for any~$j\in [1,m]$ which 
contradicts the assumption that~$\Phi$ is 
fully supported. Thus, $k=1$,
$[\Phi](j)=\{j,n+1-j\}$, $j\in[1,m-1]$
and~$[\Phi](m)=[m,n+1-m]$. 
In particular, $\Phi$ is the composition
of the homomorphism
$\Br^+(B_m)\to \Br^+(B_{\lfloor \frac n2\rfloor})$ with one of the 
homomorphisms from~\eqref{eq:unfold Bn A2n-1},
\eqref{eq:unfold Bn A2n}, depending on 
the parity of~$n$. Furthermore, as 
any disjoint fully supported 
standard homomorphism $\Br^+(B_m)\to 
\Br^+(B_n)$ yields disjoint 
fully supported standard homomorphisms 
$\Br^+(B_m)\to \Br^+(A_{2n})$ 
and~$\Br^+(B_m)\to \Br^+(A_{2n-1})$, it 
follows that the homomorphisms 
described in parts~\ref{thm:higher rank adm AB.b} and~\ref{thm:higher rank adm AB.c} are the only ones with~$M$ of type~$A$ or~$B$.

Finally, let~$\wh M=F_4$ and let~$M=A_n$. A disjoint fully supported standard homomorphism 
$\Phi:\Br^+(\wh M)\to \Br^+(M)$ restricts 
to a disjoint standard homomorphism~$\Br^+_{[1,3]}(F_4)\to \Br^+(M)$. By Lemma~\ref{lem:diagonal}
and part~\ref{thm:higher rank adm AB.c},
$[\Phi]([1,3])=\bigcup_{1\le i\le k} [a_i,b_i]$ where~$a_i-b_{i-1}>1$, $2\le i\le k$ and
$\lceil \frac{b_i-a_i}2\rceil\ge 2$,
$1\le i\le k$. Furthermore, as
$I\setminus [\Phi]([1,3])=[\Phi](4)$ and is self-orthogonal, $b_{i-1}=
a_{i}-2$ for all~$2\le i\le k$ and 
so~$[\Phi](4)=\{a_1-1,\dots,a_k-1,b_k+1\}\cap [1,n]$. Since~$a_1,b_k\in [\Phi](1)$, we conclude that~$b_k=n$ and~$a_1=1$ for otherwise~$\Phi(\wh T_1)$
and~$\Phi(\wh T_4)$ do not commute. Then~$k>1$ and so~$b_{k-1}+1=a_k-1\in[\Phi](4)$,
$b_{k-1}=a_k-2,a_k\in [\Phi](1)$ which is 
again a contradiction. Thus, no such homomorphism exists. It remains to observe that a disjoint fully supported standard homomorphism~$\Br^+(F_4)\to\Br^+(B_n)$ yields a homomorphism~$\Br^+(F_4)\to \Br^+(A_{2n-r})$,
$r\in\{0,1\}$ of the same type by~\eqref{eq:unfold Bn A2n-1}, \eqref{eq:unfold Bn A2n}.
\end{proof}
\begin{proposition}\label{prop:Bm Bn parab}
Let~$2\le m\le n$.
The assignments $\wh T_i\mapsto T_i$, $i\in [1,m-1]$,
$\wh T_m\mapsto \Cx mn\Cxr m{(n-1)}$ define 
a Coxeter type homomorphism~$\Br^+(B_m)\to \Br^+(B_n)$,
and its composition with 
the standard unfolding $\Br^+(B_n)\to \Br^+(A_N)$,
$n=\lfloor \frac12N\rfloor$ is a  
Coxeter type homomorphism $\Br^+(B_m)\to \Br^+(A_N)$. 
\end{proposition}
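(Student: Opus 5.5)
The plan is to verify the defining relations of $\Br^+(B_m)$ directly on the proposed images. Set $X_i=T_i$ for $i\in[1,m-1]$ and $X_m=\Cx mn\Cxr m{(n-1)}=T_{(m,n+1)}$, using the abbreviation from the proof of Proposition~\ref{prop:admissible hom from BrI22m to BrBn}. All of these are ${}^{op}$-invariant. Their canonical images in $W(B_n)$ are involutions: $s_i$, and the element acting on the reflection representation by $\epsilon_j\mapsto(-1)^{\delta_{j,m}}\epsilon_j$, as computed there. So once we have a homomorphism, Theorem~\partref{thm:Main Thm Cox Heck.a} gives Coxeter type for free. By Lemma~\ref{lem:fund hom}, it suffices to check the rank-two relations.

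\emph{Relations among $X_1,\dots,X_{m-1}$.} These hold trivially.

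\emph{Pairs $(X_i,X_m)$ with $i\le m-2$.} We need commutation. Since $\supp X_m=[m,n]$ and $m_{im'}=2$ for all $m'\ge m$ when $i\le m-2$, this is immediate.

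\emph{The pair $(X_{m-1},X_m)$.} This is the main step: we need $(T_{m-1}X_m)^2=(X_mT_{m-1})^2$. The plan is to pass to the rank-two parabolic picture via Proposition~\ref{prop:admissible hom from BrI22m to BrBn} and its proof.

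\begin{itemize}
\item For $m=2$, Proposition~\partref{prop:admissible hom from BrI22m to BrBn.a} says precisely that $\wh T_1\mapsto T_1$, $\wh T_2\mapsto T_{(2,n+1)}$ defines a homomorphism from $\Br^+(I_2(4))=\Br^+(B_2)$, which is exactly what we need.
\item For general $m$, apply that proposition inside the parabolic submonoid $\Br^+_{[m-1,n]}(B_n)\cong\Br^+(B_{n-m+2})$, after relabeling indices by shifting down by $m-2$. Its $m=2$ case gives that $T_{m-1}$ and $\Cx mn\Cxr m{(n-1)}$ satisfy the $4$-braid relation. Since the inclusion of parabolic submonoids is a homomorphism, the relation holds in $\Br^+(B_n)$.
\end{itemize}

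If one prefers an intrinsic check, Lemma~\ref{lem:I2m iff cnd} reduces the relation to showing that $(T_{m-1}T_{(m,n+1)})^2$ is ${}^{op}$-invariant. By \eqref{eq:prep I2m Bn} applied in the shifted parabolic, this element equals $T_{w_\circ^{[m-1,n]}}T_{w_\circ^{[m+1,n]}}^{-1}$, which is a product of commuting ${}^{op}$-invariant elements.

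\emph{Composition with the unfolding.} For the second assertion, compose with the homogeneous unfolding \eqref{eq:unfold Bn A2n-1} or \eqref{eq:unfold Bn A2n}. That unfolding is standard, hence of Coxeter type by Corollary~\partref{cor:elem prop Coxeter Hecke.b'}. Compositions of Coxeter type homomorphisms are of Coxeter type, since the induced maps compose as homomorphisms of Coxeter groups: if $\overline{\Psi}\circ\pi=\pi\circ\Psi$ and $\overline{\Phi}\circ\pi=\pi\circ\Phi$, then the composite square commutes. This finishes the argument. The only delicate point is the index shift used to reduce the $(m-1,m)$ relation to the $m=2$ case of Proposition~\ref{prop:admissible hom from BrI22m to BrBn}.
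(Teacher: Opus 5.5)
Your proposal is correct and follows the paper's route. The paper's proof is the one-line citation of Proposition~\partref{prop:admissible hom from BrI22m to BrBn.a}. Your proposal supplies what that citation leaves implicit: the reduction of the $(m-1,m)$ relation to the $m=2$ case inside the parabolic $\Br^+_{[m-1,n]}(B_n)\cong\Br^+(B_{n-m+2})$, the check that the images are involutions, and the closure of Coxeter type under composition.
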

\begin{proof}
This is immediate from Proposition~\partref{prop:admissible hom from BrI22m to BrBn.a}. 
\end{proof}
Taking the composition of a 
homomorphism from Theorem~\partref{thm:higher rank adm AB.b}
with the unfolding
$\Br^+(B_n)\to\Br^+(D_{n+1})$ from~\eqref{eq:unfold Bn Dn+1},
we obtain infinite families
of optimal fully supported disjoint standard homomorphisms $\Br^+(B_m)\to \Br^+(D_{n+1})$
and~$\Br^+(I_2(2m))\to \Br^+(D_{n+1})$
for~$n\ge m$. It appears that these are the only
families existing for arbitrary~$n$. For small ranks there are other sporadic examples, for instance in type~$D_5$
the assignments
$$
\wh T_1\mapsto  T_i,\qquad \wh T_2\mapsto T_{w_\circ^{[1,5]\setminus\{i\}}},\qquad i\in \{4,5\}
$$
define homomorphisms~$\Br^+(I_2(8))\to \Br^+(D_5)$
while the assignments
$$
\wh T_1\mapsto T_{w_\circ^{\{1,i\}}},
\qquad \wh T_2\mapsto T_{w_\circ^{[2,n+1]\setminus\{i\}}},
\qquad i\in \{n,n+1\}
$$
define homomorphisms~$\Br^+(I_2(10))\to \Br^+(D_{n+1})$
for~$n\in\{4,5\}$. In higher ranks no such
homomorphisms seem to exists, apart from those obtained from type~$B$ via the standard unfolding~\eqref{eq:unfold Bn Dn+1}, but there are a lot
of apparently infinite families of non-disjoint standard 
homomorphisms (see~\S\ref{subs:conj families}).

\section{Towards classification of non-disjoint homomorphisms}\label{sect:non-disjoint}

\subsection{Non-disjoint
homomorphisms in higher ranks}
\label{subs:mon braid}
The family constructed here is 
inspired by braidings of tensor powers of objects in braided monoidal
categories. We will
identify $\Br^+_k$ with the parabolic
submonoid~$\Br^+_{[1,k-1]}(A_{n-1})$ of~$\Br^+_n$ for all~$n>k$.
\begin{theorem}\label{thm:monomial brd}
Let~$m\in\ZZ_{>1}$, $n\in\ZZ_{>0}$
and let $J_i^{(m)}:=[(i-1)m+1,(i+1)m-1]$,
$i\in\mathbb Z_{>0}$.
\begin{enmalph}
\item \label{thm:monomial brd.a}
The assignments $T_i\mapsto 
T_{w_{J_i^{(m)}\setminus \{im\};J_i^{(m)}}}$,
$i\in[1,n-1]$ define a 
Coxeter type homomorphism
\plink{Phi(m)n}$\Phi_n^{(m)}:\Br^+_n\to\Br^+_{nm}$;

\item \label{thm:monomial brd.b}
The assignments $T_i\mapsto 
T_{w_\circ^{J_i^{(m)}}}$,
$i\in[1,n-1]$, define 
a standard homomorphism $\wh\Phi_n^{(m)}:\Br^+_n\to\Br^+_{nm}$.

Let~$\wh M=B_n$, $\widetilde M=B_{mn}$.
\item \label{thm:monomial brd.c}
    The assignments $\wh T_i\mapsto \widetilde T_{w_{J_i^{(m)}\setminus\{im\}};J_i^{(m)}}$,
    $i\in[1,n-1]$, $\wh T_n\mapsto \widetilde T_{w_{[(n-1)m+1,nm-1];[(n-1)m+1,nm]}}$ define a 
    Coxeter type homomorphism $\Br^+(\wh M)\to \Br^+(\widetilde M)$;
\item \label{thm:monomial brd.d}
     The assignments $\wh T_i\mapsto \widetilde T_{w_\circ^{J_i^{(m)}}}$,
    $i\in[1,n-1]$, $\wh T_n\mapsto 
    \widetilde T_{w_\circ^{[(n-1)m+1,nm]}}$ define a 
    standard homomorphism $\Br^+(\wh M)\to \Br^+(\widetilde M)$.  
\end{enmalph}
\end{theorem}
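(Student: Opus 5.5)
We mirror the proof of Proposition~\ref{prop:admissible hom from BrI22m to BrBn}: first establish the Coxeter-type homomorphisms in parts~\ref{thm:monomial brd.a} and~\ref{thm:monomial brd.c}, then obtain the standard ones in parts~\ref{thm:monomial brd.b} and~\ref{thm:monomial brd.d} as decorated companions.

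Write $X_i=T_{w_{J_i^{(m)}\setminus\{im\};J_i^{(m)}}}$, which equals $T_{w_\circ^{J_i^{(m)}\setminus\{im\}}}^{-1}T_{w_\circ^{J_i^{(m)}}}$ in $\Br_{nm}$. Since $J_i^{(m)}\setminus\{im\}=[(i-1)m+1,im-1]\sqcup[im+1,(i+1)m-1]$ is a union of two orthogonal intervals of length $m-1$, $\pi(X_i)$ is the permutation swapping the two consecutive blocks $B_i=[(i-1)m+1,im]$ and $B_{i+1}$ of size $m$ while preserving order inside each block. This is the ``block braiding'' $c_{V,V}$ for $V$ an $m$-strand object. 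Concretely, $X_i$ is the positive braid in which the $m$ strands of $B_i$ cross over the $m$ strands of $B_{i+1}$ as parallel cables, so $X_i\in\SQF^+$ with $\ell(X_i)=m^2$.

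\emph{Step 1: braid relations for $X_i$.} For $|i-j|>1$ the supports are orthogonal, so $X_i$ and $X_j$ commute. For $j=i+1$ I would use the cabling (naturality) picture: $X_iX_{i+1}X_i$ and $X_{i+1}X_iX_{i+1}$ are both the positive lift of the block permutation reversing $B_i,B_{i+1},B_{i+2}$. Algebraically, via Theorem~\ref{thm:Tits}\ref{thm:Tits.b}, it is enough to check that both products are reduced in $W(A_{nm-1})$, since they have the same image. Reducedness holds because no two strands cross twice: each pair of blocks crosses exactly once, and strands within a block never cross. This gives $\Phi_n^{(m)}$. It is of Coxeter type by Theorem~\ref{thm:Main Thm Cox Heck}\ref{thm:Main Thm Cox Heck.a.iii}, since $\pi(X_i)$ is an involution.

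\emph{Step 2: decoration.} Set $z_i=T_{w_\circ^{J_i^{(m)}\setminus\{im\}}}=T_{w_\circ^{B_i\setminus\{im\}}}\,T_{w_\circ^{B_{i+1}\setminus\{(i+1)m\}}}$, so that $X_iz_i=z_iX_i$ and $X_iz_i=T_{w_\circ^{J_i^{(m)}}}$. The $z_i$ are not central, so Lemma~\ref{lem:cent decor} does not apply directly. I would instead verify the conditions of Theorem~\ref{thm:decoration sufficient} with $z^{(1)}_{i,j}=z_i$. By the naturality of the block braiding, $X_i$ conjugates $T_{w_\circ}$ of the internal interval of $B_i$ to that of $B_{i+1}$, and conversely. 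Hence \eqref{eq:inv decoration} produces, for $j=i+1$, $z^{(2)}_{i,j}=X_j^{-1}z_iX_j$, which is again a product of internal $w_\circ$'s of blocks, and similarly for $z^{(3)}_{i,j}$. All of these lie in the commutative submonoid generated by the $T_{w_\circ^{B_k\setminus\{km\}}}$. Condition~\ref{thm:decoration sufficient.2} then reduces to comparing the multisets of block factors on each side, which both give each of $B_i,B_{i+1},B_{i+2}$ exactly twice. For $|i-j|\ge2$ everything commutes. This gives $\wh\Phi_n^{(m)}$, which is standard by construction.

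\emph{Step 3: type $B$.} Parts~\ref{thm:monomial brd.c} and~\ref{thm:monomial brd.d} reduce to adding the relation of length $4$ between $\wh T_{n-1}$ and $\wh T_n$. Here $\wh T_n$ maps to $Y=T_{w_\circ^{[(n-1)m+1,nm-1]}}^{-1}T_{w_\circ^{[(n-1)m+1,nm]}}$, i.e.\ $\prod_{k}T_{(k,nm+1)}$ over the last block, which is the ``signed cable flip'' of $B_n$. Using the signed-permutation reflection representation, exactly as in Lemma~\ref{lem:B2 I22m BrBn main id}, together with length counting, $(X_{n-1}Y)^2$ is the positive lift of $-1$ on $B_{n-1}\cup B_n$. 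This element is ${}^{op}$-invariant, so Lemma~\ref{lem:I2m iff cnd} applies. Decoration by $T_{w_\circ^{[(n-1)m+1,nm-1]}}$ for $\wh T_n$ then follows as in Step~2.

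\emph{Main obstacle.} I expect the main obstacle to be the reducedness check of Step~1 in the type-$B$ case: the length of the $4$-fold product must equal the length of the lift. I would try to settle it by an explicit inversion count on signed permutations.
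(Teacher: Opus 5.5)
Your Steps 1 and 2 match the paper's proof of parts (a) and (b). The paper also counts inversions to get $\ell(t_{1,m}t_{2,m}t_{1,m})=3m^2$ and lifts via Lemma~\ref{lem:lifting to Cox-Hecke}. It then decorates $\Phi^{(m)}_n$ by $z_i=T_{w_\circ^{J_i^{(m)}\setminus\{im\}}}$, and both sides of condition~$2^\circ$ become $T_{w_\circ^{B'_i}}^2T_{w_\circ^{B'_{i+1}}}^2T_{w_\circ^{B'_{i+2}}}^2$, where $B'_k:=[(k-1)m+1,km-1]$.

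For (c) and (d) the paper takes a different route and does no type-$B$ computation. It shows that $\Phi^{(m)}_{2n}$ and $\wh\Phi^{(m)}_{2n}$ intertwine the diagram flips (Lemma~\ref{lem:diag aut mon brd}). Hence their composites with the unfolding $\Upsilon_n:\Br^+(B_n)\to\Br^+_{2n}$ land in $\Br^+_{2mn}{}^{\sigma_{2mn}}$, which $\Upsilon_{mn}$ identifies with $\Br^+(B_{mn})$ by Crisp's theorem (Remark~\ref{rem:injectivity}). The explicit images then come from Lemma~\ref{lem:parab preserving}. Your direct check of the length-$4$ relation followed by decoration is a legitimate alternative: it avoids Crisp's theorem at the cost of an explicit signed-permutation computation. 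As written, however, that computation rests on a false identity.

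\emph{The error in Step 3.} Put $K=[(n-1)m+1,nm]$. Then $Y=\widetilde T_{w_\circ^{B'_n}}^{-1}\widetilde T_{w_\circ^{K}}$ has length $m^2-\binom m2=\binom{m+1}2$, and its image $y$ acts by $\epsilon_{(n-1)m+k}\mapsto-\epsilon_{nm+1-k}$.

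\begin{itemize}
\item $Y$ is \emph{not} the product of the lifts of the sign changes of the last block. That product is $\widetilde T_{w_\circ^K}$, which has length $m^2$ and acts as $-1$.
\item Let $x$ be the swap of the blocks $B_{n-1}$ and $B_n$. Then $(xy)^2$ sends $\epsilon_{(n-2)m+k}\mapsto-\epsilon_{(n-1)m+1-k}$ and $\epsilon_{(n-1)m+k}\mapsto-\epsilon_{nm+1-k}$.
\item So $(xy)^2=w_{J';K'}$ with $K'=[(n-2)m+1,nm]$ and $J'=K'\setminus\{(n-1)m,nm\}$. It is not $-1$ on $B_{n-1}\cup B_n$.
\item The length check you planned therefore fails as stated: the lift of $-1$ has length $4m^2$, whereas $2\ell(X_{n-1})+2\ell(Y)=2m^2+2\binom{m+1}2=3m^2+m$.
\end{itemize}

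\emph{The repair.} Use the correct lift.

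\begin{itemize}
\item We have $\ell(w_{J';K'})=4m^2-m(m-1)=3m^2+m$. By Theorem~\ref{thm:Tits}(b), $(X_{n-1}Y)^2=\widetilde T_{w_{J';K'}}$.
\item Since $w_{J';K'}$ is an involution, $(YX_{n-1})^2$ is the same element. Alternatively, use Lemmas~\ref{lem:can image op inv} and~\ref{lem:I2m iff cnd}.
\item The decoration for (d) then goes through. $Y$ commutes with $T_{w_\circ^{B'_{n-1}}}$ because the supports are orthogonal.
\item $Y$ commutes with $T_{w_\circ^{B'_n}}$ because $\widetilde T_{w_\circ^K}$ is central in the type-$B_m$ parabolic.
\item $X_{n-1}$ interchanges $T_{w_\circ^{B'_{n-1}}}$ and $T_{w_\circ^{B'_n}}$ by conjugation.
\item Hence both sides of condition~$2^\circ$ of Theorem~\ref{thm:decoration sufficient} for the label $4$ equal $T_{w_\circ^{B'_{n-1}}}^3T_{w_\circ^{B'_n}}^3$.
\end{itemize}
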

\begin{remark}
For~$m=2$, the homomorphisms from part~\ref{thm:monomial brd.c} appear in
the classification of homomorphisms from~$\Br_n$ to~$\Br_N$, $N\le 2n$ (\cite{CKM}).
\end{remark}
\begin{proof}
It will be convenient to consider all the $\Br^+_k$, $k\in\ZZ_{\ge 1}$ as parabolic submonoids 
of~$\Br^+_\infty=\Br^+(A_\infty)$, which is generated 
by the $T_i$, $i\in\mathbb Z_{>0}$ subject 
to relations $T_i T_j=T_j T_i$, $|i-j|>1$ and 
$T_i T_j T_i=T_j T_i T_j$, $|i-j|=1$ for all~$i,j\in\mathbb Z_{>0}$. Then we can consider~$\Phi^{(m)}$
as an endomorphism of~$\Br^+_\infty$. Likewise,
we consider symmetric groups~$S_n$ as parabolic 
subgroups of~$S_\infty$.

We need the following
\begin{lemma}\label{lem:prep Grassman perm}
Let~$I=[1,2m-1]$ and let
$t_m=\prod_{1\le i\le m}(i,m+i)\in S_{2m}\cong W(A_{2m-1})$.
Then 
\begin{enmalph}
 \item   \label{lem:prep Grassman perm.a} $t_m=\pi_{2m-1}(T_{w_{I\setminus\{m\};I}})$;
 \item 
 \label{lem:prep Grassman perm.b} 
 $t_m=\ascprod_{1\le i\le m}\cxr i{(m+i-1)}$
 and $T_{w_{I\setminus\{m\};I}}=
 \ascprod_{1\le i\le m}\Cxr i{(m+i-1)}$.
\end{enmalph}
\end{lemma}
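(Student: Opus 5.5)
We first settle (a) in the symmetric group $S_{2m}$, identified with $W(A_{2m-1})$, and then lift everything to the braid monoid using Theorem~\ref{thm:Tits}. By definition $w_{I\setminus\{m\};I}=w_\circ^{I\setminus\{m\}}w_\circ^I$, where $w_\circ^I$ is the reversal $k\mapsto 2m+1-k$. The element $w_\circ^{I\setminus\{m\}}=w_\circ^{[1,m-1]}w_\circ^{[m+1,2m-1]}$ reverses the blocks $[1,m]$ and $[m+1,2m]$ separately. Composing the two, the value $k\in[1,m]$ is sent to $k+m$ and the value $m+k$ is sent to $k$ (whichever multiplication convention is used, the composite is the same block swap up to inversion). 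Since $t_m$ is an involution, $\pi_{2m-1}(T_{w_{I\setminus\{m\};I}})=w_{I\setminus\{m\};I}=t_m$. This is part~(a).

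For the first claim in (b) we check directly that $\cxr i{(m+i-1)}=s_{m+i-1}\cdots s_i$ acts as a cycle on $[i,m+i]$, and that the product of these cycles for $i=1,\dots,m$, taken in increasing order, is the block swap $t_m$. One can verify this by induction on $m$, or by tracking where each value $k$ goes. The cleanest route is to note that, in one-line notation, $t_m$ is the Grassmannian permutation $[m+1,\dots,2m,1,\dots,m]$. This permutation has the well-known reduced word obtained by successively moving each of $1,\dots,m$ past the $m$ larger letters. That word is exactly $\ascprod_{i}\cxr i{(m+i-1)}$ (or its reverse, depending on convention). Because $t_m^{-1}=t_m$, both orderings give the same element.

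For the braid identity we compare lengths. The word $\ascprod_{1\le i\le m}\cxr i{(m+i-1)}$ has length $m\cdot m=m^2$. On the other side,
\[
\ell(t_m)=\ell(w_\circ^I)-\ell(w_\circ^{I\setminus\{m\}})=\tbinom{2m}{2}-2\tbinom{m}{2}=m^2 ,
\]
using \eqref{eq:ell w w0}; equivalently, $m^2$ is the number of inversions of the Grassmannian permutation. So the expression is reduced, and Theorem~\partref{thm:Tits.b} gives $T_{t_m}=\ascprod_{1\le i\le m}\Cxr i{(m+i-1)}$. Finally, $T_{w_{I\setminus\{m\};I}}$ is by definition the square-free lift $T_{t_m}$ of this element; by Lemma~\ref{lem:parab preserving}'s convention it equals $T_{w_\circ^{I\setminus\{m\}}}^{-1}T_{w_\circ^I}$, which is consistent because \eqref{eq:ell w w0} makes the factorization $w_\circ^I=w_\circ^{I\setminus\{m\}}\times t_m$ length-additive.

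The only delicate step is matching conventions in the group computation: left versus right action, and whether the product over $i$ gives $t_m$ or $t_m^{-1}$. Since $t_m$ is an involution, either outcome yields the same element. I would nevertheless verify the case $m=2$ explicitly: there $s_2s_1s_3s_2$ equals $(1,3)(2,4)$, which confirms the ordering.
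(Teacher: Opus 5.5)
Your proposal is correct and follows essentially the same route as the paper. You identify $w_{I\setminus\{m\};I}$ with the block swap $t_m$ by a direct computation in $S_{2m}$, check that $\ascprod_{1\le i\le m}\cxr i{(m+i-1)}=t_m$, and lift to $\Br^+_{2m}$ via the length count $\ell(t_m)=m^2$ and Theorem~\partref{thm:Tits.b}. The one difference is cosmetic: the paper verifies the product formula by tracking the images of $j$ and $m+j$ through the cycles $\cxr ij=(i,j+1,j,\dots,i+1)$, whereas you invoke the bubble-sort reduced word of the Grassmannian permutation and settle the ordering ambiguity with $t_m=t_m^{-1}$, which is fine.
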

\begin{proof}

Denote~$\eta_m:=t_m w_\circ^{[1,m-1]}w_\circ^{[m+1,2m-1]}$. 
Let~$i\in [1,m]$. Then $\eta_m(i) =t_m(m+1-i)=2m+1-i$. Similarly,
if~$i\in[m+1,2m]$, $\eta_m(i)=
t_m(3m+1-i)=2m+1-i$. Thus, $\eta_m=
\prod_{1\le i\le m} (i,2m+1-i)=w_\circ^{[1,2m-1]}$ and so
$t_m=w_{I\setminus\{m\};I}$. This proves~\ref{lem:prep Grassman perm.a}. 
To prove the first identity in part~\ref{lem:prep Grassman perm.b}, note that since $\cxr ij=(i,j+1,j,\dots,i+1)$  we have for~$1\le j\le m$
\begin{align*}
\Big(\ascprod_{1\le i\le m}\cxr i{(m+i-1)}\Big)(j)&=\Big(\ascprod_{1\le i\le j-1}\cxr i{(m+i-1)}\Big)
\cxr j{(m+j-1)}(j)\\
&=\Big(\ascprod_{1\le i\le j-1}
\cxr i{(m+i-1)}\Big)(m+j)=m+j
\\
\intertext{and}
\Big(\ascprod_{1\le i\le m}\cxr i{(m+i-1)}\Big)(m+j)&=
\Big(\ascprod_{1\le i\le m-1}\cxr i{(m+i-1)}\Big)(m+j-1)=\cdots
\\
&=\Big(\ascprod_{1\le i\le m-k}\cxr i{(m+i-1)}\Big)(m-k+j)=\cdots=j.
\end{align*}
The second identity follows from the first 
by Theorem~\partref{thm:Tits.b} since 
\begin{equation*}\ell(T_{w_{I\setminus\{m\};I}})=
\ell(t_m)=\ell(w_\circ^{[1,2m-1]})-
\ell(w_\circ^{[1,m-1]})-\ell(w_\circ^{[m+1,2m-1]})=m^2=\sum_{1\le i\le m}
\ell(\Cxr i{(m+i-1)}).\qedhere
\end{equation*}
\end{proof}
Note that the assignments $T_i\mapsto T_{i+1}$,
$i\in\ZZ_{>0}$ define an endomorphism~$\xi$ of~$\Br^+_\infty$
which clearly descends to the Coxeter group.
We claim that the assignments
$$
s_i\mapsto t_{i,m}:=\xi^{(i-1)m}(t_m)=\prod_{1\le j\le m} ((i-1)m+j,
i m+j),
\qquad i\in[1,n-1]
$$
define an endomorphism of~$S_\infty$ which 
restricts to homomorphisms $S_n\to S_{nm}$ for any~$n\in\ZZ_{>0}$. Indeed, since the $t_{i,m}$, $t_{j,m}$ with~$|j-i|>1$ manifestly commute, it suffices to verify that $t_{1,m}t_{2,m}t_{1,m}=t_{2,m}t_{1,m}t_{2,m}$.
Note that 
$$
t_{i,m}(j)=\begin{cases}
j+m,&j\in[(i-1)m+1,im],\\
j-m,&j\in[im+1,(i+1)m],\\
j,&\text{otherwise}.
\end{cases}
$$
It follows that for all~$j\in[1,3m]$
\begin{equation}\label{eq:mon braiding rel}
t_{1,m}t_{2,m}t_{1,m}(j)=t_{2,m}t_{1,m}t_{2,m}(j)=\begin{cases}
j+2m,& j\in[1,m],\\
j,&j\in[m+1,2m],\\
j-2m,&j\in[2m+1,3m].
\end{cases}
\end{equation}
Furthermore, it is well-known (see e.g.~\cite{BjBr}*{Proposition~1.25})
that for~$w\in W(A_{k})\cong S_{k+1}$ where the isomorphism maps
$s_i$, $i\in[1,k]$ to the transposition~$(i,i+1)$, 
we have $\ell(w)=|\Inv(w)|$ where
$$\Inv(w)=\{ (i,j)\in[1,k+1]\times[1,k+1]\,:\,
i<j,\,w(i)>w(j)\}.
$$
It is immediate
from~\eqref{eq:mon braiding rel} that
\begin{align*}
\Inv(t_{1,m}t_{2,m}t_{1,m})=([1,m]\times[m+1,2m])
\cup ([1,m]\times[2m+1,3m])\cup([m+1,2m]\times[2m+1,3m]).
\end{align*}
Therefore, $\ell(t_{1,m}t_{2,m}t_{1,m})=
3m^2=\ell(t_{1,m})+\ell(t_{2,m})+\ell(t_{1,m})$. 
Then by Lemmata~\ref{lem:lifting to Cox-Hecke} and~\partref{lem:prep Grassman perm.a} the assignments in part~\ref{thm:monomial brd.a} define a homomorphism which is of Coxeter type
by Theorem~\partref{thm:Main Thm Cox Heck.a}.
This completes the proof of part~\ref{thm:monomial brd.a}.

As~$T_{w_\circ^{J_i^{(m)}}}$, $T_{w_\circ^{J_k^{(m)}}}$
with~$|i-k|>1$ manifestly commute,
it suffices to prove part~\ref{thm:monomial brd.b} for~$n=3$. Note that, since~$T_{w_\circ^{J_i^{(m)}\setminus\{im\}}}$ is invariant with respect to the 
diagram automorphism of~$\Br^+_{J_i^{(m)}}(A_\infty)$ and,
therefore, commutes with~$T_{w_\circ^{J_i^{(m)}}}$
by Proposition~\partref{prop:fund elts BrSa.e}, we have 
$$
T_{w_\circ^{J_i^{(m)}}}=\Phi^{(m)}(T_i)z_{i,i}^{(1)},\qquad 
z_{i,i}^{(1)}:=T_{w_\circ^{J_i^{(m)}\setminus\{im\}}}=
T_{w_\circ^{[(i-1)m+1,im-1]]\cup [im+1,(i+1)m-1]}}.
$$
Using~\eqref{eq:inv decoration}
and Proposition~\partref{prop:fund elts BrSa.e} we obtain in~$\Br^+_{3m}$
\begin{align*}
z_{i,k}^{(2)}&=\Phi^{(m)}(T_k)^{-1}T_{w_\circ^{J_i^{(m)}\setminus\{im\}}}
\Phi^{(m)}(T_k)=
T_{w_\circ^{J_k^{(m)}}}^{-1}T_{w_\circ^{J_k^{(m)}\setminus\{km\}}}T_{w_\circ^{J_i^{(m)}\setminus\{im\}}}
T_{w_\circ^{J_k^{(m)}\setminus\{km\}}}^{-1}T_{w_\circ^{J_k^{(m)}}}\\
&=T_{w_\circ^{[1,m-1]\cup[2m+1,3m-1]}},\\
z_{i,k}^{(2)}&=\Phi^{(m)}(T_i)^{-1}T_{w_\circ^{[1,m-1]\cup[2m+1,3m-1]}}
\Phi^{(m)}(T_i)=T_{w_\circ^{[m+1,2m-1]\cup[(k-i+1)m+1,(k-i+2)m+1]}},
\end{align*}
where~$\{i,k\}=\{1,2\}$, whence
\begin{align*}
z_{i,k}^{(3)}z_{k,i}^{(2)}&z_{i,k}^{(1)}
=T_{w_\circ^{[m+1,2m-1]\cup[2(2-i)m+1,(2(2-i)+1)m-1]}}
T_{w_\circ^{[1,m-1]\cup[2m+1,3m-1]}}T_{w_\circ^{[(i-1)m+1,im-1]\cup[im+1,(i+1)m-1]}}\\
&=T_{w_\circ^{[1,m-1]}}^2 T_{w_\circ^{[m+1,2m-1]}}^2
T_{w_\circ^{[2m+1,3m-1]}}^2.
\end{align*}
Therefore, 
$\mathbf z=(z_{1,1}^{(1)},z_{2,2}^{(1)})$
is a decoration of~$\Phi^{(m)}_3$ by Theorem~\ref{thm:decoration sufficient}, and~$\wh \Phi^{(m)}_3=(\Phi^{(m)}_3)_{\mathbf z}$.

To prove part~\ref{thm:monomial brd.c}, we need the following
\begin{lemma}\label{lem:diag aut mon brd}
Let~$\sigma_N$ be the diagram automorphism of~$\Br^+_N$ and let~$\Br^+_N{}^{\sigma_N}$ be 
the submonoid of~$\sigma_N$-invariant elements of~$\Br^+_N$. Then
$\Phi^{(m)}_n\circ \sigma_n=\sigma_{mn}\circ\Phi^{(m)}_n$
and
$\wh \Phi^{(m)}_n\circ \sigma_n=\sigma_{mn}\circ\wh\Phi^{(m)}_n$ for all~$m,n\in\mathbb Z_{>1}$. In particular,
$\Phi^{(m)}_n$ and~$\wh \Phi^{(m)}_n$
restrict to homomorphisms $\Br^+_n{}^{\sigma_n}\to 
\Br^+_{mn}{}^{\sigma_{mn}}$. 
\end{lemma}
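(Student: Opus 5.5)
Since $\sigma_n$, $\sigma_{mn}$, $\Phi^{(m)}_n$ and $\wh\Phi^{(m)}_n$ are all monoid homomorphisms, both identities can be checked on the generators $T_i$, $i\in[1,n-1]$ of~$\Br^+_n$. The restriction statement then follows formally: if $X\in\Br^+_n$ satisfies $\sigma_n(X)=X$, then $\sigma_{mn}(\Phi^{(m)}_n(X))=\Phi^{(m)}_n(\sigma_n(X))=\Phi^{(m)}_n(X)$, and likewise for~$\wh\Phi^{(m)}_n$.

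The one computation concerns the intervals. We have $\sigma_n(T_i)=T_{n-i}$ and $\sigma_{mn}(T_j)=T_{mn-j}$. The image of $J_i^{(m)}=[(i-1)m+1,(i+1)m-1]$ under $j\mapsto mn-j$ is
$$[mn-(i+1)m+1,\,mn-(i-1)m-1]=[(n-i-1)m+1,(n-i+1)m-1]=J_{n-i}^{(m)}.$$
This map sends the point $im$ to $(n-i)m$. Hence it also carries $J_i^{(m)}\setminus\{im\}$ onto $J_{n-i}^{(m)}\setminus\{(n-i)m\}$.

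Next we use that a diagram automorphism $\sigma$ sends $T_{w_\circ^K}$ to $T_{w_\circ^{\sigma(K)}}$ for every $K$ of finite type. This holds because $\sigma$ maps square free elements to square free elements of the same length, and it maps $\Br^+_K$ isomorphically onto $\Br^+_{\sigma(K)}$; now apply Proposition~\partref{prop:fund elts BrSa.e}. Consequently
$$\sigma_{mn}(\wh\Phi^{(m)}_n(T_i))=T_{w_\circ^{J_{n-i}^{(m)}}}=\wh\Phi^{(m)}_n(\sigma_n(T_i)).$$

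For $\Phi^{(m)}_n$, recall that $T_{w_{K';K}}$ equals $T_{w_\circ^{K'}}^{-1}T_{w_\circ^K}$ in the Artin group. Applying $\sigma_{mn}$ to this product and using the previous paragraph gives
$$\sigma_{mn}(T_{w_{J_i^{(m)}\setminus\{im\};J_i^{(m)}}})=T_{w_{J_{n-i}^{(m)}\setminus\{(n-i)m\};J_{n-i}^{(m)}}},$$
which is $\Phi^{(m)}_n(T_{n-i})$. The identity is thus first obtained in the group, and it holds in the monoid because $\Br^+\hookrightarrow\Br$ is injective. The only step needing any care is the index bookkeeping for the reflected interval; everything else is formal.
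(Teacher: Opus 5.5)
Your proof is correct and follows the same route as the paper: you check the identities on generators, reflect $J_i^{(m)}$ under $j\mapsto mn-j$ to obtain $J_{n-i}^{(m)}$, and use $\sigma(T_{w_\circ^K})=T_{w_\circ^{\sigma(K)}}$. You also justify that last fact and write out the $\Phi^{(m)}_n$ case explicitly, via $T_{w_{K';K}}=T_{w_\circ^{K'}}^{-1}T_{w_\circ^K}$ in the Artin group; the paper omits that case as similar.
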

\begin{proof}
Since~$\sigma_N$ corresponds to the permutation
$i\mapsto N+1-i$, $i\in [1,N-1]$, we have 
\begin{align*}
\sigma_{mn}(\wh\Phi^{(m)}_n(T_i))&=
\sigma_{mn}(T_{w_\circ^{[(i-1)m+1,(i+1)m-1]}})=
T_{w_\circ^{[nm-(i+1)m+1,nm-(i-1)m-1]}}\\
&=T_{w_\circ^{[(n-i-1)m+1,(n-i+1)m-1]}}
=\wh\Phi^{(m)}_n(T_{n-i})
=\wh\Phi^{(m)}_n(\sigma_n(T_i)).
\end{align*}
The argument for~$\Phi^{(m)}_n$ is similar and is omitted.
\end{proof}
Let~$\Upsilon_k:\Br^+(B_k)\to\Br^+_{2k}$ be the 
standard unfolding~\eqref{eq:unfold Bn A2n-1} which is an isomorphism
onto~$\Br^+_{2k}{}^{\sigma_{2k}}$
by Corollary~\ref{cor:adm finite class}.
Then $\Phi^{(m)}_{2n}\circ\Upsilon_n$ is a homomorphism
$\Br^+(\wh M)\to\Br^+_{2mn}$ whose 
image is contained in~$\Br^+_{2mn}{}^{\sigma_{2mn}}$.
It follows that
$\Upsilon_{mn}^{-1}\circ\Phi^{(m)}_{2n}\circ\Upsilon_n\in\Hom_{\Art}(\wh M,\widetilde M)$. To obtain the explicit formulae,
note that we have for~$i\in[1,n-1]$
\begin{align*}
(\Phi^{(m)}_{2n}\circ\Upsilon_n)(\wh T_i)&=
T_{w_{J_i^{(m)}\setminus\{im\};J_i^{(m)}}}
T_{w_{J_{2n-i}^{(m)}\setminus\{(2n-i)m\};J_{2n-i}^{(m)}}}\\
&=T_{w_{(J_i^{(m)}\setminus\{im\})\cup (J_{2n-i}^{(m)}\setminus\{(2n-i)m\});J_i^{(m)}\cup J_{2n-i}^{(m)}}}
=\Upsilon_{mn}(\widetilde T_{w_{J_i^{(m)}\setminus\{im\};J_i^{(m)}}})
\end{align*}
while
\begin{align*}
(\Phi^{(m)}_{2n}\circ\Upsilon_n)(\wh T_n)&=
T_{w_{J_n^{(m)}\setminus\{nm\};J_n^{(m)}}}
=T_{w_{[(n-1)m+1,[nm-1]\cup[nm+1,(n+1)m-1];[(n-1)m+1,(n+1)m-1]}}\\
&=\Upsilon_{nm}(\widetilde T_{w_{[(n-1)m+1,nm-1];
[(n-1)m+1,nm-1]}}),
\end{align*}
where we used Lemma~\ref{lem:parab preserving}.
In particular, this homomorphism is of Coxeter type.
The argument in
part~\ref{thm:monomial brd.d} is similar 
and is omitted.
\end{proof}

\begin{example}
Explicitly, we have
\begin{align*}&\Phi_n^{(2)}(T'_i)=T_{2i}T_{2i-1}T_{2i+1}T_{2i},\\
&\Phi_n^{(3)}(T'_i)=T_{3i}T_{3i-1}T_{3i-2}T_{3i+1}T_{3i}T_{3i-1}
T_{3i+2}T_{3i+1}T_{3i},\qquad i\in[1,n-1].
\end{align*}
\end{example}

\subsection{More infinite series of non-disjoint standard 
homomorphisms}\label{subs:inf ser non-disj}
We now use Theorem~\ref{thm:decoration sufficient} and
homomorphisms from Theorems~\partref{thm:monomial brd.b}\ref{thm:monomial brd.d} and~\partref{thm:higher rank adm AB.b}
to obtain additional infinite families of standard homomorphisms~$\Br^+_3\to\Br^+_{3m}$, $m\ge 1$,
and~$\Br^+(B_2)\to\Br^+(M)$ where~$M$ is of type~$A_n$,
$B_n$ or~$D_{n+1}$.

\begin{theorem}\label{thm:Hom A2}
For~$m\in\ZZ_{>0}$ and~$J\subset [1,m-1]$, 
the assignments 
$$T_1\mapsto T_{w_\circ^{[1,2m-1]\cup (2m+J)}},
\qquad 
T_2\mapsto T_{w_\circ^{[m+1,3m-1]\cup J}}$$
define a homomorphism $\Br^+_3\to\Br^+_{3m}$.
\end{theorem}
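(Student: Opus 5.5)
The plan is to realize the assignments as a decoration of $\wh\Phi^{(m)}_3$ from Theorem~\partref{thm:monomial brd.b}, which sends $T_1\mapsto T_{w_\circ^{[1,2m-1]}}$ and $T_2\mapsto T_{w_\circ^{[m+1,3m-1]}}$. Since $J\subset[1,m-1]$, the set $2m+J$ lies in $[2m+1,3m-1]$, which is orthogonal to $[1,2m-1]$. Likewise $J$ is orthogonal to $[m+1,3m-1]$. Hence by Proposition~\partref{prop:fund elts BrSa.0},
$$T_{w_\circ^{[1,2m-1]\cup(2m+J)}}=T_{w_\circ^{[1,2m-1]}}\,z_1,\qquad T_{w_\circ^{[m+1,3m-1]\cup J}}=T_{w_\circ^{[m+1,3m-1]}}\,z_2,$$
with $z_1=T_{w_\circ^{2m+J}}$ and $z_2=T_{w_\circ^{J}}$. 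It therefore suffices to show that $\mathbf z=(z_1,z_2)$ is a decoration of $\wh\Phi^{(m)}_3$.

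All $\Phi(T_i)$ here are invertible in $\Br_{3m}$, and $\Br^+_{3m}$ embeds in $\Br_{3m}$, so Corollary~\ref{cor:dec iff} applies. We compute $z_{i,j}^{(k)}$, $k\in\{1,2,3\}$, by~\eqref{eq:inv decoration} and then check condition~\ref{thm:decoration sufficient.2} with $m_{12}=3$. To compute the conjugates, write $X_1=T_{w_\circ^{[1,2m-1]}}$ and $X_2=T_{w_\circ^{[m+1,3m-1]}}$. By Proposition~\partref{prop:fund elts BrSa.c}, conjugating by $X_1$ acts on $\Br^+_{[1,2m-1]}$ by $i\mapsto 2m-i$ and fixes everything orthogonal to $[1,2m-1]$. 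Similarly, conjugating by $X_2$ acts by $i\mapsto 4m-i$ on $[m+1,3m-1]$.

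This gives the chains:
\begin{itemize}
\item $z_{1,2}^{(1)}=T_{w_\circ^{2m+J}}$; conjugation by $X_2$ gives $z_{1,2}^{(2)}=T_{w_\circ^{2m-J}}$, since $2m+J\subset[m+1,3m-1]$ is sent to $4m-(2m+J)=2m-J$; conjugation by $X_1$ then gives $z_{1,2}^{(3)}=T_{w_\circ^{J}}$, since $2m-J\subset[1,2m-1]$ is sent to $J$.
\item $z_{2,1}^{(1)}=T_{w_\circ^{J}}$; conjugation by $X_1$ gives $z_{2,1}^{(2)}=T_{w_\circ^{2m-J}}$; conjugation by $X_2$ then gives $z_{2,1}^{(3)}=T_{w_\circ^{2m+J}}$.
\end{itemize}
Condition~\ref{thm:decoration sufficient.2} then reads
$$z_{1,2}^{(3)}z_{2,1}^{(2)}z_{1,2}^{(1)}=T_{w_\circ^J}\,T_{w_\circ^{2m-J}}\,T_{w_\circ^{2m+J}},\qquad z_{2,1}^{(3)}z_{1,2}^{(2)}z_{2,1}^{(1)}=T_{w_\circ^{2m+J}}\,T_{w_\circ^{2m-J}}\,T_{w_\circ^{J}}.$$
The sets $J\subset[1,m-1]$, $2m-J\subset[m+1,2m-1]$ and $2m+J\subset[2m+1,3m-1]$ are pairwise separated by the gaps at $m$ and $2m$, hence pairwise orthogonal. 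By Lemma~\ref{lem:orth factors} the three factors commute, so the two products are equal. Theorem~\ref{thm:decoration sufficient} then gives the decorated homomorphism $(\wh\Phi^{(m)}_3)_{\mathbf z}$, which is exactly the stated assignment.

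The main obstacle is applying the conjugation rule correctly. The rule $X_2^{-1}Y X_2=\Sigma(Y)$ only holds for $Y\in\Br^+_{[m+1,3m-1]}$, so at each step one must confirm that the element being conjugated lies either inside the relevant interval or in a parabolic orthogonal to it. For $J\subset[1,m-1]$ both situations occur, and each needs checking; the case $m=1$ (so $J=\emptyset$) is trivial. One also needs the ordering convention of~\eqref{eq:inv decoration} to match the one used in Theorem~\ref{thm:decoration sufficient}. Because the three final factors commute, an ordering slip would not affect the final identity, but it would affect which intermediate conjugates appear, so it should be checked.
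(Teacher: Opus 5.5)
Your proposal is correct and is essentially the paper's own proof: decorate $\wh\Phi^{(m)}_3$ by $\mathbf z=(T_{w_\circ^{2m+J}},T_{w_\circ^{J}})$, compute the conjugates via~\eqref{eq:inv decoration} using the involutions $i\mapsto 2m-i$ and $i\mapsto 4m-i$, and check condition~\ref{thm:decoration sufficient.2} using the pairwise orthogonality of $J$, $2m-J$ and $2m+J$. One small remark: every conjugation in your chains stays inside the relevant interval, so the ``orthogonal'' case you worry about in your last paragraph never arises there; orthogonality is used only for the initial factorization and the final commutation.
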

\begin{proof}
 Let~$M=A_3$,  $\mathsf M=\Br_{3m}$, $m\ge 2$ and let~$\Phi=\wh\Phi_3^{(m)}$ be the homomorphism from Theorem~\partref{thm:monomial brd.b}.
Thus, $\Phi(T_i)=T_{w_\circ^{[(i-1)m+1,(i+1)m-1]}}$,
$i\in\{1,2\}$.
Denote~$\sigma_i$, $i\in\{1,2\}$ the diagram 
automorphism of $\Br^+_{[(i-1)m+1,(i+1)m-1]}(A_{3m-1})\cong \Br^+_{2m}$. By Proposition~\partref{prop:fund elts BrSa.e}, $\Phi(T_i)^{-1}(T_{w_\circ^K})\Phi(T_i)=\sigma_i(T_{w_\circ^K})=
T_{w_\circ^{2im-K}}$
for any~$K\subset[(i-1)m+1,(i+1)m-1]$. 
Let~$z_{i,i}^{(1)}=T_{w_\circ^{2m(2-i)+J}}$, 
$i\in\{1,2\}$
and define by~\eqref{eq:inv decoration}
\begin{align*}
z_{i,j}^{(2)}&=\Phi(T_j)^{-1}z_{i,i}^{(1)}\Phi(T_{j})=
T_{w_\circ^{2mj-2m(2-i)-J}}=
T_{w_\circ^{2m-J}},\\
z_{i,j}^{(3)}&=\Phi(T_{i})^{-1}z_{i,j}^{(2)}\Phi(T_{i})
=T_{w_\circ^{2(i-1)m+J}}=z_{j,j}^{(1)}
\end{align*}
where~$\{i,j\}=\{1,2\}$.
Since~$J\subset[1,m-1]$, $2m+J\subset[2m+1,3m-1]$ and~$2m-J\subset[m+1,2m-1]$, they are pairwise orthogonal. 
Then $z_{1,2}^{(3)}z_{2,1}^{(2)}z_{1,2}^{(1)}=
T_{w_\circ^{J}}T_{w_\circ^{2m-J}}T_{w_\circ^{2m+J}}
=T_{w_\circ^{2m+J}}T_{w_\circ^{2m-J}}T_{w_\circ^{J}}
=z_{2,1}^{(3)}z_{1,2}^{(2)}z_{2,1}^{(1)}$. Therefore,
$(z_{1,1}^{(1)},z_{2,2}^{(1)})=(T_{w_\circ^{2m+J}},T_{w_\circ^J})$ is a decoration of~$\Phi$ by Theorem~\ref{thm:decoration sufficient}
and the assertion follows.   
\end{proof}
\begin{theorem}\label{thm:Hom B2}
Let~$m\in\ZZ_{\ge 0}$ and~$J\subset[1,m-1]$.
\begin{enmalph}    
\item\label{thm:Hom B2.B2An}
Let~$n\ge 4m-1$ and suppose that~$K\subset[2m+1,n-2m]$
and~$n+1-K$ are weakly orthogonal.
Then the assignments $\wh T_1\mapsto T_{w_\circ^{[1,2m-1]
\cup[n+2-2m,n]\cup K}}$, $\wh T_2\mapsto T_{w_\circ^{[m+1,n-m]\cup J\cup (n+1-J)}}$ define a homomorphism~$\Br^+(B_2)\to\Br^+(A_n)$;

\item\label{thm:Hom B2.B2} Let~$n\ge 2m$ and~$K\subset[2m+1,n]$. Then
the assignments $\wh T_1\mapsto T_{w_\circ^{[1,2m-1]\cup K}}$, $\wh T_2\mapsto T_{w_\circ^{[m+1,n]\cup J}}$ define a homomorphism~$\Br^+(B_2)\to\Br^+(B_n)$;

\item\label{thm:Hom B2.B2Dn}
Let~$n\ge 2m$.
Let~$K\subset[2m+1,n+1]$ and, if~$n-m$ is even, assume in addition that~$K$ and~$\tau(K)$ are weakly orthogonal where~$\tau$
is the transposition~$(n,n+1)$.
Then the assignments 
$\wh T_1\mapsto T_{w_\circ^{[1,2m-1]\cup K}}$, $\wh T_2\mapsto T_{w_\circ^{[m+1,n+1]\cup J}}$ define a homomorphism~$\Br^+(B_2)\to\Br^+(D_{n+1})$.
\end{enmalph}
\end{theorem}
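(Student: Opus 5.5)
We follow the template of the proof of Theorem~\ref{thm:Hom A2}: start from an undecorated standard homomorphism $\Phi\colon\Br^+(B_2)\to\mathsf M$ (with $\mathsf M$ the appropriate Artin group) and produce the stated assignments as $\Phi_{\mathbf z}$ for a decoration $\mathbf z=(z_{1,1}^{(1)},z_{2,2}^{(1)})$. We verify that $\mathbf z$ is a decoration via Corollary~\ref{cor:dec iff}, i.e.\ by computing the $z_{i,j}^{(k)}$, $k\le 4$, from~\eqref{eq:inv decoration} and checking condition~\ref{thm:decoration sufficient.2} of Theorem~\ref{thm:decoration sufficient}. The base homomorphisms come from Theorem~\ref{thm:higher rank adm AB}\ref{thm:higher rank adm AB.b} and~\ref{thm:higher rank adm AB.c}, or, more conveniently, from Proposition~\ref{prop:admissible hom from BrI22m to BrBn}\ref{prop:admissible hom from BrI22m to BrBn.b} composed with the unfoldings. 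The construction is as follows.

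\begin{enumerate}
\item \emph{Part~\ref{thm:Hom B2.B2}.} Take $\Phi(\wh T_1)=T_{w_\circ^{[1,2m-1]}}$ and $\Phi(\wh T_2)=T_{w_\circ^{[m+1,n]}}$. This should be a homomorphism $\Br^+(B_2)\to\Br^+(B_n)$, since it is the standard homomorphism $\Br^+(B_2)\to\Br^+_{[1,n]}(B_n)$ arising from Theorem~\ref{thm:monomial brd}\ref{thm:monomial brd.d} with $n=2$ there, i.e.\ $\wh T_1\mapsto T_{w_\circ^{J_1^{(m)}}}$, $\wh T_2\mapsto T_{w_\circ^{[m+1,2m]}}$. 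Its supports are extended by decorations from the complementary orthogonal parts, exactly as in Proposition~\ref{prop:admissible hom from BrI22m to BrBn}\ref{prop:admissible hom from BrI22m to BrBn.b} via~\eqref{eq:B_n telescope 0}.

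\item \emph{Decoration for part~\ref{thm:Hom B2.B2}.} Set $z_{1,1}^{(1)}=T_{w_\circ^K}$ and $z_{2,2}^{(1)}=T_{w_\circ^J}$. Since $K\subset[2m+1,n]$, the element $T_{w_\circ^K}$ commutes with $\Phi(\wh T_1)$, and since $K\subset[m+1,n]$, conjugation by $\Phi(\wh T_2)=T_{w_\circ^{[m+1,n]}}$ fixes it, because $\Sigma$ is trivial in type~$B$. So $z_{1,2}^{(k)}=T_{w_\circ^K}$ for all $k$.

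\item \emph{The other decoration.} For $z_{2,2}^{(1)}=T_{w_\circ^J}$, conjugation by $\Phi(\wh T_1)=T_{w_\circ^{[1,2m-1]}}$ gives $T_{w_\circ^{2m-J}}$ by Proposition~\ref{prop:fund elts BrSa}\ref{prop:fund elts BrSa.c}. Conjugation by $\Phi(\wh T_2)$ then fixes it, since $2m-J\subset[m+1,2m-1]$. Conjugation by $\Phi(\wh T_1)$ again returns $T_{w_\circ^J}$.

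\item \emph{Condition~\ref{thm:decoration sufficient.2}.} We must check
\[
z_{1,2}^{(4)}z_{2,1}^{(3)}z_{1,2}^{(2)}z_{2,1}^{(1)}=z_{2,1}^{(4)}z_{1,2}^{(3)}z_{2,1}^{(2)}z_{1,2}^{(1)} .
\]
Both sides are products of $T_{w_\circ^K}$ (twice) with $T_{w_\circ^J}$ and $T_{w_\circ^{2m-J}}$. The sets $J$, $2m-J$ and $K$ are pairwise orthogonal, since they lie in $[1,m-1]$, $[m+1,2m-1]$ and $[2m+1,n]$, separated by gaps. Hence everything commutes and equality holds. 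The decorated images are $T_{w_\circ^{[1,2m-1]}}T_{w_\circ^K}=T_{w_\circ^{[1,2m-1]\cup K}}$ by Proposition~\ref{prop:fund elts BrSa}\ref{prop:fund elts BrSa.0}, which needs $2m\notin K$. A caveat: $J$ and $[m+1,n]$ are orthogonal only if $m\notin$ the relevant neighbourhood, which holds because $J\subset[1,m-1]$ is separated from $[m+1,n]$ by~$m$.

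\item \emph{Part~\ref{thm:Hom B2.B2An}.} Compose the part~\ref{thm:Hom B2.B2} base homomorphism with the unfolding~\eqref{eq:unfold Bn A2n-1} or~\eqref{eq:unfold Bn A2n}, giving $\Phi(\wh T_1)=T_{w_\circ^{[1,2m-1]\cup[n+2-2m,n]}}$ and $\Phi(\wh T_2)=T_{w_\circ^{[m+1,n-m]}}$. This time $\Sigma_{[m+1,n-m]}=\sigma$ is nontrivial, so conjugating $T_{w_\circ^K}$ by $\Phi(\wh T_2)$ yields $T_{w_\circ^{n+1-K}}$. The $z$-chain for $K$ is then $K,\,n+1-K,\,n+1-K,\,K$. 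The products in condition~\ref{thm:decoration sufficient.2} now contain $T_{w_\circ^K}$ and $T_{w_\circ^{n+1-K}}$ in different orders; these commute by Lemma~\ref{lem:weakly orthogonal}, using the weak orthogonality hypothesis, as in Example~\ref{ex:embedded std}. The $J$-chain produces $J\cup(n+1-J)$ and its mirror $2m-J\cup(n+1-2m+J)$, which are orthogonal to the rest.

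\item \emph{Part~\ref{thm:Hom B2.B2Dn}.} Compose with~\eqref{eq:unfold Bn Dn+1} and use $\Phi(\wh T_2)=T_{w_\circ^{[m+1,n+1]}}$ in $D_{n+1}$. Here $\Sigma$ is the transposition $\tau$ exactly when $n+1-m$ is odd, i.e.\ when $n-m$ is even. In that case the argument of the previous step applies, using weak orthogonality of $K$ and $\tau(K)$; otherwise the argument of part~\ref{thm:Hom B2.B2} applies verbatim.
\end{enumerate}

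The main obstacle is the bookkeeping in step~1: confirming that the undecorated pair is genuinely a homomorphism for all $n\ge 2m$ (respectively $n\ge 4m-1$), and tracking $\Sigma$ on the relevant parabolics. For this I would use Lemma~\ref{lem:I2m iff cnd} together with Lemma~\ref{lem:cradicals}, showing that $(\Phi(\wh T_1)\Phi(\wh T_2))^2$ equals $T_{w_\circ^{[1,n]}}T_{w_\circ^{[2m,n]}}^{-1}$-type central quotients. This is obtained by decorating the $n=2m$ case with the orthogonal central factor $T_{w_\circ^{[2m+1,n]}}$, via Lemma~\ref{lem:cent decor} and~\eqref{eq:B_n telescope 0}.
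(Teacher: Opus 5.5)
\textbf{The gap.} Every part of your argument depends on one claim: that $\wh T_1\mapsto T_{w_\circ^{[1,2m-1]}}$, $\wh T_2\mapsto T_{w_\circ^{[m+1,n]}}$ defines a homomorphism $\Br^+(B_2)\to\Br^+(B_n)$ for \emph{every} $n\ge 2m$. Parts (a) and (c) need it for $B_{\lceil n/2\rceil}$ and $B_n$ respectively. You do not prove this claim.

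Theorem~\partref{thm:monomial brd.d} gives it only for $n=2m$, and only as a map into $\Br^+(B_{2m})$. For $n>2m$ the generators $T_1,\dots,T_{2m}$ of $\Br^+(B_n)$ span a parabolic submonoid of type $A_{2m}$. So no parabolic inclusion carries that homomorphism to the pair you want.

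Your proposed fix, ``decorating the $n=2m$ case with $T_{w_\circ^{[2m+1,n]}}$'', does not work as stated. A decoration only modifies an already existing homomorphism into a fixed monoid. To reach $T_{w_\circ^{[m+1,n]}}=T_{w_{[2m+1,n];[m+1,n]}}T_{w_\circ^{[2m+1,n]}}$ you would first need $\wh T_1\mapsto T_{w_\circ^{[1,2m-1]}}$, $\wh T_2\mapsto T_{w_{[2m+1,n];[m+1,n]}}$ to be a homomorphism into $\Br(B_n)$. By Lemma~\ref{lem:decs are invertible} that is equivalent to the claim itself, and the case $n=2m$ says nothing about it.

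The analogy with Proposition~\partref{prop:admissible hom from BrI22m to BrBn.b} also fails. There, the undecorated map was established for all $n$ by a separate computation in the reflection representation, and you have no counterpart of that computation.

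The only explicit formula you indicate, $T_{w_\circ^{[1,n]}}T_{w_\circ^{[2m,n]}}^{-1}$, cannot be right. Already for $m=1$, the length of $(\Phi(\wh T_1)\Phi(\wh T_2))^2$ is $2+2(n-1)^2$, while that quotient has length $2n-1$.

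\textbf{How the paper closes it.} The paper proves this step as Proposition~\ref{prop:basic Hom B2Bn}, by induction on $n-2m$ starting from Theorem~\partref{thm:monomial brd.d}. The missing ingredient is a map transporting the homomorphism from rank $n$ to rank $n+1$. This is the standard homomorphism $\Psi_n\colon\Br^+(B_n)\to\Br^+(B_{n+1})$ of Theorem~\partref{thm:higher rank adm AB.b}, given by $T_i\mapsto T_i$ for $i<n$ and $T_n\mapsto T_{w_\circ^{[n,n+1]}}$.

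Lemma~\ref{lem:img longest} then shows that $\Psi_n(T_{w_\circ^{[m+1,n]}})$ equals $T_{w_\circ^{[m+1,n+1]}}$ times a power of $T_{n+1}$. Its proof rests on an explicit identity for powers of Coxeter elements. Since $T_{n+1}$ commutes with both $T_{w_\circ^{[1,2m-1]}}$ and $T_{w_\circ^{[m+1,n+1]}}$, Lemma~\ref{lem:cent decor} applied in $\Br(B_{n+1})$ strips off that power. The resulting map lands in $\Br^+(B_{n+1})$.

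With this in place, your steps 2--6 agree with the paper's decoration computations. The one difference is part (b): you decorate directly in $B_n$, whereas the paper deduces (b) from (a) via the unfolding onto the $\sigma$-invariants of $\Br^+(A_{2n-1})$. Both routes are fine.

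\textbf{A shared edge case in (c).} Take $n=2m$ and $K=\{n+1\}$. In $D_{n+1}$ the node $n+1$ (and, when $\tau$ acts, $\tau(K)=\{n\}$) is adjacent to $n-1=2m-1$. Hence $T_{w_\circ^K}$ does not commute with $T_{w_\circ^{[1,2m-1]}}$, and $T_{w_\circ^{[1,2m-1]}}T_{w_\circ^K}\ne T_{w_\circ^{[1,2m-1]\cup K}}$. That case is not covered by the decoration argument, neither yours nor the paper's.
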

\begin{proof}
We need the following
\begin{proposition}\label{prop:basic Hom B2Bn}
Let~$m\in\ZZ_{\ge 0}$ and~$n\ge 2m$. Then the assignments
$\wh T_1\mapsto T_{w_\circ^{[1,2m-1]}}$,
$\wh T_2\mapsto T_{w_\circ^{[m+1,n]}}$ define 
a homomorphism $\Phi_n\in\Hom_{\Art}(B_2,B_n)$.
\end{proposition}
\begin{proof}
For~$m=0$, these assignments define a character homomorphism.

Suppose that~$m>1$.
We use induction on~$n-2m$. 
For~$n=2m$, this is a special case of Theorem~\partref{thm:monomial brd.d}. 
For the inductive step, we need the following
\begin{lemma}\label{lem:img longest}
Let~$\Psi_n\in\Hom_{\Art}(B_n,B_{n+1})$ be the homomorphism 
from Theorem~\partref{thm:higher rank adm AB.b}.
Then for any~$J\subset[1,n]$, $\Psi_n(\wh T_{w_\circ^J})=T_{w_\circ^{[\Psi_n](J)}}T_{n+1}^{|J'|-1}$ where~$J'$ is the connected
component of~$J$ containing~$n$.
\end{lemma}
\begin{proof}
Since~$[\Psi_n](J)=J$ if~$J\subset[1,n-1]$, $[\Psi_n](J)=J\cup\{n+1\}$ if~$n\in J$ and~$\Psi_n(\wh T_i)=T_i$,
$i\in[1,n-1]$, it follows that~$\Psi_n(\wh T_{w_\circ^{J\setminus J'}})=T_{w_\circ^{J\setminus J'}}$. Thus, it suffices to prove the lemma for~$J=J'=[1,n]$. By Proposition~\partref{prop:Coxeter splitting.b},
$\wh T_{w_\circ^{[1,n]}}=\Cx1n{}^n$ and since~$\Psi_n(T_i)=T_i$, $i\in[1,n-1]$, $\Psi_n(T_n)=T_nT_{n+1}T_nT_{n+1}$, we obtain
$\Psi_n(\wh T_{w_\circ^{[1,n]}})=(\Cx1{(n+1)}T_nT_{n+1})^n$.
We claim that for all~$1\le k\le n$ 
\begin{equation}\label{eq:Cox power Psi}
(\Cx1{(n+1)}T_nT_{n+1})^k=
(\Cx1{(n+1)})^k \Cx{(n+1-k)}{(n+1)}T_{n+1}^{k-1}.
\end{equation}
Indeed, for~$k=1$ there is nothing to prove. For the inductive step, we have 
\begin{align*}
(\Cx1{(n+1)}T_nT_{n+1})^{k+1}=
(\Cx1{(n+1)})^k \Cx{(n+1-k)}{(n+1)}T_{n+1}^{k-1}
\Cx1{(n+1)}T_n T_{n+1}.
\end{align*}
Since~$\Br^+(B_{n+1})$ is cancellative, it thus suffices to prove that 
\begin{equation}\label{eq:Cox power Psi 1}
\Cx{(n+1-k)}{(n+1)}T_{n+1}^{k-1}
\Cx1{(n+1)}T_n=\Cx1{(n+1)}\Cx{(n-k)}{(n+1)}
T_{n+1}^{k-1}.
\end{equation}
Since~$T_{n+1}^{k-1}\Cx1{(n+1)}T_n=\Cx1{(n-1)}
T_{n+1}^{k-1}T_nT_{n+1}T_n=
\Cx1{(n+1)}T_n T_{n+1}^{k-1}$, by cancellativity \eqref{eq:Cox power Psi 1}
is equivalent to
\begin{equation}\label{eq:Cox power Psi 2}
\Cx{(n+1-k)}{(n+1)}\Cx1{(n+1)}T_n=\Cx1{(n+1)}\Cx{(n-k)}{(n+1)}.
\end{equation}
We have 
\begin{align*}
\Cx{(n+1-k)}{(n+1)}\Cx1{(n+1)}T_n
&=\Cx{(n+1-k)}n\Cx1{(n-1)}T_{n+1} T_n T_{n+1}T_n\\
&=\Cx{(n+1-k)}n\Cx1{n}T_{n+1} T_{n}T_{n+1}
\end{align*}
while~$
\Cx1{(n+1)}\Cx{(n-k)}{(n+1)}=
\Cx1n\Cx{(n-k)}{(n-1)}T_{n+1}T_nT_{n+1}$.
Therefore, \eqref{eq:Cox power Psi 2} is equivalent
to
\begin{equation}\label{eq:Cox power Psi 3} 
\Cx{(n+1-k)}n\Cx1{n}=\Cx1n\Cx{(n-k)}{(n-1)}
\end{equation}
which, since both sides of~\eqref{eq:Cox power Psi 3}
are contained in~$\Br^+_{[1,n]}(B_{n+1})\cong\Br^+(A_n)$,
is immediate from Lemma~\ref{lem:comm cox}.

Taking~$k=n$ in~\eqref{eq:Cox power Psi}, we obtain
$\Psi_n(T_{w_\circ^{[1,n]}})=(\Cx1{(n+1)})^{n+1}T_{n+1}^n=
T_{w_\circ^{[1,n+1]}}T_{n+1}^n$, which completes the proof of Lemma~\ref{lem:img longest}.
\end{proof}
By induction hypothesis, the assignments $\wh T_1\mapsto
T_{w_\circ^{[1,2m-1]}}$, $\wh T_2\mapsto T_{w_\circ^{[m+1,n]}}$ define a homomorphism
$\Br^+(B_2)\to\Br^+(B_n)$. Taking its composition 
with~$\Psi_n$, we obtain a homomorphism~$\tilde\Psi_n:\Br^+(B_2)\to
\Br^+(B_{n+1})$. Let~$\mathsf M=\Br(B_{n+1})$ and
$\Phi=\tilde\Psi_n$. By Lemma~\ref{lem:img longest}, $$\Phi(\wh T_1)=
T_{w_\circ^{[1,2m-1]}},\qquad \Phi(\wh T_2)=T_{w_\circ^{[m+1,n+1]}}T_{n+1}^{n-m}.
$$
Let~$z_1=1$ and $z_2=T_{m+1}^{m-n}$. Since the~$z_i$, $\in\{1,2\}$ commute with~$\Phi(\wh T_j)$, $j\in\{1,2\}$, $\mathbf z=(z_1,z_2)$ is a decoration of~$\Phi$ by Lemma~\ref{lem:cent decor}. Then~$\Phi_{\mathbf z}$
is the desired homomorphism $\Br^+(B_2)\to\Br(B_{n+1})$.
Since its image is contained in~$\Br^+(B_{n+1})$, the assertion follows.
\end{proof}
To prove part~\ref{thm:Hom B2.B2An}, let~$N=4$, $\mathsf M=\Br^+(A_n)$, $r=\lceil \frac12 n\rceil$
and let~$\Phi:\Br^+(B_2)\to \mathsf M$ be the composition of the
homomorphism $\Br^+(B_2)\to\Br^+(B_{r})$
from Proposition~\ref{prop:basic Hom B2Bn} with
the standard unfolding $\Br^+(B_r)\to \Br^+(A_n)$ from~\eqref{eq:unfold Bn A2n-1} or~\eqref{eq:unfold Bn A2n} depending on the parity of~$n$. By Corollary~\ref{cor:adm finite class}, it follows that
$\Phi(\wh T_1)=T_{w_\circ^{[1,2m-1]\cup\tilde\sigma([1,2m-1])}}$ and~$\Phi(\wh T_2)=T_{w_\circ^{[m+1,n-m]}}$, where~$\tilde\sigma$ is  the diagram automorphism of~$\Br^+(A_n)$. In particular,
$\tilde\sigma(L)=n+1-L$ for any~$L\subset[1,n]$.
Note that in~$\Br(A_n)$ we have
$\Phi(\wh T_2)^{-1} x \Phi(\wh T_2)=\tilde\sigma(x)$
for any~$x\in\Br^+_{[m+1,n-m]}(M)$.
Let~$\sigma_1$
be the diagram automorphism of~$\Br^+_{[1,2m-1]}(A_n)$.
Then for any~$x\in \Br^+_{[1,2m-1]}(M)$, $\tilde x\in \Br^+_{[n+2-2m,n]}(M)$ we have 
$\Phi(\wh T_1)^{-1} x\Phi(\wh T_1)=
\sigma_1(x)$ and~$\Phi(\wh T_1)^{-1} \tilde x\Phi(\wh T_1)=
\tilde\sigma(\sigma_1(\tilde\sigma(\tilde x)))$.

Let~$z_{1,1}^{(1)}=T_{w_\circ^K}$ and~$z_{2,2}^{(1)}=
T_{w_\circ^{J\cup\tilde\sigma(J)}}$. 
Using~\eqref{eq:inv decoration}, we obtain
\begin{align*}
&z_{1,2}^{(2)}=\Phi(\wh T_2)^{-1}T_{w_\circ^K}
\Phi(\wh T_2)=T_{w_\circ^{\tilde\sigma(K)}},\\
&z_{1,2}^{(3)}=\Phi(\wh T_1)^{-1}T_{w_\circ^{\tilde\sigma(K)}}\Phi(\wh T_1)=
T_{w_\circ^{\tilde\sigma(K)}}=z_{1,2}^{(2)},\\
&z_{1,2}^{(4)}=\Phi(\wh T_2)^{-1}T_{w_\circ^{\tilde\sigma(K)}}\Phi(\wh T_2)
=T_{w_\circ^K}=z_{1,2}^{(1)},\\
&z_{2,1}^{(2)}=\Phi(\wh T_1)^{-1}T_{w_\circ^{J\cup\tilde\sigma(J)}}
\Phi(\wh T_1)=T_{w_\circ^{\sigma_1(J)\cup\tilde\sigma(\sigma_1(J))}},\\
&z_{2,1}^{(3)}=\Phi(\wh T_2)^{-1} T_{w_\circ^{\sigma_1(J)\cup\tilde\sigma(\sigma_1(J))}}
\Phi(\wh T_2)
=z_{2,1}^{(2)},\\
&z_{2,1}^{(4)}=\Phi(\wh T_1)^{-1}T_{w_\circ^{\sigma_1(J)\cup\tilde\sigma(\sigma_1(J)}}\Phi(\wh T_1)=
T_{w_\circ^{J\cup \tilde\sigma(J)}}=z_{2,1}^{(1)}.
\end{align*}
Then 
\begin{align*}
&z_{1,2}^{(4)}z_{2,1}^{(3)}z_{1,2}^{(2)}z_{2,1}^{(1)}
=z_{1,2}^{(1)}z_{2,1}^{(2)}z_{1,2}^{(2)}z_{2,1}^{(1)}
=T_{w_\circ^K}T_{w_\circ^{\sigma_1(J)\cup\tilde\sigma(\sigma_1(J))}}T_{w_\circ^{\tilde\sigma(K)}}T_{w_\circ^{J\cup\tilde\sigma(J)}}\\
&\qquad=T_{w_\circ^K}T_{w_\circ^{\tilde\sigma(K)}}T_{w_\circ^{\sigma_1(J)\cup\tilde\sigma(\sigma_1(J))}}T_{w_\circ^{J\cup\tilde\sigma(J)}},\\
&z_{2,1}^{(4)}z_{1,2}^{(3)}z_{2,1}^{(2)}z_{1,2}^{(1)}
=z_{2,1}^{(1)}z_{1,2}^{(2)}z_{2,1}^{(2)}z_{1,2}^{(1)}
=T_{w_\circ^{J\cup\tilde\sigma(J)}}T_{w_\circ^{\tilde\sigma(K)}}T_{w_\circ^{\sigma_1(J)\cup\tilde\sigma(\sigma_1(J))}}T_{w_\circ^K}
\\
&\qquad=T_{w_\circ^{\tilde\sigma(K)}}T_{w_\circ^K}T_{w_\circ^{J\cup\tilde\sigma(J)}}T_{w_\circ^{\sigma_1(J)\cup\tilde\sigma(\sigma_1(J))}}
=T_{w_\circ^{\tilde\sigma(K)}}T_{w_\circ^K}
T_{w_\circ^{\sigma_1(J)\cup\tilde\sigma(\sigma_1(J))}}T_{w_\circ^{J\cup\tilde\sigma(J)}}
\end{align*}
since~$\tilde\sigma(K),K\subset[2m+1,n-2m]$ while
$J\cup\tilde\sigma(J)\subset[1,m-1]\cup[n+2-m,n]$
and~$\sigma_1(J)\cup\tilde\sigma(\sigma_1(J))
\subset[m+1,2m-1]\cup [n+2-2m,n-m]$. Finally,
since~$K$ and~$n+1-K=\tilde\sigma(K)$
are weakly orthogonal, 
$T_{w_\circ^K}$ and~$T_{w_\circ^{\tilde\sigma(K)}}$ commute by Lemma~\ref{lem:weakly orthogonal}
and so the condition~\ref{thm:decoration sufficient.2} 
of Theorem~\ref{thm:decoration sufficient} holds. 
Then~$\mathbf z=(z_{1,1}^{(1)},z_{2,2}^{(1)})
=(T_{w_\circ^K},T_{w_\circ^{J\cup\tilde\sigma(J)}})$
is a decoration of~$\Phi$ and~$\Phi_{\mathbf z}$
is the desired homomorphism.

Part~\ref{thm:Hom B2.B2} follows from part~\ref{thm:Hom B2.B2An} by taking~$K=n+1-K$ and using Corollary~\ref{cor:adm finite class}.

To prove part~\ref{thm:Hom B2.B2Dn}, note first that 
the composition of the homomorphism from Proposition~\ref{prop:basic Hom B2Bn} with the 
standard unfolding~\eqref{eq:unfold Bn Dn+1} yields
a standard homomorphism~$\Phi:\Br^+(B_2)\to 
\Br^+(D_{n+1})$ satisfying $\Phi(\wh T_1)=T_{w_\circ^{[1,2m-1]}}$ and~$\Phi(\wh T_2)=
T_{w_\circ^{[m+1,n+1]}}$. Let~$\mathsf M=\Br(D_{n+1})$.
Then for any~$x\in\Br^+_{[m+1,n+1]}(M)$, 
$\Phi(\wh T_2)^{-1}x\Phi(\wh T_2)=\tau^{n-m+1}(x)$.
Let~$\sigma_1$ be as before
and let~$z_{1,1}^{(1)}=T_{w_\circ^K}$, $z_{2,2}^{(1)}=
T_{w_\circ^J}$. Then by~\eqref{eq:inv decoration}
\begin{alignat*}{2}
&z_{1,2}^{(2)}=\Phi(\wh T_2)^{-1}T_{w_\circ^K}\Phi(\wh T_2)=
T_{w_\circ^{\tau^{n-m+1}(K)}},&\quad &z_{2,1}^{(2)}=\Phi(\wh T_1)^{-1}T_{w_\circ^J}\Phi(\wh T_1)=T_{w_\circ^{2m-J}},\\
&z_{1,2}^{(3)}=\Phi(\wh T_1)^{-1}z_{1,2}^{(2)}\Phi(\wh T_1)=z_{1,2}^{(2)},&&z_{2,1}^{(3)}=\Phi(\wh T_2)^{-1}z_{2,1}^{(2)}\Phi(\wh T_2)=T_{w_\circ^{2m-J}},\\
&z_{1,2}^{(4)}=\Phi(\wh T_2)^{-1}z_{1,2}^{(2)}\Phi(\wh T_2)=z_{1,2}^{(1)},&
&z_{2,1}^{(4)}=\Phi(\wh T_1)^{-1}T_{w_\circ^{2m-J}}
\Phi(\wh T_1)=T_{w_\circ^J}=z_{2,1}^{(1)}
\end{alignat*}
and so, since~$J\subset[1,m-1]$, $2m-J\subset[m+1,2m-1]$ and~$K,\tau(K)\subset[2m+1,n+1]$,
\begin{align*}
&z_{1,2}^{(4)}z_{2,1}^{(3)}z_{1,2}^{(2)}z_{2,1}^{(1)}
=T_{w_\circ^K}T_{w_\circ^{2m-J}}z_{1,2}^{(2)}T_{w_\circ^J}
=T_{w_\circ^K}z_{1,2}^{(2)}T_{w_\circ^J}T_{w_\circ^{2m-J}},\\
&z_{2,1}^{(4)}z_{1,2}^{(3)}z_{2,1}^{(2)}z_{1,2}^{(1)}
=T_{w_\circ^J}z_{1,2}^{(2)}T_{w_\circ^{2m-J}}T_{w_\circ^K}
=z_{1,2}^{(2)}T_{w_\circ^K}z_{1,2}^{(2)}T_{w_\circ^J}T_{w_\circ^{2m-J}}.
\end{align*}
If~$n-m$ is odd then~$z_{1,2}^{(2)}=T_{w_\circ^K}$. Otherwise, as $K$ 
and~$\tau(K)$ are weakly orthogonal, 
$z_{1,2}^{(2)}$ commutes with~$T_{w_\circ^K}$ by Lemma~\ref{lem:weakly orthogonal}. In either case, the condition~\ref{thm:decoration sufficient.2} of Theorem~\ref{thm:decoration sufficient} is satisfied, whence~$\mathbf z=(z_1,z_2)=(T_{w_\circ^K},T_{w_\circ^J})$
is a decoration of~$\Phi$ and~$\Phi_{\mathbf z}$ yields the homomorphism in part~\ref{thm:Hom B2.B2Dn}.
\end{proof}
\begin{remark}\label{rem:non-std B2 An}
Composing the homomorphism from Theorem~\partref{thm:Hom B2.B2Dn} with the folding $\mathbf F_{\varpi_{(n,n+1)}}$ from Proposition~\partref{prop:types of light.FDn+1An}  we obtain a non-standard homomorphism~$\Br^+(B_2)\to 
\Br^+(A_n)$ given by $$\wh T_1\mapsto T_{w_\circ^{[1,2m-1]}}T_{w_\circ^{K\setminus K'}}T_{w_\circ^{K'\setminus\{n+1\}}}^2,
\qquad \wh T_2\mapsto T_{w_\circ^J}
T_{w_\circ^{[m+1,n]}}^2 $$ 
where~$K'$ is the maximal interval~$[i,n+1]$ contained in~$K$ (see Lemma~\ref{lem:Dn+1 An w0}). In particular,
for~$J=K=\emptyset$ we obtain the homomorphism
satisfying $\wh T_1\mapsto T_{w_\circ^{[1,2m-1]}}$,
$\wh T_2\mapsto T_{w_\circ^{[m+1,n]}}^2$. It is very
tempting to factor it as a composition of the 
Tits homomorphism~$\Br^+(B_2)\to \Br^+(A_2)$, $\wh T_i\mapsto T_i^{i}$, $i\in \{1,2\}$ and a 
homomorphism~$\Br^+(A_2)\to \Br^+(A_n)$, $T_1\mapsto 
T_{w_\circ^{[1,2m-1]}}$, $T_2\mapsto T_{w_\circ^{[m+1,n]}}$. Alas, the latter assignments define a homomorphism if and only if~$n=3m-1$ (see Theorem~\ref{thm:Hom A2}).
\end{remark}
\begin{theorem}\label{thm:B2 A2n-1 spec}
For all~$n\ge 2$, $0\le k\le n-2$, $J=k+1-J\subset[1,k]$ and~$K=3n+k+1-K\subset[n+k+2,2n-1]$, 
the assignments $\wh T_1\mapsto T_{w_\circ^{[1,n+k]\cup K}}$, $\wh T_2\mapsto 
T_{w_\circ^{[k+2,2n-1]\cup J}}$
define a homomorphism~$\Br^+(B_2)\to\Br^+_{2n}$.
\end{theorem}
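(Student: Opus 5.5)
The plan is to follow the decoration strategy used for Theorems~\ref{thm:Hom A2} and~\ref{thm:Hom B2}: first build an undecorated base homomorphism $\Phi:\Br^+(B_2)\to\Br^+_{2n}$, then decorate it by $\mathbf z=(T_{w_\circ^K},T_{w_\circ^J})$ using Theorem~\ref{thm:decoration sufficient} together with \eqref{eq:inv decoration}.

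For the base homomorphism, take $\wh T_1\mapsto T_{w_\circ^{[1,n+k]}}$ and $\wh T_2\mapsto T_{w_\circ^{[k+2,2n-1]}}$. This is the case $J=K=\emptyset$ of the statement, and it is a disjoint-type datum covered by Theorem~\ref{thm:main thm adm} only when the supports are disjoint, which they are not. So we need a direct argument. The candidate is Lemma~\ref{lem:I2m iff cnd} with $m=2$: we must check that $z=(T_{w_\circ^{[1,n+k]}}T_{w_\circ^{[k+2,2n-1]}})^2$ is ${}^{op}$-invariant. Equivalently, by Lemma~\ref{lem:cradicals}, $z$ must be central in the submonoid they generate. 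We expect
\[
T_{w_\circ^{[1,n+k]}}T_{w_\circ^{[k+2,2n-1]}}T_{w_\circ^{[1,n+k]}}T_{w_\circ^{[k+2,2n-1]}}=T_{w_\circ^{[1,2n-1]}}^2\,T_{w_\circ^{[k+2,n+k]}}^{-2}\cdot(\text{correction}).
\]
The first thing to try is to verify this in $W(A_{2n-1})$ via Lemma~\ref{lem:lifting to Cox-Hecke}, but that needs square-freeness of length-4 products, which fails. A more promising route is to note that the two intervals $[1,n+k]$ and $[k+2,2n-1]$ have lengths $n+k$ and $2n-k-2$, and overlap in $[k+2,n+k]$ of length $n-1$. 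Conjugation by $T_{w_\circ^{[1,n+k]}}$ acts on $\Br^+_{[1,n+k]}$ by its flip $i\mapsto n+k+1-i$, and conjugation by $T_{w_\circ^{[k+2,2n-1]}}$ acts by $i\mapsto 2n+k+1-i$. On the overlap, the composition of the two flips is the shift by $n$.

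I would realize $\Phi$ as a composition of known homomorphisms. Theorem~\ref{thm:Hom B2}\ref{thm:Hom B2.B2An} with its parameter $m$ gives $\wh T_1\mapsto T_{w_\circ^{[1,2m-1]\cup[N+2-2m,N]\cup K}}$. With $N=2n-1$ this does not directly match, so instead I would try Theorem~\ref{thm:Hom B2}\ref{thm:Hom B2.B2} into $B_{n'}$ followed by a non-standard identification. If that fails, the fallback is a direct Burau-free computation. In that computation we show, using Proposition~\ref{prop:fund elts BrSa}\ref{prop:fund elts BrSa.c} and the telescoping $T_{w_\circ^{[a,b]}}=T_{w_\circ^{[a,b-1]}}\Cxr ab$, that $z$ commutes with both generators. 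The commutation with $T_{w_\circ^{[1,n+k]}}$ reduces, after cancelling, to showing that $\Sigma_{[1,n+k]}\Sigma_{[k+2,2n-1]}$ maps $T_{w_\circ^{[k+2,2n-1]}}$ appropriately; that is, to the identity $\Sigma_{[1,n+k]}(T_{w_\circ^{[k+2,n+k]}})=T_{w_\circ^{[1,n-1]}}$ and its analogue. This base case is the main obstacle.

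Given $\Phi$, the decoration step is routine. Put $z_{1,1}^{(1)}=T_{w_\circ^K}$ and $z_{2,2}^{(1)}=T_{w_\circ^J}$, and compute $z_{i,j}^{(k)}$ by \eqref{eq:inv decoration}.
\begin{itemize}
\item Since $K\subset[n+k+2,2n-1]\subset[k+2,2n-1]$, conjugation by $\Phi(\wh T_2)$ sends $K$ to $2n+k+1-K\subset[k+2,n-1]\subset[1,n+k]$. Conjugation by $\Phi(\wh T_1)$ then sends it to $n+k+1-(2n+k+1-K)=K-n$, and one more conjugation returns $K$. The hypothesis $K=3n+k+1-K$ is exactly what makes these images close up.
\item Symmetrically, $J=k+1-J\subset[1,k]$ is flipped to $n+k+1-J\subset[n+1,n+k]$, and back.
\end{itemize}
Finally, check condition~\ref{thm:decoration sufficient.2}, namely $z_{1,2}^{(4)}z_{2,1}^{(3)}z_{1,2}^{(2)}z_{2,1}^{(1)}=z_{2,1}^{(4)}z_{1,2}^{(3)}z_{2,1}^{(2)}z_{1,2}^{(1)}$. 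The relevant sets are pairwise orthogonal or weakly orthogonal, so this follows from Lemma~\ref{lem:weakly orthogonal}.
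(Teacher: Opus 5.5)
Your decoration step is correct and is the same as the paper's. With $\mathbf z=(T_{w_\circ^K},T_{w_\circ^J})$, every element produced by \eqref{eq:inv decoration} is a $T_{w_\circ^L}$ with $L$ inside one of the four pairwise orthogonal intervals $[1,k]$, $[k+2,n-1]$, $[n+1,n+k]$, $[n+k+2,2n-1]$. The hypotheses $J=k+1-J$ and $K=3n+k+1-K$ are exactly what make condition~\ref{thm:decoration sufficient.2} of Theorem~\ref{thm:decoration sufficient} hold.

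The genuine gap is the base case $J=K=\emptyset$. It carries essentially all of the content, and you leave it open. The reduction you sketch for it does not work. Write $A=T_{w_\circ^{[1,n+k]}}$ and $B=T_{w_\circ^{[k+2,2n-1]}}$.
\begin{enumerate}
\item After left cancellation of $A$, the statement that $(AB)^2$ commutes with $A$ is literally $ABAB=BABA$, so it is circular.
\item Conjugation by $A$ acts as $\Sigma_{[1,n+k]}$ only on $\Br^+_{[1,n+k]}$. Since $B$ involves $T_{n+k+1},\dots,T_{2n-1}$, you cannot flip $B$ across $A$.
\item The identity $\Sigma_{[1,n+k]}(T_{w_\circ^{[k+2,n+k]}})=T_{w_\circ^{[1,n-1]}}$ has an analogue for every pair of overlapping intervals $[1,a]$, $[b,N]$. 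A length-$4$ braid relation between $T_{w_\circ^{[1,a]}}$ and $T_{w_\circ^{[b,N]}}$ holds only for special overlaps (compare the conjectures in \S\ref{subs:conj families}). So flip identities of this kind cannot carry the proof.
\item The composition route is not available either. In Theorem~\ref{thm:Hom B2}\ref{thm:Hom B2.B2An}, and in any composite with the unfolding \eqref{eq:unfold Bn A2n-1}, at least one image has support invariant under $i\mapsto 2n-i$. Neither $[1,n+k]$ nor $[k+2,2n-1]$ is invariant. The ``non-standard identification'' you mention is never specified.
\end{enumerate}

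The missing idea is the paper's Proposition~\ref{prop:factor Tw0^2}. Note that $[k+2,n-1]$ is stable under the flip $i\mapsto n+k+1-i$ of $[1,n+k]$, and $[n+1,n+k]$ is stable under the flip $i\mapsto 2n+k+1-i$ of $[k+2,2n-1]$. Hence $A$ commutes with $T_{w_\circ^{[k+2,n-1]}}$, and $B$ commutes with $T_{w_\circ^{[n+1,n+k]}}$. Set $X_{n,k}:=T_{w_{[1,k];[1,n+k]}}T_{w_{[k+2,n-1];[k+2,2n-1]}}\in\Br^+_{2n}$. Then in $\Br_{2n}$ it equals both $(T_{w_\circ^{[1,k]}}T_{w_\circ^{[k+2,n-1]}})^{-1}AB$ and $AB\,(T_{w_\circ^{[n+1,n+k]}}T_{w_\circ^{[n+k+2,2n-1]}})^{-1}$, which gives
\[
(AB)^2=T_{w_\circ^{[1,k]}}T_{w_\circ^{[k+2,n-1]}}\,X_{n,k}^2\,T_{w_\circ^{[n+1,n+k]}}T_{w_\circ^{[n+k+2,2n-1]}}.
\]
One then proves $X_{n,k}^2=T_{w_\circ^{[1,2n-1]}}^2$. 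The two sides have the same length, and $T_{w_\circ^{[1,2n-1]}}^2$ generates the center by Proposition~\ref{prop:fund elts BrSa}\ref{prop:fund elts BrSa.d}, so it suffices to show $X_{n,k}^2$ is central:
\begin{itemize}
\item For $i\in[1,k]\cup[n+1,n+k]$ one has $T_iX_{n,k}=X_{n,k}T_{n+k+1-i}$.
\item For $i\in[k+2,n-1]\cup[n+k+2,2n-1]$ one has $T_iX_{n,k}=X_{n,k}T_{2n+k+1-i}$.
\item For each of the three ``gap'' indices $i\in\{k+1,n,n+k+1\}$, one must find $U_i\in\Br_{2n}$ with $T_i^{-1}X_{n,k}=X_{n,k}U_i$ and $X_{n,k}T_i=U_i^{-1}X_{n,k}$. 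This is a delicate computation using Lemmas~\ref{lem:comm cox} and~\ref{lem:move across}.
\end{itemize}
Consequently $(AB)^2=T_{w_\circ^{[1,2n-1]}}^2\,T_{w_\circ^{[1,2n-1]\setminus\{k+1,n,n+k+1\}}}$, a product of two commuting ${}^{op}$-invariant elements, and Lemma~\ref{lem:I2m iff cnd} gives the base homomorphism. Without this computation, or an equivalent one, your proposal does not prove the theorem.
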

\begin{proof}
First, we prove the assertion for~$J=K=\emptyset$. 
By Lemma~\ref{lem:I2m iff cnd}, that is equivalent to proving that $(T_{w_\circ^{[1,n+k]}}T_{w_\circ^{[k+2,2n-1]}})^2$ is~${}^{op}$-invariant. Abbreviate $X_{n,k}=T_{w_{[1,k];[1,n+k]}}T_{w_{[k+2,n-1];[k+2,2n-1]}}\in\Br^+_{2n}$. Then 
in~$\Br_{2n}$
\begin{align*}
X_{n,k}&=T_{w_\circ^{[1,k]}}^{-1}T_{w_\circ^{[1,n+k]}}T_{w_\circ^{[k+2,n-1]}}^{-1}T_{w_\circ^{[k+2,2n-1]}}
=(T_{w_\circ^{[1,k]}}T_{w_\circ^{[k+2,n-1]}})^{-1}T_{w_\circ^{[1,n+k]}}T_{w_\circ^{[k+2,2n-1]}}\\
&=T_{w_\circ^{[1,n+k]}}T_{w_\circ^{[k+2,2n-1]}}(T_{w_\circ^{[n+1,n+k]}}T_{w_\circ^{[n+k+2,2n-1]}})^{-1},
\end{align*}
whence 
\begin{equation}\label{eq:Xnk }
(T_{w_\circ^{[1,n+k]}}T_{w_\circ^{[k+2,2n-1]}})^2=T_{w_\circ^{[1,k]}}T_{w_\circ^{[k+2,n-1]}}X_{n,k}^2  T_{w_\circ^{[n+1,n+k]}}T_{w_\circ^{[n+k+2,2n-1]}}.   
\end{equation}
\begin{proposition}\label{prop:factor Tw0^2}
For all~$0\le k\le n-2$, $X_{n,k}^2=T_{w_\circ^{[1,2n-1]}}^2$.
\end{proposition}
\begin{proof}
Note first that, since~$\ell(w_{[a,b];[a,b+c]})=\binom{b-a+c+2}2-\binom{b-a+2}2=\frac12 c(c+2(b-a)+3)$,
$\ell(X_{n,k})=\frac12n(n+2k+1)+\frac12n(3n-2k-3)=
n(2n-1)=\ell(T_{w_\circ^{[1,2n-1]}})$. Thus, $\ell(X_{n,k}^2)=\ell(T_{w_\circ^{[1,2n-1]}}^2)$. 
Since~$T_{w_\circ^{[1,2n-1]}}^2$
generates the center of~$\Br^+_{2n}$ by Proposition~\partref{prop:fund elts BrSa.d}, it remains to prove that~$X_{n,k}^2$ commutes with the~$T_i$, $i\in[1,2n-1]$.

Suppose that~$i\in[1,2n-1]\setminus\{k+1,n,n+k+1\}$. If~$i\in[1,k]$ then using Proposition~\partref{prop:fund elts BrSa.c}
\begin{align*}
T_i X_{n,k}&=T_i T_{w_\circ^{[1,k]}}^{-1} T_{w_\circ^{[1,n+k]}}T_{w_\circ^{[k+2,n-1]}}^{-1} T_{w_\circ^{[k+2,2n-1]}}=T_{w_\circ^{[1,k]}}^{-1} T_{k+1-i} T_{w_\circ^{[1,n+k]}}T_{w_\circ^{[k+2,n-1]}}^{-1} T_{w_\circ^{[k+2,2n-1]}}\\
&=T_{w_{[1,k];[1,n+k]}} T_{n+i} T_{w_\circ^{[k+2,n-1]}}^{-1} T_{w_\circ^{[k+2,2n-1]}}=T_{w_{[1,k];[1,n+k]}}  T_{w_\circ^{[k+2,n-1]}}^{-1} T_{n+i} T_{w_\circ^{[k+2,2n-1]}}\\
&=X_{n,k} T_{n+k+1-i}
\end{align*}
with~$n+k+1-i\in[n+1,n+k]$. If~$i\in[n+1,n+k]$ then
\begin{align*}
T_i X_{n,k}&=T_i T_{w_\circ^{[1,k]}}^{-1} T_{w_\circ^{[1,n+k]}}T_{w_\circ^{[k+2,n-1]}}^{-1} T_{w_\circ^{[k+2,2n-1]}}\\
&=T_{w_\circ^{[1,k]}}^{-1} T_{w_\circ^{[1,n+k]}}T_{n+k+1-i} T_{w_\circ^{[k+2,n-1]}}^{-1} T_{w_\circ^{[k+2,2n-1]}}=X_{n,k}T_{n+k+1-i}
\end{align*}
with~$n+k+1-i\in[1,k]$. Thus, for~$i\in [1,k]\cup [n+1,n+k]$, $T_i X_{n,k}=X_{n,k}T_{n+k+1-i}$ whence~$T_i X_{n,k}^2=X_{n,k}^2 T_i$.

If~$i\in[k+2,n-1]$,
\begin{align*}
T_i X_{n,k}&=T_i T_{w_\circ^{[1,k]}}^{-1} T_{w_\circ^{[1,n+k]}}T_{w_\circ^{[k+2,n-1]}}^{-1} T_{w_\circ^{[k+2,2n-1]}}=
T_{w_\circ^{[1,k]}}^{-1} T_{w_\circ^{[1,n+k]}}T_{n+k+1-i} T_{w_\circ^{[k+2,n-1]}}^{-1} T_{w_\circ^{[k+2,2n-1]}}\\
&=T_{w_{[1,k];[1,n+k]}} T_{w_\circ^{[k+2,n-1]}}^{-1} T_i T_{w_\circ^{[k+2,2n-1]}}=X_{n,k} T_{2n+k+1-i}
\end{align*}
with~$2n+k+1-i\in [n+k+2,2n-1]$. Similarly, if~$i\in[n+k+2,2n-1]$ then
\begin{align*}
T_i X_{n,k}&=T_{w_{[1,k];[1,n+k]}} T_{w_\circ^{[k+2,n-1]}}^{-1} T_i T_{w_\circ^{[k+2,2n-1]}}=X_{n,k} T_{2n+k+1-i}
\end{align*}
with~$2n+k+1-i\in [k+2,n-1]$. Thus, $T_i X_{n,k}=X_{n,k}T_{2n+k+1-i}$ for~$i\in[k+2,n-1]\cup[n+k+2,2n-1]$ and
so~$T_iX_{n,k}^2=X_{n,k}^2 T_i$.

It remains to prove that~$X_{n,k}^2$ commutes with the $T_i$ for~$i\in\{k+1,n,n+k+1\}$. We need the following
\begin{lemma}\label{lem:move across}
Let~$M$, $i,j\in I$ be a Coxeter matrix and suppose that~$M_{[i,j]}$ is of type~$A$. Then in~$\Br_{[i,j]}(M)$
\begin{subequations}
\begin{alignat}{2}
&T_i T_{w_\circ^{[i+1,j]}}=T_{w_\circ^{[i+1,j]}}\Cx ij\Cx i{(j-1)}{}^{-1}=T_{w_\circ^{[i+1,j]}}\Cx{(i+1)}j{}^{-1}\Cx ij,&\qquad\label{eq:move across i [i+1 j]}\\
&T_{w_\circ^{[i+1,j]}}T_i=\Cxr i{(j-1)}{}^{-1}\Cxr ijT_{w_\circ^{[i+1,j]}}=\Cxr ij\Cxr{(i+1)}j{}^{-1}T_{w_\circ^{[i+1,j]}}\label{eq:move across [i+1 j] i},\\
&T_j T_{w_\circ^{[i,j-1]}}=T_{w_\circ^{[i,j-1]}}\Cxr ij\Cxr{(i+1)}{j}{}^{-1}=T_{w_\circ^{[i,j-1]}}\Cxr{i}{(j-1)}{}^{-1}\Cxr ij,&\label{eq:move across j [i j-1]}\\
&T_{w_\circ^{[i,j-1]}}T_j=\Cx{(i+1)}{j}{}^{-1}\Cx ijT_{w_\circ^{[i,j-1]}}=\Cx ij\Cx{i}{(j-1)}{}^{-1}T_{w_\circ^{[i,j-1]}},\label{eq:move across [i j-1] j}\\
&T_i^{-1} T_{w_\circ^{[i+1,j]}}=T_{w_\circ^{[i+1,j]}}\Cx i{(j-1)}\Cx ij{}^{-1}=T_{w_\circ^{[i+1,j]}}\Cx ij{}^{-1} \Cx{(i+1)}j,\label{eq:move across -i [i+1 j]}\\
&T_{w_\circ^{[i+1,j]}}T_i^{-1}=\Cxr ij{}^{-1}\Cxr i{(j-1)}T_{w_\circ^{[i+1,j]}}=\Cxr{(i+1)}j\Cxr ij{}^{-1}T_{w_\circ^{[i+1,j]}} ,\label{eq:move across [i+1 j] -i}\\
&T_j^{-1} T_{w_\circ^{[i,j-1]}}=T_{w_\circ^{[i,j-1]}}\Cxr{(i+1)}{j}\Cxr ij{}^{-1}=T_{w_\circ^{[i,j-1]}}\Cxr ij{}^{-1} \Cxr{i}{(j-1)},\label{eq:move across -j [i j-1]}\\
&T_{w_\circ^{[i,j-1]}}T_j^{-1}=\Cx ij{}^{-1}\Cx{(i+1)}{j}T_{w_\circ^{[i,j-1]}}=\Cx{i}{(j-1)}\Cx ij{}^{-1}T_{w_\circ^{[i,j-1]}} ,\label{eq:move across [i j-1] -j}
\end{alignat}
\end{subequations}
\end{lemma}
\begin{proof}
It suffices to prove~\eqref{eq:move across i [i+1 j]}. The remaining identities follow by applying~${}^{op}$ or the diagram automorphism of~$\Br^+_{[i,j]}(M)$ or taking inverses.
Note that
$$
T_{w_\circ^{[i,j]}}=T_{w_\circ^{[i+1,j]}}\Cx ij=\Cxr ij T_{w_\circ^{[i+1,j]}}.
$$
Then
\begin{equation*}
T_i T_{w_\circ^{[i+1,j]}}=T_i T_{w_\circ^{[i,j]}}\Cx ij{}^{-1}=T_{w_\circ^{[i,j]}}T_j\Cx ij{}^{-1}=T_{w_\circ^{[i+1,j]}}\Cx ij\Cx i{(j-1)}{}^{-1}.
\end{equation*}
To prove the second equality in~\eqref{eq:move across i [i+1 j]}, note that it is equivalent to $$\Cx{(i+1)}j \Cx ij=\Cx ij\Cx i{(j-1)}$$ which in turn is immediate from Lemma~\ref{lem:comm cox}.
\end{proof}
Our aim is to show that for~$i\in\{k+1,n,n+k+1\}$,
$T_i^{-1}X_{n,k}=X_{n,k}U_{i,n,k}$ while $X_{n,k}T_i=U_{i,n,k}^{-1} X_{n,k}$ for some~$U_{i,n,k}\in\Br_{2n}$. Then~$X_i^{-1}X_{n,k}^2 X_i=X_{n,k}^2$ in~$\Br_{2n}$. 

For~$i=k+1$ we obtain, using the inverse of~\eqref{eq:move across [i j-1] j}
\begin{align*}
T_i^{-1}&X_{n,k}=T_{k+1}^{-1} T_{w_\circ^{[1,k]}}^{-1} T_{w_\circ^{[1,n+k]}} T_{w_{[k+2,n-1];[k+2,2n-1]}}\\
&=T_{w_\circ^{[1,k]}}^{-1} \Cx{1}{k}\Cx 1{(k+1)}{}^{-1}T_{w_\circ^{[1,n+k]}} T_{w_{[k+2,n-1];[k+2,2n-1]}}\qquad\\
&=T_{w_\circ^{[1,k]}}^{-1} T_{w_\circ^{[1,n+k]}}\Cxr{(n+1)}{(n+k)}\Cxr n{(n+k)}{}^{-1} T_{w_{[k+2,n-1];[k+2,2n-1]}}\\
&=T_{w_{[1,k];[1,n+k]}}\Cxr{(n+1)}{(n+k)}T_n^{-1}  T_{w_\circ^{[k+2,n-1]}}^{-1}\Cxr{(n+1)}{(n+k)}{}^{-1}T_{w_\circ^{[k+2,2n-1]}}\\
&=T_{w_{[1,k];[1,n+k]}}\Cxr{(n+1)}{(n+k)}T_{w_\circ^{[k+2,n-1]}}^{-1}\Cx{(k+2)}{(n-1)}\Cx{(k+2)}n{}^{-1}\Cxr{(n+1)}{(n+k)}{}^{-1}T_{w_\circ^{[k+2,2n-1]}}\\
&=T_{w_{[1,k];[1,n+k]}}T_{w_\circ^{[k+2,n-1]}}^{-1}\Cxr{(n+1)}{(n+k)}\Cx{(k+2)}{(n-1)}\Cx{(k+2)}n{}^{-1}\Cxr{(n+1)}{(n+k)}{}^{-1}T_{w_\circ^{[k+2,2n-1]}}\\
&=X_{n,k}\Cxr{(n+k+2)}{(2n-1)}\Cx{(n+1)}{(n+k)}\Cxr{(n+k+1)}{(2n-1)}{}^{-1}\Cx{(n+1)}{(n+k)}{}^{-1}=
X_{n,k} U_{k+1,n,k},
\end{align*}
where~$U_{k+1,n,k}=\Cxr{(n+k+2)}{(2n-1)}\Cx{(n+1)}{(n+k)}\Cx{(n+1)}{(n+k+1)}{}^{-1}\Cxr{(n+k+2)}{(2n-1)}{}^{-1}$.

On the other hand,
\begin{align*}
X_{n,k}&T_{k+1}=T_{w_{[1,k];[1,n+k]}}T_{w_\circ^{[k+2,2n-1]}} T_{w_\circ^{[n+k+2,2n-1]}}^{-1} T_{k+1}
=T_{w_{[1,k];[1,n+k]}}T_{w_\circ^{[k+2,2n-1]}}T_{k+1} T_{w_\circ^{[n+k+2,2n-1]}}^{-1}\\
&=T_{w_{[1,k];[1,n+k]}}\Cxr{(k+1)}{(2n-1)}\Cxr{(k+2)}{(2n-1)}{}^{-1} T_{w_{[k+2,n-1];[k+2,2n-1]}} \\
&=T_{w_\circ^{[1,k]}}^{-1} T_{w_\circ^{[1,n+k]}} \Cxr{(n+k+1)}{(2n-1)}\Cxr{(k+1)}{(n+k)}\Cxr{(k+2)}{(2n-1)}{}^{-1}  T_{w_{[k+2,n-1];[k+2,2n-1]}}\\
&=\Cxr{(n+k+2)}{(2n-1)} T_{w_\circ^{[1,k]}}^{-1} T_{w_\circ^{[1,n+k]}}T_{n+k+1} \Cxr{(k+1)}{(n+k)}\Cxr{(k+2)}{(2n-1)}{}^{-1}  T_{w_{[k+2,n-1];[k+2,2n-1]}}\\
&=\Cxr{(n+k+2)}{(2n-1)} T_{w_\circ^{[1,k]}}^{-1} \Cx1{(n+k+1)}\Cx{1}{(n+k)}{}^{-1} T_{w_\circ^{[1,n+k]}} \\
&\qquad\qquad\qquad \Cxr{(k+1)}{(n+k)}\Cxr{(k+2)}{(2n-1)}{}^{-1}  T_{w_{[k+2,n-1];[k+2,2n-1]}}\\
&=\Cxr{(n+k+2)}{(2n-1)} T_{w_\circ^{[1,k]}}^{-1} \Cx1{(n+k+1)}\Cx{(n+1)}{(n+k)}{}^{-1} T_{w_\circ^{[1,n+k]}}\Cxr{(k+2)}{(2n-1)}{}^{-1}  T_{w_{[k+2,n-1];[k+2,2n-1]}},
\end{align*}
where we used~\eqref{eq:move across [i+1 j] i} and~\eqref{eq:move across [i j-1] j}. 
We have by~\eqref{eq:move across [i j-1] -j}
and the inverse of~\eqref{eq:move across -j [i j-1]}
\begin{align*}
T_{w_\circ^{[1,k]}}^{-1} &\Cx1{(n+k+1)}\Cx{(n+1)}{(n+k)}{}^{-1}=\Cxr1k T_{w_\circ^{[1,k]}}^{-1} T_{k+1}\Cx{(k+2)}{(n+k+1)}\Cx{(n+1)}{(n+k)}{}^{-1}\\
&=\Cxr1k \Cxr{1}{k}{}^{-1}\Cxr 1{(k+1)} \Cx{(k+2)}{(n+k+1)} T_{w_\circ^{[1,k]}}^{-1}\Cx{(n+1)}{(n+k)}{}^{-1}\\
&=\Cxr 1{(k+1)} \Cx{(k+2)}{(n+k+1)} \Cx{(n+1)}{(n+k)}{}^{-1} T_{w_\circ^{[1,k]}}^{-1}\\
&=\Cx{(k+1)}{(n+k+1)} \Cx{(n+1)}{(n+k)}{}^{-1} \Cxr1k T_{w_\circ^{[1,k]}}^{-1},
\\
T_{w_\circ^{[1,n+k]}} &\Cxr{(k+2)}{(2n-1)}{}^{-1}=\Cx{1}{(n-1)}{}^{-1} T_{w_\circ^{[1,n+k]}}T_{n+k+1}^{-1} \Cxr{(n+k+2)}{(2n-1)}{}^{-1}\\
&=\Cx{1}{(n-1)}{}^{-1} \Cx{1}{(n+k)}\Cx1{(n+k+1)}{}^{-1}\Cxr{(n+k+2)}{(2n-1)}{}^{-1} T_{w_\circ^{[1,n+k]}}\\
&=\Cx{n}{(n+k)}\Cx1{(n+k+1)}{}^{-1}\Cxr{(n+k+2)}{(2n-1)}{}^{-1} T_{w_\circ^{[1,n+k]}},
\end{align*}
whence
\begin{align*}
&T_{w_\circ^{[1,k]}}^{-1} \Cx1{(n+k+1)}\Cx{(n+1)}{(n+k)}{}^{-1}T_{w_\circ^{[1,n+k]}}\Cxr{(k+2)}{(2n-1)}{}^{-1}\\
&=\Cx{(k+1)}{(n+k+1)} \Cx{(n+1)}{(n+k)}{}^{-1}\Cx n{(n+k)} \Cxr1k T_{w_\circ^{[1,k]}}^{-1}\Cx1{(n+k+1)}{}^{-1}\Cxr{(n+k+2)}{(2n-1)}{}^{-1} T_{w_\circ^{[1,n+k]}}.
\end{align*}
Next, using the inverse~\eqref{eq:move across j [i j-1]}
\begin{align*}
\Cxr1k &T_{w_\circ^{[1,k]}}^{-1}\Cx1{(n+k+1)}{}^{-1}=\Cxr1k\Cx{(k+2)}{(n+k+1)}{}^{-1} T_{w_\circ^{[1,k]}}^{-1}T_{k+1}^{-1}\Cx1k{}^{-1}\\
&=\Cxr1k \Cx{(k+2)}{(n+k+1)}{}^{-1}\Cxr1{(k+1)}{}^{-1}\Cxr{1}{k}\Cxr1k{}^{-1} T_{w_\circ^{[1,k]}}^{-1}\\
&=\Cx{(k+2)}{(n+k+1)}{}^{-1}\Cxr1k\Cxr1{(k+1)}{}^{-1} T_{w_\circ^{[1,k]}}^{-1}=\Cx{(k+1)}{(n+k+1)}{}^{-1} T_{w_\circ^{[1,k]}}^{-1}.
\end{align*}
Putting all the above together we conclude that $$
X_{n,k}T_{k+1}=\Cxr{(n+k+2)}{(2n-1)}\tilde U_{n,k}\Cxr{(n+k+2)}{(2n-1)}{}^{-1} X_{n,k}$$ 
where
$\tilde U_{n,k}=\Cx{(k+1)}{(n+k+1)} \Cx{(n+1)}{(n+k)}{}^{-1}\Cx n{(n+k)}\Cx{(k+1)}{(n+k+1)}{}^{-1}$. 
We have, as in the proof of Lemma~\ref{lem:move across}
$$
\Cx{(n+1)}{(n+k)}{}^{-1} \Cx{n}{(n+k)}=\Cx n{(n+k)}\Cx{n}{(n+k-1)}{}^{-1},
$$
and then by Lemma~\ref{lem:comm cox}
\begin{align*}
\tilde U_{n,k}&=\Cx{(k+1)}{(n+k+1)} \Cx n{(n+k)}\Cx{n}{(n+k-1)}{}^{-1}\Cx{(k+1)}{(n+k+1)}{}^{-1}\\
&=\Cx{(n+1)}{(n+k+1)}\Cx{(n+1)}{(n+k)}{}^{-1}.
\end{align*}
Therefore, $\Cxr{(n+k+2)}{(2n-1)}\tilde U_{n,k}\Cxr{(n+k+2)}{(2n-1)}{}^{-1}=
U_{k+1,n,k}^{-1}$.

Next, for~$i=n$ we obtain, using~\eqref{eq:move across -i [i+1 j]} and the inverse of~\eqref{eq:move across [i+1 j] i}
\begin{align*}
T_i^{-1} &X_{n,k}=T_{w_{[1,k];[1,n+k]}} T_{k+1}^{-1} T_{w_{[k+2,n-1];[k+2,2n-1]}}\\
&=T_{w_{[1,k];[1,n+k]}} T_{w_\circ^{[k+2,n-1]}}^{-1}\Cxr{(k+2)}{(n-1)}\Cxr{(k+1)}{(n-1)}{}^{-1}T_{w_\circ^{[k+2,2n-1]}}\quad \\
&=T_{w_{[1,k];[1,n+k]}} T_{w_\circ^{[k+2,n-1]}}^{-1}\Cxr{(k+2)}{(n-1)}T_{k+1}^{-1}T_{w_\circ^{[k+2,2n-1]}} \Cx{(n+k+2)}{(2n-1)}{}^{-1}\\
&=T_{w_{[1,k];[1,n+k]}} T_{w_\circ^{[k+2,n-1]}}^{-1}\Cxr{(k+2)}{(n-1)}T_{w_\circ^{[k+2,2n-1]}}\\
&\qquad\qquad\qquad\Cx{(k+1)}{(2n-1)}{}^{-1} \Cx{(k+2)}{(2n-1)}\Cx{(n+k+2)}{(2n-1)}{}^{-1}=X_{n,k}U_{n,n,k},
\end{align*}
where~$U_{n,n,k}=\Cx{(k+1)}{(n+k+1)}{}^{-1} \Cx{(k+2)}{(n+k+1)}$. On the other hand,
using~\eqref{eq:move across [i j-1] j}
and the inverse of~\eqref{eq:move across -j [i j-1]}, we obtain
\begin{align*}
X_{n,k}&T_n=T_{w_{[1,k];[1,n+k]}}T_{w_\circ^{[k+2,2n-1]}}T_{w_\circ^{[k+2+n,2n-1]}}^{-1} T_n =
T_{w_{[1,k];[1,n+k]}}T_{n+k+1} T_{w_{[k+2,n-1];[k+2,2n-1]}}\\
&=T_{w_\circ^{[1,k]}}^{-1}\Cx 1{(n+k+1)}\Cx1{(n+k)}{}^{-1}T_{w_\circ^{[1,n+k]}}T_{w_{[k+2,n-1];[k+2,2n-1]}} \\
&=\Cxr1k T_{w_\circ^{[1,k]}}^{-1}\Cx{(k+1)}{(n+k+1)}\Cx1{(n+k)}{}^{-1}T_{w_\circ^{[1,n+k]}}T_{w_{[k+2,n-1];[k+2,2n-1]}}\\
&=\Cxr 1{(k+1)}\Cx{(k+2)}{(n+k+1)} T_{w_\circ^{[1,k]}}^{-1}\Cx1{(n+k)}{}^{-1}T_{w_\circ^{[1,n+k]}}T_{w_{[k+2,n-1];[k+2,2n-1]}}\\
&=\Cxr 1{(k+1)}\Cx{(k+2)}{(n+k+1)}\Cx{(k+2)}{(n+k)}{}^{-1} T_{w_\circ^{[1,k]}}^{-1}\Cx1{(k+1)}{}^{-1}T_{w_\circ^{[1,n+k]}}T_{w_{[k+2,n-1];[k+2,2n-1]}}\\
&=\Cxr 1{(k+1)}\Cx{(k+2)}{(n+k+1)}\Cx{(k+2)}{(n+k)}{}^{-1}\Cxr1{(k+1)}{}^{-1}X_{n,k}\\
&=\Cx{(k+1)}{(n+k+1)}\Cx{(k+1)}{(n+k)}{}^{-1}X_{n,k}\\
&
=\Cx{(k+2)}{(n+k+1)}{}^{-1}\Cx{(k+1)}{(n+k+1)}X_{n,k}=U_{n,n,k}^{-1} X_{n,k}.
\end{align*}
Finally, for~$i=n+k+1$ we have, by~\eqref{eq:move across -j [i j-1]},
\begin{align*}
T_i^{-1}&X_{n,k}=T_{w_\circ^{[1,k]}}^{-1} T_{n+k+1}^{-1} T_{w_\circ^{[1,n+k]}} T_{w_{[k+2,n-1];[k+2,2n-1]}}\\
&=T_{w_{[1,k];[1,n+k]}}\Cxr1{(n+k+1)}{}^{-1} \Cxr{1}{(n+k)}T_{w_{[k+2,n-1];[k+2,2n-1]}}.
\end{align*}
Furthermore, since by~\eqref{eq:move across i [i+1 j]} and the inverse of~\eqref{eq:move across [i+1 j] -i},
\begin{align*}
\Cxr{1}{(n+k)}&T_{w_{[k+2,n-1];[k+2,2n-1]}}=\Cxr{(k+1)}{(n+k)} T_{w_\circ^{[k+2,2n-1]}} T_{w_\circ^{[n+k+2,2n-1]}}^{-1} \Cxr1k\\
&=T_{w_\circ^{[k+2,2n-1]}}\Cx{(n+1)}{(2n-1)}\Cx{(k+2)}{(2n-1)}{}^{-1}\Cx{(k+1)}{(2n-1)} T_{w_\circ^{[n+k+2,2n-1]}}^{-1} \Cxr1k\quad \\
&=T_{w_\circ^{[k+2,2n-1]}}\Cx{(k+2)}{n}{}^{-1}\Cx{(k+1)}{(n+k+1)} T_{w_\circ^{[n+k+2,2n-1]}}^{-1}\Cxr{(n+k+2)}{(2n-1)} \Cxr1k\\
&=T_{w_\circ^{[k+2,2n-1]}}\Cx{(k+2)}{n}{}^{-1}\Cx{(k+1)}{(n+k)}T_{w_\circ^{[n+k+2,2n-1]}}^{-1}\Cxr{(n+k+1)}{(2n-1)} \Cxr1k\\
&=T_{w_{[k+2,n-1];[k+2,2n-1]}}\Cx{(k+2)}{n}{}^{-1}\Cx{(k+1)}{(n+k)}\Cxr{(n+k+1)}{(2n-1)} \Cxr1k
\end{align*}
while~\eqref{eq:move across -i [i+1 j]}
and the inverse of~\eqref{eq:move across [i+1 j] i} yield
\begin{align*}
&\Cxr1{(n+k+1)}{}^{-1}T_{w_{[k+2,n-1];[k+2,2n-1]}}=\Cxr1{(n+k+1)}{}^{-1}T_{w_\circ^{[k+2,2n-1]}}T_{w_\circ^{[n+k+2,2n-1]}}^{-1}
\\
&=\Cxr1{(k+1)}{}^{-1}T_{w_\circ^{[k+2,2n-1]}}\Cx n{(2n-1)}{}^{-1}T_{w_\circ^{[n+k+2,2n-1]}}^{-1}\\
&=T_{w_\circ^{[k+2,2n-1]}}\Cxr1{k}{}^{-1}\Cx{(k+1)}{(2n-1)}{}^{-1} \Cx{(k+2)}{(n-1)}T_{w_\circ^{[n+k+2,2n-1]}}^{-1}\\
&=T_{w_\circ^{[k+2,2n-1]}}\Cxr1{k}{}^{-1}\Cx{(n+k+2)}{(2n-1)}{}^{-1}T_{n+k+1}^{-1}T_{w_\circ^{[n+k+2,2n-1]}}^{-1}\Cx{(k+1)}{(n+k)}{}^{-1}\Cx{(k+2)}{(n-1)}\\
&=T_{w_\circ^{[k+2,2n-1]}}\Cxr1{k}{}^{-1}T_{w_\circ^{[n+k+2,2n-1]}}^{-1}\Cxr{(n+k+1)}{(2n-1)}{}^{-1}\Cx{(k+1)}{(n+k)}{}^{-1}\Cx{(k+2)}{(n-1)}\\
&=T_{w_{[k+2,n-1];[k+2,2n-1]}}\Cxr1{k}{}^{-1}\Cxr{(n+k+1)}{(2n-1)}{}^{-1}\Cx{(k+1)}{(n+k)}{}^{-1}\Cx{(k+2)}{(n-1)}.
\end{align*}
It follows that $T_{n+k+1}^{-1}X_{n,k}=X_{n,k}U_{n+k+1,n,k}$
where
\begin{align*}
U_{n+k+1,n,k}
&=\Cxr1{k}{}^{-1}\Cxr{(n+k+1)}{(2n-1)}{}^{-1}\Cx{(k+1)}{(n+k)}{}^{-1}\Cx{(k+2)}{(n-1)}\times\\
&\qquad\qquad \Cx{(k+2)}{n}{}^{-1}\Cx{(k+1)}{(n+k)}\Cxr{(n+k+1)}{(2n-1)} \Cxr1k\\
&=\Cxr1{k}{}^{-1}\Cx{(k+1)}{(n-2)}\Cx{(k+1)}{(n-1)}{}^{-1}\Cxr1k\\
&=\Cxr1{k}{}^{-1}\Cx{(k+1)}{(n-1)}{}^{-1}\Cxr1k\Cx{(k+2)}{(n-1)}
\end{align*}
 by Lemma~\ref{lem:comm cox}. On the other hand, by the inverse of~\eqref{eq:move across -j [i j-1]}
\begin{align*}
X_{n,k}&T_{n+k+1}=T_{w_{[1,k];[1,n+k]}}T_{w_\circ^{[k+2,n-1]}}^{-1} T_n T_{w_\circ^{[k+2,2n-1]}}\\
&=T_{w_{[1,k];[1,n+k]}}\Cxr{(k+2)}{(n-1)}{}^{-1}\Cxr{(k+2)}n T_{w_{[k+2,n-1];[k+2,2n-1]}}\\
&=\Cx{(k+2)}{(n-1)}{}^{-1} T_{w_\circ^{[1,k]}}^{-1} \Cx{(k+1)}{(n-1)} T_{w_\circ^{[1,n+k]}} T_{w_{[k+2,n-1];[k+2,2n-1]}}\\
&=\Cx{(k+2)}{(n-1)}{}^{-1}\Cxr{1}{k}{}^{-1}\Cxr1{(k+1)}\Cx{(k+2)}{(n-1)} X_{n,k}\\
&=\Cx{(k+2)}{(n-1)}{}^{-1}\Cxr{1}{k}{}^{-1}\Cx{(k+1)}{(n-1)}\Cxr1{k} X_{n,k}=U_{n+k+1,n,k}^{-1}X_{n,k}.
\end{align*}
Thus, $T_i^{-1}X_{n,k}^2T_i=X_{n,k}^2$ for $i\in\{k+1,n,n+k+1\}$ which completes the proof of Proposition~\ref{prop:factor Tw0^2}.
\end{proof}
\begin{remark}
While~$\ell(X_{n,k})=\ell(w_\circ^{[1,2n-1]})$, the~$X_{n,k}$ are not square free and hence are not equal to~$T_{w_\circ^{[1,2n-1]}}$. For example,
$X_{2,0}=T_1T_2T_1T_2T_3T_2=T_2T_1^2T_3T_2$.
\end{remark}
By~\eqref{eq:Xnk } and Proposition~\ref{prop:factor Tw0^2},
\begin{align}
(T_{w_\circ^{[1,n+k]}}T_{w_\circ^{[k+2,2n-1]}})^2&=T_{w_\circ^{[1,k]}}T_{w_\circ^{[k+2,n-1]}}T_{w_\circ^{[1,2n-1]}}^2  T_{w_\circ^{[n+1,n+k]}}T_{w_\circ^{[n+k+2,2n-1]}}
\nonumber\\
&=T_{w_\circ^{[1,2n-1]}}^2 T_{w_\circ^{[1,2n-1]\setminus\{k+1,n,n+k+1\}}},\label{eq: str hom main}
\end{align}
which is manifestly~${}^{op}$ invariant, being the product of two commuting ${}^{op}$-invariant elements of~$\Br^+_{2n}$.
\begin{corollary}\label{cor:strange homs}
For all~$n\ge 2$, $0\le k\le n-2$ the assignments 
$\wh T_1\mapsto T_{w_{[1,k];[1,n+k]}}$, 
$\wh T_2\mapsto T_{w_{[k+2,n-1];[k+2,2n-1]}}$ (respectively, $
\wh T_1\mapsto T_{w_{[n+1,n+k];[1,n+k]}},\quad  \wh T_2\mapsto T_{w_{[n+k+2,2n-1];[k+2,2n-1]}}$)
define square free homomorphisms $\Br^+(B_2)\to\Br^+_{2n}$ 
which are neither Coxeter nor Hecke type. 
\end{corollary}
\begin{proof}
The first assignments define a homomorphism by Proposition~\ref{prop:factor Tw0^2} and Lemma~\ref{lem:cradicals}. The second is obtained from the first by applying~${}^{op}$.
\end{proof}

Now let~$J=k+1-J\subset[1,k]$, $K=3n+k+1-K\subset [n+k+2,2n-1]$. Set~$z_{1,1}^{(1)}=T_{w_\circ^K}$, 
$z_{2,2}^{(1)}=T_{w_\circ^J}$. Then, using~\eqref{eq:inv decoration}, set
\begin{alignat*}{2}
&z_{1,2}^{(2)}=T_{w_\circ^{[k+2,2n-1]}}^{-1}z_{1,2}^{(1)}T_{w_\circ^{[k+2,2n-1]}}=T_{w_\circ^{2n+k+1-K}},&\quad&z_{2,1}^{(2)}=T_{w_\circ^{[1,n+k]}}^{-1}z_{2,1}^{(1)}T_{w_\circ^{[1,n+k]}}=T_{w_\circ^{n+k+1-J}},\\
&z_{1,2}^{(3)}=T_{w_\circ^{[1,n+k]}}^{-1}z_{1,2}^{(2)}T_{w_\circ^{[1,n+k]}}=
T_{w_\circ^{-n+K}},&&
z_{2,1}^{(3)}=T_{w_\circ^{[k+2,2n-1]}}^{-1}z_{2,1}^{(2)}T_{w_\circ^{[k+2,2n-1]}}=T_{w_\circ^{n+J}},\\
&z_{1,2}^{(4)}=T_{w_\circ^{[k+2,2n-1]}}^{-1}z_{2,1}^{(3)}T_{w_\circ^{[k+2,2n-1]}}
=T_{w_\circ^{3n+k+1-K}},&&
z_{2,1}^{(4)}=T_{w_\circ^{[1,n+k]}}^{-1}z_{2,1}^{(3)}T_{w_\circ^{[1,n+k]}}
=T_{w_\circ^{k+1-J}}.
\end{alignat*}
Since~$J=k+1-J$ and~$K=3n+k+1-K$, we get
\begin{align*}
z_{1,2}^{(4)}z_{2,1}^{(3)}z_{1,2}^{(2)}z_{2,1}^{(1)}&=T_{w_\circ^K}T_{w_\circ^{n+J}}
T_{w_\circ^{2n+k+1-K}}T_{w_\circ^J}
\\
&=T_{w_\circ^K} T_{w_\circ^{-n+K}} T_{w_\circ^{n+J}} T_{w_\circ^J}
=T_{w_\circ^J}T_{w_\circ^{-n+K}}T_{w_\circ^{n+k+1-J}}T_{w_\circ^K}
=z_{2,1}^{(4)}z_{1,2}^{(3)}z_{2,1}^{(2)}z_{1,2}^{(1)}.
\end{align*}
Thus, $\mathbf z=(T_{w_\circ^K},T_{w_\circ^J})$
is a decoration of the basic 
homomorphism~$\Phi:\
\Br^+(B_2)\to \Br^+_{2n}$, $\Phi(\wh T_1)=T_{w_\circ^{[1,n+k]}}$,
$\Phi(\wh T_2)=T_{w_\circ^{[k+2,2n-1]}}$,
and $\Phi_{\mathbf z}$ is the desired homomorphism. This completes 
the proof of Theorem~\ref{thm:B2 A2n-1 spec}.
\end{proof}
\subsection{Combinatorics of standard homomorphisms}\label{subs:comb std}
It is obvious that the total number of distinct homomorphisms~$\Br^+_3\to\Br^+_{3m}$ described in Theorem~\ref{thm:Hom A2} is~$2^m$, as they are parametrized, up to the diagram automorphism of~$\Br^+_3$,
by subsets of~$[1,m-1]$. 

Enumerating homomorphisms from~$\Br^+(B_2)$ requires more effort and yields some interesting sequences.
\begin{theorem}\label{thm:combinatoric std}
\begin{enmalph}
\item \label{thm:combinatoric std.a}
The number of  homomorphisms~$\Br^+(B_2)\to\Br^+(A_{n}) $ described in Theorem~\partref{thm:Hom B2.B2An} is 
equal to
\begin{equation}\label{eq:H n}
H_n:=\tfrac12 \boldsymbol w_{\lfloor \frac12(n+5)\rfloor}-\overline{\lfloor \tfrac12(n+1)\rfloor}2^{\frac12\lfloor \frac12(n-1)\rfloor},\qquad n\ge 1
\end{equation}
where~$\boldsymbol w_0=\boldsymbol w_1=1$, $\boldsymbol w_{r+1}=2(\boldsymbol w_r+\boldsymbol w_{r-1})$, $r\ge 1$ and
\begin{equation}\label{eq:w_r seq}
\boldsymbol w_r=\tfrac12((1+\sqrt3)^r+(1-\sqrt3)^r)=\sum_{k\ge 0}\binom{r}{2k}3^k,\qquad r\ge 0.
\end{equation}

\item\label{thm:combinatoric std.b}
The number of homomorphisms
$\Br^+(B_2)\to \Br^+(B_n)$, $n\ge 2$
described in Theorem \partref{thm:Hom B2.B2} is
$3\cdot 2^{n}-2^{\lceil \frac n2\rceil}$.

\item\label{thm:combinatoric std.c}
The number of homomorphisms
$\Br^+(B_2)\to \Br^+(D_{n+1})$, $n\ge 3$
described in Theorem~\partref{thm:Hom B2.B2Dn} is
$$
\begin{cases}
\tfrac13(34\cdot 2^{n-1}-(5-\overline{\frac12(n-1)})2^{\frac12(n+1)}),&\bar n=1\\
\vphantom{\dfrac13}\tfrac13(29\cdot 2^{n-1}-(4+\overline{\frac12 n})2^{\frac12 n}),&\bar n=0.
\end{cases}
$$

\item \label{thm:combinatoric std.e}
The number of homomorphisms~$\Br^+(B_2)\to \Br^+(A_{2n-1})$ 
described 
in Theorem~\ref{thm:B2 A2n-1 spec} is equal to
\begin{align*}
\begin{cases}
2^{\frac n2}(\frac32n-2),&\bar n=0,\\
2^{\frac12(n+1)}(n-1),&\bar n=1.
\end{cases}
\end{align*}
\end{enmalph}
In all cases, the sequence grows exponentially as a function of the rang of the codomain.
\end{theorem}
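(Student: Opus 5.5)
The approach is direct counting of the parameter sets in Theorems~\ref{thm:Hom B2} and~\ref{thm:B2 A2n-1 spec}. Distinct parameters give distinct homomorphisms, since a standard homomorphism is determined by~$[\Phi]$ (Lemma~\partref{lem:sqf Hecke hom.a}). We only have to watch for coincidences: a given pair $([\Phi](1),[\Phi](2))$ may arise from different $(m,J,K)$. Such coincidences can be ruled out or controlled by reading $m$ off the image. For instance, in~\ref{thm:Hom B2.B2} the set $[\Phi](1)\cap[1,2m]$ is the interval $[1,2m-1]$, and $[\Phi](2)$ contains the interval $[m+1,n]$; so $m$ is determined except in degenerate cases ($m=0$, or $K$ adjacent to $2m-1$). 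These degenerate cases must be merged carefully, and I expect this to produce the correction terms $2^{\lceil n/2\rceil}$ and similar.

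For part~\ref{thm:combinatoric std.b}, fix $m$ with $2m\le n$. Then $J$ ranges over $2^{\max(m-1,0)}$ subsets and $K$ over $2^{n-2m}$ subsets, giving $\sum_{m\ge0}2^{m-1+n-2m}$ up to the $m=0$ term. This is geometric and yields roughly $3\cdot 2^{n}$. Overlaps come from $K\ni 2m$, which merges $[1,2m-1]\cup K$ with a larger interval and hence with the case $m'>m$; subtracting these should give the $-2^{\lceil n/2\rceil}$.

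Part~\ref{thm:combinatoric std.c} is the same count with $K\subset[2m+1,n+1]$. When $n-m$ is even we add the weak orthogonality constraint between $K$ and $\tau(K)$. This means: either both or neither of $n,n+1$ lie in $K$, or $n-1\notin K$. Counting gives a linear combination of $2^{n-2m}$ and $2^{n-2m-1}$ depending on the parity of $m$. Summing over $m$ in two parity classes produces the factors $\tfrac13$ and the residues mod~$4$ in the stated formula.

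For part~\ref{thm:combinatoric std.a}, $K\subset[2m+1,n-2m]$ must be weakly orthogonal to its mirror $n+1-K$. Such $K$ are symmetric-compatible subsets of an interval of length $L=n-4m$. Encode $K$ by its intersection with the left half and transfer across the middle: the constraint only involves the middle one or two positions. The count $f(L)$ then satisfies a linear recurrence whose generating function, combined with $2^{m-1}$ choices of $J$ and summed over $m$, should reproduce $\boldsymbol w_r$, with characteristic roots $1\pm\sqrt3$. The recurrence $\boldsymbol w_{r+1}=2(\boldsymbol w_r+\boldsymbol w_{r-1})$ suggests an induction on $n$ in steps of $2$: passing from $n$ to $n+2$ either adds a free pair of mirror positions (factor~$2$) or increments $m$.

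I would prove \eqref{eq:H n} by verifying this recurrence for $H_n$ plus small initial values, and separately verifying the closed form \eqref{eq:w_r seq} by the standard binomial expansion. The term $\overline{\lfloor (n+1)/2\rfloor}2^{\frac12\lfloor (n-1)/2\rfloor}$ should account for the self-symmetric coincidences, where $\Phi$ and its composition with the diagram automorphism of $\Br^+(B_2)$ or $\Br^+(A_n)$ agree. This parity bookkeeping is the main obstacle, and I would settle it first by brute-force tabulation for $n\le 9$.

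Part~\ref{thm:combinatoric std.e} is easiest. For fixed $k$, $J$ is a $(k+1-\cdot)$-symmetric subset of $[1,k]$, giving $2^{\lceil k/2\rceil}$ choices, and $K$ is a symmetric subset of an interval of length $n-k-2$, giving $2^{\lceil (n-k-2)/2\rceil}$ choices. The product is essentially $2^{n/2}$ up to a parity factor. Summing over $0\le k\le n-2$, splitting by the parity of $k$, and removing duplicates arising from $k\leftrightarrow$ the second family under~${}^{op}$ gives the linear-times-$2^{n/2}$ formulas. Exponential growth in each case is then read off the closed forms.
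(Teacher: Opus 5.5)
Your outline matches the paper's: sum over the parameter tuples $(m,J,K)$, resp.\ $(k,J,K)$. But the bookkeeping you propose does not produce the stated numbers, and the key lemma for~(a) is missing.

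\textbf{The factor $2$ and the correction terms.} Each stated formula is \emph{twice} the number of parameter tuples. The count also includes the precomposition with the diagram automorphism $\wh T_1\leftrightarrow\wh T_2$ of $\Br^+(B_2)$. The lower-order terms are tails of the finite sums over~$m$, not overlap corrections. In~(b),
\[
2\Big(2^n+\sum_{1\le m\le\lfloor n/2\rfloor}2^{m-1}\,2^{n-2m}\Big)=2\big(2^n+2^{n-1}-2^{\lceil n/2\rceil-1}\big)=3\cdot 2^n-2^{\lceil n/2\rceil},
\]
whereas your own sum is $\tfrac32\cdot 2^n-2^{\lceil n/2\rceil-1}$, not ``roughly $3\cdot 2^n$''.

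The overlaps you want to subtract do not exist. $K\subset[2m+1,n]$ never contains $2m$, and $m$ is read off from the connected component $[m+1,n]$ of $[\Phi](2)$. Similarly, $[m+1,n-m]$, $[m+1,n+1]$ and $[1,n+k]$ determine the parameter in (a), (c) and (e). Even allowing the swap, the only coincidence in (a)--(c) is the single homomorphism sending both generators to $T_{w_\circ^{I}}$ (from $m=0$, $K=I$). The formulas count it twice: for $n=1$, $H_1=4$, but the family in~(a) together with its swaps has three members. So merging degenerate cases would move you away from the statement, not towards it.

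The same factor~$2$ is needed in~(c). In~(e) the count is $2\sum_{0\le k\le n-2}2^{\lceil k/2\rceil+\lceil (n-k-2)/2\rceil}$. There is no second family under~${}^{op}$ to remove, since all images are ${}^{op}$-invariant.

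\textbf{The missing lemma in (a).} The condition $K\perp_w(L+1-K)$ is not concentrated at the middle. For $L\ge 5$, having $1,L-1\in K$ forces $L\in K$: otherwise $1\in K\setminus(L+1-K)$ is adjacent to $2\in L+1-K$. If only the middle positions were constrained, the count would be of order $2^L$, which is incompatible with growth rate $1\pm\sqrt3$. What is needed is the following.
\begin{itemize}
\item If $K$ avoids the middle, folding it there turns the condition into weak orthogonality of a pair of subsets of the left half.
\item If $K$ meets the middle, its central component is forced to be a symmetric interval $[a,L+1-a]$. The rest of $K$ is then a weakly orthogonal pair on $[1,a-2]$.
\item The number $N_r$ of weakly orthogonal pairs of subsets of $[1,r-1]$ satisfies $N_{r+1}=N_r+3\sum_{a<r}N_a$, so $N_r=\boldsymbol w_r$. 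Equivalently, use a $4$-state transfer matrix with eigenvalues $1\pm\sqrt3,1,1$.
\item Hence the number of admissible $K$ is $\boldsymbol u_{\lfloor (L+3)/2\rfloor}$, where $\boldsymbol u_r=\sum_{a<r}\boldsymbol w_a$.
\end{itemize}
Then $H_n=h_{\lfloor (n+3)/2\rfloor}$ with $h_k=2\boldsymbol u_k+\sum_{m\ge1}2^m\boldsymbol u_{k-2m}$, which satisfies $h_{k+2}=2h_k+\boldsymbol w_{k+2}$. The particular solution $\frac12\boldsymbol w_{k+1}$ gives the main term. The term $\overline{\lfloor\frac12(n+1)\rfloor}\,2^{\frac12\lfloor\frac12(n-1)\rfloor}$ is the $\pm\sqrt2$ homogeneous part, not a count of self-symmetric homomorphisms. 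Tabulating $n\le 9$ can confirm this, but it cannot replace these two lemmas.
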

\begin{proof}
Given~$M\in\Cox I$ and~$J,K\subset I$, write~$J\perp^M K$ (respectively, $J\perp^M_w K$) if~$J$ and~$K$ are orthogonal (respectively, weakly orthogonal).
For~$M=A_n$ and~$J,K\subset [1,n]$, $J\perp_M K$ if and only 
if~$|j-k|>1$ for all~$j\in J$ and~$k\in K$.
We abbreviate~$\perp$ and~$\perp_w$ in that case.
We need the following
\begin{lemma}\label{lem:weakly orthogonal comb}
Define
$$
\mathcal W_r=\{ (J,K)\in\mathscr P([1,r-1])\times \mathscr P([1,r-1])\,:\,J\perp_w K\},\quad r\in\ZZ_{\ge 0}.
$$
Then~$|\mathcal W_r|=\boldsymbol w_r$ for all~$r\in\ZZ_{\ge 0}$ with~$\boldsymbol{w}_r$
defined as in Theorem~\partref{thm:combinatoric std.a}.
\end{lemma}
\begin{proof}
Note that~$\mathcal W_{r+1}$, $r\ge 1$
is the disjoint union of the following sets:
\begin{alignat*}{2}
&\mathcal W_{r+1}^{(0)}=\{ (J,K)\in\mathcal W_{r+1}\,:\, J,K\subset [1,r-1]\},&\quad &
\mathcal W_{r+1}^{(1)}=\{ (J,K)\in\mathcal W_{r+1}\,:\, r\in J,\,K\subset [1,r-1]\},\\
&\mathcal W_{r+1}^{(2)}=\{ (J,K)\in\mathcal W_{r+1}\,:\, (K,J)\in\mathcal W_{r+1}^{(1)}\},&&
\mathcal W_{r+1}^{(3)}=\{ (J,K)\in\mathcal W_{r+1}\,:\,
r\in J\cap K\}.
\end{alignat*}
Clearly, $\mathcal W^{(0)}_{r+1}=\mathcal W_r$ and~$
|\mathcal W^{(2)}_{r+1}|=|\mathcal W^{(1)}_{r+1}|$.
We claim that 
\begin{align}\label{eq: W r+1 1}
\mathcal W_{r+1}^{(1)}&=
\bigsqcup_{1\le a\le r} \{ (J\cup [a,r],K)\,:\, (J,K)\in \mathcal W_{a-1}\},\\
\mathcal W_{r+1}^{(3)}&=\bigsqcup_{1\le a\le r} \{ (J\cup [a,r],K\cup [a,r])\,:\, (J,K)\in \mathcal W_{a-1}\},
\label{eq: W r+1 3}
\end{align}
whence $|\mathcal W_{r+1}^{(i)}|=\sum_{0\le a\le r-1} |\mathcal W_a|$, $1\le i\le 3$ and so
\begin{equation}\label{eq:w r long recursion}
|\mathcal W_{r+1}|=|\mathcal W_r|+3\sum_{0\le a\le r-1} |\mathcal W_a|.
\end{equation}
To prove the claim, note that the right hand side of~\eqref{eq: W r+1 1} (respectively, \eqref{eq: W r+1 3}) is obviously contained in~$\mathcal W^{(1)}_{r+1}$
(respectively, in~$\mathcal W^{(3)}_{r+1}$). To prove
the opposite inclusion in~\eqref{eq: W r+1 1},
let~$(J,K)\in\mathcal W_{r+1}^{(1)}$. Then
$J=J'\cup [a,r]$ for some~$a\in[1,r]$
and~$J'\subset [1,a-2]$. If~$a-1\in K$
then $a-1\in K\setminus J$ which is a contradiction as~$(K\setminus J)\perp J$
and~$a\in J$. Thus, $K=K'\cup K''$ with
$K'\subset [1,a-2]$ and~$K''\subset [a,r-1]$
as~$r\notin K$. Suppose that~$K''\not=\emptyset$ and let~$k=\max K''$.
Then~$k+1\le r$ and so~$k+1\in J\setminus K$
which is a contradiction as~$(J\setminus K)\perp K$ and~$k\in K$. Thus, $K=K'\subset [1,a-2]$ and so $(J',K)\in\mathcal W_{a-1}$.

To prove the opposite inclusion in~\eqref{eq: W r+1 3},
let~$(K_1,K_2)\in\mathcal W_{r+1}^{(3)}$ and write 
$K_i=K'_i\cup [a_i,r]$ where~$a_i\in[1,r]$ and $K'_i\subset [1,a_i-2]$, $i\in\{1,2\}$. If say~$a_1<a_2$ then $a_2-1\in K_1\setminus K_2$ which is a contradiction as~$(K_1\setminus K_2)\perp K_2$ and~$a_2\in K_2$. Thus~$a_1=a_2$.

We now prove that~$|\mathcal W_r|=\boldsymbol w_r$ for all~$r\ge0$. 
Since $\mathcal W_0=\mathcal W_1=\{(\emptyset,\emptyset)\}$, $|\mathcal W_r|=\boldsymbol{w}_r$, $r\in\{0,1\}$, while
for~$r\ge 1$ we have by~\eqref{eq:w r long recursion}
\begin{align*}
|\mathcal W_{r+1}|-|\mathcal W_r|=
|\mathcal W_r|+3 \sum_{0\le a\le r-1}|\mathcal W_a|-
|\mathcal W_{r-1}|-3\sum_{0\le a\le r-2}
|\mathcal W_a|
=|\mathcal W_r|+2|\mathcal W_{r-1}|,
\end{align*}
and so the~$|\mathcal W_r|$
and the~$\boldsymbol w_r$, $r\ge 0$ satisfy the same recursion.
The first equality in~\eqref{eq:w_r seq} 
is obtained by elementary linear algebra while the second is immediate from the first.
\end{proof}
Note the following elementary fact.
\begin{lemma}\label{lem:2 step rec}
Suppose that~$x_{r+1}=a x_r+b x_{r-1}$, $r\ge 1$. Then the sequence~$y_r:=\sum_{0\le i\le r-1} x_i$ satisfies the recursion $y_{r+1}=ay_r+by_{r-1}+c$, $r\ge 1$ with $y_0=0$, $y_1=x_0$ and
$c=x_1+(1-a)x_0$.
\end{lemma}
\begin{lemma}\label{lem:K sigma K w.o.}
Let~$\mathcal U_n=\{ K\subset [1,n]\,:\,
K\perp_w (n+1-K)\}$. Then~$|\mathcal U_n|=\boldsymbol u_{\lfloor\frac12(n+3)\rfloor}$
where
\begin{equation}\label{eq:w_r->u_r}
\boldsymbol u_r=\sum_{0\le a\le r-1} \boldsymbol w_a,\quad r\ge 0.
\end{equation}
In particular, $\boldsymbol u_0=0$, $\boldsymbol u_1=1$, $\boldsymbol u_{r+1}=2(\boldsymbol u_r+\boldsymbol u_{r-1})$, $r\ge 1$ whence
\begin{equation}\label{eq:u_r sequence}
\boldsymbol u_r=
\frac{(1+\sqrt 3)^r-
(1-\sqrt 3)^r}{2\sqrt 3}
=\sum_{k\ge 0}\binom{r}{2k+1}3^k,\qquad r\ge 0.
\end{equation}
\end{lemma}
\begin{proof}
Let~$k=\lfloor \frac12(n+1)\rfloor$, $\mathcal U_n'=
\{ K\in\mathcal U_n\,:\, k\in K\}$ and $\mathcal U_n''
=\mathcal U_n\setminus \mathcal U'_n$. 
We claim that~$\mathcal U'_n=\{ K\in\mathcal U_n\,:\, \{k,n+1-k\}\subset K\}$ and~$\mathcal U''_n=\{K\in\mathcal U_n\,:\,k,n+1-k\notin K\}$. Indeed, for~$n$
odd there is nothing to prove as~$k=n+1-k$.
Suppose that~$n$ is even and so~$n+1-k=k+1$. If $K\in\mathcal U_n''$ and~$k+1\in K$ then~$k\in n+1-K$ and~$k+1\notin n+1-K$, whence~$k\in (n+1-K)\setminus K$ which
is a contradiction as~$((n+1-K)\setminus K)\perp K$ and~$k+1\in K$.
Likewise, if~$K\in\mathcal U'_n$ and~$k+1\notin K$ then
$k+1\in (n+1-K)\setminus K$ which is again
a contradiction as~$((n+1-K)\setminus K)\perp K$ and~$k\in K$.

Thus, if~$K\in \mathcal U''_n$ then~$K=K'\cup (n+1-K'')$ where~$K',K''\subset [1,k-1]$. Then~$K\perp_w (n+1-K)$ if and only if
$(K',K'')\in\mathcal W_{k-1}$ and so
$|\mathcal U''_n|=\boldsymbol w_k$.

Suppose now that~$K\in \mathcal U'_n$ and let~$[a,b]$
be the connected component of~$K$ containing~$k$. Then $a\le k\le n+1-k\le b$ and $K=K_1\cup [a,b]\cup K_2$ with~$K_1\subset [1,a-2]$ and~$K_2\subset [b+2,n]$. We claim that~$b=n+1-a$. Indeed,
if $n+1-b<a$ then~$n+1-b\in n+1-K$. Since~$a\le k$
we have~$a-1\in [n+1-b,n+1-a]\subset (n+1-K)$.  
Then~$a-1\in (n+1-K)\setminus K$ which is a contradiction
as~$((n+1-K)\setminus K)\perp K$ and~$a\in K$. Similarly, if~$n+1-b>a$
then~$b+1\in[n+1-b,n+1-a]$ and so~$b+1\in (n+1-K)\setminus K$ which is also a contradiction. 

Thus, $K=K_1\cup [a,n+1-a]\cup K_2$ with~$K_2\subset n+1-[1,a-2]$ and so we can write~$K_2=n+1-K'_1$ where~$K'_1\in [1,a-2]$. It is now immediate that~$K\perp_w (n+1-K)$
if and only if $K_1\perp_w K'_1$. Thus,
$$
\mathcal U'_n=\bigsqcup_{1\le a\le k}
\{ K_1\cup [a,n+1-a]\cup (n+1-K'_1)\,:\, (K_1,K'_1)\in\mathcal W_{a-1}\}
$$
in the notation of Lemma~\ref{lem:weakly orthogonal comb}.
Therefore, $|\mathcal U'_n|=\sum_{0\le a\le k-1} \boldsymbol w_a$
and so
\begin{equation*}
|\mathcal U_n|=|\mathcal U'_n|+|\mathcal U''_n|=
\sum_{0\le a\le k}\boldsymbol w_a=\sum_{0\le a\le \lfloor \frac12 (n+3)\rfloor-1} \boldsymbol w_a,    
\end{equation*}
which yields the first assertion of the Lemma.
The recursion for~$\boldsymbol u_r$, $r\ge 0$
is immediate from that for~$\boldsymbol w_r$, $r\ge 0$ and Lemma~\ref{lem:2 step rec}, and the equalities in~\eqref{eq:u_r sequence}
are routine.
\end{proof}
We now have all necessary ingredients to 
finish the proof of part~\ref{thm:combinatoric std.a}. 
By Theorem~\partref{thm:Hom B2.B2An}
$$
H_n=2\sum_{0\le m\le \lfloor\frac14(n+1)\rfloor}
|\{ (J,K)\,:\, J\subset [1,m-1],\, K\subset [2m+1,n-2m],\,
K\perp_w n+1-K\}|,
$$
where the first factor accounts for the diagram
automorphism of~$\Br^+(B_2)$. Since~$\lfloor\frac14(n+1)\rfloor=\lfloor\frac12\lfloor\frac12(n+1)\rfloor\rfloor$, it follows from Lemma~\ref{lem:K sigma K w.o.}
that~$H_n=\boldsymbol h_{\lfloor \frac12(n+1)\rfloor}$ where
\begin{equation}\label{eq:hr seq}
\boldsymbol h_r=2 \boldsymbol u_r+\sum_{1\le m\le \lfloor \frac12 r\rfloor} 2^m \boldsymbol u_{r-2m},\qquad r\ge 0.
\end{equation}
Then~\eqref{eq:H n} is equivalent to $\boldsymbol h_r=\tfrac12 \boldsymbol{w}_{r+1}-\bar r\,2^{\frac12(r-1)}$, $r\ge 0$.
By~\eqref{eq:hr seq}, $\boldsymbol h_0=2\boldsymbol u_0=2=\frac12\boldsymbol{w}_1$ and
$\boldsymbol h_1=2\boldsymbol u_1=4=\frac12\boldsymbol{w}_2-1$.
Furthermore, for~$r\ge 0$,
\begin{align*}
\boldsymbol h_{r+2}&=2 \boldsymbol u_{r+2}+\sum_{1\le m\le \lfloor\frac12 r\rfloor+1}2^m \boldsymbol u_{r+2-2m}=2 \boldsymbol u_{r+2}+2\boldsymbol u_r+2\sum_{1\le m\le \lfloor\frac 12r\rfloor}
2^m \boldsymbol u_{r-2m}\\
&=2 \boldsymbol u_{r+2}+2 \boldsymbol h_r-2\boldsymbol u_r
=2\boldsymbol{h}_r+2(\boldsymbol w_{r+1}+
\boldsymbol{w}_r)
=2\boldsymbol h_r+\boldsymbol{w}_{r+2}\\
&=\boldsymbol{w}_{r+2}+\boldsymbol{w}_{r+1}-
2\bar r\cdot 2^{\frac12(r-1)}=\tfrac{1}{2}\boldsymbol{w}_{r+3}-\overline{r+2}\cdot
2^{\frac12(r+1)},
\end{align*}
where we used~\eqref{eq:w_r->u_r}, the recursion for the $\boldsymbol{w}_k$, $k\ge 2$ and
the induction hypothesis.
Part~\ref{thm:combinatoric std.a} is proven.

To prove part~\ref{thm:combinatoric std.b}, note 
that this family of homomorphisms is 
parametrized by $(J,K)$ with $J\subset [1,m-1]$
and~$K\subset [2m+1,n]$. Thus, taking into
account the diagram automorphism of~$\Br^+(B_2)$
we conclude that the total number of such homomorphisms is
$$
2\cdot 2^{n}+\sum_{1\le m\le \lfloor\frac12n\rfloor} 2^{n-m}=3\cdot 2^n-2^{\lceil\frac12 n\rceil}.
$$

We now prove~\ref{thm:combinatoric std.c}.
We need 
the following
\begin{lemma}\label{lem:comb}
Let~$M=D_{n+1}$ and~$a\in[1,n+1]$. Let~$\mathcal T_{n,a}=
\{ K\subset [a,n+1]\,:\, K\perp^M_w \tau(K)\}$. Then
\begin{equation}\label{eq:card T}
|\mathcal T_{n,a}|=\begin{cases}
3\cdot 2^{n-a},&1\le a\le n-1,\\
4,&a=n,\\
2,&a=n+1.
\end{cases}
\end{equation}
\end{lemma}
\begin{proof}
Clearly, if~$K\subset [a,n+1]$ satisfies ~$K=\tau(K)$ then~$K\in\mathcal T_{n,a}$. Furthermore, $\tau(K)=K$
if either~$K\subset [a,n-1]$ or~$K=K'\cup\{n,n+1\}$
with~$K'\subset [a,n-1]$. Thus, every subset of~$[a,n-1]$
yields precisely two $\tau$-invariant subsets of~$[a,n+1]$
and so
$$
|\{ K\subset [a,n+1]\,:\,\tau(K)=K\}|=\begin{cases}
2^{n+1-a},&1\le a\le n,\\
1,& a=n+1.
\end{cases}
$$
Let~$\mathcal T'_{n,a}=\mathcal T_{n,a}\setminus \{ K
\subset [a,n+1]\,:\, K=\tau(K)\}$. Clearly
$\mathcal T'_{n,n+1}=\{\{n+1\}\}$ and~$\mathcal T'_{n,n}=\{\{n\},\{n+1\}\}$. Let~$1\le a\le n-1$ and let~$K\in\mathcal T'_{n,a}$. Then~$|K\cap \{n,n+1\}|=1$. If say~$n\in K$ then~$n+1\in\tau(K)\setminus K$ and, since~$(\tau(K)\setminus K)\perp^M K$, it follows that~$n-1\notin K$.
Thus, $K=K'\cup\{n\}$ with~$K'\subset [a,n-2]$ and so~$\mathcal T'_{n,a}=
\{ K'\cup\{n\},K'\cup\{n+1\}\,:\,
K'\subset [a,n-2]\}$.
Therefore, 
$$
|\mathcal T'_{n,a}|=\begin{cases}
2^{n-a},&1\le a\le n-1,\\
2,&a=n,\\
1,&a=n+1.
\end{cases}
$$
The assertion is now immediate.
\end{proof}

By Lemma~\ref{lem:comb}, the number of homomorphisms from Theorem~\partref{thm:Hom B2.B2Dn} is equal to
\begin{alignat}{2}\label{eq: Dn+1 n odd}
&\sum_{0\le k\le \lfloor
\frac14(n-3)\rfloor} 2^{2k+\delta_{k,0}} |\mathscr P([4k+1,n+1])|
+\sum_{1\le k\le \lfloor\frac12(n+3)\rfloor} 2^{2k-1} |
\mathcal T_{n,4k-1}|,&&\bar n=1,
\\
\label{eq: Dn+1 n even}
&\sum_{0\le k\le \lfloor 
\frac14 n\rfloor} 2^{2k+\delta_{k,0}} 
|\mathcal T_{n,4k+1}|
+\sum_{1\le k\le \lfloor\frac14(n+2)\rfloor} 2^{2k-1}   
|\mathscr P([4k-1,n+1])|,&\qquad&\bar n=0,
\end{alignat}
for~$n\ge 3$.
By~\eqref{eq:card T} the expression~\eqref{eq: Dn+1 n odd}
is equal to
\begin{align*}
\sum_{0\le k\le \lfloor\frac14(n-3)\rfloor}&2^{\delta_{k,0}+n+1-2k}+3\sum_{1\le k\le \lfloor \frac14 (n-1)\rfloor} 2^{n-2k}+4\cdot 2^{\frac12(n-1)}
\\&=2^{n+2}+\tfrac13 (2^{n+1}-2^{\frac12(n+1)+3-\overline{\frac12(n-1)}})+
2^{n}+2^{\frac12(n+1)}(1-\overline{\tfrac12(n-1)})\\
&=\tfrac13(34\cdot 2^{n-1}-(5-\overline{\tfrac12(n-1)})2^{\frac12(n+1)}).
\end{align*}
Similarly, \eqref{eq: Dn+1 n even} becomes
\begin{align*}
3\cdot 2^{n}+3 \sum_{1\le k\le \lfloor\frac14(n-2)\rfloor}
2^{n- 2 k - 1}+2^{\frac12n+1}+\sum_{1\le k\le \lfloor\frac14n\rfloor}
2^{n+2-2k}=\tfrac13(29\cdot 2^{n-1}-2^{\frac12n}(4+\overline{\tfrac12n})).
\end{align*}

To prove part~\ref{thm:combinatoric std.e} we need the following
\begin{lemma}\label{lem:no inv subsets}
There are~$2^{\lfloor\frac12 n\rfloor}$
subsets of~$[1,n-1]$ which are invariant 
with respect to the diagram automorphism of~$\Br^+_n$, $n\ge 2$.
\end{lemma}
\begin{proof}
For~$n=2$ the assertion is obvious.
Clearly, $J\subset [1,n-1]$ is invariant
with respect to the diagram automorphism
of~$\Br^+_n$ if and only if~$J=J'\cup J''\cup (n-J')$
where~$J'$, $n-J'$ and~$J''$ are pairwise orthogonal
and~$J''=n-J''$ is connected. If~$J''=\emptyset$ then~$J'\subset [1,\lfloor\frac12 n\rfloor-1]$ and
so there are~$2^{\lfloor \frac12 n\rfloor-1}$ of them. Otherwise, $J''=[i,n-i]$ with~$1\le i\le \lfloor\frac12n\rfloor$ and
then~$J'\subset [1,i-2]$. Therefore, the total number of invariant subsets is
\begin{equation*}
1+\sum_{2\le i\le \lfloor\frac12n\rfloor}2^{i-2}+2^{\lfloor \frac12 n\rfloor-1}=2^{\lfloor\frac12 n\rfloor}.\qedhere
\end{equation*}
\end{proof}
Since homomorphisms from Theorem~\ref{thm:B2 A2n-1 spec} are parametrized by pairs~$(J,K)$ with~$J=k+1-J\subset [1,k]$ and~$K=3n+k+1-K\subset [n+k+2,2n-1]$, $0\le k\le n-2$, by Lemma~\ref{lem:no inv subsets} their total number is
equal to
\begin{align}\label{eq:cnt B2 A2n-1}
2\sum_{0\le k\le n-2} &2^{\lfloor\frac12(k+1)\rfloor+
\lfloor\frac12(n-k-1)\rfloor}.
\end{align}
Suppose first that~$n$ is even. Then
$\lfloor\frac12(k+1)\rfloor+\lfloor\frac12(n-k-1)\rfloor
=\frac12n+\lfloor\frac12(k+1)\rfloor-\lceil\frac12(k+1)\rceil=
\frac12n-\overline{k+1}=r-1+\bar k$, whence
$$
2\sum_{0\le k\le n-2} 2^{\lfloor\frac12(k+1)\rfloor+
\lfloor\frac12(n-k-1)\rfloor}=2^{\frac n2}
\sum_{0\le k\le n-2} 2^{\overline k}
=2^{\frac n2}(\tfrac12n+2(\tfrac12 n-1))=2^{\frac n2}(\tfrac32n-2).
$$
If~$n$ is odd then 
$\lfloor\frac12(k+1)\rfloor+\lfloor\frac12(n-k-1)\rfloor
=\lfloor\frac12(k+1)\rfloor+\frac12(n-1)-\lceil\frac12 k\rceil=\frac12(n-1)$,
and so the sum in~\eqref{eq:cnt B2 A2n-1} is equal to~$2^{\frac12(n+1)}(n-1)$.
\end{proof}
\begin{remark}\label{rem:OEIS}
The sequences~$\boldsymbol w_n$ and~$\boldsymbol u_n$, $n\ge 0$ coincide  with, respectively, \OEIS{A026150} and~\OEIS{A002605}
and admit a number of combinatorial interpretations (see e.g.~\cites{GK,Leh,BoMo}).
The sequence~$\boldsymbol h_n+\bar n\, 2^{\frac12(n-1)}$, $n\ge 0$ (cf.~\eqref{eq:hr seq}) coincides with~\OEIS{A052945}, up to the first term of the latter.  It is remarkable that all these
sequences satisfy the same Fibonacci-type recursion $x_{n+1}=2(x_n+x_{n-1})$, $n\ge 1$, albeit with different initial data.
The even (respectively, odd) numbered terms in the sequence from part~\ref{thm:combinatoric std.e}
form the sequence~\OEIS{A130129}
(respectively, \OEIS{A058922}).
\end{remark}

\subsection{Sporadic standard homomorphisms}\label{subs:non-disj Hecke}
First we describe all homomorphisms from $\Br^+(A_2)$ and $\Br^+(B_2)$
to Artin monoids of finite exceptional types.
\begin{proposition}\label{prop:Hom A2|B2 EF}
Let~$m\in \{3,4\}$ and~$M\in\Cox I$ be either~$F_4$ or~$E_n$, $n\in\{6,7,8\}$. Furthermore, let~$J_2=I\setminus\{1\}$ and let~$J_1
\subset I$ be 
as in the following table
$$
\begin{array}{c|c|c}
m&M&J_1\\
\hline
3&E_6&I\setminus\{5\}\\
\hline
4&F_4&I\setminus\{4\}\\
\hline
4&E_7&I\setminus\{5,6\}\\
\hline
4&E_8&I\setminus\{7\}
\end{array}
$$
\begin{enmalph}
   \item\label{prop:Hom A2|B2 EF.a}
The assignments~$\wh T_i\mapsto T_{w_{J_1\cap J_2;J_i}}$, $i\in\{1,2\}$ define a Coxeter type homomorphism $\Phi_{m,M}:\Br^+(I_2(m))\to\Br^+(M)$;
 \item\label{prop:Hom A2|B2 EF.b}
The assignments~$\wh T_1\mapsto T_{w_\circ^{J_1\cup K}}$, $\wh T_2\mapsto T_{w_\circ^{J_2}}$,
where~$K\in\{\emptyset,\{6\}\}$ if~$m=4$ and~$M=E_7$
and~$K=\emptyset$ otherwise, define a standard
homomorphism~$\wh\Phi_{m,M,K}:\Br^+(I_2(m))\to\Br^+(M)$;
\item \label{prop:Hom A2|B2 EF.c}
The only optimal fully supported standard homomorphisms~$\Br^+(I_2(m))\to\Br^+(M)$ where
$m\in\{3,4\}$ and~$M=F_4$ or~$M=E_n$, $n\in\{6,7,8\}$ are, up to the diagram automorphism of~$I_2(m)$, the 
homomorphisms~$\wh \Phi_{m,M,K}$ listed in part~\ref{prop:Hom A2|B2 EF.b}, together with
the composition of~$\wh\Phi_{4,F_4,\emptyset}$ with
the standard
unfolding~\eqref{eq:unfold F4 E6}, and homomorphisms
$\Br^+(B_2)\to \Br^+(E_6)$ given by $\wh T_1\mapsto T_{w_\circ^{J}}$, $\wh T_2\mapsto T_{w_\circ^{[1,6]}}$
where either~$J=[1,6]$ or~$J\not=\tau(J)$ are weakly orthogonal for
$\tau=(1,5)(2,4)$.
\end{enmalph}
\end{proposition}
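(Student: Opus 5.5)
For part~\ref{prop:Hom A2|B2 EF.a} I would argue exactly as in Theorem~\partref{thm:monomial brd.a} and Corollary~\ref{cor:strange homs}: use Lemma~\ref{lem:lifting to Cox-Hecke} for the Coxeter group part and Lemma~\ref{lem:cradicals} for the Artin relation. Set $X_i=T_{w_{J_1\cap J_2;J_i}}$. These elements are ${}^{op}$-invariant, because each $w_\circ^{J_1\cap J_2}$ commutes with $w_\circ^{J_i}$. One checks this from the table: $J_1\cap J_2$ is stable under $\Sigma_{J_i}$ in every case, for instance $J_1\cap J_2=[2,4]\subset J_1=[1,4]\setminus\{5\}$ of type $D_5$ in $E_6$.

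For $m=4$, Lemma~\ref{lem:I2m iff cnd} reduces the claim to showing that $(X_1X_2)^2$ is ${}^{op}$-invariant. Following the proof of Proposition~\ref{prop:factor Tw0^2}, I would write $X_1X_2=T_{w_\circ^{J_1\cap J_2}}^{-1}T_{w_\circ^{J_1}}T_{w_\circ^{J_1\cap J_2}}^{-1}T_{w_\circ^{J_2}}$ in $\Br(M)$ and compare lengths. The aim is an identity of the form $(X_1X_2)^2=T_{w_\circ^{I}}^{2}\cdot(\text{central correction})$, or equivalently $(X_1X_2)^{2}=T_{w_\circ^I}^{2/h\cdot\ldots}$. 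It is cleanest to verify this in $W(M)$ via the reflection representation, as in Lemma~\ref{lem:B2 I22m BrBn main id}: show that $\pi_M(X_1X_2)^2$ equals $w_\circ^I$, or $-1$ times a parabolic longest element, and that the lengths add up. Then Theorem~\partref{thm:Tits.b} lifts the identity to $\Br^+(M)$, and centrality of $T_{w_\circ^I}$ (Proposition~\partref{prop:fund elts BrSa.d}) together with ${}^{op}$-invariance gives the conclusion. For $m=3$ and $M=E_6$ I would instead verify the braid relation $X_1X_2X_1=X_2X_1X_2$ directly in $W(E_6)$, check additivity of lengths, and invoke Lemma~\ref{lem:lifting to Cox-Hecke}. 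These are finite computations that could be done by machine.

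Part~\ref{prop:Hom A2|B2 EF.b} follows from part~\ref{prop:Hom A2|B2 EF.a} by decoration. I would take $z_{1,1}^{(1)}=T_{w_\circ^{(J_1\cap J_2)\cup K}}$ and $z_{2,2}^{(1)}=T_{w_\circ^{J_1\cap J_2}}$, compute the $z_{i,j}^{(k)}$ via~\eqref{eq:inv decoration} using Proposition~\partref{prop:fund elts BrSa.c}, and check condition~\ref{thm:decoration sufficient.2} of Theorem~\ref{thm:decoration sufficient} through weak orthogonality (Lemma~\ref{lem:weakly orthogonal}), exactly as in Theorems~\ref{thm:Hom A2} and~\ref{thm:B2 A2n-1 spec}. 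For $K=\{6\}$ in $E_7$, the extra factor is orthogonal to $J_1\cap J_2$ and conjugates to itself, so it behaves like the $K$ of Theorem~\partref{thm:Hom B2.B2}.

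Part~\ref{prop:Hom A2|B2 EF.c} is the main obstacle, since it is an exhaustive classification. The plan is to enumerate all pairs $(J_1',J_2')$ of finite type subsets of $I$ with $J_1'\cup J_2'=I$. For $m=3$ they must also satisfy $\ell(w_\circ^{J_1'})=\ell(w_\circ^{J_2'})$, by Lemma~\partref{lem:sqf Hecke hom.b}. Pairs failing the order test are discarded first: by Lemma~\ref{lem:can image op inv}, $\pi_M(T_{w_\circ^{J_1'}}T_{w_\circ^{J_2'}})^{m}$ (respectively the braid word) must be an involution, respectively satisfy the braid relation in $W(M)$. For the survivors I would test ${}^{op}$-invariance of $(T_{w_\circ^{J_1'}}T_{w_\circ^{J_2'}})^2$ with the normal form of Proposition~\ref{prop:Normal form}. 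Where that is impractical, I would use a Burau/Lawrence–Krammer-type representation in which ${}^{op}$ is adjointness, following Lemma~\ref{lem:orth eigenspaces}, and exhibit non-orthogonal eigenvectors. The $E_6$, $B_2$ cases with $J_2=I$ reduce to Example~\ref{ex:embedded std}. I expect the finite but large case check to be done by computer, with the disjoint cases further pruned using Theorem~\ref{thm:main thm adm} applied to type~$A$ parabolics.
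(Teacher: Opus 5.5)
Your plan is essentially the paper's. Part (a) reduces to a finite check in $W(M)$ lifted by Lemma~\ref{lem:lifting to Cox-Hecke}, part (b) is the decoration by elements $T_{w_\circ^{J_1\cap J_2}}$ (and $T_6$) that commute with the image, i.e.\ Lemma~\ref{lem:cent decor}, and part (c) is a computer search in which one surviving $E_7$ pair is eliminated in the reflection representation of the Hecke algebra.

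Minor fixes: in $E_6$ one has $J_1\cap J_2=\{2,3,4,6\}$ (your $[2,4]$ is not $\Sigma_{J_1}$-stable), and in (c) you must explicitly discard pairs with commuting images to enforce optimality, which is how the paper kills the nested case $J_2=I$ for $F_4,E_7,E_8$; for $m=3$ you must test the braid relation itself, since ${}^{op}$-invariance of $X_1X_2X_1$ is automatic.
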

\begin{proof}
To prove~\ref{prop:Hom A2|B2 EF.a}, it is easy to check, for example using our Python program for calculations in Coxeter groups and Hecke monoids, that the assignments $\wh s_i\mapsto w_{J_1\cap J_2;J_i}$
define a homomorphism $\phi=\phi_{m,M}:W(I_2(m))\to W(M)$ and
that $\ell(\phi(\wh w_\circ))=3\ell(\phi(\wh s_1))=
3\ell(\phi(\wh s_2))$
if~$m=3$ while $\ell(\phi(\wh w_0))=2(\ell(\phi(\wh s_1))+\ell(\phi(\wh s_2)))$ if~$m=4$. Then 
the assignments from part~\ref{prop:Hom A2|B2 EF.a} define a homomorphism $\Br^+(I_2(m))\to\Br^+(M)$
by Lemma~\ref{lem:lifting to Cox-Hecke}. 
We need the following
\begin{lemma}\label{lem:centrality}
For all pairs~$(m,M)$ listed in Proposition~\ref{prop:Hom A2|B2 EF}, $T_{w_\circ^{J_1\cap J_2}}$
commutes with the $T_{w_\circ^{J_i}}$, $i\in\{1,2\}$.
\end{lemma}
\begin{proof}
For~$m=3$ and~$M=E_6$ (respectively, $m=4$ and~$M=F_4$), the $\Br^+_{J_i}(M)$, $i\in\{1,2\}$ 
are of type~$D_4$ (respectively, $B_3$) and so the~$T_{w_\circ^{J_i}}$, $i\in\{1,2\}$ are central
in the corresponding parabolic submonoids by Proposition~\partref{prop:fund elts BrSa.e}. Let~$m=4$ and~$M=E_7$. Then~$\Br^+_{J_2}(M)$ is of type~$D_6$
and so~$T_{w_\circ^{J_2}}$ is central in~$\Br^+_{J_2}(M)$. On the other hand, $\Br^+_{J_1}(M)$ is of type~$D_5$ and so $xT_{w_\circ^{J_1}}=T_{w_\circ^{J_1}}\Sigma_{J_1}(x)$
by Proposition~\partref{prop:fund elts BrSa.e}. Since~$J_1\cap J_2=\{2,3,4,7\}$ is invariant with respect to the diagram automorphism of~$\Br^+_{J_1}(M)$ which corresponds to the transposition $(4,7)$, it follows that~$T_{w_\circ^{J_1}}$ commutes with~$T_{w_\circ^{J_1\cap J_2}}$. Finally, for~$m=4$ and~$M=E_8$, $\Br^+_{J_1}(M)$ is of type~$E_7$ and 
so~$T_{w_\circ^{J_1}}$ is central in~$\Br^+_{J_1}(M)$
by Proposition~\partref{prop:fund elts BrSa.e}. On the other hand, $\Br^+_{J_2}(M)$ is of type~$D_7$, the 
corresponding diagram automorphism being the transposition~$\tau=(2,8)$. Since~$J_1\cap J_2=\{2,3,4,5,8\}$ and hence is $\tau$-invariant,
it follows 
from Proposition~\partref{prop:fund elts BrSa.e} that~$T_{w_\circ^{J_1\cap J_2}}$
commutes with~$T_{w_\circ^{J_2}}$
\end{proof}
It follows from Lemma~\ref{lem:centrality} that the~$w_{J_1\cap J_2;J_i}$, $i\in\{1,2\}$ are products
of commuting involutions and so the~$\Phi_{m,M}$ are of Coxeter type by Theorem~\partref{thm:Main Thm Cox Heck.a}.

To prove part~\ref{prop:Hom A2|B2 EF.b}, assume first that~$K=\emptyset$. Since~$\wh T_{w_\circ^{J_i}}=
T_{w_\circ^{J_1\cap J_2}}T_{w_{J_1\cap J_2;J_i}}=
\Phi_{m,M}(\wh T_i)T_{w_\circ^{J_1\cap J_2}}$
by Lemma~\ref{lem:centrality}, it follows from Lemmata~\ref{lem:centrality} and~\ref{lem:cent decor} that $\mathbf z=(T_{w_\circ^{J_1\cap J_2}},T_{w_\circ^{J_1\cap J_2}})$
is a decoration of~$\Phi_{m,M}$ and~$\wh\Phi_{m,M,\emptyset}=(\Phi_{m,M})_{\mathbf z}$. Finally, if~$m=4$, $M=E_7$
and~$K=\{6\}$, note that~$T_6$ commutes with~$T_{w_\circ^{J_2}}$ which is central in~$\Br^+_{J_2}(M)\cong \Br^+(D_6)$ and 
with~$T_{w_\circ^{J_1}}$ since~$J_1$ and~$K$ are 
orthogonal. Then by Lemma~\ref{lem:cent decor},
$\wh\Phi_{4,E_7,\{6\}}$ is obtained as the decoration
of~$\wh\Phi_{4,E_7,\emptyset}$ by
to~$\mathbf z=(T_6,1)$. 

To prove part~\ref{prop:Hom A2|B2 EF.c},
recall that, by Lemma~\partref{lem:sqf Hecke hom.a}, a standard $\Phi\in\Hom_{\Art}(I_2(m),M)$
is uniquely determined by the~$J_i:=[\Phi](i)$,
$i\in\{1,2\}$
and induces homomorphisms of respective Coxeter groups and of Hecke monoids. In addition, for~$m=3$ we must have 
$\ell(w_\circ^{J_1})=\ell(w_\circ^{J_2})$ by Lemma~\partref{lem:sqf Hecke hom.b}. 

If say~$J_1\subset J_2$ then, since~$\Phi$ is fully supported, we must have~$J_2=I$.
Then, by the optimality of~$\Phi$, either~$J_1=I$, or
$J_1\subsetneq I$ and~$J_1\not=\emptyset$, whence~$m=4$, and~$T_{w_\circ^{J_1}}$ cannot commute with~$T_{w_\circ^I}$. Since~$T_{w_\circ^I}$
is central in~$\Br^+(M)$ for~$M=F_4$, $E_7$ or~$E_8$,
it follows that~$M=E_6$ and~$\tau(J_1)\not=J_1$. Then~$\Phi$
is the decoration with~$\mathbf z=(T_{w_\circ^{J_1}},1)$ of the character 
homomorphism $\Br^+(B_2)\to \Br^+(E_6)$, $\wh T_1\mapsto 1$, $\wh T_2\mapsto T_{w_\circ^{[1,6]}}$,
and so by Theorem~\ref{thm:decoration sufficient} and Corollary~\ref{cor:dec iff}
we conclude that~$T_{w_\circ^{J_1}}$ must commute 
with~$T_{w_\circ^{\tau(J_1)}}$ hence either~$J_1=[1,6]$ or~$J_1\not=\tau(J_1)$ are weakly orthogonal. 

Otherwise, if $J_1,J_2\not=I$,
the only pairs $J_1$, $J_2$, up to renumbering, satisfying all conditions discussed above are 
the ones listed in Proposition~\ref{prop:Hom A2|B2 EF} together with
$J_1=\{1, 2, 3, 4, 5, 7\}$, $J_2=[2,7]$ for~$m=4$
and~$M=E_7$. Yet one can verify, for example in the reflection representation of the Hecke algebra of~$W(E_7)$  (cf.~\cite{CIK}*{Proposition~9.8}) with say $q=17$, that~$(T_{w_\circ^{J_1}}T_{w_\circ^{J_2}})^2\not=
(T_{w_\circ^{J_2}}T_{w_\circ^{J_1}})^2$ in that case.
\end{proof}
\begin{remark}
One can easily verify that there are no fully supported optimal standard homomorphisms $\wh\Phi_{m,H_k}:\Br^+(I_2(m))\to \Br^+(H_k)$,
$m,k\in\{3,4\}$ except $\wh T_i\mapsto T_{w_\circ^I}$,
$i\in \{1,2\}$. Indeed, if say~$\wh T_1\mapsto T_{w_\circ^I}$ then for~$m=3$ Lemma~\partref{lem:elem Artin hom.c} forces the image 
of~$\wh T_2$ to be also~$T_{w_\circ^I}$, while 
for~$m=4$, since~$T_{w_\circ^I}$ is central in~$\Br^+(H_k)$, such a homomorphism will not be optimal unless~$\wh T_2$ is also mapped to~$T_{w_\circ^I}$.
So, we may assume that~$\wh T_i\mapsto T_{w_\circ^{K_i}}$, $i\in\{1,2\}$ with~$K_i\not=[1,k]$.
Let~$\Psi_3:\Br^+(H_3)\to
\Br^+(D_6)$ and~$\Psi_4:\Br^+(H_4)\to \Br^+(E_8)$
be standard unfoldings~\eqref{eq:unfold H3D6} and~\eqref{eq:unfold H4E8}, respectively and let~$J_i=[\Psi_k](K_i)$, $i\in \{1,2\}$. 
Then $\Psi_k\circ\wh\Phi_{m,H_k}$ would 
be a fully supported standard homomorphism
$\Br^+(I_2(m))\to \Br^+(D_6)$ for~$k=3$ (respectively, $\Br^+(I_2(m))\to \Br^+(E_6)$ for $k=4$) with the following
property
\begin{equation}
J_i\cap [\Psi_k](j)\not=\emptyset\implies [\Psi_k](j)\subset J_i,\qquad i\in\{1,2\},\,j\in[1,k].
\label{eq:H34 cnd}
\end{equation}
Yet the unique standard homomorphism
$\wh\Phi_{4,E_8}:\Br^+(B_2)\to \Br^+(E_8)$ described in Proposition~\ref{prop:Hom A2|B2 EF} does not satisfy~\eqref{eq:H34 cnd} since~$J_1=[1,7]\setminus\{7\}$ while $[\Psi_4](1)=\{1,7\}$. 
For~$k=3$, there are~$45$, up to renumbering, pairs  
$J_1,J_2\subsetneq [1,6]$
such that $(w_\circ^{J_1}w_\circ^{J_2})^4=1$,
$(w_\circ^{J_1}\star w_\circ^{J_2})^{\star 2}=(w_\circ^{J_2}\star w_\circ^{J_1})^{\star 2}$.
Yet none of them satisfies~\eqref{eq:H34 cnd}.
\end{remark}

\begin{remark}\label{rem:undercoration}
In our proof of Theorems~\ref{thm:adm I2m}, 
\ref{thm:monomial brd} and Proposition~\ref{prop:Hom A2|B2 EF} we used the ``undecoration'' procedure which in rank~2 boils down to the following algorithm. Given a standard homomorphism~$\Phi\in\Hom_{\Art}(I_2(m),M)$, $M\in\Cox I$
which by Lemma~\partref{lem:sqf Hecke hom.a} is uniquely determined by the~$K_i:=[\Phi](i)\subset I$, $i\in\{1,2\}$, we take~$J_i\subset K_i$ to be maximal
such that~$T_{w_\circ^{J_i}}$ commutes with~$T_{w_\circ^{K_j}}$, $\{i,j\}=\{1,2\}$. Quite surprisingly, it so happened in all aforementioned cases that $\mathbf z=(z_1,z_2)$
where~$z_i=T_{w_\circ^{J_i}}^{-1}$, $i\in\{1,2\}$
was a decoration of~$\Phi$ regarded as a homomorphism~$\Br^+(I_2(m))\to \Br(M)$. Yet
$\Phi_{\mathbf z}$ turned out to be a homomorphism
of monoids $\Br^+(I_2(m))\to\Br^+(M)$.
We expect that this picks up all ``missing'' homomorphisms.
\end{remark}

We now describe some irregular families of non-disjoint standard homomorphisms.

\begin{proposition}\label{prop:exotic homs}
Let $\mathsf M$ be a multiplicative monoid. Suppose that $t_0, t_1, \tau, S \in \mathsf M$ satisfy $t_iS=St_{1-i}$, $t_i \tau = \tau t_i$, $i\in\{0,1\}$, $(\tau S)^{m_1} = (S\tau)^{m_1}$ and $\brd{t_0t_1}{m_2}=\brd{t_1t_0}{m_2}$. Then the assignments $T_1 \mapsto t_1\tau$, $T_2 \mapsto S$ define a homomorphism $\Br^+(I_2(2\operatorname{lcm}(m_1,m_2)) \to \mathsf M$.
\end{proposition}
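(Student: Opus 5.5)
By Lemma~\ref{lem:fund hom}, it suffices to show that $N:=2\operatorname{lcm}(m_1,m_2)\in B(t_1\tau,S)$. That is, with $X=t_1\tau$ and $Y=S$, we need $(XY)^{N/2}=(YX)^{N/2}$. The plan is to compute the product of two consecutive factors explicitly and reduce everything to the two given braid-type relations.

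First I compute $(XY)^2$. Using $t_1S=St_0$ and that $\tau$ commutes with both $t_i$, we get
\[
XYXY=t_1\tau S t_1\tau S=t_1\tau S\tau\, t_1 S\cdot(\text{rearranged})\ldots
\]
It is cleaner to move all $t$'s to the left. Since $\tau$ commutes with the $t_i$ and $S$ intertwines $t_i$ with $t_{1-i}$, an induction on $k$ gives
\[
(t_1\tau S)^k=\brd{t_1t_0}{k}\,(\tau S)^k .
\]
Indeed, the inductive step is $\brd{t_1t_0}{k}(\tau S)^k t_1\tau S$. Pushing $t_1$ leftwards through $k$ copies of $\tau S$, each $S$ flips the index and each $\tau$ commutes. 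The result is $t_{\overline{k+1}}$ in the correct alternating position, since $\brd{t_1t_0}{k+1}=\brd{t_1t_0}{k}t_{\overline{k+1}}$.

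Similarly, I show
\[
(S t_1\tau)^k=(S\tau)^k\,\brd{t_0t_1}{k}' ,
\]
where the alternating word is obtained by pushing the $t$'s to the right; its exact form is determined by the same bookkeeping. Moving everything to the left instead gives $(St_1\tau)^k=\brd{t_0t_1}{k}(S\tau)^k$, because $St_1=t_0S$. So
\[
(XY)^k=\brd{t_1t_0}{k}(\tau S)^k,\qquad (YX)^k=\brd{t_0t_1}{k}(S\tau)^k .
\]

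Now set $k=N/2=\operatorname{lcm}(m_1,m_2)$. By Lemma~\partref{lem:taut homs.a} applied to the pair $(t_0,t_1)$ with $m_2\in B(t_0,t_1)$, every multiple of $m_2$ lies in $B(t_0,t_1)$, so $\brd{t_1t_0}{k}=\brd{t_0t_1}{k}$. Likewise, $(\tau S)^{m_1}=(S\tau)^{m_1}$ means $2m_1\in B(\tau,S)$. Hence $2k\in B(\tau,S)$, which gives $(\tau S)^k=\brd{\tau S}{2k}=\brd{S\tau}{2k}=(S\tau)^k$. Combining these yields $(XY)^k=(YX)^k$, i.e.\ $\brd{XY}{N}=\brd{YX}{N}$, and Lemma~\ref{lem:fund hom} finishes the proof.

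The main obstacle is the index bookkeeping in the closed form for $(XY)^k$. I need to check that pushing $t_1$ through $(\tau S)^k$ yields exactly the next letter of the alternating word starting with $t_1$, and that the analogous computation for $YX$ produces the alternating word starting with $t_0$. Both follow from $t_iS=St_{1-i}$ by a one-line induction, which I will write out carefully.
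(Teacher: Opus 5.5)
Your proof is correct and is essentially the paper's argument. The paper likewise proves the closed forms $(t_1\tau S)^k=\brd{t_1t_0}{k}(\tau S)^k$ and $(St_1\tau)^k=\brd{t_0t_1}{k}(S\tau)^k$ by the same induction, pushing $t_i$ leftward through $\tau S$ via $t_iS=St_{1-i}$, then takes $k=\operatorname{lcm}(m_1,m_2)$ and applies Lemma~\partref{lem:taut homs.a} to both factors. The abandoned detours in your write-up, namely the partial expansion of $XYXY$ and the right-pushed version of $(St_1\tau)^k$, are unnecessary and can simply be deleted.
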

\begin{proof}
We need the following
\begin{lemma} \label{lem:weakly admissible lemma t1tauS to power m}
\label{lem:weakly admissible lemma St1tau to power m}  
Suppose that $t_0, t_1, \tau, S \in \mathsf{M}$ satisfy $t_iS=St_{1-i}$, $t_i \tau = \tau t_i$, $i\in\{0,1\}$. Then for any~$m\in\ZZ_{>0}$
$$
(t_1 \tau S)^m = \brd{  t_1t_0}m (\tau S)^m,
\quad (St_1\tau)^m=\brd{t_0t_1}m(S\tau)^m.
$$
\end{lemma}

\begin{proof}
We only prove the first equality, the argument for the second one being similar. The case~$m=1$ is trivial.
Suppose that the identity holds for some~$m\ge 1$. Then
\begin{align*}
(t_1\tau S)^{m+1}&=t_1\tau S\brd{t_1t_0}m(\tau S)^m=t_1\brd{t_0t_1}m (\tau S)^{m+1}
=\brd{t_0t_1}{m+1} (\tau S)^{m+1}.\qedhere
\end{align*}
\end{proof}

Let~$m=\operatorname{lcm}(m_1,m_2)$.
Then $(\tau S)^m=(S\tau)^m$ and
$\brd{t_1t_0}{m}=\brd{t_0t_1 }{m}$
by Lemma~\partref{lem:taut homs.a} and so
$(t_1\tau S)^m=(S t_1\tau)^m$ by
Lemma~\ref{lem:weakly admissible lemma t1tauS to power m}.
\end{proof}

\begin{corollary}
The assignments $\wh T_1 \mapsto T_1 T_3$, $\wh T_2 \mapsto T_{w_0^{[2,n-1]}}$ define a homomorphism $I_2(12) \to \Br^+_{n}$. 
\end{corollary}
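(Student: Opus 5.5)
We will apply Proposition~\ref{prop:exotic homs} with $\mathsf M=\Br^+_n$. We take
$$
t_1=T_1,\qquad t_0=T_{n-1},\qquad \tau=T_3,\qquad S=T_{w_\circ^{[2,n-1]}},
$$
so that $t_1\tau=T_1T_3$ and the image of $\wh T_2$ is $S$. Let $\Sigma$ denote the diagram automorphism of $\Br^+_{[2,n-1]}(A_{n-1})$, which corresponds to $i\mapsto n+1-i$.

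The hypothesis $t_iS=St_{1-i}$ is the natural one to check first, but it fails: $T_1\notin[2,n-1]$, so $T_1$ does not conjugate through $S$ to $T_{n-1}$. We therefore switch the roles and put $\tau=T_1$ and $t_1=T_3$, $t_0=\Sigma(T_3)=T_{n-2}$. By Proposition~\partref{prop:fund elts BrSa.c} we have $XS=S\Sigma(X)$ for every $X\in\Br^+_{[2,n-1]}$. Taking $X=T_3$ and $X=T_{n-2}$ gives $t_1S=St_0$ and $t_0S=St_1$.

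The remaining hypotheses are checked as follows.
\begin{enumerate}
\item $\tau=T_1$ commutes with $T_3$ and with $T_{n-2}$, provided $n-2\ge3$. We assume $n\ge5$; the small values of $n$ are degenerate and would be treated separately.
\item For $\brd{t_0t_1}{m_2}=\brd{t_1t_0}{m_2}$: if $|n-2-3|\ge2$, that is $n\ge7$, then $T_3$ and $T_{n-2}$ commute, so $m_2=2$ works. If $n=6$ they are adjacent and $m_2=3$. If $n=5$ they coincide and $m_2=1$ would be allowed. In all cases $m_2\in\{1,2,3\}$, which divides $6$.
\item For $(\tau S)^{m_1}=(S\tau)^{m_1}$: $T_1$ and $S$ generate a copy of an $I_2(k)$-type configuration. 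Here $S=T_{w_\circ^{[2,n-1]}}$ is the longest element of the parabolic of type $A_{n-2}$ adjacent to node~$1$. By Example~\ref{ex:embedded std} (the decoration argument with $J=\{1\}$ inside type $A$, compare Theorem~\ref{thm:monomial brd}), $T_1$ and $T_{w_\circ^{[2,n-1]}}$ should satisfy a braid relation of length $m_1\in\{4,6\}$. The cleanest route is to observe that $T_1T_{w_\circ^{[2,n-1]}}$ relates to the parabolic $[1,n-1]$: one has
$$T_{w_\circ^{[1,n-1]}}=T_{w_\circ^{[2,n-1]}}\Cx1{(n-1)}.$$
A short computation in the style of Lemma~\ref{lem:move across} should then show that $(T_1S)^3=(ST_1)^3$. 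This follows the pattern of Theorem~\ref{thm:Hom A2}, where $T_{w_\circ^{[1,2m-1]}}$ and $T_{w_\circ^{[m+1,3m-1]}}$ braid with length $3$ when $m=1$ on one side. Here we would aim for $m_1=3$ or $m_1=6$.
\end{enumerate}

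With $m_1\mid 6$ and $m_2\mid 6$ we get $\operatorname{lcm}(m_1,m_2)\mid 6$. By Lemma~\partref{lem:taut homs.a}, the relations then hold at length exactly $6$, so Proposition~\ref{prop:exotic homs} yields a homomorphism from $\Br^+(I_2(12))$. Its images are $T_3T_1$ and $S$, which is the statement up to the diagram automorphism of $\Br^+_n$ and the identification of the generators.

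The main obstacle is step~3, establishing the braid relation between $T_1$ and $T_{w_\circ^{[2,n-1]}}$ with length dividing $6$. As a first attempt we would use Lemma~\ref{lem:I2m iff cnd}: both elements are ${}^{op}$-invariant, so it suffices to show that $(T_1S)^3$ is ${}^{op}$-invariant. To do this we would express $(T_1S)^2$ through $T_{w_\circ^{[1,n-1]}}$ and $\Sigma$-conjugation, and invoke the centrality of $T_{w_\circ^{[1,n-1]}}^2$ from Proposition~\partref{prop:fund elts BrSa.d}.
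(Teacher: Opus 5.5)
Your set-up ($\tau=T_1$, $t_1=T_3$, $t_0=\Sigma(T_3)=T_{n-2}$, $S=T_{w_\circ^{[2,n-1]}}$), your checks of $t_iS=St_{1-i}$ and $t_i\tau=\tau t_i$, and your choice of $m_2$ all agree with the paper's proof. The gap is step~3.

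\textbf{The gap.} The relation $(T_1S)^{m_1}=(ST_1)^{m_1}$ with $m_1\mid 6$ is the only non-trivial hypothesis of Proposition~\ref{prop:exotic homs}, and you never establish it. The results you cite do not supply it:
\begin{itemize}
\item Example~\ref{ex:embedded std} only relates $T_{w_\circ^J}$, with $J\subset I$, to the longest element $T_{w_\circ^I}$ of the \emph{whole} codomain. Here $S$ is not $T_{w_\circ^I}$, and $\{1\}\not\subset[2,n-1]$.
\item Theorems~\ref{thm:monomial brd} and~\ref{thm:Hom A2} concern other pairs of elements.
\end{itemize}
This is why your proposal wavers between $m_1\in\{4,6\}$ and $m_1\in\{3,6\}$.

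\textbf{The missing observation.} The relation is exactly the forward direction of Theorem~\ref{thm:main thm adm} (proved as Theorem~\ref{thm:adm I2m}) in $\Br^+_n=\Br^+(A_{n-1})$ with $K=[2,n-1]$. In that case $I'(K)=\emptyset$, $I''(K)=\{1\}$ and $\sigma(K)=[1,n-2]\neq K$, so $m(K)=3$. Hence $\wh T_1\mapsto S$, $\wh T_2\mapsto T_1$ is a homomorphism from $\Br^+(I_2(6))$, which gives $m_1=3$. This is what the paper uses; then $\operatorname{lcm}(3,2)=6$ yields $I_2(12)$.

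\textbf{Completing your own sketch.} The route you outline at the end also works in a few lines. Put $\Delta=T_{w_\circ^{[1,n-1]}}$ and $Y=\Cxr{2}{(n-1)}$. Then $\Delta=\Cxr{1}{(n-1)}S=YT_1S$, so $T_1S=Y^{-1}\Delta$ in $\Br_n$. Using $X\Delta=\Delta\sigma(X)$ and the centrality of $\Delta^2$ (Proposition~\ref{prop:fund elts BrSa}), one gets
\[
(T_1S)^3=\bigl(Y\sigma(Y)Y\bigr)^{-1}\Delta^3,\qquad \sigma(Y)=\Cx{1}{(n-2)} .
\]
A length count, together with a check of the permutations in $S_n$ and Theorem~\partref{thm:Tits.b}, gives $Y\sigma(Y)Y\,T_{w_\circ^{[3,n-2]}}=\Delta$. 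Therefore $(T_1S)^3=\Delta^2T_{w_\circ^{[3,n-2]}}$, a product of commuting ${}^{op}$-invariant elements, and Lemma~\ref{lem:I2m iff cnd} gives $m_1=3$.

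Either argument has to be supplied. As written, your proposal leaves the one essential relation unproved.
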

\begin{proof}
Let~$\mathsf M=\Br^+_n$, $S=T_{w_\circ^{[2,n-2]}}$, $t_1=T_3$, $t_0=T_{n-2}$ and~$\tau=T_1$. 
Then the relation~$(S\tau)^{m_1}=(\tau S)^{m_1}$
with $m_1=3$ follows from Theorem~\ref{thm:adm I2m} while the remaining relations of Proposition~\ref{prop:exotic homs} are
immediate, with $m_2=2$.
\end{proof}

\begin{proposition}
Let~$M\in\Cox I$ be of finite type, let~$i\in I$ and suppose that $T_i T_{w_\circ^I}=T_{w_\circ^I}T_j$
for some~$j\not=i\in I$. The assignments $\wh T_1\mapsto T_i$, $\wh T_2\mapsto T_{w_\circ^I}$ define an optimal
homomorphism $\Br^+(I_2(2m))\to \Br^+(M)$ if and only if $m=\operatorname{lcm}(2,m_{ij})$.
\end{proposition}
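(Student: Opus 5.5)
The plan is to compute $B(T_i,T_{w_\circ^I})$ exactly and show that its minimum is $2\operatorname{lcm}(2,m_{ij})$. By Lemma~\ref{lem:fund hom}, the assignments define a homomorphism from $\Br^+(I_2(2m))$ if and only if $2m\in B(T_i,T_{w_\circ^I})$. By Lemma~\ref{lem:I2m iff cnd}, since both $T_i$ and $T_{w_\circ^I}$ are ${}^{op}$-invariant, this holds if and only if $(T_iT_{w_\circ^I})^m=(T_{w_\circ^I}T_i)^m$. I first write $\Delta=T_{w_\circ^I}$ and $\Sigma=\Sigma_I$. The hypothesis gives $\Sigma(T_i)=T_j$, and since $\Sigma^2=\id$ also $\Sigma(T_j)=T_i$. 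Moving all copies of $\Delta$ to the right with Proposition~\partref{prop:fund elts BrSa.c} then gives
$$
(T_i\Delta)^m=\brd{T_iT_j}{m}\,\Delta^m,\qquad (\Delta T_i)^m=\Delta\,(T_i\Delta)^{m-1}T_i=\brd{T_jT_i}{m}\,\Delta^m .
$$
This is the same telescoping as in Lemma~\ref{lem:weakly admissible lemma t1tauS to power m}. By right cancellation, the relation of length $2m$ holds if and only if $\brd{T_iT_j}m=\brd{T_jT_i}m$, that is, $m\in B(T_i,T_j)$.

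Next I determine $B(T_i,T_j)$ inside $\Br^+(M)$. Since $\Br^+_{\{i,j\}}(M)\cong\Br^+(I_2(m_{ij}))$ is a submonoid, the braid relation gives $m_{ij}\in B(T_i,T_j)$. Minimality also holds: for $k<m_{ij}$ the two alternating words $\brd{T_iT_j}k$ and $\brd{T_jT_i}k$ are distinct. Both are square free and map to distinct reduced elements of $W(M)$, so they differ by Theorem~\ref{thm:Tits}. By Lemma~\partref{lem:taut homs.b}, $B(T_i,T_j)=\ZZ_{>0}m_{ij}$. Hence the homomorphism exists if and only if $m_{ij}\mid m$. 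Note that $m_{ij}<\infty$ because $M$ is of finite type.

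Finally I address optimality, which means the generator of $B(T_i,\Delta)$ must equal $2m$. We need all $k$ with $\brd{T_i\Delta}k=\brd{\Delta T_i}k$, including odd $k$. Odd $k$ is impossible: by Lemma~\partref{lem:elem Artin hom.c} it would force $\ell(T_i)=\ell(\Delta)$, which fails unless $|I|=1$, and $|I|=1$ contradicts the existence of $j\ne i$. Thus $B(T_i,\Delta)=\{2k: m_{ij}\mid k\}$. We also need $k>1$ for even elements of $B$. If $m_{ij}=2$, then $T_i$ and $T_j$ commute, $2\in B$, and $T_i\Delta=\Delta T_j\ne\Delta T_i$ since $i\ne j$; so the minimum is $4$. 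This matches the requirement that $2m\ge4$ to be a genuine $I_2(2m)$ and not a commuting pair, hence the $\operatorname{lcm}(2,\cdot)$. In general the least even element is $2\min\{k\ge1: m_{ij}\mid k\}$, taken together with the constraint $m\ge2$. This yields $\min B=2\operatorname{lcm}(2,m_{ij})$ when $m_{ij}$ is odd (since $\brd{}{}$-equality at odd $k$ cannot occur for the pair $T_i\Delta$ versus $\Delta T_i$), and $2m_{ij}$ when $m_{ij}$ is even.

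The step I expect to be most delicate is this last reconciliation. For odd $m_{ij}$ (for example $A_n$, where $m_{ij}=2$ or $3$, and $j=\sigma(i)$ is adjacent only in the middle), I must check that $m=m_{ij}$ odd is excluded by optimality or by the convention on $I_2(2m)$. If $m_{ij}=3$, the relation already holds at $2m=6$, while the statement claims $m=6$. So I will recheck whether the correct reduction is $m\in B(T_i,T_j)$ or $2m\in$ something. If the relation at odd $m$ does hold, the proof must instead show that $\brd{T_iT_j}m=\brd{T_jT_i}m$ fails for odd $m$ under the hypothesis. I would test this directly on $A_2$ with $i=1$, $j=2$ before committing to the final form.
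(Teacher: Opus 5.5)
Your reduction is correct and coincides with the paper's. From $T_iT_{w_\circ^I}=T_{w_\circ^I}T_j$ and $T_jT_{w_\circ^I}=T_{w_\circ^I}T_i$ the telescoping gives $(T_iT_{w_\circ^I})^m=\brd{T_iT_j}{m}\,T_{w_\circ^I}^m$ and $(T_{w_\circ^I}T_i)^m=\brd{T_jT_i}{m}\,T_{w_\circ^I}^m$. So the relation of length $2m$ holds if and only if $m_{ij}\mid m$, for every $m$ regardless of parity, and $B(T_i,T_{w_\circ^I})=2m_{ij}\ZZ_{>0}$.

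The gap is your reconciliation step. The claim that $\min B(T_i,T_{w_\circ^I})=2\operatorname{lcm}(2,m_{ij})$ for odd $m_{ij}$ is false. It contradicts the formula you had just derived, whose least element is $2m_{ij}$ whatever the parity of $m_{ij}$. You conflated the parity of an element of $B$, which must be even, with the parity of its half, which is unconstrained.

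Your suspicion in the last paragraph is justified. In $A_2$ take $i=1$, $j=2$, $\Delta=T_1T_2T_1$. Then $(T_1\Delta)^3=T_1T_2T_1\Delta^3=\Delta^4=T_2T_1T_2\Delta^3=(\Delta T_1)^3$. On the other hand $T_1\Delta=\Delta T_2\ne\Delta T_1$ and $(T_1\Delta)^2=T_1T_2\Delta^2\ne T_2T_1\Delta^2=(\Delta T_1)^2$. So $m=3$ gives an optimal homomorphism $\Br^+(I_2(6))\to\Br^+(A_2)$. By contrast, $m=\operatorname{lcm}(2,3)=6$ gives a homomorphism from $\Br^+(I_2(12))$ that is not optimal. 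The same happens for $A_{2k}$ with $i=k$ and for $I_2(2k+1)$.

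Thus the proposition cannot be proved as stated when $m_{ij}$ is odd. What your argument actually proves is the following. The assignments define a homomorphism if and only if $m_{ij}\mid m$, and an optimal one if and only if $m=m_{ij}$. This agrees with $m=\operatorname{lcm}(2,m_{ij})$ exactly when $m_{ij}$ is even.

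The discrepancy originates in the paper's own proof. It writes $((T_iT_{w_\circ^I})^m)^{op}=T_{w_\circ^I}^m\brd{T_jT_i}{m}$ for all $m$, and deduces that $(T_iT_{w_\circ^I})^m$ is automatically ${}^{op}$-invariant for odd $m$. But for odd $m$ the alternating word $\brd{T_iT_j}{m}$ is a palindrome. So actually $((T_iT_{w_\circ^I})^m)^{op}=T_{w_\circ^I}^m\brd{T_iT_j}{m}=\brd{T_jT_i}{m}\,T_{w_\circ^I}^m$, and ${}^{op}$-invariance again requires $\brd{T_iT_j}{m}=\brd{T_jT_i}{m}$. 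The case $m=1$ already refutes the paper's claim, since $T_iT_{w_\circ^I}=T_{w_\circ^I}T_j\ne T_{w_\circ^I}T_i$.

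So do not bend your computation toward $\operatorname{lcm}(2,m_{ij})$. State and prove the corrected conclusion $m=m_{ij}$, or restrict to even $m_{ij}$. The first two paragraphs of your proposal already contain a complete proof of it.
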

\begin{proof}
Since~$T_j T_{w_\circ^I}=T_{w_\circ^I} T_i$ by Proposition~\partref{prop:fund elts BrSa.c},
using Lemma~\ref{lem:weakly admissible lemma t1tauS to power m} with~$\mathsf M=\Br^+(M)$, $t_1=T_i$, $t_0=T_j$, $S=T_{w_\circ^I}$ and~$\tau=1$
we obtain
$
(T_i T_{w_\circ^I})^m=\brd{T_iT_j}{m} T_{w_\circ^I}^m
$
whence by Proposition~\partref{prop:fund elts BrSa.a}\ref{prop:fund elts BrSa.d}
$$
((T_iT_{w_\circ}^I)^m)^{op}=T_{w_\circ^I}^m\brd{T_jT_i}{m}=\begin{cases}
\brd{T_iT_j}m T_{w_\circ^I}^m,&\bar m=1,\\
\brd{T_jT_i}m T_{w_\circ^I}^m,&\bar m=0.
\end{cases}
$$
Thus, if $m$ is odd, $(T_iT_{w_\circ^I})^m$ is automatically ${}^{op}$-invariant, while for even~$m$, since~$\Br^+(M)$ is cancellative, $(T_iT_{w_\circ^I})^m$ is ${}^{op}$-invariant
if and only if $\brd{T_iT_j}{m}=\brd{T_jT_i}m$, which
by Lemma~\partref{lem:taut homs.b} happens if and only if~$m_{ij}$ divides~$m$. The assertion follows by Lemma~\ref{lem:I2m iff cnd}.
\end{proof}
\begin{proposition}
For any~$n\in\mathbb Z_{\ge 4}$, the assignments
$\wh T_1\mapsto T_1T_n, \wh T_2\mapsto  T_{w_\circ^{[1,n-1]}}$
define a standard homomorphism $\Br^+(I_2(10))\to \Br^+_{n+1}$.
\end{proposition}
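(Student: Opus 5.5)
Since $T_1T_n$ and $T_{w_\circ^{[1,n-1]}}$ are ${}^{op}$-invariant (the first because $T_1,T_n$ commute for $n\ge 3$, the second by Proposition~\partref{prop:fund elts BrSa.a}), Lemma~\ref{lem:I2m iff cnd} reduces the claim to showing that
\[
Y:=(T_1T_nT_{w_\circ^{[1,n-1]}})^5
\]
is ${}^{op}$-invariant in $\Br^+_{n+1}$. The homomorphism is then standard by definition, since both images are of the form $T_{w_\circ^{J}}$ with $J=\{1,n\}$ and $J=[1,n-1]$ respectively.

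To show this, I would first use the structure from Proposition~\partref{prop:fund elts BrSa.c}. Conjugation by $S:=T_{w_\circ^{[1,n-1]}}$ acts on $\Br^+_{[1,n-1]}$ as the diagram automorphism $T_k\mapsto T_{n-k}$, so $T_1S=ST_{n-1}$. This puts us close to Proposition~\ref{prop:exotic homs}, with $\tau=T_n$, $t_1=T_1$, $t_0=T_{n-1}$. However, $\tau=T_n$ does not commute with $t_0=T_{n-1}$, so Lemma~\ref{lem:weakly admissible lemma t1tauS to power m} does not apply verbatim.

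Instead, I would rewrite $Y$ in terms of the element already controlled by Theorem~\ref{thm:adm I2m}. Take $K=[2,n-1]$, so that $J=\{1,n+1\}$ and $g(J)=1$. By Theorem~\ref{thm:main thm adm} with $K=[1,n-1]$, the pair $(T_n,\,S)$ gives a homomorphism of $\Br^+(I_2(2m))$ with $m=m(K)=n-|K|+2=3$. Hence $(T_nS)^3=(ST_n)^3$, and by Proposition~\ref{prop:g J=1} this element equals $T_{w_\circ^{[1,n]}}^2S^{-2}$ in $\Br_{n+1}$.

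The strategy is then to write $X:=T_1T_nS=T_1\,(T_nS)$. Using $T_1S=ST_{n-1}$ and the fact that $T_1$ commutes with $T_n$, I would try to express $X^5$ as (central element)$\cdot S^{a}\cdot$(a braid word in $T_1,T_{n-1},T_n$ and $S$) and to find the palindromic structure explicitly. A cleaner route is to pass to the quotient by the center: $T_{w_\circ^{[1,n]}}^2$ is central and ${}^{op}$-invariant. Setting $Z:=(T_nS)^3S^2$, which equals that central element, I would compute $X^{5}$ modulo $Z$ and aim for an identity of the form
\[
X^5=Z\cdot W,\qquad W=W^{op},
\]
where $W$ is built from $T_{w_\circ^{[2,n-2]}}$, $T_1$, $T_{n-1}$ and $T_n$. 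The exponent $10=2\operatorname{lcm}(5,\dots)$ suggests that the $T_1$ factors, transported by $S$-conjugation between positions $1$ and $n-1$, combine with $T_n$ into an $A_2$-type relation on $\{n-1,n\}$ and a period-$5$ pattern on the combined orbit.

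The main obstacle is this explicit identity. If a conceptual reduction fails, I would verify ${}^{op}$-invariance of $Y$ directly. The first check is $n=4$, by a normal-form computation (Proposition~\ref{prop:Normal form}), or by the faithful-enough test via the symmetrized Burau representation (Proposition~\ref{prop:Burau}) as a sanity check. Then I would prove the general case by induction on $n$, using $T_{w_\circ^{[1,n-1]}}=T_{w_\circ^{[2,n-1]}}\Cx1{(n-1)}$ and the moving-across identities of Lemma~\ref{lem:move across}, in the style of the proof of Proposition~\ref{prop:factor Tw0^2}. Concretely, I would show that $Y$ commutes with every $T_k$ after multiplication by a suitable ${}^{op}$-invariant element of the form $T_{w_\circ^{[2,n-2]}}^{2}$. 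Then, by length comparison, the product equals a power of $T_{w_\circ^{[1,n]}}^2$ times an ${}^{op}$-invariant factor.
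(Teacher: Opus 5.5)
Your reduction is correct: both images are ${}^{op}$-invariant, so by Lemma~\ref{lem:I2m iff cnd} the proposition is equivalent to $(T_1T_nS)^5=(ST_1T_n)^5$ with $S=T_{w_\circ^{[1,n-1]}}$, and standardness is then automatic. You also collect the right ingredients:
\begin{itemize}
\item $T_1S=ST_{n-1}$;
\item $T_1T_n=T_nT_1$;
\item the braid relation on $\{n-1,n\}$;
\item $(T_nS)^3=(ST_n)^3$, which comes from Theorem~\ref{thm:main thm adm} with $K=[1,n-1]$, i.e.\ $J=\{1,n,n+1\}$. (Your $J=\{1,n+1\}$ has $g(J)=0$.)
\end{itemize}
However, the identity $(T_1T_nS)^5=(ST_1T_n)^5$ is the entire content of the proposition, and it is never proved. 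You call it ``the main obstacle'' and leave it to searches that are not carried out: one for a factorization $X^5=Z\cdot W$, one for a correcting factor that makes $Y$ central. As written there is no proof.

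The concrete steps you set up are also wrong. Proposition~\ref{prop:g J=1} is about $T_J=T_{(1,n)}T_n$, not about $ST_n$. Since $S=T_{(1,n)}T_{w_\circ^{[2,n-2]}}$ and the second factor commutes with $T_{(1,n)}$ and $T_n$, it yields $(ST_n)^3=T_{w_\circ^{[1,n]}}^2T_{w_\circ^{[2,n-2]}}$, not $T_{w_\circ^{[1,n]}}^2S^{-2}$. The lengths already differ: $3+3\binom n2$ versus $2n$. So $Z=(T_nS)^3S^2$ is not central.

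Your fallback fails as stated too. For $n=4$, $\ell\bigl(Y\,T_{w_\circ^{[2,2]}}^2\bigr)=42$ is not a multiple of $\ell(T_{w_\circ^{[1,4]}}^2)=20$, so $Y\,T_{w_\circ^{[2,n-2]}}^2$ cannot be central. Note also that your plan never uses the hypothesis $n\ge 4$, only $n\ge3$ for $T_1T_n$. That is a sign the decisive step was not located.

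The missing idea is Lemma~\ref{lem:TUS5=UST5}. In any monoid, suppose
\begin{itemize}
\item $(ST)^3=(TS)^3$ and $SVS=VSV$;
\item $UV=VU$ and $US=SU$;
\item $TU=VT$ and $TV=UT$.
\end{itemize}
Then $(TUS)^5=(UST)^5$, proved by a direct chain of about twenty rewritings. Apply it with $S=T_n$, $U=T_1$, $V=T_{n-1}$, $T=T_{w_\circ^{[1,n-1]}}$; note that the paper's $S$ is your $T_n$, not your $S$. Every hypothesis is one of the relations you listed, and $UV=VU$ is exactly where $n\ge4$ enters. What your proposal lacks is this explicit word manipulation, or some other actual verification of the fivefold relation.
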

\begin{proof} We need the following
 \begin{lemma} \label{lem:TUS5=UST5}
 Let $\mathsf M$ be a multiplicative monoid and suppose that $S,T,U,V\in \mathsf M$ satisfy $(ST)^3=(TS)^3$, $UV=VU$, $US=SU$, $TU=VT$, $TV=UT$ and $SVS=VSV$. Then $(TUS)^5=(UST)^5$.
\end{lemma}
\begin{proof}We have
\begin{align*}(T&US)^5=(VTS)(TSU)^4
=VTSTSTVSUTSUTSU
=VTSTSTVSVUTSTSU\\
&=VTSTSTSVSUTSTSU
=VSTSTSTVSUTSTSU
=VSTSVTSTSUTSTSU\\
&=VSTSVTSTSTVSVTS
=VSTSVTSTSTSVSTS
=VSTSVSTSTSTVSTS\\
&=VSTVSVTSTSTVSTS
=UVSTSVTSVTSTSTS
=UVSTSVTSVSTSTST\\
&=UVSTSVTVSVTSTST
=UVSTSUTUSVTSTST
=UVSVTSTUSVTSTST\\
&=USVSTSTUSVTSTST
=USVSTSTSTVSTVST
=USVTSTSTSVSTVST\\
&=USVTSTSTVSVTVST
=USTUSTSTVSVTVST
=USTUSTSTVSUTUST\\
&=USTUSTUSTSUTUST=(UST)^5.\qedhere
\end{align*}
\end{proof}
Let $\mathsf M=\Br_n^+$, $S=T_n$, $U=T_1$, $V=T_{n-1}$
and $T=T_{w_\circ^{[1,n-1]}}$. All relations in the Lemma
are immediate except $(ST)^3=(TS)^3$. But
for~$J=\{1,n-1,n+1\}$
we have $\tau_1(J)=T_{w_\circ^{[1,n-1]}}=T$, $\tau_0(J)=T_n=S$
and then $(\tau_1(J)\tau_0(J))^3=(\tau_0(J)\tau_1(J))^3$
by Theorem~\ref{thm:adm I2m}.
\end{proof}

\subsection{Some conjectural families of non-disjoint standard homomorphisms}\label{subs:conj families}
In this section, we list several yet conjectural infinite families of standard homomorphisms from Artin monoids
of type~$I_2(N)$ to $\Br^+(A_n)$, $\Br^+(B_n)$ and $\Br^+(D_{n+1})$. So far we have verified these conjectures for $n \le 15$. 
\begin{conjecture}\label{conj:all homs A2 An}
Let~$M\in\Cox I$ be irreducible of finite type and let~$\Phi\in\Hom_{\Art}(A_2,M)$ be fully supported and 
standard with~$[\Phi](1)\not=[\Phi](2)$. Then, up to diagram automorphisms,  either~$M=A_{3m-1}$ for some~$m\ge 1$ and~$\Phi=\Phi_{m,J}$, $J\subset[1,m-1]$ from Theorem~\ref{thm:Hom A2}
or~$M=E_6$ and~$\Phi=\wh \Phi_{3,E_6}$ from Proposition~\partref{prop:Hom A2|B2 EF.b}.
\end{conjecture}
\begin{conjecture}\label{conj:all homs B2 M}
 Homomorphisms from either Theorem~\ref{thm:Hom B2} or~\ref{thm:B2 A2n-1 spec} or Proposition~\partref{prop:Hom A2|B2 EF.b},
exhaust, up to diagram automorphisms, all fully supported standard homomorphisms from $\Br^+(B_2)$ to any Artin monoid of irreducible finite type.
\end{conjecture}

\begin{conjecture}
Let~$1\le b\le a<n-1$ and~$a>1$. The assignments~$\wh T_1\mapsto T_{w_\circ^{[1,a]}}$, $\wh T_2\mapsto T_{w_\circ^{[b,n-1]}}$ define a homomorphism
$\Br^+(I_2(N))\to \Br^+_{n}$ if one of the 
following holds
\begin{itemize}
\item $N=2n/(n+b-a-2)\in 2\ZZ_{>0}$;
\item
$a+b=n$ and~$N=n/(b-1)\in1+2\ZZ_{>0}$;
\item 
$a+1=2(b-1)$, $a+b< n$ and~$N=6$.
\end{itemize}
\end{conjecture}
 
 \begin{conjecture}
For any~$i\in [2,n-2]$, the assignments $\wh T_1 \mapsto T_i T_n$, $\wh T_2 \mapsto T_{w_0^{[1,n-1]}}$, $i \in [2,n-2]$, define a standard homomorphism $\Br^+(I_2(12/d)) \to \Br^+_{n+1}$, where $d=1+\delta_{2i,n}+\delta_{2i,n-1}+\delta_{2i,n+1}$.
\end{conjecture}

We conclude with a list of conjectural families of standard
homomorphisms $\Br^+(I_2(N))\to 
\Br^+(D_{n+1})$. Here we use the following labeling of the Coxeter graph of type~$D_{n+1}$
$$
\dynkin[Coxeter,expand labels={n,n-1,2,1,0},label directions={,,right,,},make indefinite edge={2-3}]D5
$$
\begin{conjecture}
\begin{enmalph}
\item 
The assignments
$\wh T_i\mapsto T_{w_\circ^{K_i\setminus(2i+4\ZZ_{\ge 0})}}$, $i\in\{1,2\}$
define a standard homomorphism from $\Br^+(I_2(n+1))$ to $\Br^+(D_{n+1})$ for the following
 $K_1,K_2\subset [0,n]$:
\begin{enumerate}[leftmargin=*,label={$\bullet$}]

\item $K_1 =[1,n]$, $K_2 = \{0\} \cup [2,n-2]$ if $n\in 1+4\ZZ_{>0}$,

\item $K_1=[1,n]$, $K_2 = \{0\} \cup [2,n]$ if $n\in 1+ 2\ZZ_{> 0}$,

\item  $K_1=[1,n-2]$, $K_2 = \{0\} \cup [2,n]$ if $n\in 3+4\ZZ_{\ge 0}$.

\end{enumerate}
\item The assignments $\wh T_i\mapsto T_{w_\circ^{K_i\setminus(2i-1+4\ZZ_{\ge 0})}}$, $i\in\{1,2\}$
define a standard homomorphism from $\Br^+(I_2(2n))$ to $\Br^+(D_{n+1})$ for the following
pairs $K_1,K_2\subset[0,n]$:
\begin{enumerate}[leftmargin=*,label={$\bullet$}]

\item $K_1=[0,n]$,
$K_2= \{1\} \cup [4,n-2]$ if $n\in 4\ZZ_{>0}$,

\item $K_1=[0,n]$, $K_2= \{1\} \cup [4,n]$ if $n\in 2\ZZ_{>1}$,

\item  $K_1=[0,n-2]$, $K_1= \{1\} \cup [4,n]$ if $n\in 2+4\ZZ_{>0}$.

\end{enumerate}
\end{enmalph}

\end{conjecture}

\section*{List of symbols}
\def\bqq{{\setbox0\hbox{$\widehat{U}_q^+$}\setbox2\null\ht2\ht0\dp2\dp0\box2}}
\def\hr#1{\hyperlink{#1}{\pageref*{page:#1}}}

\noindent
{
\scriptsize
\begin{tabular}{p{1.55in}@{\bqq}l@{\hskip.25in}p{1.55in}@{\bqq}l@{\hskip.25in}p{1.55in}@{\bqq}l}
$\bar s$&p.~\hr{bar s}&$[a,b]_2$&p.~\hr{[a,b]2}&$\mathscr P(S)$&p.~\hr{PS}\\
$\ascprod$, $\dscprod$&p.~\hr{ascp}&
$\brd{xy}{m}$&p.~\hr{brd}&$B(x,y)$&p.~\hr{B(x,y)}\\
$\Cox I$&p.~\hr{Cox I}&$\Gamma(M)$&p.~\hr{Gamma(M)}&
$\Br^+(M)$, $\Br(M)$&p.~\hr{Br+(M)} \\
$\ell$&p.~\hr{ell}&${}^{op}$&p.~\hr{op}&$W(M)$&p.~\hr{W(M)}\\
$\pi_M$&p.~\hr{piM}&
$\SQF^+(M)$&p.~\hr{SQF}&
$\Br^+_J(M)$, $W_J(M)$&p.~\hr{Br+J(M)}\\
$\iota_J$&p.~\hr{iotaJ}&$\mathscr F(M)$&p.~\hr{F(M)}&
$\supp$&p.~\hr{supp}\\
$w_\circ^J$&p.~\hr{w0J}&$w_{J;K}$&p.~\hr{wJ;K}&
$\pi^\star_M$&p.~\hr{pi*M}\\
$(W(M),\star)$&p.~\hr{HeMon}&
$\times$&p.~\hr{times}&
$D_L(w)$,$D_R(w)$&p.~\hr{DL(w)}\\
$\cx ab$, $\cxr ab$, $\Cx ab$, $\Cxr ab$&p.~\hr{cab}&
$D_L(X)$&p.~\hr{I(X)}&
$h(M)$&p.~\hr{h(M)}\\
$\Art$, $\CoxCat  $, $\Heck $&p.~\hr{A C H}&
$[\Phi]$&p.~\hr{[Phi]}&
$\Xi_{\mathbf X}$&p.~\hr{char hom}\\
$\Phi_{\mathbf z}$&p.~\hr{Phi z}&
$\mathscr{AH}$, $\mathscr{AC}$, $\mathscr{ACH}$&p.~\hr{AH AC}&
$\mathsf H$, $\mathsf C$&p.~\hr{H C}\\
$\overline\Phi$, $\overline\Phi_\star$&p.~\hr{barPhi}&$\Ast$&p.~\hr{Ast}&
$P_J$&p.~\hr{PJ}\\
$M^{\varpi}$&p.~\hr{MII}&
$\mathbf F_{\varpi}$&p.~\hr{F wpi}&
$M(\mathbf d)$&p.~\hr{M(d)}\\
$\mathbf T_{\mathbf d}$&p.~\hr{T d}\\
$\mathbf T_{i,d}$&p.~\hr{T i,d}&
$\Br^+_n$, $\Br_n$&p.~\hr{Brn}\\
$T_{(i,j)}$&p.~\hr{T(i,j)}&
$T_J$,$\tilde\tau_k(J)$&p.~\hr{TJ}&
$\tau_k(J)$&p.~\hr{tauk(J)}\\
$\Cx ij^{(a)}$, $\Cxr ij^{(a)}$&p.~\hr{Cij(a)}&
$g(J)$&p.~\hr{gJ}&
$U(J)$&p.~\hr{U(J)}\\
$e_i$&p.~\hr{ei}&
$v_{[i,j]}$&p.~\hr{v[i,j]}&
$u_i$, $w_{[i,j]}^{(a)}$&p.~\hr{ui}\\
$\la\cdot|\cdot\ra$&p.~\hr{<.|.>}&
$\beta_\pm(J)$&p.~\hr{betapm}&
$q_s$&p.~\hr{qs}\\
$\Phi^{(m)}_n$,
$\wh \Phi^{(m)}_n$&p.~\hr{Phi(m)n}
\end{tabular}
}

\renewcommand{\PrintDOI}[1]{    DOI: \href{https://dx.doi.org/#1}{#1}}

\renewcommand{\MR}[1]{\relax}
\renewcommand{\eprint}[1]{\href{http://arxiv.org/abs/#1}{arXiv:#1}
}

\end{document}